\newcommand{\myBiblio}{main.bib}
\newcommand{\myPackages}{MyPackages}
\newcommand{\myLayout}{MyLayoutBook}
\newcommand{\myMacros}{MyMacros}
\newcommand{\myDiagrams}{MyDiagrams}
\newenvironment{dedication}
  {\clearpage           %
   \thispagestyle{empty}%
   \vspace*{\stretch{1}}%
   \itshape             %
   \raggedleft          %
  }
  {\par %
   \vspace{\stretch{3}} %
   \clearpage           %
  }
\begin{document}

\frontmatter		%
\newcommand{\HRule}{\rule{\linewidth}{0.5mm}}

\begin{titlepage}
\begin{center}
\includegraphics[width=0.4\textwidth]{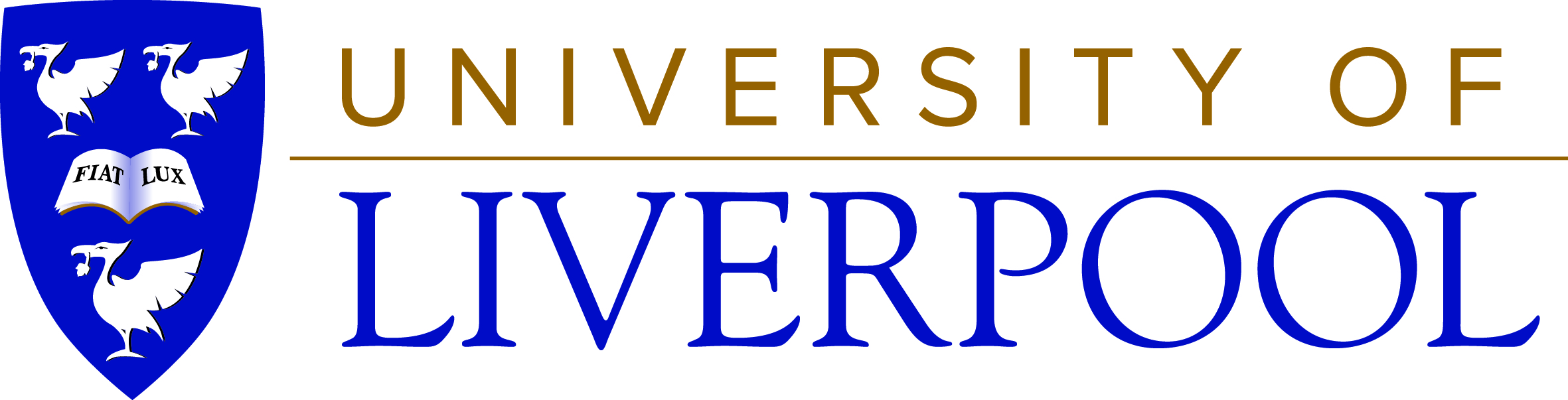}~\\[0.8cm]

\vspace*{1.1cm}
\HRule \\[0.4cm]
\textsc{ \LARGE A convenient category of locally stratified spaces \\[0.4cm] }
\HRule \\[1.5cm]

\begin{minipage}[t]{0.4\textwidth}
\begin{flushleft} \large
\emph{Candidate:}\\
Stefano \textsc{Nicotra}
\end{flushleft}
\end{minipage}%
\begin{minipage}[t]{0.5\textwidth}
\begin{flushright} \large
\emph{Supervisor:} \\
Dr.~Jon \textsc{Woolf}
~\\
\end{flushright}
\end{minipage}
~\\[2.4cm]

\vfill
~\\[0.4cm]
\HRule \\
{Thesis submitted in accordance with the requirements of the University of Liverpool for the  degree  of  Doctor  in  Philosophy  \par}
\vspace{1cm}
{\large February, 2020}
\end{center}
\end{titlepage}

\clearpage
\chapter{Abstract}
In this thesis we define the notion of a locally stratified space.
Locally stratified spaces are particular kinds of streams and d-spaces which are locally modelled on stratified spaces. 
We construct a locally presentable and cartesian closed category of locally stratified spaces that admits an adjunction with the category of simplicial sets.
Moreover, we show that the full subcategory spanned by locally stratified spaces whose associated simplicial set is an $\infty$-category has the structure of a category with fibrant objects.

We define the fundamental category of a locally stratified space and show that the canonical functor $\theta_{A}$ from the fundamental category of a simplicial set $A$ to the fundamental category of its realisation is essentially surjective.
We show that the functor $\theta_{A}$ sends split monomorphisms to isomorphisms, in particular we show that $\theta_{A}$ is not necessarily an equivalence of categories.
On the other hand, we show that the fundamental category of the realisation of the simplicial circle is equivalent to the monoid of the natural numbers.
To conclude, we define left covers of locally stratified spaces and we show that, under suitable assumptions, the category of representations of the fundamental category of a simplicial set is equivalent to the category of left covers over its realisation.

\cleardoublepage
\begin{dedication}
Ai miei genitori.
\end{dedication}
\cleardoublepage
\chapter{Acknowledgements}

Without my family I would not even have an education, let alone having completed a PhD thesis. They have always believed in me and supported me in all my decisions, I love them.

I am extremely grateful to my supervisor Jon Woolf, this thesis begins and ends with him (quite literally, see front-page and \cite{woo08}).
I will never forget the interminable afternoons spent together trying to figure out the details of a proof (mostly me) and then drawing a picture that makes everything self evident (mostly him).

I  thank my second supervisor and PGR Director Nicola Pagani and Alice Rizzardo for organising interesting seminars and for their kindness in replying to all my questions, inside and outside of mathematics.

I am grateful to Yossi for carefully reading various drafts of this thesis and correcting many grammar mistakes in my exposition (the reader can easily see he has not read the current section).

I am thankful to Stephen Nand-Lal for all the time spent answering my questions, discussing stratified spaces and drawing lifting problems at the blackboard. All of this while he was writing his own thesis.

I will never forget the days spent with Alessio doing some work, drinking lots of coffee and playing the guitar together.

I thank my good friend Andrea for the countless games of pool we played together.

A special mention goes to my spiritual gurus Dean and Benji, my Talking Cricket Leti and her partner in life Lucas.
They know better than me how important has been their influence on my growth in the past four years.

I would like to mention my former travelling companions and good friends. 
Anwar, for his kindness in answering all my questions and his taste in mathematics and Oliver, for sharing with me most of the journey and finding true happiness at the end of it.

I am grateful to my lovely housemates for turning a house into a proper home. In particular I am thankful to Flavione for the existential debates, to Caitlin for feeding me in the last few weeks before the deadline and to Rick for his Lego system.

I would like to thank all the people that shared an office with me here in Liverpool, unless they were being too loud all the time (except Felix: I thank him anyway because he is awesome).

I thank my good friends and old pals Edoardo and Andrea. The amount of times they answered one of my stupid doubts (usually with an insult) is uncountable.

I am grateful to Flavio and Stefania for turning the end of a journey into the beginning of another trip.

The list goes on and I will definitely forget someone, therefore I am thankful to whomever is reading these acknowledgements and feels he should have been included.

\cleardoublepage
{\scshape\tableofcontents}
\clearpage

\chapter{Introduction\label{ch:intro}}

The idea that a topological space $X$ should be determined, up to homotopy, by a combinatorial gadget containing all the homotopy groups of $X$, envisaged by Dan Kan, 
was formalised for the first time by Dan Quillen.
In his seminal paper \cite{qui67} Quillen proved that the categories $\Topa$ of topological spaces and $\sSet$ of simplicial sets both admit a \emph{model category structure}.
Moreover, he showed that there exists an adjunction
\begin{equation*}
\Adjoint{\sSet}{\Topa}{\abs{\blank}}{\Sing}
\end{equation*}
which defines an equivalence between their underlying homotopy theories.
The singular simplicial set $\Sing X$ of a topological space $X$ is an $\infty$-\emph{groupoid}, whose `$1$-categorical shadow' recovers the classical \emph{fundamental groupoid} $\Pi_{1} X$ of $X$. 

\subsection*{}
Following the results of an unpublished work of Bob MacPherson, later generalised by David Treumann (\cite{tre09}), one could argue that the right generalisation of the fundamental groupoid to the framework of stratified spaces is the so called \emph{exit path category}.
Morphisms in the exit path category $\Exit(X)$ of a stratified space $X$ are homotopy classes of paths that `wind outward from a deeper stratum'.
As in the classical \emph{monodromy equivalence}, representations of the exit path category of $X$ completely determine the category of stratified covering spaces of $X$, in what \cite{BGH18} call an \emph{exodromy equivalence}:
\[\Set^{\Exit X}\cong \sCover{X}.\]
\subsection*{}
As the first invariant associated to a stratified space is a category rather than a groupoid, it seems natural that, in the context of stratified spaces, Quillen's results should be related to the homotopy theory of $\infty$-categories, rather than $\infty$-groupoids.
Pursuing this point of view, Stephen Nand-Lal  in \cite{nan18} has constructed a model structure on a full subcategory of the category of stratified spaces, transferred from the Joyal model structure on the category of simplicial sets via an adjunction
\begin{equation*}
\Adjoint{\sSet}{\Strata}{\sabs{\blank}}{\sSing}.
\end{equation*}
Recently, work in this direction has been done by Sylvain Douteau and Peter Haine .
Douteau, realising ideas of Henriques, has shown in \cite{dou18} that the category $\overcat{\Topa}{P}$ of topological spaces with a stratification map to $P$ has a combinatorial simplicial model structure.
Barwick, Glasman and Haine define the $\infty$-category $\Strata_{P}$ of abstract $P$-stratified homotopy types as a full subcategory of the slice category $\overcat{\Cat_{\infty}}{P}$ spanned by the $\infty$-categories with a conservative functor to $P$.
Peter Haine in \cite{hai18} has recently unified the two approaches, showing that $\Strata_{P}$ is equivalent to an accessible localisation of the underlying $\infty$-category of the Douteau-Henriques model structure on $\Topa_{P}$

\subsection*{}
However, independently of the approach chosen, the homotopy theory of stratified spaces is not equivalent to the homotopy theory of all $\infty$-categories.
This can be seen already at a $1$-categorical level, as a stratified space has a global preorder on the set of its points that forces `exit loops' to remain in a single stratum.
A natural question to pose is whether we can refine the topological side further, taking into consideration topological spaces with a local preorder on their points that admit more interesting non-invertible paths.

\subsection*{}
Luckily the way has been paved in the area of directed topology, a field that studies the behaviour of topological spaces with a natural notion of `directed paths'.
Different models of directed topological spaces and locally preordered spaces are present in the literature, most notably d-spaces, introduced by Marco Grandis in \cite{gra03}, and streams defined by Sanjeevi Krishnan in \cite{kri09}.

\subsection*{}
In this thesis we address the aforementioned question and we define the notion of a \emph{locally stratified space}.
Locally stratified spaces are particular kinds of streams and d-spaces which are locally modelled on stratified spaces. 
In particular, the global preorder on the points of a locally stratified space is a stratified space and every stratified space naturally gives rise to a locally stratified space.
While streams and d-spaces are of interest in computer science and more precisely in the study of concurrent processes (see  for example \cite{hau12} and \cite{FRG06}) locally stratified spaces have a more topological flavour.
For instance the geometric realisation of any simplicial set has a natural locally-stratified structure.

\subsection*{}
We construct a locally presentable and cartesian closed category of locally stratified spaces that admits an adjunction:
\begin{equation*}
\Adjoint{\sSet}{\LocStrata}{\dabs{\blank}}{\dSing}
\end{equation*}
with the category of simplicial sets.
Moreover, we show that the full subcategory of $\LocStrata$ spanned by the locally stratified spaces $X$ such that $\dSing X$ is an $\infty$-category, has the structure of a category with fibrant objects.

\subsection*{}
However the adjunction between locally stratified spaces and simplicial sets does not define an equivalence between the homotopy theory of $\infty$-categories and the homotopy theory of locally stratified spaces.
As in the case of stratified spaces this can be detected at a 1-categorical level, and we do so by considering the \emph{fundamental category} $\dPi_{1}X$ of a locally stratified space $X$.
For a simplicial set $A$ there is a canonical functor 
\[\theta_{A}\colon\tau_{1}A\to \dPi_{1}\dabs{A}\]
from the fundamental category of $A$ to the fundamental category of its realisation.
We show that the functor $\theta_{A}$ sends split monomorphisms to isomorphisms and we conjecture that $\dPi_{1}\dabs{A}$ is the localisation of $\tau_{1}A$ at the class of split monomorphisms.
We give a few examples that verify the conjecture.
If $S^{1}$ is the simplicial circle, we show that 
\[\theta_{S^{1}}\colon \tau_{1}S^{1}\to \dPi_{1}\dabs{S^{1}}\]
defines an equivalence of categories from the monoid of the natural numbers to the fundamental category of $\dabs{S^{1}}$.
If $R$ is the walking retraction simplicial set, we show that the fundamental category of the realisation of $R$ is equivalent to the terminal category.
In particular, the canonical functor:
\[\theta_{R}\colon \tau_{1}R\to \dPi_{1}\dabs{R}\]
is not an equivalence of categories, and this provides an explicit example of a simplicial set $R$ for which the unit of the adjunction is not a weak equivalence.

\subsection*{}
A left cover over a locally stratified space $X$ is the geometric counterpart of a representation of $\dPi_{1}X$. 
Although we define left covers using lifting properties, we characterise left covers over the realisation $\dabs{A}$ of a simplicial set in terms of \'etale maps and topological covers.
Moreover, we define an adjunction
\begin{equation*}
\Adjoint{\Set^{\tau_{1}A}}{\LCover{\dabs{A}}}{\rec}{\fib}
\end{equation*}
between the category of representations of the fundamental category of $A$ and the category of left covers of its realisation.
Under suitable assumptions, we show that this adjunction defines an equivalence of categories.
As expected, the walking retraction simplicial set provides a counterexample where the above adjunction fails to be an equivalence, as we show that the category of left covers over $\dabs{R}$ is equivalent to the category of sets.

\subsection*{}
Summarising, we construct a category of generalised stratified spaces which captures more information coming from the homotopy theory of $\infty$-categories.
However it is still an open question whether a suitable category of topological spaces equipped with some order-theoretic data, encapsulating the homotopy theory of stratified spaces, can recover the homotopy theory of all $\infty$-categories. A positive answer to this question would allow for example to construct classifying `spaces' for $\infty$-categories capturing fully their homotopical information.

\section*{Outline}
\addcontentsline{toc}{section}{Outline}
\subsection*{}
Sections \ref{ch:1sec:1}, \ref{ch:1sec:2} and \ref{ch:1sec:3} are standard overviews of basic category theory, enriched category theory and factorisation systems.
In Section \ref{ch:1sec:4} we recall the theory of topological functors and topological constructs.

\subsection*{}
In Sections \ref{ch:2sec:1} and \ref{ch:2sec:2}  we give a brief tour of the basics of classical abstract homotopy theory.
Section  \ref{ch:2sec:3} is a review of Cisinski's theory of model structures on presheaves.
The results in Section \ref{ch:2sec:4} are original, where we set up a general framework for the study of the homotopy theory of topological constructs.

\subsection*{}
In Section \ref{ch:3sec:1} we recall the constructions of the Kan-Quillen and Joyal model structures, using the results of Section \ref{ch:2sec:3}.
Section \ref{ch:3sec:2} is an overview of the theory of covering spaces from a simplicial perspective.
In Section \ref{ch:3sec:3} we give an original presentation of the equivalence between the category of left covers over a simplicial set and the category of representations of its fundamental category.

\subsection*{}
In Section \ref{ch:4sec:1} we recall some classical results on the relationship between preorders and topological spaces. 
Moreover, we study the category of preordered spaces.
In Section \ref{ch:4sec:2} we construct a convenient category of numerically generated stratified spaces as a full subcategory of the category of preordered spaces.
Section \ref{ch:4sec:3} is a recollection of the homotopy theory of stratified spaces.

\subsection*{}
In Section \ref{ch:5sec:1} we give an overview of the theory of prestreams and streams.
The material of the rest of the chapter is original.
In Section \ref{ch:5sec:2} we construct a locally presentable and cartesian closed category of locally stratified spaces.
In Section \ref{ch:5sec:3} we define the homotopy theory of locally stratified spaces and show that fibrant locally stratified spaces form a category of fibrant objects.
In Section \ref{ch:5sec:4} we define the fundamental category of a locally stratified space and the category of left covers.
We show that the fundamental category of a simplicial set admits an essentially surjective functor to the fundamental category of the associated locally stratified space.
To conclude, in Section \ref{ch:5sec:5} we give a characterisation of left covers over the realisation of a simplicial set in terms of \'etale maps and we show that, under suitable hypothesis,
the category of left covers over $A$ is equivalent to the category of left covers over its realisation.
\section*{Conventions}
Throughout the document, we assume the \emph{axiom of universes} and we fix a universe $\cat{U}$ once and for all.
Sets will be understood as $\cat{U}$-sets and classes as $\cat{U}$-classes.

\mainmatter				%

\chapter{Preliminaries}
\label{ch:1}
\section{Prelude}
\label{ch:1sec:1}

Left Kan extensions are ubiquitous in mathematics and they provide a formal way to extend a functor $F\colon \cat{C}\to\cat{E}$ along a functor $K\colon \cat{C}\to \cat{D}$ in a universal way.
In most of our applications, $\cat{E}$ will be a category of topological spaces with some extra structure, $\cat{C}$ will be the category of finite non-empty ordinals and $\cat{D}$ will be the category of simplicial sets.
In such a framework, the existence of a left Kan extension for any functor $F$ along the Yoneda embedding is ensured by Theorem \ref{thm:kan-ext}.
The strength of Theorem \ref{thm:kan-ext} lies in that it does also provide a concrete formula for the value of the left Kan extension of $F$ along $K$ at every object $d\in \cat{D}$.
Moreover, Theorem \ref{thm:kan-presheaves} implies that such a Kan extension has a right adjoint.
Towards the end of the section we illustrate some examples of Kan extensions involving the category of simplicial sets.

\begin{definition}
Let $F\colon \cat{C}\to \cat{E}$ and $K\colon \cat{C}\to \cat{D}$ be functors.
A \emph{right Kan extension} of $F$ along $K$ is a functor 
\[\Ran_{K}F\colon \cat{D}\to \cat{E}\]
together with a natural transformation $\epsilon\colon \Ran_{K}F\circ K\to F$ such that for any other pair $(G\colon \cat{D}\to\cat{E}, \gamma\colon GK\to F)$ there exists a unique natural transformation $\alpha\colon G\to \Ran_{K}F$ such that $\epsilon \alpha K=\gamma$, as shown in the following diagram:
\begin{equation*}
\begin{tikzcd}
{\cat{C}}\ar[rr, "F", ""{name=U, below}] \ar[ddr, "K"']& & \cat{E}\\
&&\\
& \cat{D}.	\ar[Rightarrow, to=U, "\epsilon" {description, pos=0.4}]
		\ar[uur, bend left = 25,  " " {name=V ,below}, "{\Ran_{K}F}" {description, pos=0.65} ]
		\ar[uur, bend right = 25, "G"', " "{name=W}] & 
		\ar[Rightarrow, from=W, to=V, "{\exists !}"]
\end{tikzcd}
\end{equation*}
Dually, a \emph{left Kan extension} of $F$ along $K$ is a functor $\Lan_{K}F\colon \cat{D}\to \cat{E}$ together with a natural transformation $\eta\colon F\to \Lan_{K}F\circ K$ such that for any other pair $(G\colon \cat{D}\to\cat{E}, \delta\colon F\to GK)$ there exists a unique natural transformation $\beta\colon \Lan_{K}F\to G$ such that $\beta K \eta=\delta$, as shown in the following diagram:
\begin{equation*}
\begin{tikzcd}
{\cat{C}}\ar[rr, "F", ""{name=U, below}] \ar[ddr, "K"']& & \cat{E}\\
&&\\
& \cat{D}	\ar[Rightarrow, from=U, "\eta" {description, pos=0.4}]
		\ar[uur, bend left = 25,  " " {name=V ,below}, "{\Lan_{K}F}" {description, pos=0.65} ]
		\ar[uur, bend right = 25, "G"', " "{name=W}] & 
		\ar[Rightarrow, from=V, to=W, "{\exists !}"']
\end{tikzcd}
\end{equation*}
\end{definition}

\subsection{}
Let $F\colon \cat{C}\to \cat{E}$ and $K\colon \cat{C}\to \cat{D}$ be functors and let $\cat{E}^{\cat{D}}$ be the category of functors from $\cat{D}$ to $\cat{E}$ and natural transformations between them.
Then, the universal property of the left Kan extension of $F$ along $K$ is equivalent to the functor
\[\cat{E}^{{\cat{C}}}(F, \blank\circ K)\colon\cat{E}^{\cat{D}}\to \Set\]
being representable.
Indeed, if this is the case, precomposition with $K$ induces a natural isomorphism:
\[\cat{E}^{\cat{D}}(\Lan_{K}F,G)\cong {\cat{E}^{\cat{C}}}(F, G\circ K)\]
whose inverse translates into the universal property of a left Kan extension.

\subsection{}
\label{subsec:kan-ext-adjoints}
In particular, let $K\colon \cat{C}\to \cat{D}$ be a functor, let $\cat{E}$ be a category and assume that the left and right Kan extension for every functor $\cat{C}\to\cat{E}$ along $K$ exist. 
Then, they provide a left and right adjoint to the functor $K^{*}\colon \cat{E}^{\cat{D}}\to\cat{E}^{\cat{C}}$ given by precomposition with $K$.
\begin{equation}
\label{eq:triadjoint-kan-ext}
\Triadjoint{\cat{E}^{\cat{C}}_{\phantom{C}}}{\cat{E}^{\cat{D}}_{\phantom{C}}}{K^{*}}{\Lan_{K}}{\Ran_{K}}
\end{equation}

We now recall the definition of ends and coends.
Coends are special kinds of colimits over a functor in two variables.
In Theorem \ref{thm:kan-ext} we will use a coend formula to compute the left Kan extension of a functor $F\colon \cat{C}\to\cat{E}$ along a functor $K\colon\cat{C}\to\cat{D}$ under suitable hypotheses.

\begin{definition}
Let $H\colon \cat{C^{\op}}\times \cat{C}\to \cat{E}$ be a functor. 
The \emph{end} of $H$ is an object $\int_{{c\in \cat{C}}}H(c,c)$ in $\cat{E}$ together with a morphism $\zeta_{c}\colon \int_{c\in \cat{C}}H(c,c)\to H(c,c)$ for every $c\in \cat{C}$ such that the diagram:
\begin{equation*}
\csquare{\displaystyle\int_{{c\in \cat{C}}}H(c,c)}{H(c', c')}{H(c, c')}{H(c, c)}{\zeta_{c'}}{f^{*}}{f_{*}}{\zeta_{c}}
\end{equation*}
commutes for every morphism $f\colon c\to c'$ and $\left(\int_{c\in\cat{C}}H(c,c),(\zeta_{c})_{c\in \cat{C}}\right)$ is universal among pairs with this property.
When the end of $H$ exists it is given by the equaliser of the diagram:
\begin{equation}
\label{eq:end-equaliser}
\begin{tikzcd}
{\displaystyle\int_{c\in\cat{C}}H(c,c)}
	\ar[r, dashed] &
{\displaystyle\prod_{c} H(c,c)}	
	\ar[r, "{f^{*}}", shift left]
	\ar[r, "{f_{*}}"', shift right] &
{\displaystyle\prod_{c\in \cat{C}}H(c,c')}.
\end{tikzcd}
\end{equation}
Dually, the \emph{coend} of $H$ is an object $\int^{{c\in \cat{C}}}H(c,c)$ in $\cat{E}$ together with a morphism $\theta_{c}\colon H(c,c)\to \int^{c\in \cat{C}}H(c,c)$ for every $c\in \cat{C}$ such that the diagram:
\begin{equation*}
\csquare{H(c',c)}{H(c',c')}{\displaystyle\int^{{c\in \cat{C}}}H(c,c)}{H(c, c)}{f_{*}}{\theta_{c'}}{\theta_{c}}{f^{*}}
\end{equation*}
commutes for every morphism $f\colon c\to c'$ and $\left(\int^{c\in\cat{C}}H(c,c),(\theta_{c})_{c\in \cat{C}}\right)$ is universal among pairs with this property.
When the coend of $H$ exists it is given by the coequaliser of the diagram:
\begin{equation}
\label{eq:coend-coequaliser}
\begin{tikzcd}
{\displaystyle\bigcoprod_{f:c\to c'} H(c',c)}	
	\ar[r, "{f^{*}}", shift left]
	\ar[r, "{f_{*}}"', shift right] &
{\displaystyle\bigcoprod_{c\in \cat{C}}H(c,c)}
	\ar[r, dashed] &
{\displaystyle\int^{c\in\cat{C}}H(c,c)}.
\end{tikzcd}
\end{equation}
\end{definition}

\begin{example}
\label{ex:nat-trans-end}
Let $F,G\colon \cat{C}\to \cat{E}$ be functors and assume that $\cat{C}$ is small and $\cat{E}$ is complete. Consider the functor
\[\cat{E}(F\blank,G\blank)\colon \cat{C}^{\op}\times\cat{C}\to \Set\]
Then, there is a natural isomorphism:
\[\int_{{c\in\cat{C}}}\cat{E}(Fc,Gc)\cong \cat{E^{\cat{C}}}(F,G)\]
Indeed, by inspection one can see that the following diagram is an equaliser of sets:
\begin{diagram}
{ \cat{E^{\cat{C}}}(F,G)}
	\ar[r] &
{\displaystyle\prod_{c\in \cat{C}} \cat{E}(Fc,Gc)}	
	\ar[r, "{f^{*}}", shift left]
	\ar[r, "{f_{*}}"', shift right] &
{\displaystyle\prod_{f\colon c\to c'}\cat{E}(Fc,Gc')}
\end{diagram}
\end{example}

\subsection{} 
\label{subsec:power-copower-set}
Let $\cat{E}$ be a category with arbitrary coproducts.
For every set $S$ and every object $e\in \cat{E}$ the \emph{copower} or \emph{tensor} of $e$ by $S$ is the coproduct 
\[S\otimes e = \bigcoprod_{s\in S}e.\]
of copies of $e$ indexed by $S$.
This defines a functor:
\[\otimes\colon \Set\times \cat{E}\to \cat{E}.\]
Dually, if $\cat{E}$ has arbitrary products, the \emph{power} or \emph{cotensor} of $e$ by $S$ is the product
\[e^{S} =\prod_{s\in S}e\]
of $S$ many copies of $e$.

\begin{theorem}
\label{thm:kan-ext}
Let $\cat{C}$ be a small category, let $\cat{D}$ be a locally small category and let $\cat{E}$ be a cocomplete category. 
Then, the left Kan extension of any functor $F\colon \cat{C}\to \cat{E}$ along any functor $K\colon \cat{C}\to \cat{D}$ exists and is given by the formula:
\begin{equation}
\label{eq:lan-coend}
\Lan_{K}F(d)=\int^{c\in \cat{C}}\cat{D}(Kc,d)\otimes Fc.
\end{equation}
\end{theorem}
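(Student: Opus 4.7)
The plan is to verify the universal property of the left Kan extension by establishing a natural isomorphism
\begin{equation*}
\cat{E}^{\cat{D}}(L, G)\cong \cat{E}^{\cat{C}}(F, G\circ K),
\end{equation*}
where $L(d) := \int^{c\in \cat{C}}\cat{D}(Kc,d)\otimes Fc$, natural in $G\in \cat{E}^{\cat{D}}$. As observed earlier in the excerpt (right after the definition of Kan extensions), such a representability statement is equivalent to $L$ being the left Kan extension of $F$ along $K$; the unit $\eta\colon F\to L\circ K$ is then obtained by evaluating the isomorphism at $G = L$ on the identity.

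The isomorphism is assembled as a chain, each step using a general and reusable fact. First I would rewrite the left-hand side using the end formula for the set of natural transformations from Example \ref{ex:nat-trans-end}, obtaining $\int_{d\in\cat{D}}\cat{E}(L(d),G(d))$. Next, I would pull the coend defining $L(d)$ out of the first argument of the hom: since the coend is a colimit in $\cat{E}$, the contravariant hom $\cat{E}(\blank,G(d))$ turns it into an end, giving
\begin{equation*}
\int_{d\in\cat{D}}\int_{c\in\cat{C}}\cat{E}\bigl(\cat{D}(Kc,d)\otimes Fc,\, G(d)\bigr).
\end{equation*}
I would then apply the defining adjunction of the copower from \ref{subsec:power-copower-set}, which gives
\begin{equation*}
\cat{E}\bigl(\cat{D}(Kc,d)\otimes Fc, G(d)\bigr)\cong \Set\bigl(\cat{D}(Kc,d),\,\cat{E}(Fc, G(d))\bigr),
\end{equation*}
after which a Fubini-type interchange of ends allows me to move the end over $d$ inside, arriving at $\int_{c\in \cat{C}}\int_{d\in\cat{D}}\Set(\cat{D}(Kc,d),\cat{E}(Fc,G(d)))$. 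The final step is the co-Yoneda/ninja-Yoneda lemma in end form: the inner end, being $\int_{d}\Set(\cat{D}(Kc,d),\cat{E}(Fc,G(d)))$, is naturally isomorphic to $\cat{E}(Fc,G(Kc))$. Applying the end formula for natural transformations one more time, this time to the pair $(F, G\circ K)$ over $\cat{C}$, identifies the remaining end with $\cat{E}^{\cat{C}}(F, G\circ K)$, completing the chain.

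The main obstacle is justifying the two non-formal steps, namely the interchange of ends (Fubini) and the hom-of-coend formula; both rest on checking that the relevant (co)limits exist and that the indicated universal properties assemble. Cocompleteness of $\cat{E}$ and smallness of $\cat{C}$ ensure that every coend appearing in \eqref{eq:lan-coend} exists via the coequaliser presentation \eqref{eq:coend-coequaliser}; local smallness of $\cat{D}$ ensures the copower is indexed by a genuine set; and completeness of $\Set$ makes the ends on the right-hand sides well-defined. Once these formal ingredients are in place, naturality of every step in $G$ is automatic, and extracting $\beta\colon L\to G$ from $\delta\colon F\to GK$ via the isomorphism yields exactly the unique factorisation required by the universal property of $\Lan_{K}F$.
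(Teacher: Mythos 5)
Your proof is correct and is the standard coend-calculus verification of the pointwise Kan extension formula; it matches the argument in \cite[X.4.1--2]{mac13}, which the paper cites, up to the cosmetic difference of phrasing the formula as a colimit over the comma category $\overcat{K}{d}$ rather than as a coend (the two are identified in Remark~\ref{rmk:pointwise-kan-ext}). The only care needed is that, since $\cat{D}$ may be large, the ends over $\cat{D}$ in your chain should be read as classes of compatible families rather than as equaliser-presented limits of sets via \eqref{eq:end-equaliser}; this is harmless because the end description of natural transformations, Fubini, and co-Yoneda all hold at that level for locally small $\cat{D}$, and the final term $\cat{E}^{\cat{C}}(F, G\circ K)$ is a bona fide set since $\cat{C}$ is small.
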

\begin{proof}
See \cite[X.4.1-2]{mac13}
\end{proof}

\subsection{}
Let $F\colon \cat{C}\to \Set$ be a functor from a category $\cat{C}$ to the category of sets.
The \emph{category of elements} of $F$, denoted by $\Elts(F)$, is the category whose objects are pairs $(c,f)$ where $c\in \cat{C}$ and $f\in F(c)$ and morphisms are maps  $\alpha\colon c\to c'$ in $\cat{C}$ such that $F(\alpha)(f)=f'$.
The category of elements of $F$ comes equipped with a forgetful functor
\begin{align*}
\phi_{F}\colon\Elts(F)&\to\cat{C}\\
(c,f)&\mapsto c
\end{align*}
The pair $(\Elts(F),\phi_{F})$ is often called the \emph{Grothendieck construction} of $F$.

\subsection{}
In particular, for any functor $K\colon \cat{C}\to \cat{D}$ between locally small categories and any object $d\in \cat{D}$ we can consider the category of elements of the functor ${\cat{D}}(K\blank,d)$ and we denote it as $\overcat{K}{d}$ and call it the \emph{overcategory} of $K$ over $d$. Unraveling the definition, the objects of $\overcat{K}{d}$ are pairs $(c,f)$ where $c$ is an object of $\cat{C}$ and $f\colon Kc\to d$ is a morphism in $\cat{D}$ and a morphism $\alpha\colon (c,f)\to (c',f')$ is a map $\alpha\colon c\to c'$ in $\cat{C}$ such that the triangle
\begin{diagram}
 & Kc'\ar[d, "f'"]\\
Kc\ar[ur, "K\alpha"]\ar[r,"f"']&d
\end{diagram}
commutes.
When $K$ is a fully faithful functor, we will use the notation $\overcat{\cat{C}}{d}$ and leave the functor $K$ implicit.
In particular, when $K$ is the identity functor on $\cat{C}$, we recover the \emph{slice category} $\overcat{\cat{C}}{c}$ of $\cat{C}$ over $c$.
Dually, the \emph{undercategory} of $K$ under $d$, denoted by $\undercat{K}{d}$ is the category of elements of the functor ${\cat{D}}(d, K\blank)$ and when $K$ is the identity functor on $\cat{C}$ we recover the \emph{coslice} category $\coslice{\cat{C}}{c}$ of $\cat{C}$ over $c$.

\begin{notation}
\label{not:grothendieck-construction-colimit}
Let $K\colon \cat{C}\to \cat{D}$ be a functor between locally small categories and let $F\colon \cat{C}\to \cat{E}$ be a functor to a cocomplete category.
Then, we will use the notation
\[\colim_{Kc\to d}Fc= \colim\left(\overcat{K}{d}\to \cat{C}\stackrel{F}{\to} \cat{E}\right)\]
to denote the colimit of the composite of $F$ with the forgetful functor of the Grothendieck construction associated to the functor ${\cat{D}}(K\blank,d)$.
\end{notation}

\begin{remark}
\label{rmk:pointwise-kan-ext}
Let $F\colon \cat{C}\to \cat{E}$ and $K\colon \cat{C}\to \cat{D}$ be functors as in Theorem \ref{thm:kan-ext}. 
Then, by Equations \eqref{eq:coend-coequaliser} and \eqref{eq:lan-coend} the left Kan extension of $F$ along $K$ at $d$ can be computed as the colimit:
\begin{equation*}
\Lan_{K}F(d)=\colim_{Kc\to d}Fc.
\end{equation*}
\end{remark}

\begin{example}
\label{ex:kan-ext-colimit}
Let $!\colon \cat{C}\to \term$ be the unique functor from a small category $\cat{C}$ to the terminal category and let $F\colon \cat{C}\to \cat{E}$ be a functor to a cocomplete category.
Then, Remark \ref{rmk:pointwise-kan-ext} implies the left Kan extension of $F$ along $!$ is simply the colimit of $F$:
\[\Lan_{!}F=\colim_{\cat{C}}F.\]
Dually, the right Kan extension of $F$ along $!$ is the limit of $F$. 
In particular, \eqref{eq:triadjoint-kan-ext} specialises to the following diagram:
\begin{equation*}
\Triadjoint{\cat{E}^{\cat{C}}_{\phantom{C}}}{\cat{E}^{\phantom{\cat{D}}}_{\phantom{C}}}{\const_{\cat{C}}}{\colim_{\cat{C}}}{\lim_{\cat{C}}}.
\end{equation*}
\end{example}

\begin{definition}
Let $F\colon \cat{C}\to \cat{E}$ be a functor, where $\cat{C}$ is a small category and $\cat{E}$ is a cocomplete, locally small category. We say that $F$ is \emph{dense} if $\Lan_{F}(F)=\id_{\cat{E}}$.
By Remark \ref{rmk:pointwise-kan-ext} this is equivalent to saying that for every object $e\in \cat{E}$ there is a natural isomorphism:
\begin{equation*}
e\cong \colim_{Fc\to e}Fc.
\end{equation*}
\end{definition}

\begin{theorem}[Yoneda Lemma]
\label{thm:yoneda-lemma}
Let $F\colon \cat{C}\to \Set$ be a functor, then there exists a natural isomorphism
\[Fc\cong \Set^{\cat{C}}(\cat{C}(c,\blank), F).\]
\end{theorem}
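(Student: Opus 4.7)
The plan is to exhibit a pair of mutually inverse maps, with the magic move being evaluation at the identity. First I would define
\[
\Phi\colon \Set^{\cat{C}}(\cat{C}(c,\blank), F)\to Fc,\qquad \Phi(\alpha)=\alpha_{c}(\id_{c}),
\]
which extracts from a natural transformation $\alpha$ its value at the universal element $\id_{c}\in \cat{C}(c,c)$.

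Next I would construct a candidate inverse
\[
\Psi\colon Fc\to \Set^{\cat{C}}(\cat{C}(c,\blank), F)
\]
by sending $x\in Fc$ to the family of maps whose component at $d\in \cat{C}$ is $\Psi(x)_{d}\colon \cat{C}(c,d)\to Fd$, $f\mapsto F(f)(x)$. The first nontrivial check is that $\Psi(x)$ is indeed a natural transformation, which amounts to verifying that for every $g\colon d\to d'$ the square
\[
\csquare{\cat{C}(c,d)}{Fd}{\cat{C}(c,d')}{Fd'}{\Psi(x)_{d}}{g_{*}}{F(g)}{\Psi(x)_{d'}}
\]
commutes; this is immediate from functoriality $F(g\circ f)=F(g)\circ F(f)$.

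Then I would verify that $\Phi$ and $\Psi$ are mutually inverse. The composite $\Phi\Psi(x)=F(\id_{c})(x)=x$ is trivial by functoriality. For the other direction, given $\alpha\colon \cat{C}(c,\blank)\to F$, I must show $\Psi(\Phi(\alpha))=\alpha$, that is, $F(f)(\alpha_{c}(\id_{c}))=\alpha_{d}(f)$ for every $f\colon c\to d$; this follows from the naturality square of $\alpha$ applied to $f$, evaluated at $\id_{c}$. The last step is naturality of the bijection in $c$ and $F$, which is again a direct unwinding of definitions.

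The only subtle step is recognising the naturality-of-$\alpha$ identity as precisely the statement $\Psi\Phi=\id$; everything else is a short diagram chase. Since the statement here is the contravariant form (the functor $\cat{C}(c,\blank)$ is covariant in its free slot), I would be careful to treat the variance consistently and then note that the dual version for presheaves follows by applying the argument to $\cat{C}^{\op}$.
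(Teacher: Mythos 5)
Your proof is correct, but it takes a genuinely different route from the paper. The paper derives Yoneda from machinery it has just developed: it writes $F\cong \Ran_{\id_{\cat{C}}}F$, expands the right Kan extension as an end via the dual of Theorem \ref{thm:kan-ext}, and then identifies the resulting end $\int_{x}\Set(\cat{C}(c,x),Fx)$ with the set of natural transformations using Example \ref{ex:nat-trans-end}. This is a slick, formal chain of natural isomorphisms in which naturality in $c$ and $F$ comes for free, but it leans on the Kan extension calculus. You instead give the classical elementary proof: exhibit the evaluation-at-$\id_c$ map $\Phi$ and the ``Yoneda extension'' $\Psi(x)_d(f)=F(f)(x)$, verify naturality of $\Psi(x)$, and check $\Phi\Psi=\id$ (from $F(\id_c)=\id$) and $\Psi\Phi=\id$ (from the naturality square of $\alpha$ at $f$ evaluated on $\id_c$). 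Your approach is self-contained and shows explicitly where the identity morphism does the work, whereas the paper's approach illustrates the end/Kan-extension formalism at no extra cost, having already paid for it. Both are standard; they buy different things. One small remark: your final sentence labels the statement ``the contravariant form,'' which is a slip --- $\cat{C}(c,\blank)$ and $F$ are both covariant, so this is the covariant Yoneda lemma, and the computations you carried out are indeed for that version; the presheaf form is, as you say, obtained by passing to $\cat{C}^{\op}$.
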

\begin{proof}
We have the following chain of isomorphisms
\begin{align*}
Fc 	&\cong \Ran_{\id_{\cat{C}}}Fc\\
	&\cong \int_{x\in \cat{C}}(Fx)^{\cat{C}(x,c)}\\
	&\cong \int_{x\in \cat{C}}\Set(\cat{C}(x,c),Fx)\\
	&\cong \Set^{\cat{C}}\left(\cat{C}(c,\blank), F\right)
\end{align*}
where the second isomorphism follows from the dual of Theorem \ref{thm:kan-ext} and the last from Example \ref{ex:nat-trans-end}.
\end{proof}

We now recall the definition of the category of presheaves on a small category and specialise Theorem \ref{thm:kan-ext} to this context.
\subsection{}
Let $\cat{A}$ be a small category, recall that a presheaf on $\cat{A}$ is a functor $X\colon \cat{A}^{\op}\to \Set$. We denote by $\hat{\cat{A}}$ the category $\Set^{\cat{A}^{\op}}$ of presheaves on $\cat{A}$.
The functor
\begin{align*}
	h \colon \cat{A} & \to \hat{\cat{A}}\\
			a	& \mapsto \cat{A}(\blank, a)
\end{align*}
is fully faithful by Theorem \ref{thm:yoneda-lemma} and is called the \emph{Yoneda embedding} of $\cat{A}$.

\subsection{} Let $X$ be a presheaf on a small category $\cat{A}$. 
The Yoneda Lemma implies that the category $\Elts(X)$ of elements of $X$ is isomorphic to the overcategory $\overcat{\cat{A}}{X}$ of $h$ over $X$.

\begin{theorem}
\label{thm:kan-presheaves}
Let $\cat{A}$ be a small category and let $\cat{E}$ be a cocomplete category. Any functor $u\colon \cat{A}\to \cat{E}$ induces an adjunction:
\begin{equation*}
\Adjoint{\hat{\cat{A}}}{\cat{E}}{u^{!}}{u_{*}}
\end{equation*}
where $u^{!}=\Lan_{h}u$ and $u_{*}e$ is the presheaf defined by
\[u_{*}e(a)=\cat{E}(u(a), e)\]
\end{theorem}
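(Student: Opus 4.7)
The plan is to first obtain $u^{!}$ via Theorem \ref{thm:kan-ext} and then exhibit $u_{*}$ as its right adjoint by chaining together natural isomorphisms coming from the coend formula, the universal property of copowers, and the end characterisation of natural transformations from Example \ref{ex:nat-trans-end}.

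First I would invoke Theorem \ref{thm:kan-ext}: since $\cat{A}$ is small, $\hat{\cat{A}}$ is locally small, and $\cat{E}$ is cocomplete, the left Kan extension $u^{!}=\Lan_{h}u\colon \hat{\cat{A}}\to \cat{E}$ exists. Combining the coend formula \eqref{eq:lan-coend} with the Yoneda lemma (Theorem \ref{thm:yoneda-lemma}) to identify $\hat{\cat{A}}(ha,X)\cong X(a)$ gives the more convenient expression
\[u^{!}X\;\cong\;\int^{a\in \cat{A}} X(a)\otimes u(a).\]
Next I would check that $u_{*}$ as defined is a functor $\cat{E}\to \hat{\cat{A}}$: for each $e\in\cat{E}$, the assignment $a\mapsto \cat{E}(u(a),e)$ is contravariantly functorial in $a$ via precomposition with $u(\alpha)$, and post-composition makes it covariant in $e$.

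The core step is to produce, for every presheaf $X$ and every object $e\in \cat{E}$, a natural isomorphism
\[\cat{E}\bigl(u^{!}X,\,e\bigr)\;\cong\;\hat{\cat{A}}\bigl(X,\,u_{*}e\bigr).\]
I would obtain this by the calculation
\begin{align*}
\cat{E}(u^{!}X,e)
&\cong \cat{E}\!\left(\int^{a\in \cat{A}}X(a)\otimes u(a),\,e\right)\\
&\cong \int_{a\in \cat{A}}\cat{E}\bigl(X(a)\otimes u(a),\,e\bigr)\\
&\cong \int_{a\in \cat{A}}\Set\bigl(X(a),\,\cat{E}(u(a),e)\bigr)\\
&\cong \int_{a\in \cat{A}}\Set\bigl(X(a),\,u_{*}e(a)\bigr)\\
&\cong \hat{\cat{A}}(X,\,u_{*}e),
\end{align*}
where the second isomorphism uses that $\cat{E}(\blank,e)$ sends colimits (hence coends) to limits (hence ends), the third is the defining adjunction of the copower from \ref{subsec:power-copower-set} applied pointwise inside the end, the fourth is the definition of $u_{*}e$, and the last is Example \ref{ex:nat-trans-end}.

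The only potential obstacle is bookkeeping: one must verify that each displayed isomorphism is natural in both $X$ and $e$, and that the composite indeed coincides with the unit/counit data one would write down directly from the universal property of $u^{!}=\Lan_{h}u$ (in particular, that on representables $X=ha$ the natural isomorphism recovers the canonical map $u(a)\to u^{!}(ha)$ coming from the unit $\eta\colon u\to u^{!}\circ h$). Since each isomorphism in the chain is natural by construction, this is routine, and the resulting bijection is precisely the hom-set adjunction $u^{!}\dashv u_{*}$.
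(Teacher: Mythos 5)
Your proof is correct and follows the same route as the paper's: both establish the adjunction by the identical chain of natural isomorphisms passing from the coend formula for $\Lan_h u$ to an end of hom-sets via the copower adjunction, and then invoke Example \ref{ex:nat-trans-end}. The extra remarks on functoriality of $u_*$ and naturality bookkeeping are sound but add nothing beyond what the paper leaves implicit.
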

\begin{proof}
We have the following natural isomorphisms:
\begin{align*}
\cat{E}(\Lan_{h}u(X),e) 
	& \cong\cat{E}\left(\int^{a\in \cat{A}}X(a)\otimes ua,e\right)\\
	&\cong \int_{a\in\cat{A}}\cat{E}\left(X(a)\otimes ua, e\right)\\
	&\cong \int_{a\in \cat{A}}\Set\left(X(a),\cat{E}(u(a),e)\right)\\
	&\cong \hat{\cat{A}}(X,u_{*}(e))
\end{align*}
Where the first isomorphism follows from Theorem \ref{thm:kan-ext} together with the Yoneda Lemma and the last isomorphism from Example \ref{ex:nat-trans-end}.
\end{proof}

The following Corollary of Theorem \ref{thm:kan-presheaves} is the well known fact that every presheaf is, canonically, a colimit of representable presheaves.
\begin{corollary}[Density Theorem]
\label{cor:density-theorem}
The Yoneda embedding is a dense functor. More explicitly, for every presheaf $X$ on $\cat{A}$ there is a canonical isomorphism:
\[X\cong \colim_{\cat{A}(\blank, a)\to X}\cat{A}(\blank, a)\]
\end{corollary}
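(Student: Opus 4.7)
The plan is to deduce the statement directly from Theorem \ref{thm:kan-presheaves} applied to the Yoneda embedding itself. Take $\cat{E}=\hat{\cat{A}}$ (which is cocomplete since $\Set$ is) and $u=h\colon \cat{A}\to \hat{\cat{A}}$. The theorem then produces an adjunction
\[\Adjoint{\hat{\cat{A}}}{\hat{\cat{A}}}{h^{!}}{h_{*}}\]
with $h^{!}=\Lan_{h}h$ and $h_{*}(X)(a)=\hat{\cat{A}}(h(a), X)=\hat{\cat{A}}(\cat{A}(\blank,a),X)$.

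First I would identify $h_{*}$ with the identity functor on $\hat{\cat{A}}$. For a presheaf $X$ and an object $a\in \cat{A}$, the Yoneda Lemma (Theorem \ref{thm:yoneda-lemma}) applied to $X\colon \cat{A}^{\op}\to \Set$ gives a natural isomorphism
\[X(a)\cong \hat{\cat{A}}(\cat{A}(\blank,a), X)=h_{*}(X)(a),\]
and naturality in $a$ promotes this to a natural isomorphism $X\cong h_{*}(X)$ of presheaves, which in turn is natural in $X$. Hence $h_{*}\cong \id_{\hat{\cat{A}}}$.

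Since left adjoints are unique up to unique isomorphism and $\id_{\hat{\cat{A}}}$ is manifestly left adjoint to itself, it follows that $h^{!}\cong \id_{\hat{\cat{A}}}$, i.e., $\Lan_{h}h\cong \id_{\hat{\cat{A}}}$, which is precisely the statement that $h$ is dense. The explicit colimit formula then follows from Remark \ref{rmk:pointwise-kan-ext}, which evaluates the pointwise left Kan extension as
\[X\cong \Lan_{h}h\,(X)=\colim_{h(a)\to X}h(a)=\colim_{\cat{A}(\blank,a)\to X}\cat{A}(\blank,a).\]

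There is no real obstacle here; the only subtle point is to make sure that the Yoneda isomorphism $X(a)\cong h_{*}(X)(a)$ is natural both in the variable $a\in \cat{A}^{\op}$ (so that it assembles into a morphism of presheaves) and in $X\in \hat{\cat{A}}$ (so that it defines a natural isomorphism of functors $\hat{\cat{A}}\to \hat{\cat{A}}$). Both naturalities are part of the standard form of the Yoneda Lemma, so the argument concludes.
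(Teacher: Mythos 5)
Your proposal is correct and follows essentially the same route as the paper: apply Theorem \ref{thm:kan-presheaves} with $u=h$, use the Yoneda Lemma to identify $h_{*}$ with the identity, and conclude $\Lan_{h}h\cong\id_{\hat{\cat{A}}}$ by uniqueness of left adjoints. The paper phrases the final step as a chain of hom-set isomorphisms $\hat{\cat{A}}(\Lan_{h}h(X),Y)\cong\hat{\cat{A}}(X,h_{*}(Y))\cong\hat{\cat{A}}(X,Y)$ (then invoking Yoneda once more implicitly), whereas you argue via uniqueness of adjoints and make the appeal to Remark \ref{rmk:pointwise-kan-ext} explicit, but the substance is the same.
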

\begin{proof}
By Theorem \ref{thm:kan-presheaves} and the Yoneda Lemma we have the following natural isomorphisms:
\begin{align*}
\hat{\cat{A}}(\Lan_{h}h(X),Y)
	&\cong \hat{\cat{A}}(X,h_{*}(Y))\\
	&\cong \hat{\cat{A}}(X,Y).
\end{align*}
\end{proof}

We introduce now the category $\DDelta$ of finite non-empty ordinals and give a few examples of adjunctions coming from Theorem \ref{thm:kan-presheaves} involving the category of presheaves on $\DDelta$.
\subsection{} 
\label{subsec:simplicial-sets}
Let $\DDelta$ be the category whose objects are the finite non-empty ordinals $[n]=\{0<1<\ldots<n\}$ and morphisms are the weakly monotonic maps between them (see Definition \ref{def:preorder}).
A presheaf $X$ on $\DDelta$ is said to be a \emph{simplicial set}. 
The set of $n$-simplices of a simplicial set $X$ is the evaluation $X([n])$ of $X$ at $[n]$ and it will be denoted by $X_{n}$.
The category of simplicial sets will be denoted by $\sSet=\hat{\DDelta}$.

\begin{example}
\label{ex:nerve-fundamental-category}
The category $\DDelta$ of finite ordinals comes equipped with a full embedding $\iota\colon \DDelta\to\Cat$ to the category of small categories.
By Theorem \ref{thm:kan-presheaves} the inclusion functor induces an adjunction:
\begin{equation*}
\Adjoint{\sSet}{\Cat}{\tau_1}{\Nerv}
\end{equation*}
where $\tau_1$, the left Kan extension of $\iota$ along the Yoneda embedding, is called the \emph{fundamental category} functor and $\Nerv$ is the \emph{nerve functor}.
In particular, if $C$ is a small category, the $n$-simplices of $\Nerv(C)$ are given by:
\[\Nerv(C)_{n}=\Cat([n],C)\]
In other words, they are composable strings of $n$-morphisms in $C$.
\end{example}

\begin{example}
\label{ex:geometric-realisation}
Let $\Topa$ be the category of topological spaces. For every $n\ge 0$ the \emph{geometric standard $n$-simplex} is the topological space given by:
\begin{equation}
\abs{\Delta^n}= \left\{\left(t_0,\ldots, t_n\right) \in \RR^{n+1}: t_i\ge 0,\quad \sum_{i=0}^nt_i = 1\right\}
\end{equation}
This, gives rise to a functor, $i\colon\Delta\to \Topa$ which induces an adjunction 
\begin{equation*}
\Adjoint{\sSet}{\Topa}{\abs{\blank}}{\Sing}
\end{equation*}
by Theorem \ref{thm:kan-presheaves}.
The left adjoint is called the \emph{geometric realisation functor} and the right adjoint is called the \emph{singular complex functor}.
\end{example}

We conclude the section with a brief recollection of the main facts about monads and comonads.
For the purposes of this thesis, we are mainly interested in the concept of idempotent comonad (see \ref{def:idempotent-monad}) and its relationship with coreflective subcategories (see Proposition \ref{prop:idempotent-monad}) as we will use it to give a categorical definition of the category of Streams in Definition \ref{def:stream}.

\subsection{}
Recall that a \emph{monad} on a category $\cat{C}$ is a triple $\Topp=(\top,\mu, \eta)$ where $\top\colon \cat{C}\to\cat{C}$ is an endofunctor on $\cat{C}$ and 
\[\mu\colon \top\top\to\top,\quad \eta\colon \id_{\cat{C}}\to\top\]
are an associative and a unital natural transformation.
Dually, a \emph{comonad} on $\cat{C}$ is a triple $\Bot=(\bot,\nu,\epsilon)$ where $\bot\colon \cat{C}\to\cat{C}$ is an endofunctor on $\cat{C}$ and
\[\nu\colon \bot\to\bot\bot,\quad \eta\colon \bot\to \id_{\cat{C}}\]
are a coassociative and a counital natural transformation.

\subsection{}
\label{subsec:adjunction-monad}
Let $\cat{C}$ and $\cat{D}$ be categories and let 
\begin{equation}
\Adjoint{\cat{C}}{\cat{D}}{F}{U}
\end{equation}
be an adjunction.
Then, the endofunctor $UF\colon \cat{C}\to\cat{C}$ has the structure of a monad, with multiplication and unit given by 
\[U\epsilon F\colon UFUF\to UF, \quad \eta\colon \id_{\cat{C}}\to UF\]
where $\eta$ is the unit and $\epsilon$ is the counit of the adjunction.
Dually, we can define a comonad $FU\colon \cat{D}\to\cat{D}$ with comultiplication and counit given by
\[F\eta U\colon FU\to FUFU,\quad \epsilon\colon FU\to \id_{\cat{D}}.\]

\begin{example}
Let $\Set_{\term}$ be the category of pointed sets with basepoint preserving functions between them. 
The forgetful functor $U\colon \Set_{\term}\to \Set$ has a left adjoint
\begin{align*}
(\blank)_{+}\colon \Set & \to \Set_{*}\\
S &\mapsto S\coprod \term
\end{align*}
given by adding a disjoint basepoint.
The adjuncion defines a monad on $\Set$ by \ref{subsec:adjunction-monad}.
\end{example}

\subsection{}
Let $\Bot=(\bot,\nu,\epsilon)$ be a comonad on a category $\cat{C}$. 
A $\Bot$-\emph{algebra} is an object $a$ of $\cat{C}$ together with a morphism $\alpha\colon a\to \bot a$ such that the diagrams
\begin{equation}
\csquare{a}{\bot a}{\bot\bot a}{\bot a}{\alpha}{\nu_{a}}{\bot\alpha}{\alpha}\quad
\begin{tikzcd}
a	\ar[r, "\alpha"]
	\ar[dr, "\id_{a}"]&
\bot a\ar[d, "\epsilon_{a}"]\\
&
a
\end{tikzcd}
\end{equation}
commute.
A morphism of $\Bot$-algebras is a map $f\colon a\to a'$ in $\cat{C}$ such that the diagram

\begin{equation}
\csquare{a}{a'}{\bot a'}{\bot a}{f}{\alpha'}{\bot f}{\alpha}
\end{equation}
commutes.
We denote by $\cat{C}^{\Bot}$ the category of $\Bot$-algebras in $\cat{C}$. 
This comes equipped with a forgetful functor $U\colon\cat{C}^{\Bot}\to\cat{C}$ which has a left adjoint $F\colon \cat{C}\to\cat{C}^{\Bot}$ sending an object $c\in \cat{C}$ to the \emph{free} $\Bot$-\emph{algebra} $F(c)$ on $c$ given by $(\bot c,\epsilon_{c}\colon \bot c\to c)$.

\begin{definition}
\label{def:idempotent-monad}
A monad $\Topp=(\top,\mu,\eta)$ on a category $\cat{C}$ is said to be an \emph{idempotent monad} if the multiplication map $\mu\colon \top\top\to\top$ is a natural isomorphism. 
Dually, a comonad $\Bot=(\bot,\nu,\epsilon)$ in $\cat{C}$ is said to be an \emph{idempotent comonad} if the comultiplication map $\nu\colon \bot\to\bot\bot$ is a natural isomorphism.
\end{definition}

\subsection{}
Recall that a subcategory $\iota\colon\cat{A}\to \cat{C}$ is said to be \emph{reflective} (\resp \emph{coreflective}) if the inclusion functor $\iota$ has a left adjoint (\resp a right adjoint).

\begin{proposition}
\label{prop:idempotent-monad}
Let $\Bot=(\bot,\nu,\epsilon)$ be a comonad on a category $\cat{C}$, the following are equivalent
\begin{enumerate}
\item
The comonad $\Bot$ is idempotent,
\item
The maps $\bot\epsilon,\epsilon\bot\colon \bot\to\bot\bot$ are equal
\item
The category $\cat{C}^{\Bot}$ of $\Bot$-algebras is a coreflective subcategory of $\cat{C}$.
\end{enumerate}
\end{proposition}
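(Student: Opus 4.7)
The plan is to prove the implications cyclically $(1)\Rightarrow(2)\Rightarrow(1)$ and $(1)\Rightarrow(3)\Rightarrow(1)$. The first equivalence is a short diagram chase using the counit laws and naturality of $\nu$; the second identifies the category $\cat{C}^{\Bot}$ with a full subcategory of $\cat{C}$ determined by where $\epsilon$ becomes invertible.

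For $(1)\Rightarrow(2)$, the counit axioms $\bot\epsilon\circ\nu=\id_{\bot}=\epsilon\bot\circ\nu$ exhibit both $\bot\epsilon$ and $\epsilon\bot$ as right inverses of $\nu$; when $\nu$ is invertible both must coincide with $\nu^{-1}$. Conversely, assume $\bot\epsilon_c=\epsilon_{\bot c}$ for every $c$; applying $\bot$ to this identity and then substituting $\bot c$ for $c$ in the hypothesis yields $\bot\bot\epsilon_c=\bot\epsilon_{\bot c}=\epsilon_{\bot\bot c}$. Naturality of $\nu$ on $\epsilon_c\colon\bot c\to c$ then gives
\[
\nu_c\circ\bot\epsilon_c \;=\; \bot\bot\epsilon_c\circ\nu_{\bot c} \;=\; \epsilon_{\bot\bot c}\circ\nu_{\bot c} \;=\; \id_{\bot\bot c},
\]
the last equality being the counit law at $\bot c$. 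Combined with $\bot\epsilon_c\circ\nu_c=\id_{\bot c}$, this exhibits $\bot\epsilon_c$ as a two-sided inverse of $\nu_c$.

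For $(1)\Rightarrow(3)$, idempotence supplies $\bot\epsilon_a=\epsilon_{\bot a}$ and, by the argument above, $\nu_a=(\bot\epsilon_a)^{-1}$ for every $a$. Given any $\Bot$-algebra $(a,\alpha)$, naturality of $\epsilon$ on $\alpha$ together with these identities yields
\[
\alpha\circ\epsilon_a \;=\; \epsilon_{\bot a}\circ\bot\alpha \;=\; \bot\epsilon_a\circ\bot\alpha \;=\; \bot(\epsilon_a\circ\alpha) \;=\; \id_{\bot a},
\]
so $\alpha$ is a two-sided inverse to $\epsilon_a$, making $\epsilon_a$ invertible and forcing $\alpha=\epsilon_a^{-1}$. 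Conversely, if $\epsilon_a$ is invertible then $\epsilon_a^{-1}$ is readily checked to be a $\Bot$-algebra structure, with the comultiplication axiom reducing to $\bot\epsilon_a^{-1}=\nu_a$, which holds since $\bot\epsilon_a^{-1}=(\bot\epsilon_a)^{-1}=\nu_a$. Thus $\cat{C}^{\Bot}$ is equivalent to the full subcategory of $\cat{C}$ on objects where $\epsilon$ is invertible; since $\bot c$ always lies in this subcategory, the corestriction $\cat{C}\to\cat{C}^{\Bot}$ of $\bot$ serves as right adjoint to the forgetful functor (with counit $\epsilon$), realising $\cat{C}^{\Bot}$ as a coreflective subcategory of $\cat{C}$.

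For $(3)\Rightarrow(1)$, the forgetful functor $U\colon\cat{C}^{\Bot}\to\cat{C}$ has the cofree coalgebra functor $R(c)=(\bot c,\nu_c)$ as its unique right adjoint; coreflectivity means $U$ is fully faithful, which is in turn equivalent to the unit $\eta\colon\id\to RU$ of $U\dashv R$ being invertible. The comultiplication of the induced comonad $UR=\bot$ is $\nu=U\eta R$, hence also invertible. The main obstacle I anticipate is the bookkeeping in $(1)\Rightarrow(3)$: verifying in detail that $\epsilon_a^{-1}$ satisfies the comultiplication axiom, and assembling the resulting equivalence into a genuine coreflective adjunction; everything else is a formal consequence of standard comonad theory.
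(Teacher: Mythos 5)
Your proof is correct. Note that the paper itself does not prove this proposition: it refers to \cite[Proposition 4.2.3]{bor942} for a proof of the dual statement about monads, so your argument supplies details the paper leaves to a reference, and (dualised) it essentially reproduces the standard Borceux-style argument. Two small remarks. First, in $(1)\Rightarrow(2)$ the counit laws $\bot\epsilon\circ\nu=\id_{\bot}=\epsilon\bot\circ\nu$ exhibit $\bot\epsilon$ and $\epsilon\bot$ as \emph{left} inverses of $\nu$ (equivalently, $\nu$ as a right inverse of each), not as right inverses as you wrote; this does not affect the argument, since a one-sided inverse of an isomorphism is its two-sided inverse, and so both maps equal $\nu^{-1}$. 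Second, in $(1)\Rightarrow(3)$ you establish that on objects the forgetful functor $U\colon\cat{C}^{\Bot}\to\cat{C}$ is injective (the only possible coalgebra structure on $a$ is $\epsilon_{a}^{-1}$, which exists iff $\epsilon_{a}$ is invertible), but you should also record that $U$ is full: if $(a,\epsilon_{a}^{-1})$ and $(a',\epsilon_{a'}^{-1})$ are coalgebras, then any $f\colon a\to a'$ in $\cat{C}$ satisfies $\bot f\circ\epsilon_{a}^{-1}=\epsilon_{a'}^{-1}\circ f$ by naturality of $\epsilon$, so every map is automatically a coalgebra morphism. With that supplement your identification of $\cat{C}^{\Bot}$ with a full coreflective subcategory of $\cat{C}$, and the converse via invertibility of the unit $\eta$ of $U\dashv R$ (which gives $\nu=U\eta R$), is complete. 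You also tacitly correct the typo in the paper's statement of (2), whose source and target should read $\bot\bot\to\bot$.
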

\begin{proof}
See \cite[Proposition 4.2.3.]{bor942} for a proof of the dual statement.
\end{proof}

\begin{example}
\label{ex:skeleton-coskeleton}
Let $\DDelta_{{\le n}}$ be the full subcategory of $\DDelta$ spanned by the finite ordinals $[m]$ with $m\le n$.
The category of presheaves on $\DDelta_{\le n}$ is the category of $n$-\emph{truncated simplicial sets} and will be denoted by $\sSet_{\le n}$.
By \ref{subsec:kan-ext-adjoints} the inclusion functor $\iota_{n}\colon\DDelta_{{\le n}}\to \DDelta$ induces adjunctions:
\begin{equation}
\label{eq:n-truncated-simplicial-set}
\Triadjoint{\sSet_{\le n}}{\sSet}{\iota_{n}^{*}}{\Lan_{\iota_{n}}}{\Ran_{\iota_{n}}}
\end{equation}
The composites
\begin{align*}
\sk_{n}=\Lan_{\iota_{n}}\circ \iota_{n}^{*}\colon \sSet		& \to \sSet\\
\cosk_{n}=\Ran_{\iota_{n}}\circ\iota_{n}^{*}\colon\sSet	& \to \sSet
\end{align*}
are called the $n$-\emph{skeleton} and $n$-\emph{coskeleton} functor, respectively.
Being a monad and comonad coming from an adjuntion of this form, they form themselves an adjunction:
\begin{equation*}
\Adjoint{\sSet}{\sSet}{\sk_{n}}{\cosk_{n}}
\end{equation*}
\end{example}

\begin{example}
\label{ex:0-truncated-simplicial-set}
A $0$-truncated simplicial set is a presheaf of sets on the terminal category, hence just a set. Thus, for $n=0$ the adjunction \eqref{eq:n-truncated-simplicial-set} specialises to:
\begin{equation}
\label{eq:0-truncated-simplicial-set}
\Triadjoint{\Set}{\sSet}{\ev_{0}}{\const}{\Ran_{\iota_{0}}}
\end{equation}
where $\ev_{0}\colon \sSet\to\Set$ is the functor that evaluates a simplicial set to its set of $0$-simplices,
while $\const_{\DDelta^{\op}}\colon \Set\to \sSet$ is the constant functor.
In particular, $\const_{\DDelta^{\op}}$ has itself a left adjoint given by the colimit functor (see Example \ref{ex:kan-ext-colimit}).
For every $X\in \sSet$, we denote by $\pi_{0}X$ the colimit of $X$ and call it the set of \emph{connected components} of $X$.
A simple calculation shows that $\pi_{0}X$ fits in the following coequaliser diagram:
\begin{equation}
\begin{tikzcd}
X_{1}\ar[r, shift left, "d^{0}_{1}"]
	\ar[r, shift right, "d^{1}_{1}"']&
X_{0}\ar[r, dashed] &
\pi_{0}X
\end{tikzcd}
\end{equation}
where $d^{i}_{1}\colon X_{1}\to X_{0}$ corresponds to the map $\partial^{1}_{i}\colon [0]\to [1]$ that skips the value $i$ (see \ref{subsec:face-degeneracy} and Notation \ref{not:simplicial-sets}).
\end{example}

\begin{example}
\label{ex:reflexive-quiver}
For $n=1$ the category $\DDelta_{{\le 1}}$ is given by
\begin{diagram}
{[0]}\ar[r, "\partial^{1}_{0}", shift left=2]\ar[r,"\partial^{1}_{1}"', shift right=2]& 
{[1]}\ar[l, "\sigma^{1}_{0}" description]
\end{diagram}
where  $\sigma_{0}^{0}$ is the unique map from $[1]$ to $[0]$ (see also \ref{subsec:face-degeneracy}).
A $1$-truncated simplicial set is said to be a \emph{reflexive quiver}.
Unraveling the definition, a reflexive quiver is a set $Q_{0}$ of objects and a set $Q_{1}$ of edges, with source and target maps $s,t \colon Q_{1}\to Q_{0}$ and a map $1\colon Q_{0}\to Q_{1}$ that assigns to every object $q\in Q$ the identity edge $1_{q}$ on $q$.
We denote by $\RefQuiv$ the category of reflexive quivers.
Notice that every small category has an underlying reflexive quiver and we have an adjunction
\begin{equation*}
\Adjoint{\RefQuiv}{\Cat}{F}{U}
\end{equation*}
where $U\colon \Cat\to \RefQuiv$ is the forgetful functor and $F\colon \RefQuiv\to\Cat$ is the functor that associates to every reflexive quiver $Q$ the \emph{free category} on $Q$.
\end{example}

\begin{example}
\label{ex:core-gpd}
Recall that a \emph{groupoid} is a category $\cat{G}$ such that every morphism in $G$ is an isomorphism.
Let $\Gpd$ be the category of small groupoids, then the forgetful functor $U\colon \Gpd\to \Cat$ admits a right adjoint:
\[\core{(\blank)}\colon \Cat \to \Gpd\]
that associates to every category $C$ the \emph{core groupoid} $\core{C}$ of $C$, the maximal subgroupoid of $C$.
Moreover, $U$ has also a left adjoint:
\[\gpd{(\blank)}\colon \Cat\to \Gpd\]
that associates to every category $C$ the \emph{groupoidification} $\gpd{C}$ of $C$ by formally inverting all the morphisms in $C$ (See \cite[Proposition 5.2.2]{bor941}).
\end{example}

\section{Monoidal and enriched categories}
\label{ch:1sec:2}
In the first part of the section, we give a brief overview of the theory of monoidal categories.
For the purposes of this thesis, we are mainly interested in Cartesian monoidal categories and Cartesian closed categories (see \ref{subsec:cartesian-closed-categories}).
The second half of the section is a recollection of the basics of enriched category theory.
In most of our examples, the base of the enrichment will be the category of simplicial sets. 
In particular, Theorem \ref{cor:adjunction-v-adjunction} will be used extensively throughout the text.
We conclude the section with functor tensor products, as they play a role in Sections \ref{ch:3sec:2}, \ref{ch:3sec:3} and \ref{ch:5sec:4}.

\subsection{}										%
Recall that a \emph{monoidal category} is a triple $(\cat{V}, \otimes, \1)$ where $\cat{V}$ is a category,
$\otimes\colon \cat{V}\times\cat{V}\to \cat{V}$ is a functor called the \emph{monoidal product} and $\1$ is an object of $\cat{V}$ called the \emph{unit object}, together with natural isomorphisms:
\begin{equation}
\label{eq:monoidal}
u\otimes(v\otimes w)\cong (u\otimes v)\otimes w, \quad \1\otimes v \cong v\cong v \otimes \1.
\end{equation}
This data is subject to a list of coherence conditions that ensure that any diagram involving the isomorphisms in \eqref{eq:monoidal} is commutative (see \cite[Chapter XI]{mac13}).

\begin{example}									%
\label{ex:endofunctors}
Let $\cat{C}$ be a category and let $\iEnd(\cat{C})$ be the category whose objects are the \emph{endofunctors} of $\cat{C}$ \ie the functors from $\cat{C}$ to itself, and whose morphisms are the natural transformations between endofunctors.
Composition of endofunctors defines a functor $\circ\colon \iEnd(\cat{C})\times\iEnd(\cat{C})\to\iEnd(\cat{C})$ which endows $\iEnd(\cat{C})$ with the structure of a monoidal category, with unit object given by the identity functor $\id_\cat{C}\colon \cat{C}\to\cat{C}$. Such monoidal structure is \emph{strict} in the sense that the isomorphisms in \eqref{eq:monoidal} become the following identities
\[F\circ(G\circ H)=(F\circ G)\circ H,\quad \id_{\cat{C}}\circ F=F=F\circ \id_\cat{C}.\]
\end{example}

\subsection{}										%
A \emph{lax monoidal functor} $F\colon (\cat{V},\otimes,\1)\to (\cat{V'},\otimes',\1')$ between monoidal categories is a functor $F\colon \cat{V}\to \cat{V'}$ together with a natural transformation and a morphism
\begin{equation}
\label{eq:monoidal-functor}
Fu\otimes' Fv\to F(u\otimes v), \quad \1'\to F(\1)
\end{equation}
satisfying associative and unital coherence conditions.
A \emph{strong monoidal functor} is a lax monoidal functor $F\colon (\cat{V},\otimes,\1)\to (\cat{V'},\otimes',\1')$ for which the natural transformations in \eqref{eq:monoidal-functor} are natural isomorphisms.

\begin{definition}									%
Let $\cat{C}$ be a category and let $(\cat{V},\otimes,\1)$ be a monoidal category.
A \emph{left action} of $\cat{V}$ on $\cat{C}$ is a strong monoidal functor from $\cat{V}$ to the monoidal category of endofunctors of $\cat{C}$.
Equivalently, a left action of $\cat{V}$ on $\cat{C}$ is given by a bifunctor $\otimes \colon \cat{V}\times\cat{C}\to \cat{C}$ together with natural isomorphisms:
\begin{equation}
\label{eq:action}
\1\otimes x\cong x, \quad (v\otimes w)\otimes x\cong v\otimes(w\otimes x)
\end{equation}
for every $v$ and $w$ in $\cat{V}$ and for every $x$ in $\cat{C}$, satisfying associative and unital coherence conditions.
\end{definition}

\begin{example}									%
Every monoidal category $(\cat{V},\otimes,\1)$ acts on itself via the monoidal product $\otimes\colon \cat{V}\times \cat{V}\to \cat{V}$.
\end{example}

\begin{example}									%
Let $\cat{C}$ be a category. The identity on $\iEnd(\cat{C})$ defines a tautological action of $\iEnd(\cat{C})$ on $\cat{C}$ which corresponds by transposition to the evaluation map
\begin{align*}
\iEnd(\cat{C})\times\cat{C} & \to \cat{C}\\
(F,x)&\mapsto F(x)
\end{align*}
\end{example}

\subsection{}										%
A \emph{symmetric monoidal category} is a monoidal category $(\cat{V},\otimes,\1)$ together with a natural isomorphism:
\[u\otimes v\cong v\otimes u\]
satisfying a compatibility condition with the associator.

\begin{example}								%
\label{ex:cartesian-monoidal}
Let $\cat{C}$ be a category with finite products, then the Cartesian product $\times\colon \cat{C}\times\cat{C}\to \cat{C}$ defines the structure of a symmetric monoidal category on $\cat{C}$ with unit object given by the terminal object $\term$. 
If this is the case, $(\cat{C},\times,\term)$ is said to be a \emph{Cartesian monoidal category}.
In particular, many well known categories such as $\Set$, $\Topa$, $\sSet$ and $\Cat$ are Cartesian monoidal.
\end{example}

\begin{example}								%
The representable functor $\hom_{\cat{V}}(\1,\blank)\colon\cat{V}\to\Set$ defines a lax monoidal functor from $(\cat{V},\otimes,\1)$ to $(\Set,\times,\term)$.
Indeed, the isomorphism $\1\cong \1\otimes\1$ induces an associative morphism
\[\hom_{\cat{V}}(\1,u)\times\hom_{\cat{V}}(\1,v)\to\hom_{\cat{V}}(\1,u\otimes v)\]
natural in $u$ and $v$ and the identity on $\1$ is equivalently a map
\[\term\to \hom_{\cat{V}}(\1,\1)\]
satisfying unital coherence conditions.
Moreover, if $\cat{C}$ is Cartesian monoidal, then the functor $\hom_{\cat{C}}(\term,\blank)$ is strong monoidal.
\end{example}

\begin{example}									%
Let $\Ab$ be the category of abelian groups and group homomorphisms between them.
The tensor product of abelian groups defines a symmetric monoidal structure on $\Ab$
\[\otimes_{\ZZ}\colon \Ab\times \Ab\to \Ab\]
whose unit object is given by the integers $\ZZ\in \Ab$.
\end{example}

\begin{definition}												%
A symmetric monoidal category $(\cat{V},\otimes,\1)$ is said to be \emph{closed} if for every object $v\in \cat{V}$, the functor $\blank\otimes v\colon \cat{V}\to \cat{V}$ admits a right adjoint
$\ihom_{\cat{V}}(v,\blank)\colon \cat{V}\to \cat{V}$ called \emph{internal-hom}.
\end{definition}

\begin{example}												%
\label{ex:set-cartesian-closed}
The prototypical example of a closed symmetric monoidal category is given by the Cartesian monoidal category $(\Set,\times,\term)$ of sets and maps between them.
In this case, given two sets $S$ and $T$ the internal-hom is given by the hom-set:
\begin{equation}
\label{eq:exponential-in-set}
\ihom_{\Set}(S,T) \cong \hom_{\Set}(S,T) \cong T^{S}
\end{equation}
where $T^{S}$ stands for the power $\prod_{s\in S}T$.
\end{example}

\begin{example}												%
\label{ex:ab-closed-monoidal}
The symmetric monoidal category $(\Ab,\otimes_{\ZZ},\ZZ)$ of abelian groups is closed.
Indeed, pointwise sum gives the set $\hom_{\Ab}(A,B)$ of group homomorphisms between any two abelian groups $A$ and $B$ the structure of an abelian group $\ihom_{\Ab}(A,B)$.
\end{example}

\subsection{}													%
\label{subsec:cartesian-closed-categories}
Generalising Example \ref{ex:set-cartesian-closed}, let $(\cat{C}, \times,\term)$ be a Cartesian monoidal category.
If $x$ is an object of $\cat{C}$ and the functor $\blank\times x\colon \cat{C}\to \cat{C}$ has a right adjoint, we say that $x$ is \emph{exponentiable} and, for every object $y\in \cat{C}$, we call
$\ihom_{\cat{C}}(x,y)$ the \emph{exponential} of $y$ by $x$ and we use interchangeably the notation $y^{x}$.
If every object in $\cat{C}$ is exponentiable, \ie if $\cat{C}$ is a closed symmetric monoidal category with respect to the Cartesian product, then $(\cat{C},\times,\term)$ is said to be  \emph{Cartesian closed}.

\begin{example}												%
\label{ex:cat-cartesian-closed}
Let $\Cat$ be the category of small categories and functors between them.
Then, for any two small categories $C$ and $D$, the category $D^{C}$ whose objects are functors from $C$ to $D$ and morphisms are natural transformations defines an exponential object.
In particular, $\Cat$ is Cartesian closed.
\end{example}

\begin{example}									%
The category $\hat{A}$ of presheaves of sets on a small category $A$ is Cartesian closed, where the internal hom from a presheaf $X$ to a presheaf $Y$ is given by the presheaf $\ihom(X,Y)$ defined by the rule:
\[\ihom(X,Y)(a)=\ihom(A(\blank,a)\times X,Y)\]
for every object $a\in A$. 
In particular, specialising to the category $\DDelta$ of finite non-empty ordinals we deduce that the category $\sSet$ of simplicial sets is Cartesian closed.
\end{example}

\begin{example}									%
The category $\Topa$ of all topological spaces is not Cartesian closed (see \cite[Proposition 7.1.2.]{bor942}). 
However, one can consider different subcategories of the category of topological spaces which are \emph{convenient} in the sense of \cite{ste97} and in particular Cartesian closed.
We adopt the category $\Top$ of numerically generated spaces, which we introduce in Example \ref{ex:numerically-generated}, as our preferred Cartesian closed category of topological spaces.
\end{example}

\begin{construction}												%
\label{con:pointed-monoidal-category}
Let $\cat{C}$ be a category with a terminal object $\term$.
The category of pointed objects $\cat{C}_{\term}$ in $\cat{C}$ is the coslice $\coslice{\cat{C}}{\term}$ under the terminal object of $\cat{C}$.
Assume that $\cat{C}$ is a Cartesian closed category and that it is complete and cocomplete for simplicity.
Then, the category $\cat{C}_{\term}$ of pointed objects in $\cat{C}$ inherits the structure of a Cartesian closed category as follows.
For every two pointed objects $c$ and $c'$ in $\cat{C}$ let $c\wedge c'$ be the pushout diagram in $\cat{C}$
\begin{equation*}
\push{c\coprod c'}{c\times c'}{c\wedge c'}{\term}{(1_{c},1_{c'})}{}{}{}
\end{equation*}
where the top horizontal map is induced canonically by the basepoints of $c$ and $c'$.
Let $c\wedge c'$ be equipped with the natural basepoint $\term\cong\term\times\term\to c\times c'\to c\wedge c'$.
Then, the functor $\wedge\colon \cat{C}_{\term}\times\cat{C}_{\term}\to \cat{C}_{\term}$ defines a symmetric monoidal product, with unit object given by the coproduct $\term\coprod\term$ of two copies of the terminal object.
Moreover, let us consider the following pullback diagram in $\cat{C}$
\begin{equation*}
\pull{\ihom_{\term}(c,c')}{\ihom(c,c')}{\ihom(\term,c')}{\ihom(\term,\term)}{}{}{}{}
\end{equation*}
where the right vertical map is induced by precomposition with the basepoint of $c$ and the bottom horizontal map by postcomposition with the basepoint of $c'$.
One can show that the functor 
\[\ihom_{\term}\colon\cat{C}_{\term}^{\op}\times\cat{C}_{\term}\to\cat{C}_{\term}\]
defines an internal hom functor for the monoidal category $(\cat{C}_{\term},\wedge,\term\coprod\term)$.
\end{construction}

\subsection{}												%
The forgetful functor $U\colon \cat{C}_{\term}\to \cat{C}$ has a left adjoint
\[(\blank)_{+}\colon \cat{C}\to\cat{C}_{\term}\]
given by adding a basepoint.
Moreover, if $\cat{C}$ is a Cartesian closed category, the functor $(\blank)_{+}$ defines a strong monoidal functor and one has the following natural isomorphisms:
\[(\term)_{+}\cong \term\coprod\term, \quad (c\times c')_{+}\cong c_{+}\wedge c'_{+}.\]

We conclude the first half of this section introducing left  cones and coslices.
These constructions will be useful in \ref{subsec:left-cone-strat} when we will specialise them to the category of stratified spaces.

\subsection{}												%
\label{subsec:segment-object}
Following \cite{BM03} we say that an object $a$ in a symmetric monoidal category $\cat{V}$ is a \emph{segment object} if it comes equipped with a decomposition
\begin{equation*}
\begin{tikzcd}
\1\coprod \1 	\ar[r, "{(0,1)}"']
			\ar[rr, bend left]&
a
			\ar[r, "\epsilon"']&
\1
\end{tikzcd}
\end{equation*}
of the codiagonal on the unit object $\1$ and an associative morphism
\[m\colon a\otimes a\to a\]
which has $0$ as its neutral element and $1$ as its absorbing element and such that $\epsilon$ is a counit for $m$.

\begin{construction}							%
\label{subsec:left-cone}
Let $\cat{C}$ be a Cartesian monoidal category with a segment object $a$ and let $c$ be an object of $\cat{C}$.
Assume that $\cat{C}$ has finite limits and colimits.
The $a$-based \emph{left cone} of $c$ is the pushout in $\cat{C}$.
\begin{equation}
\push{\term\times c}{a\times c}{\lcone{c}.}{\term}{0\times \id_{a}}{}{u}{}
\end{equation}
This defines a functor 
\[\lcone{\blank}\colon\cat{C}\to \cat{C}\]
which naturally factors through the category of pointed objects in $\cat{C}$.
Moreover, if the object $a$ is exponentiable, and $c$ is a pointed object in $\cat{C}$ with basepoint $u\colon\term\to c$, the \emph{coslice} or \emph{based path space} of $c$ at $u$ is the pullback
\begin{equation}
\pull{\lpath{c}}{c^{a}}{c.}{\term}{}{c^{0}}{u}{}
\end{equation}
This defines a coslice functor 
\[\lpath{\blank}\colon \cat{C}_{\term}\to\cat{C}_{\term}.\]
Moreover, such functors induce an adjunction
\begin{equation}
\Adjoint{\cat{C}}{\cat{C}_{\term}.}{\lcone{\blank}}{\lpath{\blank}}
\end{equation}
\end{construction}

\begin{example}												%
\label{ex:cone-top}
The category $\Topa$ of all topological spaces is cartesian monoidal and the standard interval $\II=[0,1]$ has the structure of a segment object, with multiplication:
\begin{align*}
\min\colon \II\times\II&\to \II\\
(t,t')&\mapsto \min(t,t')
\end{align*}
For every topological space $X$, the left cone $\lcone{X}$ on $X$ is the classical cone on $X$.
In particular, notice that the left cone on the geometric standard $(n-1)$-simplex is the geometric standard $n$-simplex.
Since $\II$ is a compact Hausdorff space, and in particular a core-compact space, $\II$ is exponentiable (see \ref{subsec:top-exponential}).
Therefore, the cone functor has a right adjoint which sends a pointed topological space $(X,x)$ to the usual based path space $\lpath{X}$ of $X$ at $x$.
\end{example}

\begin{example}												%
\label{ex:cone-cat}
The category $\Cat$ of small categories has a segment object given by the ordinal $[1]$ with multiplication given by the minimum:
\[\min\colon[1]\times [1]\to [1]\]
For every small category $C$, the left cone $\lcone{C}$ on $C$ is the category $C$ with an adjoint initial object.
In particular, notice that the left cone on the finite ordinal $[n-1]$ is the finite ordinal $[n]$.
Since $\Cat$ is Cartesian closed, the left cone functor comes equipped with a right adjoint, which sends a category $C$ with a distinguished object $c$ to the coslice category $\coslice{C}{c}$. 
\end{example}

\begin{example}
\label{ex:cone-sset}
Since $N[1]=\Delta^{1}$ and $N$ preserves limits and coproducts, the standard $1$-simplex has the structure of a segment object in $\sSet$.
An explicit description of cones and slices in $\sSet$ can be found in \cite[Chapter 3, Section 4]{cis16}.
\end{example}

We now introduce the basics of the theory of enriched categories. 
We fix a  symmetric monoidal category $(\cat{V}, \otimes, \1)$. 
\begin{definition}									%
A $\cat{V}$-\emph{enriched category} or simply $\cat{V}$-\emph{category} $\cat{C}$ consists of
\begin{enumerate}
\item
A class of objects $\ob(\cat{C})$,
\item
for each pair $x,y$ of objects in $\cat{C}$, a \emph{mapping object} $\icat{C}(x,y)$ in $\cat{V}$,
\item
for each object $x\in \cat{C}$ a morphism $\id_x\colon \1\to\icat{C}(x,x)$ in $\cat{V}$,
\item
for each triple $x,y, z$ of objects in $\cat{C}$ a morphism 
\[\circ\colon \icat{C}(y,z)\otimes\icat{C}(x,y)\to \icat{C}(x,z)\]
in $\cat{V}$,
\end{enumerate}
such that the following diagrams commute:

\begin{equation*}
\begin{matrix}
\begin{tikzcd}[ampersand replacement=\&]									%
{\icat{C}(x, y)\otimes\1} 
	\ar[r, "1\otimes \id_{x}"]
	\ar[dr, "\cong"']\&
{\icat{C}(x,y)\otimes\icat{C}(x, x)}
	\ar[d, "\circ"]\\
\&
{\icat{C}(x,y)}
\end{tikzcd}\\
\begin{tikzcd}[ampersand replacement=\&]								%
{\icat{C}(z,w)\otimes\icat{C}(y,z)\otimes\icat{C}(x,y)}
	\ar[r,"{1\otimes \circ}"] \ar[d, "{\circ\otimes 1}"']\&
{\icat{C}(z,w)\otimes\icat{C}(x,z)}
	\ar[d, "{\circ}"]\\
{\icat{C}(y,w)\otimes\icat{C}(x,y)}
	\ar[r, "{\circ}"'] \&
{\icat{C}(x,w)}
\end{tikzcd} \\
\begin{tikzcd}[ampersand replacement=\&]									%
{\icat{C}(y, y)\otimes\icat{C}(x, y)} 
	\ar[d, "\circ"'] \&
{\1\otimes\icat{C}(x,y)}
	\ar[l, "\id_{y}\otimes1"']\ar[dl, "\cong"]\\
{\icat{C}(x,y)}
\&
\end{tikzcd}
\end{matrix}
\end{equation*}
\end{definition}

\begin{definition}											 	%
Let $\cat{C}$ be a $\cat{V}$-category. 
The \emph{opposite} $\cat{V}$\emph{-category} $\cat{C}^{\op}$ to $\cat{C}$ is the enriched category with same class of objects as $\cat{C}$ and with mapping object given by
\[\icat{C}^{\op}(x,y)=\icat{C}(y,x).\]
Composition is inherited by the composition of $\cat{C}$ using the fact that $\cat{V}$ is symmetric monoidal and the identity morphisms are the ones of $\cat{C}$.
\end{definition}

\begin{example}												%
A category with one object enriched over the monoidal category of abelian groups is a ring $R$ with identity.
Indeed, composition defines a monoid structure whose unit is given by the identity morphism $1\colon \ZZ\to R$. 
The enrichment defines the addition of the ring and the additive identity.
Distributivity and left and right absorption laws follow from the fact that the maps involved are group homomorphisms.
\end{example}

\begin{example}													%
\label{ex:sym-mon-self-enriched}
Let $(\cat{V},\otimes,\1)$ be a closed symmetric monoidal category, then $\cat{V}$ is enriched over itself with mapping object given by the internal hom:
\[\icat{V}(x,y)=\ihom_{\cat{V}}(x,y).\]
\end{example}

\begin{definition}												%
A $\cat{V}$-\emph{enriched functor} or $\cat{V}$-\emph{functor} $F\colon \cat{C}\to \cat{C'}$ 
between $\cat{V}$-categories is a map $x\mapsto Fx$ 
between the objects of $\cat{C}$ and $\cat{C}'$ together with a collection of morphisms
\[F_{x,y}\colon \icat{C}(x,y)\to\icat{D}(Fx,Fy)\]
in $\cat{V}$ for every pair of objects $x, y$ in $\cat{C}$ such that the following diagrams commute
\begin{equation*}
\begin{tikzcd}[column sep = small]									%
{\icat{C}(y,z)\otimes\icat{C}(x,y)}
	\ar[r, "\circ"]\ar[d, "{F_{y,z}\otimes F_{x,y}}"'] &
{\icat{C}(x,z)}
	\ar[d, "{F_{x,z}}"]\\
{\icat{D}(Fy,Fz)\otimes\icat{D}(Fx,Fy)}
	\ar[r, "\circ"'] &
{\icat{D}(Fx,Fz)}
\end{tikzcd}
\\
\begin{tikzcd}[column sep = small]									%
\1 \ar[r, "\id_x"]
	\ar[rd, "\id_{Fx}"']&
{\icat{C}(x,x)}
	\ar[d, "F_{x, x}"]\\
	&
{\icat{D}(Fx,F,x)}
\end{tikzcd}
\end{equation*}
for every $x,y$ and $z$ in $\cat{C}$.
\end{definition}

\begin{example}												%
An $\Ab$-functor $M\colon R\to \Ab$ from a ring with unit to the category of abelian groups is a left $R$-module.
Dually, an $\Ab$-functor $N\colon R^{\op}\to \Ab$ from the opposite enriched category is a right $R$-module.
\end{example}

\subsection{}							%
Let $F\colon (\cat{V},\otimes,\1) \to (\cat{V'},\otimes',\1')$ be a lax monoidal functor and let $\cat{C}$ be a $\cat{V}$-category. 
Then one can define a $\cat{V}'$-category $\cat{C}_{\cat{V}'}$ with the same objects as $\cat{C}$ and with mapping objects given by
\[\icat{C_{\cat{V'}}}(x,y)=F(\icat{C}(x,y))\]

\subsection{}											%
In particular, since the functor $\hom_{\cat{V}}(\1,\blank)\colon\cat{V}\to \Set$ is lax monoidal, one can define a category with the same objects as $\cat{C}$ and with morphisms given by
\[\hom_{\cat{C}}(x,y):=\hom_{\cat{V}}\left(\1, \icat{C}(x,y)\right)\]
We call this the \emph{underlying category} to the $\cat{V}$-category $\cat{C}$ and we still denote it by $\cat{C}$.

\subsection{}
Let $\cat{C}$ be a $\cat{V}$-enriched category and let $f\colon \1 \to \icat{C}(y,z)$ be an arrow in the underlying category of $\cat{C}$.
Then, for every $x\in \cat{C}$ there is an induced morphism $f_{*}\colon \icat{C}(x,y)\to \icat{C}(x,z)$ defined as the composition 
\begin{equation*}
\begin{tikzcd}
{f_{*}\colon\icat{C}(x,y)\cong\1\times\icat{C}(x,y)}
\ar[r, "{f\times 1}"]&
{\icat{C}(y,z)\times\icat{C}(x,y)}
\ar[r,"\circ"]&
{\icat{C}(x,z)}
\end{tikzcd}
\end{equation*}
Moreover, this construction defines an unenriched representable functor $\icat{C}(x,\blank)\colon \cat{C}\to \cat{V}$ whose composition with the functor $\hom_{\cat{V}}(\1,\blank)$ coincides with the representable functor $\cat{C}(x,\blank)$ (see \cite[Exercise 3.4.13]{rie14}).
Moreover, if $\cat{V}$ is closed symmetric monoidal category, the functor $\icat{C}(x,\blank)$ promotes to a $\cat{V}$-functor (see \cite[Example 3.5.4]{rie14}).

\begin{definition}												%
A $\cat{V}$-\emph{natural transformation} $\alpha\colon F\to G$ between a pair of $\cat{V}$-functors $F,G\colon \cat{C}\to \cat{D}$ 
is a family of morphisms $\left(\alpha_x\colon \1\to \icat{D}(Fx,Gx)\right)_{x\in \cat{C}}$ in $\cat{V}$ such that the following diagram commutes:
\begin{equation*}
\csquare{\icat{C}(x,y)}{\icat{D}(Fx,Fy)}{\icat{D}(Fx,Gy)}{\icat{D}(Gx,Gy)}{F_{x,y}}{\alpha_{y,*}}{\alpha_{x,*}}{G_{x,y}}
\end{equation*}
for every $x, y$ in $\cat{C}$.
\end{definition}

\begin{definition}												%
A $\cat{V}$-\emph{adjunction} consists of $\cat{V}$-functors
$F\colon\cat{C}\to \cat{D}$ and $G\colon\cat{D}\to\cat{C}$ together with $\cat{V}$-natural isomorphisms
\[\icat{D}(FC,D)\cong \icat{C}(C, GD)\]
in $\cat{V}$.
\end{definition}

\begin{definition}												%
Let $\cat{C}$ be a $\cat{V}$-category.
We say that $\cat{C}$ is \emph{tensored} if for every $v\in \cat{V}$ and every $x\in \cat{C}$, the functor
\[\ihom_{\cat{V}}(v, \icat{C}(x,\blank))\colon \cat{C}\to \cat{V}\]
is a representable $\cat{V}$-functor.
In other words, if there exists an object $v\otimes x$ called the \emph{tensor of $x$ with $v$} and a natural isomorphism
\[\icat{C}(v\otimes x,y)\cong \ihom_{\cat{V}}(v, \icat{C}(x,y)).\]
Dually, we say that $\cat{C}$ is \emph{cotensored}  if for every $v\in \cat{V}$ and every $y\in \cat{C}$, the functor
\[\ihom_{\cat{V}}(v, \icat{C}(\blank,y))\colon \cat{C}^{\op}\to \cat{V}\]
is a representable $\cat{V}$-functor.
In other words, if there exists an object $y^{v}$ called the \emph{cotensor of $y$ with $v$} and a natural isomorphism
\[\icat{C}(x,y^{v})\cong \ihom_{\cat{V}}(v, \icat{C}(x,y)).\]
\end{definition}

\subsection{}
If $\cat{C}$ is a $\cat{V}$-category, which is tensored and cotensored, the functors:
\begin{equation*}
\otimes\colon \cat{V}\times\cat{C}\to\cat{C}\quad (\blank)^{\blank}\colon\cat{C}^{\op}\times\cat{V}\to\cat{C}\quad \ihom\colon\cat{C}^{\op}\times\cat{C}\to\cat{V}
\end{equation*}
define a \emph{two-variables adjunction}, in the sense that
\[\cat{C}(v\otimes x,y)\cong \cat{V}(v,\hom(x,y))\cong \cat{C}(x,y^{v}).\]

\begin{example}
\label{ex:locally-small-tensor-cotensor}
Let $\cat{C}$ be a locally small category and assume that $\cat{C}$ has arbitrary products and coproducts, then the definitions introduced in \ref{subsec:power-copower-set} endow $\cat{C}$ with the structure of a category tensored and cotensored over the category of sets. Notice that a category enriched over the category of sets is exactly a locally small category.
\end{example}

\begin{proposition}
\label{prop:enriched-adjunction}
Let $\cat{C}$ and $\cat{D}$ be tensored and cotensored $\cat{V}$-categories and let 
\begin{equation*}
\Adjoint{\cat{C}}{\cat{D}}{F}{G} 
\end{equation*}
be an adjunction between their underlying categories. 
Then, the following are equivalent:
\begin{enumerate}
\item
There exists a $\cat{V}$-adjunction
\[\icat{D}(Fx,y)\cong \icat{C}(x,Gy)\]
whose underlying adjunction is the given one.
\item
There exists a $\cat{V}$-functor $F\colon \cat{C}\to \cat{D}$ together with a natural isomorphism $F(v\otimes x)\cong v\otimes Fx$ for every $v$ in $\cat{V}$ and every $x$ in $\cat{C}$, whose underlying functor is $F$.
\item
There exists a $\cat{V}$-functor $G\colon \cat{D}\to \cat{C}$ together with a natural isomorphism $G(y^v)\cong (Gy)^v$ for every $v$ in $\cat{V}$ and every $y$ in $\cat{D}$, whose underlying functor is $G$.
\end{enumerate}
\end{proposition}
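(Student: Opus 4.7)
The plan is to prove the cycle $(1)\Leftrightarrow(2)$ in detail, and then invoke formal duality with respect to $\cat{V}^{\op}$ to obtain $(1)\Leftrightarrow (3)$, exchanging the roles of tensor and cotensor and swapping the two categories.

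For $(1)\Rightarrow(2)$, the assumption provides a $\cat{V}$-functor $F$, so one only needs to verify that $F$ preserves tensors. The trick is a Yoneda argument in $\cat{V}$: using the $\cat{V}$-adjunction, the definition of tensor, and the fact that $\cat{D}$ is tensored, I would compute
\[
\icat{D}(F(v\otimes x), y) \cong \icat{C}(v\otimes x, Gy) \cong \ihom_{\cat{V}}(v, \icat{C}(x,Gy)) \cong \ihom_{\cat{V}}(v, \icat{D}(Fx, y)) \cong \icat{D}(v\otimes Fx, y),
\]
naturally in $y$, and then conclude $F(v\otimes x)\cong v\otimes Fx$ by the enriched Yoneda lemma (Theorem \ref{thm:yoneda-lemma} applied to the underlying representable functor, combined with Example \ref{ex:sym-mon-self-enriched}).

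For the harder direction $(2)\Rightarrow(1)$, the task is to promote the ordinary natural isomorphism $\hom_{\cat{D}}(Fx,y)\cong \hom_{\cat{C}}(x,Gy)$ to an isomorphism $\icat{D}(Fx,y)\cong \icat{C}(x,Gy)$ inside $\cat{V}$. Here I would test both sides against an arbitrary $v\in \cat{V}$: by the tensor–cotensor two-variable adjunction (which gives $\hom_{\cat{V}}(v,\icat{C}(x,Gy))\cong \hom_{\cat{C}}(v\otimes x, Gy)$ since $\hom_{\cat{V}}(\1,\ihom_{\cat{V}}(v,w))\cong \hom_{\cat{V}}(v,w)$), and then using the hypothesis that the ordinary adjunction $F\dashv G$ exists together with $F(v\otimes x)\cong v\otimes Fx$, one obtains the chain
\[
\hom_{\cat{V}}(v,\icat{C}(x,Gy)) \cong \hom_{\cat{C}}(v\otimes x, Gy) \cong \hom_{\cat{D}}(F(v\otimes x), y) \cong \hom_{\cat{D}}(v\otimes Fx, y) \cong \hom_{\cat{V}}(v,\icat{D}(Fx,y)),
\]
natural in $v$; invoking the Yoneda lemma in $\cat{V}$ delivers the required isomorphism in $\cat{V}$. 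One then checks that this isomorphism is $\cat{V}$-natural in $x$ and $y$ (routine, since every step above is natural in those variables) and that its underlying map recovers the adjunction isomorphism of the hypothesis (obtained by substituting $v=\1$).

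The main obstacle I anticipate is the construction in $(2)\Rightarrow(1)$ of the $\cat{V}$-enrichment of the right adjoint $G$. Although the hom-object $\icat{C}(x,Gy)$ is uniquely determined by the above isomorphism, one must still verify that $G$ assembles into a genuine $\cat{V}$-functor, compatible with enriched composition. The cleanest way is to define the enrichment of $G$ via the formula $G_{y,y'}\colon \icat{D}(y,y')\to \icat{C}(Gy, Gy')$ as the transpose under the $\cat{V}$-adjunction of the composite $FG y\to y\to y'$, and then chase the associativity/unit diagrams against the known enrichment of $F$. Dualising the entire argument with cotensors in place of tensors yields $(1)\Leftrightarrow(3)$ with essentially no extra work.
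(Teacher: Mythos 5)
The paper offers no proof of its own here---it simply defers to \cite[Proposition 3.7.10]{rie14}---so there is no internal argument to compare against. Your reconstruction is the standard one that appears in Riehl: represent both sides of the would-be enriched hom isomorphism against a variable $v\in\cat{V}$ via the tensor two-variable adjunction, use the given ordinary adjunction and the tensor-preservation isomorphism to identify the two functors of $v$, and conclude by Yoneda in $\cat{V}$; then bootstrap the enrichment of the other adjoint. The skeleton is correct.

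Two small comments on precision. First, your phrase ``the transpose under the $\cat{V}$-adjunction of the composite $FGy\to y\to y'$'' for defining $G_{y,y'}$ is circular as written, since the $\cat{V}$-adjunction is exactly what is being constructed, and there is no canonical map $y\to y'$. What you mean is: start from the evaluation $\icat{D}(y,y')\otimes y\to y'$ furnished by the tensored structure, precompose with $\id\otimes\epsilon_y\colon\icat{D}(y,y')\otimes FGy\to\icat{D}(y,y')\otimes y$, transpose under the \emph{ordinary} adjunction using $F(\icat{D}(y,y')\otimes Gy)\cong\icat{D}(y,y')\otimes FGy$ to get $\icat{D}(y,y')\otimes Gy\to Gy'$, and finally transpose once more under the tensor/hom adjunction in $\cat{C}$ to land in $\icat{C}(Gy,Gy')$. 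Second, ``routine, since every step above is natural in those variables'' undersells the remaining work: ordinary naturality of each displayed isomorphism is not yet $\cat{V}$-naturality, and the $\cat{V}$-functor axioms for $G$ together with $\cat{V}$-naturality of the final isomorphism are precisely where the diagram-chasing lives. These are genuinely routine but they are the bulk of the proof, so they deserve to be named rather than waved away.
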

\begin{proof}
See \cite[Proposition 3.7.10]{rie14}.
\end{proof}

\begin{theorem}
\label{thm:strong-monoidal-enrichment}
Let $\cat{V}$ and $\cat{U}$ be closed symmetric monoidal categories and let 
\begin{equation*}
\Adjoint{\cat{V}}{\cat{U}}{F}{G}
\end{equation*}
be an adjunction, such that the functor $F$ is strong monoidal.
Then any tensored, cotensored and enriched $\cat{U}$-category is canonically a tensored and cotensored enriched $\cat{V}$-category.
\end{theorem}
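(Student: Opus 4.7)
The plan is to transport the enrichment along $G$ and to define the $\cat{V}$-tensor and cotensor by applying $F$ to the $\cat{U}$-tensor and cotensor.

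First I would observe that because $F\colon \cat{V}\to \cat{U}$ is strong monoidal and left adjoint to $G$, the right adjoint $G\colon \cat{U}\to \cat{V}$ inherits a canonical lax monoidal structure. Indeed, the structure maps $Gu\otimes Gu'\to G(u\otimes u')$ and $\1_{\cat{V}}\to G\1_{\cat{U}}$ are obtained as the mates of the inverses of $F(v\otimes v')\cong Fv\otimes Fv'$ and $F\1_{\cat{V}}\cong \1_{\cat{U}}$ under $F\dashv G$, and their associativity and unitality are a formal consequence of the triangle identities. Applying the construction recalled just before Example 1.2.13 (the one where a lax monoidal functor turns a $\cat{U}$-category into a $\cat{V}$-category), I would then define a $\cat{V}$-category $\cat{C}_{\cat{V}}$ with the same objects as $\cat{C}$ and with mapping objects
\[
\icat{C}_{\cat{V}}(x,y)\;=\;G\bigl(\icat{C}(x,y)\bigr),
\]
identities and composition being induced by those of $\cat{C}$ together with the lax monoidal structure on $G$.

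Next I would establish the key compatibility between $G$ and the internal homs. For any $v\in \cat{V}$ and $u\in \cat{U}$, the chain of natural bijections
\[
\hom_{\cat{V}}\bigl(w,G\,\ihom_{\cat{U}}(Fv,u)\bigr)\cong \hom_{\cat{U}}\bigl(Fw\otimes Fv,u\bigr)\cong \hom_{\cat{U}}\bigl(F(w\otimes v),u\bigr)\cong \hom_{\cat{V}}\bigl(w,\ihom_{\cat{V}}(v,Gu)\bigr),
\]
uses $F\dashv G$, the strong monoidality of $F$ and the closedness of $\cat{V}$ and $\cat{U}$; by the Yoneda Lemma (Theorem 1.1.11) this yields a natural isomorphism
\[
G\,\ihom_{\cat{U}}(Fv,u)\;\cong\;\ihom_{\cat{V}}\bigl(v,Gu\bigr).
\]

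With this in hand, I would define the $\cat{V}$-tensor and cotensor by $v\otimes_{\cat{V}} x := Fv\otimes_{\cat{U}} x$ and $y^{v} := y^{Fv}$, using the assumed $\cat{U}$-tensor and cotensor of $\cat{C}$. The required natural isomorphisms then follow by stringing together the above identities, for instance
\[
\icat{C}_{\cat{V}}(v\otimes_{\cat{V}} x,y)=G\,\icat{C}(Fv\otimes x,y)\cong G\,\ihom_{\cat{U}}(Fv,\icat{C}(x,y))\cong \ihom_{\cat{V}}\bigl(v,\icat{C}_{\cat{V}}(x,y)\bigr),
\]
and symmetrically for the cotensor; Proposition 1.2.33 (the recognition principle for enriched adjunctions in terms of tensor/cotensor preservation) can then be invoked to package these isomorphisms as genuinely $\cat{V}$-enriched representability statements.

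The main obstacle I expect is bookkeeping the coherence of the lax monoidal structure on $G$ carefully enough that the transported composition in $\cat{C}_{\cat{V}}$ is associative and unital, and that the identifications above are natural in all variables simultaneously; everything else is a formal consequence of $F\dashv G$ and the Yoneda Lemma.
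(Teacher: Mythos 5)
Your proposal is correct and matches the paper's approach exactly: the paper defines $\icat{C}_{\cat{V}}(x,y)=G(\icat{C}(x,y))$, $v\otimes_{\cat{V}}x=Fv\otimes x$, and the cotensor $x^{Fv}$, and then refers to Riehl's book for the (routine) verification; you have simply spelled out what that verification looks like, including the key lax monoidal structure on $G$ obtained by doctrinal adjunction and the resulting natural isomorphism $G\,\ihom_{\cat{U}}(Fv,u)\cong\ihom_{\cat{V}}(v,Gu)$. The only minor caveat is that the definitions of tensored/cotensored in the text ask for $\cat{V}$-natural representability rather than just underlying-category naturality, so a fully detailed writeup would need to check that your Yoneda-style isomorphisms are $\cat{V}$-enriched; but this is exactly the bookkeeping you flagged, and it is the same detail the paper elides by citing Riehl.
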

\begin{proof}
If $\cat{C}$ is a $\cat{U}$-category, we define a $\cat{V}$-enrichment, tensor and cotensor as follows:
\[ v\otimes_{\cat{V}}x=Fv\otimes x, \quad \icat{C}_{\cat{V}}(x,y)=G\left(\icat{C}(x,y)\right),\quad \{v,x\}_{\cat{V}}=x^{Fv}.\]
It is not hard to see that with these definitions, $\cat{C}$ is a tensored and cotensored $\cat{V}$-category (see \cite[Theorem 3.7.11]{rie14} for a complete proof).
\end{proof}

\begin{corollary}
\label{cor:adjunction-v-adjunction}
Let $\cat{V}$ and $\cat{U}$ be closed symmetric monoidal categories and let 
\begin{equation}
\label{eq:v-u-adjunction}
\Adjoint{\cat{V}}{\cat{U}}{F}{G}
\end{equation}
be an adjunction, such that the functor $F$ is strong monoidal.
Then, $\cat{U}$ is a tensored and cotensored enriched $\cat{V}$-category.
Moreover, the adjunction \eqref{eq:v-u-adjunction} is a $\cat{V}$-adjunction.
\end{corollary}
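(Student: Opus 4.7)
The plan is to deduce this corollary from Theorem \ref{thm:strong-monoidal-enrichment} together with Proposition \ref{prop:enriched-adjunction}. First, observe that since $\cat{U}$ is a closed symmetric monoidal category, by Example \ref{ex:sym-mon-self-enriched} it is canonically enriched over itself, and it is tensored and cotensored over itself via the monoidal product and the internal hom respectively. Applying Theorem \ref{thm:strong-monoidal-enrichment} to the $\cat{U}$-category $\cat{C}=\cat{U}$ immediately yields that $\cat{U}$ is a tensored and cotensored $\cat{V}$-category, with structure given explicitly by
\[
v\otimes_{\cat{V}}y=Fv\otimes_{\cat{U}} y,\quad \icat{U}_{\cat{V}}(x,y)=G\bigl(\ihom_{\cat{U}}(x,y)\bigr),\quad \{v,y\}_{\cat{V}}=y^{Fv}.
\]
This establishes the first assertion of the corollary.

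For the second assertion, I would invoke condition (2) of Proposition \ref{prop:enriched-adjunction} applied to both $\cat{V}$-categories $\cat{V}$ (self-enriched) and $\cat{U}$ (as just constructed). The $\cat{V}$-tensor on $\cat{V}$ coincides with the monoidal product, and the $\cat{V}$-tensor on $\cat{U}$ is $Fv\otimes_{\cat{U}}(\blank)$. Hence the isomorphism demanded by the proposition reads
\[
F(v\otimes_{\cat{V}} w)\;\cong\; v\otimes_{\cat{V}} Fw\;=\;Fv\otimes_{\cat{U}} Fw,
\]
which is precisely the strong monoidal comparison morphism of $F$, assumed to be an isomorphism by hypothesis.

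What remains is to verify the premise of condition (2), namely that $F$ extends to a $\cat{V}$-functor $\cat{V}\to\cat{U}$. I would construct the action of $F$ on hom-objects by adjoint transposition: a morphism $\ihom_{\cat{V}}(v,w)\to G\ihom_{\cat{U}}(Fv,Fw)$ in $\cat{V}$ corresponds under $F\dashv G$ to a morphism $F\ihom_{\cat{V}}(v,w)\to \ihom_{\cat{U}}(Fv,Fw)$ in $\cat{U}$, which in turn corresponds to a morphism $F\ihom_{\cat{V}}(v,w)\otimes_{\cat{U}} Fv\to Fw$. Using the strong monoidal isomorphism of $F$, the left hand side identifies with $F(\ihom_{\cat{V}}(v,w)\otimes_{\cat{V}} v)$, so one simply applies $F$ to the evaluation map $\ihom_{\cat{V}}(v,w)\otimes_{\cat{V}} v\to w$.

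The genuinely fiddly step, and the main obstacle in the proof, is checking that the hom-object maps so constructed respect the enriched composition and unit laws, i.e.\ that $F$ is a bona fide $\cat{V}$-functor. This amounts to a coherence verification combining the associativity and unit constraints of $F$ as a strong monoidal functor with the triangle identities of the adjunction $F\dashv G$. Once these diagrams are checked, Proposition \ref{prop:enriched-adjunction} upgrades the underlying adjunction to a $\cat{V}$-adjunction, giving simultaneously the $\cat{V}$-functoriality of $G$ and the enriched natural isomorphism $\icat{U}_{\cat{V}}(Fv,y)\cong\icat{V}(v,Gy)$.
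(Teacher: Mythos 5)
Your proof is correct and follows exactly the same route as the paper: you unwind the paper's one-line citation of Theorem \ref{thm:strong-monoidal-enrichment} (applied to $\cat{U}$ self-enriched via Example \ref{ex:sym-mon-self-enriched}) for the first claim, and Proposition \ref{prop:enriched-adjunction}, condition (2), for the $\cat{V}$-adjunction, correctly identifying the required tensor-preservation isomorphism with the strong monoidal structure map and constructing the $\cat{V}$-functoriality of $F$ by adjoint transposition. The coherence checks you flag as remaining are indeed the content of Riehl's Theorem 3.7.11 which the paper cites in proving Theorem \ref{thm:strong-monoidal-enrichment}.
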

\begin{proof}
This follows immediately from Theorem \ref{thm:strong-monoidal-enrichment}, Example \ref{ex:sym-mon-self-enriched} and Proposition \ref{prop:enriched-adjunction}.
\end{proof}

\begin{example}
Let $\Top$ be the category of numerically generated spaces (or any other Cartesian closed category of topological spaces containing the realisations of the standard simplices).
Then, the adjunction introduced in Example \ref{ex:geometric-realisation} restricts to an adjunction
\begin{equation}
\label{eq:top-sing-adjunction}
\Adjoint{\sSet}{\Top}{\abs{\blank}}{\Sing}
\end{equation}
Moreover, since the functor $\abs{\blank}$ preserves products of standard simplices and $\Top$ is Cartesian closed, the realisation functor preserves all products.
In particular, by Corollary \ref{cor:adjunction-v-adjunction} the category $\Top$ is enriched, tensored and cotensored over the category of simplicial sets and the adjunction \eqref{eq:top-sing-adjunction} is a simplicially enriched adjunction.
Notice that the mapping space $\map(X,Y)$ between two topological spaces $X$ and $Y$ is given by the singular simplicial complex of the internal hom between $X$ and $Y$. Morever, for a simplicial set $K$ and topological spaces $X$ and $Y$ we have
\[K\otimes X=\abs{K}\times X\quad Y^{K}= \ihom(\abs{K},Y)\]
\end{example}

\begin{definition}
\label{def:functor-tensor-product}
Let $\cat{C}$ be a small category, let $\cat{V}$ be a closed monoidal category and let $\cat{E}$ be a $\cat{V}$-category which is tensored and cotensored over $\cat{V}$.
Given functors $F\colon\cat{C}\to\cat{V}$ and $G\colon\cat{C}^{\op}\to \cat{E}$, the \emph{tensor product} of $F$ and $G$ is the object of $\cat{E}$ given by:
\[G\otimes_{\cat{C}}F=\int^{c\in\cat{C}}Gc\otimes Fc\]
\end{definition}

\begin{example}
Let $M\colon R\to\Ab$ and $N\colon R^{\op}\to\Ab$ be a left and a right $R$-module respectively. Then, the tensor product of the underlying unenriched functors to $M$ and $N$ in the sense of Definition \ref{def:functor-tensor-product} is the usual tensor product:
\[N\otimes_{R}M=\int^{R}N\otimes_{\ZZ} M\]
of a right and a left $R$-module.
\end{example}

\begin{example}
Let $F\colon \cat{C}\to\cat{E}$ and $K\colon\cat{C}\to\cat{D}$ be functors.
By Theorem \ref{thm:kan-ext}, the left Kan extension of $F$ along $K$ can be expressed as the tensor product:
\[\Lan_{K}F(d) = \cat{D}(K\blank, d)\otimes_{\cat{C}}F.\]
\end{example}

\begin{example}
In particular, the Yoneda Lemma implies that, for any simplicial set $K$:
\[\abs{K}=\sSet(\Delta^{\bullet},K)\otimes_{\DDelta}\Delta^{\bullet}\cong K_{\bullet}\otimes_{\DDelta}\Delta^{\bullet},\]
where $\abs{K}$ denotes the geometric realisation of $K$ defined in Example \ref{ex:geometric-realisation}.
\end{example}

\begin{proposition}
\label{prop:tensor-product-functors}
Let $F\colon \cat{C}\to \Set$ and $G\colon \cat{C}^{\op}\to \cat{E}$ be functors and assume that $\cat{E}$ is a cocomplete and locally small category.
Then, for every $e\in \cat{E}$, there exists an isomorphism:
\[\cat{E}\left(G\otimes_{\cat{C}}F, e\right)\cong \Set^{\cat{C}}\left(F, \cat{E}(G,e)\right)\]
which is natural in all variables.
\end{proposition}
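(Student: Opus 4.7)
The plan is to chase the definitions through a short chain of natural isomorphisms, converting the coend defining $G\otimes_{\cat{C}}F$ into an end that computes the set of natural transformations. The key observation is that a cocomplete, locally small category $\cat{E}$ is tensored over $\Set$ via the copower construction of \ref{subsec:power-copower-set}, as noted in Example \ref{ex:locally-small-tensor-cotensor}. Hence $G\otimes_{\cat{C}}F$ makes sense as the coend
\[G\otimes_{\cat{C}}F = \int^{c\in\cat{C}} F(c)\otimes G(c),\]
where $F(c)\otimes G(c)$ denotes the copower of $G(c)$ by the set $F(c)$.

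First I would apply the representable functor $\cat{E}(-,e)$ to this coend. Since $\cat{E}(-,e)\colon\cat{E}^{\op}\to\Set$ sends colimits to limits, and coends are particular colimits (given explicitly by the coequaliser \eqref{eq:coend-coequaliser}), it sends the coend $\int^{c}F(c)\otimes G(c)$ to the end $\int_{c}\cat{E}(F(c)\otimes G(c),e)$. Next I would use the defining adjunction of the copower, namely the natural isomorphism
\[\cat{E}\bigl(F(c)\otimes G(c),\,e\bigr)\cong \Set\bigl(F(c),\,\cat{E}(G(c),e)\bigr),\]
which expresses the fact that $\cat{E}$ is tensored over $\Set$. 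Substituting this under the end yields $\int_{c\in\cat{C}}\Set(F(c),\cat{E}(G(c),e))$, which by Example \ref{ex:nat-trans-end} is precisely the set $\Set^{\cat{C}}(F,\cat{E}(G,e))$ of natural transformations from $F$ to the functor $c\mapsto \cat{E}(G(c),e)$.

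Stringing these isomorphisms together gives
\[\cat{E}(G\otimes_{\cat{C}}F, e)\;\cong\;\int_{c\in\cat{C}}\cat{E}(F(c)\otimes G(c),e)\;\cong\;\int_{c\in\cat{C}}\Set(F(c),\cat{E}(G(c),e))\;\cong\;\Set^{\cat{C}}(F,\cat{E}(G,e)).\]
Naturality in $F$, $G$ and $e$ is automatic, since each isomorphism in the chain is natural in all of its inputs: the first from the universal property of the coend/end, the second from the adjunction witnessing that copowers are left adjoint to $\cat{E}(G(c),-)$, and the third from the natural presentation of $\Set^{\cat{C}}(F,H)$ as an equaliser of products. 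I do not foresee any real obstacle; the only point requiring minor care is checking that the functorial dependence of $F(c)\otimes G(c)$ on $c$ (contravariantly via $G$, covariantly via $F$) matches the dinatural structure required for the end/coend interchange, but this is built into the coend formula for the functor tensor product.
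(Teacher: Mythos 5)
Your argument is correct and coincides step by step with the paper's own proof: apply the representable functor $\cat{E}(-,e)$ to carry the coend to an end, use the copower adjunction $\cat{E}(S\otimes x,e)\cong\Set(S,\cat{E}(x,e))$ under the end sign, and finish with the end formula for natural transformations from Example \ref{ex:nat-trans-end}. The only cosmetic difference is the order of the two arguments of the copower, which is immaterial.
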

\begin{proof}
We have the following chain of natural isomorphisms:
\begin{align*}
\cat{E}\left(G\otimes_{\cat{C}}F, e\right) 
	& = \cat{E}\left(\int^{c\in{\cat{C}}}Gc\otimes Fc, e\right)\\
	& \cong \int_{c\in\cat{C}}\cat{E}\left(Gc\otimes Fc, e\right)\\
	& \cong \int_{c\in\cat{C}}\Set\left(Fc,\cat{E}(Gc, e)\right)\\
	& \cong \Set^{\cat{C}}\left(F,\cat{E}(G,e)\right)
\end{align*}
where the last isomorphism follows from Example \ref{ex:nat-trans-end}.
\end{proof}

\section{Factorisation systems}
\label{ch:1sec:3}
We develop in parallel the theory of weak factorisation systems and orthogonal factorisation systems.
Weak factorisation systems encode lifting properties and they are central in abstract homotopy theory.
On the other hand, orthogonal factorisation systems encode unique lifting properties and they play an important role in the interplay between covering spaces and monodromy.
In particular, Corollary \ref{cor:orthogonal-factorisation-theorem} will be used extensively in Sections \ref{ch:3sec:2}, \ref{ch:3sec:3} and \ref{ch:5sec:4}.
We follow mainly \cite{rie14} and \cite{faj08}.

\begin{definition}
Let $i\colon a\to b$ and $p\colon x\to y$ be two morphisms in a category $\cat{C}$. 
A \emph{lifting problem} of $i$ against $p$ is a solid commutative square
\begin{equation}
\label{eq:lifting-problem}
\begin{tikzcd}
a	\ar[r, "u"]
	\ar[d, "i"']&
x	\ar[d, "p"]\\
b	\ar[ur, dashed, "k" description]
	\ar[r, "v"']&
y	
\end{tikzcd}
\end{equation}
A \emph{solution} to the lifting problem is a dotted arrow as $k$ making the diagram commutative.
We say that $i$ has the \emph{(unique) left lifting property} with respect to $p$ or, equivalently, that $p$ has the \emph{(unique) right lifting property} with respect to $i$ and we write $i\olift p$ (resp. $i\perp p$) if any lifting problem of $i$ against $p$ has a (unique) solution.
\end{definition}

\begin{notation}
Let $\cat{C}$ be a category and let $\cat{A}$ and $\cat{B}$ be two classes of morphisms in $\cat{C}$.
We write $\rlift{\cat{A}}$ (\resp $\urlift{\cat{A}}$) for the class of morphisms with the (unique) right lifting property with respect to every morphism in $\cat{A}$ and $\llift{\cat{B}}$ (\resp $\ullift{\cat{B}}$) for the class of morphisms with the (unique) left lifting property with respect to every morphism in $\cat{B}$.
We write $\cat{A}\olift \cat{B}$ (\resp $\cat{A}\oulift \cat{B}$) if $\cat{A}$ is contained in $\llift{\cat{B}}$ (\resp $\cat{A}$ is contained in $\ullift{\cat{B}}$).
\end{notation}

\begin{lemma}
Let $\cat{A}$ be a class of maps in a category $\cat{C}$ and let $\cat{B}$ be a class of maps in a category $\cat{D}$.
Assume we have an adjunction
\begin{equation*}
\Adjoint{\cat{C}}{\cat{D}}{F}{G} 
\end{equation*}
Then 
\[F\cat{A}\olift \cat{B}\quad \text{if and only if}\quad \cat{A}\olift G\cat{B}\]
\end{lemma}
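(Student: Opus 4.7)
The plan is to prove the biconditional by showing, for each individual pair $(i,p)$ with $i \in \cat{A}$ and $p \in \cat{B}$, that $Fi \olift p$ if and only if $i \olift Gp$. This pointwise statement immediately gives the global one by quantifying over $\cat{A}$ and $\cat{B}$.

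To establish the pointwise equivalence, I would use the adjoint transpose to put lifting problems for $Fi$ against $p$ in $\cat{D}$ into bijection with lifting problems for $i$ against $Gp$ in $\cat{C}$. Concretely, given a lifting problem
\begin{equation*}
\begin{tikzcd}
Fa \ar[r, "u"] \ar[d, "Fi"'] & x \ar[d, "p"]\\
Fb \ar[r, "v"'] & y
\end{tikzcd}
\end{equation*}
in $\cat{D}$, transposing $u$ and $v$ under the adjunction $F\dashv G$ produces morphisms $\tilde u\colon a\to Gx$ and $\tilde v\colon b\to Gy$. Naturality of the unit of the adjunction (together with the functoriality of $G$) ensures that the transposed square
\begin{equation*}
\begin{tikzcd}
a \ar[r, "\tilde u"] \ar[d, "i"'] & Gx \ar[d, "Gp"]\\
b \ar[r, "\tilde v"'] & Gy
\end{tikzcd}
\end{equation*}
commutes if and only if the original one does, and the construction is clearly reversible using the counit.

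Next I would show that lifts also correspond: given a filler $k\colon Fb\to x$ of the first square, its transpose $\tilde k\colon b\to Gx$ is a filler of the second, and vice versa. This again follows from the naturality of the unit and counit, which ensure that the triangle identities $p\circ k = v$ and $k\circ Fi = u$ transpose precisely to $Gp\circ \tilde k=\tilde v$ and $\tilde k\circ i=\tilde u$. Consequently the set of solutions to one lifting problem is in bijection with the set of solutions to the transposed one. In particular, the first admits at least one solution if and only if the second does.

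There is no serious obstacle in this proof; the only care required is in correctly tracking the naturality identities for the unit and counit when transposing commutative squares (rather than single arrows). Once that bookkeeping is done, the equivalence $Fi \olift p \iff i \olift Gp$ is immediate, and taking conjunctions over $i \in \cat{A}$ and $p \in \cat{B}$ yields the desired equivalence $F\cat{A}\olift\cat{B} \iff \cat{A}\olift G\cat{B}$. I note that the same argument, tracked word-for-word, would also prove the analogous statement for unique lifting $\oulift$, since the bijection on solutions preserves the property of being a singleton.
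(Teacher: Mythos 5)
Your proof is correct and is the standard argument; the paper actually states this lemma without proof, presumably because it is a well-known consequence of the natural bijection $\hom_{\cat{D}}(Fa,x)\cong\hom_{\cat{C}}(a,Gx)$. Your careful remark that the transpose bijection carries commutative squares to commutative squares and fillers to fillers (via naturality of the unit and counit and the triangle identities) is exactly the right bookkeeping, and your observation that the bijection on solution sets preserves being a singleton — so the same argument yields $F\cat{A}\oulift\cat{B} \iff \cat{A}\oulift G\cat{B}$ — is a worthwhile addition, since the paper does in fact use the orthogonal analogue of this lemma later on (e.g.\ in the discussion of left covers and universal covers in Sections \ref{ch:3sec:2}, \ref{ch:3sec:3} and \ref{ch:5sec:4}).
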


\begin{construction}
\label{cons:push-pull}
Let $\otimes\colon\cat{C}\times\cat{D}\to\cat{E}$ be a functor and assume that $\cat{E}$ has pushouts. Then, given morphisms $i\colon c\to c'$ and $j\colon d\to d'$ we can form the pushout square
\begin{diagram}
c\otimes d	\ar[r, "i\otimes \id_{d}"]
		\ar[d, "\id_{c}\otimes j"']&
c'\otimes d\ar[d]
		\ar[ddr, bend left=25, "\id_{c'}\otimes j"]&
		\\
c\otimes d'\ar[r]
		\ar[drr, bend right=15, "i\otimes \id_{d'}"']&
{\displaystyle c\otimes d'\bigcoprod_{c\otimes d}c'\otimes d}
		\ar[dr, dashed, "i\hat{\otimes}j" description]&
		\\
		&
		&
		c'\otimes d'.
\end{diagram}
We call the induced map
\[i\hat{\otimes}j\colon c\otimes d'\bigcoprod_{c\otimes d}c'\otimes d\to c'\otimes d'\]
The \emph{pushout-product} of $i$ and $j$ with respect to $\otimes$.

Dually if $\ihom\colon \cat{D^{\op}}\times\cat{E}\to\cat{C}$ is a functor, given maps $j\colon d\to d'$ and $p\colon e\to e'$ we can construct the pullback diagram:
\begin{diagram}
\ihom(d',e)	\ar[rrd, bend left=15]
			\ar[rdd, bend right=25]
			\ar[rd, dashed]&
			&
			\\
			&
\ihom(d',e')\times_{\ihom(d,e')}\ihom(d,e)
			\ar[r]
			\ar[d]&
\ihom(d,e)
			\ar[d]\\
			&
\ihom(d',e')	\ar[r]&
\ihom(d,e').
\end{diagram}
We call the induced map
\[\hat{\ihom}(j,p)\colon\ihom(d',e)\to \ihom(d',e')\times_{\ihom(d,e')}\ihom(d,e)\]
the \emph{pullback-power} of $j$ and $p$ with respect to $\ihom$.
\end{construction}

\subsection{}
Let $\cat{C}$ be a category. The \emph{arrow category} $\arr{\cat{C}}$ of $\cat{C}$ is the category of functors from $[1]$ to $\cat{C}$ (see \ref{ex:nerve-fundamental-category}).
In other words, $\arr{\cat{C}}$ is the category whose objects are maps in $\cat{C}$ and morphisms are commutative diagrams between them.

\begin{lemma}
\label{lem:two-variable-lifting}
Let us consider a two variable adjunction
\begin{equation*}
\otimes\colon \cat{C}\times\cat{D}\to\cat{E}\quad \{\blank,\blank\} \colon\cat{C}^{\op}\times\cat{E}\to\cat{D}\quad \ihom\colon\cat{D}^{\op}\times\cat{E}\to\cat{C}
\end{equation*}
and assume that $\cat{C}$ and $\cat{D}$ have pullbacks and $\cat{E}$ has pushouts. 
\begin{enumerate}
\item
\label{item:two-variable-lifting-1}
The pushout-product and pullback-power constructions induce a two variable adjunction
\begin{equation*}
\hat{\otimes}\colon \cat{C}^{[1]}\times\cat{D}^{[1]}\to\cat{E}^{[1]}\quad \hat{\{\blank,\blank\}} \colon\left(\cat{C}^{\op}\right)^{[1]}\times\cat{E}^{[1]}\to\cat{D}^{[1]}\quad \hat{\ihom}\colon\left(\cat{D}^{\op}\right)^{[1]}\times\cat{E}^{[1]}\to\cat{C}^{[1]}
\end{equation*}
\begin{equation*}
\hom_{\arr{\cat{E}}}\left(i\hat{\otimes}j,p\right)\cong \hom_{\arr{\cat{D}}}\left(j, \hat{\{i,p\}}\right)\cong\hom_{\arr{\cat{C}}}\left(i, \hat{\ihom}(j,p)\right)
\end{equation*}
\item
\label{item:two-variable-lifting-2}
If $\cat{L}$, $\cat{M}$ and $\cat{R}$ are classes of maps in $\cat{C}$, $\cat{D}$ and $\cat{E}$ respectively, then
\[\cat{L}\hat{\otimes}\cat{M}\olift\cat{R}\quad \Leftrightarrow\quad \cat{M}\olift \hat{\{\cat{L},\cat{R}\}}\quad \Leftrightarrow \quad \cat{L}\olift\hat{\ihom}(\cat{M},\cat{R}).\]
\end{enumerate}
\end{lemma}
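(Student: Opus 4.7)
The plan is to establish item~\ref{item:two-variable-lifting-1} by a direct diagram chase and then deduce item~\ref{item:two-variable-lifting-2} by observing that the resulting bijection of squares respects diagonal fillers. For item~\ref{item:two-variable-lifting-1}, I would focus on the first natural isomorphism $\hom_{\arr{\cat{E}}}(i\hat{\otimes} j, p) \cong \hom_{\arr{\cat{D}}}(j, \hat{\{i,p\}})$; the second isomorphism is analogous via the adjunction with $\ihom$. The strategy is to unfold both sides using universal properties. A morphism $i\hat{\otimes} j \to p$ in $\arr{\cat{E}}$ corresponds, via the pushout defining $i\hat{\otimes} j$, to a compatible triple of maps $c'\otimes d \to e$, $c\otimes d' \to e$, and $c'\otimes d' \to e'$ whose two restrictions to $c\otimes d$ agree and which are compatible with $p$. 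Dually, a morphism $j \to \hat{\{i,p\}}$ in $\arr{\cat{D}}$ corresponds, via the pullback defining $\hat{\{i,p\}}$, to a compatible triple $d \to \{c',e\}$, $d' \to \{c,e\}$, and $d' \to \{c',e'\}$. I would construct the bijection by applying the original two-variable adjunction isomorphism $\cat{E}(c\otimes d, e) \cong \cat{D}(d, \{c,e\})$ to each component map, and verify by naturality that the compatibility conditions on one side correspond to those on the other.

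For item~\ref{item:two-variable-lifting-2}, the statement follows from item~\ref{item:two-variable-lifting-1} once one notes that transposition preserves diagonal fillers. Given a lifting problem $i\hat{\otimes} j \to p$, a lift is a map $k\colon c'\otimes d' \to e$ making both triangles commute; its transpose $\bar{k}\colon d' \to \{c',e\}$ is then a lift for the corresponding lifting problem $j \to \hat{\{i,p\}}$, because commutativity of the upper triangle (expressing that $k$ factors through the pushout inclusion) and of the lower triangle (expressing $pk=v$) transports across the adjunction by the same naturality argument used in part~(1). The inverse transposition sends lifts back to lifts, so the existence of a solution on one side is equivalent to the existence of a solution on the other, yielding the chain of equivalences $\cat{L}\hat{\otimes}\cat{M}\olift\cat{R} \Leftrightarrow \cat{M}\olift \hat{\{\cat{L},\cat{R}\}} \Leftrightarrow \cat{L}\olift \hat{\ihom}(\cat{M},\cat{R})$.

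The main obstacle is the notational bookkeeping in part~(1): the pushout $c\otimes d' \coprod_{c\otimes d} c'\otimes d$ and the pullback $\{c',e'\} \times_{\{c,e'\}} \{c,e\}$ each present several component maps, and one must track how the adjunction transposition acts coordinate-wise on each face of the commutative square. No deep insight is needed beyond naturality of the two-variable adjunction, but it is easy to overlook a compatibility condition; I would organise the verification by listing the component maps on each side together with their compatibility relations and checking them pair by pair.
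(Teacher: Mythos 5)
The paper states this lemma without giving a proof, deferring to the standard references \cite{rie14} and \cite{faj08} cited at the start of the section. Your proposal is the standard argument: unfold morphisms in the arrow categories via the universal properties of the pushout and pullback into componentwise data, transport each component across the two-variable adjunction, use naturality to match the compatibility conditions, and then observe that diagonal fillers correspond under this bijection; this is correct and is essentially the proof given in \cite{rie14}.
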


\subsection{}
An object $a$ in a category $\cat{C}$ is said to be a \emph{retract} of an object $x$ in $\cat{C}$ if there exist morphisms $i\colon a\to x$ and $r\colon x\to a$ such that the composition $r\circ i$ equals the identity on $a$.
A morphism $f\colon a\to b$ is a retract of a morphism $g\colon x\to y$ if it is so in the arrow category $\arr{\cat{C}}$.
Given an ordinal $\alpha$ and a functor $F\colon \alpha\to \cat{C}$, the transfinite composite of $F$ is the induced morphism
\[F(0)\to \colim_{\beta<\alpha}F(\beta).\]
A class of morphims $\cat{A}$ in $\cat{C}$ is said to be \emph{closed under transfinite composition} if for every ordinal $\alpha$ and every functor $F\colon \alpha\to \cat{C}$ such that
\[F(\beta)\to F(\beta+1)\]
is in $\cat{A}$, the transfinite composite of $F$ is in $\cat{A}$.

\begin{definition}
A class $\cat{S}$ of morphisms in a category $\cat{C}$ is said to be \emph{weakly saturated} if it is closed under taking retracts, pushouts and transfinite composition.
\end{definition}

\begin{lemma}
\label{lemma:left-saturated-class}
Let $\cat{I}$ be a class of maps in a category $\cat{C}$, then:
\begin{enumerate}
\item
The class $\llift{\cat{I}}$ is weakly saturated.
\item
The class $\ullift{\cat{I}}$ is closed under colimits.
\item
The class $\ullift{\cat{I}}$ has the left cancellation property: if $f$ and $gf$ are in $\ullift{\cat{I}}$, then $g$ is in $\ullift{\cat{I}}$.
\end{enumerate}
\end{lemma}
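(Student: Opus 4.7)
The plan is to treat each of the three clauses separately by direct diagram chases, exploiting only the definition of the (unique) left lifting property.

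For clause (1), I would fix an arbitrary $p\in\cat{I}$ and verify the three closure conditions. For closure under retracts, given $f$ a retract of $g\in\llift{\cat{I}}$ via maps $i,r$ with $ri=\id$, any lifting problem of $f$ against $p$ can be precomposed with $r$ to yield a lifting problem of $g$ against $p$; a solution $k$ pulled back via $i$ gives the desired lift. For closure under pushouts, if $f'$ is the pushout of $f\in\llift{\cat{I}}$ along some $u$, a lifting problem of $f'$ against $p$ restricts along the pushout leg to a lifting problem of $f$ against $p$; a solution assembles with the other leg via the universal property of the pushout to give a lift for $f'$. For closure under transfinite composition, given $F\colon\alpha\to\cat{C}$ with $F(\beta)\to F(\beta+1)$ in $\llift{\cat{I}}$ and a lifting problem of the transfinite composite $F(0)\to\colim_{\beta<\alpha}F(\beta)$ against $p$, I construct a coherent family of partial lifts $F(\beta)\to x$ by transfinite induction: at successor stages I use the lifting property of $F(\beta)\to F(\beta+1)$, and at limit stages I use the universal property of the colimit to assemble the lifts constructed at earlier stages. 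The colimit of this family is the required lift.

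For clause (2), let $\{f_i\colon a_i\to b_i\}$ be a diagram in $\arr{\cat{C}}$ with each $f_i\in\ullift{\cat{I}}$ and let $f\colon\colim a_i\to\colim b_i$ denote its colimit. Given a lifting problem of $f$ against $p\in\cat{I}$, precomposing with the colimit coprojections yields, for each $i$, a lifting problem of $f_i$ against $p$, which has a \emph{unique} solution $k_i$. Functoriality of the unique lifts on arrows of the indexing diagram (which again follows from uniqueness applied to a suitable lifting problem) guarantees that the family $(k_i)$ is compatible with the structure maps of the diagram, so by the universal property of $\colim a_i$ the $k_i$ assemble into a unique map $k\colon\colim b_i\to x$ solving the original lifting problem; uniqueness of $k$ reduces to the uniqueness of each $k_i$.

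For clause (3), suppose $f\colon a\to b$ and $gf\colon a\to c$ lie in $\ullift{\cat{I}}$ and let $p\in\cat{I}$; I need to show $g\colon b\to c$ has the unique right... wait, left lifting property against $p$. Given a square with top $u\colon b\to x$ and bottom $v\colon c\to y$ satisfying $pu=vg$, the composite square with top $uf$ and bottom $v$ is a lifting problem for $gf$ against $p$, so by hypothesis it has a unique solution $k\colon c\to x$. To see that $k$ solves the original square I must check $kg=u$, which I verify by noting that both $u$ and $kg$ solve the lifting problem for $f$ against $p$ with top $uf$ and bottom $vg$; uniqueness (using $f\in\ullift{\cat{I}}$) forces them to coincide. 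Uniqueness of $k$ in the original problem is immediate: any other solution $k'$ for $g$ automatically solves the $gf$-problem and so agrees with $k$.

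The three arguments are essentially diagram chases; the only point requiring care is the transfinite induction in clause (1), where one must set up the limit-stage step compatibly, and the verification $kg=u$ in clause (3), which is the characteristic use of \emph{uniqueness} of lifts that distinguishes $\ullift{\cat{I}}$ from $\llift{\cat{I}}$ (which, unlike $\ullift{\cat{I}}$, does not generally satisfy left cancellation).
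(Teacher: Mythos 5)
Your proof is correct. The paper states this lemma without proof (deferring implicitly to the standard references such as \cite{rie14} and \cite{faj08} cited at the start of the section), and your argument is exactly the standard diagram chase one would write out: in particular, the key observation in clause (3) — that $kg$ and $u$ both solve the same lifting problem for $f$, so uniqueness forces $kg=u$ — is the right way to exploit unique lifting, and your remark that this is precisely where $\ullift{\cat{I}}$ differs from $\llift{\cat{I}}$ (which is not left-cancellative in general) is accurate and worth keeping.
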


\begin{definition}
A \emph{weak factorisation system} (\resp an \emph{orthogonal factorisation system}) on a category $\cat{C}$ is a pair $(\cat{L},\cat{R})$ of classes of morphisms such that
\begin{enumerate}
\item
Every map in $\cat{C}$ can be factored as a map in $\cat{L}$ followed by a map in $\cat{R}$.
\item
The equalities $\cat{L}=\llift{\cat{R}}$ and $\rlift{\cat{L}}=\cat{R}$ (resp. $\cat{L}=\ullift{\cat{R}}$ and $\urlift{\cat{L}}=\cat{R}$) hold.
\end{enumerate}
\end{definition}

\begin{comment}
\begin{lemma}
Let $\cat{C}$ be a category and let $\cat{L}$ and $\cat{R}$ be classes of maps in $\cat{C}$. The following are equivalent :
\begin{enumerate}
\item
The pair $(\cat{L},\cat{R})$ is a weak factorisation system.
\item
The classes $\cat{L}$ and $\cat{R}$ are closed under retracts
\end{enumerate}
\end{lemma}
\begin{lemma}
Let $\cat{C}$ be a category and let $\cat{L}$ and $\cat{R}$ be classes of maps in $\cat{C}$. The following are equivalent :
\begin{enumerate}
\item
The pair $(\cat{L},\cat{R})$ is a weak factorisation system.
\item
\end{enumerate}
\end{lemma}
\end{comment}

\begin{definition}
Let $\cat{C}$ be a category, let $c$ be an object of $\cat{C}$ and let $\kappa$ be a regular cardinal.
\begin{enumerate}
\item
We say that $c$ is $\kappa$-\emph{presentable} if the functor 
\[\hom_{\cat{C}}(c,\blank)\colon\cat{C}\to \Set\]
commutes with $\kappa$-filtered colimits.
\item
We say that $c$ is \emph{presentable} if it is $\kappa$-presentable for some $\kappa$.
\item
We say that $c$ is \emph{finitely presentable} if it is $\omega$-presentable.
\end{enumerate}
\end{definition}

\begin{definition}
Let $\kappa$ be a regular cardinal. A category $\cat{C}$ is said to be $\kappa$-\emph{locally presentable} if it is locally small, cocomplete and if there exists a set $\cat{P}$ of $\kappa$-presentable objects of $\cat{C}$ such that every object of $\cat{C}$ is a $\kappa$-filtered colimit of objects in $\cat{P}$. 
We say that $\cat{C}$ is \emph{locally presentable} if it is $\kappa$-locally presentable for some regular cardinal $\kappa$.
We say that $\cat{C}$ is \emph{locally finitely presentable} if it is $\omega$-locally presentable.
\end{definition}

\begin{theorem}[Small object argument]
\label{thm:wfs}
Let $\cat{C}$ be a locally small cocomplete category, let $\cat{I}$ be a set of objects in $\cat{C}$ and assume that the domain of every morphism of $\cat{I}$ is presentable. Then the pair $(\llift{(\rlift{\cat{I}})},\rlift{\cat{I}})$ is a weak factorisation system.
\end{theorem}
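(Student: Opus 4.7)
The plan is the classical small object argument, built on an explicit transfinite factorisation. Throughout I will assume (as I believe the statement intends) that $\cat{I}$ is a set of \emph{morphisms} of $\cat{C}$ whose domains are presentable. Since $\cat{I}$ is a set, we can pick a single regular cardinal $\kappa$ such that the domain of every $i \in \cat{I}$ is $\kappa$-presentable.

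First I would construct the factorisation of an arbitrary morphism $f \colon X \to Y$. Define inductively a sequence of factorisations $X \to Z_\alpha \to Y$ indexed by ordinals $\alpha \le \kappa$. Set $Z_0 = X$ with its given map to $Y$. At a successor stage, let $S_\alpha$ be the (small) set of all commutative squares of the form
\begin{equation*}
\begin{tikzcd}
A \ar[r, "u"] \ar[d, "i"'] & Z_\alpha \ar[d]\\
B \ar[r, "v"'] & Y
\end{tikzcd}
\end{equation*}
with $i \in \cat{I}$, and define $Z_{\alpha+1}$ via the pushout
\begin{equation*}
\begin{tikzcd}
\coprod_{s \in S_\alpha} A_s \ar[r] \ar[d, "\coprod i_s"'] & Z_\alpha \ar[d]\\
\coprod_{s \in S_\alpha} B_s \ar[r] & Z_{\alpha+1}
\end{tikzcd}
\end{equation*}
with the induced map $Z_{\alpha+1} \to Y$. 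At limit ordinals, set $Z_\beta = \colim_{\alpha < \beta} Z_\alpha$, again with the induced map to $Y$. The final factorisation is $X \to Z_\kappa \to Y$.

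Next I would verify the two classes. The left half $X \to Z_\kappa$ is a transfinite composition of pushouts of coproducts of maps in $\cat{I}$, hence lies in $\llift{(\rlift{\cat{I}})}$ by Lemma \ref{lemma:left-saturated-class} together with the observation that $\llift{(\rlift{\cat{I}})}$ is closed under coproducts (as a special case of weak saturation, via pushouts from the initial object). For the right half, given any lifting problem of some $i \colon A \to B$ in $\cat{I}$ against $Z_\kappa \to Y$, the map $A \to Z_\kappa = \colim_{\alpha < \kappa} Z_\alpha$ factors through some $Z_\alpha$ with $\alpha < \kappa$ by $\kappa$-presentability of $A$ (since $\kappa$ is regular, $\kappa$ is a $\kappa$-filtered colimit). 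This produces a square over $Z_\alpha \to Y$, which by construction belongs to $S_\alpha$ and is therefore resolved by the structural map $B \to Z_{\alpha+1} \to Z_\kappa$. Hence $Z_\kappa \to Y \in \rlift{\cat{I}}$.

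Finally I would close the pair under the lifting-class equations. The equality $\llift{(\rlift{\cat{I}})} = \llift{(\rlift{\cat{I}})}$ is tautological, and $\rlift{\cat{I}} \subseteq \rlift{(\llift{(\rlift{\cat{I}})})}$ follows from $\cat{I} \subseteq \llift{(\rlift{\cat{I}})}$. For the reverse inclusion, the standard retract argument applies: if $p \in \rlift{(\llift{(\rlift{\cat{I}})})}$, factor $p = q \circ j$ with $j \in \llift{(\rlift{\cat{I}})}$ and $q \in \rlift{\cat{I}}$; then $p \olift q$ produces a diagonal exhibiting $p$ as a retract of $q$, and since $\rlift{\cat{I}}$ is closed under retracts (by Lemma \ref{lemma:left-saturated-class} applied dually, or directly from the definition of right lifting), we conclude $p \in \rlift{\cat{I}}$. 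The main obstacle is the cardinality bookkeeping in the transfinite step: one must verify that $\kappa$ is indeed large enough so that every map $A \to Z_\kappa$ factors through an earlier stage, which is precisely the content of $\kappa$-presentability combined with regularity of $\kappa$; picking $\kappa$ as a regular cardinal bounding the presentability rank of every domain in $\cat{I}$ secures this step.
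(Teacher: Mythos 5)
Your proof is the standard small object argument, which is also the content of the reference (Cisinski, Proposition 2.1.9) that the paper cites for this statement; so in substance you and the paper are following the same route. The core of your argument — the $\kappa$-indexed transfinite pushout construction, the observation that regularity of $\kappa$ makes $\kappa$ a $\kappa$-filtered colimit, and the factorisation of a map $A \to Z_\kappa$ through a strictly earlier stage by $\kappa$-presentability of $A$ — is correct. Two points are worth flagging, both at the very end where you verify the defining equations of a weak factorisation system. First, once one sets $\cat{L} = \llift{(\rlift{\cat{I}})}$ and $\cat{R} = \rlift{\cat{I}}$, \emph{both} equalities $\cat{L} = \llift{\cat{R}}$ and $\rlift{\cat{L}} = \cat{R}$ are purely formal: the first is a tautology, and for the second, the inclusion $\rlift{\cat{I}} \subseteq \rlift{(\llift{(\rlift{\cat{I}})})}$ is immediate from the definitions, while $\rlift{(\llift{(\rlift{\cat{I}})})} \subseteq \rlift{\cat{I}}$ follows by contravariance of $\rlift{(\blank)}$ applied to the obvious inclusion $\cat{I} \subseteq \llift{(\rlift{\cat{I}})}$. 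The retract argument is therefore not needed at all once the factorisation is established; it becomes necessary only if one starts from two abstract classes closed under retracts rather than from the lifting-class closure of a generating set. Second, as written your retract argument contains a slip: from $p \in \rlift{(\llift{(\rlift{\cat{I}})})}$ and $j \in \llift{(\rlift{\cat{I}})}$ the relevant lifting relation is $j \olift p$, not $p \olift q$, and it is the lift in the square with left edge $j$, top edge $1_X$, right edge $p$ and bottom edge $q$ that furnishes the retraction exhibiting $p$ as a retract of $q$. Finally, a small expository point: closure of $\llift{(\rlift{\cat{I}})}$ under set-indexed coproducts is not a pushout from the initial object, but rather a transfinite composition of pushouts of the individual maps; this is still covered by Lemma \ref{lemma:left-saturated-class}, it just goes through a different clause of weak saturation than the one you invoked.
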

\begin{proof}
See \cite[Proposition 2.1.9]{cis16}
\end{proof}

\begin{corollary}
\label{cor:wfs-locpres}
Let $\cat{C}$ be a locally presentable category and let $\cat{I}$ be a set of morphisms in $\cat{C}$. Then the pair $(\llift{(\rlift{\cat{I}})},\rlift{\cat{I}})$ is a weak factorisation system.
\end{corollary}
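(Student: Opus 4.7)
The plan is to deduce the result from the small object argument, Theorem \ref{thm:wfs}, which produces a weak factorisation system of the form $(\llift{(\rlift{\cat{I}})},\rlift{\cat{I}})$ whenever $\cat{C}$ is locally small and cocomplete and the domains of the maps in $\cat{I}$ are presentable. Since a locally presentable category is locally small and cocomplete by definition, the only point to verify is that the domain of each morphism in $\cat{I}$ is presentable. In fact, the key lemma I would prove is the stronger statement that \emph{every} object of a locally presentable category is presentable; once this is established, the corollary is immediate.

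First I would unpack the hypothesis: assume $\cat{C}$ is $\kappa$-locally presentable, so there exists a set $\cat{P}$ of $\kappa$-presentable objects such that every object of $\cat{C}$ arises as a $\kappa$-filtered colimit of objects in $\cat{P}$. Fix any object $c\in\cat{C}$ and write $c\cong\colim_{i\in \cat{J}} p_{i}$ for some $\kappa$-filtered diagram $F\colon \cat{J}\to\cat{P}$. I would then choose a regular cardinal $\lambda\geq\kappa$ strictly bigger than the cardinality of the set of morphisms of $\cat{J}$; this is possible because $\cat{J}$ may be taken essentially small by a standard cofinality argument. The claim is that $c$ is $\lambda$-presentable. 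Given a $\lambda$-filtered colimit $d=\colim_{s\in S}d_{s}$, a map $c\to d$ is determined by a cocone $(p_{i}\to d)_{i\in \cat{J}}$; each component factors through some stage $d_{s_{i}}$ because $p_{i}$ is $\kappa$-presentable, hence $\lambda$-presentable. Since $\cat{J}$ is smaller than $\lambda$ and $S$ is $\lambda$-filtered, the family $(s_{i})$ admits a common upper bound $s\in S$, through which the entire cocone factors, proving that $\hom_{\cat{C}}(c,\blank)$ commutes with $\lambda$-filtered colimits.

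Having established that every object of $\cat{C}$ is presentable, I would conclude as follows. For each $i\in\cat{I}$ let $\lambda_{i}$ be a regular cardinal witnessing the presentability of the domain of $i$. Because $\cat{I}$ is a \emph{set}, we may pick a single regular cardinal $\lambda$ bounding all the $\lambda_{i}$; then every domain of a map in $\cat{I}$ is $\lambda$-presentable. The hypotheses of Theorem \ref{thm:wfs} are therefore satisfied, and applying that theorem produces the desired weak factorisation system $(\llift{(\rlift{\cat{I}})},\rlift{\cat{I}})$.

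The main obstacle is the core lemma that every object of a locally presentable category is presentable; the rest is essentially bookkeeping. The delicate point there is controlling the size of the indexing diagram $\cat{J}$ so as to obtain a uniform presentability cardinal; this is handled by replacing $\cat{J}$ with an essentially small cofinal subcategory before choosing $\lambda$.
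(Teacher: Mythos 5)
Your proposal takes exactly the paper's approach---the paper's entire proof is the single line ``Follows from Theorem \ref{thm:wfs}''---and correctly supplies the implicit lemma that every object of a locally presentable category is presentable, which is what makes that citation legitimate. Your sketch of the lemma leaves two small points tacit (after pushing the components $p_i\to d$ into a common stage $d_s$ one must invoke $\lambda$-presentability again to make the lifted maps actually form a cocone over $\cat{J}$, and the injectivity half of ``$\hom_{\cat{C}}(c,\blank)$ preserves $\lambda$-filtered colimits'' needs its own argument), but both are routine given your bound on the size of $\cat{J}$, so the argument is sound.
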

\begin{proof}
Follows from Theorem \ref{thm:wfs}
\end{proof}

\begin{corollary}
\label{cor:orthogonal-factorisation-theorem}
Let $\cat{C}$ be a locally presentable category and let $\cat{I}$ be a set of morphisms in $\cat{C}$. Then the pair $(\ullift{(\urlift{\cat{I}})},\urlift{\cat{I}})$ is an orthogonal factorisation system.
\end{corollary}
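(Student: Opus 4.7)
The plan is to reduce the statement to the weak factorisation system of Corollary \ref{cor:wfs-locpres} by encoding uniqueness of lifts via codiagonals. For every morphism $i\colon A\to B$ in $\cat{I}$, let $\nabla_i\colon B\sqcup_A B\to B$ denote its codiagonal, and set
\[\cat{J}=\cat{I}\cup\{\nabla_i:i\in\cat{I}\}.\]
A routine diagram chase shows that a map $p$ has the right lifting property with respect to $\nabla_i$ precisely when any two solutions of a given lifting problem of $i$ against $p$ must agree; combined with the ordinary right lifting property against $\cat{I}$, this yields the identification $\rlift{\cat{J}}=\urlift{\cat{I}}$. Since $\cat{C}$ is locally presentable, Corollary \ref{cor:wfs-locpres} applied to $\cat{J}$ produces the weak factorisation system $(\llift{\urlift{\cat{I}}},\urlift{\cat{I}})$, which supplies the factorisation needed for the claimed orthogonal factorisation system on $(\ullift{\urlift{\cat{I}}},\urlift{\cat{I}})$, once we know $\llift{\urlift{\cat{I}}}=\ullift{\urlift{\cat{I}}}$.

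The inclusion $\ullift{\urlift{\cat{I}}}\subseteq\llift{\urlift{\cat{I}}}$ is immediate. For the converse, I would verify that $\ullift{\urlift{\cat{I}}}$ is weakly saturated and contains $\cat{J}$. Weak saturation on the left is a standard verification: the orthogonality relation $\perp$ is stable on the left under retracts, pushouts and transfinite composites because uniqueness of lifts propagates through each of these operations. Containment of $\cat{I}$ is tautological, and containment of each $\nabla_i$ reuses the diagram chase above: a lifting problem of $\nabla_i$ against $g\in\urlift{\cat{I}}$ amounts to a pair of lifts of a single problem of $i$ against $g$, which must agree by the defining property of $\urlift{\cat{I}}$ and hence factor uniquely through $\nabla_i$. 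By the small object argument of Theorem \ref{thm:wfs}, every element of $\llift{\urlift{\cat{I}}}=\llift{\rlift{\cat{J}}}$ is a retract of a transfinite composite of pushouts of elements of $\cat{J}$, and hence lies in $\ullift{\urlift{\cat{I}}}$.

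Finally, the identity $\urlift{\ullift{\urlift{\cat{I}}}}=\urlift{\cat{I}}$ required by the orthogonal factorisation system axiom is formal: the inclusion $\cat{I}\subseteq\ullift{\urlift{\cat{I}}}$ yields $\urlift{\ullift{\urlift{\cat{I}}}}\subseteq\urlift{\cat{I}}$, while the reverse inclusion is precisely what $\ullift{\urlift{\cat{I}}}$ was defined to ensure. The main obstacle in the plan is the identification $\llift{\urlift{\cat{I}}}=\ullift{\urlift{\cat{I}}}$, and within it the codiagonal step showing $\nabla_i\in\ullift{\urlift{\cat{I}}}$; the remainder is routine bookkeeping once this is in place.
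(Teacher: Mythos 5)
Your argument is correct, but it is genuinely different from the paper's, which offers no proof of its own and simply defers to \cite[Theorem 2.2]{faj08}. You instead deduce the orthogonal system from the already-established weak factorisation system of Corollary \ref{cor:wfs-locpres} by the standard codiagonal encoding of uniqueness: adjoining the codiagonal $\nabla_i$ for each $i\in\cat{I}$ turns unique right lifting against $\cat{I}$ into ordinary right lifting against $\cat{J}=\cat{I}\cup\{\nabla_i\}$, so that $\rlift{\cat{J}}=\urlift{\cat{I}}$, and the weak factorisation system $(\llift{\rlift{\cat{J}}},\rlift{\cat{J}})$ already has the correct right class. Your key step, the identification $\llift{\urlift{\cat{I}}}=\ullift{\urlift{\cat{I}}}$, is correct; be aware that it invokes the cell-complex-plus-retract form of the small object argument, which is established in the proof of Theorem \ref{thm:wfs} (via \cite[Proposition 2.1.9]{cis16}) rather than in its statement as printed in this text. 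That full equality is in fact slightly more than you need: the small object argument already produces a factorisation of every morphism as a relative $\cat{J}$-cell complex --- a transfinite composite of pushouts of maps in $\cat{J}$, hence already in $\ullift{\urlift{\cat{I}}}$ by the weak saturation you verify --- followed by a map in $\urlift{\cat{I}}$, giving the factorisation axiom directly and avoiding the retract step; the axioms $\ullift{\urlift{\cat{I}}}=\ullift{\urlift{\cat{I}}}$ and $\urlift{\ullift{\urlift{\cat{I}}}}=\urlift{\cat{I}}$ are, as you say, formal. What your route buys is self-containment within Section \ref{ch:1sec:3}: the orthogonal factorisation system is derived from the weak one rather than imported from the literature, at the cost of about a page.
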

\begin{proof}
See \cite[Theorem 2.2]{faj08}
\end{proof}

\section{Topological constructs}
\label{ch:1sec:4}
A topological functor $U\colon \cat{S}\to \cat{A}$ is a functor that generalises the behaviour of the forgetful functor from topological spaces to sets.
In particular, a topological functor $U\colon \cat{S}\to \cat{A}$ is a faithful functor such that the class of ``objects that forget to a given object $A$'' can be viewed as a complete preorder of ``structures'' on $A$, so that we can form the initial and final structure on $A$, with respect to a given class of morphisms with source or target $A$, respectively. 
Given a topological functor $U\colon \cat{C}\to \Set$ and a full subcategory $\cat{I}$ of $\cat{C}$, we recall the definition of the final closure $\cat{C}_{\cat{I}}$ of $\cat{I}$ in $\cat{C}$.
In particular, we give sufficient conditions for $\cat{C}_{\cat{I}}$ to be locally presentable (Theorem \ref{thm:C_I-locally-presentable}) and Cartesian closed (Theorem \ref{thm:final-closure-cartesian-closed}).

\begin{definition}
\label{def:initial-lift}
Let $U\colon \cat{S}\to \cat{A}$ be a faithful functor, let $a$ be an object of $\cat{A}$ and let $\cat{I}$ be a class of objects in $\cat{S}$.
An \emph{initial lift} for a family of maps $(f_{i}\colon a\to Ui)_{i\in \cat{I}}$ is an object $s$ in $\cat{S}$ together with a family of morphisms $(m_i\colon s \to i)_{i\in \cat{\cat{I}}}$ satisfying the following axioms:
\begin{enumerate}
\item
$Us=a$ and $f_i=Um_i$, for every $i\in \cat{I}$.
\item
If $(m'_i\colon s'\to i)_{i\in \cat{I}}$ is a family of morphisms and $f\colon Us'\to a$ is a map such that the diagram:
\begin{diagram}
Us'\ar[rd, "Um'_{i}"']\ar[r, "f"]&
a\ar[d, "f_{i}"]\\
&
Ui
\end{diagram}
commutes for every object $i\in \cat{I}$, there exists a unique morphism $m\colon s'\to s$ such that $Um=f$. In particular, since $U$ is a faithful functor, the diagram:
\begin{diagram}
s'\ar[rd, "m'_{i}"']\ar[r, "m"]&
s\ar[d, "m_{i}"]\\
&
i
\end{diagram}
commutes for every $i\in\cat{I}$.
\end{enumerate}
\end{definition}

\begin{definition}
\label{def:topological-functor}
Let $U\colon \cat{S}\to \cat{A}$ be a faithful functor. We say that $U$ is a \emph{topological functor} if for every object $a$ in $\cat{A}$ and for every class of objects $\cat{I}$ in $\cat{S}$, every family of maps $(f_i\colon a\to Ui)_{i\in \cat{I}}$ has a unique initial lift. A \emph{topological construct} is a topological functor $U\colon\cat{C}\to \Set$ to the category of sets.
\end{definition}

\begin{notation}
In the presence of a topological functor $U\colon \cat{S}\to\cat{A}$, we think of $\cat{S}$ as a category of structured objects over the objects of $\cat{A}$.
Following the terminology already used in Definition \ref{def:initial-lift}, we refer to ``maps'' for the morphisms $f\colon a\to a'$ between the less structured objects on the base and we use the term ``morphism'' exclusively for the morphisms $m\colon s\to s'$ between structured objects.
\end{notation}

\begin{remark}
Dually, a faithful functor $U\colon \cat{S}\to \cat{A}$ has \emph{final lift} for a family of objects $\cat{I}$ in $\cat{S}$ and maps $(f_i\colon Ui\to a)_{i\in \cat{I}}$ if the opposite functor $U^\op\colon \cat{S}^\op\to \cat{A}^\op$ has initial lift for the corresponding family. Functors that have all final lifts are called \emph{cotopological}, but this definition is superfluous since one can show that every topological functor is also cotopological (see \cite[Proposition 7.3.6]{bor942}).
\end{remark}

\subsection{}
\label{subsec:preorder-fibre}
If $U\colon \cat{S}\to \cat{A}$ is a faithful functor, then the fibre of $U$ at an object $a\in \cat{A}$ is a preorder, where $s\le s'$ if there exists a morphism $m\colon s\to s'$ such that $Um=\id_{a}$.
If $U$ is a topological functor, the preorder on each fibre is a complete lattice, hence $U$ is \emph{amnestic}. 
Moreover $U$ is \emph{uniquely transportable} meaning that, for every object $s\in \cat{S}$ and every isomorphism $f\colon Us\to a$ in $\cat{A}$, there exists a unique object $s'\in \cat{S}$ and a unique isomorphism $m\colon s\to s'$ such that $Us'=a$ and $Um=f$.

\begin{example}				%
The category $\Topa$ of topological spaces is a topological construct. 
Indeed, let $S$ be a set, let $(X_i)_{i\in \cat{I}}$ be a class of topological spaces and $(f_i\colon S\to UX_{i})_{i\in I}$ be a cone in $\Set$, where $U\colon \Topa\to \Set$ is the forgetful functor.
Then, an initial lift for this data is given by the set $S$, endowed with the \emph{initial topology} with respect to the maps $f_i$. 
The initial topology on $S$ is the coarsest topology on $S$ containing the subsets $f_{i^{-1}}(U_{i})$, for every $i\in I$ and every $U_{i}$ open in $X_{i}$.
\end{example}

\begin{proposition}
\label{prop:top-functor-adjoints}
Let $U\colon \cat{S}\to\cat{A}$ be a topological functor.
Then, $U$ has a left and a right adjoint which are fully faithful.
\end{proposition}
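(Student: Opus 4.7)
The plan is to obtain both adjoints directly from initial and final lifts of the empty family of maps. I would first construct the right adjoint $R\colon\cat{A}\to\cat{S}$ by setting $Ra$ to be the unique initial lift of the empty family of maps out of $a$, which exists by Definition \ref{def:topological-functor}. By construction $U(Ra)=a$, and unpacking the universal property of the initial lift with $\cat{I}=\emptyset$ says exactly that for every $s\in\cat{S}$ and every map $f\colon Us\to a$ in $\cat{A}$, there is a unique morphism $m\colon s\to Ra$ in $\cat{S}$ with $Um=f$. Equivalently, $U$ induces a bijection
\[
\hom_{\cat{S}}(s,Ra)\;\xrightarrow{\;\cong\;}\;\hom_{\cat{A}}(Us,a),
\]
which is the required hom-set isomorphism for the adjunction $U\dashv R$.

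Next I would promote $R$ to a functor and check naturality: given $f\colon a\to a'$ in $\cat{A}$, applying the universal property of $Ra'$ to $f\colon U(Ra)\to a'$ produces a unique morphism $Rf\colon Ra\to Ra'$ with $U(Rf)=f$, and uniqueness forces both functoriality and naturality of the bijection. This is where the only real care is needed, but the amnestic property recorded in \ref{subsec:preorder-fibre} ensures the lifts are genuinely functorial rather than only unique up to iso, so the verification is formal.

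For the left adjoint I would argue dually. Using the remark after Definition \ref{def:topological-functor} that every topological functor is also cotopological, let $La$ be the final lift of the empty family of maps into $a$. The dual universal property then yields a bijection $\hom_{\cat{S}}(La,s)\cong\hom_{\cat{A}}(a,Us)$ natural in $a$ and $s$, giving $L\dashv U$, and $L$ becomes a functor by the same uniqueness argument.

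Finally, fully faithfulness of both $L$ and $R$ is immediate: by construction $UR=\mathrm{id}_{\cat{A}}$ and $UL=\mathrm{id}_{\cat{A}}$, so the counit of $U\dashv R$ and the unit of $L\dashv U$ are identity natural transformations, which is the standard criterion for the adjoint in question to be fully faithful. Alternatively, one can read this directly off the hom-set bijections at objects of the form $Ra,Ra'$ (resp.\ $La,La'$), which present $\hom_{\cat{A}}(a,a')$ as $\hom_{\cat{S}}(Ra,Ra')$ (resp.\ $\hom_{\cat{S}}(La,La')$) via $U$. The only genuine content of the proposition is the correct identification of the empty-family lifts as the adjoints; the rest is bookkeeping.
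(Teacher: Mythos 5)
Your argument is correct and is essentially the standard proof from the reference the paper cites (\cite[Proposition 7.3.7]{bor942}): the codiscrete right adjoint is the initial lift of the empty cone, the discrete left adjoint is the final lift of the empty cocone, and in both cases the adjunction bijection is literally ``apply $U$,'' which forces the relevant unit/counit to be an identity and hence both adjoints to be fully faithful.
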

\begin{proof}
See \cite[Proposition 7.3.7]{bor942}
\end{proof}

\begin{proposition}
\label{prop:coreflective-subcat-top-cat}.
Let $U\colon \cat{S}\to \cat{A}$ be a topological functor and let $\iota\colon\cat{C}\to \cat{S}$ be a coreflective subcategory of $\cat{S}$ with coreflector $\phi\colon \cat{S}\to \cat{C}$.
Assume that the counit $\iota\phi X\to X$ forgets to the identity morphism on $UX$ in $\cat{A}$ for every $X\in \cat{S}$.
Then $\cat{C}\to \cat{A}$ is a topological functor.
\end{proposition}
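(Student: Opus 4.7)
The plan is to verify directly the two conditions of Definition \ref{def:topological-functor} for the composite $U\iota\colon\cat{C}\to\cat{A}$. Faithfulness is immediate: $\iota$ is fully faithful (as the inclusion of a coreflective subcategory) and $U$ is faithful by hypothesis, so the composite is faithful.

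For the existence of initial lifts, let $a\in\cat{A}$ and let $(f_i\colon a\to U\iota c_i)_{i\in\cat{I}}$ be a family of maps with $c_i\in\cat{C}$. The strategy is to first form the initial lift in $\cat{S}$, which exists since $U$ is topological, obtaining an object $s\in\cat{S}$ with $Us=a$ and morphisms $m_i\colon s\to \iota c_i$ such that $Um_i=f_i$. I would then define the candidate initial lift in $\cat{C}$ as $\tilde s:=\phi s$, with structural morphisms $\tilde m_i\colon \tilde s\to c_i$ obtained from the composites $m_i\circ\epsilon_s\colon \iota\phi s\to \iota c_i$ by fully faithfulness of $\iota$. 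The hypothesis $U\epsilon_s=\id_{Us}$ immediately gives $U\iota\tilde s=U\iota\phi s=Us=a$ and $U\iota\tilde m_i=Um_i\circ U\epsilon_s=f_i$.

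To verify the universal property, suppose given $c'\in\cat{C}$, a family of morphisms $g_i\colon c'\to c_i$ in $\cat{C}$, and a map $f\colon U\iota c'\to a$ with $U\iota g_i=f_i\circ f$. Applying $\iota$ transports this to a cone in $\cat{S}$ lifting $f$, so the universal property of $s$ produces a unique morphism $m\colon \iota c'\to s$ with $Um=f$ and $m_i\circ m=\iota g_i$. Transposing $m$ under the adjunction $\iota\dashv\phi$ yields a morphism $\tilde m\colon c'\to \phi s$ in $\cat{C}$ satisfying $m=\epsilon_s\circ\iota\tilde m$; combined with $U\epsilon_s=\id_a$ this gives $U\iota\tilde m=f$ and, after applying the faithful functor $\iota$ to the identity $\iota\tilde m_i\circ\iota\tilde m=m_i\circ\epsilon_s\circ\iota\tilde m=m_i\circ m=\iota g_i$, also $\tilde m_i\circ\tilde m=g_i$. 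Uniqueness of $\tilde m$ follows from injectivity of the adjunction bijection together with the uniqueness of $m$ in $\cat{S}$.

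The substantive content of the argument is concentrated in the single hypothesis $U\epsilon_X=\id_{UX}$: it is precisely what allows the fibre of $U\iota$ over $a$ to inherit a complete lattice structure from the fibre of $U$ over $a$ by restriction to the coreflective subcategory. The rest is bookkeeping with the adjunction $\iota\dashv\phi$ and the fully faithfulness of $\iota$; the main (mild) subtlety I expect is keeping the two levels of transposition (between $\cat{S}$ and $\cat{C}$, and between maps and objects in the fibre) straight while verifying compatibility of $\tilde m$ with the $\tilde m_i$.
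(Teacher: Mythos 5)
Your proof is correct, and it supplies the direct verification that the paper delegates to \cite[Propositions 21.31, 21.32]{AHS04} without writing out. The two steps are exactly the right ones: forming the initial lift $s$ in $\cat{S}$, applying the coreflector to obtain $\phi s$, and then using the hypothesis $U\epsilon_s = \id_{Us}$ to show that nothing on the base $\cat{A}$ has moved. All the verifications check out: $U\iota\phi s = Us = a$ because the counit is an identity on underlying objects; $\iota\tilde m_i = m_i\circ\epsilon_s$ gives $U\iota\tilde m_i = f_i$; and the universal property reduces cleanly to that of $s$ in $\cat{S}$ via the adjunction bijection $\cat{C}(c',\phi s)\cong\cat{S}(\iota c', s)$, $\tilde m\mapsto\epsilon_s\circ\iota\tilde m$, which you correctly invoke both for existence and (via its injectivity) for uniqueness. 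You are also right to flag $U\epsilon_X=\id_{UX}$ as the load-bearing hypothesis: it is precisely what makes the coreflection \emph{concrete} over $\cat{A}$, and without it the candidate lift $\phi s$ would not even live over $a$. One small point worth making explicit, although it is implicit in what you wrote: the condition ``$U\epsilon_X=\id_{UX}$'' already forces $U\iota\phi X = UX$ as an equality of objects in $\cat{A}$, not merely an isomorphism, which is what the strict initiality condition in Definition \ref{def:initial-lift} requires.
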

\begin{proof}
See \cite[Propositions 21.31, 21.32]{AHS04}.
\end{proof}

\subsection{}
Let $U\colon \cat{S}\to \cat{A}$ be a topological functor and let $a$ be an object in $\cat{A}$.
The image of $a$ via the left adjoint of $U$ is the minimum element of the fibre of $U$ at $a$ with respect to the preorder relation introduced in \ref{subsec:preorder-fibre}.
We call it the \emph{discrete object} on $a$ and denote it by $\disc{a}$.
Dually the image of $a$ via the right adjoint of $U$ is the greatest element of the fibre of $U$.
We call it the \emph{codiscrete object} on $a$ and denote it by $\ind{a}$.
Depending on the context, we will also use the terminology \emph{chaotic object} or \emph{indiscrete object} on $a$ for the codiscrete object on $a$.

\begin{definition}
A functor $U\colon \cat{S}\to \cat{A}$ is called \emph{fibre-small} if for every object $a\in \cat{A}$, the fibre of $U$ at $a$ is an essentially small category.
\end{definition}

\subsection{}
If $U\colon \cat{S}\to \cat{A}$ is a topological functor and $\cat{A}$ has a terminal object $\terminal$, then $\cat{S}$ also has a terminal object which is given by the codiscrete object $\ind{\terminal}$ on $\term$.
Indeed, for every object $s$ in $\cat{S}$, the following natural isomorphisms hold:
\[\terminal\cong\hom_{\cat{S}}(Us, \terminal)\cong \hom_{\cat{C}}(s, \ind{\terminal}).\]
Moreover, if $\cat{C}$ is a topological construct with topological functor $U\colon \cat{C}\to \Set$, the discrete object $\disc{\terminal}$ on the terminal object represents the forgetful functor, since we have:
\[Ux\cong \hom_{\Set}(\terminal, Ux)\cong \hom_{\cat{C}}(\disc{\terminal}, x).\]

\subsection{}
A topological functor $U\colon \cat{S}\to \cat{A}$ has \emph{discrete terminal object} if the discrete and codiscrete objects on the terminal object of $\cat{A}$ coincide.
This is equivalent to saying that the fibre of $U$ at the terminal object of $\cat{A}$ is the terminal category.
A topological construct $\cat{C}$ is said to be \emph{well fibered} if it is fibre-small and it has discrete terminal object. 
Notice that, if $\cat{C}$ is a topological construct with discrete terminal object, given any two objects $C$ and $X$ in $\cat{C}$, there is a bijection between the underlying set of $X$ and the set of morphisms $m\colon C\to X$ in $\cat{C}$ which factor through the terminal object.

\begin{definition}
\label{def:I-generated}
Let $\cat{C}$ be a topological construct with topological functor $U\colon \cat{C}\to \Set$ and let $\cat{I}$ be a full subcategory of $\cat{C}$. 
An object $X$ in $\cat{C}$ is said to be $\cat{I}$-\emph{generated}, if the class $(\alpha\colon C\to X)_{\alpha\in \overcat{\cat{I}}{X}}$ is a final lift for the family of maps $(U\alpha\colon UC\to UX)_{\alpha\in \overcat{\cat{I}}{X}}$. 
The \emph{final closure} of $\cat{I}$ in $\cat{C}$, denoted by $\cat{C}_{\cat{I}}$, is the full subcategory of $\cat{C}$ spanned by the $\cat{I}$-generated objects.
\end{definition}

\begin{proposition}
\label{prop:C_I-coreflective}
Let $U\colon\cat{C}\to \Set$ be a topological construct with discrete terminal object, let $k\colon\cat{I}\to \cat{C}$ be a full subcategory of $\cat{C}$ and let 
\[\eta\colon \Lan_{k}k\to \id_{\cat{C}}\]
be the canonical natural transformation from the left Kan extension of $k$ along itself to the identity on $\cat{C}$.
\begin{enumerate}
\item
For every $X$ in $\cat{C}$, the map
\[U\eta_X\colon U\left(\Lan_{k}k(X)\right)\to UX\]
is a bijection.
\item
An object $X$ in $\cat{C}$ is $\cat{I}$-generated if and only if $\eta_X$ is an isomorphism.
\item
An object $X$ in $\cat{C}$ is $\cat{I}$-generated if and only if it is a colimit of a diagram whose objects belong to $\cat{I}$.
\item
The category $\cat{C}_\cat{I}$ is coreflective in $\cat{C}$ and it contains $\cat{I}$ as a dense subcategory.
\end{enumerate}
\end{proposition}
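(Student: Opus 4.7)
The plan is to tackle the four parts in order, using the pointwise Kan extension formula and the cocontinuity of any topological functor as the main ingredients, with the uniqueness of final lifts (via amnesticity of $U$) doing most of the structural work.

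For (1), Remark \ref{rmk:pointwise-kan-ext} gives $\Lan_{k}k(X)=\colim_{(C,f)\in\overcat{\cat{I}}{X}}C$ in $\cat{C}$. By Proposition \ref{prop:top-functor-adjoints}, $U$ admits a right adjoint and hence preserves colimits, so
\[
U(\Lan_{k}k(X))\;\cong\;\colim_{(C,f)}UC
\]
in $\Set$. The induced map to $UX$ sends the class of $(c\in UC,\,f\colon C\to X)$ to $Uf(c)$. Under the discrete terminal object hypothesis, $UX\cong\hom_{\cat{C}}(\disc{\term},X)$, so each $x\in UX$ is witnessed by a morphism $\disc{\term}\to X$; assuming the discrete objects lie in $\cat{I}$ (which is the implicit regularity condition making this proposition non-vacuous), this yields surjectivity, and injectivity follows since any two representatives $(c_i,f_i)$ with the same image under $U$ are both identified with the leg of the colimit coming from $\disc{\term}$.

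For (2), a colimit in $\cat{C}$ is automatically a final lift in the topological construct sense: if $\eta_X$ is an isomorphism then $X\cong\Lan_{k}k(X)$ carries the final structure for $(C\to X)_{\overcat{\cat{I}}{X}}$ and so is $\cat{I}$-generated. Conversely, if $X$ is $\cat{I}$-generated then both $X$ and $\Lan_{k}k(X)$ lie in the same fibre of $U$ over $UX$ by (1) and carry the final lift for the same family; the amnesticity of $U$ from \ref{subsec:preorder-fibre} forces $\eta_X$ to be an isomorphism. Part (3) then follows: the forward direction is (2), while for the converse, if $X=\colim_{i}C_i$ with $C_i\in\cat{I}$, then $X$ already has the final structure with respect to $(C_i\to X)_i$; any morphism $C\to X$ with $C\in\cat{I}$ factors through this universal cocone, so the enlarged family $(C\to X)_{\overcat{\cat{I}}{X}}$ induces the same final lift, and $X$ is $\cat{I}$-generated.

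For (4), the endofunctor $\phi=\Lan_{k}k$ lands in $\cat{C}_{\cat{I}}$ by (3), and its counit is $\eta\colon\phi\to\id_{\cat{C}}$. Given $Y\in\cat{C}_{\cat{I}}$ and $X\in\cat{C}$, density (namely $Y\cong\colim_{C\to Y}C$ from (2)) turns a morphism $Y\to X$ into a cocone $(C\to X)_{C\to Y}$, each leg of which is an index in the colimit defining $\phi X$ and hence lifts canonically; the universal property of the colimit assembles these into a unique factorization $Y\to\phi X$ through $\eta_X$, establishing the coreflection. Density of $\cat{I}$ in $\cat{C}_{\cat{I}}$ is then exactly (2) rephrased, since $\Lan_{k}k$ restricted to $\cat{C}_{\cat{I}}$ is the identity.

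The principal obstacle lies in Part (1), which at face value requires $\cat{I}$ to be rich enough that the cocone $(C\to X)_{\overcat{\cat{I}}{X}}$ witnesses every point of $UX$; the cleanest way to secure this is to take as standing hypothesis that $\cat{I}$ contains the discrete singleton, an assumption that is implicit here and explicit in \cite[Propositions 21.31, 21.32]{AHS04}. Once (1) is in place, parts (2)–(4) follow essentially formally from the interplay between colimits in $\cat{C}$, uniqueness of final lifts, and amnesticity of the topological functor.
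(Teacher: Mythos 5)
The paper offers no self-contained argument for this proposition; it merely cites \cite[Proposition 3.5]{faj08}, so there is nothing to compare approaches against, and your reconstruction supplies genuine content. Your plan — pointwise Kan extension plus cocontinuity of $U$ (via Proposition \ref{prop:top-functor-adjoints}) for (1), uniqueness of final lifts for (2), deducing (3) from (2), and building the coreflector from $\Lan_k k$ via density for (4) — is the standard and correct route, and matches the spirit of the cited source. A couple of the steps are compressed (e.g.\ in (2) one should be explicit that the colimit $\Lan_k k(X)$ carries the final lift of the \emph{same} family $(UC\to UX)$ under the transport along the bijection $U\eta_X$, and then invoke uniqueness of final lifts, which is built into Definition \ref{def:topological-functor}), but the ideas are all there and nothing fails.

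Your remark about part (1) requiring that $\overcat{\cat{I}}{X}$ witness every point of $UX$ is the most valuable observation: the statement as printed is genuinely false if $\cat{I}$ is empty, or more generally if every object of $\cat{I}$ has empty underlying set, since then $U(\Lan_k k(X))$ can be $\emptyset$ while $UX$ is not. Your proposed fix ($\disc{\term}\in\cat{I}$) works but is stronger than needed: it suffices that $\cat{I}$ contain \emph{some} object $C_0$ with $UC_0\neq\emptyset$. Then for surjectivity use the constant morphism $C_0\to\disc{\term}\to X$ at any given point (well-defined precisely because the terminal object is discrete), and for injectivity connect any two preimages $(C,f,c)$ and $(C',f',c')$ of the same $x\in UX$ via the zigzag through $(C_0,\mathrm{const}_x,c_0)$ given by the constant maps $C_0\to C$ at $c$ and $C_0\to C'$ at $c'$. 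This weaker hypothesis is the one effectively in force in every application in the paper (the standard simplices, the finite posets, etc.\ all contain the point), and it would be worth stating explicitly in the proposition.
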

\begin{proof}
See \cite[Proposition 3.5]{faj08}.
\end{proof}

\begin{lemma}
\label{lem:coreflection-I-generated}
Let $\cat{C}$ be a well fibered topological construct, let $k\colon \cat{I}\to \cat{C}$ be a full subcategory and let $X$ be an object of $\cat{C}$. 
Assume that $h\colon X'\to X$ is a morphism in the fibre of $U$ at $UX$ with $X'$ an $\cat{I}$-generated object. Then, $X'$ is equal to $\Lan_{k}k(X)$ if and only if for every object $C \in\cat{I}$ the natural map
\[\hom_{\cat{C}_{\cat{I}}}(C, X')\to \hom_{\cat{C}}(C,X)\]
induced by $h$ is a bijection.
\end{lemma}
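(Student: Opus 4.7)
The plan is to split the proof into the two implications, using the coreflection of Proposition \ref{prop:C_I-coreflective} which exhibits $\Lan_{k}k(X)$ as the universal $\cat{I}$-generated object mapping to $X$ in the fibre of $U$ at $UX$.

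For the forward direction, I would first note that since the fibre of $U$ at $UX$ is a preorder (\ref{subsec:preorder-fibre}), there is at most one morphism in the fibre with given source and target; hence if $X' = \Lan_{k}k(X)$ then $h$ must coincide with the canonical counit $\eta_{X}$. The coreflection adjunction then supplies, for every $C \in \cat{C}_{\cat{I}}$ and in particular for every $C \in \cat{I}$, a natural bijection $\hom_{\cat{C}}(C, \Lan_{k}k(X)) \cong \hom_{\cat{C}}(C, X)$ given precisely by post-composition with $\eta_{X} = h$.

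For the backward direction, my first step would be to upgrade the hypothesised bijection from $\cat{I}$ to all of $\cat{C}_{\cat{I}}$. Since $\cat{C}_{\cat{I}}$ is coreflective in $\cat{C}$ by Proposition \ref{prop:C_I-coreflective}, its inclusion preserves colimits as a left adjoint; combined with the density of $\cat{I}$ in $\cat{C}_{\cat{I}}$, every object $D \in \cat{C}_{\cat{I}}$ is the colimit in $\cat{C}$ of the canonical diagram $\overcat{k}{D} \to \cat{I} \hookrightarrow \cat{C}$. Applying $\hom_{\cat{C}}(\blank, X')$ and $\hom_{\cat{C}}(\blank, X)$ to this colimit presentation assembles the hypothesised bijections into a single bijection $\hom_{\cat{C}}(D, X') \cong \hom_{\cat{C}}(D, X)$ induced by post-composition with $h$, valid for every $D \in \cat{C}_{\cat{I}}$. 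With this extended representability in hand, I would run a Yoneda-style argument: setting $D = \Lan_{k}k(X)$, the preimage of $\eta_{X}$ produces $f \colon \Lan_{k}k(X) \to X'$ with $h \circ f = \eta_{X}$; the coreflection property gives $g \colon X' \to \Lan_{k}k(X)$ with $\eta_{X} \circ g = h$; taking $D = X'$ and invoking uniqueness of preimages of $h$ yields $f \circ g = \id_{X'}$ and symmetrically $g \circ f = \id_{\Lan_{k}k(X)}$. Since $X'$ and $\Lan_{k}k(X)$ both lie in the fibre of $U$ over $UX$ and both composites lift the identity on $UX$, the isomorphism $f$ belongs to the fibre; amnesticity of $U$ (\ref{subsec:preorder-fibre}) then forces $f$ to be the identity, giving the equality $X' = \Lan_{k}k(X)$ on the nose.

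The main obstacle I anticipate is the upgrade of the bijection from $\cat{I}$ to $\cat{C}_{\cat{I}}$: one must carefully verify that the density colimit of $D$ is computed in $\cat{C}$ rather than merely in $\cat{C}_{\cat{I}}$ (this is where coreflectivity of the inclusion is essential) and confirm that the hypothesised bijection is sufficiently functorial in $C$ for the resulting limit-of-bijections-over-the-colimit to assemble correctly. Once this step is secured, the rest of the argument is a standard Yoneda identification together with a final appeal to amnesticity to promote isomorphism in the fibre to genuine equality.
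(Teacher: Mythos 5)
Your proof is correct and proceeds by the same central idea as the paper's: rewriting each $\cat{I}$-generated object $A$ as the colimit $\colim_{C\to A}C$ in $\cat{C}$ to bootstrap the hypothesised bijection from $\cat{I}$ up to all of $\cat{C}_{\cat{I}}$, then reading off the coreflection universal property and using amnesticity to promote the resulting isomorphism in the fibre to an equality. You additionally spell out the forward direction (identifying $h$ with the counit $\eta_X$ via thinness of the fibre) and the Yoneda-style identification, both of which the paper's terse chain of isomorphisms leaves implicit.
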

\begin{proof}
If $A$ is an $\cat{I}$-generated object, we have that 
\[\Lan_{k}k(A)=\colim_{C\to A}C\cong A.\]
Therefore
\begin{align*}
\hom_{\cat{C}_\cat{I}}(A,X')
& = \hom_{\cat{C}_{\cat{I}}}(\colim_{C\to A}C,X')\\
& = \lim_{C\to A}\hom_{\cat{C}_{\cat{I}}}(C,X')\\
& = \lim_{C\to A}\hom_{\cat{C}}(C,X)\\
& = \hom_{\cat{C}}(\colim_{C\to A}C,X)\\
& = \hom_{\cat{C}}(A,X)
\end{align*}
\end{proof}

\begin{theorem}
\label{thm:C_I-locally-presentable}
Let $\cat{C}$ be a well-fibered topological construct and let $I$ be a small full subcategory of $\cat{C}$. Then, the category $\cat{C}_I$ is locally presentable.
\end{theorem}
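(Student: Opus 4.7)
The plan is to exhibit $\cat{C}_I$ as a locally $\kappa$-presentable category for an appropriately chosen regular cardinal $\kappa$. Cocompleteness and local smallness come almost for free: by Proposition \ref{prop:C_I-coreflective}(iv), $\cat{C}_I$ is coreflective in $\cat{C}$, so its inclusion into $\cat{C}$ is a left adjoint and preserves colimits; since $\cat{C}$ is cocomplete as a topological construct, so is $\cat{C}_I$, with colimits computed in $\cat{C}$. Local smallness is inherited from $\cat{C}$, which is locally small via the faithful topological functor $U$. Moreover, again by Proposition \ref{prop:C_I-coreflective}(iv), $I$ is dense in $\cat{C}_I$ and supplies a small set of strong generators; what remains is to exhibit a regular cardinal $\kappa$ with respect to which every $C\in I$ is $\kappa$-presentable in $\cat{C}_I$.

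To select $\kappa$, I exploit well-fiberedness and smallness of $I$: the family $\{|UC|:C\in I\}$ is bounded, so one may choose a regular cardinal $\kappa$ strictly larger than all of them. Fix $C\in I$ and a $\kappa$-filtered diagram $F\colon J\to \cat{C}_I$ with colimit cocone $(\alpha_j\colon F(j)\to X)_{j\in J}$. Since $U$ has a right adjoint by Proposition \ref{prop:top-functor-adjoints}, it preserves colimits, giving $UX=\colim_j UF(j)$ in $\Set$. Because $|UC|<\kappa$, the set $UC$ is $\kappa$-presentable in $\Set$, so set-theoretic factorisations through some $UF(j)$ exist and are essentially unique after passing to a larger index. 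Injectivity of the canonical map
\[
\colim_j \hom_{\cat{C}_I}(C,F(j))\longrightarrow \hom_{\cat{C}_I}(C,X)
\]
then follows by applying $U$ and using its faithfulness: if $\alpha_j f=\alpha_j g$, then $Uf$ and $Ug$ coalesce at some transition $F(j\to k)$, and faithfulness promotes this to an equality $F(j\to k)f=F(j\to k)g$ in $\cat{C}$.

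The main obstacle is surjectivity. Given $f\colon C\to X$, one obtains a set-theoretic factorisation $Uf=U\alpha_j\circ g$ for some $j\in J$ and some $g\colon UC\to UF(j)$, and one must upgrade $g$ to a morphism $C\to F(k)$ in $\cat{C}$ after composing with a suitable transition $F(j\to k)$. My strategy is to exploit the $I$-generation of each $F(j)$, characterised by Lemma \ref{lem:coreflection-I-generated} as saying that the structure on $F(j)$ is the final structure induced by the morphisms from objects of $I$. Combined with the compatibility of the structure on $X$ with those of the $F(j)$ under the colimit cocone, and with the fact that well-fiberedness makes each fibre of $U$ a small lattice, one shows that only boundedly many candidate structures on $UC$ need to be tested; this bound can be absorbed into $\kappa$, so that after enlarging $j$ inside the $\kappa$-filtered system the function $g$ is forced to underlie a genuine morphism in $\cat{C}$.

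Once presentability of every $C\in I$ in $\cat{C}_I$ is established, cocompleteness together with the existence of a small dense subcategory of $\kappa$-presentable objects entails that $\cat{C}_I$ is locally $\kappa$-presentable, completing the proof. The delicate step is precisely the surjectivity lifting above, where the interplay between the final-structure description of $I$-generated objects in Lemma \ref{lem:coreflection-I-generated} and the cardinality bounds furnished by well-fiberedness is what makes the argument close.
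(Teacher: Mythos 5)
The paper gives no proof of this result; it is cited verbatim from \cite[Theorem 3.6]{faj08}, so your proposal is an attempt to reconstruct that proof. The high-level plan is sound: coreflectivity gives cocompleteness, $I$ is a small dense full subcategory by Proposition \ref{prop:C_I-coreflective}, and if each $C\in I$ were $\kappa$-presentable in $\cat{C}_I$ then the conclusion would follow by the standard argument (close $I$ under $\kappa$-small colimits to make the canonical diagram over each object $\kappa$-filtered). The injectivity half of the presentability claim is also correctly handled.

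The gap is in the surjectivity half, which you rightly flag as the delicate point but then do not prove. Having factored $Uf=U\alpha_{j_0}\circ g_0$, let $S_k$ denote the initial structure on $UC$ pulled back from $F(k)$ along $UF(j_0\to k)\circ g_0$. The $S_k$ form a monotone family in the fibre over $UC$, and if $\kappa$ exceeds the cardinality of that fibre then the family stabilises at some $S_{k_0}$; this is the content of your ``only boundedly many candidate structures'' remark, and it is correct. But the conclusion you need is $s_C\le S_{k_0}$, i.e.\ that $g_{j_0 k_0}$ underlies a morphism $C\to F(k_0)$, and this simply does not follow. The inequalities one can actually extract are $S_k\le g_{j_0 k}^*\alpha_k^*(s_X)=f^*(s_X)$ and $s_C\le f^*(s_X)$, so both $s_C$ and the stable value $S_{k_0}$ sit below $f^*(s_X)$, but nothing compares them to each other: pullback along $g_{j_0 k}$ does not commute with the supremum defining the final structure $s_X$, so the ``extra'' structure on $F(k)$ that is not pulled back from $X$ along $\alpha_k$ --- precisely what obstructs $g_{j_0 k}$ from being a morphism --- is never controlled. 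Nor is it clear that your choice of $\kappa$ bounds the relevant data, since those extra opens live in the fibre over $UF(k)$, not over $UC$, and well-fiberedness bounds fibres pointwise but not uniformly in the cardinality of the base set. The appeal to $I$-generation of the $F(j)$ via Lemma \ref{lem:coreflection-I-generated} is announced but never used in any identifiable way; if anything, what is needed is an exploitation of the $I$-generation of the colimit $X$ itself, or a reduction to a small $I$-generated subobject of $F(k_0)$ containing the image of $g_0$, neither of which appears. As written, the proposal locates the crux but asserts rather than proves it.
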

\begin{proof}
See \cite[Theorem 3.6]{faj08}.
\end{proof}

\begin{example}
\label{ex:numerically-generated}
Let $\cat{I}$ be a full subcategory of $\Topa$. A topological space $X$ is $\cat{I}$-generated if and only if it has the final topology with respect to the class $(f\colon C\to X)_{f\in \overcat{\cat{I}}{X}}$of all continuous maps from objects of $\cat{I}$ to $X$.
In particular, if $\cat{I}$ is the full subcategory of $\Topa$ spanned by the geometric standard simplices, we denote by $\Top$ the category of $\cat{I}$-generated spaces.
This is known in the literature as the category of $\DDelta$-\emph{generated spaces} or \emph{numerically generated spaces}. 
Notice that, thanks to \ref{thm:C_I-locally-presentable}, $\Top$ is locally presentable.
\end{example}

\begin{definition}
Let $\cat{C}$ be a topological construct and assume that $\cat{C}$ has finite products. A class $\cat{I}$ of objects of $\cat{C}$ is said to be \emph{productive} if every object of $\cat{I}$ is exponentiable in $\cat{C}$ and binary products of elements of $\cat{I}$ are $\cat{I}$-generated.
\end{definition}

\begin{theorem}
\label{thm:final-closure-cartesian-closed}
Let $\cat{C}$ be a well fibered topological construct and let $\cat{I}$ be a productive class of objects of $\cat{C}$ containing a non empty object.
Then, the final closure $\cat{C}_{\cat{I}}$ of $\cat{I}$ in $\cat{C}$ is a Cartesian closed category.
\end{theorem}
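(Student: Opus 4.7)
The plan is to construct an internal hom object $Y^{X}$ in $\cat{C}_{\cat{I}}$ for each pair $X,Y\in\cat{C}_{\cat{I}}$, exploiting three ingredients: the coreflection $\iota\dashv R$ of $\cat{C}_{\cat{I}}$ in $\cat{C}$ provided by Proposition \ref{prop:C_I-coreflective}, the density of $\cat{I}$ inside $\cat{C}_{\cat{I}}$, and the exponentiability in $\cat{C}$ of every object of $\cat{I}$ guaranteed by the productive hypothesis. First I would check that $\cat{C}_{\cat{I}}$ has finite products. Since $\cat{C}$ is well fibered, its terminal object is $\disc{\term}=\ind{\term}$; the hypothesis that $\cat{I}$ contains a non-empty object ensures that $\disc{\term}$ is $\cat{I}$-generated, and hence serves as the terminal object of $\cat{C}_{\cat{I}}$. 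Binary products are then obtained by coreflecting the ambient products, so that $X\times_{\cat{I}}Y:=R(\iota X\times\iota Y)$.

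The pivotal technical step is the following key lemma: for every $C\in\cat{I}$ and $Z\in\cat{C}_{\cat{I}}$, the product $C\times Z$ computed in $\cat{C}$ is already $\cat{I}$-generated. Density lets us write $Z=\colim_{C'\to Z}C'$, which by Proposition \ref{prop:C_I-coreflective} is a colimit both in $\cat{C}_{\cat{I}}$ and in $\cat{C}$; exponentiability of $C$ in $\cat{C}$ makes $C\times\blank$ commute with this colimit; productivity of $\cat{I}$ guarantees that each $C\times C'$ lies in $\cat{C}_{\cat{I}}$; and a final use of the fact that $\iota$ preserves colimits exhibits $C\times Z$ as an $\cat{I}$-generated colimit. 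With this lemma in hand, the functor $C\times\blank\colon\cat{C}_{\cat{I}}\to\cat{C}_{\cat{I}}$ coincides with the restriction of $C\times\blank\colon\cat{C}\to\cat{C}$, so setting $Y^{C}:=R(Y^{C}_{\cat{C}})$ a routine manipulation of the adjunction $\iota\dashv R$ together with the exponentiability of $C$ in $\cat{C}$ gives $C\times\blank\dashv(\blank)^{C}$ in $\cat{C}_{\cat{I}}$.

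For arbitrary $X\in\cat{C}_{\cat{I}}$, use density to write $X=\colim_{C\to X}C$ and define $Y^{X}:=\lim_{C\to X}Y^{C}$ in $\cat{C}_{\cat{I}}$. The desired adjunction $\hom_{\cat{C}_{\cat{I}}}(Z\times_{\cat{I}}X,Y)\cong\hom_{\cat{C}_{\cat{I}}}(Z,Y^{X})$ is then verified first for test objects $Z\in\cat{I}$, where the computation is transparent because $Z\times\blank\colon\cat{C}\to\cat{C}$ preserves colimits and hence $Z\times X\cong\colim(Z\times C)$ in $\cat{C}$, and then extended to general $Z\in\cat{C}_{\cat{I}}$ by writing $Z=\colim_{C'\to Z}C'$ and reducing to the previous case. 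The main obstacle is showing that this extension really matches $\hom_{\cat{C}_{\cat{I}}}(Z\times_{\cat{I}}X,Y)$, which boils down to the identification $R(Z\times X)\cong\colim_{C'\to Z}(C'\times X)$ in $\cat{C}_{\cat{I}}$. By Lemma \ref{lem:coreflection-I-generated} this reduces to checking, for every $C_{0}\in\cat{I}$, the equality $\hom_{\cat{C}_{\cat{I}}}\bigl(C_{0},\colim_{C'\to Z}(C'\times X)\bigr)=\hom_{\cat{C}}(C_{0},Z)\times\hom_{\cat{C}}(C_{0},X)$, where the exponentiability of $C_{0}$ in $\cat{C}$ and the fact, recorded in Proposition \ref{prop:top-functor-adjoints}, that the underlying-set functor $U\colon\cat{C}\to\Set$ is simultaneously a left and a right adjoint (and so preserves both limits and colimits) are precisely what is needed to close the argument.
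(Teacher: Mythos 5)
The paper gives no proof of its own here: it simply cites \cite[Section 3, Theorem 4]{gou14} and stops, so there is no internal argument to compare against. Your density-based proof is correct and self-contained, and it reconstructs the standard mechanism underlying results of this kind.

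The skeleton is right. Finite products in $\cat{C}_{\cat{I}}$ are obtained by coreflecting the ambient products, with the non-emptiness hypothesis on $\cat{I}$ used exactly where you put it, namely to make the singleton $\cat{I}$-generated. Your key lemma --- that $C\times Z$, computed in $\cat{C}$, is already $\cat{I}$-generated whenever $C\in\cat{I}$ and $Z\in\cat{C}_{\cat{I}}$ --- is the engine of the whole argument, and your proof of it (expand $Z$ as the canonical colimit of $\cat{I}$-objects provided by Proposition \ref{prop:C_I-coreflective}, push $C\times\blank$ through that colimit using exponentiability of $C$ in $\cat{C}$, invoke productivity to see the pieces are $\cat{I}$-generated, and close up using that $\cat{C}_{\cat{I}}$ is closed under colimits in $\cat{C}$) is exactly what is needed. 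Setting $Y^{C}:=R(Y^{C}_{\cat{C}})$ and using fullness of $\cat{C}_{\cat{I}}\subset\cat{C}$ then makes each $C\in\cat{I}$ exponentiable in $\cat{C}_{\cat{I}}$, and the double density reduction for general $X$ and general $Z$ is sound.

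The one place you compress too much is the closing step. You correctly isolate the identity $R(Z\times X)\cong\colim_{C'\to Z}(C'\times X)$ as the remaining obstruction, and Lemma \ref{lem:coreflection-I-generated} is the right tool, but that Lemma may only be applied after one has checked that the comparison map $\colim_{C'\to Z}(C'\times X)\to Z\times X$ forgets to a bijection of sets; that check is where the preservation of products and colimits by $U$ (together with cartesian closedness of $\Set$) is genuinely used. Once the underlying sets are matched, the surjectivity demanded by the Lemma has a concrete witness which your proposal does not spell out: a pair $(a,b)\colon C_{0}\to Z\times X$ factors through the colimit leg indexed by the object $(C_{0},a)$ of $\overcat{\cat{I}}{Z}$, via the map $(\id_{C_{0}},b)\colon C_{0}\to C_{0}\times X$, and the only thing to verify is that this last map lives in $\cat{C}_{\cat{I}}$, which is your key lemma again. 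Your final sentence gestures at the right ingredients (exponentiability of $C_{0}$, the behaviour of $U$) without explaining how they combine; the factorisation just described is the glue that makes the step go through.
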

\begin{proof}
See \cite[Section 3, Theorem 4]{gou14}.
\end{proof}

\subsection{}
\label{subsec:corecompact-exponentiable}
Recall that, given a topological space $X$ and two open subsets $U$ and $V$ in $X$, we say that $U$ is \emph{way below} $V$ and write $U\Subset V $ if every open cover of $V$ contains a finite subcover of $U$.
A topological space $X$ is said to be \emph{core-compact} if for every point $x\in X$ and every open neighbourhood $V$ of $x$, there is an open neighbourhood $U$ of $x$ such that $U\Subset V$.

\subsection{}
\label{subsec:top-exponential}
If $X$ is a core-compact topological space and $Y$ is any topological space, given open subsets $U\subset X$ and $V\subset Y$, we denote by $[U\Subset V]$ the set of continuous maps $f\colon X\to Y$ such that $U\Subset f^{-1}(V)$.
Then, the topology on $\Topa(X,Y)$ generated by $[U\Subset V]$ for all $U$ and $V$ as above is called the \emph{core-open topology} and one can take the exponential $Y^X$ to be the set of continuous maps from $X$ to $Y$ endowed with the core-open topology. 
Even more, we have the following result:

\begin{lemma}
\label{lem:corecompact-exponentiable}
A topological space is exponentiable if and only if it is a core-compact topological space.
\end{lemma}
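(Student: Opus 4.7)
The plan is to prove both directions separately, relying on the Sierpi\'nski space as the main probe for the forward direction and on an explicit construction of the exponential via the core-open topology (as described in \ref{subsec:top-exponential}) for the reverse.

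For the ``core-compact implies exponentiable'' direction, I would take $Y^X$ to be the set of continuous maps $X\to Y$ endowed with the core-open topology generated by the subbasic opens $[U\Subset V]$ as in \ref{subsec:top-exponential}. The core of the argument is verifying the two-variable adjunction. I would first show that the evaluation map $\mathrm{ev}\colon Y^X\times X\to Y$ is continuous: given $f\in Y^X$ and $x\in X$ with $f(x)\in V$, core-compactness of $X$ supplies an open neighbourhood $U$ of $x$ with $U\Subset f^{-1}(V)$, so that $[U\Subset V]\times U$ is an open neighbourhood of $(f,x)$ mapping into $V$. Second, for a continuous map $g\colon Z\times X\to Y$ I would define its transpose $\hat g\colon Z\to Y^X$ by $\hat g(z)(x)=g(z,x)$ and verify continuity of $\hat g$: given $z_0\in Z$ with $\hat g(z_0)\in [U\Subset V]$, I would use core-compactness and the continuity of $g$ to construct an open neighbourhood $W$ of $z_0$ with $\hat g(W)\subseteq[U\Subset V]$. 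The uniqueness of $\hat g$ and the natural bijection $\Topa(Z\times X,Y)\cong \Topa(Z,Y^X)$ then follow immediately from the definitions.

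For the ``exponentiable implies core-compact'' direction, assume $\blank\times X$ has a right adjoint $(\blank)^X$, and let $\mathbb{S}=\{0,1\}$ be the Sierpi\'nski space (open sets $\emptyset,\{1\},\mathbb{S}$). The underlying set of $\mathbb{S}^X$ is in natural bijection with the lattice $\mathcal{O}(X)$ of open subsets of $X$ via $U\mapsto\chi_U$. I would then analyse the topology on $\mathbb{S}^X$ forced by the adjunction: continuity of the counit $\mathrm{ev}\colon \mathbb{S}^X\times X\to \mathbb{S}$ forces, for each pair $(U,x)$ with $x\in U$, the existence of an open neighbourhood $\mathcal{U}$ of $\chi_U$ in $\mathbb{S}^X$ and an open neighbourhood $W$ of $x$ in $X$ such that $\mathrm{ev}(\mathcal{U}\times W)\subseteq\{1\}$, i.e.\ every $V\in\mathcal{U}$ contains $W$. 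Applying this to a directed cover $\{V_i\}$ of $U$ and transposing the corresponding continuous map from the Sierpi\'nski cone, I would extract an open neighbourhood $W$ of $x$ such that finitely many $V_i$ already cover $W$, which is precisely the way-below relation $W\Subset U$. As $x\in U$ was arbitrary, $X$ is core-compact.

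The main obstacle will be the forward direction: extracting the way-below relation from the mere existence of the right adjoint, without assuming any concrete description of $\mathbb{S}^X$, requires carefully designing the test maps $Z\times X\to\mathbb{S}$ (typically taking $Z$ to be the Scott topology on a directed set of opens, realised topologically) so that the transposed map $Z\to\mathbb{S}^X$ sees directed joins in $\mathcal{O}(X)$. The cleanest route is to observe that the topology on $\mathbb{S}^X$ must refine the Scott topology on $\mathcal{O}(X)$ and that this refinement holds if and only if $\mathcal{O}(X)$ is a continuous lattice, equivalently $X$ is core-compact. A fully written proof can be found in \cite[Proposition 7.1.2 and Theorem 7.1.4]{bor942}, which I would cite rather than reproduce.
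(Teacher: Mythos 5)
The paper's ``proof'' is just a citation to Escard\'o--Heckmann \cite[Theorem 4.3]{EH01}; you likewise defer the full argument to a reference (Borceux), but you additionally give a correct sketch of the standard argument. Both halves of your outline are sound: the core-open topology on $Y^X$ with the $[U\Subset V]$ subbasis handles ``core-compact implies exponentiable'', and probing $\mathbb{S}^X\cong\mathcal{O}(X)$ with the Sierpi\'nski space and directed covers to recover the continuous-lattice/Scott-topology condition is exactly how the converse is done in the literature. You also correctly flag the converse as the delicate direction, where one must design test objects $Z$ so the transposed map sees directed joins in $\mathcal{O}(X)$.
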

\begin{proof}
See \cite[Theorem 4.3]{EH01}.
\end{proof}

\begin{example}
The category $\Top$ of $\Delta$-generated topological spaces is Cartesian closed, by Theorem \ref{thm:final-closure-cartesian-closed} since $\abs{\Delta^n}$ is a core compact topological space for every $n$ and the product $\abs{\Delta^n}\times\abs{\Delta^m}$ is isomorphic to the realisation $\abs{\Delta^n\times\Delta^m}$ (see \cite[Chapter III, 3.4]{GZ67}).
\end{example}
\chapter{Homotopical Algebra\label{ch:2}}
\section{Categories of fibrant object}
\label{ch:2sec:1}
A category with weak equivalences is a category $\cat{C}$ equipped with a class $\cat{W}$ of arrows in $\cat{C}$ called weak equivalences.
The localisation of $\cat{C}$ at $\cat{W}$ is called the homotopy category of $\cat{C}$.
Working with localisations is a priori a difficult task.
When $\cat{C}$ is equipped with the structure of a category of fibrant objects, one can define a meaningful notion of homotopy, that allows for a more manageable description of the homotopy category of $\cat{C}$ (see Theorem \ref{thm:homotopy-category-fibrant-objects}).
The results of this section are mainly from \cite{bro73}.

\subsection{}
\label{subsec:category-weak-equivalences}
A category with weak equivalences is a category $\cat{C}$ equipped with a class of morphisms $\cat{W}$ containing  the identity morphisms and satisfying the 2-out-of-3 property that given a pair of composable morphisms $f$ and $g$, if any two between $f$, $g$ and $gf$ are in $\cat{W}$, then so is the third.

\subsection{}
\label{subsec:homotopical-category}
A category $\cat{C}$ with weak equivalences $\cat{W}$ is said to be a \emph{homotopical category} if $\cat{W}$ satisfies the 2-out-of-6 property: given morphisms
\begin{diagram}
x\ar[r,"f"]&
y\ar[r, "g"]&
z\ar[r, "h"]&
w
\end{diagram}
if $hg$ and $gf$ are weak equivalences, then so are $f$, $g$, $h$ and $hgf$.

\subsection{} 
Recall that, given a category $\cat{C}$ and a distinguished class $\cat{W}$ of morphisms in $\cat{C}$, called weak equivalences, the \emph{homotopy category}  $\Ho(\cat{C})$ relative to $\cat{W}$ is the \emph{localisation} of $\cat{C}$ at the class $\cat{W}$. 
In other words, it is a functor:
\[\gamma\colon \cat{C}\to \Ho(\cat{C})\]
that sends the morphisms in $\cat{W}$ to invertible morphisms and which is universal among such functors.
One can show that a localisation of $\cat{C}$ by any class $\cat{W}$ always exists (see for example \cite[Proposition 5.2.2]{bor941}), provided that we are willing to pass to a higher universe.

\subsection{}
\label{subsec:cocylinder}
Let $\cat{C}$ be a category with weak equivalences and assume that $\cat{C}$ has finite products.
Let $\cat{F}$ be a class of maps in $\cat{C}$ that we call \emph{fibrations}.
A \emph{cocylinder} or \emph{path space} $(x',s, d^0, d^1)$ for an object $x$ in $\cat{C}$ is a factorisation
\begin{diagram}
x\ar[r, "s"] &
x' \ar[r, "{(d^0,\,d^1)}"] &
x\times x
\end{diagram}
of the diagonal $(1_x,1_x)\colon x\to x\times x$ , into a weak equivalence $s$ followed by a fibration $(d^0,\,d^1)$.

\begin{definition}
\label{def:category-fibrant-objects}
A \emph{category of fibrant objects} is a category with weak equivalences $(\cat{C}, \cat{W})$ together with a class $\cat{F}$ of fibrations, satisfying the following axioms:
\begin{enumerate}
	\item \label{item:category-fibrant-objects-1} The category $\cat{C}$ has finite products,
	\item \label{item:category-fibrant-objects-2} The class of fibrations is stable under arbitrary base change,
	\item \label{item:category-fibrant-objects-3} The class $\cat{W}\cap\cat{F}$ of \emph{acyclic fibrations} is stable under arbitrary base change,
	\item \label{item:category-fibrant-objects-4} Every object has a path space object,
	\item \label{item:category-fibrant-objects-5} For every object $x\in \cat{C}$ the morphism $x\to \terminal$ to the terminal object is a fibration.
\end{enumerate}
\end{definition}

\begin{lemma}
\label{lem:factorisation-lemma}
Let $\cat{C}$ be a category of fibrant objects and let $f$ be a morphism in $\cat{C}$. Then, there exists a factorisation $f=pi$ of $f$ such that $p$ is a fibration and $i$ is a section to an acyclic fibration. In particular, $i$ is a weak equivalence.
\end{lemma}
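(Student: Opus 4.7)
The plan is to mimic Brown's original construction, factoring $f\colon x \to y$ through a suitable path space of $y$.

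First I would unpack the path space of $y$. By axiom \ref{item:category-fibrant-objects-4} there exists a factorisation $y \xrightarrow{s} y' \xrightarrow{(d^0,d^1)} y\times y$ of the diagonal. Before doing anything else, I would check that each of $d^0, d^1$ is an \emph{acyclic} fibration. Fibrancy: the projection $\pi_0\colon y\times y \to y$ is the base change of the fibration $y \to \terminal$ (axiom \ref{item:category-fibrant-objects-5}) along itself, so it is a fibration by axiom \ref{item:category-fibrant-objects-2}; then $d^0 = \pi_0 \circ (d^0,d^1)$ is a composition of fibrations (which I will take to be closed under composition, as is standard for such axiomatisations), and similarly for $d^1$. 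Weak equivalence: since $(d^0,d^1)\circ s = (1_y,1_y)$, we have $d^0 \circ s = 1_y = d^1 \circ s$; as $s$ and $1_y$ are weak equivalences, the 2-out-of-3 property forces $d^0$ and $d^1$ to be weak equivalences.

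Next I would form the pullback
\begin{equation*}
\begin{tikzcd}
P \ar[r, "q"] \ar[d, "\pi"'] & y' \ar[d, "d^0"] \\
x \ar[r, "f"'] & y.
\end{tikzcd}
\end{equation*}
By axiom \ref{item:category-fibrant-objects-3}, $\pi$ is again an acyclic fibration. Using the universal property of the pullback, I would define $i\colon x \to P$ as the map induced by the pair $(1_x, sf)$; automatically $\pi \circ i = 1_x$, so $i$ is a section of $\pi$, and in particular a weak equivalence by 2-out-of-3. Setting $p := d^1 \circ q$, a direct check gives $p \circ i = d^1 \circ s \circ f = f$, giving the desired factorisation $f = pi$.

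The only nontrivial remaining point is that $p$ is a fibration. Here I would use the standard trick that $P$ fits in a second pullback square
\begin{equation*}
\begin{tikzcd}
P \ar[r, "q"] \ar[d, "{(\pi,p)}"'] & y' \ar[d, "{(d^0,d^1)}"] \\
x\times y \ar[r, "f\times 1_y"'] & y\times y,
\end{tikzcd}
\end{equation*}
whose pullback property follows by comparing universal properties with the original square. Since $(d^0,d^1)$ is a fibration by construction, so is $(\pi,p)$ by axiom \ref{item:category-fibrant-objects-2}; composing with the projection $x\times y \to y$, which is a fibration as above, yields that $p$ is a fibration. This verifying that $p$ is a fibration is the main point where I would be careful, since one must make the right choice of auxiliary pullback square; once that is set up, the argument is purely formal.
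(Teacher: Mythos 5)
Your proof is correct and is precisely Brown's original argument, which is what the paper cites in lieu of a proof: factor through the pullback of the path space, use the second pullback square against $(d^0,d^1)$ to show $p$ is a fibration, and deduce $i$ is a weak equivalence by 2-out-of-3 from $\pi i = 1_x$. Your caveat about fibrations being closed under composition is well-taken, since the paper's Definition \ref{def:category-fibrant-objects} does not list this axiom explicitly even though it is part of Brown's original axiomatisation and is needed both to show $d^\epsilon$ and $p$ are fibrations.
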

\begin{proof}
See \cite[Factorization Lemma]{bro73}.
\end{proof}

\begin{corollary}[Ken Brown's Lemma]
\label{cor:ken-brown-lemma}
Let $\cat{C}$ be a category of fibrant objects, let $\cat{D}$ be a category with weak equivalences and let $F\colon \cat{C}\to \cat{D}$ be a functor. Assume that for every acyclic fibration $p\in \cat{C}$, the map $F(p)$ is a weak equivalence in $\cat{D}$. 
Then, for every weak equivalence $w$ in $\cat{C}$, the map $F(w)$ is a weak equivalence in $\cat{D}$.
\end{corollary}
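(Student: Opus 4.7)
The plan is to reduce the case of an arbitrary weak equivalence in $\cat{C}$ to the case of an acyclic fibration by applying Lemma \ref{lem:factorisation-lemma}, and then to transfer the result to $\cat{D}$ using the 2-out-of-3 property that comes with the notion of a category with weak equivalences (\ref{subsec:category-weak-equivalences}).

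First, let $w\colon x\to y$ be a weak equivalence in $\cat{C}$. By Lemma \ref{lem:factorisation-lemma}, I can factor $w = pi$, where $p\colon z\to y$ is a fibration and $i\colon x\to z$ is a section to an acyclic fibration $q\colon z\to x$ (so $qi=\id_x$). Since $q$ is a weak equivalence and $qi=\id_x$ is a weak equivalence, the 2-out-of-3 property in $\cat{C}$ forces $i$ itself to be a weak equivalence. Combined with $w=pi$ being a weak equivalence, a second application of 2-out-of-3 shows that $p$ is also a weak equivalence, hence an acyclic fibration.

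Now I apply $F$. By hypothesis, $F$ sends acyclic fibrations to weak equivalences, so $F(p)$ and $F(q)$ are weak equivalences in $\cat{D}$. Since $F(q)F(i)=F(qi)=F(\id_x)=\id_{Fx}$, which is a weak equivalence by assumption on $\cat{D}$, the 2-out-of-3 property in $\cat{D}$ yields that $F(i)$ is a weak equivalence. Finally, $F(w)=F(p)F(i)$ is a composite of two weak equivalences in $\cat{D}$, and one more application of 2-out-of-3 (to $F(i)$, $F(p)$ and $F(w)$) shows that $F(w)$ is a weak equivalence, as required.

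The argument is essentially a formal manipulation of 2-out-of-3, and no step presents a genuine obstacle provided Lemma \ref{lem:factorisation-lemma} is in hand. The only thing to be careful about is making sure every use of 2-out-of-3 in $\cat{D}$ is legitimate, which it is because $\cat{D}$ is assumed to be a category with weak equivalences. The key conceptual input is that the Factorization Lemma lets us replace any weak equivalence by a composite of an acyclic fibration with a section of one, so that the hypothesis on $F$ (preservation of acyclic fibrations) is enough to control all weak equivalences.
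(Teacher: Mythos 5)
Your proof is correct and follows the same route as the paper: factor $w$ via the Factorization Lemma, deduce that the fibration factor is acyclic and that the section is a weak equivalence, then use 2-out-of-3 in $\cat{D}$ twice, first on $F(q)F(i)=\id_{Fx}$ to get $F(i)$, then on $F(w)=F(p)F(i)$. The only cosmetic difference is that you re-derive $i$ being a weak equivalence from 2-out-of-3, whereas the paper's Lemma \ref{lem:factorisation-lemma} already states this as part of its conclusion.
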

\begin{proof}
Let $w\colon c\to c'$ be a weak equivalence, then by  Lemma \ref{lem:factorisation-lemma} we have a diagram
\begin{equation}
\begin{tikzcd}
c 	\ar[r, "i" description]
	\ar[dr, "w"']&
x	\ar[d, "p"]
	\ar[l, bend right, "r"']\\
&
c'
\end{tikzcd}
\end{equation}
where $r$ is an acyclic fibration such that $ri=1_{c}$ and the map $p$ is a fibration.
In particular, since both $w$ and $i$ are weak equivalences, $p$ is a weak equivalence as well by the 2-out-of-3 property.
Hence, looking at the diagram:
\begin{equation}
\begin{tikzcd}
Fc 	\ar[r, "Fi" description]
	\ar[dr, "Fw"']&
Fx	\ar[d, "Fp"]
	\ar[l, bend right, "Fr"']\\
&
Fc'
\end{tikzcd}
\end{equation}
we have that $Fp$ and $Fr$ are weak equivalences, since $p$ and $r$ are acyclic fibrations.
In particular, $Fi$ is a weak equivalence as well by the 2-out-of-3 applied to $1_{Fc}$ and $Fr$.
To conclude, $Fw$ is a weak equivalence as well by the 2-out-of-3 applied to $Fi$ and $Fp$.
\end{proof}

\subsection{}
Two morphisms $f,g\colon a\to x$ in a category of fibrant objects $\cat{C}$ are said to be \emph{(right) homotopic} if there exists a cocylinder $(x', s, d^0,d^1)$ of $x$ and a morphism $h\colon a\to x$ such that
$d^0h=f$ and $d^1h=g$.
One can show that homotopy defines an equivalence relation on the set $\hom_{\cat{C}}(a,x)$ of maps from $a$ to $x$.

\begin{proposition}
\label{prop:cat-fibrant-objects}
Let $f,g\colon a\to x$ be maps in a category of fibrant objects $\cat{C}$ and assume that $f$ is homotopic to $g$.
\begin{itemize}
\item
If $u\colon b\to a$ is a morphism in $\cat{C}$, then $fu$ is homotopic to $gu$,
\item
If $v\colon x\to y$ is a morphism in $\cat{C}$, and $y'$ is a path space for $y$, then there exists an acyclic fibration $p\colon a'\to a$ such that $vfp$ is homotopic to $vgp$ through a homotopy $h\colon a'\to y'$.
\end{itemize}
\end{proposition}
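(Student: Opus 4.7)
For part (i), the claim is immediate: if $(x',s,d^{0},d^{1})$ is a path space and $K\colon a\to x'$ is a homotopy from $f$ to $g$, then the composite $Ku\colon b\to x'$ satisfies $d^{i}(Ku)=(d^{i}K)u$ for $i=0,1$, hence is a homotopy from $fu$ to $gu$.

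For part (ii), the obstacle is that $v\colon x\to y$ is not equipped with a comparison map between the given path spaces $x'$ and $y'$. The plan is to build such a comparison after replacing $x'$ by an acyclic-fibration cover. Write $(y',s',\partial^{0},\partial^{1})$ for the given cocylinder on $y$, and fix a homotopy $K\colon a\to x'$ with $d^{0}K=f$ and $d^{1}K=g$.

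First, form the pullback
\begin{equation*}
\begin{tikzcd}
M \ar[r, "\pi_{y'}"] \ar[d, "\pi_{x'}"'] & y' \ar[d, "{(\partial^{0},\partial^{1})}"] \\
x' \ar[r, "{(vd^{0},vd^{1})}"'] & y\times y,
\end{tikzcd}
\end{equation*}
which makes $\pi_{x'}$ a fibration by axiom \ref{item:category-fibrant-objects-2}. Since $d^{i}s=1_{x}$ and $\partial^{i}s'=1_{y}$, the composites $(vd^{0},vd^{1})s$ and $(\partial^{0},\partial^{1})s'v$ both equal $(v,v)\colon x\to y\times y$, so they assemble into a canonical map $\sigma:=(s,s'v)\colon x\to M$. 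Applying Lemma \ref{lem:factorisation-lemma} to $\sigma$ yields a factorisation $x\xrightarrow{i}E\xrightarrow{\rho}M$ with $\rho$ a fibration and $i$ a section to an acyclic fibration $q\colon E\to x$. The composite $\alpha:=\pi_{x'}\rho\colon E\to x'$ is then a fibration; since $qi=1_{x}$ and $q$ is a weak equivalence, 2-out-of-3 shows $i$ is a weak equivalence, and since $\alpha i=\pi_{x'}\sigma=s$ is also a weak equivalence, a second application of 2-out-of-3 shows $\alpha$ is a weak equivalence. Hence $\alpha$ is an acyclic fibration.

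To conclude, pull $\alpha$ back along $K$:
\begin{equation*}
\begin{tikzcd}
a' \ar[r, "K'"] \ar[d, "p"'] & E \ar[d, "\alpha"] \\
a \ar[r, "K"'] & x'.
\end{tikzcd}
\end{equation*}
By axiom \ref{item:category-fibrant-objects-3}, $p$ is an acyclic fibration. Setting $h:=\pi_{y'}\rho K'\colon a'\to y'$, the pullback relations yield $(\partial^{0}h,\partial^{1}h)=(vd^{0},vd^{1})\alpha K'=(vd^{0},vd^{1})Kp=(vfp,vgp)$, as required. The step most likely to require care is the identification of $\alpha\colon E\to x'$ as an acyclic fibration, which rests on combining the factorisation lemma with two consecutive applications of the 2-out-of-3 property.
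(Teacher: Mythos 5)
Your proof is correct. Since the paper states Proposition \ref{prop:cat-fibrant-objects} without giving a proof (it cites \cite{bro73} for this section, where the result appears as Lemma 1), your argument is being judged against Brown's classical approach, which it reproduces faithfully: part (i) is immediate; for part (ii), forming the pullback $M=x'\times_{y\times y}y'$, factoring the induced map $\sigma=(s,s'v)\colon x\to M$ via Lemma \ref{lem:factorisation-lemma}, showing $\alpha=\pi_{x'}\rho$ is an acyclic fibration by two applications of 2-out-of-3, and then pulling back along the homotopy $K$ is exactly the standard strategy. The bookkeeping at the end, identifying $(\partial^{0}h,\partial^{1}h)=(vfp,vgp)$ via the two pullback squares, is verified correctly.

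One thing worth making explicit: the step asserting that $\alpha=\pi_{x'}\rho$ is a fibration relies on fibrations being closed under composition. This is part of Brown's original axiom set (his axiom (B)), but it is not actually listed in the paper's Definition \ref{def:category-fibrant-objects}. The theory doesn't function without it (Lemma \ref{lem:factorisation-lemma} already needs it), so your appeal to it is reasonable, but since the definition in the paper only explicitly grants axioms \ref{item:category-fibrant-objects-1}--\ref{item:category-fibrant-objects-5}, you should flag that closure under composition is being assumed.
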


\subsection{}
Let $\cat{C}$ be a category of fibrant objects and let $a$ and $b$ be objects in $\cat{C}$. 
We define an equivalence on $\cat{C}(a,b)$ by setting $f\sim g$ if there exists a weak equivalence $t\colon a'\to a$ such that $ft$ is homotopic to $gt$.
By Proposition \ref{prop:cat-fibrant-objects}, this equivalence relation is compatible with composition of morphisms in $\cat{C}$.
We denote by $[a,b]$ the quotient of $\cat{C}(a,b)$ with respect to $\sim$ and by $\pi\cat{C}$ the category with the same objects as $\cat{C}$ and with:
\[\pi\cat{C}(a,b)=[a,b]\]
of $\cat{C}(a,b)$ by the equivalence relation $\sim$.
Notice that the images of the weak equivalences in $\cat{C}$ endow $\pi\cat{C}$ with the structure of a category with weak equivalences.

\begin{definition}
A category $\cat{C}$ with weak equivalences $\cat{W}$ admits a \emph{calculus of right fractions} if
\begin{enumerate}
\item
Any solid span:
\begin{equation*}
\begin{tikzcd}
a'\ar[r,dashed]\ar[d,dashed]&
b'\ar[d, "t"]\\
a\ar[r]&
b
\end{tikzcd}
\end{equation*}
where $t$ is a weak equivalence, can be completed to a commuting square.
\item
If $f,g\colon a\to b$ are morphisms such that there exists a weak equivalence $t\colon b\to b'$ such that $tf=tg$, then there exists  a weak equivalence $s\colon a'\to a$ such that $fs=gs$.
\end{enumerate}
\end{definition}

\begin{proposition}
Let $\cat{C}$ be a category of fibrant objects with weak equivalences $\cat{W}$. Then, the category $\pi\cat{C}$ admits a right calculus of fractions with respect to the image of $\cat{W}$.
\end{proposition}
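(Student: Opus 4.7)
I would verify the two axioms of a right calculus of fractions separately, in each case reducing to a pullback situation via the Factorisation Lemma (Lemma \ref{lem:factorisation-lemma}) and handling residual data with cocylinder objects guaranteed by axiom \ref{item:category-fibrant-objects-4} of Definition \ref{def:category-fibrant-objects}.

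For the pullback axiom, given $f\colon a\to b$ and a weak equivalence $t\colon b'\to b$, I would factor $t$ as $b'\xrightarrow{j} x\xrightarrow{p} b$ with $j$ a section of an acyclic fibration $r\colon x\to b'$ and $p$ a fibration; by the 2-out-of-3 property applied to $t=pj$, the map $p$ is itself an acyclic fibration. Pulling $p$ back along $f$ (using axiom \ref{item:category-fibrant-objects-3}) yields an acyclic fibration $\tilde p\colon a'\to a$ together with a map $k\colon a'\to x$ satisfying $f\tilde p = pk$. Setting $\tilde f := rk$ provides the desired completion $a'\xrightarrow{\tilde f} b'$ of the span, and $\tilde p$ is the required weak equivalence. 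The square with legs $\tilde p$ and $\tilde f$ commutes in $\pi\cat{C}$ rather than strictly in $\cat{C}$: the comparison of $t\tilde f = pjrk$ with $f\tilde p = pk$ reduces to exhibiting a right homotopy $jr\sim 1_x$, which I would produce by solving a suitable lifting problem against the cocylinder fibration $(d^0,d^1)\colon x^I\to x\times x$ for $x$, using that $r$ is an acyclic fibration and $rjr=r$.

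For the equalization axiom, given $f,g\colon a\to b$ with $tf=tg$ for a weak equivalence $t\colon b\to b'$, I would select a cocylinder $(b^I,\sigma,d^0,d^1)$ for $b$ and form the pullback $a' := a\times_{b\times b} b^I$ along $(f,g)\colon a\to b\times b$ and $(d^0,d^1)\colon b^I\to b\times b$, obtaining a fibration $s\colon a'\to a$ and a canonical map $h\colon a'\to b^I$ satisfying $d^0 h = fs$ and $d^1 h = gs$. The map $h$ is then an honest right homotopy witnessing $fs\sim gs$ in $\cat{C}$, which gives $[f][s]=[g][s]$ in $\pi\cat{C}$. It remains to show that $s$ is (representable by) a weak equivalence; for this I would use the hypothesis $tf=tg$ in combination with a cocylinder for $b'$, whose constant path $\sigma' t f\colon a\to b'^I$ encodes the compatibility of $(f,g)$ with $t\times t$, and transfer this datum along a Factorisation-Lemma factorisation of $t$ via a lift against an acyclic fibration, producing a homotopy section of $s$ that forces $s$ into the class of weak equivalences via the 2-out-of-3 axiom.

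The principal obstacle in both parts is the transfer of structure along the weak equivalence $t$: replacing $t$ by an acyclic fibration is mechanical, but the resulting lifts must be coordinated so that the strict pullback constructions not only produce candidate maps but also deliver the required equalities in $\pi\cat{C}$ and the required weak-equivalence property of $s$. The bookkeeping around these homotopies, rather than any essentially new ingredient beyond Brown's Factorisation Lemma and the cocylinder axiom, is where the proof demands care.
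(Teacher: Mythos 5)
Note first that the paper states this proposition without a proof (it is part of the Brown \cite{bro73} material recalled in Section~\ref{ch:2sec:1}), so there is no in-paper argument to compare against; the proposal has to be judged on its own terms.

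Your treatment of the first axiom has the right overall shape: factor $t$ via Lemma~\ref{lem:factorisation-lemma} into a section $j$ of an acyclic fibration $r$ followed by a (necessarily acyclic) fibration $p$, pull $p$ back along $f$ to obtain an acyclic fibration $\tilde p$ and a comparison map $k$, and set $\tilde f = rk$. That the resulting square only commutes in $\pi\cat{C}$, and that this reduces to showing $jr$ is right-homotopic to $1_x$ after precomposition with an acyclic fibration, is also correct. The flaw is in the proposed mechanism: a category of fibrant objects carries no distinguished class of cofibrations, so one cannot ``solve a lifting problem against $(d^0,d^1)\colon x^I\to x\times x$''. The fact you need is a standard lemma of Brown — two maps into the source of an acyclic fibration that become equal after composing with it are right-homotopic after a further acyclic-fibration precomposition — and its proof is a pullback construction, not a lifting argument. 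This part is a fixable imprecision rather than a wrong idea.

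For the second axiom the plan has a genuine gap. The fibration $s\colon a'\to a$ obtained by pulling back $(d^0,d^1)\colon b^I\to b\times b$ along $(f,g)$ is not a weak equivalence in general, even in the degenerate case $f=g$, $t=1_b$ where $tf=tg$ holds on the nose. Concretely, in the category of Kan complexes take $a=\Delta^0$, take $b$ a Kan model of the circle, and take $f=g$ a vertex of $b$: then $a'$ models the based loop space $\Omega b$, whose set of components is $\mathbb{Z}$, so $s\colon a'\to\Delta^0$ is certainly not a weak equivalence. The proposed rescue via a ``homotopy section'' of $s$ and 2-out-of-3 does not close this: a (homotopy) section of a fibration carries no information about whether the fibration is a weak equivalence unless one already knows the section is a weak equivalence, and in the example just given the constant-path section $\Delta^0\to\Omega b$ is visibly not. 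A correct argument applies Lemma~\ref{lem:factorisation-lemma} to $t$ to replace it by an acyclic fibration, then invokes the same detection lemma as above (together with the postcomposition part of Proposition~\ref{prop:cat-fibrant-objects}) to produce $s\colon a'\to a$ directly as a composite of pullbacks of acyclic fibrations — so that the weak-equivalence property of $s$ is built in — together with an honest right homotopy $fs\sim gs$.
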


\begin{theorem}
\label{thm:homotopy-category-fibrant-objects}
Let $\cat{C}$ be a category of fibrant objects and let $a$ and $b$ be two objects of $\cat{C}$. Then there is a natural isomorphism:
\[\Ho(\cat{C})(a,b)\cong [a,b].\]
\end{theorem}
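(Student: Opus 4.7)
The plan is to build the comparison map from $[a,b]$ to $\Ho(\cat{C})(a,b)$ using the canonical localisation functor $\gamma\colon \cat{C}\to\Ho(\cat{C})$, and then to prove injectivity and surjectivity by exploiting the right calculus of fractions that $\pi\cat{C}$ was just shown to carry. First I would verify that $\gamma$ descends to $\pi\cat{C}$: given a right homotopy $h\colon a\to x'$ between $f$ and $g$, with $x'$ sitting in a cocylinder $x\xrightarrow{s}x'\xrightarrow{(d^0,d^1)}x\times x$, the relations $d^{i}s=1_{x}$ together with the 2-out-of-3 property applied to $s\in\cat{W}$ force $d^{0}$ and $d^{1}$ to be weak equivalences, so $\gamma(d^{0})$ and $\gamma(d^{1})$ are each two-sided inverses of the invertible map $\gamma(s)$ and therefore agree. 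Hence $\gamma(f)=\gamma(d^{0}h)=\gamma(d^{1}h)=\gamma(g)$, and the coarser relation used to define $[a,b]$ is inverted by $\gamma$ in the same way (precomposing with an invertible image of a weak equivalence). This produces a well-defined map $\phi\colon[a,b]\to\Ho(\cat{C})(a,b)$, $[f]\mapsto[(1_{a},f)]$.

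Since $\gamma$ factors through $\pi\cat{C}$, the localisation $\Ho(\cat{C})$ is also the localisation of $\pi\cat{C}$ at the image $\bar{\cat{W}}$ of $\cat{W}$, and by the previous proposition this localisation is computed as a right calculus of fractions: morphisms $a\to b$ in $\Ho(\cat{C})$ are equivalence classes of spans $a\xleftarrow{[s]}a'\xrightarrow{[f]}b$ with $[s]\in\bar{\cat{W}}$. Injectivity of $\phi$ is then immediate: if $(1_{a},f)$ and $(1_{a},g)$ become equivalent, a common refinement supplies a weak equivalence $w\colon z\to a$ (in $\cat{C}$) with $[f]\cdot[w]=[g]\cdot[w]$ in $\pi\cat{C}$, which by definition means there is a further weak equivalence $t\colon z'\to z$ making $fwt$ right-homotopic to $gwt$ in $\cat{C}$. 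Setting $r=wt$, this is precisely the datum showing $f\sim g$ in $\pi\cat{C}$, so $[f]=[g]$ in $[a,b]$.

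The main obstacle is surjectivity. Given a right fraction $(s\colon a'\to a,\,f\colon a'\to b)$, I need to produce $g\colon a\to b$ in $\cat{C}$ such that $(1_{a},g)\sim(s,f)$. Unpacking the calculus-of-fractions equivalence, this reduces to finding $g$ together with a weak equivalence $t\colon z\to a'$ and a right homotopy between $gst$ and $ft$. My approach is to apply the Factorisation Lemma to the induced morphism $(s,f)\colon a'\to a\times b$, obtaining a factorisation $(s,f)=(p,q)\circ i$ in which $i\colon a'\to y$ is a section of an acyclic fibration $r\colon y\to a'$ and $(p,q)\colon y\to a\times b$ is a fibration. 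Since $p\colon y\to a$ is then the composite of two fibrations (hence a fibration) and satisfies $pi=s$ with $s,i$ both weak equivalences, the 2-out-of-3 property upgrades $p$ to an acyclic fibration. The fraction $(s,f)$ is equivalent to $(p,q)$ via the refinement $i$, so the problem becomes: replace the acyclic fibration $p$ to produce a genuine morphism $g\colon a\to b$ together with a comparison homotopy with $q$. The technical core is then to combine a path object for $b$ with the acyclicity of $p$ — producing simultaneously $g$ and a path-object lift witnessing $gp$ homotopic to $q$ over $a$ — which is exactly the place where all five axioms of a category of fibrant objects are needed.
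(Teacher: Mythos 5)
Your well-definedness and injectivity arguments are correct, and your factorisation step is sound: it does reduce a right fraction $(s,f)$ to one of the form $(p\colon y\to a,\,q\colon y\to b)$ with $p$ an acyclic fibration. But the ``technical core'' you then gesture at --- producing $g\colon a\to b$ together with a right homotopy between $gp$ and $q$ --- cannot be carried out. To extract a genuine morphism $a\to b$ from the fraction $(p,q)$ you would need $p$ to admit a section, or at least a section up to homotopy over $a$; in a category of fibrant objects there is no class of cofibrations to lift against, so nothing guarantees that acyclic fibrations are split. This is not a gap that more cleverness with path objects will close: the statement you are trying to prove is actually false as written. Take $\cat{C}$ to be unbounded chain complexes of abelian groups, with quasi-isomorphisms as weak equivalences and degreewise surjections as fibrations; every object is fibrant and the standard path complex gives cocylinders, so this is a category of fibrant objects. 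With $a=\ZZ/2$ concentrated in degree $0$ and $b=\ZZ/2$ concentrated in degree $1$, one has $\cat{C}(a,b)=0$ for degree reasons, so $[a,b]$ is a singleton, yet $\Ho(\cat{C})(a,b)\cong\ZZ/2$ (it is the first Ext group computed in the derived category). What Brown actually proves in \cite{bro73} is that $\Ho(\cat{C})(a,b)\cong\colim_{t\colon a'\to a}[a',b]$, the filtered colimit over weak equivalences $t$ with target $a$; equivalently, morphisms in $\Ho(\cat{C})$ are equivalence classes of right fractions $a\leftarrow a'\to b$, not of genuine morphisms $a\to b$. Your argument correctly establishes that the canonical map $[a,b]\to\Ho(\cat{C})(a,b)$ is injective, and it is a bijection only under extra hypotheses --- for instance when $a$ is cofibrant in an ambient model category, so that every acyclic fibration over $a$ admits a section.
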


\section{Model categories}
\label{ch:2sec:2}

Model categories were introduced by Quillen in \cite{qui67}.
A model category is a category $\cat{M}$ with weak equivalences, together with two classes of auxiliary morphisms, called fibrations and cofibrations.
As for categories of fibrant objects, the extra structure allows for a more manageable computation of the morphisms in the homotopy category of $\cat{M}$ (see Theorem \ref{thm:cofibrant-fibrant}).
The results in this section can be found in \cite{hov07}, \cite{hir09}  or \cite{rie14}.
\begin{definition}
\label{def:model-category}
A \emph{model category} is a category $\cat{M}$ endowed with three classes of morphisms $\cat{W}$, $\cat{F}$ and $\cat{C}$, satisfying the following axioms:
\begin{enumerate}
	\item
	The category $\cat{M}$ is complete and cocomplete.
	\item
	The class $\cat{W}$ has the 2-out-of-3 property.
	\item
	Both pairs $(\cat{C},\cat{F}\cap \cat{W})$ and $(\cat{C}\cap\cat{W},\cat{F})$ are weak factorisation systems.
\end{enumerate}
\end{definition}

\subsection{} 
If $\cat{M}$ is a model category, a morphism in $\cat{W}$ (\cat{C}, or $\cat{F}$) is called a \emph{weak equivalence} (\emph{cofibration} or \emph{fibration}, respectively).
A morphism in $\cat{M}$ that is both a weak equivalence and a cofibration is called a \emph{trivial cofibration} and dually a morphism that is both a weak equivalence and a fibration is called a \emph{trivial fibration}.
An object $a$ in $\cat{M}$ such that the unique map $\initial\to a$ from the initial object is a cofibration is called \emph{cofibrant} and an object $x$ in $\cat{M}$ such that the unique map $x\to \terminal$ to the terminal object is a fibration is called \emph{fibrant}.

\begin{example}
Every bicomplete category $\cat{M}$ has a \emph{trivial model category structure} given by declaring the weak equivalences to be the isomorphisms in $\cat{M}$ and the other two classes to be equal to the class of all morphisms in $\cat{M}$.
Moreover, any permutation of these three classes gives rise to a model structure on $\cat{M}$.
\end{example}

\begin{example}
\label{ex:opposite-category-model}
If $\cat{M}$ has a model category structure, the opposite category $\cat{M}^\op$ inherits a model category structure where the weak equivalences remain the same and the classes of cofibrations and fibrations are interchanged.
\end{example}

\begin{example}
If $\cat{M}$ is a model category and $x$ is an object of $\cat{M}$, the slice category $\overcat{\cat{M}}{x}$ has a natural model structure. Weak equivalences, cofibrations and fibrations are preserved and reflected by the forgetful functor $\overcat{\cat{M}}{x}\to \cat{M}$.
Example \ref{ex:opposite-category-model} implies that also the dual statement is true: the coslice category $\undercat{\cat{M}}{x}$ has a natural model category structure.
\end{example}

\begin{comment}
\begin{example}
Let $\Top$ be the category of topological spaces and continuous maps between them. 
Recall that a \emph{Serre fibration} is a morphism $p\colon x\to Y$ of topological spaces with the right lifting property with respect to the class of inclusions:
\[\{D^n\to D^n\times I: n\in \NN\}\]
of the $n$-disks into their cylinder objects.
Moreover, recall that a \emph{weak homotopy equivalence} is a morphism $f\colon X\to Y$ between topological spaces that induces isomorphisms:
\[\pi_n(f)\colon \pi_n(X,x)\to \pi_n(Y,f(x))\]
for all $n\in \NN$ and for every point $x\in X$.
Then,  there exists a model category structure on $\Top$, called the \emph{Serre-Quillen model structure} where the weak equivalences are the weak homotopy equivalences and the fibrations are the Serre fibrations.
\end{example}
\end{comment}

\subsection{} 
Let $\cat{M}$ be a model category. Recall that a \emph{cylinder} of an object $a$ in $\cat{M}$ is a factorisation
\begin{diagram}
a\coprod a\ar[r, "{(\partial_0,\partial_1)}"] &
a' \ar[r, "\sigma"] &
a
\end{diagram}
of the codiagonal $(1_a,1_a)\colon a\coprod a\to a$ into a cofibration $(\partial_0,\partial_1)$ followed by a weak equivalence $\sigma$.
Dually, recall that a \emph{cocylinder} or \emph{path object} of an object $x$ is a factorisation
\begin{diagram}
x\ar[r, "s"] &
x' \ar[r, "{(d^0,d^1)}"] &
x\times x
\end{diagram}
of the diagonal $(1_x,1_x)\colon x\to x\times x$ , where $s$ is a weak equivalence and $(d^0,d^1)$ is a fibration (see also \ref{subsec:cocylinder}).

\begin{remark}
\label{rmk:model-cat-fibrant-cat}
Due to the third axiom in Definition \ref{def:model-category}, every object $a$ in a model category $\cat{M}$ has a cylinder object, obtained by applying the weak factorisation system $(\cat{C},\cat{W}\cap \cat{F})$ to the codiagonal $a\coprod a\to a$ (dually, every object $x$ in $\cat{M}$ has a cocylinder object).
In particular, if we define by $\cat{M}_{c}$ the full subcategory of $\cat{M}$ spanned by the cofibrant objects and by $\cat{M}_{f}$ the full subcategory of $\cat{M}$ spanned by the fibrant objects, it's  easy to see that $\cat{M}_{f}$ and $\cat{M}_{c}^{\op}$ have both the structure of a category of fibrant objects in the sense of Definition \ref{def:category-fibrant-objects} (see \cite[Example 1]{bro73}).
\end{remark}

\subsection{} 
Let $f_0, f_1\colon a\to x$ be morphisms in $\cat{M}$. A \emph{left homotopy} from $f_0$ to $f_1$ is a map $h\colon a'\to x$ from a cylinder of $a$ to $x$, such that $h\partial_i=f_i$ for $i=0,1$.
Dually, a \emph{right homotopy} from $f_0$ to $f_1$ is a morphism $k\colon a\to x'$ from $a$ to a cocylinder of $x$ such that $d^ik=f_i$ for $i=0,1$.

\subsection{}
Let $f_0,f_1\colon a\to x$ be morphisms in a model category $\cat{M}$ and assume that $a$ is cofibrant and $x$ is fibrant.
Then, one can show that the existence of a left homotopy from $f_0$ to $f_1$ is equivalent to the existence of a right homotopy.
Moreover, the existence of such homotopies is independent of the given cylinder or cocylinder.
This can be used to show that the relation $\sim$ on the set of morphisms from $a$ to $x$ given by $f_0\sim f_1$ whenever there exists a left (or right) homotopy from $f_0$ to $f_1$ is indeed an equivalence relation (see \cite[Corollary 1.2.6]{hov07}).

\begin{notation}
If $a$ is a cofibrant object and $x$ is a fibrant object in a model category $\cat{M}$, we denote by
	\[[a,x]=\hom_{\cat{M}}(a,x)/\sim\]
the quotient of the set of morphisms from $a$ to $x$ by the relation of left (or right) homotopy defined above.
\end{notation}

\begin{theorem}
\label{thm:cofibrant-fibrant}
For any cofibrant object $a$ and any fibrant object $x$, there exists a natural bijection:
\[\hom_{\Ho(\cat{M})}(a,x)\simeq[a,x].\]
\end{theorem}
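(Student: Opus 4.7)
The plan is to reduce the statement to Theorem \ref{thm:homotopy-category-fibrant-objects} applied to the category of fibrant objects $\cat{M}_f$ obtained in Remark \ref{rmk:model-cat-fibrant-cat}. First I would observe that the inclusion $\iota\colon\cat{M}_f\hookrightarrow \cat{M}$ induces an equivalence of homotopy categories $\Ho(\cat{M}_f)\simeq\Ho(\cat{M})$. A choice of factorisation of each map $y\to\terminal$ as a trivial cofibration followed by a fibration, via the weak factorisation system $(\cat{C}\cap\cat{W},\cat{F})$, produces a fibrant replacement functor $R\colon\cat{M}\to\cat{M}_f$ together with a natural weak equivalence $y\to\iota Ry$. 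Both $R$ and $\iota$ preserve weak equivalences, and $\iota R\Rightarrow\id_{\cat{M}}$ and $R\iota\Rightarrow\id_{\cat{M}_f}$ are natural weak equivalences (the second following from the first via the $2$-out-of-$3$ property), so after localising one obtains the desired equivalence. Since $x$ is already fibrant, this gives $\hom_{\Ho(\cat{M})}(a,x)\cong\hom_{\Ho(\cat{M}_f)}(a,x)$.

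Next, Theorem \ref{thm:homotopy-category-fibrant-objects} applied to $\cat{M}_f$ identifies $\hom_{\Ho(\cat{M}_f)}(a,x)$ with the quotient $[a,x]_{\cat{M}_f}$ of $\hom_{\cat{M}}(a,x)$ by the relation defined in the previous section: $f\sim_{\cat{M}_f}g$ iff there is a weak equivalence $t\colon a'\to a$ with $a'\in\cat{M}_f$ such that $ft$ is right homotopic to $gt$ through a cocylinder of $x$ in $\cat{M}_f$. Note that any cocylinder in the sense of \ref{subsec:cocylinder} is in particular a model-categorical path object for $x$, so the underlying notion of right homotopy coincides with the one defined for the model structure on $\cat{M}$.

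The main obstacle will be to show that, when $a$ is cofibrant, this relation agrees with the usual right homotopy relation of the model structure, so that $[a,x]_{\cat{M}_f}=[a,x]$. One direction is trivial by taking $t=1_a$. For the converse I would use the cofibrancy of $a$ as follows: given a weak equivalence $t\colon a'\to a$ and a right homotopy $h\colon a'\to x'$ from $ft$ to $gt$, factor $t=qs$ with $s$ a trivial cofibration and $q$ a trivial fibration via the weak factorisation system $(\cat{C},\cat{W}\cap\cat{F})$; since $a$ is cofibrant, the lifting problem of $\initial\to a$ against $q$ admits a solution, producing a section $j\colon a\to a''$ of $q$. Precomposing $hs j$ (or its analogue after applying Proposition \ref{prop:cat-fibrant-objects} to transport $h$ across $s$) yields a right homotopy $a\to x'$ between $f$ and $g$. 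Equivalently, the cleanest formulation is that for $a$ cofibrant and $x$ fibrant, precomposition with any weak equivalence $a'\to a$ between fibrant objects induces a bijection on sets of right-homotopy classes into $x$, from which the identification $[a,x]_{\cat{M}_f}=[a,x]$ follows immediately. Combining the three steps yields the natural bijection $\hom_{\Ho(\cat{M})}(a,x)\cong[a,x]$.
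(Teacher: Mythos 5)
Your strategy---routing the result through Brown's Theorem~\ref{thm:homotopy-category-fibrant-objects} for the category of fibrant objects $\cat{M}_f$ of Remark~\ref{rmk:model-cat-fibrant-cat}---is a legitimate alternative to the more direct argument in Hovey, but there is a genuine gap: you repeatedly treat $a$ as an object of $\cat{M}_f$, which the hypotheses do not grant. The theorem only assumes $a$ is cofibrant, not fibrant; if $a$ is not fibrant, then $a\notin\cat{M}_f$, so the expression $\hom_{\Ho(\cat{M}_f)}(a,x)$ is not even well-formed, and the quotient $[a,x]_{\cat{M}_f}$ that Theorem~\ref{thm:homotopy-category-fibrant-objects} produces is defined only for $a\in\cat{M}_f$. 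Your closing ``cleanest formulation'' has the same problem: you speak of a weak equivalence $a'\to a$ ``between fibrant objects,'' again implicitly assuming $a$ is fibrant.

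The natural repair is to replace $a$ by a fibrant replacement $Ra$ throughout (choosing $r_a\colon a\to Ra$ a trivial cofibration, so that $Ra$ is also cofibrant). Brown's theorem then yields $\hom_{\Ho(\cat{M})}(a,x)\cong\hom_{\Ho(\cat{M}_f)}(Ra,x)\cong [Ra,x]_{\cat{M}_f}$, and your cofibrancy argument, applied to $Ra$ instead of $a$, shows $[Ra,x]_{\cat{M}_f}$ equals the model-categorical right homotopy classes $[Ra,x]$. But this leaves a final step that you never address: identifying $[Ra,x]$ with $[a,x]$. That identification is a nontrivial lemma in its own right (essentially part of~\cite[Proposition~1.2.5]{hov07}): one needs that precomposition with the trivial cofibration $r_a\colon a\to Ra$ induces a bijection on right-homotopy classes into the fibrant object $x$, which requires lifting against $x\to\terminal$ for surjectivity and against a path-object fibration $x'\to x\times x$ for injectivity. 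Without this bridge the argument stops one step short; with it, your route is viable and conceptually pleasant, trading Hovey's direct cofibrant-and-fibrant bookkeeping for an appeal to Brown's calculus of fractions, but it does not actually circumvent the homotopy-invariance lemma it was perhaps hoping to avoid.
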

\begin{proof}
See \cite[Corollary 1.2.9]{hov07}.
\end{proof}

\begin{corollary}
Let $\cat{M}_{cf}$ be the category of cofibrant and fibrant objects in $\cat{M}$ and let $\pi(\cat{M}_{cf})$ be the category with the same objects as $\cat{M}_{cf}$ and morphisms from $a$ to $x$ given by the set $[a,x]$ of homotopy classes of maps from $a$ to $x$.
Then, the inclusion functor $\cat{M}_{cf}\subset\cat{M}$ induces a  canonical equivalence of categories:
\[\pi(\cat{M}_{cf})\simeq \Ho(\cat{M}).\]
In particular, the localisation of $\cat{M}$ at the class of weak equivalences is a (locally small) category.
\end{corollary}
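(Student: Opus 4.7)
The plan is to leverage Theorem \ref{thm:cofibrant-fibrant} together with cofibrant-fibrant replacement. The whole statement essentially says that the fully faithful part is exactly the content of the previous theorem, and the essential surjectivity comes from the factorisation axioms. Local smallness is then a free consequence.

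First I would check that the composite functor $\cat{M}_{cf}\hookrightarrow \cat{M}\xrightarrow{\gamma}\Ho(\cat{M})$ descends to $\pi(\cat{M}_{cf})$, i.e.\ sends left-homotopic maps to equal morphisms in $\Ho(\cat{M})$. Given a left homotopy $h\colon a'\to x$ between $f_0,f_1\colon a\to x$ with cylinder $(a\coprod a\xrightarrow{(\partial_0,\partial_1)}a'\xrightarrow{\sigma}a)$, the weak equivalence $\sigma$ satisfies $\sigma\partial_0=\sigma\partial_1=1_a$, so in $\Ho(\cat{M})$ both $\gamma(\partial_0)$ and $\gamma(\partial_1)$ equal $\gamma(\sigma)^{-1}$. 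Hence $\gamma(f_i)=\gamma(h)\gamma(\partial_i)$ are independent of $i$, giving a well-defined functor $F\colon\pi(\cat{M}_{cf})\to\Ho(\cat{M})$.

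Next I would establish that $F$ is fully faithful. On hom-sets, $F$ factors the natural map $\hom_{\cat{M}}(a,x)\to\hom_{\Ho(\cat{M})}(a,x)$ through $[a,x]$. By Theorem \ref{thm:cofibrant-fibrant}, the induced map $[a,x]\to\hom_{\Ho(\cat{M})}(a,x)$ is a bijection for cofibrant $a$ and fibrant $x$, so $F$ is fully faithful.

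For essential surjectivity, I would use the two weak factorisation systems to produce a cofibrant-fibrant replacement of any object $m\in\cat{M}$: factor $\initial\to m$ as a cofibration $\initial\to Qm$ followed by a trivial fibration $Qm\xrightarrow{\sim}m$, then factor $Qm\to\terminal$ as a trivial cofibration $Qm\xrightarrow{\sim}RQm$ followed by a fibration. Then $RQm$ is cofibrant-fibrant and linked to $m$ by a zigzag of weak equivalences, so $\gamma(RQm)\cong\gamma(m)$ in $\Ho(\cat{M})$. Combined with fully faithfulness, this shows $F$ is an equivalence of categories.

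The main (and only real) obstacle is making sure that the functor $F$ is genuinely well-defined and that the left-homotopy relation is an equivalence relation on $\hom_{\cat{M}_{cf}}(a,x)$, both of which were already handled before the corollary's statement. Finally, local smallness of $\Ho(\cat{M})$ follows because for any $m,n\in\cat{M}$ we have isomorphisms $\hom_{\Ho(\cat{M})}(m,n)\cong\hom_{\Ho(\cat{M})}(RQm,RQn)\cong[RQm,RQn]$ via $F$, and the last set is a quotient of a hom-set of $\cat{M}$, hence a genuine set.
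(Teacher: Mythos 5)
Your proposal is correct and follows the standard route that the paper delegates to \cite[Theorem 1.2.10]{hov07}: well-definedness of the descended functor, fully faithfulness via the previous theorem, essential surjectivity via cofibrant-fibrant replacement, and local smallness as a consequence. The only caveat worth flagging is that your fully-faithfulness step silently identifies the ``natural bijection'' of Theorem \ref{thm:cofibrant-fibrant} with the map induced on hom-sets by $\gamma$; this identification is standard (and is exactly what ``natural'' means in Hovey's formulation), but a careful write-up should state it explicitly rather than assume it.
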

\begin{proof}
See \cite[Theorem 1.2.10]{hov07}.
\end{proof}

\begin{remark}
\label{rmk:model-cat-homotopical-cat}
One can show that the weak equivalences of every model category $\cat{M}$ satisfy the 2-out-of-6 property (see \cite{DHKS04}).
In particular, every model category $\cat{M}$ has an underlying homotopical category in the sense of \ref{subsec:homotopical-category}.
\end{remark}

\subsection{} %
Given $\cat{M}$ and $\cat{M}'$ model categories, a \emph{Quillen adjunction} from $\cat{M}$ to $\cat{M'}$ is an adjunction
\begin{equation*}
\Adjoint{\cat{M}}{\cat{M}'}{F}{G}
\end{equation*}
such that $F$ preserves cofibrations and $G$ preserves fibrations.
The left adjoint is called the \emph{left Quillen functor} and the right adjoint is called the \emph{right Quillen functor}.

\subsection{} Playing with the adjunction and the lifting properties, one can show that $(F,G)$ is a Quillen adjunction if and only if $F$ preserves cofibrations and trivial cofibrations.
Dually, this is equivalent to $G$ preserving fibrations and trivial fibrations.

\begin{theorem}
A Quillen adjunction 
\begin{equation}
\Adjoint{\cat{M}}{\cat{M}'}{F}{G} 
\end{equation}
induces an adjunction
\begin{equation*}
\Adjoint{\Ho(\cat{M})}{\Ho(\cat{M}')}{\Left F}{\Right G}
\end{equation*}
on the associated homotopy categories.
\end{theorem}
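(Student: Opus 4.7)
My plan is to construct the derived functors $\Left F$ and $\Right G$ using functorial cofibrant and fibrant replacements, and to verify the adjunction first on the full subcategories of cofibrant/fibrant objects, where the homotopy relation admits the concrete description of Theorem \ref{thm:cofibrant-fibrant}, and then extend to the whole homotopy categories.

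First, I would fix functorial cofibrant and fibrant replacements $Q\colon \cat{M}\to\cat{M}_{c}$ and $R\colon \cat{M}'\to\cat{M}'_{f}$, obtained by applying the factorisation axiom to the maps $\initial\to a$ and $x\to \terminal$. Define $\Left F$ by sending $a$ to $FQa$ (composed with the localisation) and $\Right G$ by sending $x$ to $GRx$. To see that these pass to well-defined functors on the homotopy categories, observe that since $F$ is a left Quillen functor it sends trivial cofibrations between cofibrant objects to trivial cofibrations, hence to weak equivalences; by Ken Brown's Lemma (Corollary \ref{cor:ken-brown-lemma}), applied via Remark \ref{rmk:model-cat-fibrant-cat} to the category-of-fibrant-objects structure on $\cat{M}_{c}^{\op}$, this extends to all weak equivalences in $\cat{M}_{c}$. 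Dually, $G$ sends weak equivalences between fibrant objects to weak equivalences. Hence $FQ$ and $GR$ send weak equivalences to weak equivalences, yielding $\Left F$ and $\Right G$.

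Next, I would establish the adjunction directly for cofibrant $a\in\cat{M}$ and fibrant $x\in\cat{M}'$. In this setting Theorem \ref{thm:cofibrant-fibrant} gives natural bijections
\[
\hom_{\Ho(\cat{M}')}(Fa,x)\cong [Fa,x],\qquad \hom_{\Ho(\cat{M})}(a,Gx)\cong [a,Gx].
\]
The underlying adjunction $(F\dashv G)$ supplies a bijection $\hom_{\cat{M}'}(Fa,x)\cong\hom_{\cat{M}}(a,Gx)$, and the crucial step is to show that this descends to homotopy classes. Because $F$ preserves cofibrations and, by Ken Brown's Lemma, weak equivalences between cofibrant objects, applying $F$ to a cylinder $a\coprod a\to a'\to a$ of the cofibrant object $a$ produces a cylinder $Fa\coprod Fa\to Fa'\to Fa$ of $Fa$; dually, $G$ sends any cocylinder of $x$ to a cocylinder of $Gx$. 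Combining these with the fact that adjoint transposition carries a left homotopy through $Fa'$ to a right homotopy through $G(x')$, one obtains a bijection $[Fa,x]\cong[a,Gx]$ natural in both variables.

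Finally, I would extend this isomorphism to arbitrary $a\in\cat{M}$ and $x\in\cat{M}'$ via the natural weak equivalences $Qa\to a$ and $x\to Rx$, which are invertible in the homotopy categories:
\[
\hom_{\Ho(\cat{M}')}(\Left F(a),x)\cong [FQa,Rx]\cong [Qa,GRx]\cong \hom_{\Ho(\cat{M})}(a,\Right G(x)),
\]
where the outer isomorphisms use the inversion of weak equivalences and the inner one is the bijection from the previous paragraph. Naturality follows from the naturality of each step. The main obstacle is the middle step: verifying that the set-theoretic adjunction isomorphism respects the homotopy relation. This is where the interaction between the two factorisation systems is essential and where Ken Brown's Lemma is indispensable, as it is what guarantees that $F$ preserves cylinders of cofibrant objects (and dually for $G$ and cocylinders of fibrant objects).
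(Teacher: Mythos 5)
Your proposal is correct and takes essentially the same route as the paper's cited reference \cite[Lemma 1.3.10]{hov07}: build the derived functors by (co)fibrant replacement, use Ken Brown's Lemma to show they descend to the homotopy categories, verify that the adjunction bijection respects homotopy classes for cofibrant source and fibrant target, and then extend via the replacement weak equivalences. One small slip: the adjoint transpose of a left homotopy $h\colon Fa'\to x$ is a map $a'\to Gx$, which is again a \emph{left} homotopy through the cylinder $a'$, not a right homotopy through $G(x')$; the bijection $[Fa,x]\cong[a,Gx]$ already follows from $F$ preserving cylinders of cofibrant objects together with the adjunction isomorphism $\hom_{\cat{M}'}(Fa',x)\cong\hom_{\cat{M}}(a',Gx)$, so no appeal to $G(x')$ is needed (though a dual argument with cocylinders works equally well). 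Also note that your construction presupposes functorial factorisations, which the paper's Definition \ref{def:model-category} does not formally require; this is harmless since one can either adopt the standard convention (as the cited reference does) or define $\Left F$ and $\Right G$ objectwise and check well-definedness up to canonical isomorphism using that $F$ preserves homotopies between cofibrant objects.
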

\begin{proof}
See \cite[Lemma 1.3.10]{hov07}.
\end{proof}

\subsection{} 
A Quillen adjunction 
\begin{equation*}
\Adjoint{\cat{M}}{\cat{M}'}{F}{G} 
\end{equation*}
is said to be a \emph{Quillen equivalence} if the induced adjunction at the level of homotopy categories is an adjoint equivalence of categories.

\begin{comment}
\subsection{}
Recall that, if $\cat{C}$ is a cocomplete category and $I$ is a class of maps in $\cat{C}$, an object $K$ in $\cat{C}$ is said to be \emph{small relative to $I$} if the functor
\[\hom_{\cat{C}}(K,\blank)\colon \cat{C}\to \Set\]
commutes with colimits of the form $\colim_{\beta<\kappa}X_{\beta}$, where $\kappa$ is a regular cardinal, and the transition map $X_\beta\to X_{\beta+1}$ is a cobase change of a map in $I$, for every $\beta<\kappa$.
If $I$ is a class of maps in $\cat{C}$ we say that $I$ \emph{admits the small object argument} if the domains of all maps in $I$ are small relative to $I$.
\end{comment}

\begin{definition}
A model category $\cat{M}$ is said to be \emph{cofibrantly generated} if there exist sets of morphisms $I$ and $J$ in $\cat{M}$ whose domains are presentable
\footnote{The standard, slightly more general hypothesis in the definition of a cofibrantly generated model category is that $I$ and $J$ ``admit the small object argument'', see for example \cite[Chapter 11]{hir09}. We won't need that as we will deal mostly with locally presentable categories.},
such that the set of cofibrations is equal to $\llift{(\rlift{I})}$ and the set of trivial cofibrations is equal to $\llift{(\rlift{J})}$.
If this is the case, we will say that $I$ and $J$ are sets of \emph{generating cofibrations} and \emph{generating trivial cofibrations} respectively.
\end{definition}

\begin{comment}
\begin{proposition}[The small object argument]
\label{prop:small-object-argument}
Let $\cat{C}$ be a cocomplete category, let $I$ be a set of maps in $\cat{C}$ and assume that $I$ admits the small object argument.
Then, the couple $(l(r(I)), r(I))$ is a weak factorisation system.
\end{proposition}
\begin{proof}
See \cite[Theorem 2.1.14]{hov07}.
\end{proof}

\begin{corollary}
Let $A$ be a small category and $I$ be a set of morphisms of presheaves over $A$. Then, the couple $(l(r(I)),r(I))$ is a weak factorisations system.
\end{corollary}
\end{comment}

\begin{theorem}
\label{thm:crans}
Let $\cat{M}$ be a cofibrantly generated model category, with set of generating cofibrations and trivial cofibrations $I$ and $J$ respectively.
Assume we have an adjunction
\begin{equation}
\Adjoint{\cat{M}}{\cat{M}'}{F}{G}
\end{equation}
and that the following properties hold:
\begin{enumerate}
\item
The category $\cat{M}'$ is complete and cocomplete.
\item
The domains of the maps in $F(I)$ and $F(J)$ are presentable.
\item
The image via $G$ of any morphism in $\llift{(\rlift{(FJ)})}$ is a weak equivalence.
\end{enumerate}
Then, there exists a cofibrantly generated model category structure on $\cat{M}'$ with $F(I)$ and $F(J)$ the sets of generating cofibrations and trivial cofibrations respectively.
Moreover, the weak equivalences and fibrations in $\cat{M}'$ are given by the maps in $\cat{M}'$ that are sent to weak equivalences and fibrations in $\cat{M}$ by $G$, respectively.
\end{theorem}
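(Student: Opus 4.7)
The plan is to transfer the model structure along the adjunction by declaring that a morphism $f$ in $\cat{M}'$ is a weak equivalence or a fibration exactly when $Gf$ is so in $\cat{M}$. So I would set $\cat{W}'=\{f:Gf\in\cat{W}\}$, $\cat{F}'=\{f:Gf\in\cat{F}\}$, and take the cofibrations to be $\cat{C}'=\llift{\left(\cat{F}'\cap\cat{W}'\right)}$. The cornerstone is the adjunction lifting bijection: for any $p$ in $\cat{M}'$ one has $FI\olift p$ iff $I\olift Gp$, and likewise for $J$. Hence $\rlift{(FI)}$ is the class of $p$ with $Gp$ a trivial fibration in $\cat{M}$, that is $\cat{F}'\cap\cat{W}'$, and $\rlift{(FJ)}$ is the class of $p$ with $Gp$ a fibration in $\cat{M}$, that is $\cat{F}'$.

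Because $\cat{M}'$ is cocomplete and the domains of $F(I)$ and $F(J)$ are presentable, Theorem \ref{thm:wfs} supplies two weak factorisation systems $(\llift{(\rlift{(FI)})},\rlift{(FI)})$ and $(\llift{(\rlift{(FJ)})},\rlift{(FJ)})$. By the identifications above these read $(\cat{C}',\cat{F}'\cap\cat{W}')$ and $(\llift{\cat{F}'},\cat{F}')$. To finish, I need the matching identity $\llift{\cat{F}'}=\cat{C}'\cap\cat{W}'$; the 2-out-of-3 property of $\cat{W}'$ is then inherited from $\cat{W}$ via functoriality of $G$, and the two weak factorisation systems give the required axioms of a model structure.

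The inclusion $\llift{\cat{F}'}\subseteq\cat{C}'\cap\cat{W}'$ is where hypothesis (3) enters: it is exactly the statement that every element of $\llift{(\rlift{(FJ)})}=\llift{\cat{F}'}$ is sent by $G$ to a weak equivalence, hence lies in $\cat{W}'$, while $\llift{\cat{F}'}\subseteq\llift{(\cat{F}'\cap\cat{W}')}=\cat{C}'$ holds simply because $\cat{F}'\cap\cat{W}'\subseteq\cat{F}'$. For the reverse inclusion, given $f\in\cat{C}'\cap\cat{W}'$, I would factor $f=pi$ using the second weak factorisation system so that $i\in\llift{\cat{F}'}$ and $p\in\cat{F}'$. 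Then $i\in\cat{W}'$ by the previous sentence, so by 2-out-of-3 $p\in\cat{W}'$ as well, placing $p$ in $\cat{F}'\cap\cat{W}'$. Since $f\in\cat{C}'=\llift{(\cat{F}'\cap\cat{W}')}$, the square presenting $f=pi$ admits a diagonal filler which exhibits $f$ as a retract of $i$ in the arrow category, and so $f\in\llift{\cat{F}'}$ by closure of left classes under retracts.

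The main obstacle is conceptually hypothesis (3): every other step is a formal consequence of the adjunction together with Theorem \ref{thm:wfs} and the classical retract argument. In practice, verifying (3) in concrete examples is the substantive content of any application of this transfer theorem, typically requiring an explicit path-object or analogous construction in $\cat{M}'$ to witness that cobase changes of maps in $F(J)$ transported through $G$ are weak equivalences in $\cat{M}$.
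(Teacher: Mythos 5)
The paper does not give its own proof of this transfer theorem — it only cites \cite[Proposition 1.4.23]{cis06} and \cite{cra95} — so your self-contained argument is a welcome expansion. Your proof is correct and follows the standard template: define $\cat{W}'$, $\cat{F}'$ by preimage under $G$, use the adjunction bijection to identify $\rlift{(FI)}$ with $\cat{F}'\cap\cat{W}'$ and $\rlift{(FJ)}$ with $\cat{F}'$, invoke the small object argument (Theorem \ref{thm:wfs}, using hypotheses (1) and (2)) for the two weak factorisation systems, obtain $\llift{\cat{F}'}\subseteq\cat{W}'$ from hypothesis (3), and close the loop with the retract argument to get $\cat{C}'\cap\cat{W}'\subseteq\llift{\cat{F}'}$. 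The 2-out-of-3 property of $\cat{W}'$ is immediate from functoriality of $G$, and the identification of generating (trivial) cofibrations follows directly from the construction. All the main ingredients are present and the logic is sound.
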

\begin{proof}
See for example \cite[Proposition 1.4.23]{cis06} or \cite{cra95}.
\end{proof}

\begin{definition}
Let $\cat{M}$, $\cat{N}$ and $\cat{P}$ be model categories. A functor $\otimes\colon \cat{M}\times\cat{N}\to \cat{P}$ is said to be a \emph{left Quillen bi-functor} if the following conditions are satisfied:
\begin{enumerate}
\item
Let $i\colon a\to b$ and $j\colon a'\to b'$ be cofibrations in $\cat{M}$ and $\cat{N}$, respectively. Then, the induced map
\[i\hat{\otimes} j \colon b\otimes a'\bigcoprod_{a\otimes a'} a\otimes b' \to b\otimes b'\]
is a cofibration in $\cat{P}$. Moreover, if either $i$ or  $j$ is a trivial cofibration, then $i\hat{\otimes} j$ is also a trivial cofibration.
\item
The functor $\otimes$ preserves small colimits separately in each variable.
\end{enumerate}
\end{definition}

\begin{definition}
A \emph{monoidal model category} is a closed monoidal category $(\cat{S},\otimes)$ equipped with a model structure, such that the monoidal product $\otimes\colon \cat{S}\times\cat{S}\to \cat{S}$ is a left Quillen bi-functor and the unit object $\1\in \cat{S}$ is cofibrant.
\end{definition}

\subsection{}
\label{def:enriched-model-category}
Let $(\cat{S}, \otimes)$ be a monoidal model category. An $\cat{S}$-\emph{enriched model category} is an $\cat{S}$-enriched, tensored and cotensored category $\cat{M}$ equipped with a model structure such that the tensor product $\otimes\colon \cat{M}\times\cat{S}\to \cat{M}$ is a left Quillen functor.

\begin{remark}
\label{rem:monoidal-model-structure}
Let $(\cat{S},\otimes)$ be a monoidal model category and let $\cat{M}$ be a model category. 
Suppose that $\cat{M}$, as a category, is enriched, tensored and co-tensored over $\cat{S}$. 
Then, by Lemma \ref{item:two-variable-lifting-2} the functor $\otimes\colon\cat{S}\times\cat{M}\to\cat{M}$ being  a left Quillen functor is equivalent to:
\begin{enumerate}
\item
Given any cofibration $i\colon d\to d'$ in $\cat{M}$ and any fibration $p\colon x\to y$ in $\cat{M}$, the induced map
\[\hat{\map}(i,p)\colon \map_{\cat{M}}(d',x)\to \map_{\cat{M}}(d,x)\times_{\map_{\cat{M}}(d,y)}\map_{\cat{M}}(d',y)\]
is a fibration in $\cat{S}$, which is trivial if either $i$ or $p$ is a weak equivalence.
\item
Given any cofibration $i\colon c\to c'$ in $\cat{S}$ and any fibration $p\colon x\to y$ in $\cat{M}$, the induced map:
\[\hat{p^i} \colon x^{c'}\longto x^c\times_{y^c}y^{c'}\]
is a fibration in $\cat{M}$, which is trivial if either $i$ or $p$ is a weak equivalence.
\end{enumerate}
\end{remark}
\section{Cisinski model structures\label{ch:2sec:3}}
In this section we recall how to construct Cisinski model structures on the category of presheaves on a small category $A$.
Cisinki model structures are determined by the choice of a functorial cylinder and by a class of anodyne extensions, satisfying a set of axioms.
The results of this section can be found in \cite{cis16}.

\begin{comment}
\subsection{} 
Recall that, given a small category $A$, a \emph{presheaf} over $A$ is a functor $X\colon A^\op\to \Set$. 
We denote by $\hat{A}$ the category of presheaves over $A$ and, for an object $a\in A$, we denote by $h^a$ the presheaf represented by $a$. 
This is the image of $a$ via the Yoneda embedding $h\colon A\to \hat{A}$ of $A$.
Throughout this section, we fix a small category $A$ and consider the category $\hat{A}$ of presheaves on $A$.

\begin{theorem}[Kan]
\label{thm:Kan}
Let $\cat{C}$ be a cocomplete category. 
Any functor $u\colon A\to \cat{C}$ induces an adjunction:
\Adjoint{\hat{A}}{\cat{C}}{u_!}{u_*}
where $u_!$ is defined by $u_!(X)=\colim_{\overcat{A}{X}}u(a)$ and, for every object $Y$ in $\cat{C}$, the presheaf $u_*(Y)$ is defined as:
\[u_*(Y)_a=\hom_A(u(a),Y).\]
\end{theorem}
\begin{proof}
See \cite[Theorem 1.1.10]{cis16}.
\end{proof}

\subsection{} The monoidal category $\iend(\cat{C})$ of endofunctors on a category $\cat{C}$, with monoidal product defined by composition, acts on the left on $\cat{C}$ via:
\[(F,X)\mapsto F\otimes X=F(X).\]
Under this convention,  a natural transformation $u\colon F\to G$ between endofunctors and a morphism $f\colon X\to Y$ between objects in $\cat{C}$ induce a morphism
\[u\otimes f\colon F\otimes X\to G\otimes Y.\]
We write $X=1\otimes X$ for the action of the identity endofunctor.
\end{comment}
\subsection{} 
Let $A$ be a small category and let $\hat{A}$ be the category of presheaves on $A$.
A \emph{functorial cylinder} on $\hat{A}$ is an endofunctor $I\colon\hat{A}\to\hat{A}$ together with a decomposition
\begin{diagram}
1\coprod 1
\ar[r, "{(\partial_0,\partial_1)}"] &
I
\ar[r, "\sigma"] &
1
\end{diagram}
of the codiagonal on the identity endofunctor, such that the map $(\partial_0,\partial_1)$ is a monomorphism.
A functorial cylinder $I$ is said to be an \emph{exact cylinder} if it preserves monomorphisms, commutes with small colimits and if, for every monomorphism $j\colon K\to L$ of presheaves, the commutative square:
\begin{equation*}
\csquare{K}{L}{I\otimes L}{I\otimes K}{j}{\partial_\epsilon\otimes 1_L}{1_I\otimes j}{\partial_\epsilon\otimes 1_K}
\end{equation*}
is Cartesian for $\epsilon=0,1$.

\subsection{}
Given a cartesian square of presheaves
\begin{equation}
\pull{X}{Y}{T}{Z}{}{}{}{}
\end{equation}
where all morphisms are mono, the induced map $Y\coprod_X Z\to T$ is a monomorphism as well and we denote its image by $Y\cup Z\subset T$.
If $I$ is a functorial cylinder, we denote by $\{\epsilon\}$ the subobject of $I$ determined by the image of $\partial_\epsilon$, for $\epsilon =0,1$. Therefore, there are canonical inclusions
\[K\simeq \{\epsilon\}\otimes K \subset I\otimes K\]
for $\epsilon = 0,1$. Moreover, when $I$ is an exact cylinder, we have inclusions
\[I\otimes K\cup \{\epsilon\}\otimes L \subset I\otimes L\]
for $\epsilon=0,1$. To conclude, we denote by $\partial I$ the union $\{0\}\cup\{1\}$ of the subobjects $\{0\}$ and $\{1\}$ in  $I$.

\begin{definition}
Given an exact cylinder $I$ on $\hat{A}$, a class $\cat{B}$ of morphism of presheaves is a class of \emph{$I$-anodyne extensions} if it satisfies the following properties:
\begin{enumerate}
\item
There exists a set of monomorphisms $\Lambda$ such that $\cat{B}=\llift(\rlift{\Lambda})$.
\item
For any monomorphism $K\to L$ of presheaves, the induced map 
\[I\otimes K\cup\{\epsilon\}\otimes L\to I\otimes L\]
is in $\cat{B}$ for $\epsilon=0,1$.
\item
For any map $K\to L$ in $\cat{B}$, the induced map 
\[I\otimes K\cup \partial I\otimes L\to I\otimes L\]
is in $\cat{B}$.
\end{enumerate}
\end{definition}

\subsection{}
A \emph{homotopical structure} on $A$ is a pair $(I,\cat{B})$ where $I$ is an exact cylinder on $\hat{A}$ and $\cat{B}$ is a class of $I$-anodyne extensions.
We fix a homotopical structure $(I,\cat{B})$ on $A$.

\subsection{}
Given two morphisms $f_0,f_1\colon K\to X$ of presheaves, an $I$-\emph{homotopy} from $f_0$ to $f_1$ is a map $h\colon I\otimes K\to X$ such that $h(\partial_\epsilon\otimes1_K)=f_\epsilon$ for $\epsilon=0,1$.
In particular, since $I$ is functorial, we can define the smallest equivalence relation generated by $I$-homotopies on the set of morphisms from $K$ to $X$, which is compatible with composition.
We denote by $[K,X]$ the quotient of $\hom_{\hat{A}}(K,X)$ by such equivalence relation.

\subsection{}
We define a \emph{cofibration} to be a monomorphism, a \emph{naive fibration} to be a map in $\rlift{\cat{B}}$, a \emph{fibrant object} to be a presheaf $W$ such that $W\to \terminal$ is a naive fibration and a weak equivalence to be a morphism $f\colon X\to Y$ such that, for every fibrant object $W$, the induced map
\[f^*\colon [Y,W]\to [X,W]\]
is an isomorphism.

\begin{theorem}
\label{thm:model-structure-presheaves}
The class of cofibrations and weak equivalences determines a cofibrantly generated model category structure on the category $\hat{A}$ of presheaves on $A$. Moreover, a morphism $p\colon X\to Y$ between fibrant objects is a fibration, if and only if it is a naive fibration and a monomorphism $j\colon K\to W$ between fibrant objects is a trivial cofibration if and only if it is an $I$-anodyne extension.
\end{theorem}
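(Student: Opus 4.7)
The plan is to apply the small object argument (Theorem \ref{thm:wfs}) twice and then identify the resulting weak factorisation systems with the pairs $(\cat{C}, \cat{F}\cap \cat{W})$ and $(\cat{C}\cap \cat{W}, \cat{F})$ required by Definition \ref{def:model-category}. Since $\hat{A}$ is locally presentable and the class of monomorphisms in $\hat{A}$ is of the form $\llift{(\rlift{I_0})}$ for a small set $I_0$ (for example, subobject inclusions into representables), one obtains a first weak factorisation system $(\mathrm{Mono}, \rlift{\mathrm{Mono}})$. Applying the small object argument to the given generating set $\Lambda$ produces a second weak factorisation system $(\cat{B}, \rlift{\Lambda})$, whose right class is by definition the class of naive fibrations. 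Bicompleteness of $\hat{A}$ is immediate, and the 2-out-of-3 property for $\cat{W}$ follows from its definition via bijections on the sets $[\cdot, W]$ for $W$ fibrant.

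The first substantial step is to prove that every anodyne extension is a trivial cofibration. Anodyne extensions are monomorphisms by saturation, since $\Lambda$ consists of monomorphisms. Given an anodyne extension $j\colon K\to L$ and a fibrant object $W$, the induced map $j^{*}\colon [L, W]\to [K, W]$ is surjective because fibrancy of $W$ solves any lifting problem of $j$ against $W\to \term$, and injective because the inclusion $I\otimes K\cup \partial I\otimes L\to I\otimes L$ is itself anodyne by the third axiom on $\Lambda$, so any $I$-homotopy $I\otimes K\to W$ extending given maps $f_{0},f_{1}\colon L\to W$ on $\partial I \otimes L$ lifts to a homotopy from $f_{0}$ to $f_{1}$. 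Dually, every map in $\rlift{\mathrm{Mono}}$ is a weak equivalence: such a $p\colon X\to Y$ admits a section $s$ by lifting against the monomorphism $\initial\to Y$, together with a homotopy $sp\sim \id_{X}$ by lifting against $\partial I\otimes X\to I\otimes X$, so pre- and post-composition with $s$ exhibit $p^{*}\colon [Y,W]\to [X,W]$ as a bijection for every presheaf $W$.

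The main obstacle is the converse direction, namely that every trivial cofibration lies in $\cat{B}$, equivalently that every naive fibration which is a weak equivalence is a trivial fibration. Factoring a trivial cofibration $i\colon K\to L$ as an anodyne extension $K\to L'$ followed by a naive fibration $L'\to L$, the 2-out-of-3 property forces $L'\to L$ to be a weak equivalence, and once we know that such a map is a trivial fibration the usual retract argument places $i$ in $\cat{B}$. The reduced statement is Cisinski's key lemma, whose proof exploits the exactness of the cylinder (its compatibility with pullbacks of monomorphisms) via a homotopy correspondence argument that produces lifts against an arbitrary monomorphism. Once this is established, the two weak factorisation systems coincide with $(\cat{C}, \cat{F}\cap \cat{W})$ and $(\cat{C}\cap \cat{W}, \cat{F})$ as required. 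The characterisations between fibrant objects are then formal consequences: a naive fibration between fibrant objects lifts against the saturation of $\Lambda$, which equals $\cat{C}\cap \cat{W}$ by the identifications above, so it is a fibration; conversely any fibration is a naive fibration since $\Lambda\subset \cat{B}\subset \cat{C}\cap \cat{W}$, and the statement on monomorphisms between fibrant objects follows similarly.
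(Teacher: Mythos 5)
Your preliminary steps are in order: bicompleteness and the $2$-out-of-$3$ property are immediate, $(\mathrm{Mono}, \rlift{\mathrm{Mono}})$ is a weak factorisation system by the small object argument together with a cellular model for $\hat{A}$, every anodyne extension is a trivial cofibration by the surjectivity/injectivity argument you give, and every map in $\rlift{\mathrm{Mono}}$ is a weak equivalence. The problem lies in the reduction that follows. You assert that the remaining task is to show that every trivial cofibration lies in $\cat{B}$ --- equivalently that every naive fibration which is a weak equivalence is a trivial fibration --- and you attribute this to Cisinski as his ``key lemma.'' That is not what Cisinski proves, and it is not available in the generality of the theorem: the key lemma only identifies naive fibrations \emph{between fibrant objects} which are weak equivalences with trivial fibrations. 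If your stronger claim held, the fibrations of the model structure would be exactly the naive fibrations and the trivial cofibrations exactly the anodyne extensions, which would make the ``between fibrant objects'' qualifications in the second half of the statement vacuous; specialised to the Joyal structure it would assert that every trivial cofibration is a categorical anodyne extension, which is not known to hold and is certainly not established by the arguments you invoke.

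Consequently the second required weak factorisation system cannot simply be taken to be $(\cat{B}, \rlift{\Lambda})$, and the sketch does not go through as written. Cisinski instead proves the fibrant-object form of the lemma, using fibrant replacements supplied by the anodyne weak factorisation system together with an $I$-homotopy argument exploiting the exactness of the cylinder, and obtains $(\cat{C}\cap\cat{W}, \cat{F})$ separately by showing that the class of trivial cofibrations is generated by a set which is in general strictly larger than $\Lambda$, via an accessibility/cardinality bound. The two characterisations between fibrant objects in the statement are then deduced from the fibrant key lemma by retract and factorisation arguments, not from a global identification $\cat{B} = \cat{C}\cap\cat{W}$, which need not hold. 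The gap is therefore not a missing detail but a misidentified reduction; the intermediate target has to change.
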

\begin{proof}
See \cite[Theorem 2.4.19]{cis16}.
\end{proof}
\section{Topological constructs and Cisinski model structures}
\label{ch:2sec:4}
In this section we investigate the relationship between Cisinski model structures on presheaf categories and topological constructs.
Given a functor $u\colon A\to \cat{C}$ from a small category $A$ to a topological construct $\cat{C}$, one can consider the final closure $\icat{C}$ of $uA$ in $\cat{C}$. 
The objects in $\icat{C}$ should be thought of as locally modelled by the ``basic shapes'' in $A$, always keeping in mind $A=\DDelta$ and $\cat{C}=\Topa$ as a fundamental example.
We show that, if the functor $u$ is nice enough, one can endow the category $\icat{C}$ with the structure of an $\hat{A}$-enriched category, which is tensored and cotensored.
Moreover, if $\hat{A}$ is endowed with a Cisinski model structure, we give a conditional result for the category $\icat{C}$ to possess a model structure, transferred from $\hat{A}$.
To conclude the full subcategory of $\icat{C}$ spanned by the objects that are fibrant in the candidate model structure has the structure of a category of fibrant objects, unconditionally.
To our knowledge, the results of this section are not spelled out anywhere else in the literature.

We fix a small category $A$ with a terminal object $\terminal$ and we consider the category $\hat{A}$ of presheaves on $A$. 
\subsection{}
\label{subsec:cisinski-topological}
Let $\cat{C}$ be a well fibered topological construct and assume we have a functor $u\colon A\to \cat{C}$. Then  by Theorem \ref{thm:kan-presheaves}, there exists an adjunction:
\begin{equation}
\label{adjoint:presheaf-topcat}
\Adjoint{\hat{A}}{\cat{C}}{u_!}{u_*}
\end{equation}
Assume now that $u_{!}$ preserves products of representable presheaves, then we have:
\begin{equation}
\label{eq:u-preserves-representable}
u(a)\times u(a')=u_{!}\left(h^{a}\times h^{a'}\right)=\colim_{h^{x}\to h^{a}\times h^{a'}}u(x).
\end{equation}
Assume furthermore that $ua$ is an exponentiable object in $\cat{C}$ for every $a\in A$, let $uA$ be the full subcategory of $\cat{C}$ spanned by $ua$ for every $a\in A$ and let $\icat{C}$ be the final closure of $uA$ in $\cat{C}$.
Then, $\icat{C}$ is Cartesian closed by \eqref{eq:u-preserves-representable} and Theorem \ref{thm:final-closure-cartesian-closed}. 
Moreover, the adjunction \eqref{adjoint:presheaf-topcat} restricts to an adjunction
\begin{equation}
\label{adjoint:presheaf-topcat-gen}
\Adjoint{\hat{A}}{\icat{C}}{u_!}{u_*}
\end{equation}

\begin{proposition}
\label{prop:cisinski-topological-enriched}
The functor $u_{!}\colon \hat{A}\to \icat{C}$ is strong monoidal.
In particular, $\icat{C}$ is a category enriched, tensored and cotensored over $\hat{A}$ and \eqref{adjoint:presheaf-topcat-gen} is an $\hat{A}$-enriched adjunction.
\end{proposition}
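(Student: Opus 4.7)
The plan is to prove directly that $u_{!}$ is a strong monoidal functor with respect to the cartesian monoidal structures on $\hat{A}$ and $\icat{C}$, and then obtain the enrichment, tensoring, cotensoring, and the claim that \eqref{adjoint:presheaf-topcat-gen} is an $\hat{A}$-enriched adjunction as immediate consequences of Corollary \ref{cor:adjunction-v-adjunction}.

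For preservation of binary products, the strategy is to combine three ingredients: $u_{!}$ is cocontinuous as a left adjoint; both $\hat{A}$ and $\icat{C}$ are cartesian closed, so cartesian product distributes over colimits in each variable; and representables (\resp the objects of $uA$) are dense in $\hat{A}$ (\resp in $\icat{C}$) by Corollary \ref{cor:density-theorem} and Proposition \ref{prop:C_I-coreflective}. Concretely, I would expand
\[X \times Y \;\cong\; \colim_{h^{a}\to X,\,h^{b}\to Y}\,h^{a}\times h^{b}\]
via the density theorem and the distributivity of products over colimits, apply $u_{!}$ together with the hypothesis that $u_{!}$ preserves products of representables to obtain
\[u_{!}(X\times Y) \;\cong\; \colim_{h^{a}\to X,\,h^{b}\to Y}\,u(a)\times u(b),\]
and finally rearrange the same colimit inside $\icat{C}$ using cartesian closure to identify it with $u_{!}(X)\times u_{!}(Y)$.

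For the terminal object, since $\term$ is terminal in $A$ the terminal presheaf equals $h^{\term}$, and Yoneda gives $u_{!}(\term_{\hat{A}}) \cong u(\term)$. Specialising the hypothesis on preservation of products of representables to $a=a'=\term$ yields $u(\term)\times u(\term) \cong u(\term)$ with the two projections coinciding, so $u(\term)$ is a subterminal object of $\icat{C}$. On the other hand, the unique morphism $a\to\term$ in $A$ produces a morphism $u(a)\to u(\term)$ for every $a\in A$, so $\hom_{\icat{C}}(u(a),u(\term))$ is nonempty and, by subterminality, a singleton. Since $uA$ is dense in $\icat{C}$, presenting any $X\in\icat{C}$ as $\colim_{u(a)\to X}u(a)$ and taking a limit of singletons shows that $\hom_{\icat{C}}(X,u(\term))$ is itself a singleton, so $u(\term)$ is terminal in $\icat{C}$.

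The only step I expect to require real care is the verification that $u(\term)$ is terminal in $\icat{C}$: the subterminal property follows formally from the hypothesis, but showing that morphisms into $u(\term)$ exist genuinely uses both the density of $uA$ in $\icat{C}$ and the fact that $\term$ is terminal in $A$. Once $u_{!}$ is known to be strong monoidal, Corollary \ref{cor:adjunction-v-adjunction} delivers the $\hat{A}$-enrichment, tensoring, cotensoring and the $\hat{A}$-enriched adjunction immediately.
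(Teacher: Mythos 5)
Your proof is correct, and for binary products it runs on the same engine as the paper's one-line argument: write $X\times Y$ as a double colimit of $h^{a}\times h^{b}$ via density of representables and distributivity of $\times$ over colimits (Cartesian closure of $\hat{A}$), push through the cocontinuous $u_{!}$ using the hypothesis on representables, and reassemble inside $\icat{C}$ using Cartesian closure again. After that, invoking Corollary~\ref{cor:adjunction-v-adjunction} is exactly what the paper does.

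Where you genuinely add something is the unit. The paper's proof says ``preserves products of representable presheaves, hence preserves all products since $\icat{C}$ is Cartesian closed,'' but the hypothesis in~\ref{subsec:cisinski-topological}, as stated via~\eqref{eq:u-preserves-representable}, concerns only binary products $h^{a}\times h^{a'}$, and the density/Cartesian-closure argument only propagates binary product preservation; it says nothing about the nullary product. Strong monoidality for the Cartesian structures does require $u_{!}(\term_{\hat{A}})\cong\term_{\icat{C}}$, and your argument supplies it: $\term_{\hat{A}}=h^{\term}$ since $\term$ is terminal in $A$, so $u_{!}(\term_{\hat{A}})=u(\term)$ by Yoneda; specialising the binary hypothesis to $a=a'=\term$ shows the diagonal of $u(\term)$ is invertible, i.e.\ $u(\term)$ is subterminal; and existence of a (then unique) morphism $X\to u(\term)$ for each $X$ follows by writing $X$ as a colimit of objects $u(a)$ via density of $uA$ in $\icat{C}$ (Proposition~\ref{prop:C_I-coreflective}) and observing that a limit of singletons is a singleton. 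This shows the unit preservation is not an extra hypothesis but a consequence of binary preservation plus density, a point the paper elides. Your proposal is therefore a correct and slightly more careful version of the paper's proof rather than a different route.
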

\begin{proof}
The functor $u_{!}$ preserves products of representable presheaves by hypothesis, hence it preserves all products since $\icat{C}$ is Cartesian closed.
The second statement follows immediately from Corollary \ref{cor:adjunction-v-adjunction}.
\end{proof}

\subsection{}
More explicitly, given $X$ and $Y$ in $\icat{C}$ the \emph{mapping space} between $X$ and $Y$ is the presheaf defined by 
\[\map(X,Y)=u_*\left(\ihom(X,Y)\right)\]
Moreover, if $X$ is an object in $\icat{C}$ and $K$ is a presheaf on $A$, the \emph{tensor} of $X$ by $K$ is the presheaf defined by
\[K\otimes X= u_{!}K\times X\]
and dually, the \emph{cotensor} of $X$ by $K$ is the presheaf given by:
\[X^K=\ihom(u_!(K), X)\]

\subsection{} 
We now assume that we are given a Cisinski model structure on $\hat{A}$, i.e. a combinatorial model category structure for which the cofibrations are monomorphisms. 
Moreover, we assume that such a model structure is Cartesian closed, meaning that the model structure on $\hat{A}$ is a monoidal model structure with respect to the Cartesian product.

\begin{definition}
\label{def:model-structure-top-cat}
Let $f\colon X\to Y$ be a morphism in $\cat{C}$. 
We say that $f$ is a \emph{weak equivalence} (resp. a \emph{fibration}) if the morphism $u_*(f)$ is a weak equivalence (resp. a fibration) in $\hat{A}$.
We call $f$ an \emph{acyclic fibration} if it is both a weak equivalence and a fibration.
\end{definition}

\begin{definition}
A morphism $i\colon A\to B$ in $\cat{C}$ is said to be a \emph{cofibration} (\resp a trivial cofibration) if it has the left lifting property with respect to all trivial fibration (\resp all fibrations).
We call $i$ an \emph{acyclic cofibration} if it is both a cofibration and a weak equivalence
\footnote{The reader will notice the asymmetry between fibrations and cofibrations.
Indeed, by definition we know that the class of acyclic fibrations is exactly the class of morphisms with the right lifting property with respect to all cofibrations.
On the other hand, acyclic cofibrations and trivial cofibrations do not necessarily coincide, and this is somewhat the source of difficulties in practical applications.
}.
\end{definition}

\begin{definition}
Let $X$ be an object of $\cat{C}$.
We say that $X$ is fibrant if $u_{*}X$ is a fibrant object in $\hat{A}$.
We say that $X$ is cofibrant if the unique morphism $\initial\to X$ is a cofibration.
\end{definition}

\begin{lemma}
\label{lem:top-cat-homotopical-cat}
The category $\cat{C}$ equipped with the class of weak equivalences has the structure of a homotopical category.
\end{lemma}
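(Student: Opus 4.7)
The plan is to reduce the statement to the corresponding fact in $\hat{A}$, exploiting the fact that weak equivalences in $\cat{C}$ are defined by reflection along $u_{*}$. Since $\hat{A}$ is a model category, by Remark \ref{rmk:model-cat-homotopical-cat} its class of weak equivalences already satisfies the 2-out-of-6 property, so it is a homotopical category. All we need is to transport this property back along the functor $u_{*}\colon \cat{C}\to \hat{A}$.

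First I would observe that the class of weak equivalences in $\cat{C}$ contains all identity morphisms, since $u_{*}$ preserves identities and every identity in $\hat{A}$ is a weak equivalence. Next, to verify the 2-out-of-6 property, suppose we are given a chain
\[
x \xrightarrow{f} y \xrightarrow{g} z \xrightarrow{h} w
\]
in $\cat{C}$ such that $gf$ and $hg$ are weak equivalences. Applying the functor $u_{*}$ yields a chain
\[
u_{*}x \xrightarrow{u_{*}f} u_{*}y \xrightarrow{u_{*}g} u_{*}z \xrightarrow{u_{*}h} u_{*}w
\]
in $\hat{A}$, and since $u_{*}$ preserves composition we have $u_{*}(gf)=u_{*}(g)\circ u_{*}(f)$ and $u_{*}(hg)=u_{*}(h)\circ u_{*}(g)$. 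By Definition \ref{def:model-structure-top-cat} both of these composites are weak equivalences in $\hat{A}$.

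Now I would invoke the 2-out-of-6 property of the homotopical category underlying the model category $\hat{A}$ to conclude that $u_{*}f$, $u_{*}g$, $u_{*}h$ and $u_{*}(h)u_{*}(g)u_{*}(f)=u_{*}(hgf)$ are all weak equivalences in $\hat{A}$. Unfolding Definition \ref{def:model-structure-top-cat} once more, this means that $f$, $g$, $h$ and $hgf$ are weak equivalences in $\cat{C}$, proving the 2-out-of-6 property for $\cat{C}$ and hence that $\cat{C}$ is a homotopical category in the sense of \ref{subsec:homotopical-category}. There is essentially no obstacle here: the argument is a routine transfer along a functor that preserves composition and reflects the chosen class of equivalences by definition, and the only nontrivial input is that model categories are homotopical, which is recorded in Remark \ref{rmk:model-cat-homotopical-cat}.
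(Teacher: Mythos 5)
Your proof is correct and takes essentially the same approach as the paper's own (one-line) proof: transport the 2-out-of-6 property from the model category $\hat{A}$ along $u_{*}$, using that $u_{*}$ preserves composition and reflects weak equivalences by Definition \ref{def:model-structure-top-cat}. The only cosmetic omission is that you verify identities and 2-out-of-6 but do not explicitly remark that these together give 2-out-of-3, which the definition of a category with weak equivalences in \ref{subsec:category-weak-equivalences} also requires; this is standard and the same silent step is made in the paper.
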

\begin{proof}
Since $u_{*}$ reflects weak equivalences and $\hat{A}$ is a model category, the claim follows from Remark \ref{rmk:model-cat-homotopical-cat}.
\end{proof}

\begin{lemma}
\label{lem:top-cat-prod-we}
Let $X$ be an object of $\cat{C}$ and let $w\colon Y\to Z$ be a weak equivalence in $\cat{C}$, then the induced map:
\[X\times w\colon X\times Y\to X\times Z.\]
is a weak equivalence.
\end{lemma}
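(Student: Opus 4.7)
The plan is to transport the claim to $\hat{A}$ via the right adjoint $u_*$, where it becomes a standard consequence of the Cartesian closed Cisinski model structure on $\hat{A}$. Since $u_*$ is defined by the formula $u_*(X)_a = \cat{C}(u(a), X)$, it preserves arbitrary products, so there is a natural isomorphism
\[
u_*(X \times w) \cong u_*(X) \times u_*(w).
\]
By Definition \ref{def:model-structure-top-cat} the map $u_*(w)$ is a weak equivalence in $\hat{A}$, so the problem reduces to showing that, for any presheaf $K$ on $A$, multiplication by $K$ preserves weak equivalences in $\hat{A}$.

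For this I invoke two features of the Cisinski setup. First, cofibrations in $\hat{A}$ are monomorphisms, so every presheaf is cofibrant, since the map $\emptyset \to K$ from the empty presheaf is always a monomorphism. Second, the model structure is assumed to be Cartesian closed, so the pushout-product axiom applied with the cofibration $\emptyset \to K$ shows that multiplication by $K$ sends (trivial) cofibrations to (trivial) cofibrations. In particular, $K \times (-) \colon \hat{A} \to \hat{A}$ is a left Quillen endofunctor. By Ken Brown's lemma, applied in its cofibrant-dual form (which is available because, by Remark \ref{rmk:model-cat-fibrant-cat}, the cofibrant objects of $\hat{A}$ form an opposite category of fibrant objects to which Corollary \ref{cor:ken-brown-lemma} applies), this functor preserves weak equivalences between cofibrant objects; since every presheaf is cofibrant, it preserves all weak equivalences. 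Specialising to $K = u_*(X)$ finishes the argument.

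There is no substantial obstacle here: the proof is bookkeeping that packages the pushout-product axiom in $\hat{A}$ and transports it back along $u_*$ to a statement in $\cat{C}$. The one ingredient that deserves emphasis is the cofibrant-everywhere property of Cisinski structures, without which the pushout-product axiom alone would only give a statement about multiplication by cofibrant presheaves, which would not suffice since $u_*(X)$ is a priori an arbitrary presheaf.
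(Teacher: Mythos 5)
Your proof is correct and takes essentially the same route as the paper: reduce to $\hat{A}$ via $u_*$ (which preserves products and reflects weak equivalences), then invoke that every presheaf is cofibrant and apply the dual of Ken Brown's Lemma. The paper's version is terser — it leaves the pushout-product step implicit inside the phrase ``this follows from'' — whereas you spell out that Cartesian closedness of the Cisinski structure is what makes $K\times(\blank)$ a left Quillen endofunctor, which is the right thing to make explicit.
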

\begin{proof}
Since $u_{*}$ preserves products and preserves and reflects weak equivalences, it is enough to prove that the product of a presheaf in $\hat{A}$ with a weak equivalence  is a weak equivalence. This follows from the fact that every object in $\hat{A}$ is cofibrant together with the dual of Corollary \ref{cor:ken-brown-lemma}.
\end{proof}

\begin{proposition}
\label{prop:cotensor-Quillen-functor}
Let $i\colon K\to L$ be a monomorphism in $\hat{A}$ and let $X\to Y$ be a fibration in $\icat{C}$. Then, the induced map
\[\hat{p^{i}}\colon X^L\to X^K\times_{Y^K}Y^L\]
is a fibration, which is acyclic if $i$ or $p$ is so.
\end{proposition}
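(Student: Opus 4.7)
The plan is to apply $u_*$ to $\hat{p^i}$ and recognise the result as a pullback-power of the original fibration $u_*p$ against the monomorphism $i$ in $\hat{A}$, and then invoke the monoidal model structure on $\hat{A}$. The fibration and weak equivalence classes in $\icat{C}$ are detected by $u_*$, so this reduces the entire statement to a computation inside the Cisinski model category $\hat{A}$.

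First, I would observe that by Proposition~\ref{prop:cisinski-topological-enriched} the adjunction $(u_!,u_*)$ is $\hat{A}$-enriched, with $u_!$ strong monoidal. By Proposition~\ref{prop:enriched-adjunction}(iii) this means that $u_*$ commutes with cotensors in the strong sense that for any $X\in\icat{C}$ and any $K\in\hat{A}$ there is a natural isomorphism $u_*(X^K)\cong (u_*X)^K$. Unwinding, this is just the computation
\[
u_*(X^K)(a)\;=\;\icat{C}\bigl(ua,\ihom(u_!K,X)\bigr)\;\cong\;\icat{C}\bigl(u_!(h^a\times K),X\bigr)\;\cong\;\hat A(h^a\times K,u_*X)\;=\;(u_*X)^K(a),
\]
where the middle step uses that $u_!$ is strong monoidal and the last is adjunction. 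Since $u_*$ is a right adjoint it preserves pullbacks as well, so applying $u_*$ to the pullback square defining $\hat{p^i}$ and using the above natural isomorphism yields
\[
u_*\bigl(\hat{p^i}\bigr)\;\cong\;\widehat{(u_*p)^{\,i}}\colon\;(u_*X)^L\;\longrightarrow\;(u_*X)^K\times_{(u_*Y)^K}(u_*Y)^L,
\]
the pullback-power in $\hat{A}$ of $u_*p$ against $i$.

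Next I would use the hypotheses: $i\colon K\to L$ is a monomorphism in $\hat{A}$, which in the Cisinski model structure is precisely a cofibration; and $p\colon X\to Y$ is a fibration in $\icat{C}$, which by Definition~\ref{def:model-structure-top-cat} means $u_*p$ is a fibration in $\hat{A}$. Since $\hat{A}$ carries a Cartesian closed model structure (a monoidal model structure with respect to $\times$), Remark~\ref{rem:monoidal-model-structure} applied to this data tells us that the pullback-power $\widehat{(u_*p)^{\,i}}$ is a fibration in $\hat{A}$, and is moreover acyclic whenever $i$ is a trivial cofibration or $u_*p$ is a trivial fibration, i.e.\ whenever $i$ or $p$ is a weak equivalence.

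Finally, unwinding Definition~\ref{def:model-structure-top-cat} once more, the fact that $u_*(\hat{p^i})$ is a (trivial) fibration in $\hat A$ is by definition the statement that $\hat{p^i}$ is a (trivial) fibration in $\icat{C}$, which is what we wanted. The main obstacle in this argument is the compatibility step establishing $u_*(X^K)\cong(u_*X)^K$; everything else is a direct translation through adjunctions. This compatibility, however, is essentially free once one knows that $u_!$ is strong monoidal, which was the content of Proposition~\ref{prop:cisinski-topological-enriched}.
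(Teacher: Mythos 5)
Your proposal is correct and follows essentially the same route as the paper: apply $u_*$, use that the enriched adjunction makes $u_*$ commute with cotensors (so $u_*(\hat{p^i})$ is the pullback-power $\widehat{(u_*p)^i}$ in $\hat{A}$), and conclude via the Cartesian monoidal hypothesis on $\hat{A}$. The only cosmetic difference is that you spell out the cotensor-preservation isomorphism $u_*(X^K)\cong(u_*X)^K$ explicitly, whereas the paper cites it directly from the enriched adjunction; beware that in that computation the hom-sets should be ordinary hom-sets $\cat{C}(ua,-)$ rather than the enriched mapping objects $\icat{C}(ua,-)$, as $u_*(e)(a)=\cat{E}(u(a),e)$ is defined via the underlying hom.
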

\begin{proof}
Since $u_{*}$ reflects fibrations and acyclic fibrations by definition, it is enough to show that $u_{*}\left(\hat{p^{i}}\right)$ is a fibration, which is acyclic when either $i$ or $p$ is so.
Since $(u_{!},u_{*})$ is an enriched adjunction, the functor $u_{*}$ maps cotensors to cotensors, and since $\hat{A}$ is self enriched, the claim is equivalent to show that the map
\[\hat{\ihom}(i,p)\colon \ihom(L,u_{*}X)\to \ihom(K,u_{*}X)\times_{\ihom(K,u_{*}Y)}\ihom(L,u_{*}Y)\]
is a fibration, which is acyclic when either $i$ or $p$ is. This follows from the hypothesis that the model structure on $\hat{A}$ is Cartesian monoidal.
\end{proof}

\begin{corollary}
\label{cor:internal-hom-quillen-top-cat}
Let $f\colon X\to Y$ be a fibration and let $i\colon A\to B$ be a cofibration. Then, the induced map
\[\ihom(B,X)\to \ihom(A,X)\times_{\ihom(A,Y)}\ihom(B,Y)\]
is a fibration, which is acyclic if either $p$ is acyclic or $i$ is a trivial cofibration.
\end{corollary}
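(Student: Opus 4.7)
The plan is to reduce the claim to Proposition \ref{prop:cotensor-Quillen-functor} by transporting the problem across $u_{*}$ and then applying the enriched two-variable adjunction supplied by Proposition \ref{prop:cisinski-topological-enriched}. First I would observe that, since $u_{*}$ is a right adjoint it preserves the pullback defining the pullback-power, while the identity $\map(A,X)=u_{*}\ihom(A,X)$ gives $u_{*}\hat{\ihom}(i,p)=\hat{\map}(i,p)$. By Definition \ref{def:model-structure-top-cat} it is therefore enough to show that $\hat{\map}(i,p)$ is a (trivial) fibration in $\hat{A}$, i.e. that it has the right lifting property with respect to every (acyclic) monomorphism $j\colon K\to L$ in $\hat{A}$.

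Next I would invoke the two-variable adjunction
\[\icat{C}(K\otimes A,X)\cong \hat{A}(K,\map(A,X))\cong \icat{C}(A,X^{K})\]
coming from Proposition \ref{prop:cisinski-topological-enriched}, and use Lemma \ref{lem:two-variable-lifting} to convert the lifting problem $j\olift\hat{\map}(i,p)$ into the equivalent one $i\olift\hat{p^{j}}$, where $\hat{p^{j}}\colon X^{L}\to X^{K}\times_{Y^{K}}Y^{L}$ is the pullback-power studied in Proposition \ref{prop:cotensor-Quillen-functor}.

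At this point the three cases are quickly dispatched. If $p$ is a fibration and $j$ is an acyclic monomorphism, then $\hat{p^{j}}$ is an acyclic fibration by Proposition \ref{prop:cotensor-Quillen-functor}, against which the cofibration $i$ lifts. If $p$ is an acyclic fibration, then $\hat{p^{j}}$ is an acyclic fibration for every monomorphism $j$, so $i$ again lifts. Finally, if $i$ is a trivial cofibration, then $\hat{p^{j}}$ is a fibration and $i$ lifts against it by the very definition of a trivial cofibration.

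No genuine obstacle is anticipated: the proof is essentially bookkeeping once the two-variable adjunction is in place. The only point to verify carefully is the commutation of $u_{*}$ with $\hat{\ihom}$, which follows from right adjoints preserving pullbacks together with the enriched adjunction structure established in Proposition \ref{prop:cisinski-topological-enriched}.
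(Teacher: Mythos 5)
Your argument is correct and follows the same strategy the paper intends: the corollary is stated without an explicit proof, but the analogous Corollary \ref{cor:internal-hom-quillen-strat} is justified precisely by citing Lemma \ref{lem:two-variable-lifting} together with (the Chapter~4 analogue of) Proposition \ref{prop:cotensor-Quillen-functor}, which is exactly the reduction you perform. The extra care you take in verifying that $u_{*}$ carries $\hat{\ihom}(i,p)$ to $\hat{\map}(i,p)$ (because $u_{*}$ is a right adjoint and $\map = u_{*}\circ\ihom$ by definition) spells out a step the paper leaves implicit.
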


\subsection{}
Let $I$ be an interval object for $\hat{A}$. i.e. a presheaf on $A$ together with a factorisation of the codiagonal 
\[\terminal\coprod\terminal\to I\to\terminal\]
such that $(\partial_0,\partial_1)\colon *\coprod *\to I$ is a monomorphism and $\sigma\colon I\to *$ is a weak equivalence. 
Then, for every object $X$ in $\icat{C}$, applying the functor $X^{\blank}$ we get a factorisation
\[X\to X^I\to X\times X\]

\begin{lemma}
\label{lem:path-object-for-top-cat}
For every fibrant object $X$ in $\icat{C}$, the above factorisation induces the structure of a path object $X^I$ for $X$.
\end{lemma}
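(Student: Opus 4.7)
The plan is to apply Proposition \ref{prop:cotensor-Quillen-functor} twice, exploiting that $X$ being fibrant forces the terminal map $p_X\colon X\to\terminal$ to be a fibration in $\icat{C}$: since $u_{*}$ is a right adjoint and therefore preserves the terminal object, and since it reflects fibrations by Definition \ref{def:model-structure-top-cat}, the fibrancy of $u_{*}X$ transports immediately to the fibration $X\to\terminal$.

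First, I would check that $X^{I}\to X\times X$ is a fibration. Applying Proposition \ref{prop:cotensor-Quillen-functor} to the monomorphism $(\partial_{0},\partial_{1})\colon\terminal\coprod\terminal\to I$ and the fibration $p_{X}$, the induced map
\[
X^{I}\longrightarrow X^{\terminal\coprod\terminal}\times_{\terminal^{\terminal\coprod\terminal}}\terminal^{I}
\]
is a fibration in $\icat{C}$. Using that $X^{\blank}$ sends colimits in $\hat{A}$ to limits in $\icat{C}$ (being the right adjoint of $\blank\otimes X$) and that $\terminal^{K}\cong\terminal$ for every presheaf $K$, the codomain identifies with $X\times X$, and the resulting fibration is precisely the second factor $X^{I}\to X\times X$ in the candidate path object.

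Next, I would verify that the first factor $X\to X^{I}$, which arises as $X^{\sigma}$, is a weak equivalence. The endpoint $\partial_{0}\colon\terminal\to I$ is split by $\sigma$ and so is a monomorphism; it is also a weak equivalence by the 2-out-of-3 property applied to $\sigma\partial_{0}=\id_{\terminal}$ together with the weak equivalence $\sigma$. A second application of Proposition \ref{prop:cotensor-Quillen-functor} to $\partial_{0}$ and $p_{X}$ then yields that $X^{\partial_{0}}\colon X^{I}\to X$ is an acyclic fibration. The identity $X^{\partial_{0}}\circ X^{\sigma}=X^{\sigma\partial_{0}}=\id_{X}$ exhibits $X^{\sigma}$ as a section of an acyclic fibration, so a final appeal to 2-out-of-3 (valid by Lemma \ref{lem:top-cat-homotopical-cat}) identifies $X\to X^{I}$ as a weak equivalence.

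I do not anticipate any serious obstacle: the argument is purely formal once the $\hat{A}$-enrichment provided by Proposition \ref{prop:cisinski-topological-enriched} is in place. The only point requiring a moment of care is the identification of the degenerate pullback $X^{\terminal\coprod\terminal}\times_{\terminal^{\terminal\coprod\terminal}}\terminal^{I}$ with $X\times X$, which is routine.
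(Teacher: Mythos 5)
Your proof is correct. The first half is identical to the paper's: apply Proposition \ref{prop:cotensor-Quillen-functor} to the monomorphism $(\partial_{0},\partial_{1})\colon\term\coprod\term\to I$ and the fibration $X\to\term$, then identify the degenerate pullback with $X\times X$. For the second half you take a slightly different route. The paper invokes Ken Brown's lemma (Corollary \ref{cor:ken-brown-lemma}) together with Proposition \ref{prop:cotensor-Quillen-functor} to conclude that the functor $X^{(\blank)}\colon\hat{A}^{\op}\to\icat{C}$ sends weak equivalences to weak equivalences in general, and then applies this to $\sigma$. You instead unfold exactly the relevant instance: observe that $\partial_{0}\colon\term\to I$ is a trivial cofibration in $\hat{A}$ (a monomorphism by definition, a weak equivalence by 2-out-of-3 against $\sigma\partial_{0}=\id_{\term}$), apply Proposition \ref{prop:cotensor-Quillen-functor} once more to get that $X^{\partial_{0}}\colon X^{I}\to X$ is an acyclic fibration, and conclude by 2-out-of-3 from $X^{\partial_{0}}\circ X^{\sigma}=\id_{X}$. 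This is precisely the retraction-plus-2-out-of-3 argument hidden inside Ken Brown's lemma, so the two proofs are morally the same; yours avoids one citation at the cost of an extra explicit application of the pullback-power proposition and one small 2-out-of-3 check, which is a reasonable trade if you prefer a self-contained derivation.
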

\begin{proof}
Indeed, by Proposition \ref{prop:cotensor-Quillen-functor} we have that $X^I\to X\times X$ is a fibration, since $\terminal\coprod\terminal\to I$ is a cofibration and $X$ is a fibrant object. 
Moreover, by Corollary \ref{cor:ken-brown-lemma} and Proposition \ref{prop:cotensor-Quillen-functor}, the functor $X^{(-)}\colon \hat{A}^\op\to \icat{C}$ maps weak equivalences to weak equivalences.
\end{proof}

\begin{proposition}
\label{prop:top-cat-fibrant-objects}
Under the hypotheses of \ref{subsec:cisinski-topological}, the full subcategory $\icat{C}^{f}$ of $\icat{C}$ spanned by the fibrant objects is a category of fibrant objects.
\end{proposition}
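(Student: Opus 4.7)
The plan is to verify the five axioms of Definition \ref{def:category-fibrant-objects} for $\icat{C}^{f}$, together with the fact that the weak equivalences satisfy the 2-out-of-3 property. The underlying principle is that the functor $u_{*}\colon \icat{C}\to \hat{A}$ is a right adjoint (so it preserves limits) and it reflects fibrations, acyclic fibrations and weak equivalences by Definition \ref{def:model-structure-top-cat}. Since $\hat{A}$ is a model category whose fibrant objects form a category of fibrant objects (see Remark \ref{rmk:model-cat-fibrant-cat}), essentially all of the axioms transport across $u_{*}$.

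First I would observe that the 2-out-of-3 property for weak equivalences in $\icat{C}^{f}$ is immediate from Lemma \ref{lem:top-cat-homotopical-cat}. For axiom \eqref{item:category-fibrant-objects-1}, the terminal object $\terminal$ lies in $\icat{C}^{f}$ because $u_{*}\terminal\cong\terminal$ is fibrant; and if $X,Y$ are fibrant objects of $\icat{C}$, then $u_{*}(X\times Y)\cong u_{*}X\times u_{*}Y$ is a product of fibrant objects in $\hat{A}$, hence fibrant, so $X\times Y$ is fibrant. For axiom \eqref{item:category-fibrant-objects-5}, if $X$ is fibrant, then $u_{*}(X\to\terminal)$ is the map from a fibrant object to the terminal object in $\hat{A}$, which is a fibration; hence $X\to\terminal$ is a fibration in $\icat{C}$.

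Next I would verify axioms \eqref{item:category-fibrant-objects-2} and \eqref{item:category-fibrant-objects-3} simultaneously. Given a cospan $X\to Z\leftarrow Y$ in $\icat{C}^{f}$ with $X\to Z$ a (acyclic) fibration, form the pullback $P=X\times_{Z}Y$ in $\icat{C}$. Applying $u_{*}$ yields the pullback in $\hat{A}$ of a (acyclic) fibration along a map between fibrant objects; in the model category $\hat{A}$, such a pullback is an (acyclic) fibration and the pullback object is fibrant. Therefore $P$ lies in $\icat{C}^{f}$ and the base change $P\to Y$ is an (acyclic) fibration in $\icat{C}^{f}$.

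For axiom \eqref{item:category-fibrant-objects-4}, I would take as path object the cotensor $X^{I}$ of Lemma \ref{lem:path-object-for-top-cat}, which already provides the required factorisation $X\to X^{I}\to X\times X$ into a weak equivalence followed by a fibration. The one thing to check is that $X^{I}$ itself is fibrant: the map $X^{I}\to X\times X$ is a fibration and $X\times X$ is fibrant (by axiom \eqref{item:category-fibrant-objects-1} above), so composing with $X\times X\to\terminal$ (a fibration by axiom \eqref{item:category-fibrant-objects-5}) shows $X^{I}\to\terminal$ is a fibration in $\icat{C}$, i.e.\ $X^{I}\in\icat{C}^{f}$. I do not foresee a genuine obstacle; the only mildly delicate point is to keep straight that fibrancy is transferred along $u_{*}$ and that $u_{*}$ commutes with the limits used in the pullback and path-object constructions, both of which are already built into the set-up of \ref{subsec:cisinski-topological} and Proposition \ref{prop:cotensor-Quillen-functor}.
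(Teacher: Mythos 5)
Your proof is correct and takes essentially the same approach as the paper, which dismisses axioms (1), (2), (3), (5) of Definition \ref{def:category-fibrant-objects} as ``immediate from the definitions'' and cites Lemma \ref{lem:path-object-for-top-cat} for axiom (4); you have simply unpacked what ``immediate'' amounts to (namely, that $u_{*}$ is a limit-preserving right adjoint which reflects fibrations and weak equivalences) and added the small but worthwhile observation that $X^{I}$ is itself fibrant.
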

\begin{proof}
We need to show that $\icat{C}^{f}$ satisfies the hypothesis of Definition \ref{def:category-fibrant-objects}. 
Items  \ref{item:category-fibrant-objects-1}, \ref{item:category-fibrant-objects-2}, \ref{item:category-fibrant-objects-3} and \ref{item:category-fibrant-objects-5} are immediate from the definitions and item \ref{item:category-fibrant-objects-4} follows from Lemma \ref{lem:path-object-for-top-cat}.
\end{proof}

\subsection{}
Let us assume now that we have an object $E$ in $\icat{C}$ together with a factorisation
\begin{equation}
\label{eq:chaotic-interval-top-cat}
\begin{tikzcd}
\terminal\coprod\terminal \ar[r, "{\left(\partial^0,\partial^1\right)}"] &
E \ar[r, "\sigma"] &
\terminal
\end{tikzcd}
\end{equation}
of the codiagonal on the terminal object, such that $\left(\partial^{0}, \partial^{1}\right)$ is a cofibration and $\sigma$ is a weak equivalence.

\subsection{}
For every object $X$ in $\icat{C}$, applying the functor $X\times \blank$ to \eqref{eq:chaotic-interval-top-cat} yields a decomposition of the codiagonal on $X$
\begin{equation}
\label{eq:cylinder-top-cat}
\begin{tikzcd}
X\coprod X \ar[r, "{\left(\partial_{X}^0,\partial_{X}^1\right)}"] &
X\times E \ar[r, "\sigma_{X}"] &
X
\end{tikzcd}
\end{equation}

\begin{lemma}
\label{lem:cylinder-top-cat}
For every object $X$ in $\icat{C}$ the morphism $\sigma_{X}$ in \eqref{eq:cylinder-top-cat} is a weak equivalence.
Moreover, if $X$ is a cofibrant object, the morphism $\left(\partial_{X}^{0}, \partial_{X}^{1}\right)$ is a cofibration.
\footnote{Notice that we don't use the word ``cylinder object'' in this case, since a priori the dual of the category of cofibrant objects in $\icat{C}$ is not a category of fibrant objects in the sense of Definition \ref{def:category-fibrant-objects}}
\end{lemma}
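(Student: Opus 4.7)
My plan is to treat the two claims separately. For the first, I would observe that $\sigma_X$ is literally the product $1_X\times\sigma$ of the identity on $X$ with the weak equivalence $\sigma\colon E\to\terminal$, so that Lemma \ref{lem:top-cat-prod-we} applies directly and yields that $\sigma_X$ is a weak equivalence, with no further work required.

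For the second claim, I would first identify $(\partial_X^0,\partial_X^1)$ with $1_X\times(\partial^0,\partial^1)$ under the canonical isomorphism $X\coprod X\cong X\times(\terminal\coprod\terminal)$, available because $\icat{C}$ is Cartesian closed and binary coproducts distribute over products in this setting. The strategy is then to verify the left lifting property against an arbitrary trivial fibration $p\colon W\to Z$. By the adjunction $X\times(\blank)\dashv\ihom(X,\blank)$ arising from the Cartesian closed structure on $\icat{C}$, such a lifting problem is naturally equivalent to a lifting problem of $(\partial^0,\partial^1)\colon\terminal\coprod\terminal\to E$ against the map $p_{*}\colon\ihom(X,W)\to\ihom(X,Z)$.

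The key input at this point is Corollary \ref{cor:internal-hom-quillen-top-cat}: applying it to the cofibration $\initial\to X$ (which is a cofibration since $X$ is cofibrant) and to the trivial fibration $p$, I obtain that $p_{*}$ is a trivial fibration, noting that the pullback-hom corner collapses to $p_*$ because $\ihom(\initial,\blank)\cong\terminal$. Since $(\partial^0,\partial^1)$ is a cofibration in $\icat{C}$ by hypothesis, the transported lifting problem admits a solution, and transporting this solution back through the adjunction gives the desired lift for the original problem, whence $(\partial_X^0,\partial_X^1)$ is a cofibration.

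The main obstacle, such as it is, lies in the bookkeeping: correctly identifying $(\partial_X^0,\partial_X^1)$ with $1_X\times(\partial^0,\partial^1)$ and observing that the pullback-hom corner of Corollary \ref{cor:internal-hom-quillen-top-cat} simplifies to $p_*$ when one input is initial. Once these identifications are in place, the argument is a clean pair of invocations of Lemma \ref{lem:top-cat-prod-we} and Corollary \ref{cor:internal-hom-quillen-top-cat}, and I do not anticipate any deeper difficulty.
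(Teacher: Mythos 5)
Your proof is correct and takes essentially the same route as the paper: the first claim is a direct application of Lemma \ref{lem:top-cat-prod-we}, and the second is the transposition of the lifting problem through the Cartesian closed adjunction (which is precisely the content of Lemma \ref{lem:two-variable-lifting} that the paper cites), followed by an application of Corollary \ref{cor:internal-hom-quillen-top-cat} to the cofibration $\initial\to X$ and the trivial fibration $p$, with the observation that the pullback-hom corner degenerates to $p_{*}$ since $\ihom(\initial,\blank)\cong\terminal$. You have merely unfolded what the paper leaves compressed into two citations.
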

\begin{proof}
The first part of the lemma follows immediately from Lemma \ref{lem:top-cat-prod-we} and from the fact that $\sigma\colon E\to \term$ is a weak equivalence.
The second part of the lemma follows from Corollary \ref{cor:internal-hom-quillen-top-cat} and Lemma \ref{lem:two-variable-lifting}.
\end{proof}

\subsection{}
\label{subsec:E-homotopy}
Given two morphisms $f\colon X\to Y$ and $g\colon X\to Y$ in $\icat{C}$, an $E$-\emph{homotopy} from $f$ to $g$ is a morphism $h\colon X\times E\to Y$ such that $h\circ \partial_{X}^{0}$ is equal to $f$ and $h\circ\partial_{X}^{1}$ is equal to $g$.

\begin{remark}
\label{rem:partial-we-top-cat}
For every object $X$ of $\icat{C}$ and $\epsilon \in \{0,1\}$, the morphism $\partial_{X}^{\epsilon}\colon X\to X\times E$ is a weak equivalence.
Indeed, since $\sigma_{X}$ is a weak equivalence, it is enough to apply the 2-out-of-3 property to the diagram:
\begin{equation}
\begin{tikzcd}
X\ar[d, "{\partial_{X}^{\epsilon}}"']\ar[dr, "1_{X}"]&\\
X\times E\ar[r, "\sigma_{X}"'] &X
\end{tikzcd}
\end{equation}
\end{remark}

\begin{definition}
\label{def:E-homotopy}
Let $f\colon X\to Y$ be a morphism in $\cat{C}$. We say that $f$ is an $E$-\emph{equivalence} if there exists a morphism $g\colon Y\to X$, an $E$-homotopy from $gf$ to $1_{X}$ and an $E$-homotopy from $fg$ to $1_{Y}$.
\end{definition}

\begin{proposition}
\label{prop:E-homotopy-we}
Every $E$-homotopy equivalence is a weak equivalence.
\end{proposition}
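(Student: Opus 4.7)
The plan is to show that $E$-homotopic maps share the property of being weak equivalences, and then combine this with the 2-out-of-6 axiom to conclude.

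\textbf{Step 1: $E$-homotopic maps are simultaneously weak equivalences.} Suppose $h\colon X\times E\to Y$ is an $E$-homotopy from $f$ to $g$, so that $f = h\circ\partial_X^0$ and $g = h\circ\partial_X^1$. By Remark \ref{rem:partial-we-top-cat}, both $\partial_X^0$ and $\partial_X^1$ are weak equivalences. Applying 2-out-of-3 (available because $\cat{C}$ is a homotopical category by Lemma \ref{lem:top-cat-homotopical-cat}) twice, we obtain that $f$ is a weak equivalence iff $h$ is iff $g$ is.

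\textbf{Step 2: The composites $gf$ and $fg$ are weak equivalences.} Given an $E$-equivalence $f\colon X\to Y$ with quasi-inverse $g\colon Y\to X$, by hypothesis there exist $E$-homotopies from $gf$ to $1_X$ and from $fg$ to $1_Y$. Since identities are weak equivalences, Step 1 implies that both $gf$ and $fg$ are weak equivalences.

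\textbf{Step 3: Conclude via 2-out-of-6.} Consider the chain
\begin{equation*}
X \xrightarrow{\ f\ } Y \xrightarrow{\ g\ } X \xrightarrow{\ f\ } Y.
\end{equation*}
The two consecutive composites $g\circ f$ and $f\circ g$ are weak equivalences by Step 2. Since $\cat{C}$ is a homotopical category (Lemma \ref{lem:top-cat-homotopical-cat}), the class of weak equivalences satisfies the 2-out-of-6 property, which forces $f$ (and $g$) to be weak equivalences.

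The only non-routine input is Step 1, whose hinge is Remark \ref{rem:partial-we-top-cat}; everything else is formal manipulation of 2-out-of-3 and 2-out-of-6. There is no real obstacle since all of the necessary ingredients — the weak equivalence $\sigma\colon E\to\term$, the homotopical structure on $\cat{C}$, and hence the status of $\partial_X^\epsilon$ as a weak equivalence — have already been established.
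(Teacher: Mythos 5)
Your proof is correct and follows essentially the same route as the paper: invoke Remark~\ref{rem:partial-we-top-cat} together with the 2-out-of-3 property to conclude that $gf$ and $fg$ are weak equivalences, then finish with 2-out-of-6 applied to $X\to Y\to X\to Y$. The only cosmetic difference is that you package the first part as a standalone lemma (E-homotopic maps are simultaneously weak equivalences) before applying it, which is a slightly cleaner exposition of the same argument.
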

\begin{proof}
Let $f\colon X\to Y$ be an $E$-equivalence, then there exists a morphism $g\colon Y\to X$  and $E$-homotopies $h\colon X\times E\to X$ and $k\colon Y\times E\to Y$ fitting in commutative diagrams:
\begin{equation}
\begin{tikzcd}
X\ar[d, "{\partial_{X}^{0}}"']\ar[dr, "gf"]&\\
X\times E \ar[r, "h" description]& X\\
X\ar[u, "{\partial_{X}^{1}}"]\ar[ur, "1_{X}"']&
\end{tikzcd}
\quad
\begin{tikzcd}
&Y\ar[d, "{\partial_{Y}^{0}}"]\ar[dl, "fg"']\\
Y&Y\times E \ar[l, "k" description]\\
& Y\ar[u, "{\partial_{Y}^{1}}"']\ar[ul, "1_{Y}"].
\end{tikzcd}
\end{equation}
By Remark \ref{rem:partial-we-top-cat} and a repeated use of the 2-out-of-3 property, both $fg$ and $gf$ are weak equivalences.
To conclude, it is enough to apply the 2-out-of-6 property to the diagram
\begin{equation}
\begin{tikzcd}
X\ar[r, "f" description]\ar[rr, bend left, "gf"]&Y\ar[r,"g" description]\ar[rr, bend right, "fg"']& X\ar[r, "f" description]& Y
\end{tikzcd}
\end{equation}
\end{proof}

\begin{theorem}
\label{thm:transfer-cartesian-closed}
Under the above hypothesis and notation, let $\cat{I}$  and $\cat{J}$ be sets of generating cofibrations and trivial cofibrations for the model structure on $\hat{A}$. 
Assume that every morphism in $\llift{(\rlift{u_!\cat{J}})}$ is a weak equivalence.
Then, there exists a combinatorial, $\hat{A}$-enriched and cartesian closed model structure on $\icat{C}$ such that the adjunction
\begin{equation*}
\Adjoint{\hat{A}}{\icat{C}}{u_!}{u_*}
\end{equation*}
is an enriched Quillen adjunction.
\end{theorem}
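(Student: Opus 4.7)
The plan is to apply Crans' theorem (Theorem~\ref{thm:crans}) to transfer a model structure across the adjunction $(u_!, u_*)$ from $\hat{A}$ to $\icat{C}$, and then to upgrade the resulting structure to an enriched and Cartesian closed one using the strong monoidality of $u_!$ together with Proposition~\ref{prop:cotensor-Quillen-functor}.

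First I would verify the hypotheses of Theorem~\ref{thm:crans}. The category $\icat{C}$ is locally presentable by Theorem~\ref{thm:C_I-locally-presentable}, hence complete and cocomplete; in such a category every object is $\kappa$-presentable for some regular cardinal, so one may pick a common $\kappa$ for the small collection of domains of $u_!\cat{I}$ and $u_!\cat{J}$. The third hypothesis---that $u_*$ carries $\llift{(\rlift{u_!\cat{J}})}$ into weak equivalences of $\hat{A}$---is precisely the standing assumption, since by Definition~\ref{def:model-structure-top-cat} weak equivalences in $\icat{C}$ are those maps $f$ for which $u_*(f)$ is a weak equivalence in $\hat{A}$. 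Theorem~\ref{thm:crans} then produces a cofibrantly generated model structure on $\icat{C}$ with generating (trivial) cofibrations $u_!\cat{I}$ (resp.\ $u_!\cat{J}$); being cofibrantly generated in a locally presentable category, it is automatically combinatorial. By construction $u_*$ preserves fibrations and trivial fibrations, so $(u_!, u_*)$ is a Quillen adjunction, and the enrichment furnished by Proposition~\ref{prop:cisinski-topological-enriched} upgrades this to an enriched Quillen adjunction.

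Next I would verify the $\hat{A}$-enrichment of the model structure, i.e.\ that the tensor $\otimes\colon\hat{A}\times\icat{C}\to\icat{C}$ is a left Quillen bifunctor. By the two-variable transposition in Lemma~\ref{lem:two-variable-lifting} this is equivalent to showing that for every monomorphism $i$ in $\hat{A}$ (which is exactly a cofibration in the Cisinski model structure) and every fibration $p$ in $\icat{C}$, the pullback-power $\hat{p^{i}}$ is a fibration in $\icat{C}$, acyclic whenever $i$ or $p$ is. This is precisely the content of Proposition~\ref{prop:cotensor-Quillen-functor}, so no additional work is needed here.

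Finally, for Cartesian closedness I would reduce the pushout-product axiom for $\times$ to generators. Because $\icat{C}$ is Cartesian closed, $\,\hat{\times}\,$ preserves colimits separately in each variable, and hence the class of maps whose pushout-product with a chosen generator remains a (trivial) cofibration is saturated; it therefore suffices to check generators. Since $u_!$ is strong monoidal (Proposition~\ref{prop:cisinski-topological-enriched}) and cocontinuous, it commutes with pushout-products, so $u_!(i)\,\hat{\times}\,u_!(i')\cong u_!(i\,\hat{\times}\,i')$ for any $i, i'$ in $\hat{A}^{[1]}$. The Cartesian closed Cisinski model structure on $\hat{A}$ ensures that $i\,\hat{\times}\,i'$ is a cofibration when $i, i'$ are, trivial if either is, and applying the left Quillen functor $u_!$ transports this conclusion onto the generators of $\icat{C}$; the terminal object $\term_{\icat{C}}\cong u_!(\term_{\hat{A}})$ is cofibrant as the image of a cofibrant object under a left Quillen functor, completing the monoidal model structure data. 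The only nontrivial input anywhere in this argument is the third hypothesis of Crans' theorem, which is precisely the reason the assumption on $\llift{(\rlift{u_!\cat{J}})}$ is imposed: weak equivalences in $\icat{C}$ are controlled only indirectly through $u_*$, so without such an assumption one has no way to detect that the generating trivial cofibrations actually generate a class of weak equivalences.
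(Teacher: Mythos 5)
Your proof follows the same overall strategy as the paper's: apply Crans' transfer theorem using the local presentability of $\icat{C}$ and the standing hypothesis on $\llift{(\rlift{u_!\cat{J}})}$ to obtain the combinatorial model structure, then verify enrichment via Proposition \ref{prop:cotensor-Quillen-functor}. The enrichment step is sound; your use of Lemma \ref{lem:two-variable-lifting} is equivalent to the paper's citation of Remark \ref{rem:monoidal-model-structure}.

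The genuine value-add in your version is the explicit treatment of Cartesian closedness. The paper's proof only says the model structure is ``$\hat{A}$-enriched,'' which covers the tensoring $\hat{A}\times\icat{C}\to\icat{C}$ being left Quillen, but does not spell out why the internal product $\times\colon\icat{C}\times\icat{C}\to\icat{C}$ itself satisfies the pushout-product axiom; that claim corresponds to Corollary \ref{cor:internal-hom-quillen-top-cat}, which is stated without proof and is not even cited in the theorem's proof. Your reduction to generators combined with the observation that $u_!$ is strong monoidal and cocontinuous---hence commutes with pushout-products, giving $u_!(i)\,\hat{\times}\,u_!(j)\cong u_!(i\,\hat{\times}\,j)$, and sends the (trivial) cofibration $i\hat{\times}j$ in $\hat{A}$ to a (trivial) cofibration in $\icat{C}$ because $u_!$ is left Quillen---is the correct argument. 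The cofibrancy of the unit $\term_{\icat{C}}$ likewise follows since strong monoidality implies $u_!$ preserves the terminal object and every presheaf is cofibrant in a Cisinski model structure. This supplies the justification the paper implicitly defers.
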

\begin{proof}
The existence of a combinatorial model structure and the fact that the adjunction \eqref{adjoint:presheaf-topcat-gen} is a Quillen adjunction follow from Theorem \ref{thm:crans} and Theorem \ref{thm:C_I-locally-presentable}.
The model structure is $\hat{A}$-enriched by Proposition \ref{prop:cotensor-Quillen-functor} and Remark \ref{rem:monoidal-model-structure}.
The Quillen adjunction is enriched by Proposition \ref{prop:cisinski-topological-enriched}.
\end{proof}

\begin{corollary}
In the above situation, assume that there exists a functor $R\colon \icat{C}\to \icat{C}^{f}$ together with a natural weak equivalence $r_X\colon X\to RX$.
Then, there exists a combinatorial, $\hat{A}$-enriched and Cartesian closed model structure on $\icat{C}$ such that the adjunction
\begin{equation*}
\Adjoint{\hat{A}}{\icat{C}}{u_!}{u_*}
\end{equation*}
is an enriched Quillen adjunction.
\end{corollary}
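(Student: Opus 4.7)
The plan is to deduce this corollary from Theorem \ref{thm:transfer-cartesian-closed} by verifying its hypothesis. First I would identify $\llift{(\rlift{u_!\cat{J}})}$ with the class of trivial cofibrations of $\icat{C}$: by the adjunction $(u_!, u_*)$ a morphism $p$ lies in $\rlift{u_!\cat{J}}$ if and only if $u_*(p) \in \rlift{\cat{J}}$, that is, $u_*(p)$ is a fibration in $\hat{A}$, which by Definition \ref{def:model-structure-top-cat} amounts to $p$ being a fibration in $\icat{C}$. The task therefore reduces to showing that every trivial cofibration $i\colon X \to Y$ is a weak equivalence.

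Given such $i$, naturality of $r$ yields a commutative square with horizontal sides $r_X, r_Y$ and vertical sides $i, R(i)$, in which $RX$ and $RY$ are fibrant. Since $RX \to \terminal$ is a fibration, the lifting property of $i$ against fibrations produces a morphism $\ell\colon Y \to RX$ with $\ell \circ i = r_X$. Applying $u_*$ gives $u_*(\ell) \circ u_*(i) = u_*(r_X)$, which is a weak equivalence by hypothesis on $r$.

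Next I would produce a right homotopy $H\colon Y \to RY^I$ from $R(i) \circ \ell$ to $r_Y$. Using the path object structure $RY \xrightarrow{s_{RY}} RY^I \xrightarrow{(d^0,d^1)} RY \times RY$ supplied by Lemma \ref{lem:path-object-for-top-cat}, consider the square with top edge $s_{RY} \circ r_Y \circ i$ and bottom edge $(R(i) \circ \ell,\, r_Y)$; both composites to $RY \times RY$ equal $(r_Y \circ i,\, r_Y \circ i)$, since $(d^0, d^1) \circ s_{RY} = \Delta_{RY}$ and $R(i) \circ \ell \circ i = R(i) \circ r_X = r_Y \circ i$. As $(d^0,d^1)$ is a fibration and $i$ a trivial cofibration, a lift $H$ exists with $d^0 H = R(i)\circ\ell$ and $d^1 H = r_Y$. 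The enriched adjunction identifies $u_*(RY^I)$ with $\ihom(I, u_*(RY))$, and the Quillen bifunctor property of the internal hom of the Cartesian closed model category $\hat{A}$ makes this a path object for the fibrant presheaf $u_*(RY)$, so that $d^0$ and $d^1$ become weak equivalences in $\hat{A}$. Consequently $u_*(R(i)) \circ u_*(\ell)$ is right homotopic to $u_*(r_Y)$ and is therefore itself a weak equivalence.

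To conclude, the 2-out-of-6 property applied to the triple $\bigl(u_*(i),\, u_*(\ell),\, u_*(R(i))\bigr)$, whose consecutive composites $u_*(r_X)$ and $u_*(R(i))\circ u_*(\ell)$ are both weak equivalences, forces each of the three arrows to be a weak equivalence; in particular $u_*(i)$ is, so $i$ is a weak equivalence in $\icat{C}$. This verifies the hypothesis of Theorem \ref{thm:transfer-cartesian-closed} and yields the desired combinatorial, $\hat{A}$-enriched and Cartesian closed model structure, with $(u_!, u_*)$ an enriched Quillen adjunction. The delicate step is the construction of $H$: one must recognise that because $r_Y \circ i$ factors through $i$, the two candidate endpoints $R(i) \circ \ell$ and $r_Y$ agree after precomposition with $i$ and the square therefore admits a diagonal filler; with this observation in hand, the remaining arguments are formal consequences of the enriched adjunction, the monoidal model structure on $\hat{A}$, and the 2-out-of-6 axiom.
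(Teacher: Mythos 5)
Your proof is correct and follows essentially the same route as the paper: reduce to showing trivial cofibrations are weak equivalences via Theorem \ref{thm:transfer-cartesian-closed}, lift $r_X$ along $i$ using fibrancy of $RX$ to get $\ell$ with $\ell i = r_X$, lift $i$ against the path-object fibration to produce a right homotopy from $R(i)\circ\ell$ to $r_Y$, conclude $R(i)\circ\ell$ is a weak equivalence, and finish with 2-out-of-6 on $i$, $\ell$, $R(i)$. The only differences from the paper's argument are notational and the level of detail in unpacking the adjunction identification of $\llift{(\rlift{u_!\cat{J}})}$ with the trivial cofibrations, which the paper leaves implicit.
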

\begin{proof}
By Theorem \ref{thm:transfer-cartesian-closed} it is enough to show that every trivial cofibration is acyclic. Let $i\colon A\to B$ be a trivial cofibration in $\cat{C}$, then there exists a lift in the following diagram:
\begin{equation*}
\lift{A}{RA}{*}{B}{r_A}{}{}{i}{s}
\end{equation*}
since $RA$ is a fibrant object.
Again, since $i$ is a trivial cofibration, there exists a lift in the solid commutative diagram:
\begin{diagram}
A \ar[d, "i"'] \ar[r, "{r_Bi}"]  &
RB\ar[r] & 
{RB^I}\ar[d] \\
B \ar[r, "{(r_B,s)}"'] \ar[rru, dashed, "h" description ] &
{RB\times RA} \ar[r, "{1_{RB}\times Ri}"'] &
{RB\times RB}
\end{diagram}
in particular $u_*(h)\colon u_*(B)\to u_*RB^I$ defines a right homotopy between $u_*(r_B)$ and $u_*(Ri s)$, so that $Ri s$ is a weak equivalence. But then, looking at the diagram
\begin{diagram}
A \ar[r, "i"] &
B \ar[r, "s"] &
RA \ar[r, "Ri"] &
RB
\end{diagram}
we have that $si=r_A$ and $Ris$ are weak equivalences. 
Hence, by Lemma \ref{lem:top-cat-homotopical-cat} $i$ is a weak equivalence as well.
\end{proof}

\chapter{More on Simplicial sets\label{ch:3}}

\section{Model structures on simplicial sets}
\label{ch:3sec:1}
We recall how to define the Kan-Quillen and Joyal model structures on the category of simplicial sets.
The results of this section can be found in \cite{cis16}.

\subsection{}
Recall that the category of simplicial sets is the category $\sSet$ of presheaves on the category $\DDelta$ of finite non-empty ordinals (see \ref{subsec:simplicial-sets}).

\subsection{} 
\label{subsec:face-degeneracy}
For integers $n\ge 1$ and $0\le i\le n$ we let 
\[\partial_i^n\colon \Delta^{n-1}\to\Delta^n\]
be the $i$-th \emph{face map}, i.e. the morphism corresponding to the unique strictly monotonic map $[n-1]\to[n]$ which skips the value $i$.
Similarly, for all $n\ge 0$ and $0\le i\le n$ we let 
\[\sigma_i^n\colon \Delta^{n+1}\to \Delta^n\]
be the $i$-th \emph{degeneracy map}, i.e. the morphism corresponding to the unique surjective map $[n+1]\to [n]$ which hits the value $i$ twice.
One can show (see \cite[1.1]{JT08}) that $\DDelta$ can be recovered as the free category on the reflexive quiver determined by the morphisms $\partial^{n}_{i}$ and $\sigma^{n}_{i}$, quotiented by the relations:
\begin{align*}
\partial^{n+1}_{j}\partial^{n}_{i} & = 
\partial^{n+1}_{i} \partial^{n}_{j-1} \quad\quad i<j\\
\sigma^{n}_{j} \sigma^{n+1}_{i} & = 
\sigma^{n}_{i} \sigma^{n+1}_{j+1} \quad\quad i\le j\\
\sigma^{n-1}_{j}\partial^{n}_{i} & = \begin{cases}
\partial^{n-1}_{i} \sigma^{n-2}_{j-1} & i<j\\
\id & i=j, j+1\\
 \partial^{n-1}_{i-1} \sigma^{n-2}_{j} &i>j+1
\end{cases}
\end{align*}

\begin{notation}
\label{not:simplicial-sets}
If $X$ is a simplicial set, we write $X_{n}$ for the set $X([n])$ of $n$-\emph{simplices} of $X$. Moreover, if $\alpha\colon [n]\to [m]$ is a morphism in $\DDelta$, we write $\alpha^{*}\colon X_{m}\to X_{n}$ for the induced morphism.
In particular, we use the following notation:
\begin{align*}
d^{i}_{n}=\left(\partial^{n}_{i}\right)^{*}\colon X_{n}&\to X_{n-1}\\
s^{i}_{n}=\left(\sigma^{n}_{i}\right)^{*}\colon X_{n}&\to X_{n+1}.
\end{align*}
\end{notation}

\begin{definition}
Let $X$ be a simplicial set. An $n$-simplex $x$ of $X$ is said to be \emph{degenerate} if there exists a surjection $\eta\colon [n]\to [m]$ and an $m$-simplex $y$ of $X$ such that $\eta^{*}y=x$.
\end{definition}

\begin{proposition}[Eilenberg-Zilber Lemma]
For every $x\in X_{n}$ there exists a unique surjection $\eta\colon [n]\to [m]$ and a unique non-degenerate $m$ simplex $y$ such that $x=\eta^{*}y.$
\end{proposition}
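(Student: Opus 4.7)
The plan is to prove existence and uniqueness separately, using the epi-mono factorisation in $\DDelta$ (every morphism in $\DDelta$ factors uniquely as a surjection followed by an injection) and the fact that every surjection $\eta\colon [n]\to [m]$ in $\DDelta$ admits a section, i.e.\ an injection $s\colon [m]\to [n]$ with $\eta s=\id_{[m]}$.

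For existence, I would consider the family of factorisations $x=\eta^*y$ where $\eta\colon [n]\to [k]$ is a surjection and $y\in X_{k}$; the trivial factorisation $x=\id^*x$ belongs to this family, so it is nonempty, and moreover $k\le n$ for every such factorisation. Pick one where $k$ is minimal, and call it $x=\eta^*y$. I claim $y$ is non-degenerate. If instead $y=\mu^*z$ for some surjection $\mu\colon [k]\to [k']$ with $k'<k$ and some $z\in X_{k'}$, then $x=\eta^*\mu^*z=(\mu\eta)^*z$ contradicts minimality, since $\mu\eta$ is again a surjection.

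For uniqueness, suppose $x=\eta^*y=\eta'^*y'$ with $y\in X_m$ and $y'\in X_{m'}$ both non-degenerate and $\eta,\eta'$ surjections. Pick sections $s$ of $\eta$ and $s'$ of $\eta'$. Applying $s^*$ to $\eta^*y=\eta'^*y'$ gives $y=(\eta's)^*y'$; factoring $\eta's=\delta\sigma$ as surjection followed by injection in $\DDelta$ and using that $y=\sigma^*\delta^*y'$ with $y$ non-degenerate forces $\sigma=\id$, so $\eta's\colon [m]\to [m']$ is injective. The symmetric argument shows $\eta s'\colon [m']\to [m]$ is injective, so $m=m'$ and both $\eta's$ and $\eta s'$ are monotone bijections of $[m]$, hence the identity. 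In particular $y=(\eta's)^*y'=y'$, establishing uniqueness of $y$.

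It remains to show $\eta=\eta'$, and this is the step I expect to be the main obstacle. From $\eta^*y=\eta'^*y$ with $y$ non-degenerate, the same section argument applied to an arbitrary section $s'$ of $\eta'$ yields $\eta s'=\id_{[m]}$, so every section of $\eta'$ is also a section of $\eta$. I would then translate this into a combinatorial statement about the partitions of $[n]$ into consecutive blocks $B_j=\eta^{-1}(j)$ and $B'_j=\eta'^{-1}(j)$: a monotone map $s'\colon [m]\to [n]$ is a section of $\eta'$ precisely when $s'(j)\in B'_j$ for every $j$, and monotonicity is automatic because the blocks $B'_j$ are consecutive intervals. So the condition that every such $s'$ is also a section of $\eta$ means $B'_j\subseteq B_j$ for every $j$; since both families partition $[n]$ into $m+1$ non-empty blocks, this forces $B'_j=B_j$ for all $j$, hence $\eta=\eta'$.
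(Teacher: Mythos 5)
The paper states the Eilenberg--Zilber Lemma without giving a proof, so there is no argument in the paper to compare yours against; the question is only whether your argument stands on its own, and it does. Your three steps are all sound: existence by choosing a factorisation $x=\eta^{*}y$ with the dimension of the target simplex minimal; uniqueness of $y$ and of $m$ by applying a section $s$ of $\eta$ and using the (unique) epi--mono factorisation in $\DDelta$ to show that $\eta's$ is injective, then the symmetric argument to force $m=m'$ and hence $\eta's=\id$; and uniqueness of $\eta$ by observing that ``every section of $\eta'$ is also a section of $\eta$'' translates into $B'_{j}\subseteq B_{j}$ for the partitions of $[n]$ into consecutive fibres, which together with both families having $m+1$ non-empty blocks forces $B'_{j}=B_{j}$. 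This is essentially the classical Gabriel--Zisman argument. Two facts you use tacitly deserve one line each: every surjection $\eta$ in $\DDelta$ has a monotone section, e.g.\ $s(j)=\min\eta^{-1}(j)$; and a non-degenerate $y$ with $y=\sigma^{*}w$ for a surjection $\sigma$ forces $\sigma=\id$, which uses that the paper's stated definition of degeneracy should require $\eta$ to be non-invertible (as written it literally includes $\eta=\id$, but the intended reading is clear and your argument uses the correct one).
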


\subsection{}
Let $X$ be a simplicial set. 
Recall that the $n$-skeleton of $X$ (see Example \ref{ex:skeleton-coskeleton}) is the simplicial set given by taking the restriction of $X$ to the category $\DDelta_{\le n}$ of finite ordinals $[m]$, with $m\le n$, and then taking the left Kan extension along the inclusion $\iota_{n}\colon\DDelta_{\le n}\to \DDelta$.
In particular, one can see that the $m$-simplices of $\sk_{n}X$ coincide with the $m$-simplices of $X$ for $m\le n$, while they are all degenerate for $m> n$.
Moreover, the $n$-skeleton functor  induces a filtration
\[\sk_{0}X\subset \sk_{1}X\subset \ldots\subset \sk_{n}X\subset \ldots\bigcup_{n\ge 0}\sk_{n}X=X\]
of $X$ by subobjects (see \cite[1.2]{JT08}). 
We say that $X$ is \emph{finite-dimensional}, if there exists an $n$ such that 
\begin{equation}
\label{eq:n-skeletal}
\sk_{n}X=X.
\end{equation}
If $n$ is the minimal integer for which \eqref{eq:n-skeletal} holds, we will say that $X$ is $n$-\emph{dimensional}.
We say that $X$ is \emph{infinite-dimensional} if there exists no $n$ for which \eqref{eq:n-skeletal} occurs.
Clearly, the adjunction \eqref{eq:n-truncated-simplicial-set} defines an equivalence between the category of $n$-truncated simplicial sets and the category of $n$-dimensional simplicial sets.

\subsection{}
Let $\rho\colon \DDelta\to \DDelta$ be defined as the identity on the objects and on the generators of $\DDelta$ as follows:
\begin{align*}
\rho\left(\partial^{n}_{i}\right) & = \partial^{n}_{n+1-i},\\
\rho\left(\sigma^{n}_{i}\right) & =\sigma^{n}_{n-i}.
\end{align*}
Then, precomposition with $\rho$ determines an involution
\begin{align*}
(\blank)^{\op}\colon \sSet &\to\sSet
\end{align*}
on the category of simplicial sets.
For every simplicial set $X$ we call $X^{\op}$ the \emph{opposite} simplicial set of $X$.
Moreover, if $C$ is a small category, the following natural isomorphism holds (see \cite[Proposition 1.5.8]{cis16}):
\[\Nerv \left(C^{\op}\right)\cong \left(\Nerv C\right)^{\op}\]
where $\Nerv\colon \Cat\to \sSet$ denotes the nerve functor, defined in \ref{ex:nerve-fundamental-category}.
\begin{example}

\label{ex:boundary}
For every finite ordinal $[n]$, the \emph{boundary} of the standard $n$-simplex is the simplicial set $\partial \Delta^n$ defined as the union of the images of all the face maps:
\[\partial\Delta^n=\bigcup_{0\le i\le n} \im(\partial_i^n)\]
Notice that $\partial\Delta^{n}$ is an $n-1$ dimensional simplicial set. Moreover, the following equality holds:
\[\sk_{n-1}\Delta^{n}=\partial\Delta^{n}.\]
\end{example}

\begin{example}
\label{ex:horn}
For $0\le k\le n$, the \emph{$k$-th horn} of the standard $n$-simplex is the simplicial set $\Lambda^n_k$ defined as the union of the images of all except the $k$-th face map:
\[\Lambda^n_k=\bigcup_{i\ne k}\im(\partial_i^n).\]
\end{example}

\begin{example}
\label{ex:n-spine}
For every finite ordinal $[n]$ the $n$-\emph{spine} is the sub-simplicial set $\Sp^{n}$ of $\Delta^{n}$ given by the union:
\[\Sp^n=\bigcup_{i=0}^{n-1}\Delta^{\{i,i+1\}}\]
where $\Delta^{i,i+1}$ denotes the edge from $i$ to $i+1$ in $\Delta^{n}$, \ie the sub-simplicial set of $\Delta^{n}$ spanned by the vertices $i$ and $i+1$ (see \cite[Subsection 1.4.5]{cis16}).
Notice that $\Sp^{1}=\Delta^{1}$ and $\Sp^{2}=\Lambda^{2}_{1}$.
\end{example}

\subsection{}
Let $X$ be a simplicial set.
For every $[n]\in \DDelta$, let us denote by $X_{n}^{\nd}$ the set of non-degenerate $n$-simplices of $X$. 
Then, there exists a pushout diagram
\begin{equation}
\push{\displaystyle\bigcoprod_{X^{\nd}_n}\partial\Delta^{n}}{\displaystyle\bigcoprod_{X^{\nd}_n}\Delta^{n}}{\sk_{n}X}{\sk_{n-1}X}{}{}{}{}
\end{equation}
See \cite[1.2]{JT08}.

\begin{remark}
\label{rmk:realisation-cw}
Let us consider the geometric realisation functor $\abs{\blank}\colon \sSet\to \Topa$ defined in Example \ref{ex:geometric-realisation}.
Since $\abs{\blank}$ is a left adjoint, it preserves colimits. Therefore, for every simplicial set $X$ and every $n\in \NN$ there is a pushout diagram:
\begin{equation}
\label{eq:realisation-cw}
\push{\displaystyle\bigcoprod_{X^{\nd}_n}\partial\abs{\Delta^{n}}}{\displaystyle\bigcoprod_{X^{\nd}_n}\abs{\Delta^{n}}}{\abs{\sk_{n}X}}{\abs{\sk_{n-1}X}}{}{}{}{}
\end{equation}
where $\partial\abs{\Delta^{n}}$ denotes the topological boundary of the realisation of $\Delta^{n}$ and it coincides with $\abs{\partial \Delta^{n}}$ by \cite[1.3]{JT08}.
In particular, $\abs{X}$ has the structure of a CW-complex.
\end{remark}

\subsection{}
Let $\Lambda$ be the set of inclusions $\{\Lambda^n_k\to \Delta^n: n\ge 1, 0\le k\le n\}$, a morphism in $\llift{(\rlift{\Lambda})}$ is called an \emph{anodyne extension}. 
The endofunctor $I=\Delta^1\times(\blank)$ given by taking product with the standard 1-simplex defines an exact cylinder on $\sSet$. 
A classical result of Gabriel and Zisman implies that the class of anodyne extensions coincides with the smallest class of $I$-anodyne extensions. 

\begin{definition}
A morphism $p\colon X\to Y$ is said to be a \emph{Kan fibration} if it belongs to $\rlift{\Lambda}$ and a simplicial set $W$, whose structural map $W\to \Delta^0$ is a Kan fibration, is said to be a \emph{Kan complex}.
\end{definition}

\subsection{}
The model category structure obtained by applying Theorem \ref{thm:model-structure-presheaves} to the exact cylinder $I$ and the class of anodyne extensions is called the \emph{Kan-Quillen model category structure}.
Weak equivalences in this model structure are called \emph{weak homotopy equivalences}.
Moreover, one can show that any anodyne extension is a trivial cofibration and that the fibrations in this model structure are precisely the Kan fibrations.

\begin{comment}
\subsection{}
\label{subsec:geometric-realisation}
For every $n\ge 0$, define the \emph{geometric standard $n$-simplex} as the topological space given by:
\begin{equation}
\abs{\Delta^n}= \left\{\left(t_0,\ldots, t_n\right) \in \RR^{n+1}: t_i\ge 0, \sum_{i=0}^nt_i = 1\right\}
\end{equation}
This, gives rise to a functor, $i\colon\Delta\to \Top$ which induces an adjunction 
\Adjoint{\sSet}{\Top}{\abs{\blank}}{\Sing} 
by Theorem \ref{thm:Kan}.
The left adjoint is called the \emph{geometric realisation functor} and the right adjoint is called the \emph{singular complex functor}.
\end{comment}

\begin{theorem}
There exists a cofibrantly generated model category structure on the category $\Topa$ of topological spaces such that the adjunction introduced in Example \ref{ex:geometric-realisation} defines a Quillen equivalence with the Kan-Quillen model structure on $\sSet$.
Moreover, the weak equivalences in this model structure are the weak homotopy equivalences.
\end{theorem}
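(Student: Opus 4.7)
The plan is to apply the transfer theorem (Theorem \ref{thm:crans}) to the adjunction $(\abs{\blank}, \Sing)$ of Example \ref{ex:geometric-realisation}, using as input the Kan-Quillen model structure on $\sSet$ with generating cofibrations $I = \{\partial\Delta^n \to \Delta^n\}_{n \ge 0}$ and generating trivial cofibrations $J = \Lambda$. The transferred structure on $\Topa$ would then have generating (trivial) cofibrations $\abs{I}$ and $\abs{J}$, and a map $p$ would be a fibration (resp.\ weak equivalence) precisely when $\Sing(p)$ is a Kan fibration (resp.\ weak homotopy equivalence) in $\sSet$. This description would automatically yield the Quillen adjunction, and one identifies the weak equivalences with the classical weak homotopy equivalences by invoking the natural isomorphism between $\pi_n(Y,y)$ and the simplicial homotopy groups of the Kan complex $\Sing Y$.

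To invoke Theorem \ref{thm:crans} I would verify its three hypotheses. Completeness and cocompleteness of $\Topa$ is standard. The domains $\abs{\partial\Delta^n}$ and $\abs{\Lambda^n_k}$ are finite CW complexes, hence compact, and therefore are small with respect to the class of transfinite compositions of closed $T_1$ inclusions along which relative $\abs{I}$-cell complexes are built. The substantive condition, and the main obstacle, is the third: every map in $\llift{(\rlift{\abs{J}})}$ must be sent by $\Sing$ to a weak equivalence. For this I would first observe that each inclusion $\abs{\Lambda^n_k} \to \abs{\Delta^n}$ is a strong deformation retract and a closed Hurewicz cofibration, so a cobase change of such an inclusion along an arbitrary map of topological spaces remains a closed cofibration and a homotopy equivalence (by the Gluing Lemma applied in $\Topa$); applying $\Sing$ then produces a trivial cofibration in $\sSet$, since $\Sing$ sends homotopy equivalences to weak equivalences. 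Stability under transfinite composition follows because $\Sing$ commutes with $\omega$-filtered colimits of closed inclusions of compact spaces, and weak equivalences in $\sSet$ are themselves closed under transfinite composition and retracts.

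For the Quillen equivalence, I would first note that every object of $\sSet$ is cofibrant and that every topological space is fibrant in the transferred structure (since $\Sing Y \to \Delta^0$ is always a Kan fibration). It therefore suffices to show that the unit $\eta_X\colon X \to \Sing\abs{X}$ is a weak homotopy equivalence for every simplicial set $X$. This is proved by induction on the skeletal filtration via the pushout \eqref{eq:realisation-cw}: the base case reduces to the statement for a point, and the inductive step is handled by the Gluing Lemma in $\sSet$ together with the fact that $\Delta^n \to \Sing\abs{\Delta^n}$ is a weak equivalence between contractible Kan complexes. A triangle identity argument then forces $\Sing$ applied to the counit $\abs{\Sing Y} \to Y$ to be a weak equivalence in $\sSet$, so the counit itself is a weak equivalence in the transferred structure, completing the verification that the adjunction is a Quillen equivalence.
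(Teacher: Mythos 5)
Your proposal is correct and follows essentially the same approach as the paper, which simply defers to Quillen's original transfer-based argument; you have filled in the standard details. One small point worth flagging: the paper's Theorem~\ref{thm:crans} is stated with the hypothesis that the domains of $F(I)$ and $F(J)$ are \emph{presentable}, and compact spaces such as $\abs{\partial\Delta^{n}}$ are not presentable in $\Topa$ (which is not locally presentable); you correctly sidestep this by invoking smallness relative to the class of transfinite compositions of closed $T_{1}$ inclusions, which is what the cited sources (Crans, Cisinski) actually use, so your argument applies the intended, slightly more general version of the theorem rather than its literal statement here.
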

\begin{proof}
A classical proof using Theorem \ref{thm:crans} can be found in \cite{qui67}.
\end{proof}

\subsection{} 
\label{subsec:morphisms-sset}
Let $X$ be a simplicial set.
A \emph{morphism} $\alpha\colon x\to x'$ in $X$ is a 1-simplex $\alpha\colon \Delta^1\to X$ in $X$ such that $x=\alpha\partial^1_1$ is the source of $\alpha$ and $x'=\alpha\partial^1_0$ is the target of $\alpha$.

\begin{example}
Given a $0$-simplex $x$ in a simplicial set $X$, the \emph{identity} on $x$ is the morphism 
\[1_{x}=s^{0}_{0}(x)\colon x\to x.\]
\end{example}

\subsection{}
\label{subsec:triangles-sset}
A \emph{triangle} in $X$ is a map $t\colon \partial \Delta^{2}\to X$, which can be seen as a triple $(f,g,h)$ of morphisms in $X$ such that the source of $g$ coincides with the target of $f$, while the sources of $f$ and $h$ coincide and dually the targets of $g$ and $h$ coincide.
A triangle $t\colon \partial \Delta^{2}\to X$ is said to be \emph{commutative} if there exists a morphism $c\colon \Delta^{2}\to X$ of simplicial sets such that the restriction of $c$ to $\partial \Delta^{2}$ coincides with $t$.

\subsection{}
Let $\alpha\colon x\to x'$ be a morphism in a simplicial set $X$, a \emph{left inverse} to $\alpha$ is a morphism $\beta\colon x'\to x$ such that the triangle $(\alpha,\beta,1_{x})$ is commutative.
Dually, a right inverse to $\alpha$ is a left inverse to $\alpha$ in the opposite simplicial set $X^{\op}$.

\begin{definition}
\label{def:morphism-invertible-sset}
Let $X$ be a simplicial set. A morphism $\alpha\colon x\to x'$ in $X$ is said to be \emph{invertible}, if $\alpha$ has a left inverse and a right inverse.
Explicitly, $\alpha$ is invertible if there exist morphisms $\beta\colon x'\to x$ and $\gamma\colon x'\to x$ in $X$ such that the triangles:
\begin{equation}
\begin{tikzcd}[row sep= small, column sep=small]
& x'\ar[rdd, "\beta"]&\\
&&\\
x\ar[uur, "\alpha"]\ar[rr, "1_{x}"']& & x
\end{tikzcd}
\quad
\begin{tikzcd}[row sep= small, column sep=small]
& x\ar[rdd, "\alpha"]&\\
&&\\
x'\ar[uur, "\gamma"]\ar[rr, "1_{x'}"']& & x'
\end{tikzcd}
\end{equation}
commute.
\end{definition}

\begin{construction}
\label{con:left-right-inverse}
Let $\alpha\colon x\to x'$ be a morphism in a simplicial set $X$.
Following \cite[3.3.1]{cis16} one can freely add a left inverse to $\alpha$.
We construct the simplicial set $X[\beta\alpha=1]$ as follows:
First, we freely adjoin a morphism $\beta\colon x'\to x$ taking the pushout:
\begin{equation*}
\push{\partial\Delta^{1}}{X}{X[\beta]}{\Delta^{1}}{(x',x)}{}{\beta}{}
\end{equation*}
Then, we glue a $2$-simplex:
\begin{equation*}
\push{\partial\Delta^{2}}{X[\beta]}{X[\beta\alpha=1]}{\Delta^{2}}{(\alpha,\beta,1_{x})}{}{c}{}
\end{equation*}
manifesting the commutativity of the triangle $(\alpha,\beta,1_{x})$.
By applying this procedure to $X^{\op}$ one can freely add a right inverse to $\alpha$, obtaining the simplicial set $X[\alpha\gamma=1]$.
\end{construction}

\begin{example}
\label{ex:walking-retraction-sset}
Let $\Delta^{1}$ be the standard $1$-simplex and let $\alpha\colon 0\to 1$ be the unique morphism from $0$ to $1$ in $\Delta^{1}$.
We denote by $R=\Delta^{1}[\beta\alpha=1]$ the simplicial set given by freely adding a left inverse to $\alpha$.
\end{example}

\begin{definition}
\label{def:localisation-sset}
Let $X$ be a simplicial set and let $\alpha\colon x\to x'$ be a morphism in $X$.
The \emph{localisation} of $X$ at $\alpha$ is the simplicial set $X[\alpha^{-1}]$ defined by freely adding a right and a left inverse to $X$:
\[X[\alpha^{-1}]=X[\beta\alpha=1][\alpha\gamma=1].\]
\end{definition}

\begin{example}
\label{ex:J-interval-sset}
We define the interval $J$ as:
\[J=\Delta^{1}[\alpha^{-1}]\]
where $\alpha$ is the unique non-degenerate $1$-simplex in $\Delta^{1}$.
One can show that taking the product with $J$ defines an exact cylinder on $\sSet$ (See \cite[Chapter 3, Section 3]{cis16}).
\end{example}

\subsection{} 
Let $\Lambda'$ be the set of \emph{inner horn inclusions}. 
This is the set of horn inclusions $\Lambda^n_k\to \Delta^n$ for $n\ge 2$ and $0<k<n$. An \emph{inner anodyne extension} is a map of $\llift{(\rlift{\Lambda'})}$. 
A \emph{categorical anodyne extension} is an element of the smallest class of $J$-anodyne extensions containing the inner anodyne extensions.

\begin{definition}
\label{def:Joyal-model-structure}
The \emph{Joyal model category structure} on $\sSet$ is the model category structure on $\sSet$ obtained by applying Theorem \ref{thm:model-structure-presheaves} to the exact cylinder $J\times(\blank)$ and to the class of categorical anodyne extensions.
\end{definition}

\begin{definition}
Let $p\colon X\to Y$ be a morphism of simplicial sets and let $W$ be a simplicial set
\begin{enumerate}
\item
We say that $p$ is an \emph{inner fibration} if $p$ has the right lifting property with respect to the class of inner anodyne extensions (equivalently, with respect to the inner horn inclusions).
\item
We say that $p$ is an \emph{isofibration}, if $p$ is an inner fibration and has the right lifting property with respect to the inclusion $\{0\}\to J$.
\item
We say that $W$ is an \emph{$\infty$-category}, if the unique map $W\to \Delta^0$ is an inner fibration.
\end{enumerate}
\end{definition}

\begin{theorem}[Joyal]
A simplicial set is a fibrant object in the Joyal model category structure if and only if it is an $\infty$-category. 
A morphism of $\infty$-categories is a fibration in the Joyal model category structure if and only if it is an isofibration.
\end{theorem}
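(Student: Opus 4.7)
The statement unpacks into two bi-implications, each split into an immediate direction and a substantive one. By Theorem \ref{thm:model-structure-presheaves}, a simplicial set $W$ is fibrant in the Joyal model structure exactly when $W \to \Delta^{0}$ has the right lifting property against every categorical anodyne extension, and a morphism between fibrant objects is a fibration iff it has the corresponding relative lifting property. Since the class of categorical anodyne extensions contains all inner horn inclusions by construction, every fibrant simplicial set is automatically an $\infty$-category, and every fibration between fibrant objects is in particular an inner fibration. For the isofibration half, observe that the inclusion $\{0\} \to J$ arises as the canonical map $J \times \emptyset \cup \{0\} \times \Delta^{0} \to J \times \Delta^{0}$ and is therefore $J$-anodyne by axiom (2) of that definition; it is consequently categorical anodyne, so every fibration between fibrants lifts against it and is an isofibration.

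For the converse directions, my plan is the following. Given an $\infty$-category $W$, consider the class $\cat{B}'$ of monomorphisms with the left lifting property against $W \to \Delta^{0}$. By Lemma \ref{lemma:left-saturated-class} this class is weakly saturated and by hypothesis it contains the inner anodyne extensions. Since the class $\cat{B}$ of categorical anodyne extensions is by definition the smallest class of $J$-anodyne extensions containing the inner anodyne extensions, it suffices to prove that $\cat{B}'$ itself is a class of $J$-anodyne extensions, i.e.\ satisfies closure axioms (2) and (3) of that definition. Using the pushout-product/pullback-power adjunction of Lemma \ref{lem:two-variable-lifting}, axiom (2) translates into the claim that for every monomorphism $K \to L$ the cotensor map $W^{L} \to W^{K}$ has the right lifting property against $\{\epsilon\} \to J$ for $\epsilon \in \{0, 1\}$, while axiom (3) translates into a similar lifting property against $\partial J \to J$. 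An entirely parallel argument in the fibred setting, replacing $W \to \Delta^{0}$ by an isofibration $p \colon X \to Y$ and the cotensor by the pullback-power $X^{L} \to X^{K} \times_{Y^{K}} Y^{L}$, will handle the statement about fibrations.

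The main obstacle, which I expect to consume most of the technical work, is \emph{Joyal's special outer horn extension theorem}: if $W$ is an $\infty$-category and $\sigma \colon \Lambda^{n}_{0} \to W$ is an outer horn with $n \geq 2$ whose initial edge $\sigma|_{\Delta^{\{0,1\}}}$ is invertible in the sense of Definition \ref{def:morphism-invertible-sset}, then $\sigma$ admits a filler $\Delta^{n} \to W$, and dually for $\Lambda^{n}_{n}$ with invertible final edge. Equivalently, a morphism $f \colon \Delta^{1} \to W$ is invertible precisely when it extends along $\Delta^{1} \to J$. The proof of this theorem is a delicate combinatorial argument that leverages the presentation of $J$ as the iterated localisation $\Delta^{1}[\beta\alpha = 1][\alpha\gamma = 1]$ from Construction \ref{con:left-right-inverse} and Definition \ref{def:localisation-sset}, together with the standard ``box filling'' technique that produces lifts by successive inner horn fillers. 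Granting this theorem, for any monomorphism $K \to L$ one first shows that $W^{L} \to W^{K}$ is an inner fibration by the well-known combinatorial fact that pushout-products of monomorphisms with inner horn inclusions are inner anodyne; the outer horn theorem then upgrades it to an isofibration by treating one new non-degenerate simplex of $L$ at a time, establishing axiom (2).

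To verify axiom (3) one argues that the map $W^{L} \to W^{K}$ does not merely admit lifts of individual endpoints but interpolates between compatible pairs: $\partial J \to J$ admits a filtration whose successive stages are cobase changes of inner horn inclusions and of $\{\epsilon\} \to J$, both of which have been handled, exploiting that $J$ is a contractible Kan complex assembled from $\Delta^{1}$ by the iterative inverse-adjoining procedure. The same strategy, carried out relatively with an isofibration $p \colon X \to Y$ between $\infty$-categories in place of $W \to \Delta^{0}$ and the relative version of the special outer horn theorem, shows that $p$ has the right lifting property against every categorical anodyne extension and is therefore a fibration, completing the characterisation.
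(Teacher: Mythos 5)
The paper does not actually prove this statement: the proof given is the single line ``See \cite[Theorem 3.6.1]{cis16}'', i.e.\ a citation to Cisinski's book, where the hard work is carried out. Your outline has the right general shape for an honest proof, and the easy directions (fibrant $\Rightarrow$ $\infty$-category, fibration between fibrants $\Rightarrow$ isofibration) are handled correctly — in particular your observation that $\{0\}\to J$ is the pushout-product $J\otimes\emptyset\cup\{0\}\otimes\Delta^0\to J\otimes\Delta^0$ and hence $J$-anodyne by axiom (2) is exactly right. But the converse directions, which are the content of the theorem, are not proved. You identify Joyal's special outer horn theorem as the crux and then write ``granting this theorem''; that theorem \emph{is} the substantive combinatorial content of the result, and everything else in your proposal is reduction and bookkeeping around it. Leaving it as an unproven black box means the proposal is a roadmap rather than a proof — which is precisely why the paper chooses to cite \cite{cis16}, where that combinatorics is developed in detail.

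There is also a technical inaccuracy in the reduction step. You propose to show that the class $\cat{B}'$ of monomorphisms having the left lifting property against $W\to\Delta^0$ ``is itself a class of $J$-anodyne extensions, i.e.\ satisfies closure axioms (2) and (3)''. But the definition of a class of $I$-anodyne extensions has \emph{three} axioms, and axiom (1) — that $\cat{B}'$ equal $\llift{(\rlift{\Lambda})}$ for some \emph{set} $\Lambda$ of monomorphisms — is not automatic for the orthogonal complement of a single map and you never address it. Without axiom (1), the ``smallest class of $J$-anodyne extensions'' minimality property that you invoke does not give you $\cat{B}\subset\cat{B}'$. The standard fix is to work with an explicit generating set for $\cat{B}$ (Cisinski's anodyne completion of the inner horns relative to $J$) and show by induction on its construction that each generator lies in $\cat{B}'$, using weak saturation of $\cat{B}'$ at each stage, rather than appealing to the abstract minimality of $\cat{B}$ among classes of $J$-anodyne extensions.
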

\begin{proof}
See \cite[Theorem 3.6.1]{cis16}.
\end{proof}

\subsection{}
Let $X$ and $Y$ be $\infty$-categories.
A \emph{functor} from $X$ to $Y$ is a morphism $f\colon X\to Y$ of simplicial sets.
A natural transformation from a functor $f\colon X\to Y$ to a functor $g\colon X\to Y$ is a $\Delta^{1}$-homotopy from $f$ to $g$.
A natural transformation is said to be invertible if for every object $x\in X$, the induced morphism $fx\to gx$ is invertible in $Y$.
A functor $f\colon X\to Y$ is said to be an \emph{equivalence of $\infty$-categories} if there exists a functor $g\colon Y\to X$ and invertible natural transformations from $fg$ to $1_{Y}$ and from $gf$ to $1_{X}$.
One can show that a functor is an equivalence of $\infty$-categories if and only if it is a $J$-homotopy equivalence.

\subsection{}
Let $\Cat$ be the category of small categories.
A functor $p\colon C\to D$ in $\Cat$ is said to be an \emph{isofibration} if for every object $c_{0}$ in $C$ and for every isomorphism $\delta\colon d_{0}\to d_{1}$ in $D$ such that $pc_{0}=d_{0}$ there exists a unique isomorphism $\gamma\colon c_{0}\to c_{1}$ in $C$ such that $p\gamma=\delta$.

\begin{theorem}
The category $\Cat$ of small categories admits a cofibrantly generated model category structure, whose weak equivalences are the equivalences of categories, whose cofibrations are the functors which are injective on objects and whose fibrations are the isofibrations.
Moreover, the adjunction
\begin{equation*}
\Adjoint{\sSet}{\Cat}{\tau_{1}}{N}
\end{equation*}
defines a Quillen adjunction with the Joyal model structure.
\end{theorem}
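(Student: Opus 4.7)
The strategy is to verify the model axioms on $\Cat$ directly, and then to check the Quillen adjunction against the generators of the Joyal model structure. First I would characterise the trivial fibrations as the functors that are surjective on objects and fully faithful, and dually the trivial cofibrations as the functors that are injective on objects and equivalences of categories. With these characterisations, 2-out-of-3 and closure under retracts are routine, and bicompleteness of $\Cat$ is standard. For factorisation, a functor $f\colon A \to B$ factors as a cofibration followed by a trivial fibration by adjoining to $A$ one object for each $b \in B$ missing from the image of $f$ on objects, together with a formal isomorphism to a chosen preimage; dually, one obtains a trivial cofibration followed by a fibration via the mapping path factorisation $A \to A \times_{B} B^{J} \to B$, where $J$ denotes the walking isomorphism category, which exists by the Cartesian closed structure on $\Cat$ of Example \ref{ex:cat-cartesian-closed}.

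For the lifting axioms, given a square from a cofibration $i\colon A \to B$ to a trivial fibration $p\colon X \to Y$, one builds the lift on objects outside $i(A)$ by choosing preimages via the surjectivity of $p$, and on morphisms by full-faithfulness. The subtler case is lifting a trivial cofibration $i\colon A \to B$ against an isofibration $p\colon X \to Y$: I would pick a quasi-inverse $i'\colon B \to A$ together with a natural isomorphism $\eta\colon \id_{B} \cong i i'$, form a first candidate lift $k_{0}\colon B \to X$ by precomposing with $i'$ the solution given over $A$, and then correct $k_{0}$ object-by-object using $\eta$ together with the unique lifting of isomorphisms afforded by the isofibration axiom on $p$ to obtain a functor $k$ which strictly fits into the square.

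For the Quillen adjunction it suffices to check that $\tau_{1}$ sends the generating (trivial) cofibrations of the Joyal structure to (trivial) cofibrations in $\Cat$, equivalently that $N$ preserves fibrations and acyclic fibrations. For the boundary inclusions $\partial \Delta^{n} \to \Delta^{n}$ the induced functor is the identity on objects, and is even an isomorphism for $n \geq 3$ since the inner $2$-simplices already present in $\partial \Delta^{n}$ force all composition relations in $[n]$, while the cases $n = 0,1,2$ yield $\varnothing \to [0]$, $[0]\sqcup [0]\to [1]$ and the canonical quotient $\tau_{1}(\partial \Delta^{2}) \to [2]$, all visibly injective on objects. Dually, $N$ preserves fibrations because uniqueness and associativity of composition in a category produce unique fillers for inner horns in any nerve, and the lifting of $\{0\}\to J$ against $NC \to ND$ is exactly the isofibration property of $C \to D$. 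The main obstacle is the lifting of trivial cofibrations against fibrations in $\Cat$: both the existence of a quasi-inverse coming from the equivalence (to produce a candidate lift) and the rigidity of isofibrations (to adjust that candidate into a strictly commuting one) are essential, and their interplay is the only nontrivial step in the verification.
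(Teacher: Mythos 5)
The paper's own proof is simply a citation to Cisinski (\cite[Theorem 3.3.10, Proposition 3.3.14]{cis16}), so your direct verification is a genuinely different route: you reprove from scratch the existence of the canonical (``folk'') model structure on $\Cat$, originally due to Joyal and Tierney, together with the Quillen adjunction, rather than quoting it. The overall outline is the standard one and most of your steps are sound, including the mapping-path factorisation $A\to A\times_B B^J\to B$, the lift-then-correct strategy against isofibrations (though you should make explicit that the quasi-inverse $i'$ is chosen so that $i'i=\mathrm{id}_A$ and $\eta i=\mathrm{id}$, which is possible precisely because $i$ is injective on objects and an equivalence), the computation of $\tau_1$ on the boundary inclusions, and the reduction of the Quillen-adjunction check to $\tau_1$ preserving cofibrations together with $N$ preserving fibrations.

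However, your cofibration--trivial-fibration factorisation does not work as stated. Adjoining to $A$ one new object for each $b\in B$ missing from the image of $f$ ``together with a formal isomorphism to a chosen preimage'' is incoherent, since such a $b$ by assumption has no preimage; and even if one reads it as merely adjoining new objects over the missing $b$'s, the resulting category $A'$ fails to be fully faithful over $B$, because already $A'(a,a')=A(a,a')\to B(fa,fa')$ need not be a bijection unless $f$ was fully faithful to begin with. The correct construction is the mapping cylinder $M(f)$ with objects $\mathrm{ob}\,A\sqcup\mathrm{ob}\,B$ and hom-sets $M(f)(x,y)=B(\bar f x,\bar f y)$, where $\bar f$ acts as $f$ on $A$-objects and as the identity on $B$-objects. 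Then $A\hookrightarrow M(f)$ is injective on objects (a cofibration, though not in general faithful), while $M(f)\to B$ is surjective on objects and fully faithful by construction. With that replacement the rest of your argument goes through.
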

\begin{proof}
See \cite[Theorem 3.3.10, Proposition 3.3.14]{cis16}
\end{proof}

\begin{definition}
Let $X$ be an $\infty$-category. We say that $X$ is an $\infty$-\emph{groupoid} if every morphism in $X$ is invertible in the sense of Definition \ref{def:morphism-invertible-sset}
\end{definition}

\begin{theorem}
An $\infty$-category $X$ is an $\infty$-groupoid if and only if $X$ is a Kan complex.
\end{theorem}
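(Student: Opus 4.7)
The plan is to prove both implications of the biconditional, with the forward direction (Kan complex $\Rightarrow$ $\infty$-groupoid) being essentially immediate from horn-filling, and the reverse being the substantive content, achieved by reducing outer horn filling to inner horn filling using the invertibility of edges.

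First I would handle the easy direction. Let $X$ be a Kan complex and let $\alpha \colon x \to x'$ be a morphism in $X$. To produce a left inverse, consider the outer horn $\Lambda^{2}_{0} \to X$ whose $01$-face is $\alpha$ and whose $02$-face is the degenerate edge $1_{x} = s^{0}_{0}(x)$; a filler $c \colon \Delta^{2} \to X$ exists by the Kan condition, and its $12$-face is a morphism $\beta \colon x' \to x$ witnessing that $(\alpha,\beta,1_{x})$ is a commutative triangle. The dual argument using a $\Lambda^{2}_{2}$-horn produces a right inverse, so every morphism in $X$ is invertible in the sense of Definition \ref{def:morphism-invertible-sset}, and $X$ is an $\infty$-groupoid.

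For the reverse direction, suppose $X$ is an $\infty$-category in which every morphism is invertible. It suffices to show that $X$ has the right lifting property with respect to the outer horn inclusions $\Lambda^{n}_{0} \hookrightarrow \Delta^{n}$ and $\Lambda^{n}_{n} \hookrightarrow \Delta^{n}$ for every $n \geq 1$. The case $n=1$ is trivial, since a map $\Lambda^{1}_{0} \to X$ is just the choice of a vertex $x$, and the degenerate edge $s^{0}_{0}(x) \colon \Delta^{1}\to X$ provides the required extension (symmetrically for $\Lambda^{1}_{1}$). For $n \geq 2$ the key input is the following lemma of Joyal: \emph{if $X$ is an $\infty$-category and $\sigma \colon \Lambda^{n}_{0} \to X$ is a horn whose distinguished edge $\sigma|_{\Delta^{\{0,1\}}}$ is invertible, then $\sigma$ extends to $\Delta^{n}$}; the analogous statement holds for $\Lambda^{n}_{n}$ with distinguished edge $\sigma|_{\Delta^{\{n-1,n\}}}$. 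Granted this lemma, every outer horn in $X$ has an invertible distinguished edge by hypothesis, so all horns fill, and $X$ is a Kan complex.

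The main obstacle is the Joyal lemma itself. The standard route is to show that invertibility of a morphism $\alpha \colon \Delta^{1} \to X$ (in the sense of Definition \ref{def:morphism-invertible-sset}) is equivalent to $\alpha$ extending along the inclusion $\Delta^{1} \to J$, where $J$ is the interval of Example \ref{ex:J-interval-sset}; this equivalence follows by unpacking the definition of $J = \Delta^{1}[\alpha^{-1}]$ as a pushout of $\Lambda^{2}_{0}$- and $\Lambda^{2}_{2}$-shaped cells witnessing the commutativity of the inverse triangles. Given an extension $\tilde{\sigma}|_{\Delta^{\{0,1\}}} \colon J \to X$ of the distinguished edge, one forms the pushout $\Lambda^{n}_{0} \cup_{\Delta^{\{0,1\}}} J$ and argues that the canonical inclusion
\begin{equation*}
\Lambda^{n}_{0} \cup_{\Delta^{\{0,1\}}} J \;\longrightarrow\; \Delta^{n} \cup_{\Delta^{\{0,1\}}} J
\end{equation*}
is inner anodyne, so that an extension to $\Delta^{n}$ exists because $X$ is an $\infty$-category. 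Verifying that this inclusion is inner anodyne is the combinatorially delicate step: one filters the target by attaching cells of increasing dimension and identifies each successive cell attachment as a pushout of an inner horn inclusion. This is precisely the content of the argument carried out in \cite[Theorem 3.5.1]{cis16}, to which I would appeal.
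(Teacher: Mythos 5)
The paper gives no argument for this theorem, only the citation \cite[Theorem 3.5.1]{cis16}, so there is nothing in the text to compare your proof against; your sketch therefore supplies genuine content that the paper omits. The easy direction is correct: filling a $\Lambda^2_0$-horn whose $\{0,1\}$-face is $\alpha$ and whose $\{0,2\}$-face is $1_x$ produces the required left inverse as the $\{1,2\}$-face, and the dual $\Lambda^2_2$-filling gives a right inverse, matching Definition \ref{def:morphism-invertible-sset} exactly. For the converse you correctly reduce to Joyal's special outer horn lemma, and the equivalence between invertibility of $\alpha\colon\Delta^1\to X$ and existence of an extension $J\to X$ is immediate from matching data with Construction \ref{con:left-right-inverse}. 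The one step I would flag is the assertion that $\Lambda^n_0\cup_{\Delta^{\{0,1\}}}J\to\Delta^n\cup_{\Delta^{\{0,1\}}}J$ is inner anodyne: this map is a pushout of the outer horn inclusion $\Lambda^n_0\to\Delta^n$ along $\Lambda^n_0\to\Lambda^n_0\cup_{\Delta^{\{0,1\}}}J$, and since outer horns are not inner anodyne no closure property of the inner anodyne class applies, so the claim genuinely rests on the explicit cell-by-cell filtration you gesture at and defer to \cite{cis16}. Even if one only establishes the weaker fact that this inclusion is a trivial cofibration for the Joyal model structure, the desired lift against an $\infty$-category still exists, because the paper has already recorded Joyal's theorem that the $\infty$-categories are exactly the Joyal-fibrant objects. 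Either way, you and the paper ultimately appeal to the same source for the delicate combinatorics.
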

\begin{proof}
See \cite[Theorem 3.5.1]{cis16}
\end{proof}

\subsection{}
Let $f\colon x\to x'$ and $g\colon x\to x'$ be morphisms in an  $\infty$-category $X$.
We say that $f$ is homotopic to $g$ and write $f\sim g$ if there exists a commutative triangle with boundary $(f,1_{x'},g)$.
One can show that $\sim$ defines an equivalence relation on the set of morphisms from $x$ to $x'$, compatible with composition.

\begin{definition}
\label{def:homotopy-category-quasicategory}
Let $X$ be an $\infty$-category. The \emph{homotopy category} of $X$ is the category $\ho X$  whose objects are the $0$-simplices of $X$ and with set of morphisms $\ho X(x,x')$ from $x$ to $x'$ the set of morphisms from $x$ to $x'$ in $X$ modulo the equivalence relation $\sim$.
\end{definition}

\begin{theorem}[Boardmann-Vogt]
\label{thm:boardmann-vogt}
There is a morphism $X\to \Nerv \ho X$ which is the identity on objects and sends a morphism $f\colon x\to x'$ in $X$ to the equivalence class $[f]$ of $f$ in $\ho X(x,x')$.
Moreover this morphisms induces an isomorphism:
\[\tau_{1}X\cong \ho X.\]
\end{theorem}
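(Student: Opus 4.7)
The plan is to construct the morphism $\pi\colon X\to \Nerv\ho X$ directly, and then use the adjunction $\tau_{1}\dashv \Nerv$ together with an explicit inverse to establish the isomorphism $\tau_{1}X\cong \ho X$.

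First I would record an auxiliary fact about composition in $\ho X$: for any 2-simplex $\tau\colon \Delta^{2}\to X$ with faces $d_{2}\tau=f$, $d_{1}\tau=h$, $d_{0}\tau=g$, one has $[h]=[g]\circ[f]$ in $\ho X$. Indeed, $\tau$ itself is a filler of the inner horn $\Lambda^{2}_{1}\to X$ with outer edges $f$ and $g$, so the composite $[g]\circ[f]$ defined through such a filler equals $[d_{1}\tau]$; the independence on the choice of filler is precisely the well-definedness of composition asserted in Definition \ref{def:homotopy-category-quasicategory}, which in turn relies on inner horn filling (available since $X$ is an $\infty$-category) and on the fact that $\sim$ is an equivalence relation compatible with composition.

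With this in hand I would define $\pi\colon X\to \Nerv\ho X$ levelwise. On $0$-simplices it is the identity; on $1$-simplices it sends $f$ to $[f]$; on an $n$-simplex $\sigma\colon \Delta^{n}\to X$ it returns the functor $[n]\to \ho X$ sending the arrow $i\le j$ to $[\sigma|_{\Delta^{\{i,j\}}}]$. Functoriality of this assignment is exactly the auxiliary fact applied to each 2-face $\sigma|_{\Delta^{\{i,j,k\}}}$; compatibility with face operators is immediate, and compatibility with degeneracies uses that $s_{0}(x)$ represents $1_{x}$ in $\ho X$. This is the required morphism $\pi$, clearly identity on objects and sending $f$ to $[f]$.

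For the second assertion, by the adjunction of Example \ref{ex:nerve-fundamental-category} the morphism $\pi$ corresponds to a functor $\bar\pi\colon \tau_{1}X\to \ho X$, identity on objects and sending the class of a $1$-simplex $f$ in $\tau_{1}X$ to $[f]$ in $\ho X$. I would exhibit an explicit inverse $\phi\colon \ho X\to \tau_{1}X$, again the identity on objects, sending $[f]_{\sim}$ to the class $[f]_{\tau_{1}X}$ of a representing $1$-simplex $f$. Well-definedness of $\phi$ uses that any 2-simplex certifying $f\sim g$ provides, via the defining relation of $\tau_{1}X$ together with $s_{0}(x')=1_{x'}$, an equality $[f]=[g]$ in $\tau_{1}X$; functoriality of $\phi$ is the auxiliary fact read in reverse, namely if the composite $[g]_{\sim}\circ[f]_{\sim}$ is computed through a filler $\sigma$ then $\phi([g]_{\sim}\circ[f]_{\sim})=[d_{1}\sigma]_{\tau_{1}X}=[g]_{\tau_{1}X}\circ[f]_{\tau_{1}X}$ by the 2-simplex relation. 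Both composites $\bar\pi\phi$ and $\phi\bar\pi$ are then identities: they act as the identity on objects and on the generating classes of $1$-simplices, and both categories are generated by these under composition.

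The main obstacle is the careful treatment of well-definedness and functoriality of $\phi$: one needs to verify that the equivalence relation on $1$-simplices generated by the 2-simplex relations in $\tau_{1}X$ coincides with $\sim$ on $1$-simplices, so that no extra identifications arise when passing from $\ho X$ to $\tau_{1}X$. This hinges on the interplay between arbitrary inner horn fillers (used to define composition in $\ho X$) and the explicit presentation of $\tau_{1}X$ as a colimit of $[n]$'s over the simplices of $X$.
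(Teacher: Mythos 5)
The paper gives no proof for this statement, simply citing \cite[Theorem 1.5.14]{cis16}, so your proposal supplies a complete argument where the thesis defers to a reference. Your route — construct $\pi\colon X\to\Nerv\ho X$ levelwise, pass to $\bar\pi\colon\tau_1 X\to\ho X$ by adjunction, and exhibit an explicit inverse $\phi$ — is correct and essentially the standard direct proof of the Boardman–Vogt comparison. The auxiliary observation that \emph{any} $2$-simplex $\tau$ exhibits $[d_1\tau]=[d_0\tau]\circ[d_2\tau]$ in $\ho X$ is exactly the well-definedness of composition implicit in Definition~\ref{def:homotopy-category-quasicategory}, and you correctly use it both to make $\pi(\sigma)\colon[n]\to\ho X$ a functor and to check functoriality of $\phi$; the degeneracy check via $[s_0(x)]=1_x$ is also the right thing.

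One remark on your closing paragraph: the obstacle you flag — that the relation on $1$-simplices induced by the $\tau_1$-relations should \emph{coincide} with $\sim$ — is actually not something you need to establish separately. Well-definedness of $\phi$ only needs the implication $f\sim g\Rightarrow [f]_{\tau_1 X}=[g]_{\tau_1 X}$, which a $(f,1_{x'},g)$-triangle gives directly. The converse then comes for free once you have $\bar\pi\phi=\id$ and $\phi\bar\pi=\id$: if $[f]_{\tau_1 X}=[g]_{\tau_1 X}$ then applying $\bar\pi$ gives $[f]_\sim=[g]_\sim$. So your argument, as written — check $\phi$ is well-defined and functorial, observe both composites are the identity on objects and on the generating classes of $1$-simplices, and conclude because $\tau_1 X$ is a quotient of the free category on the $1$-truncation while every morphism of $\ho X$ is such a class — already closes the loop. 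The worry at the end somewhat overstates what remains; the pieces you have assembled suffice.
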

\begin{proof}
See \cite[Theorem 1.5.14]{cis16}
\end{proof}

\subsection{}
Let $X$ be an $\infty$-category and let $x$ and $x'$ be $0$-simplices in $X$.
Let us consider the following pullback diagram:
\begin{equation}
\pull{X(x,x')}{\ihom(\Delta^{1},X)}{X\times X}{\Delta^{0}}{}{(s,t)}{(x,x')}{}
\end{equation}
where $s$ and $t$ are the evaluation maps at $0$ and $1$, respectively.
Then, $X(x,x')$ is the $\infty$-groupoid (see \cite[Subsection 3.7.1]{cis16}) of morphisms from $x$ to $x'$ in $X$.
Moreover, we have the following result:

\begin{proposition}
Let $X$ be an $\infty$-category, then there is a natural bijection:
\[\pi_{0}\left(X(x,x')\right)\cong \ho X(x,x').\]
\end{proposition}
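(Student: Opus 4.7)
The plan is to exhibit a bijection between the two sets by unpacking both definitions and comparing the equivalence relations that define them, then to check that this bijection is natural.

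First I would unfold the 0- and 1-simplices of $X(x,x')$. Using the pullback and the tensor-hom adjunction of Corollary \ref{cor:adjunction-v-adjunction}, a $0$-simplex of $X(x,x')$ is exactly a morphism $f\colon x\to x'$ in $X$ (see \ref{subsec:morphisms-sset}), while a $1$-simplex is a map $h\colon \Delta^1\times\Delta^1\to X$ whose restrictions to $\Delta^1\times\{0\}$ and $\Delta^1\times\{1\}$ are $s^0_0(x)$ and $s^0_0(x')$ respectively. The two faces of such an $h$ give the restrictions to $\{0\}\times\Delta^1$ and $\{1\}\times\Delta^1$, which are morphisms from $x$ to $x'$. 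Consequently, by Example \ref{ex:0-truncated-simplicial-set}, $\pi_0(X(x,x'))$ is the set of morphisms $x\to x'$ quotiented by the equivalence relation generated by declaring $f\sim g$ whenever such an $h$ exists with boundary $f$ and $g$.

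Next I would compare this with $\ho X(x,x')$, which by Definition \ref{def:homotopy-category-quasicategory} is the set of morphisms $x\to x'$ modulo the relation $\sim_{\ho}$ where $f\sim_{\ho} g$ iff a commutative triangle with boundary $(f,1_{x'},g)$ exists. The core of the proof is the standard decomposition $\Delta^1\times\Delta^1 \cong \Delta^2\cup_{\Delta^1}\Delta^2$ along the main diagonal $d\colon(0,0)\to(1,1)$. Given $h\colon\Delta^1\times\Delta^1\to X$ with the boundary conditions above, restricting to the upper $2$-simplex $\{(0,0),(0,1),(1,1)\}$ produces a $2$-simplex $\sigma_{1}$ with boundary $(f,1_{x'},d)$, and restricting to the lower $2$-simplex $\{(0,0),(1,0),(1,1)\}$ produces a $2$-simplex $\sigma_{2}$ with boundary $(1_{x},g,d)$. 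Each of these witnesses $d\sim_{\ho}f$ and $d\sim_{\ho}g$ respectively, hence $f\sim_{\ho}g$. Conversely, starting from a $2$-simplex $\sigma$ with boundary $(f,1_{x'},g)$, gluing $\sigma$ as the upper triangle with the degenerate $2$-simplex $s^0_0(g)$ as the lower triangle (both sharing the diagonal $g$) yields a map $\Delta^1\times\Delta^1\to X$ with the correct boundary, exhibiting a $1$-simplex in $X(x,x')$ between $f$ and $g$. Combined, this shows that the two equivalence relations on $\operatorname{Hom}_{X}(x,x')$ coincide, yielding the bijection at the level of sets.

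Finally I would verify naturality: given a functor $F\colon X\to Y$ of $\infty$-categories and the induced map $X(x,x')\to Y(Fx,Fx')$ on mapping spaces, both $\pi_0$ and $\ho$ are functorial (see Theorem \ref{thm:boardmann-vogt}), and the constructions above only use taking faces, degeneracies, and gluing of simplices — all preserved by $F$ — so the bijection commutes with the induced maps. The same argument applies to changes of the chosen vertices $x,x'$.

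The main obstacle is essentially bookkeeping: carefully handling the combinatorial decomposition of $\Delta^1\times\Delta^1$, in particular verifying that the upper and lower triangles glue correctly and that the degenerate simplex $s^0_0(g)$ has exactly the boundary needed so that the lower triangle composition makes sense. Nothing in the argument requires $X(x,x')$ to be a Kan complex for the bijection to hold, but if one wanted to simplify further one could invoke the fact cited from \cite[3.7.1]{cis16} to avoid ``taking the equivalence relation generated by'' — directly-related $1$-simplices already form an equivalence relation.
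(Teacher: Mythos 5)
The paper's own proof is a bare citation to \cite[Proposition 3.7.2]{cis16}, so your self-contained argument is a genuinely different route. Your unpacking of the $0$- and $1$-simplices of $X(x,x')$ is correct, the decomposition $\Delta^1\times\Delta^1 \cong \Delta^2\cup_{\Delta^1}\Delta^2$ along the main diagonal is the right tool, and the converse direction (from a filler of $(f,1_{x'},g)$ to a $1$-simplex of $X(x,x')$) checks out, modulo a small notational slip: the degenerate $2$-simplex on $g$ you want for the lower triangle is $s^0_1(g)$, not $s^0_0(g)$, since in the paper's conventions $s^0_0\colon X_0\to X_1$.

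The real gap is in the forward direction. Restricting $h$ to the lower triangle $\{(0,0),(1,0),(1,1)\}$ gives a $2$-simplex $\sigma_2$ with boundary $(1_x,\, g,\, d)$ --- degenerate edge in the \emph{first} slot. But Definition \ref{def:homotopy-category-quasicategory} declares two parallel morphisms homotopic only when there is a commutative triangle of the specific shape $(\,\cdot\,, 1_{x'}, \,\cdot\,)$, degenerate edge in the \emph{middle} slot. So $\sigma_2$ is not, on its face, a witness that $d\sim g$. Passing from a filler of $(1_x, g, d)$ to a filler of $(g, 1_{x'}, d)$ --- equivalently, showing that the ``left'' and ``right'' one-sided homotopy patterns generate the same relation --- requires an inner-horn filling argument in $X$, which you do not supply. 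The fastest patch with tools already in the paper is Theorem \ref{thm:boardmann-vogt}: the commutative triangle $\sigma_2$ is sent under $X\to\Nerv\ho X$ to the relation $g\circ 1_x = d$ in $\ho X$, hence $g=d$ there; but in your writeup Boardman--Vogt is invoked only for functoriality in the naturality paragraph, not for this step. Either fix is short, but it is a missing idea rather than bookkeeping, and without it the conclusion that $\sigma_1,\sigma_2$ together give $f\sim g$ does not follow from the paper's definitions.
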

\begin{proof}
See \cite[Proposition 3.7.2]{cis16}
\end{proof}

\begin{definition}
Let $f\colon X\to Y$ be a morphism of $\infty$-categories. 
We say that $f$ is \emph{fully faithful} if, for every pair of objects $x$ and $x'$ in $X$, the induced morphism:
\[X(x,x')\to Y(fx,fx')\]
is an equivalence of $\infty$-groupoids.
Moreover, we say that $f\colon X\to Y$ is \emph{essentially surjective} if for every $0$-simplex $y\in Y$ there exists a $0$-simplex $x\in X$ and an invertible morphism $fx\to y$ in $Y$.
\end{definition}

\begin{theorem}
A functor between $\infty$-categories is an equivalence of $\infty$-categories if and only if it is fully faithful and essentially surjective.
\end{theorem}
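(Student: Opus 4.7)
The plan is to prove the two directions separately. The forward implication is essentially formal once one has the functoriality of mapping spaces, while the converse is the substantial content of the theorem. I will use the Joyal model structure throughout.

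For the implication $(\Rightarrow)$, let $f\colon X\to Y$ be an equivalence of $\infty$-categories, with quasi-inverse $g\colon Y\to X$ and invertible natural transformations $\eta\colon 1_X \to gf$ and $\epsilon\colon fg \to 1_Y$. Essential surjectivity is immediate: for every $0$-simplex $y\in Y$, the component $\epsilon_y\colon fgy\to y$ is an invertible morphism in $Y$, so $y$ is the target of an invertible morphism out of the image of $gy$ under $f$. For fully faithfulness, I would use that the pullback defining $X(x,x')$ is functorial in $X$ and in the chosen pair of endpoints, and that this construction sends equivalences of $\infty$-categories to equivalences of $\infty$-groupoids; the components of $\eta$ and $\epsilon$ then induce, on fixed pairs of objects, a quasi-inverse to the map $X(x,x')\to Y(fx,fx')$ up to the homotopies supplied by $\eta$ and $\epsilon$.

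For the converse $(\Leftarrow)$, assume $f\colon X\to Y$ is fully faithful and essentially surjective. First I factor $f = p\circ j$ using the Joyal model structure, with $j\colon X\to Z$ a trivial cofibration and $p\colon Z\to Y$ an isofibration. Since $p$ is an inner fibration and $Y$ is an $\infty$-category, $Z$ is also an $\infty$-category. The map $j$ is a trivial cofibration between fibrant objects, hence a $J$-homotopy equivalence, hence an equivalence of $\infty$-categories, so by the direction already proved $j$ is fully faithful and essentially surjective. A direct check shows that full faithfulness and essential surjectivity satisfy a 2-out-of-3 property of the form we need: fully faithfulness descends to $p$ via the 2-out-of-3 property for equivalences of $\infty$-groupoids on mapping spaces, and essential surjectivity of $p$ follows by chasing any $y\in Y$ through $f$ and then through $j$. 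Thus it suffices to prove that a fully faithful and essentially surjective isofibration between $\infty$-categories is a trivial fibration in the Joyal model structure, and therefore an equivalence.

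The hard part is this last step, and it is the main obstacle. By the small object argument, it is enough to verify that such $p\colon Z\to Y$ has the right lifting property against the boundary inclusions $\partial \Delta^n \hookrightarrow \Delta^n$ for every $n \geq 0$. For $n=0$, given $y\in Y$, essential surjectivity produces some $z_0\in Z$ together with an invertible morphism $\alpha\colon pz_0\to y$; since $p$ is an isofibration, $\alpha$ lifts to an invertible morphism $z_0\to z$ in $Z$ with $pz=y$, yielding the required vertex. For $n\geq 1$, the lifting problem against $\partial \Delta^n \hookrightarrow \Delta^n$ unpacks, via the pullback description of the mapping $\infty$-groupoids, into a lifting problem for a map between pairs of Kan complexes $Z(z,z')\to Y(pz,pz')$, which is an equivalence by fully faithfulness; the isofibration property of $p$ then allows one to promote a homotopical lift to a strict simplicial lift, inductively filling the required $n$-simplex. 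Combining the two cases, $p$ is a trivial fibration and in particular an equivalence of $\infty$-categories, so $f = p\circ j$ is an equivalence as claimed.
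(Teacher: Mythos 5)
The paper gives no proof of its own; it simply refers the reader to \cite{cis16}. Your overall strategy does match the one followed there: dispatch the forward direction, factor $f$ in the Joyal model structure as a trivial cofibration $j$ followed by an isofibration $p$, transport full faithfulness and essential surjectivity along the factorisation, and reduce to showing that a fully faithful and essentially surjective isofibration between $\infty$-categories is a trivial fibration. That skeleton is correct.

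The genuine gap lies exactly where the substance of the theorem sits: your treatment of the lifting problems of $p$ against $\partial\Delta^n\hookrightarrow\Delta^n$ for $n\ge 1$. The claim that such a problem ``unpacks, via the pullback description of the mapping $\infty$-groupoids, into a lifting problem'' on mapping spaces is not correct as stated. For $n\ge 2$ the boundary inclusion does not factor through the $\Delta^1$-pullback model of $X(x,x')$ by any direct manipulation; the reduction requires the join/slice formalism, trading $\partial\Delta^n\to\Delta^n$ for $\partial\Delta^{n-1}\to\Delta^{n-1}$ against a suitable slice of $p$, and a genuine induction on $n$. Moreover, once one is at the level of mapping spaces, what the lifting argument actually uses is that $Z(z,z')\to Y(pz,pz')$ is a \emph{trivial fibration}, not merely a weak equivalence; this in turn requires first establishing that the mapping-space map is a Kan fibration (a separate consequence of $p$ being an inner fibration) and only then combining that with full faithfulness. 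The phrase ``inductively filling the required $n$-simplex'' and the vague appeal to the isofibration property do not supply these two intertwined steps, which are the real content of the converse. A smaller issue: in the forward direction, invoking that the mapping-space construction ``sends equivalences of $\infty$-categories to equivalences of $\infty$-groupoids'' is essentially circular for the functor $f$ under consideration; the honest route is to observe that the components of $\eta$ and $\epsilon$ are invertible and that pre- and post-composition by invertible morphisms induce equivalences of mapping spaces, which yields an explicit homotopy inverse to $X(x,x')\to Y(fx,fx')$.
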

\begin{proof}
See \cite[]{cis16}.
\end{proof}

\section{Covering spaces and simplicial covers}
\label{ch:3sec:2}
Following \cite[Appendix I]{GZ67}, we give an overview of the theory of topological and simplicial coverings.
Our treatment differs from \cite{GZ67} as we do not investigate coverings of groupoids.
Instead, we define based universal covers of simplicial sets and use functor tensor products to describe the correspondence between covering spaces over a base space and representations of the associated fundamental groupoid.

\begin{definition}
\label{def:simplicial-cover}
Let $X$ be a simplicial set. 
A morphism $p\colon E\to X$ is said to be a \emph{ (simplicial) cover} over $X$ if it has the unique right lifting property with respect to the set
\[\Delta_{0}=\bigcoprod_{n\in\NN}\Delta^{n}_{0}=\{i\colon\Delta^{0}\to\Delta^{n}: [n]\in \DDelta\}\]
of vertices of all standard simplices.
We denote by  $\Cover{X}$ the full subcategory of the slice $\overcat{\sSet}{X}$ spanned by all covers over $X$.
\end{definition}

\begin{remark}
\label{rmk:simplicial-cover}
Definition \ref{def:simplicial-cover} and the Yoneda Lemma imply that a map $p\colon E\to X$ is a cover if and only if every map $[0]\to[n]$ induces a natural isomorphism:
\[E_{n}\cong E_{0}\times_{X_{0}}X_{n}.\]
In particular a morphism $E\to \Delta^{0}$ is a cover over $\Delta^{0}$ if and only if $E$ is a $0$-dimensional simplicial set.
Hence there is a natural isomorphism $\Cover{\Delta^{0}}\cong\Set$.
\end{remark}

\subsection{}
Let $\Delta$ be the set of all maps between all standard simplices:
\[\Delta=\bigcup_{n, m}\Delta^{n}_{m}=\{i\colon \Delta^{m}\to \Delta^{n}: [n], [m]\in \DDelta\}\]
and recall that $\Lambda$ is the set of all horn inclusions $\Lambda^{n}_{k}\to\Delta^{n}$. 
Then we have the following equality between classes of maps in $\sSet$:
\[\ullift{(\urlift{\Delta_{0}})}=\ullift{(\urlift{\Delta})}=\ullift{(\urlift{\Lambda})}\]
(see \cite[Lemma 54.2]{res18}).
In particular, we see that every cover is a Kan fibration.

\subsection{}
Since right orthogonal maps are stable under pullback, a morphism $f\colon X'\to X$ of simplicial sets induces a \emph{base change} functor:
\[f^{*}\colon \Cover{X}\to \Cover{X'}\]
in particular, by Remark \ref{rmk:simplicial-cover} a $0$-simplex $x\colon\Delta^{0}\to X$ of $X$ induces a \emph{fibre functor}:
\begin{align*}
\fib_{x}\colon \Cover{X} & \to \Set\\
p\colon E\to X &\mapsto \fib_{x}E 
\end{align*}

\begin{definition}
Let $p\colon E\to X$ be a morphism of simplicial sets. 
We say that $p$ is \emph{locally trivial} if for every $n$-simplex $x\colon \Delta^{n}\to X$ of $X$, there exists a simplicial set $F$ and a morphism $f\colon F\times \Delta^{n}\to E$ making the following diagram cartesian:
\begin{equation}
\pull{F\times \Delta^{n}}{E}{X}{\Delta^{n}}{f}{p}{x}{\pi_{2}}
\end{equation}
\end{definition}

\begin{proposition}
\label{prop:cover-loctriv}
A morphism $p\colon E\to X$ of simplicial sets is a cover if and only if it is a locally trivial morphism with discrete fibres.
\end{proposition}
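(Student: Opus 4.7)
The plan is to prove both implications by reducing, via base change, to the case $X=\Delta^{n}$, where I will show that every cover is trivial.

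For the forward implication, suppose $p\colon E\to X$ is a cover. Since covers are defined by a (unique) right lifting property, the class is stable under pullback; hence for any $n$-simplex $x\colon \Delta^{n}\to X$ the base change $x^{*}p\colon x^{*}E\to \Delta^{n}$ is again a cover. It therefore suffices to prove that every cover $q\colon E'\to \Delta^{n}$ is isomorphic over $\Delta^{n}$ to the trivial cover $F\times\Delta^{n}\to\Delta^{n}$ for some set $F$. Let $F=q^{-1}(0)\subseteq E'_{0}$ denote the fibre over the vertex $0$. For each $e\in F$, the unique right lifting property applied to the square with top edge $e\colon \Delta^{0}\to E'$ and bottom edge $\id_{\Delta^{n}}$ produces a canonical section $\tilde{e}\colon\Delta^{n}\to E'$ of $q$ whose $0$-th vertex is $e$. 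I define $f\colon F\times\Delta^{n}\to E'$ by $f_{m}(e,\sigma)=\sigma^{*}\tilde{e}$; the claim is that $f$ is an isomorphism over $\Delta^{n}$.

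To verify this, I would apply Remark \ref{rmk:simplicial-cover} with the vertex $0$ of $[m]$: the set $E'_{m}$ is then in natural bijection with pairs $(e_{0},\sigma)$ where $\sigma\in\Delta^{n}_{m}$ and $e_{0}\in q^{-1}(\sigma(0))$. The claim therefore reduces to showing that, for every vertex $v$ of $[n]$, the \emph{transport map} $T_{v}\colon F\to q^{-1}(v)$ sending $e$ to the $v$-th vertex of $\tilde{e}$ is a bijection. This is again a consequence of Remark \ref{rmk:simplicial-cover}, now applied to $q$ with the vertex inclusion $[0]\to[n]$ selecting $v$: both $F$ and $q^{-1}(v)$ are naturally in bijection with the set of sections of $q$ over $\Delta^{n}$ (via evaluation at $0$ and at $v$ respectively), and $T_{v}$ realises the resulting bijection.

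For the backward implication, suppose $p\colon E\to X$ is locally trivial with discrete fibres. Given a lifting problem for the vertex inclusion $\Delta^{0}\to\Delta^{n}$ against $p$, with bottom edge $x\colon\Delta^{n}\to X$, passing to the pullback $x^{*}E$ reduces the problem to lifting a vertex against the projection $F\times\Delta^{n}\to\Delta^{n}$, where $F$ is the discrete simplicial set trivialising $p$ over $x$. Sections of this projection correspond, by the universal property of the product, to morphisms $\Delta^{n}\to F$; since $F$ is a set and $\Delta^{n}$ is connected, such a morphism factors uniquely through $\pi_{0}(\Delta^{n})=\Delta^{0}$, yielding a unique lift extending the prescribed vertex. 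The main technical step lies in the forward direction: verifying that $f$ is an isomorphism requires comparing the fibres of $q$ over different vertices of $\Delta^{n}$, which is accomplished by invoking Remark \ref{rmk:simplicial-cover} with different choices of vertex inclusion.
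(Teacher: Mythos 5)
Your argument is correct, and since the paper's own ``proof'' is simply a citation of \cite[Appendix I, 2.2]{GZ67}, there is no written proof in the text to compare against; your argument is the natural direct one. A few remarks on the details you left implicit, none of which is a gap: in the forward direction, the left edge of the lifting square producing the section $\tilde{e}$ should be spelled out as the vertex inclusion $0\colon\Delta^{0}\to\Delta^{n}$, and the notation $\sigma^{*}\tilde{e}$ is best read as $\tilde{e}\circ\sigma$ (or equivalently $\tilde{e}_{m}(\sigma)$); in the backward direction, the step ``$F$ is a set'' deserves a word, since the hypothesis is that the \emph{fibres} are discrete, but this is immediate because the fibre over $x(v)$ is $F\times\Delta^{0}\cong F$, so discreteness of the fibres forces the trivialising simplicial set $F$ to be $0$-dimensional. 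The core of the forward direction --- that the transport maps $T_{v}\colon F\to q^{-1}(v)$ are bijections because both sides are identified with the set of sections of $q$ by the unique lifting property against the respective vertex inclusions $0,v\colon\Delta^{0}\to\Delta^{n}$ --- is exactly the right mechanism, and combined with Remark \ref{rmk:simplicial-cover} applied at vertex $0$ of $[m]$ it does establish levelwise bijectivity of $f$.
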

\begin{proof}
\cite[Appendix I, 2.2]{GZ67}
\end{proof}

\subsection{}
By Corollary \ref{cor:orthogonal-factorisation-theorem}, every morphism $f\colon Y\to X$ can be factored as a composition:
\begin{equation}
\label{eq:factorisation-cover}
\begin{tikzcd}[column sep=small, row sep=small]
Y \arrow[rr, "f"] \arrow[rdd, "j"']
   & & X \\
   & & \\
& E \ar[ruu, "p"'] &
\end{tikzcd}
\end{equation}
where $p\colon E\to X$ is a cover and $j\colon Y\to X$ has the unique left lifting property with respect to any cover.
In particular, this defines a left adjoint to the inclusion functor $\Cover{X}\to \overcat{\sSet}{X}$.
\begin{equation*}
\Adjoint{\overcat{\sSet}{X}}{\Cover{X}}{}{}
\end{equation*}
which displays $\Cover{X}$ as a reflective subcategory of $\overcat{\sSet}{X}$.

\subsection{}
Let $x$ be a $0$-simplex of a simplicial set $X$. Applying the factorisation of \eqref{eq:factorisation-cover} to the morphism $\colon \Delta^{0}\to X$ yields:
\begin{diagram}
\Delta^{0}
	\ar[r, "\tilde{x}"']
	\ar[rr, bend left, "x"]&
\tilde{X}_{x}
	\ar[r,"\tilde{p}"']&
X
\end{diagram}
The cover $\tilde{p}\colon \tilde{X}_{x}\to X$ is called the \emph{universal cover} of $X$ at $x$.
Moreover, we have the following result:

\begin{lemma}
\label{lem:universal-cover-represents-fibers}
Let $X$ be a simplicial set and let $\tilde{X}_{x}$ be the universal cover of $X$ at $x$.
Then, for every cover $E$ of $X$, evaluation at $\tilde{x}$ induces an isomorphism:
\begin{equation}
\label{eq:universal-cover-represents-fibers}
\Cover{X}(\tilde{X}_{x}, E)=\fib_{x}E,
\end{equation}
natural in $E$.
\end{lemma}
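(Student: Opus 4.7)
The plan is to exploit the orthogonal factorisation system $(\ullift{(\urlift{\Delta_{0}})}, \urlift{\Delta_{0}})$ on $\sSet$ obtained from Corollary \ref{cor:orthogonal-factorisation-theorem}. By construction, the universal cover $\tilde{p}\colon\tilde{X}_{x}\to X$ fits in a factorisation of $x\colon\Delta^{0}\to X$ where $\tilde{x}\colon\Delta^{0}\to\tilde{X}_{x}$ lies in $\ullift{(\urlift{\Delta_{0}})}$ and $\tilde{p}$ is a cover. Hence $\tilde{x}$ has the unique left lifting property with respect to every cover of $X$, viewed as a morphism in $\sSet$ (and therefore, by pulling the lifting problem back over $X$, also in $\Cover{X}$).

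First, I would define the evaluation map
\begin{equation*}
\ev_{\tilde{x}}\colon \Cover{X}(\tilde{X}_{x},E)\to \fib_{x}E,\qquad f\mapsto f\circ\tilde{x},
\end{equation*}
noting that for any $f\colon \tilde{X}_{x}\to E$ in $\Cover{X}$ the composite $f\tilde{x}\colon\Delta^{0}\to E$ satisfies $pf\tilde{x}=\tilde{p}\tilde{x}=x$, so $f\tilde{x}$ is indeed an element of $\fib_{x}E$. Next, given any $e\in\fib_{x}E$, viewed as a map $e\colon\Delta^{0}\to E$ with $pe=x$, the square
\begin{equation*}
\begin{tikzcd}
\Delta^{0}\ar[r, "e"]\ar[d, "\tilde{x}"']& E\ar[d, "p"]\\
\tilde{X}_{x}\ar[r, "\tilde{p}"']\ar[ur, dashed, "\hat{e}" description]& X
\end{tikzcd}
\end{equation*}
is a lifting problem of $\tilde{x}\in\ullift{(\urlift{\Delta_{0}})}$ against the cover $p\in\urlift{\Delta_{0}}$, so it admits a \emph{unique} diagonal filler $\hat{e}\colon \tilde{X}_{x}\to E$, which by construction is a morphism in $\Cover{X}$ with $\hat{e}\tilde{x}=e$. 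Sending $e\mapsto\hat{e}$ produces a map $\fib_{x}E\to\Cover{X}(\tilde{X}_{x},E)$, which by uniqueness of the lift is a two-sided inverse of $\ev_{\tilde{x}}$.

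Finally, naturality in $E$ is automatic: given a morphism $g\colon E\to E'$ in $\Cover{X}$ and $f\colon\tilde{X}_{x}\to E$, one has $(gf)\circ\tilde{x}=g\circ(f\tilde{x})$, so $\ev_{\tilde{x}}$ commutes with postcomposition by $g$. No step is genuinely difficult; the only subtle point is recognising that the universal property by which $\tilde{X}_{x}$ was constructed as a factor in $\overcat{\sSet}{X}$ is exactly the unique left lifting property against covers, so that the representability statement is essentially a reformulation of the defining orthogonality.
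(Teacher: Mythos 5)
Your argument is correct and is exactly the content behind the paper's terse "follows immediately from the definitions": the universal cover is constructed via the orthogonal factorisation system $(\ullift{(\urlift{\Delta_{0}})},\urlift{\Delta_{0}})$, so $\tilde{x}$ is left orthogonal to every cover, and the unique diagonal filler in the square against $p$ is precisely the inverse of $\ev_{\tilde{x}}$. Nothing is missing, and the observation that the filler lands in $\Cover{X}$ because $p\hat{e}=\tilde{p}$ is the right way to see that the bijection is a hom-set bijection in $\Cover{X}$ rather than merely in $\sSet$.
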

\begin{proof}
This follows immediately from the definitions.
\end{proof}

\begin{definition}
Let $X$ be a simplicial set. The \emph{fundamental groupoid} $\Pi_{1}X$ of $X$ is the groupoid associated to $\tau_{1}X$ (see Example \ref{ex:core-gpd})
\[\Pi_{1}X=\gpd{\tau_{1}X}\]
\end{definition}

\begin{proposition}
\label{prop:universal-cover-fibre-functor}
The universal cover construction defines a functor:
\[\tilde{X}_{\bullet}\colon \Pi_1X^{\op}\to \Cover{X}\]
\end{proposition}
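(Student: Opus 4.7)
The plan is to construct a contravariant functor from the fundamental category $\tau_{1}X$ to $\Cover{X}$, to verify that every morphism in its image is an isomorphism, and then to invoke the universal property of the groupoidification recalled in Example~\ref{ex:core-gpd} to factor it through $\Pi_{1}X^{\op}=\gpd{(\tau_{1}X)^{\op}}$.

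Given a $1$-simplex $\alpha\colon x\to y$ of $X$ and any cover $E\to X$, the unique right lifting property of covers with respect to the inclusion of the source vertex $\Delta^{0}\to\Delta^{1}$ produces, for each $\tilde{x}\in\fib_{x}E$, a unique lift $\tilde{\alpha}\colon\Delta^{1}\to E$ of $\alpha$ starting at $\tilde{x}$. Let $\alpha_{*}(\tilde{x})\in\fib_{y}E$ be its endpoint. Since any morphism of covers commutes with unique lifts, $\alpha_{*}$ is natural in $E$ and defines a natural transformation $\alpha_{*}\colon\fib_{x}\Rightarrow\fib_{y}$ of functors $\Cover{X}\to\Set$. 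Lemma~\ref{lem:universal-cover-represents-fibers} identifies $\fib_{x}$ with $\Cover{X}(\tilde{X}_{x},\blank)$, so by the Yoneda Lemma (Theorem~\ref{thm:yoneda-lemma}) this natural transformation corresponds to a morphism $\tilde{X}_{\alpha}\colon\tilde{X}_{y}\to\tilde{X}_{x}$ in $\Cover{X}$.

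To promote this assignment to a functor $(\tau_{1}X)^{\op}\to\Cover{X}$, I use the presentation of $\tau_{1}X$ as the free category on the reflexive quiver of $0$- and $1$-simplices of $X$ modulo the relations imposed by $2$-simplices. Identities are sent to identities because the unique lift of the degenerate edge $s_{0}^{0}(x)=1_{x}$ starting at $\tilde{x}$ is $s_{0}^{0}(\tilde{x})$, whose endpoint is $\tilde{x}$, corresponding via Yoneda to $1_{\tilde{X}_{x}}$. For the $2$-simplex relation, let $\sigma\colon\Delta^{2}\to X$ satisfy $d_{2}\sigma=\alpha$, $d_{0}\sigma=\beta$, $d_{1}\sigma=\gamma$, so that $\gamma=\beta\alpha$ in $\tau_{1}X$; lifting $\sigma$ uniquely to $E$ starting at $\tilde{x}$ produces a $2$-simplex whose three edges are the unique lifts of $\alpha$, $\beta$, $\gamma$ starting at $\tilde{x}$, $\alpha_{*}\tilde{x}$, $\tilde{x}$ respectively, and matching the third vertex forces $\gamma_{*}\tilde{x}=\beta_{*}\alpha_{*}\tilde{x}$, whence $\tilde{X}_{\gamma}=\tilde{X}_{\alpha}\circ\tilde{X}_{\beta}$ by Yoneda.

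The same unique right lifting property, applied instead to the inclusion of the target vertex $\Delta^{0}\to\Delta^{1}$, provides for each $\tilde{y}\in\fib_{y}E$ a unique lift of $\alpha$ \emph{ending} at $\tilde{y}$; its starting point gives a natural map $\alpha^{!}\colon\fib_{y}E\to\fib_{x}E$, and uniqueness of lifts immediately yields $\alpha_{*}\alpha^{!}=\id$ and $\alpha^{!}\alpha_{*}=\id$, so $\tilde{X}_{\alpha}$ is an isomorphism in $\Cover{X}$. Since the functor $(\tau_{1}X)^{\op}\to\Cover{X}$ sends every morphism to an isomorphism, it factors uniquely through $\gpd{(\tau_{1}X)^{\op}}=\Pi_{1}X^{\op}$, yielding the desired $\tilde{X}_{\bullet}$. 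The main obstacle is the composition check: one must unravel the Yoneda correspondence carefully enough to see how the morphism $\tilde{X}_{\alpha}$ acts on $0$-simplices of $\tilde{X}_{y}$ away from the basepoint, using that it is itself a map of covers and so commutes with lifting of $1$-simplices.
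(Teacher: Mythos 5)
Your proof is correct, but the key step differs from the paper's. The paper's own argument (referencing forward to Proposition~\ref{prop:universal-cover-functor} for functoriality on $\tau_{1}X^{\op}$) establishes that each $\alpha_{*}\colon \fib_{x}E\to\fib_{x'}E$ is a bijection by invoking Proposition~\ref{prop:cover-loctriv}, the Gabriel--Zisman characterisation of covers as locally trivial maps with discrete fibres; once the fibre transport is known to be an isomorphism for every cover $E$, the Yoneda correspondence shows $\tilde{X}_{\alpha}$ is invertible, so the functor factors through the groupoidification. You instead construct an explicit inverse $\alpha^{!}$ by lifting $\alpha$ against the \emph{target} vertex inclusion $1\colon\Delta^{0}\to\Delta^{1}$, which is available because the set $\Delta_{0}$ defining ordinary covers contains \emph{all} vertex inclusions, and uniqueness of lifts gives the two identities $\alpha_{*}\alpha^{!}=\id$ and $\alpha^{!}\alpha_{*}=\id$ directly. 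Your route is more elementary: it never leaves the defining lifting property and does not rely on the nontrivial local-triviality characterisation. It also exposes exactly where ordinary covers differ from left covers --- a left cover has unique lifts only against the initial vertex, so $\alpha^{!}$ is unavailable and one cannot invert $\tilde{X}_{\alpha}$ in general; this is precisely why the analogous Proposition~\ref{prop:universal-cover-functor} for left covers lands only in $\tau_{1}X^{\op}$, not $\Pi_{1}X^{\op}$. Your composition check (lifting the full $2$-simplex against $0\colon\Delta^{0}\to\Delta^{2}$) also packages into a single step what the paper's proof of Proposition~\ref{prop:universal-cover-functor} does via two horn lifts followed by a uniqueness comparison; both are fine.
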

\begin{proof}
As in the proof of Proposition \ref{prop:universal-cover-functor} $\tilde{X}_{\bullet}$ defines a functor:
\[\tilde{X}_{\bullet}\colon \tau_{1}X^{\op}\to \Cover{X}.\]
To show that $\tilde{X}_{\bullet}$ defines a functor on $\Pi_{1}{X}^{\op}$ it is enough to prove that for every $1$-simplex $\alpha\colon x\to x'$ in $X$, the induced map:
\[\alpha^{*}\colon \tilde{X}_{x'}\to \tilde{X}_{x}\]
is an isomorphism (see Example \ref{ex:core-gpd}).
By Yoneda Lemma and Lemma \ref{lem:universal-cover-represents-fibers}, this is equivalent to show that for every cover $E$ over $X$, the induced map:
\[\alpha_{*}\colon \fib_{x}E\to \fib_{x'}E\]
is an isomorphism. Therefore, we are done by Proposition \ref{prop:cover-loctriv}.
\end{proof}

\subsection{}
\label{subsec:adjunctions-covers}
Since $\Cover{X}$ is complete and cocomplete, it is tensored and cotensored over $\Set$ as in \ref{ex:locally-small-tensor-cotensor}.
In particular, for every functor $F\colon \Pi_1X\to \Set$ we can form the tensor product:
\begin{align*}
F\otimes_{X}\tilde{X}_{\bullet} 	& = F\otimes_{\Pi_1X^{\op}}\tilde{X}_{\bullet}\\
						& = \int^{x\in \Pi_1X^{\op}}Fx\otimes\tilde{X}_{x}
\end{align*}
In particular we denote by
\begin{align*}
\rec \colon \Set^{\Pi_1X} 	& \to \Cover{X}\\
F						&\mapsto F\otimes_{X}\tilde{X}_{\bullet}
\end{align*}
and call it the \emph{reconstruction} functor.
On the other hand, we denote by
\begin{align*}
\fib\colon \Cover{X} & \to \Set^{\Pi_1X}\\
E &\mapsto \fib_{\bullet} E=\hom(\tilde{X}_{\bullet},E)
\end{align*}
and call it the \emph{fibre} functor. 
By Proposition \ref{prop:tensor-product-functors}, they form an adjunction:
\begin{equation}
\label{eq:fibre-reconstruction-simplicial}
\Adjoint{\Set^{\Pi_1X}}{\Cover{X}}{\rec}{\fib}
\end{equation}

\begin{corollary}
\label{cor:recfib-equiv}
The adjunction:
\begin{equation*}
\Adjoint{\Set^{\Pi_1X}}{\Cover{X}}{\rec}{\fib}
\end{equation*}
defines an adjoint equivalence of categories
\end{corollary}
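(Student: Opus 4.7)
The plan is to show that both the unit $\eta\colon \id \Rightarrow \fib\circ\rec$ and the counit $\epsilon\colon \rec\circ\fib \Rightarrow \id$ of the adjunction \eqref{eq:fibre-reconstruction-simplicial} are natural isomorphisms. The computation will pivot on the identification
\[\fib(\tilde{X}_{x}) \cong \Pi_{1}X(x, \blank)\]
for every $0$-simplex $x$: Lemma \ref{lem:universal-cover-represents-fibers} gives $\fib_{y}(\tilde{X}_{x}) = \Cover{X}(\tilde{X}_{y}, \tilde{X}_{x})$, and Proposition \ref{prop:universal-cover-fibre-functor} supplies a natural map $\Pi_{1}X(x,y) \to \Cover{X}(\tilde{X}_{y}, \tilde{X}_{x})$. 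I would verify this is a bijection using the universal property of $\tilde{X}_{y}$ (a morphism from $\tilde{X}_{y}$ is determined by the image of the distinguished basepoint $\tilde{y}$) together with the fact that the fibre of $\tilde{X}_{x}$ over $y$ is the free orbit of $\tilde{x}$ under the $\Pi_{1}X$-action on fibres.

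Granting this identification, the coend Yoneda lemma applied to the functor $\tilde{X}_{\bullet}\colon \Pi_{1}X^{\op} \to \Cover{X}$ computes
\[\rec(\Pi_{1}X(x, \blank)) = \int^{y} \Pi_{1}X(x, y) \otimes \tilde{X}_{y} \cong \tilde{X}_{x},\]
so the unit at each representable $\Pi_{1}X(x, \blank)$ and the counit at each universal cover $\tilde{X}_{x}$ are isomorphisms. To extend the unit to an arbitrary $F \in \Set^{\Pi_{1}X}$, I would write $F$ as a colimit of representables via Corollary \ref{cor:density-theorem}. The functor $\rec$ preserves colimits as a left adjoint, and $\fib$ also preserves colimits: given a diagram $(E_{i})$ of covers with colimit $E$ computed in $\overcat{\sSet}{X}$, the identities $E_{n} = \colim_{i}(E_{i})_{0}\times_{X_{0}}X_{n} = E_{0}\times_{X_{0}}X_{n}$ (using that pullback commutes with colimits in $\Set$) show that $E$ is itself a cover, so colimits in $\Cover{X}$ are levelwise, and $\fib_{x}$ then extracts the corresponding fibre, which is a colimit in $\Set$. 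Hence $\fib\circ\rec$ preserves colimits and agrees with the identity on representables, so $\eta_{F}$ is an isomorphism for every $F$.

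To upgrade the counit to every cover $E$, I would invoke conservativity of $\fib$: a morphism $f\colon E \to E'$ of covers is an isomorphism if and only if $f_{0}$ is a bijection (by Remark \ref{rmk:simplicial-cover}, which forces $f_{n}$ to be a pullback of $f_{0}$), and this holds if and only if each $\fib_{x}(f)$ is a bijection, since the fibres partition $E_{0}$ over $X_{0}$. The triangle identity $\fib(\epsilon_{E})\circ\eta_{\fib E} = \id_{\fib E}$, combined with the unit isomorphism just established, forces $\fib(\epsilon_{E})$ to be an isomorphism, whence so is $\epsilon_{E}$. The main obstacle lies in the initial identification $\fib(\tilde{X}_{x}) \cong \Pi_{1}X(x, \blank)$: everything else is cleanly packaged by the coend calculus and the triangle identities, but pinning down the free orbit structure of the fibre of the universal cover requires the concrete analysis of the orthogonal factorisation that produces $\tilde{X}_{x}$.
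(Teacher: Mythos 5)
Your proposal is correct, but it takes a route the paper does not: the paper disposes of this corollary with a citation to Gabriel--Zisman (\cite[Theorems 1.2 and 2.4.1]{GZ67}) and gives no direct argument. What you do instead closely parallels the paper's own proof of the analogous left-cover statement, Theorem \ref{thm:left-covers-fiber-functors}, where the key input is the explicit description $\left(\tilde{X}_{x}\right)_{n}\cong\bigcoprod_{\sigma\in X_{n}}\tau_{1}X(x,\sigma_{0})$ of the universal left cover (Lemma \ref{lem:universal-left-cover-representable}). You correctly identify the groupoid analogue $\fib_{y}\tilde{X}_{x}\cong\Pi_{1}X(x,y)$ as the crux, and you flag that it requires the same kind of concrete analysis of the orthogonal factorisation; in the paper's framework this would amount to replacing $\tau_{1}$ by $\Pi_{1}$ and $\tensor[^{l}]{\Delta}{_{0}}$ by $\Delta_{0}$ throughout the construction of $\overline{X}_{x}$. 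Where your proof diverges structurally from the paper's left-cover argument is in the handling of the counit: the paper computes $\left(\rec\fib E\right)_{n}\cong E_{n}$ levelwise via Proposition \ref{prop:concrete-left-cover-functor}, whereas you deduce it from the unit isomorphism, the triangle identity $\fib(\epsilon_{E})\circ\eta_{\fib E}=\id$, and conservativity of $\fib$ (itself a consequence of Remark \ref{rmk:simplicial-cover}). Your cocontinuity argument for $\fib$ is sound --- pullback along $X_{n}\to X_{0}$ preserves colimits because it is a base change in a presheaf topos --- and the coend Yoneda computation on representables is standard. Both routes work; yours front-loads the representability lemma and then proceeds by formal nonsense (cocontinuity plus triangle identities), while the paper's levelwise computation has the advantage of never needing to observe that $\Cover{X}\subset\overcat{\sSet}{X}$ is closed under colimits, a fact you must verify for your extension step. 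As written your argument is a proposal rather than a complete proof precisely because the torsor structure of $\fib_{\bullet}\tilde{X}_{x}$ is asserted rather than established, but you have located exactly what must be proved and why.
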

\begin{proof}
This follows from \cite[Theorem 1.2]{GZ67} and \cite[Theorem 2.4.1]{GZ67}.
\end{proof}

\subsection{} 
Recall that a morphism $p\colon E\to B$ in $\Top$ is said to be \emph{locally trivial} if for every point $b\in B$ there exists an open neighbourhood $i\colon U\subset B$ of $b$ and a topological space $F_{x}$ fitting in a Cartesian square as follows:
\begin{equation}
\pull{F_{x}\times U}{E}{B}{U}{}{p}{i}{\pi_{U}}
\end{equation}
A morphism $p\colon E\to B$ is said to be a \emph{covering space} or \emph{(topological) cover} if it is locally trivial with discrete fibres.
In particular, notice that in this case the pullback can be taken in the category $\Topa$ of all topological spaces.
We let $\Cover{B}$ be the full subcategory of the slice category over $B$ spanned by covering spaces.

\subsection{}
Let $X$ be a simplicial set and let $B$ be a topological space. Then we have adjunctions:
\begin{equation}
\label{eq:sing-realisation-slice}
\Adjoint{\overcat{\sSet}{X}}{\overcat{\Top}{\abs{X}}}{\abs{\blank}}{\Sing_{X}} \quad
\Adjoint{\overcat{\sSet}{\Sing B}}{\overcat{\Top}{B}}{\abs{\blank}_{B}}{\Sing}
\end{equation}
where the functors $\abs{\blank}$ and $\Sing$ are the induced functors on the slice categories
while the functor $\Sing_{X}$ maps a morphism $f\colon Y\to \abs{X}$ to the base change
\begin{equation}
\pull{\Sing_{X}(Y)}{\Sing(Y)}{\Sing(\abs{X})}{X}{}{\Sing(f)}{\eta_{X}}{}
\end{equation}
and $\abs{\blank}_{B}$ takes a map $g\colon A\to \Sing B$ to the composite
\begin{equation}
\begin{tikzcd}
\abs{A}
\ar[r, "\abs{g}"]&
\abs{\Sing B}
\ar[r, "\epsilon_{B}"]
& B
\end{tikzcd}
\end{equation}

\begin{proposition}
\label{prop:sing-realisation-cover}
Let $X$ be a simplicial set and let $B$ be a topological space. 
Then, the adjunctions \eqref{eq:sing-realisation-slice} induce adjunctions
\begin{align}
\Adjoint{\Cover{X}}{\Cover{\abs{X}}}{\abs{\blank}}{\Sing_{X}} \label{eq:sing-realisation-cover-1}\\
\Adjoint{\Cover{\Sing B}}{\Cover{B}}{\abs{\blank}_{B}}{\Sing} \label{eq:sing-realisation-cover-2}
\end{align}
\end{proposition}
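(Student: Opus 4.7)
The plan is as follows. Since the adjunctions in \eqref{eq:sing-realisation-slice} are already given, it suffices to show that in each case both functors restrict to the full subcategories of covers; once this is established, the restricted adjunctions exist automatically since the subcategories are full.

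The first step is to show that $\Sing_{X}$ (resp.\ $\Sing$) sends topological covers to simplicial covers. Let $p\colon Y\to\abs{X}$ be a topological cover. By Remark \ref{rmk:simplicial-cover} we must check that the canonical map
\[(\Sing_{X}Y)_{n}\to (\Sing_{X}Y)_{0}\times_{X_{0}}X_{n}\]
is a bijection for all $n\ge 0$. By construction an element of $(\Sing_{X}Y)_{n}$ over an $n$-simplex $x\colon\Delta^{n}\to X$ is a continuous lift $\sigma\colon\abs{\Delta^{n}}\to Y$ of $\abs{x}$ along $p$. Because $\abs{\Delta^{n}}$ is contractible (in particular path-connected and simply connected) and $p$ is a covering space, the classical unique path-lifting property ensures that such lifts are in natural bijection with points of the fibre $p^{-1}(\abs{x}(0))$, where $\abs{x}(0)$ is the image of the initial vertex of $\Delta^{n}$. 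Summing over $x\in X_{n}$ identifies $(\Sing_{X}Y)_{n}$ with $(\Sing_{X}Y)_{0}\times_{X_{0}}X_{n}$. The same argument applied to $p\colon Y\to B$ (with $\abs{x}$ replaced by the adjoint transpose $\abs{x}_{B}$) shows that $\Sing$ restricts to $\Cover{B}\to\Cover{\Sing B}$.

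The second step is to show that $\abs{\blank}$ sends simplicial covers to topological covers. Let $p\colon E\to X$ be a simplicial cover. By Proposition \ref{prop:cover-loctriv}, for each $n$-simplex $x\colon\Delta^{n}\to X$ there is a set $F_{x}$ and an isomorphism $E\times_{X}\Delta^{n}\cong F_{x}\times\Delta^{n}$ over $\Delta^{n}$. Realisation of this pullback square yields a commutative square
\begin{equation*}
\pull{F_{x}\times\abs{\Delta^{n}}}{\abs{E}}{\abs{X}}{\abs{\Delta^{n}}}{}{\abs{p}}{\abs{x}}{\pi_{2}}
\end{equation*}
which is still cartesian in $\Top$: indeed $\abs{\blank}$ preserves coproducts and commutes with products against standard simplices (\cite[Chapter III, 3.4]{GZ67}), so $\abs{E\times_{X}\Delta^{n}}\cong F_{x}\times\abs{\Delta^{n}}$, and the universal property transfers using the CW-structure on $\abs{X}$ recalled in Remark \ref{rmk:realisation-cw}. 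Thus $\abs{p}$ is locally trivial over each closed cell of $\abs{X}$ with discrete fibre $F_{x}$; shrinking to open neighbourhoods of each cell using the CW-topology (where opens are characterised cell-by-cell) yields open trivialisations, so $\abs{p}$ is a topological cover. For the second adjunction, if $E\to\Sing B$ is a simplicial cover then $\abs{E}\to\abs{\Sing B}$ is a topological cover by the same argument, and composing with the counit $\epsilon_{B}\colon\abs{\Sing B}\to B$ preserves covers because $\epsilon_{B}$ is a quotient map in the CW-sense and pulling back a cover along any map yields a cover.

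The main technical obstacle is the commutation of realisation with the pullback $E\times_{X}\Delta^{n}$; this is where one genuinely uses that $F_{x}$ is discrete and that $\abs{\blank}$ behaves well on products with representables, and it is the content invoked from \cite[Appendix I, 2.2]{GZ67}. Once covers are preserved by both functors in both adjunctions, the restricted adjunctions \eqref{eq:sing-realisation-cover-1} and \eqref{eq:sing-realisation-cover-2} follow by the fullness of $\Cover{X}\hookrightarrow\overcat{\sSet}{X}$ and $\Cover{B}\hookrightarrow\overcat{\Top}{B}$.
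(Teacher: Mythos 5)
Your overall strategy — show both functors preserve covers, then use fullness of the subcategories — is the right shape for the argument, and since the paper proves this proposition by citing \cite[Theorem 3.2]{GZ67} there is no internal proof to compare against. Your first step is essentially correct: lifts of $\abs{x}\colon\abs{\Delta^{n}}\to\abs{X}$ against a covering are in bijection with the fibre over any vertex (you should note this for every vertex inclusion, not only $\abs{x}(0)$, in order to verify the criterion of Remark~\ref{rmk:simplicial-cover}, but the same unique-lifting argument handles each), and the same reasoning shows $\Sing\colon\Cover{B}\to\Cover{\Sing B}$ is well defined.

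The second step has two genuine gaps. First, for $\abs{\blank}\colon\Cover{X}\to\Cover{\abs{X}}$: knowing that $\abs{E}\times_{\abs{X}}\abs{\Delta^{n}}$ is a trivial bundle for each characteristic map $\abs{x}\colon\abs{\Delta^{n}}\to\abs{X}$ does not directly give local triviality over \emph{open} subsets of $\abs{X}$, because characteristic maps have closed and typically non-injectively parametrised image. Already for $\Sp^{\infty}\to S^{1}$, the pullback over the unique closed $1$-cell of $\abs{S^{1}}$ is trivial, but the two endpoints of that cell are identified in $\abs{S^{1}}$ and the trivialisation is offset by one at the two ends; producing an open trivialising neighbourhood of the $0$-cell is exactly the content of the Gabriel--Zisman neighbourhood construction (\ref{subsec:GZ-nghb}), which your phrase ``shrinking to open neighbourhoods of each cell yields open trivialisations'' does not supply. (You should also invoke explicitly that $\abs{\blank}$ preserves finite limits into $\Top$ rather than saying the universal property ``transfers.'') Second, the argument that $\abs{\blank}_{B}$ preserves covers does not work: postcomposing with $\epsilon_{B}$ is not a pullback, so ``pulling back a cover along any map yields a cover'' is not relevant, and ``$\epsilon_{B}$ is a quotient map'' does not make composition with it preserve coverings. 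Indeed, applying your $\abs{\blank}_{B}$ to the identity cover $\Sing B\to\Sing B$ yields $\epsilon_{B}\colon\abs{\Sing B}\to B$ itself, which in general has non-discrete fibres and is not locally trivial. So the left adjoint of \eqref{eq:sing-realisation-cover-2} cannot be the naive composite; establishing the second adjunction requires a more careful construction, which your proposal does not provide.
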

\begin{proof}
See \cite[Theorem 3.2]{GZ67}.
\end{proof}

\begin{theorem}
Let $X$ be a simplicial set. Then, the adjunction
\begin{equation*}
\Adjoint{\Cover{X}}{\Cover{\abs{X}}}{\abs{\blank}}{\Sing_{X}}
\end{equation*}
defines an equivalence between the category $\Cover{X}$ of simplicial covers over $X$ and the category $\Cover{\abs{X}}$ of topological covers over its realisation.
\end{theorem}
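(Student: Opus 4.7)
My plan is to route the proof through the ``exodromy''-style equivalences with fundamental groupoid representations, since both Corollary \ref{cor:recfib-equiv} on the simplicial side and the classical monodromy theorem on the topological side provide such descriptions. Concretely, I would first invoke Corollary \ref{cor:recfib-equiv} to obtain the equivalence $\Cover{X}\simeq \Set^{\Pi_1 X}$. On the topological side, Remark \ref{rmk:realisation-cw} tells us $\abs{X}$ is a CW-complex, in particular locally path-connected and semi-locally simply connected, so the classical monodromy theorem yields an equivalence $\Cover{\abs{X}}\simeq \Set^{\Pi_1\abs{X}}$ implemented by a fibre functor over the $0$-cells. The proof then reduces to showing that the canonical comparison $\Pi_1 X\to \Pi_1\abs{X}$ is an equivalence of groupoids and that the adjunction $(\abs{\blank}, \Sing_X)$ intertwines the two ``fibre'' equivalences.

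For the groupoid comparison I would argue as follows. Essential surjectivity on objects is immediate since $0$-simplices of $X$ are exactly $0$-cells of $\abs{X}$, and every point of a CW-complex lies in the path-component of some $0$-cell. For full faithfulness, I would use that the unit $X\to \Sing\abs{X}$ of the Kan--Quillen adjunction is a weak homotopy equivalence (or, equivalently, that $\tau_1$ respects the fundamental groupoid up to localisation at the maps inverted by realisation, see Example \ref{ex:core-gpd} and the remarks following Theorem \ref{thm:boardmann-vogt}). This gives $\Pi_1 X\simeq \Pi_1\abs{X}$.

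To promote this into the desired equivalence of cover categories, I would verify that the unit $\eta_E\colon E\to \Sing_X\abs{E}$ and the counit $\epsilon_Y\colon \abs{\Sing_X Y}_{\abs{X}}\to Y$ of the adjunction from \eqref{eq:sing-realisation-cover-1} are isomorphisms on, respectively, simplicial and topological covers. For the unit, since both $E$ and $\Sing_X\abs{E}$ are simplicial covers of $X$ (the latter because $\Sing_X$ is defined by pullback along $\eta_X$, so preserves the unique right lifting property against $\Delta_0$), by the fibre-functor criterion of Corollary \ref{cor:recfib-equiv} it suffices to check that $\eta_E$ induces an isomorphism on fibres over each $0$-simplex $x\in X_0$; and $\fib_x E=p^{-1}(x)$ matches $\fib_x \Sing_X\abs{E}=\abs{p}^{-1}(x)$ since realisation preserves covers at the level of $0$-cells by the local-triviality characterisation of Proposition \ref{prop:cover-loctriv}. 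For the counit, the analogous fibre check uses that topological covers over $\abs{X}$ are determined by their fibre functor on $X_0\hookrightarrow\abs{X}$ together with the $\Pi_1 X\simeq \Pi_1\abs{X}$ comparison established above.

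The main obstacle will be the groupoid equivalence $\Pi_1 X\simeq \Pi_1\abs{X}$: while essential surjectivity is cheap, full faithfulness genuinely uses that $\abs{\blank}$ is homotopically well-behaved, and one must be careful that the morphisms inverted by $\tau_1 X\to \Pi_1 X$ (namely, all of them, since we groupoidify) match up with the invertible classes of paths in $\abs{X}$ coming from homotopies in $\Sing\abs{X}$. Once this is in place, everything else is bookkeeping: the adjunction from Proposition \ref{prop:sing-realisation-cover} is a geometric manifestation of the identity functor $\Set^{\Pi_1 X}\to \Set^{\Pi_1\abs{X}}$ transported across the two monodromy equivalences, and is therefore an adjoint equivalence.
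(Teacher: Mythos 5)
Your approach is genuinely different from the paper's, which simply cites \cite[Theorem 3.2.1]{GZ67}. The Gabriel--Zisman proof is direct: the unit $E\to\Sing_X\abs{E}$ is checked fibrewise to be an isomorphism, and essential surjectivity of $\abs{\blank}$ is established by building a simplicial cover of $X$ out of any topological cover of $\abs{X}$ using the CW structure of $\abs{X}$ (Remark~\ref{rmk:realisation-cw}) and local triviality (Proposition~\ref{prop:cover-loctriv}). You instead transport the problem to groupoid representations via both monodromy equivalences (Corollary~\ref{cor:recfib-equiv} and a topological analogue) and the comparison $\Pi_1 X\simeq\Pi_1\abs{X}$ arising from the Kan--Quillen unit being a weak equivalence. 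This is correct and conceptually illuminating, but it uses strictly heavier machinery and carries a circularity hazard you should address explicitly: in the paper's own development the topological monodromy theorem (Theorem~\ref{thm:cover-fiber-equivalence}) is cited from \cite[Theorem 3.6]{GZ67}, whose Gabriel--Zisman proof itself routes through the simplicial--topological comparison you are establishing, so you must appeal to an independent, purely topological proof of monodromy for locally $0$- and $1$-connected spaces rather than to Theorem~\ref{thm:cover-fiber-equivalence} as stated. Also, the ``bookkeeping'' you defer is a genuine step --- one has to verify that $\fib\circ\Sing_X$ agrees with the topological fibre functor restricted along $\Pi_1 X\to\Pi_1\abs{X}$, compatibly with the two monodromy actions (path lifting versus unique simplicial lifting) --- and your fibre identification over $0$-cells, while correct, is more cleanly justified by Remark~\ref{rmk:simplicial-cover} together with the CW structure of $\abs{E}$ than by Proposition~\ref{prop:cover-loctriv}.
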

\begin{proof}
See \cite[Theorem 3.2.1]{GZ67}.
\end{proof}

\subsection{}
Recall that if $B$ is a topological space, we have the following natural isomorphisms
\[\Pi_{1}B=\ho\Sing B=\Pi_1 \Sing B\]
where $\Pi_{1}B$ is the \emph{fundamental groupoid} of $B$.
In particular, composing the adjunctions \eqref{eq:sing-realisation-cover-2} and \eqref{eq:fibre-reconstruction-simplicial} for $X=\Sing B$ we obtain
\begin{equation}
\label{eq:fibre-reconstruction-topological}
\Adjoint{\Set^{\Pi_{1}B}}{\Cover{B}}{\rec}{\fib}
\end{equation}

\begin{theorem}
\label{thm:cover-fiber-equivalence}
Let $B$ be a locally $0$-connected and locally $1$-connected topological space. Then, the adjunction 
\begin{equation*}
\Adjoint{\Set^{\Pi_{1}B}}{\Cover{B}}{\rec}{\fib}
\end{equation*}
is an adjoint equivalence of categories.
\end{theorem}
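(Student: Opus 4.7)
The plan is to decompose the adjunction in the statement into two adjunctions, the first of which is already an adjoint equivalence for any simplicial set, and then to verify that the second is an equivalence under the stated hypotheses on $B$. Using the identification $\Pi_{1}B\cong \Pi_{1}\Sing B$, the adjunction of the theorem factors as the composite of the reconstruction--fibre adjunction
\begin{equation*}
\Adjoint{\Set^{\Pi_{1}B}}{\Cover{\Sing B}}{\rec}{\fib}
\end{equation*}
coming from \eqref{eq:fibre-reconstruction-simplicial} applied to $X=\Sing B$, followed by the realisation--singular complex adjunction
\begin{equation*}
\Adjoint{\Cover{\Sing B}}{\Cover{B}}{\abs{\blank}_{B}}{\Sing}
\end{equation*}
from \eqref{eq:sing-realisation-cover-2}. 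By Corollary \ref{cor:recfib-equiv} the first of these is already an equivalence for any simplicial set, so the theorem reduces to showing that the second adjunction is an equivalence under the hypotheses on $B$.

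To handle the second adjunction, I would verify that its unit $\eta_{F}\colon F\to \Sing(\abs{F}_{B})$ and counit $\epsilon_{E}\colon \abs{\Sing E}_{B}\to E$ are isomorphisms for every simplicial cover $F\to \Sing B$ and every topological cover $p\colon E\to B$. A useful preliminary observation, provided by Remark \ref{rmk:simplicial-cover}, is that the fibre of $\Sing E\to \Sing B$ over a $0$-simplex $x\in B$ agrees set-theoretically with the topological fibre $p^{-1}(x)$. In particular, the composite right adjoint $\fib\circ \Sing\colon \Cover{B}\to \Set^{\Pi_{1}B}$ is naturally isomorphic to the classical topological fibre functor. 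Under the hypotheses of local $0$- and $1$-connectedness, this classical fibre functor is an equivalence of categories by standard covering space theory, and since the first factor $\fib$ is already an equivalence by Corollary \ref{cor:recfib-equiv}, the remaining factor $\Sing\colon \Cover{B}\to \Cover{\Sing B}$ must be an equivalence as well, which is exactly what is needed.

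The main obstacle is verifying the compatibility of the two $\Pi_{1}$-actions: the simplicial monodromy on fibres of $\Sing E$ induced by the universal covers $\tilde{(\Sing B)}_{\bullet}$ of Proposition \ref{prop:universal-cover-fibre-functor}, and the classical topological monodromy on fibres of $E$ induced by path lifting in $B$. Local $1$-connectedness is used precisely here: it ensures that every homotopy class of loops in $B$ is representable by a singular $1$-simplex in $\Sing B$ realising the standard monodromy. Once this compatibility is established, the local triviality of topological covers (available under local $0$-connectedness) reduces checking that $\epsilon_{E}$ is a homeomorphism to the statement that $\abs{\Sing U}\to U$ induces a bijection on path components for any sufficiently small path-connected open $U\subseteq B$, which is immediate. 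A parallel argument using the simplicial local triviality of Proposition \ref{prop:cover-loctriv} handles the unit $\eta_{F}$.
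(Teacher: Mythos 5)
The paper's own proof of Theorem~\ref{thm:cover-fiber-equivalence} is a direct citation of Gabriel--Zisman, and Corollary~\ref{cor:cover-sset-top-equivalence} is then \emph{deduced} from Theorem~\ref{thm:cover-fiber-equivalence} together with Corollary~\ref{cor:recfib-equiv}. Your proposal reverses this logical order: you factor the adjunction (this factorisation is not an insight, it is literally the definition given in \eqref{eq:fibre-reconstruction-topological}), observe that the simplicial factor is an equivalence by Corollary~\ref{cor:recfib-equiv}, and then try to prove Corollary~\ref{cor:cover-sset-top-equivalence} independently and deduce the theorem. That reversal is not by itself a problem, but it means you must give a self-contained argument for the realisation--singular adjunction on covers being an equivalence, and the argument you offer has genuine gaps.

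The main one: you justify the equivalence of the second adjunction by asserting that ``the classical fibre functor is an equivalence of categories by standard covering space theory.'' But the identification $\Pi_1 B\cong\Pi_1\Sing B$ together with the isomorphism $\fib\circ\Sing\cong\fib_{\mathrm{top}}$ shows that the classical monodromy equivalence \emph{is} Theorem~\ref{thm:cover-fiber-equivalence}. So this 2-out-of-3 step either silently cites the theorem itself, or else tacitly invokes an independent proof of the classical monodromy theorem (via universal covers, say) without supplying the identification of the two fibre functors that would make that invocation legitimate. The ``compatibility of $\Pi_1$-actions'' you flag as the main obstacle is the place where the real work lives, and you do not carry it out.

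Your explanation of where the hypothesis is used is also incorrect. You write that local $1$-connectedness ``ensures that every homotopy class of loops in $B$ is representable by a singular $1$-simplex in $\Sing B$.'' This is vacuous: every continuous loop in $B$ is, after reparametrisation, a singular $1$-simplex. Local $1$-connectedness (semi-local simple connectedness, in the usual terminology) is what guarantees the \emph{existence} of covers with prescribed monodromy, i.e.\ essential surjectivity of $\fib$; and local $0$-connectedness is what makes the local-triviality arguments work. So the hypotheses play a different role from the one you assign them. Finally, your reduction of $\epsilon_E$ to the claim that $\abs{\Sing U}\to U$ induces a bijection on path components is too quick: $\abs{\blank}_B$ is defined as the composite $\abs{F}\to\abs{\Sing B}\to B$, and the preimage of an open $U\subseteq B$ in $\abs{F}$ is not $\abs{F'}$ for any simplicial set $F'$ over $\Sing U$ in an evident way, so ``checking locally'' needs a real argument; this is one of the technical points that Gabriel--Zisman handle carefully and that cannot be dismissed as ``immediate.''
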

\begin{proof}
See \cite[Theorem 3.6]{GZ67}.
\end{proof}

\begin{corollary}
\label{cor:cover-sset-top-equivalence}
Let $B$ be a locally $0$-connected and locally $1$-connected topological space. Then, the adjunction 
\begin{equation*}
\Adjoint{\Cover{\Sing B}}{\Cover{B}}{\abs{\blank}_{B}}{\Sing}
\end{equation*}
is an adjoint equivalence of categories.
\end{corollary}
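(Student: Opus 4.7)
The plan is to deduce this from the two previously established equivalences between categories of covers and of representations of the fundamental groupoid, together with the canonical identification $\Pi_{1} B \cong \Pi_{1} \Sing B$ recalled just before \eqref{eq:fibre-reconstruction-topological}.

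First I would assemble the square of right adjoints
\begin{equation*}
\begin{tikzcd}
\Cover{B} \ar[r, "\Sing"] \ar[d, "\fib"'] & \Cover{\Sing B} \ar[d, "\fib"] \\
\Set^{\Pi_{1} B} \ar[r, "\cong"'] & \Set^{\Pi_{1} \Sing B}
\end{tikzcd}
\end{equation*}
and argue that it commutes up to natural isomorphism. For a point $x \in B$, viewed as a $0$-simplex of $\Sing B$, the $0$-simplices of $\Sing E$ lying over $x$ are precisely the points of the topological fibre $p^{-1}(x)$, and this bijection is natural in the cover $p \colon E \to B$; this already identifies the underlying $\Set$-valued functors $\fib \circ \Sing$ and $\fib$. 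To upgrade the identification to an isomorphism of functors into $\Set^{\Pi_{1} B}$ one must check compatibility with the action of morphisms, which reduces to comparing, for a path $\gamma$ in $B$ with endpoints $x_{0}$ and $x_{1}$, the topological monodromy of $p$ along $\gamma$ with the action of the corresponding class $[\gamma] \in \Pi_{1} \Sing B$ obtained via Proposition~\ref{prop:universal-cover-fibre-functor}; both actions arise from the same path-lifting data through the discrete fibre.

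Once commutativity of the square is established, the conclusion is a two-out-of-three argument for equivalences of categories. The left vertical $\fib$ is an equivalence by Theorem~\ref{thm:cover-fiber-equivalence}, where the hypotheses that $B$ is locally $0$- and $1$-connected are precisely what is needed; the right vertical $\fib$ is an equivalence by Corollary~\ref{cor:recfib-equiv} applied to the simplicial set $\Sing B$; and the bottom horizontal is the canonical identification. Hence the top horizontal $\Sing$ is an equivalence of categories, and since the left adjoint of an equivalence is itself an equivalence with invertible unit and counit, the adjunction $\abs{\blank}_{B} \dashv \Sing$ is an adjoint equivalence.

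The main obstacle, though technical rather than conceptual, is verifying the $\Pi_{1}$-equivariance of $\fib \circ \Sing \cong \fib$. This requires unwinding the orthogonal factorisation defining the universal cover of $\Sing B$ at $x$ from Proposition~\ref{prop:universal-cover-fibre-functor} and tracing how a $1$-simplex lift in the simplicial universal cover translates into a continuous path-lift realising topological monodromy; the identification ultimately reduces to the fact that both path-lifting constructions are controlled by the same discrete fibre data, but making this precise is the one place where care is needed.
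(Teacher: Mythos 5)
Your proof is correct and in substance it is the paper's proof: invoke Theorem~\ref{thm:cover-fiber-equivalence} and Corollary~\ref{cor:recfib-equiv}, then conclude by a two-out-of-three argument for equivalences. The one point worth flagging is that the ``main obstacle'' you single out --- checking commutativity of the square of fibre functors, i.e.\ that $\fib\circ\Sing$ agrees with the topological $\fib$ as $\Pi_1 B$-representations --- does not actually arise in the paper's setup. The topological adjunction \eqref{eq:fibre-reconstruction-topological} is \emph{defined} as the composite of \eqref{eq:sing-realisation-cover-2} with \eqref{eq:fibre-reconstruction-simplicial} applied to $X=\Sing B$, so your square commutes by construction and the two-out-of-three step is immediate. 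Your concern would be the genuine content of the argument only if one started from an independent, path-lifting description of the topological fibre functor; in the paper's framework that comparison is absorbed into the citation of \cite[Theorem 3.6]{GZ67} for Theorem~\ref{thm:cover-fiber-equivalence}.
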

\begin{proof}
This follows from Theorem \ref{thm:cover-fiber-equivalence} and Corollary \ref{cor:recfib-equiv}.
\end{proof}

\section{Left covers and fundamental categories}
\label{ch:3sec:3}
In the first part of this section, we recall an explicit construction of the fundamental category of a simplicial set, and give a few examples.
We define left and right covers of a simplicial set $X$ and  we give a description of the universal left cover $\tilde{X}_{x}$ of $X$ at a $0$-simplex $x$, in terms of orthogonal factorisation systems.
To conclude, we use functor tensor products and universal left covers to show that the category of left covers over $X$ is equivalent to the category of representations of its fundamental category.
Although the main statements of this section can be found in \cite{res18}, we give complete proofs.

\subsection{}
\label{subsec:tau-explicit}
A more explicit description of the fundamental category functor $\tau_1\colon\sSet\to\Cat$ introduced in \ref{ex:nerve-fundamental-category} can be given as follows (see \cite[Section 1.3]{JT08}).
Let $X$ be a simplicial set and let us consider the free category $F\iota_{1}^{*}X$ on the 1-truncation on $X$. 
This is the category with objects the 0-simplices of $X$ and morphisms the paths of consecutive 1-simplices of $X$. 
Then $\tau_1 X$ is isomorphic to the quotient of $F\iota_{1}^{*}X$ by the equivalence relation generated by 
\[x\in X_{2} \quad \Rightarrow \quad d_{2}^{1}(x)\sim d_{2}^{0}(x)\circ d_{2}^{2}(x).\]

\begin{example}
\label{ex:simplicial-circle}
The quotient $S^{1}=\Delta^{1}/\partial\Delta^{1}$ is called the \emph{standard simplicial circle}.
This is the simplicial set with exactly one non degenerate 0-simplex and one non degenerate 1-simplex.
In particular, by \ref{subsec:tau-explicit}, the fundamental category of $S^{1}$ is the free category on the reflexive quiver with one object $0$ and a unique non-identity edge from $0$ to itself.
In other words, $\tau_{1}S^{1}$ is isomorphic to the monoid $\NN$ of the natural numbers, seen as a category with one object.
\end{example}

\begin{example}
\label{ex:walking-retraction-cat}
Let $R$ be the simplicial set defined in Example \ref{ex:walking-retraction-sset}. 
Then, the fundamental category of $R$ is the \emph{walking retraction category}, that we denote as $\Ret$.
The category $\Ret$ has two objects $0$ and $1$ and two non-identity morphisms $i\colon 0\to 1$ and $r\colon 1\to 0$ such that $ri=1_{0}$.
\end{example}

The following corollaries of \ref{subsec:tau-explicit} can be found in \cite[Section 1.3]{JT08} and we recall their proofs for convenience.
\begin{corollary}
\label{cor:tau-nerve-iso}
Let $C\in \Cat$ be a small category, then the counit $\tau_{1}\Nerv C\to C$ is an isomorphism.
\end{corollary}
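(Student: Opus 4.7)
The plan is to invoke the explicit presentation of $\tau_1$ given in \ref{subsec:tau-explicit} and show that, when applied to $\Nerv C$, the resulting category is canonically isomorphic to $C$ via the counit.

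First I would unpack the structure of $\Nerv C$ as a simplicial set. By Example \ref{ex:nerve-fundamental-category}, an $n$-simplex of $\Nerv C$ is a functor $[n]\to C$, i.e.\ a string of $n$ composable morphisms in $C$. In particular, the $0$-simplices are the objects of $C$, the $1$-simplices are the morphisms of $C$ (with source and target given by $d_1^1$ and $d_1^0$), and the $2$-simplices are pairs of composable morphisms $(f,g)$ with $d_2^2 = f$, $d_2^0 = g$, and $d_2^1 = g\circ f$. The degeneracy $s_0^0(x)$ sends an object $x$ to the identity morphism $1_x$.

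Next I would describe the counit $\epsilon_C\colon \tau_1\Nerv C \to C$ explicitly. By the construction in \ref{subsec:tau-explicit}, $\tau_1 \Nerv C$ is the quotient of the free category $F\iota_1^*\Nerv C$ on the underlying reflexive quiver (objects of $C$, generating edges the morphisms of $C$) by the relation $d_2^1(x)\sim d_2^0(x)\circ d_2^2(x)$ for every $2$-simplex $x$. The counit is identity on objects and sends a generating edge $f$ (viewed as a $1$-simplex) to the morphism $f$ of $C$.

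To show $\epsilon_C$ is an isomorphism, I would verify bijectivity on objects and on hom-sets separately. Bijectivity on objects is immediate. For the hom-sets: surjectivity is clear, since every morphism of $C$ is already a $1$-simplex of $\Nerv C$ and hence a generating edge in $F\iota_1^*\Nerv C$. For injectivity, I would argue that, given any string of composable morphisms $f_1,\dots,f_n$ in $C$, the $2$-simplex relation applied to $(f_1,f_2)$, then to $(f_2\circ f_1, f_3)$, and so on, telescopes the string down to the single edge $f_n\circ\cdots\circ f_1$ in $\tau_1\Nerv C$. Hence each equivalence class in $\tau_1\Nerv C$ is represented by a single generating edge, namely the composite in $C$ of the underlying string; two paths are identified in the quotient if and only if their composites agree in $C$, which is exactly saying that $\epsilon_C$ is injective on hom-sets.

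The only subtle point is the last one: showing that the equivalence relation generated by the $2$-simplex identifications is precisely equality of composites in $C$. The forward direction is the telescoping argument above; the reverse direction is automatic because $\epsilon_C$ is a well-defined functor to $C$, so any two equivalent paths must share the same image, i.e.\ the same composite. I expect this step to be the main (mild) obstacle, but it is essentially bookkeeping once the telescoping argument is in place. Concluding, $\epsilon_C$ is bijective on objects and on hom-sets, hence an isomorphism of categories.
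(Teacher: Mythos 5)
Your proof is correct and takes essentially the same route as the paper: both invoke the explicit presentation of $\tau_{1}$ from \ref{subsec:tau-explicit} and observe that quotienting the free category on the underlying reflexive quiver of $C$ by the $2$-simplex relations exactly recovers composition in $C$. The paper leaves this as a one-line observation, whereas you spell out the telescoping argument and the bijectivity on objects and hom-sets; note only that your labels ``forward'' and ``reverse'' at the end are swapped relative to what you actually argue (the telescoping argument gives injectivity, while well-definedness of $\epsilon_{C}$ is the other implication).
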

\begin{proof}
From the explicit description of $\tau_{1}$, we have that $\tau_{1}\Nerv C$ is the free category on the underlying reflexive quiver of $C$, modulo the equivalence relation generated by $gf\sim h$ whenever $h$ is the composition of $f$ and $g$, which recovers $C$.
\end{proof}

\begin{example}
\label{ex:fundamental-category-standard-simplex}
Since $\Delta^{n}=\Nerv [n]$, the fundamental category $\tau_{1}\Delta^{n}$ of the standard $n$-simplex is the finite ordinal $[n]$, by Corollary \ref{cor:tau-nerve-iso}.
\end{example}

\begin{corollary}
\label{cor:tau-preserves-products}
The fundamental category functor $\tau_1\colon \sSet\to \Cat$ preserves products.
\end{corollary}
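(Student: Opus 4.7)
The plan is to reduce to the case of representables, where the claim becomes an immediate consequence of Corollary \ref{cor:tau-nerve-iso} together with the fact that the nerve functor, being a right adjoint, preserves products.

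First I would set up the reduction. For any two simplicial sets $X$ and $Y$, one has the canonical comparison morphism
\[\Phi_{X,Y}\colon \tau_{1}(X\times Y)\to \tau_{1}X\times\tau_{1}Y\]
induced by the pair of projections. I would like to show this is an isomorphism. The functor $\tau_{1}$, being a left adjoint, preserves all colimits. Moreover, since $\sSet$ is Cartesian closed, the functor $X\times(\blank)\colon \sSet\to\sSet$ preserves colimits in each variable, and similarly, since $\Cat$ is Cartesian closed (Example \ref{ex:cat-cartesian-closed}), the functor $\tau_{1}X\times(\blank)\colon \Cat\to\Cat$ preserves colimits in each variable. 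Writing $X\cong \colim_{\Delta^{n}\to X}\Delta^{n}$ and $Y\cong \colim_{\Delta^{m}\to Y}\Delta^{m}$ via Corollary \ref{cor:density-theorem}, both $\tau_{1}(X\times Y)$ and $\tau_{1}X\times\tau_{1}Y$ are naturally expressed as the double colimit of the respective values on pairs $(\Delta^{n},\Delta^{m})$, and the comparison $\Phi_{X,Y}$ is the induced map between these colimits. Hence, to prove $\Phi_{X,Y}$ is an isomorphism for all $X,Y$, it suffices to prove $\Phi_{\Delta^{n},\Delta^{m}}$ is an isomorphism for all $n,m$.

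Next I would handle the representable case. The nerve $\Nerv\colon\Cat\to \sSet$ is a right adjoint, so it preserves products. Combined with Example \ref{ex:fundamental-category-standard-simplex}, this gives
\[\Delta^{n}\times\Delta^{m}\cong \Nerv[n]\times\Nerv[m]\cong \Nerv([n]\times[m]).\]
Applying $\tau_{1}$ and invoking Corollary \ref{cor:tau-nerve-iso}, we obtain
\[\tau_{1}(\Delta^{n}\times\Delta^{m})\cong \tau_{1}\Nerv([n]\times[m])\cong [n]\times[m]\cong \tau_{1}\Delta^{n}\times\tau_{1}\Delta^{m},\]
and by unwinding the universal properties involved, this chain of isomorphisms coincides with $\Phi_{\Delta^{n},\Delta^{m}}$.

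The step I expect to require the most care is verifying that the colimit description of $\Phi_{X,Y}$ really matches the natural comparison morphism on each representable pair: concretely, that the isomorphism produced by the chain above is the one induced by the two projections $\Delta^{n}\times\Delta^{m}\to \Delta^{n}$ and $\Delta^{n}\times\Delta^{m}\to\Delta^{m}$. This is, however, formal, as everything is obtained via universal properties of adjunctions and of representable presheaves; once the bookkeeping is done, there is no genuine computation to perform. I would not anticipate substantive difficulty, since the preservation of products by $\Nerv$ is itself natural and $\tau_{1}\Nerv\cong\id_{\Cat}$ holds naturally.
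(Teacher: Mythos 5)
Your proof is correct and takes essentially the same approach as the paper: reduce to representables using the density theorem and Cartesian closedness of both $\sSet$ and $\Cat$, then compute $\tau_1(\Delta^n\times\Delta^m)$ via $\Nerv$ preserving products and $\tau_1\Nerv\cong\id$ (Corollary \ref{cor:tau-nerve-iso}). The paper simply leaves the naturality bookkeeping implicit; your extra care there is fine but not a different argument.
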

\begin{proof}
Since $\Cat$ is a Cartesian closed category (see Definition \ref{subsec:cartesian-closed-categories} and Example \ref{ex:cat-cartesian-closed}) and by Corollary \ref{cor:density-theorem}, it is enough to show that $\tau_{1}$ preserves products of representable presheaves. Given finite ordinals $[n]$ and $[m]$ we have
\begin{align*}
\tau_{1}\left(\Delta^{n}\times\Delta^{m}\right)
	&\cong \tau_{1}\left(\Nerv[n]\times\Nerv[m]\right)\\
	&\cong \tau_{1}\left(\Nerv\left([n]\times[m]\right)\right)\\
	&\cong [n]\times [m] \\
	&\cong \tau_{1}\Delta^{n}\times \tau_{1}\Delta^{m}
\end{align*}
where the first isomorphism follows from the definitions, the second from the fact that $\Nerv$ is a right adjoint and the third and fourth from Corollary \ref{cor:tau-nerve-iso}.
\end{proof}

\begin{definition}
\label{def:left-cover}
Let $X$ be a simplicial set. 
A morphism $p\colon E\to X$ is said to be a \emph{left cover} over $X$ if it has the unique right lifting property with respect to the set:
\[\tensor[^{l}]{\Delta}{_{0}}=\{0\colon\Delta^{0}\to\Delta^{n}: [n]\in \DDelta\}\]
of initial vertices of all standard simplices.
Dually, a morphism $p\colon E\to X$ is said to be a \emph{right cover} over $X$ if it has the unique right lifting property with respect to the set:
\[\tensor[^{r}]{\Delta}{_{0}}=\{n\colon \Delta^{0}\to \Delta^{n}: [n] \in \DDelta\}\]
of terminal vertices of all standard simplices.
We denote by  $\LCover{X}$ and by $\RCover{X}$ the full subcategories of the slice $\overcat{\sSet}{X}$ spanned by all left and right covers over $X$ respectively.
\end{definition}

\begin{remark}
\label{rmk:left-cover}
Definition \ref{def:simplicial-cover} and the Yoneda Lemma imply that a map $p\colon E\to X$ is a left cover if and only if the initial map $[0]\to[n]$ induces a natural isomorphism:
\[E_{n}\cong E_{0}\times_{X_{0}}X_{n}.\]
In particular a morphism $E\to \Delta^{0}$ is a left cover over $\Delta^{0}$ if and only if $E$ is a $0$-dimensional simplicial set.
Hence there is a natural isomorphism $\LCover{\Delta^{0}}\cong\Set$.
\end{remark}

\subsection{}
Let $\tensor[^{l}]{\Delta}{}$ and $\tensor[^{r}]{\Delta}{}$ be defined as follows:
\begin{align*}
\tensor[^{l}]{\Delta}{} = \bigcup_{n, m}\tensor[^{l}]{\Delta}{^{n}_{m}} &  = \{i\colon \Delta^{m}\to \Delta^{n}: [n], [m]\in \DDelta, \quad i(0)=0\}\\
\tensor[^{r}]{\Delta}{} = \bigcup_{n, m}\tensor[^{r}]{\Delta}{^{n}_{m}} & = \{i\colon \Delta^{m}\to \Delta^{n}: [n], [m]\in \DDelta, \quad i(m)=n\}
\end{align*}
Moreover, let $\tensor[^{l}]{\Lambda}{}$ and $\tensor[^{r}]{\Lambda}{}$ be the sets of left and right horn inclusions  respectively:
\begin{align*}
\tensor[^{l}]{\Lambda}{} &=\{\Lambda^{n}_{k}\subset \Delta^{n}: [n]\in \DDelta, 0\le k<n\}\\
\tensor[^{r}]{\Lambda}{} &=\{\Lambda^{n}_{k}\subset \Delta^{n}: [n]\in \DDelta, 0<k \le n\}
\end{align*}
Then we have the following equalities between classes of maps in $\sSet$:
\begin{align}
\ullift{\left(\urlift{\tensor[^{l}]{\Delta}{_{0}}}\right)}&=\ullift{\left(\urlift{\tensor[^{l}]{\Delta}{}}\right)}=\ullift{\left(\urlift{\tensor[^{l}]{\Lambda}{}}\right)}\label{eq:left-cover-saturation}\\
\ullift{\left(\urlift{\tensor[^{r}]{\Delta}{_{0}}}\right)}&=\ullift{\left(\urlift{\tensor[^{r}]{\Delta}{}}\right)}=\ullift{\left(\urlift{\tensor[^{r}]{\Lambda}{}}\right)}\label{eq:right-cover-saturation}
\end{align}
(see \cite[Lemma 54.6]{res18}). 

\subsection{}
Since right orthogonal maps are stable under pullback, a morphism $f\colon X'\to X$ of simplicial sets induces a \emph{base change} functor:
\[f^{*}\colon \LCover{X}\to \LCover{X'}\]
in particular, by Remark \ref{rmk:left-cover} a $0$-simplex $x\colon\Delta^{0}\to X$ of $X$ induces a \emph{fibre functor}:
\begin{align*}
\fib_{x}\colon \LCover{X} & \to \Set\\
p\colon E\to X &\mapsto \fib_{x}E 
\end{align*}

\subsection{}
By Corollary \ref{cor:orthogonal-factorisation-theorem}, every morphism $f\colon Y\to X$ can be factored as a composition:
\begin{equation}
\label{eq:factorisation-left-cover}
\begin{tikzcd}[column sep=small, row sep=small]
Y \arrow[rr, "f"] \arrow[rdd, "j"']
   & & X \\
   & & \\
& E \ar[ruu, "p"'] &
\end{tikzcd}
\end{equation}
where $p\colon E\to X$ is a left cover and $j\colon Y\to E$ has the unique left lifting property with respect to any left cover.
In particular, this defines a left adjoint to the forgetful functor $\LCover{X}\to \overcat{\sSet}{X}$
\begin{equation*}
\Adjoint{\overcat{\sSet}{X}}{\LCover{X}}{}{}
\end{equation*}
which displays $\LCover{X}$ as a reflective full subcategory of $\overcat{\sSet}{X}$.

\subsection{}
Let $x$ be a $0$-simplex of a simplicial set $X$, applying the factorisation \eqref{eq:factorisation-left-cover} to the morphism $x\colon \Delta^{0}\to X$ yields a diagram:
\begin{diagram}
\Delta^{0}
	\ar[r, "\tilde{x}"']
	\ar[rr, bend left, "x"]&
\tilde{X}_{x}
	\ar[r,"\tilde{p}"']&
X
\end{diagram}
The morphism $\tilde{p}\colon \tilde{X}_{x}\to X$ is called the \emph{universal left cover} of $X$ at $x$.

\begin{lemma}
\label{lem:universal-left-cover-represents-fibers}
Let $X$ be a simplicial set and let $\tilde{X}_{x}$ be the universal left cover of $X$ at $x$.
Then, for every left cover $E$ of $X$, evaluation at $\tilde{x}$ induces an isomorphism:
\begin{equation}
\label{eq:universal-left-cover-represents-fibers-sset}
\LCover{X}(\tilde{X}_{x}, E)=\fib_{x}E,
\end{equation}
natural in $E$.
\end{lemma}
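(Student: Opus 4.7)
The plan is to exploit the orthogonality between the two classes in the factorisation system used to construct the universal left cover: the morphism $\tilde{x}\colon\Delta^{0}\to\tilde{X}_{x}$ lies in $\ullift{(\urlift{\tensor[^{l}]{\Delta}{_{0}}})}$, while the structural map $p\colon E\to X$ of any left cover lies in $\urlift{\tensor[^{l}]{\Delta}{_{0}}}$, so $\tilde{x}\perp p$ in the sense of unique lifting.

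First I would unwind both sides of the claimed bijection as sets of solutions to lifting problems over $X$. On the one hand, elements of $\LCover{X}(\tilde{X}_{x},E)$ are morphisms $f\colon\tilde{X}_{x}\to E$ of simplicial sets such that $p\circ f=\tilde{p}$. On the other hand, by Remark \ref{rmk:left-cover} (or directly by definition), an element $e\in\fib_{x}E$ is the same as a morphism $e\colon\Delta^{0}\to E$ with $p\circ e=x=\tilde{p}\circ\tilde{x}$. Thus both sets can be viewed as certain fillers in the square
\begin{equation*}
\begin{tikzcd}
\Delta^{0}\ar[r, "e"]\ar[d, "\tilde{x}"'] & E \ar[d, "p"]\\
\tilde{X}_{x}\ar[r, "\tilde{p}"']\ar[ur, dashed, "f"]& X.
\end{tikzcd}
\end{equation*}

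Next, I would define the map in both directions. Evaluation at $\tilde{x}$ sends $f\mapsto f\circ\tilde{x}$, which by commutativity over $X$ lands in $\fib_{x}E$. Conversely, given $e\in\fib_{x}E$, the square above is a lifting problem of $\tilde{x}$ against $p$; since $\tilde{x}\in\ullift{(\urlift{\tensor[^{l}]{\Delta}{_{0}}})}$ by construction and $p\in\urlift{\tensor[^{l}]{\Delta}{_{0}}}$ because $E\to X$ is a left cover, Corollary \ref{cor:orthogonal-factorisation-theorem} guarantees a \emph{unique} diagonal filler $f\colon\tilde{X}_{x}\to E$ with $p\circ f=\tilde{p}$ and $f\circ\tilde{x}=e$. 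Uniqueness shows the two assignments are mutually inverse, and naturality in $E$ follows because any morphism $E\to E'$ of left covers over $X$ postcomposes with both sides compatibly, by functoriality of the diagonal filler.

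This proof is essentially formal given the earlier set-up, so I do not anticipate a significant obstacle; the only thing to be careful about is to invoke the \emph{unique} lifting version of the factorisation (Corollary \ref{cor:orthogonal-factorisation-theorem}) rather than the weak version, since bijectivity, not merely surjectivity, of the map $f\mapsto f\circ\tilde{x}$ is what is being asserted.
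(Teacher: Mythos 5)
Your proof is correct and takes essentially the same approach as the paper, which dismisses the lemma as ``immediate from the definitions''; you have simply spelled out the orthogonality argument that the paper leaves implicit. The key observation — that $\tilde{x}$ lies in the left class and $p$ in the right class of the orthogonal factorisation system from Corollary \ref{cor:orthogonal-factorisation-theorem}, so the square has a unique filler — is exactly the content of the universal property of the universal left cover.
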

\begin{proof}
This follows immediately from the definitions.
\end{proof}

\begin{example}
\label{ex:left-cover-standard-simplex}
The universal left cover of $\Delta^{n}$ at a vertex $k$ is the simplicial set $\Delta^{n-k}$ based at its first vertex, together with the map $\epsilon\colon \Delta^{n-k}\to \Delta^{k}$ uniquely determined by the inclusion:
\begin{align*}
\epsilon\colon [n-k]&\to [n]\\
j&\mapsto k+j
\end{align*}
Indeed, by definition the first vertex of $\Delta^{n-k}$ determines a map with the left lifting property with respect to all left covers.
Moreover, a commutative diagram:
\begin{equation}
\csquare{\Delta^{0}}{\Delta^{n-k}}{\Delta^{n}}{\Delta^{m}}{0}{\epsilon}{\alpha}{0}
\end{equation}
is equivalent to a morphism $\alpha\colon [m]\to[n]$ such that $\alpha(0)=k$.
To conclude, since $\alpha$ is weakly monotonic and $\epsilon$ is an injection, $\alpha$ factors through $\epsilon$, which proves the claim.
\end{example}

\begin{example}
\label{ex:infinity-spine}
We define the $\infty$-\emph{spine} $\Sp^{\infty}$ as the colimit of the sequence:
\begin{equation}
\label{eq:filtration-spine}
\Sp^{1}\subset \Sp^{2}\subset \ldots \subset \Sp^{n}\subset \ldots
\end{equation}
where the inclusion $\Sp^{n-1}\subset \Sp^{n}$ is induced by the bottom face map $\partial^{n-1}_{n-1}\colon \Delta^{n-1}\to \Delta^{n}$.
Notice that, for every $n$, we can define a morphism of simplicial sets:
\[e_{n}\colon \Sp^{n}\to S^{1}\]
uniquely determined by mapping every $0$-simplex of $\Sp^{n}$ to the unique $0$-simplex of the circle, and every $1$-simplex of $\Sp^{n}$ to the unique non degenerate $1$-simplex of $S^{1}$.
Moreover, the maps $e_{n}$ are compatible with the filtration \eqref{eq:filtration-spine} so that they determine a unique morphism:
\[e\colon \Sp^{\infty}\to S^{1}.\]
We claim that $e$ is the universal left cover of $S^{1}$ at $0$, based at its first $0$-simplex.
To show that $e$ is a left cover, since $S^{1}$ and $\Sp^{\infty}$ are $1$-dimensional simplicial sets, it is enough to check that $e$ has the right lifting property with respect to the map $0\colon \Delta^{0}\to \Delta^{1}$, which is immediate from the definitions.
Moreover, the inclusion of the first vertex $0\colon \Delta^{0}\to \Sp^{\infty}$ is manifestly a transfinite composition of pushouts of the map $0\colon \Delta^{0}\to \Delta^{1}$.
\end{example}

\begin{construction}
Let $X$ be a simplicial set and let $x$ be a $0$-simplex of $X$.
We construct a simplicial set $\overline{X}_{x}$ as follows. 
The $n$-simplices of $\overline{X}_{x}$ are given by:
\[\left(\overline{X}_{x}\right)_{n}= \bigcoprod_{\sigma\in X_{n}}\tau_{1}X(x,\sigma_{0}).\]
For every morphism $\delta\colon [m]\to[n]$, the evaluation of $\overline{X}_{x}$ at $\delta$ is defined as:
\begin{align*}
\bigcoprod_{\sigma\in X_{n}}\tau_{1}X(x,\sigma_{0}) &\to \bigcoprod_{\beta\in X_{m}}\tau_{1}X(x,\beta_{0})\\
(\sigma,\alpha)& \mapsto \left(\delta^{*}\sigma, \sigma_{0,\delta_{0}}\alpha\right)
\end{align*}
where $\sigma_{{0},\delta_{0}}$ denotes the unique morphism in $\tau_{1}X$ induced by the composition of $\sigma$ with the unique $1$-simplex in $\Delta^{n}$ from the first vertex to $\delta_{0}$.
Moreover $\overline{X}_{x}$ comes equipped with a canonical projection $p\colon\overline{X}_{x}\to X$ and a  basepoint $1_{x}\in \left(\overline{X}_{x}\right)_{0}$ corresponding to the identity on $x$.
\end{construction}

\begin{lemma}
\label{lem:universal-left-cover-representable}
Let $X$ be a simplicial set and let $x$ be a $0$-simplex of $X$.
Then, there exists a basepoint preserving isomorphism:
\[\tilde{X}_{x}\cong\overline{X}_{x}\]
In particular, for every $[n]\in \DDelta$, there exists a canonical isomorphism:
\begin{equation}
\label{eq:explicit-left-cover}
\left(\tilde{X}_{x}\right)_{n}\cong\bigcoprod_{\sigma\in X_{n}}\tau_{1}X(x,\sigma_{0}).
\end{equation}
\end{lemma}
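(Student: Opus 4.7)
The plan is to exhibit $(\overline{X}_x, 1_x)$ as a solution to the orthogonal factorisation of the morphism $x\colon \Delta^0\to X$ with respect to the pair $(\ullift{(\urlift{\tensor[^l]{\Delta}{_0}})}, \urlift{\tensor[^l]{\Delta}{_0}})$. By uniqueness of such factorisations (Corollary \ref{cor:orthogonal-factorisation-theorem}), this will produce a canonical basepoint-preserving isomorphism $\overline{X}_x\cong\tilde{X}_x$. Concretely I need to check two things: (a) the projection $p\colon \overline{X}_x\to X$ is a left cover, and (b) the basepoint inclusion $1_x\colon \Delta^0\to \overline{X}_x$ has the unique left lifting property with respect to every left cover of $X$.

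For (a), I shall apply Remark \ref{rmk:left-cover} and show that for each $n$ the map $(\overline{X}_x)_n\to (\overline{X}_x)_0\times_{X_0}X_n$ induced by the initial vertex is a bijection. This is immediate from the construction: unpacking $(\overline{X}_x)_0=\coprod_{v\in X_0}\tau_1X(x,v)$, both sides of this map are canonically in bijection with the set of pairs $(\sigma,\alpha)$ where $\sigma\in X_n$ and $\alpha\colon x\to\sigma_0$ is a morphism in $\tau_1X$, and the bijection respects $p$.

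For (b), given any left cover $q\colon E\to X$ I shall first use the left-horn filling property of $q$ to equip the fibres of $q$ with a natural functorial action $\fib_\bullet E\colon \tau_1X\to\Set$: a $1$-simplex $\alpha\colon x\to y$ of $X$ and a point $e\in\fib_xE$ determine a unique lift $\Delta^1\to E$ of $\alpha$ with initial vertex $e$, whose terminal vertex I call $\alpha\cdot e\in\fib_yE$; compatibility of this assignment with composition in $\tau_1X$ and independence of the chosen representative $1$-simplices follow from the unique $\Lambda^2_0\subset\Delta^2$-filling property of $q$. Given then a lifting problem
\begin{equation*}
\begin{tikzcd}
\Delta^0\ar[r,"e"]\ar[d,"1_x"'] & E\ar[d,"q"]\\
\overline{X}_x\ar[r,"p"'] & X
\end{tikzcd}
\end{equation*}
I define $\phi\colon \overline{X}_x\to E$ on $(\sigma,\alpha)\in (\overline{X}_x)_n$ as the unique lift of $\sigma$ to $E$ with initial vertex $\alpha\cdot e$. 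Compatibility of $\phi$ with the simplicial operators follows from the functoriality of $\fib_\bullet E$ together with the explicit formula for the evaluation of $\overline{X}_x$ on $\delta\colon[m]\to[n]$, and uniqueness is forced because any solution must send the $0$-simplex $(y,\alpha)\in (\overline{X}_x)_0$ to $\alpha\cdot e$ by functoriality, and higher simplices are then determined by the left cover property of $q$.

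The main obstacle is the bookkeeping in the construction of the $\tau_1X$-action on $\fib_\bullet E$: verifying that the assignment $\alpha\mapsto (e\mapsto \alpha\cdot e)$ descends to $\tau_1X$ (rather than only the free category on the underlying reflexive quiver of $X$) requires showing that for every $2$-simplex $c\in X_2$ the actions of $d_2c$ followed by $d_0c$ and of $d_1c$ coincide, which follows from filling the $\Lambda^2_0$-horn determined by $c$ and a lift of its initial vertex. Once this is in place, all remaining compatibilities reduce to uniqueness statements for left-cover liftings and are routine.
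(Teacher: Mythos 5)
Your proof is correct and follows essentially the same route as the paper: both verify that $\overline{X}_x$ is a left cover via the identification $(\overline{X}_x)_n\cong(\overline{X}_x)_0\times_{X_0}X_n$ and then establish the universal property of the basepoint against left covers, with the paper packaging this as the natural isomorphism $\LCover{X}(\overline{X}_x,E)\cong\fib_xE$ of Lemma \ref{lem:universal-left-cover-represents-fibers} rather than as left orthogonality directly. The one place where you are more explicit than the paper is in verifying that the fibre action $\alpha\mapsto(e\mapsto\alpha\cdot e)$ descends from the free category on $\iota_1^*X$ to $\tau_1X$ via $\Lambda^2_0$-filling; the paper asserts the existence of $\alpha_*\colon\fib_Ex\to\fib_E\sigma_0$ without comment, so your care here is well placed.
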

\begin{proof}
By Lemma \ref{lem:universal-left-cover-represents-fibers}, it is enough to show that $p\colon \overline{X}_{x}\to X$ is a left cover and there is a natural isomorphism:
\begin{equation}
\label{eq:lc-u-property}
\LCover{X}\left(\overline{X}_{x},E\right)\cong \fib_{x}E
\end{equation}
for every cover $E$ over $X$.
To show that $p\colon \overline{X}_{x}\to X$ is a left cover notice that, by construction, we have a natural isomorphism:
\[\left(\overline{X}_{x}\right)_{n}\cong \bigcoprod_{\sigma\in X_{n}}\tau_{1}X(x,\sigma_{0})\cong\left(\overline{X}_{x}\right)_{0}\times_{X_{0}}X_{n}\]
Let $q\colon E\to X$ be a left cover over $X$, to show that \eqref{eq:lc-u-property} holds we need to prove that a morphism $\tilde{X}_{x}\to E$ over $X$ is uniquely determined by the image of $1_{x}$.
Let $e$ be a point in the fibre of $E$ at $x$ and let $(\sigma, \alpha\colon x\to \sigma_{0})$ be an $n$-simplex of $\overline{X}_{x}$.
Then, $\alpha$ defines a morphism of fibres:
\[\alpha_{*}\colon \fib_{E}x\to \fib_{E}\sigma_{0}\]
hence, taking the pair $(\sigma, \alpha_{*}e)$ determines a unique $n$-simplex of $E$ by Remark \ref{rmk:left-cover}.
By construction, this assignment defines a unique morphism of simplicial sets $f\colon \overline{X}_{x}\to E$ with the desired property.
\end{proof}

\begin{proposition}
\label{prop:universal-cover-functor}
The universal cover construction defines a functor:
\[\tilde{X}_{\bullet}\colon \tau_1X^{\op}\to \LCover{X}.\]
\end{proposition}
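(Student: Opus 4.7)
The plan is to construct $\tilde{X}_{\bullet}$ on morphisms using the representability established in Lemma \ref{lem:universal-left-cover-represents-fibers} together with the explicit description of $\tilde{X}_x$ from Lemma \ref{lem:universal-left-cover-representable}, and then verify functoriality by a Yoneda argument.

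First I would observe that, combining Lemma \ref{lem:universal-left-cover-represents-fibers} with the identification \eqref{eq:explicit-left-cover} at level $n=0$, we have a natural isomorphism
\[\LCover{X}\!\left(\tilde{X}_{y},\tilde{X}_{z}\right) \;\cong\; \fib_{y}\tilde{X}_{z} \;\cong\; \tau_{1}X(z,y)\]
for any two $0$-simplices $y,z\in X_{0}$. Given a morphism $\alpha\colon x\to x'$ in $\tau_{1}X$, I define $\tilde{X}_{\bullet}(\alpha)=\alpha^{*}\colon \tilde{X}_{x'}\to \tilde{X}_{x}$ to be the unique morphism of left covers corresponding to $\alpha\in \tau_{1}X(x,x')\cong \fib_{x'}\tilde{X}_{x}$ under this bijection, \ie{} the unique map over $X$ that sends the basepoint $\tilde{x}'$ of $\tilde{X}_{x'}$ to the element $(x',\alpha)\in \bigcoprod_{\sigma\in X_{0}}\tau_{1}X(x,\sigma)$.

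Next I would unwind $\alpha^{*}$ on higher simplices using the construction in the proof of Lemma \ref{lem:universal-left-cover-representable}: a typical $n$-simplex $(\sigma,\gamma\colon x'\to\sigma_{0})$ of $\tilde{X}_{x'}$ is sent to $(\sigma,\gamma\alpha\colon x\to\sigma_{0})$ in $\tilde{X}_{x}$, because the fibre transport along $\gamma$ carries $(x',\alpha)$ to $\gamma_{*}(x',\alpha)=(\sigma_{0},\gamma\alpha)$. This explicit formula makes the functoriality checks routine.

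For the identity axiom, $(1_{x})^{*}$ corresponds under the bijection to $1_{x}\in \tau_{1}X(x,x)\cong \fib_{x}\tilde{X}_{x}$, which is exactly the basepoint $\tilde{x}$; by the uniqueness clause of Lemma \ref{lem:universal-left-cover-represents-fibers} this is $\id_{\tilde{X}_{x}}$. For composition, given $\alpha\colon x\to x'$ and $\beta\colon x'\to x''$ in $\tau_{1}X$, I need $\alpha^{*}\beta^{*}=(\beta\alpha)^{*}$ as maps $\tilde{X}_{x''}\to \tilde{X}_{x}$. By the representability isomorphism, it suffices to compare their values on the basepoint $\tilde{x}''\in\fib_{x''}\tilde{X}_{x''}$: the explicit formula gives $\beta^{*}(\tilde{x}'')=(x'',\beta)$ and then $\alpha^{*}(x'',\beta)=(x'',\beta\alpha)$, which agrees with $(\beta\alpha)^{*}(\tilde{x}'')$ by construction. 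Hence $\alpha^{*}\beta^{*}$ and $(\beta\alpha)^{*}$ are two morphisms of left covers that agree on the distinguished fibre element, so uniqueness forces them to coincide, establishing contravariant functoriality.

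The argument is essentially an application of the Yoneda lemma in the representable category $\LCover{X}$, and I do not expect a substantive obstacle; the only subtlety is bookkeeping the identification $\fib_{y}\tilde{X}_{z}\cong \tau_{1}X(z,y)$ in the correct direction so that $\tilde{X}_{\bullet}$ ends up contravariant rather than covariant, which is why it lands in $\tau_{1}X^{\op}$.
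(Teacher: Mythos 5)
Your proof is correct, but it takes a genuinely different route from the paper's. The paper proves the statement directly from the orthogonal-factorisation definition of $\tilde{X}_x$: it first uses unique lifting of elementary edges to get a functor out of the opposite of the free category on the $1$-truncation of $X$, and then verifies, via unique lifting against $\Lambda^2_1 \to \Delta^2$ and $\Lambda^2_0 \to \Delta^2$, that the relations identifying a composite with a $2$-simplex boundary are respected, which by \ref{subsec:tau-explicit} is exactly what is needed for the functor to descend to $\tau_1X^{\op}$. You instead pass through the explicit model $\overline{X}_x$ of Lemma \ref{lem:universal-left-cover-representable} and the representability isomorphism of Lemma \ref{lem:universal-left-cover-represents-fibers}, at which point identity and composition become basepoint checks via a Yoneda-style uniqueness argument. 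This is cleaner and shorter once the explicit model is in hand. One thing worth flagging, though: the substantive content — that a morphism of $\tau_1X$ (i.e.\ an equivalence class of edge-paths) induces a \emph{well-defined} map on fibres of a left cover, independent of representative — is precisely what the paper's horn-lifting argument establishes, and your proof effectively delegates it to the proof of Lemma \ref{lem:universal-left-cover-representable}, where the assertion ``$\alpha$ defines a morphism of fibres $\alpha_* \colon \fib_E x \to \fib_E\sigma_0$'' is made without justification and tacitly presupposes this. So your argument is logically valid given that lemma, but it does not re-derive the key mechanism; it only repackages the functoriality check, while the paper's proof shows explicitly which lifting properties do the work.
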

\begin{proof}
By Lemma \ref{lem:universal-left-cover-represents-fibers}, for every two $0$-simplices $x$ and $x'$ in $X$ we have a natural isomorphism:
\[\LCover{X}(\tilde{X}_{x'},\tilde{X}_{x})\cong \fib_{x'}\tilde{X}_{x}.\]
By the unique lifting property of left covers, given a morphism $\alpha\colon y\to y'$ in $X$, and a point $\tilde{y}$ in the fibre of $\tilde{X}_{x}$ at $y$, there exists a unique morphism $\tilde{\alpha}\colon \tilde{y}\to\tilde{\alpha}(1)$ lifting $\alpha$.
In particular, $\tilde{X}_{x}$ defines a functor from the opposite of the free category at the $1$-truncation of $X$ to the category of left covers over $X$.
To conclude the proof, by \ref{subsec:tau-explicit}, we need to show that for every commutative triangle $(\alpha,\beta,\gamma)$ in $X$ of the form
\begin{equation}
\TriangleOver{x}{x'}{x''}{\alpha}{\beta}{\gamma}
\end{equation}
and lifts $\tilde{\alpha}\colon \tilde{x}\to \tilde{\alpha}(1)$, $\tilde{\beta}\colon \tilde{\alpha}(1)\to \tilde{\beta}(1)$ and $\tilde{\gamma}\colon \tilde{x}\to \tilde{\gamma}(1)$ in $\tilde{X}$, the points $\tilde{\gamma}(1)$ and $\tilde{\beta}(1)$ coincide in the fibre of $\tilde{X}_{x}$ at $x''$.
Consider the following lifting problems
\begin{equation}
\lift{\Lambda^{2}_{1}}{\tilde{X}_{x}}{X}{\Delta^{2}}{(\tilde{\alpha}, \tilde{\beta},\blank)}{}{}{}{h}
\quad
\lift{\Lambda^{2}_{0}}{\tilde{X}_{x}}{X}{\Delta^{2}}{(\tilde{\alpha},\blank,\tilde{\gamma})}{}{}{}{k}
\end{equation}
Then, both problems have a unique lift by \eqref{eq:left-cover-saturation}.
To conclude, notice that both $k$ and $h$ are lifts for the diagram
\begin{equation}
\lift{\Delta^{0}}{\tilde{X}_{x}}{X}{\Delta^{2}}{\tilde{x}}{}{}{0}{}
\end{equation}
which implies that $h=k$ by uniqueness of lifts, hence $\tilde{\gamma}(1)=\tilde{\beta}(1)$.
\end{proof}

\begin{corollary}
The functor $\tilde{X}_{\bullet}\colon \tau_{1}X^{\op}\to \Set$ is fully faithful.
\end{corollary}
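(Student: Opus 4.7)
The plan is to deduce the statement by chaining the two previous lemmas together, and then to verify that the composite bijection coincides with the map induced by $\tilde{X}_{\bullet}$ on hom-sets.

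First, I would fix two $0$-simplices $x, x' \in X_0$ and assemble the natural bijection
\[
\LCover{X}\bigl(\tilde{X}_x,\tilde{X}_{x'}\bigr) \;\cong\; \fib_{x}\tilde{X}_{x'} \;\cong\; \tau_{1}X(x',x) \;=\; \tau_{1}X^{\op}(x,x').
\]
The first isomorphism is supplied by Lemma \ref{lem:universal-left-cover-represents-fibers} applied to the left cover $\tilde{X}_{x'} \to X$, via evaluation at the canonical basepoint $\tilde{x}$. The second isomorphism comes from Lemma \ref{lem:universal-left-cover-representable}, which gives $(\tilde{X}_{x'})_{0} \cong \coprod_{\sigma\in X_0}\tau_{1}X(x',\sigma)$, so that restricting to the fibre over $x$ picks out precisely the summand $\tau_{1}X(x',x)$.

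The remaining and essential point is that the composite bijection above agrees with the hom-map induced by the functor $\tilde{X}_{\bullet}\colon \tau_{1}X^{\op}\to \LCover{X}$. Unwinding Proposition \ref{prop:universal-cover-functor}, the image of a morphism $\alpha\colon x'\to x$ in $\tau_{1}X$ (equivalently, a morphism $x\to x'$ in $\tau_{1}X^{\op}$) is obtained by choosing a representing $1$-simplex, lifting it uniquely to $\tilde{\alpha}\colon \tilde{x'}\to \tilde{\alpha}(1)$ in $\tilde{X}_{x'}$ starting at the basepoint, and then using $\tilde{\alpha}(1)\in\fib_{x}\tilde{X}_{x'}$ to produce the corresponding morphism $\tilde{X}_x\to \tilde{X}_{x'}$ of left covers via Lemma \ref{lem:universal-left-cover-represents-fibers}. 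Tracing this through the explicit formula \eqref{eq:explicit-left-cover}, the basepoint $\tilde{x'}$ of $\tilde{X}_{x'}$ corresponds to the pair $(x', 1_{x'})$ and the lift $\tilde{\alpha}(1)$ corresponds to $(x, [\alpha])$, so that the composite bijection sends $[\alpha]$ to itself. Hence $\tilde{X}_{\bullet}$ is fully faithful.

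The only possible obstacle is this final coherence check; but it is a direct consequence of the universal property of the left cover together with the description of $\tilde{X}_x$ in Lemma \ref{lem:universal-left-cover-representable}, and requires no further ingredients beyond the uniqueness of lifts against maps in $\tensor[^{l}]{\Delta}{_0}$.
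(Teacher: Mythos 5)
Your proof is correct and follows the same route as the paper: chaining Lemma \ref{lem:universal-left-cover-represents-fibers} and Lemma \ref{lem:universal-left-cover-representable} to identify $\LCover{X}(\tilde{X}_x,\tilde{X}_{x'})$ with $\tau_1 X^{\op}(x,x')$. The one thing you add beyond the paper's terse argument is the explicit check that this composite bijection coincides with the action of $\tilde{X}_\bullet$ on hom-sets; the paper leaves this step implicit, and your unwinding of Proposition \ref{prop:universal-cover-functor} via \eqref{eq:explicit-left-cover} is a welcome bit of extra rigour, since a mere bijection between hom-sets does not by itself certify fully-faithfulness.
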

\begin{proof}
By Lemma \ref{lem:universal-left-cover-represents-fibers} evaluation at the basepoint induces a natural isomorphism:
\[\LCover{X}\left(\tilde{X}_{x'}, \tilde{X}_{x}\right)\cong \fib_{x'}\tilde{X}_{x}\]
Moreover, by Lemma \ref{lem:universal-left-cover-representable} there is a natural isomorphism:
\[\left(\tilde{X}_{x}\right)_{0}\cong \bigcoprod_{x'\in X_{0}}\tau_{1}X(x,x').\]
We conclude that:
\[\tau_{1}X(x,x')\cong \LCover{X}\left(\tilde{X}_{x'},\tilde{X}_{x}\right).\]
\end{proof}

\subsection{}
Since $\LCover{X}$ is complete and cocomplete, it is tensored and cotensored over $\Set$ as in \ref{ex:locally-small-tensor-cotensor}.
Therefore, for every functor $F\colon \tau_1 X \to \Set$ we can form the tensor product (see Definition \ref{def:functor-tensor-product}):
\begin{align*}
F\otimes_{X}\tilde{X}_{\bullet} & = F\otimes_{\tau_1 X^{\op}}\tilde{X}_{\bullet}\\
	& \int^{x\in \tau_1 X^{\op}}Fx\otimes\tilde{X}_{x}
\end{align*}
In particular we obtain a functor:
\begin{align*}
\rec \colon \Set^{\tau_1 X} 	& \to \LCover{X}\\
F					& \mapsto F\otimes_{X}\tilde{X}_{\bullet}
\end{align*}
that we call the \emph{left reconstruction functor}.
Moreover, by Proposition \ref{prop:tensor-product-functors}, $\rec$ has a right adjoint defined as:
\begin{align*}
\fib\colon \LCover{X} & \to \Set^{\tau_1 X}\\
E &\mapsto \fib_{\bullet} E=\hom(\tilde{X}_{\bullet},E)
\end{align*}
that we call the \emph{left fibre functor}.

\begin{proposition}
\label{prop:concrete-left-cover-functor}
Let $F\colon \tau_{1}X\to \Set$ be a functor. Then, for every $[n]\in \DDelta$, there exists a natural isomorphism:
\[\left(F\otimes \tilde{X}_{\bullet}\right)_{n}\cong \bigcoprod_{\alpha\in X_{n}}F(\alpha_{0})\]
where $\alpha_{0}$ denotes the first vertex of $\alpha$.
\end{proposition}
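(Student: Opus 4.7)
The plan is to compute the coend defining $F\otimes \tilde{X}_{\bullet}$ at level $n$ directly, using the explicit description of universal left covers from Lemma \ref{lem:universal-left-cover-representable} and the co-Yoneda (density) formula.

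First, since colimits in $\sSet$ are computed pointwise and the tensor of a simplicial set by a set is a coproduct indexed by that set, we have at level $n$:
\[
\left(F\otimes_{X}\tilde{X}_{\bullet}\right)_{n}
\;=\;\left(\int^{x\in \tau_{1}X^{\op}} Fx\otimes \tilde{X}_{x}\right)_{n}
\;\cong\;\int^{x\in \tau_{1}X^{\op}} Fx\times \bigl(\tilde{X}_{x}\bigr)_{n}.
\]
Next I invoke Lemma \ref{lem:universal-left-cover-representable} to substitute
$(\tilde{X}_{x})_{n}\cong\bigcoprod_{\sigma\in X_{n}}\tau_{1}X(x,\sigma_{0})$,
and then commute the coend past the coproduct indexed by $X_{n}$ (which does not depend on $x$) to obtain
\[
\bigcoprod_{\sigma\in X_{n}}\int^{x\in \tau_{1}X^{\op}} Fx\times \tau_{1}X(x,\sigma_{0}).
\]

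To finish, I apply the co-Yoneda lemma for the covariant functor $F\colon \tau_{1}X\to \Set$, which in this coend form reads $\int^{x}\tau_{1}X(x,c)\times Fx \cong Fc$ (this is just the density theorem, Corollary \ref{cor:density-theorem}, packaged as a coend in $\Set$, and can alternatively be seen directly from the coequaliser presentation \eqref{eq:coend-coequaliser} together with the Yoneda Lemma). Taking $c=\sigma_{0}$ yields
\[
\left(F\otimes_{X}\tilde{X}_{\bullet}\right)_{n}\;\cong\;\bigcoprod_{\sigma\in X_{n}}F(\sigma_{0}).
\]

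The only step requiring care is naturality: I need the isomorphism to be natural in $F$ and, more delicately, compatible with the simplicial structure, i.e.\ functorial in $[n]\in \DDelta^{\op}$. Naturality in $F$ is immediate since every step above is functorial in $F$. For the simplicial naturality, given $\delta\colon [m]\to [n]$, the induced map on $(F\otimes \tilde{X}_{\bullet})_{\bullet}$ is computed through Lemma \ref{lem:universal-left-cover-representable}, where $\delta$ acts on $(\sigma,\alpha)\in \bigcoprod_{\sigma\in X_{n}}\tau_{1}X(x,\sigma_{0})$ by $(\sigma,\alpha)\mapsto (\delta^{*}\sigma,\sigma_{0,\delta_{0}}\alpha)$; under the co-Yoneda identification this corresponds to precomposition of $Fx\to F(\sigma_{0})$ with $F(\sigma_{0,\delta_{0}})\colon F(\sigma_{0})\to F((\delta^{*}\sigma)_{0})$, which is exactly the simplicial structure map on the right-hand side. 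This is the main bookkeeping obstacle, but it reduces to unwinding the construction of $\overline{X}_{x}$ in the proof of Lemma \ref{lem:universal-left-cover-representable}.
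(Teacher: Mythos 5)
Your proof is correct and is essentially the same argument as the paper's, just packaged differently. The paper first writes $F\cong \colim_{\overcat{\tau_{1}X}{F}}\tau_{1}X(x,\blank)$ via the density theorem, applies $\blank\otimes_{X}\tilde{X}_{\bullet}$ (which sends representables to universal covers), takes $n$-simplices, substitutes Lemma \ref{lem:universal-left-cover-representable}, and applies density once more; you keep the coend in place, substitute the explicit description of $(\tilde{X}_{x})_{n}$ immediately, commute the coend past $\bigcoprod_{\sigma\in X_{n}}$, and finish with co-Yoneda. The two are the same computation read in a different order, and both implicitly use that the colimit/coend can be computed in $\overcat{\sSet}{X}$ (levelwise) and lands in $\LCover{X}$, which is justified a posteriori because the resulting simplicial set $\bigcoprod_{\sigma\in X_{n}}F(\sigma_{0})$ visibly satisfies $E_{n}\cong E_{0}\times_{X_{0}}X_{n}$. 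Your explicit check of compatibility with the simplicial structure maps via $\sigma_{0,\delta_{0}}$ is a useful piece of bookkeeping that the paper leaves to the reader.
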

\begin{proof}
By Corollary \ref{cor:density-theorem} we have a natural isomorphism:
\[F\cong \colim_{\overcat{\tau_{1}X}{F}}\tau_{1}X(x,\blank).\]
Applying $\blank\otimes_{X}\tilde{X}_{\bullet}$ and taking $n$-simplices we obtain:
\begin{align*}
\left(F\otimes_{X}\tilde{X}_{\bullet}\right)_{n}&\cong \colim_{\overcat{\tau_{1}X}{F}}\left(\tilde{X}_{x}\right)_{n}\\
	&\cong\colim_{\overcat{\tau_{1}X}{F}}\bigcoprod_{\alpha\in X_{n}}\tau_{1}X(x,\alpha_{0})\\
	&\cong \bigcoprod_{\alpha\in X_{n}}F(\alpha_{0})
\end{align*}
where the first isomorphism follows from Corollary \ref{cor:density-theorem} and the second isomorphism follows from Lemma \ref{lem:universal-left-cover-representable}.
\end{proof}

\begin{theorem}
\label{thm:left-covers-fiber-functors}
The adjunction:
\begin{equation*}
\Adjoint{\Set^{\tau_1 X}}{\LCover{X}}{\rec}{\fib}
\end{equation*}
is an equivalence of categories.
\end{theorem}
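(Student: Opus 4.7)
The plan is to show directly that both the unit and counit of the adjunction $\rec \dashv \fib$ are natural isomorphisms, leveraging the explicit formulas already established for both sides.

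For the unit $\eta_F\colon F \to \fib(\rec F)$, I would evaluate at a $0$-simplex $x \in X$. By Lemma \ref{lem:universal-left-cover-represents-fibers}, we have
\[
\fib_x(\rec F) = \LCover{X}\bigl(\tilde{X}_x,\, F\otimes_X \tilde{X}_\bullet\bigr) \cong \fib_x\bigl(F\otimes_X\tilde{X}_\bullet\bigr),
\]
and by Proposition \ref{prop:concrete-left-cover-functor} applied in degree $0$, the right-hand side is naturally isomorphic to $\coprod_{x' \in X_0} F(x')$, whose fibre over $x$ is just $F(x)$. I would then chase the definitions to confirm that the resulting identification $Fx \cong \fib_x(\rec F)$ is exactly $\eta_F$ at $x$, and that functoriality in $\tau_1 X$ (i.e.\ naturality in $x$ along morphisms $\alpha\colon x \to x'$) is compatible with the left-cover transport used to build $\tilde{X}_\bullet$ in Proposition \ref{prop:universal-cover-functor}.

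For the counit $\varepsilon_E\colon \rec(\fib E) \to E$, I would compare both sides simplex-wise. On the one hand, by Proposition \ref{prop:concrete-left-cover-functor},
\[
\bigl(\rec(\fib E)\bigr)_n \;=\; \bigl(\fib_\bullet E \otimes_X \tilde{X}_\bullet\bigr)_n \;\cong\; \bigcoprod_{\alpha \in X_n} \fib_{\alpha_0} E.
\]
On the other hand, since $E \to X$ is a left cover, Remark \ref{rmk:left-cover} gives
\[
E_n \;\cong\; E_0 \times_{X_0} X_n \;\cong\; \bigcoprod_{\alpha \in X_n} \fib_{\alpha_0} E,
\]
and both identifications are natural in $[n] \in \DDelta$ and in $E \in \LCover{X}$. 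The map realising this bijection is precisely the component $\varepsilon_E$ obtained by evaluating each $\tilde{X}_{\alpha_0}$-summand at its basepoint and using the canonical inclusion of $\fib_{\alpha_0} E$ into $E_n$ as the set of $n$-simplices whose initial vertex maps to $\alpha_0$.

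The only non-routine point I anticipate is verifying that the explicit bijections produced by Proposition \ref{prop:concrete-left-cover-functor} and Remark \ref{rmk:left-cover} actually agree with $\eta$ and $\varepsilon$ (rather than merely being \emph{some} natural isomorphism between the same functors); this amounts to unwinding how the coend in the definition of $\otimes_X$ interacts with the universal property of $\tilde{X}_x$ from Lemma \ref{lem:universal-left-cover-represents-fibers}. Once this is checked, the conclusion follows formally: an adjunction whose unit and counit are both natural isomorphisms is an adjoint equivalence.
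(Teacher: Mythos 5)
Your proposal takes essentially the same route as the paper: both sides are computed simplex-wise via Proposition \ref{prop:concrete-left-cover-functor} for the unit and via Proposition \ref{prop:concrete-left-cover-functor} together with Remark \ref{rmk:left-cover} for the counit, and the conclusion follows. The caveat in your final paragraph is not actually needed: once one knows $\rec \dashv \fib$ and exhibits \emph{some} natural isomorphisms $\fib\circ\rec \cong \id$ and $\rec\circ\fib \cong \id$, it is a standard categorical fact that the unit and counit of the given adjunction are then automatically isomorphisms, so there is no need to trace the explicit bijections back to $\eta$ and $\varepsilon$.
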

\begin{proof}
Let $F$ be a functor. 
Then, by Proposition \ref{prop:concrete-left-cover-functor} we have an isomorphism:
\[Fx\cong \fib_{x}\rec{F},\]
which is natural in $x$ by the explicit description of $\overline{X}_{x}$ in Lemma \ref{lem:universal-left-cover-representable}
On the other hand, given a left cover $E$ of $X$, for every $[n]\in \DDelta$ we have a chain of isomorphisms:
\begin{align*}
\left(\rec\fib E\right)_{n} &= \bigcoprod_{\alpha\in X_{n}}\fib_{\alpha_{0}}E\\
	&\cong E_{0}\times_{X_{0}}X_{n}\\
	&\cong E_{n}
\end{align*}
natural in $[n]$.
\end{proof}
\chapter{Stratified spaces}
\label{ch:4}

\section{Topology and preorder\label{ch:4sec:1}}
Every preordered set $P$ can be endowed with a topology called the \Alexandroff{} topology of $P$.
This assignment defines an equivalence of categories between the category of preordered sets and the category of finitely generated topological spaces (\ref{subsec:equivalence-finitely-generated-preorder})
and we recall the proof for convenience in Proposition \ref{prop:finitely-generated-space}.
We conclude the section with a recollection on preordered topological spaces (Definition \ref{def:preordered-space}) and with a characterisation of the exponentiable preordered spaces (Proposition \ref{prop:ptop-corecompact-exponentiable}).

\subsection{}
Recall that a \emph{relation} $R$ is a set $S$, called the \emph{domain} of $R$, together with a subset $\graph(R)$ of the cartesian product $S\times S$, called the \emph{graph} of $R$.
If $R$ is a relation with domain $S$ we will say that $R$ is a relation on $S$ and write $s\mathrel{R}s'$ whenever the pair $(s,s')$ belongs to the graph of $R$.
A morphism of relations $f\colon R\to R'$ is a function $f\colon S\to S'$ from the domain of $R$ to the domain of $R'$, such that $s\mathrel{R}s'$ implies $fs\mathrel{R'}fs'$.
We denote by $\Rel$ the category of relations and morphisms between them.

\begin{definition}
\label{def:preorder}
A \emph{preordered set}, or \emph{preorder}, is a set $P$ together with a reflexive and transitive relation $\le$ on $P$. 
The category $\Pre$ of preorders is the full subcategory of $\Rel$ spanned by the preordered sets.
A morphism $f\colon P\to P'$ of preorders is also called a \emph{weakly monotonic map}.
\end{definition}

\begin{definition}
Let $R$ be a relation on a set $S$. 
The \emph{reflexive-transitive closure} of $R$ in $S$ is the preorder relation on $S$ with smallest graph containing $\graph(R)$.
\end{definition}

\begin{construction}
\label{con:reflexive-transitive}
Let $R$ be a relation on a set $S$.
We construct a relation $R^{\infty}$ on $S$ as follows.
Let $R^{0}=R$ and $R^{1}$ be the relation on $S$ with graph:
\[\graph(R^{1})=\graph(R)\cup \{(s,s):s\in S\}\] 
For every $n\ge 2$ we define the relation $R^{n}$ on $S$ with graph:
\[\graph(R^{n})=\left\{(s,s'): \exists s''\in S, s \mathrel{R^{n-1}}s'' \mathrel{R}s'\right\}.\]
Then, we define $R^{\infty}$ to be the relation on $S$ with graph:
\[\graph(R^{\infty})=\bigcup_{n\in \NN}\graph(R^{n}).\]
\end{construction}

\begin{lemma}
\label{lem:reflexive-transitive}
Let $R$ be a relation on a set $S$. Then, the the relation $R^{\infty}$ constructed in \ref{con:reflexive-transitive} is the reflexive-transitive closure of $R$ in $S$.
\end{lemma}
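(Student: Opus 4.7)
The plan is to verify directly that $R^{\infty}$ satisfies the four defining properties of the reflexive-transitive closure of $R$: it contains $R$, it is reflexive, it is transitive, and it is contained in every preorder on $S$ whose graph contains $\graph(R)$. The first two properties are immediate: $\graph(R) = \graph(R^{0}) \subseteq \graph(R^{\infty})$ by construction, and reflexivity of $R^{\infty}$ follows from the inclusion $R^{1} \subseteq R^{\infty}$, since $\graph(R^{1})$ contains every pair $(s,s)$.

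The key step is transitivity. I would prove, by induction on $m \ge 0$, the auxiliary statement that if $s \mathrel{R^{n}} s'$ and $s' \mathrel{R^{m}} s''$, then $s \mathrel{R^{k}} s''$ for some $k \in \NN$. The base cases $m = 0, 1$ reduce to checking that whenever $s \mathrel{R^{n}} s'$ and $s' \mathrel{R} s''$ one has $s \mathrel{R^{n'}} s''$ for some $n'$: if $n \ge 1$ one concludes $s \mathrel{R^{n+1}} s''$ directly from the definition of $R^{n+1}$, while if $n = 0$ one first uses the inclusion $\graph(R) \subseteq \graph(R^{1})$ to promote $s \mathrel{R} s'$ to $s \mathrel{R^{1}} s'$ and then obtains $s \mathrel{R^{2}} s''$. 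For the inductive step $m \ge 2$ one unpacks $s' \mathrel{R^{m}} s''$ as $s' \mathrel{R^{m-1}} t \mathrel{R} s''$ for some $t \in S$, applies the induction hypothesis to produce $s \mathrel{R^{k'}} t$ for some $k'$, and then appends the final $R$-step from $t$ to $s''$ by the base case.

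For minimality, let $\preceq$ be any preorder on $S$ with $\graph(R) \subseteq \graph(\preceq)$. A routine induction on $n$ shows $\graph(R^{n}) \subseteq \graph(\preceq)$: the cases $n = 0, 1$ use the assumed inclusion together with the reflexivity of $\preceq$, while the step $n \ge 2$ uses its transitivity. Taking the union over $n \in \NN$ then yields $\graph(R^{\infty}) \subseteq \graph(\preceq)$, as required.

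The main obstacle is essentially bookkeeping: the sequence $(R^{n})_{n}$ is not monotone in general---for example $R^{1} \not\subseteq R^{2}$ when $R$ is empty---so the transitivity argument must explicitly invoke $\graph(R) \subseteq \graph(R^{1})$ as a lubricant to bridge the case $n = 0$ into the recursive regime $n \ge 2$.
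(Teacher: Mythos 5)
Your proof is correct and follows the same overall strategy as the paper's: show $R^{\infty}$ is a reflexive, transitive relation containing $R$, then show minimality against any preorder containing $R$. The paper dismisses reflexivity and transitivity as holding ``by construction'' and proves minimality by unpacking a pair in $R^{\infty}$ as an $R$-chain, whereas you supply the full transitivity induction (correctly noting that $(R^{n})_{n}$ is not monotone, which the paper silently glosses over) and replace the chain-unpacking with a clean induction on $n$ showing $\graph(R^{n})\subseteq\graph(\preceq)$; both arguments are sound, yours simply fills in details the paper elides.
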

\begin{proof}
The relation $R^{\infty}$ is reflexive and transitive by construction.
Let $\le$ be a preoder relation on $S$ whose graph contains the graph of $R$.
Then, if $(s,s')$ is a pair in $R^{\infty}$, there exists an $n+1$-tuple $(s_{0},\ldots, s_{n})$ of elements of $S$ with $s_{0}=s$ and $s_{n}=s'$, such that:
\[s_{0}\mathrel{R}s_{1}\mathrel{R}\ldots \mathrel{R}s_{n}.\]
In particular, since $\le$ is transitive and it contains $R$, the pair $(s_{0},s_{n})$ belongs to the graph of $\le$.
\end{proof}

\subsection{}
\label{subsec:final-initial-preorder}
One can show that the forgetful functor $U\colon \Pre\to \Set$, mapping a preorder to its underlying set, is a topological functor (see \cite[Example 3.1 (1)]{faj08}).
Given a family of preorders $(P_{i},\le_{i})$ and maps $(f_{i}\colon S\to P_{i})_{i\in I}$, the initial structure on $S$ is the preorder relation $\bigwedge_{i\in I}\le_{i}$ with graph: 
\[\graph\left(\bigwedge_{i\in I}\le_{i}\right)=\bigcap_{i\in I}\left(f_{i}\times f_{i}\right)^{-1}\left(\graph(\le_{i})\right).\]
Dually, the final structure on $S$ with respect to a family of maps $(f_{i}\colon P_{i}\to S)$ is the relation $\bigvee_{i\in I}\le_{i}$ given by the reflexive-transitive closure of the relation on $S$ with graph:
\[\bigcup_{i\in I}(f_{i}\times f_{i})(\graph(\le_{i}))\]
By Lemma \ref{lem:reflexive-transitive}, $s\bigvee_{i\in I}\le_{i} s'$ in $S$ if there exists an $n$-tuple $(i_{1},\ldots, i_{n})$ of elements in $I$ and a set $\{p_{j}^{\epsilon}: p_{j}^{\epsilon}\in P_{i_{j}}, \epsilon\in\{0,1\}\}$ such that
\[p_{j}^{0}\le_{i_{j}} p_{j}^{1}\quad f_{i_{j}}\left(p_{j}^{1}\right)=f_{i_{j+1}}\left(p_{j}^{0}\right)\quad f_{i_{1}}\left(p_{1}^{0}\right)=s,\; f\left(p_{n}^{1}\right)=s'.\]

\subsection{}
The right adjoint to the forgetful functor $U\colon \Pre\to \Set$ maps a set $S$ to the \emph{chaotic preorder} $\ind{S}$ on $S$, whose graph is the product $S\times S$.
On the other hand, the left adjoint to $U$ maps a set $S$ to the \emph{discrete preorder} $\disc{S}$ on $S$, whose graph is the image of the diagonal morphism $S\to S\times S$.

\subsection{} 
Every preorder $P$ gives rise to a category with the underlying set of $P$ as a set of objects and a unique morphism from $p$ to $p'$ if and only if $p\le p'$. 
This assignment defines a fully faithful embedding $\iota\colon \Pre\to\Cat$ of the category of preorders in the category of small categories.
A category in the essential image of $\iota$ is called a \emph{thin category}.

\subsection{}
\label{subsec:pre-cartesian-closed}
The inclusion functor $\iota\colon \Pre\to\Cat$ has a left adjoint which assigns to a small category $\cat{A}$ the preorder on the set of objects of $\cat{A}$ given by $a\le a'$ if and only if there exists a morphism from $a$ to $a'$. 
Such a functor manifestly preserves products, which implies that the category $\Pre$ is Cartesian closed.
Explicitly, if $P$ and $P'$ are preorders, the category ${P'}^{P}$ of functors from $P$ to $P'$ is a thin category, since a natural transformation $\eta\colon f\to g$ between functors $f, g\colon P\to P'$ simply manifests that $fp\le gp$ for every $p\in P$.

\begin{notation}
\label{not:preord}
Let $P$ be a preorder and let $p$ be a element of $P$, we use the following notation:
\begin{align*}
P_{\le p}	&= \{p'\in P: p'\le p\}	\\
P_{\ge p}  &= \{p'\in P: p'\ge p\}	\\
P_{< p}	&= \{p'\in P: p'< p\}	\\
P_{> p}	&= \{p'\in P: p'> p\}	\\
P_{p}	&= \{p'\in P: p'\le p \le p'\}.
\end{align*}
We call $P_{\le p}$ the \emph{sieve} generated by $p$ and $P_{\ge p}$ the \emph{cosieve} generated by $p$.
Two elements $p$ and $p'$ of a preorder $P$ are said to be \emph{chaotically equivalent} if $p\le p'\le p$ and we write $p\sim p'$.
If $p$ is an element of $P$, we say that $P_{p}$ is the \emph{chaotic orbit} of $P$ at $p$.
\end{notation}

\begin{definition}
\label{def:partial-order}
A \emph{partially ordered set} or \emph{poset} is a preordered set $(P,\le)$ such that $\le$ is anti-symmetric.
We denote by $\Pos$ the full subcategory of $\Pre$ spanned by the partially ordered sets.
\end{definition}

\subsection{}
\label{subsec:poset-preord-left-adj}
The category $\Pos$ of posets is a coreflective full subcategory of $\Pre$.
The left adjoint $\phi\colon \Pre\to \Pos$  to the inclusion functor sends a preorder $P$ to the quotient 
\[\phi(P)=P/\sim=\{P_{p}:p\in P\}\]
of $P$ by the chaotic equivalence relation on $P$.

\begin{definition}
If $P$ is a preorder, the \emph{\Alexandroff{} topology} on $P$ is defined by declaring a subset $U$ open in $P$ if and only if $U$ is  upward closed in $P$ \ie if $u\in U$ and $p\in P$ with $u\le p$, then $p$ is in $U$.
This defines a functor
	\[ A\colon \Pre\to \Topa\]
which comes equipped with a right adjoint 
	\[S\colon \Topa\to \Pre\]
mapping a topological space $X$ to the \emph{specialisation preorder} on the set of points of $X$, given by $x\le x'$ if and only if $x$ is in the closure of $\{x'\}$.
We will still denote by $P$ the \Alexandroff{} space associated to a preorder $P$, omitting $A$ from the notation.
\end{definition}

\begin{example}
Every finite ordinal $[n]$ (see \ref{subsec:simplicial-sets}) is in particular a poset. The \Alexandroff{} space associated to $[n]$ has open subsets the subchains $[n]_{\ge k}$.
Notice that the \Alexandroff{} space associated to $[1]$ is the topological space with two points $\{0,1\}$ such that $\{1\}$ is open in $[1]$ and $\{0\}$ is closed.
In other words, $[1]$ is the \emph{\Sierpinski{}} space.
\end{example}

\begin{lemma}
Let $P$ be a preorder, then a basis for the \Alexandroff{} topology on $P$ is given by the set of all cosieves of $P$.
\end{lemma}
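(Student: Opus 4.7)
The plan is to verify directly the two standard axioms for a collection of subsets to form a basis for a topology, and then check that the generated topology agrees with the \Alexandroff{} one. First I would fix the convention that a \emph{cosieve} of $P$ is any upward closed subset of $P$ (extending the terminology introduced for the principal cosieves $P_{\ge p}$), since every upward closed subset $U$ can be written as $U=\bigcup_{p\in U}P_{\ge p}$, so this is the natural notion of a general cosieve. Let $\mathcal{B}$ denote the collection of all cosieves of $P$.

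Next I would verify the basis axioms for $\mathcal{B}$. For the covering axiom, the whole of $P$ is vacuously upward closed, so $P\in\mathcal{B}$ and $\mathcal{B}$ covers $P$. For the intersection axiom, given $U_{1},U_{2}\in \mathcal{B}$ and $p\in U_{1}\cap U_{2}$, it suffices to take $U_{3}=U_{1}\cap U_{2}$; this lies in $\mathcal{B}$ because if $q\in U_{1}\cap U_{2}$ and $q\le r$, then $r\in U_{1}$ and $r\in U_{2}$ by the upward closure of each, so $r\in U_{1}\cap U_{2}$. Hence $\mathcal{B}$ generates a topology on $P$.

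Finally I would identify this generated topology with the \Alexandroff{} topology. Every cosieve is by definition an upward closed subset of $P$, hence is open in the \Alexandroff{} topology; thus the topology generated by $\mathcal{B}$ is contained in the \Alexandroff{} topology. Conversely, every open subset of the \Alexandroff{} topology is upward closed, hence is itself an element of $\mathcal{B}$, and so is trivially a union of members of $\mathcal{B}$. The two topologies therefore coincide, showing that $\mathcal{B}$ is a basis for the \Alexandroff{} topology. The argument is essentially tautological once the notion of cosieve is pinned down, so the only real step requiring care is the first one, namely recognising that a cosieve in this context means precisely an upward closed subset.
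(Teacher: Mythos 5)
Your verification of the basis axioms is sound, but the reading of ``cosieve'' you adopt drains the lemma of content: if a cosieve is \emph{any} upward closed subset, then your proposed basis is exactly the collection of all \Alexandroff{}-open sets, and every topology is trivially a basis for itself. The paper's notation only ever defines ``the cosieve generated by $p$'', namely $P_{\ge p}$, so the intended collection is the set of \emph{principal} cosieves $\{P_{\ge p}: p\in P\}$, and under that reading the lemma has genuine (if small) content. The argument then runs: reflexivity gives $p\in P_{\ge p}$, so the collection covers $P$; if $r\in P_{\ge p}\cap P_{\ge q}$, transitivity gives $P_{\ge r}\subseteq P_{\ge p}\cap P_{\ge q}$, so the intersection axiom holds; each $P_{\ge p}$ is upward closed, hence \Alexandroff{}-open; and conversely every upward closed $U$ satisfies $U=\bigcup_{u\in U}P_{\ge u}$, so every \Alexandroff{}-open set is a union of principal cosieves. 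You do state this last identity, but only in your opening paragraph as a justification for your terminological convention rather than as the substance of the proof. So the mathematics you need is all present; it is just organised around the wrong definition. The paper itself offers no argument (``immediate from the definitions''), so there is no finer comparison to make beyond this point of interpretation.
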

\begin{proof}
The proof is immediate from the definitions.
\end{proof}

\subsection{} 
Recall that a \emph{finite topological space} is a topological space $F$ with a finite set of points.
Let $\FinTop$ be the full subcategory of $\Topa$ spanned by the finite topological spaces.
A topological space $X$ is said to be \emph{finitely generated}, if it is $\FinTop$-generated in the sense of Definition \ref{def:I-generated}.
We denote by $\fgTop$ the full subcategory of $\Topa$ spanned by the finitely generated spaces.

\begin{proposition}
\label{prop:finitely-generated-space}
Let $X$ be a topological space. The following conditions are equivalent:
\begin{enumerate}
\item
The space $X$ is finitely generated.
\item
Arbitrary intersections of open subsets of $X$ are open in $X$.
\item
The topology on $X$ coincides with the \Alexandroff{} topology associated to the specialisation preorder on $X$.
\end{enumerate}
\end{proposition}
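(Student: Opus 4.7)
The plan is to prove the circular chain of implications $(1)\Rightarrow(2)\Rightarrow(3)\Rightarrow(1)$, each of which reduces to a short manipulation of upward-closed sets and subspaces.

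For $(1)\Rightarrow(2)$, I would use the definition of finite generation directly: a subset $U\subseteq X$ is open if and only if $f^{-1}(U)$ is open in $F$ for every continuous map $f\colon F\to X$ with $F$ finite. Given a family $(U_i)_{i\in I}$ of opens, I want to show $\bigcap_{i\in I}U_i$ is open. For any $f\colon F\to X$ from a finite space, the family of preimages $(f^{-1}(U_i))_i$ is a collection of opens in $F$; since $F$ has only finitely many subsets, only finitely many distinct such preimages occur, so the intersection $f^{-1}\bigl(\bigcap_i U_i\bigr)=\bigcap_i f^{-1}(U_i)$ is a finite intersection of opens in $F$, hence open.

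For $(2)\Rightarrow(3)$, I would first observe (as a general fact independent of (2)) that every open in $X$ is upward closed in the specialisation preorder: if $u\in U$ and $u\le p$ (i.e., $u\in\overline{\{p\}}$), then $U$ must contain $p$. This gives the inclusion $\tau_{X}\subseteq\tau_{A}$ where $\tau_{A}$ denotes the \Alexandroff{} topology on the specialisation preorder. For the reverse inclusion, given $x\in X$, condition (2) implies the intersection $U_{x}=\bigcap\{V:V\ni x,\,V\text{ open}\}$ is open in $X$. A direct check shows $y\in U_{x}$ if and only if every open containing $x$ also contains $y$, i.e., $x\le y$, so $U_{x}=X_{\ge x}$ is the cosieve generated by $x$. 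Since such cosieves form a basis of $\tau_{A}$, I get $\tau_{A}\subseteq\tau_{X}$.

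For $(3)\Rightarrow(1)$, I must show $X$ has the final topology with respect to all continuous maps from finite spaces, using the \Alexandroff{} assumption. One direction is automatic: any such $f\colon F\to X$ is continuous, so preimages of opens are open. For the converse, suppose $U\subseteq X$ satisfies $f^{-1}(U)$ open for every continuous $f\colon F\to X$ with $F$ finite; I want $U$ upward closed in the specialisation preorder (hence open by (3)). Given $u\in U$ and $u\le p$ in $X$, consider the two-point subspace $\{u,p\}\subseteq X$ with its subspace topology and the inclusion $\iota\colon\{u,p\}\hookrightarrow X$. Since every open in $X$ containing $u$ also contains $p$, the subset $\{u\}$ is never open in $\{u,p\}$. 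Therefore, if $p\notin U$, then $\iota^{-1}(U)=\{u\}$ would be a non-open subset of a finite space, contradicting the hypothesis. Hence $p\in U$, so $U$ is upward closed.

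None of the steps is really an obstacle; the only delicate point is keeping the convention for the specialisation preorder straight (opens are upward closed, cosieves are the smallest open neighbourhoods), and being careful in $(3)\Rightarrow(1)$ to use the subspace topology on $\{u,p\}$ rather than a discrete or indiscrete topology, since the former is genuinely a finite topological space and the inclusion is continuous.
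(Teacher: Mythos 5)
Your argument is correct. For $(1)\Rightarrow(2)$ you reason exactly as the paper does, using the finiteness of $F$ to collapse the preimages to a finite intersection. For $(3)\Rightarrow(1)$ you also match the paper's proof in substance: the paper builds an explicit continuous map $f\colon[1]\to X$ from the \Sierpinski{} space with $f(0)=x$ and $f(1)=x'$, while you take the two-point subspace $\{u,p\}\subseteq X$ and its inclusion; when $u\le p$ the subspace topology on $\{u,p\}$ is either the \Sierpinski{} topology or the indiscrete one, and in either case $\{u\}$ is not open, so the two presentations encode the same argument (your closing caveat about ``subspace, not discrete or indiscrete'' is slightly off only in that the subspace topology \emph{may} coincide with the indiscrete one when $u$ and $p$ are topologically indistinguishable, but this does no harm). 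The genuinely different step is $(2)\Rightarrow(3)$: the paper shows that an upward-closed set $U$ is open via the one-line identity $U=\bigcap_{y\notin U}\bigl(X\setminus\overline{\{y\}}\bigr)$, which is open by $(2)$; you instead observe that, by $(2)$, the minimal open neighbourhood $U_x=\bigcap\{V\ \text{open}:x\in V\}$ exists and equals the cosieve $X_{\ge x}$, and then use that cosieves form a basis of the \Alexandroff{} topology. Both are valid: the paper's is a slick set-theoretic identity, while yours is closer in spirit to the classical characterisation of \Alexandroff{} spaces as those possessing minimal open neighbourhoods, and it makes the connection to the cosieve basis lemma explicit.
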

\begin{proof}
Assume that $X$ is finitely generated and let $\{U_i\}_{i\in I}$ be a family of open subsets of $X$. For every finite topological space $F$ and any map $f\colon F\to X$, we have that $\{f^{-1}(U_i)\}_{i\in I}$ is a (finite) family of open subsets in $F$, so that 
\[f^{-1}\left(\bigcap_{i\in I}U_i\right)=\bigcap_{i\in I}f^{-1}\left(U_i\right)\]
is open in $F$. Hence (1) implies (2).
Assume now that $X$ satisfies (2) and let $U$ be a subset of $X$ upward closed with respect to the specialisation preorder, then the following holds:
\begin{equation*}
\label{eq:finitely-gen-proposition}
\bigcap_{y\notin U} \left(X\setminus\overline{\{y\}}\right)=U.
\end{equation*}
In particular, $U$ is open in $X$.
To conclude, assume that (3) holds and that $U$ is not open in $X$. 
Then, there exist $x$ and $x'$ in $X$ such that $x$ is in $U$ and $x$ belongs to the closure of $\{x'\}$ in $X$, but $x'$ is not in $U$.
In particular, we can define a morphism $f\colon [1] \to X$  from the \Sierpinski{} space to $X$ mapping the closed point to $x$ and the open point to $x'$ and we have that $f^{-1}(U)=\{0\}$ is not open in $S$, which proves the claim.
\end{proof}

\subsection{} 
\label{subsec:equivalence-finitely-generated-preorder}
By Proposition \ref{prop:finitely-generated-space}, the \Alexandroff{} topology defines an equivalence between the category of preorders and the category of finitely generated spaces. 
\begin{equation}
\Adjoint{\Pre}{\fgTop}{A}{S}
\end{equation}
In particular, it restricts to an equivalence between the category of finite preorders and the category of finite topological spaces.
Moreover, a preorder $P$ is a poset if and only if the associated topological space is a $T_{0}$-finitely generated space.

\begin{remark}
The Alexandroff space associated to any preorder $P$ is a numerically generated space (see Example \ref{ex:numerically-generated}).
Indeed, every preorder $P$ can be written as a colimit of $[0]$, $[1]$ and $\ind{2}$, where $\ind{2}$ denotes the chaotic preorder on the set with two elements.
Moreover, the \Alexandroff{} spaces associated to $[0]$, $[1]$ and $\ind{2}$ are numerically generated, since they can be written as colimits of points and intervals.
Therefore, by Proposition \ref{prop:C_I-coreflective} we conclude that every preorder defines a numerically generated space.
\end{remark}

\begin{definition}
\label{def:preordered-space}
A \emph{preordered space} is a pair $(X,\le)$ where $X$ is a topological space and $\le$ is a preorder on the underlying set of $X$.
A morphism $f\colon (X,\le)\to (X',\le')$ of preordered spaces is a continuous map $f\colon X\to X'$ such that $f$ is monotonic with respect to the given preorders.
When no confusion will arise, a preordered space $(X,\le)$ will be denoted simply by $X$, omitting the preorder from the notation.
We denote by $\pTop$ the category of preordered spaces and morphisms between them.
\end{definition}
\begin{comment}
\begin{remark}
Equivalently, the category of preordered spaces can be described as the category of sets equipped with two independent topologies, a \emph{spatial} topology and a finitely generated topology. 
Morphisms of preorders are those maps that are continuous with respect to both topologies.
\end{remark}
\end{comment}

\subsection{}
\label{subsec:ptop-pullback-top-pre}
The category $\pTop$ of preordered spaces can be described categorically as the (strict) pullback:
\begin{equation}
\label{eq:ptop-pullback}
\pull{\pTop}{\Topa}{\Set}{\Pre}{}{}{}{}
\end{equation}
of the forgetful functors $\Pre\to \Set$ and $\Topa\to \Set$.
Indeed, to give a category $\cat{C}$ together with functors $F\colon \cat{C}\to \Topa$ and $G\colon \cat{C}\to \Pre$ making the following square commutative:
\begin{equation*}
\csquare{\cat{C}}{\Topa}{\Set}{\Pre}{F}{}{}{G}
\end{equation*}
is the same as giving a functor $F\colon \cat{C}\to \Set$ such that, for every $c\in \cat{C}$, the set $Fc$ is equipped with a topology and a preorder and for every morphism $f\colon c\to c'$ in $\cat{C}$, the function $Ff$ is continuous and weakly monotonic.
This is exactly the datum of a functor from $\cat{C}$ to the category of preordered spaces.
In particular, $\pTop$ is a well fibered topological construct, since the categories $\Pre$ and $\Topa$ are.
Moreover, all the functors involved in the Cartesian square \eqref{eq:ptop-pullback} are topological.

\begin{example}
Let $X$ be a topological space.
When no confusion arises, we denote the chaotic preordered space associated to $X$ simply as $X$.
In particular, we denote by $\RR$ and $\II$ the real line and the unit interval equipped with the chaotic preorder.
\end{example}

\begin{example}
The real line $\RR$ equipped with the standard order $\le$ is called the \emph{standard ordered real line}.
Similarly $(\II,\le)$ is the interval equipped with the order induced by the standard order on $\RR$.
\end{example}

\subsection{}
\label{subsec:alex-preord-space}
The \Alexandroff{} topology functor $A\colon \Pre\to \Topa$ induces a functor $(1_{\Pre},A)\colon \Pre \to \pTop$ mapping a preorder $P$ to the \Alexandroff{} space of $P$ equipped with its own preorder.
As for the functor $A$ itself, we will blur the distinction between $P$ viewed as a preorder and its associated preordered space.

\begin{construction}
\label{con:exponentiable-preordered-space}
Let $(A,\le_{A})$ be a preordered space whose underlying space $A$ is core-compact (see \ref{subsec:corecompact-exponentiable}) and let $(X,\le_{X})$ be a preordered space.
The set $\pTop\left(A,X\right)$ of weakly monotonic continuous maps is a subset of the topological space $X^{A}$ of continuous maps from $A$ to $X$ and as such can be endowed with the subspace topology.
Moreover, we can define a preorder $\le^{A}_{X}$ on the points of $\pTop\left(A, X\right)$ by setting $f\le^{A}_{X}g$ if and only if $fa\le_{X}ga$ for every $a\in A$.
Let $\left(\pTop\left(A, X\right), \le^{A}_{X}\right)$ be the preordered space so obtained.
\end{construction}

\begin{proposition}
\label{prop:ptop-corecompact-exponentiable}
A preordered space $(A,\le_{A})$ is exponentiable if and only if $A$ is a core-compact topological space.
Moreover, for every preordered space $(X,\le_{X})$ the internal-hom from $(A,\le_{A})$ to $(X,\le_{X})$ is given by the preordered space $\left(\pTop\left(A,X\right), \le^{A}_{X}\right)$.
\end{proposition}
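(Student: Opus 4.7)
The plan is to establish each direction separately, starting with the easier sufficiency.

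First, assume that $A$ is core-compact as a topological space. The plan is to verify by hand that the preordered space $(\pTop(A,X), \le^A_X)$ of Construction \ref{con:exponentiable-preordered-space} satisfies the universal property of the internal hom. Given a preordered space $(Y,\le_Y)$, a morphism $f\colon (Y,\le_Y)\times (A,\le_A)\to (X,\le_X)$ is a continuous map $f\colon Y\times A\to X$ whose monotonicity with respect to the product preorder encodes two independent conditions: for fixed $y$, the partial map $f(y,\blank)$ is weakly monotonic $A\to X$ (setting $y=y'$ and $a\le_A a'$); and for fixed $a$, the partial map $f(\blank,a)$ is weakly monotonic $Y\to X$ (setting $a=a'$ and $y\le_Y y'$). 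By Lemma \ref{lem:corecompact-exponentiable}, the exponential $X^A$ exists in $\Topa$ and $f$ transposes to a continuous $\tilde f\colon Y\to X^A$. The two conditions above translate precisely to $\tilde f$ factoring through the subset $\pTop(A,X)$ and being weakly monotonic with respect to $\le^A_X$. Since $\pTop(A,X)$ carries the subspace topology, the transposition is also continuous into $(\pTop(A,X),\le^A_X)$, and reversing the argument shows the correspondence is a natural bijection.

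For the necessity, assume $(A,\le_A)$ is exponentiable in $\pTop$ with right adjoint $(\blank)^{(A,\le_A)}\colon \pTop\to\pTop$. The strategy is to restrict attention to chaotic preordered spaces in order to extract a right adjoint for $A\times\blank$ in $\Topa$, whence $A$ is core-compact by Lemma \ref{lem:corecompact-exponentiable}. Write $i\colon \Topa\to \pTop$ for the right adjoint to the forgetful $U\colon \pTop\to\Topa$ that equips a topological space with the chaotic preorder (see \ref{subsec:ptop-pullback-top-pre}). For any topological spaces $X$ and $Y$, the underlying space of $iY\times (A,\le_A)$ is $Y\times A$, and any continuous map into $iX$ is automatically weakly monotonic since the target preorder is chaotic. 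Hence:
\begin{align*}
\Topa\bigl(Y\times A,\, X\bigr) & \cong \pTop\bigl(iY\times (A,\le_A),\, iX\bigr)\\
& \cong \pTop\bigl(iY,\, iX^{(A,\le_A)}\bigr)\\
& \cong \Topa\bigl(Y,\, U(iX^{(A,\le_A)})\bigr),
\end{align*}
where the final isomorphism uses the adjunction $U\dashv i$. This exhibits $U(iX^{(A,\le_A)})$ as an exponential $X^A$ in $\Topa$, so $A$ is exponentiable.

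The essentially routine part is the first direction, amounting to unpacking the product preorder and transposing; I expect the main subtlety to be articulating cleanly, in the converse direction, that restricting the universal property to chaotic test objects is sufficient to witness exponentiability in $\Topa$, which is handled by the adjunction $U\dashv i$ arising from the pullback presentation of $\pTop$ in \ref{subsec:ptop-pullback-top-pre}.
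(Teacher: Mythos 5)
Your overall structure mirrors the paper's: both directions push exponentiability between $\pTop$ and $\Topa$ by testing against the chaotic preorder $\ind{X}$ on the target, and both establish the explicit formula for the internal hom by unpacking the product preorder. The sufficiency direction is sound.

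The gap is in the necessity direction, in your third isomorphism
\[\pTop\bigl(iY,\,iX^{(A,\le_A)}\bigr)\cong\Topa\bigl(Y,\,U(iX^{(A,\le_A)})\bigr),\]
which you justify by ``the adjunction $U\dashv i$.'' That adjunction reads $\pTop(Z, iW)\cong\Topa(UZ,W)$, with the chaotic object in the \emph{target}, whereas here $iY$ sits in the \emph{source} and the target $iX^{(A,\le_A)}$ is not known to be chaotic. In general $\pTop(iY,Z)\subsetneq\Topa(Y,UZ)$: a continuous map $Y\to UZ$ gives a morphism $iY\to Z$ only if its image is pairwise comparable in $Z$, which fails for non-chaotic $Z$. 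One can show $iX^{(A,\le_A)}$ is in fact chaotic (probe the exponential with a two-point discrete space carrying the preorder $0\le 1$), which would rescue your step, but you do not prove this, and the cited adjunction does not deliver it.

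The paper avoids the issue entirely by equipping $Y$ with the \emph{discrete} preorder $\disc{Y}$ rather than the chaotic one. Then the third isomorphism is literally the other adjunction $\disc{(\blank)}\dashv U$, namely $\pTop(\disc{Y},Z)\cong\Topa(Y,UZ)$, valid for arbitrary $Z$ since weak monotonicity out of a discrete preorder is vacuous. The first two isomorphisms are unaffected by this swap (the chaotic target still absorbs any source preorder, and $U$ still preserves the product), so replacing $iY$ with $\disc{Y}$ throughout repairs your argument with no other changes.
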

\begin{proof}
Let $(A,\le_{A})$ be an exponentiable preordered space and let us prove that $A$ is core-compact.
Let $X$ be a topological space and let $\ind{X}$ be the chaotically preordered space on $X$.
Then, the underlying set of the internal-hom $\ihom\left((A,\le_{A}),\ind{X}\right)$ is the set of continuous maps $\Topa(A,X)$ from $A$ to $X$.
Moreover, for every topological space $Y$, taking the discrete preorder on $Y$ we get the following chain of isomorphisms:
\begin{align*}
\Topa(Y\times A, X)
	& \cong \pTop\left(\disc{Y}\times (A,\le_{A}), \ind{X}\right)\\
	& \cong \pTop\left(\disc{Y},\ihom\left((A,\le_{A}),\ind{X}\right)\right)\\
	&\cong \Topa\left(Y,U\left(\ihom\left((A,\le_{A}),\ind{X}\right)\right)\right)
\end{align*}
where, in the last line, the functor $U$ denotes the forgetful from $\pTop$ to $\Topa$.
In particular, $U\left(\ihom\left((A,\le_{A}),\ind{X}\right)\right)$ defines an internal-hom in $\Topa$ from $A$ to $X$, which implies that $A$ is a core-compact topological space by Lemma \ref{lem:corecompact-exponentiable}.
Conversely, let us assume that $A$ is a core-compact topological space and let $\left(\pTop\left(A, X\right), \le^{A}_{X}\right)$ be as in Construction \ref{con:exponentiable-preordered-space}.
Then, we have a natural bijection:
\[\pTop\left((Z\times A, \le_{Z}\times \le_{A}),(X,\le_{X})\right)\cong \pTop\left((Z,\le_{Z}),\left(\pTop\left(A, X\right), \le^{A}_{X}\right)\right)\]
that follows from the definitions.
\end{proof}

\section{A convenient category of stratified spaces\label{ch:4sec:2}}
Our treatment of stratified spaces is based on the work of Nand-Lal (see \cite[Definition 6.1.2.1]{nan18}, which in turn is a version of \cite[Definition A.5.1]{lur16} without the requirement of a fixed base poset).
Our point of view on stratified spaces differs from both of these sources for the use of preordered spaces as a natural ambient category for stratified spaces, and is based on earlier work of Woolf on well filtered spaces (see \cite{woo08} and Construction \ref{con:strat-preord}).
More specifically, the main achievement of this section is that we are able to construct a convenient category $\Strat$ of stratified spaces, that we call numerically generated stratified spaces, using the results of Section \ref{ch:1sec:4} applied to the category of preordered spaces.
The main advantage of doing so is that, in addition of being cartesian closed (Theorem \ref{thm:strat-cartesian-closed}), $\Strat$ is also a locally presentable category (Theorem \ref{thm:strat-loc-pres}).
Moreover, we construct an adjunction with the category of simplicial sets, which endows $\Strat$ with the structure of a simplicially enriched tensored and cotensored category (Theorem \ref{thm:strat-enriched-tensored-cotensored}).
Furthermore, we give a characterisation of numerically generated stratified spaces in terms of exit paths (Lemma \ref{lem:numerically-generated-exit-path}) and use it to deduce that the strata of a numerically generated stratified space are path connected (Corollary \ref{cor:strata-path-connected}).
To conclude we set up an adjunction between $\Strat$ and $\Top$, and give an explicit description of the right adjoint (Proposition \ref{prop:delta-prespace-associated-to-delta-space}). 

\begin{definition}
\label{def:strat-space}
A \emph{stratified space} is a continuous surjective map $s\colon X\to P$ from a topological space $X$ to a poset $P$ endowed with the \Alexandroff{} topology.
If $s\colon X\to P$ is a stratified space we call $X$ the \emph{total space}, $P$ the \emph{base poset} and $s$ the \emph{stratification map} of the stratified space.
A morphism of stratified spaces $(f,\alpha)\colon (s\colon X\to P)\to (s'\colon X'\to P')$ is a commutative square: 
\begin{equation*}
\csquare{X}{X'}{P'}{P}{f}{s'}{\alpha}{s}
\end{equation*}
where $f\colon X\to X'$ is a continuous map and $\alpha\colon P\to P'$ is a weakly monotonic map.
Let $\Strata$ be the category of stratified spaces. Since the \Alexandroff{} topology functor is fully faithful, $\Strata$ is a full subcategory of the arrow category $\arr{\Topa}$.
\end{definition}

\begin{notation}
Let $s\colon X\to P$ be a stratified space. If $p$ is a point of $P$, we use the following standard notation:
\begin{align*}
X_{\le p}	&= s^{-1}(P_{\le p})	\\
X_{\ge p}  &= s^{-1}(P_{\ge p})	\\
X_{< p}	&= s^{-1}(P_{< p})	\\
X_{> p}	&= s^{-1}(P_{> p})	\\
X_{p}	&= s^{-1}(p).
\end{align*}
Moreover, the fibre $X_{p}$ of $s$ at $p$ is called the \emph{stratum} of $X$ at $p$.
Notice that $X_{p}=X_{\le p}\cap X_{\ge p}$ is locally closed in $X$ since $X_{\le p}$ is closed and $X_{\ge p}$ is open.
\end{notation}

\begin{construction}
\label{con:strat-preord}
Let $s\colon X\to P$ be a stratified space. 
Since the functor $\pTop\to \Topa$ is topological, we can endow $X$ with the initial structure with respect to $s$, where $P$ is equipped with the structure of a preordered space as in \ref{subsec:alex-preord-space}.
Concretely, we endow $X$ with the preorder $\le_{s}$ defined by:
\[x\le_{s} x' \quad \Leftrightarrow \quad sx\le sx' \text{ in } P.\]
Conversely, let $(X,\le)$ be a preordered space, then the quotient $X/\sim$ of $X$ by the chaotic equivalence relation inherits the structure of a partially ordered topological space and we write $P_{X}$ for the underlying poset of $X/\sim$ equipped with the \Alexandroff{} topology.
Let $UX$ be the underlying set of $X$, we denote by $\overline{X}$ the topological space on $UX$ equipped with the initial topology with respect to the maps:
\[1_{UX}\colon UX\to UX, \quad q\colon UX \to UP.\]
Then, by construction, the quotient map defines a stratification $q\colon \overline{X}\to P_{X}$ of $\overline{X}$ by $P_{X}$.
Notice that the topology on $\overline{X}$ is generated by the union of the topology on $X$ and the \Alexandroff{} topology on the underlying preorder of $(X,\le)$.
\end{construction}

\begin{lemma}
\label{lem:strat_closed_colimits}
The constructions described in \ref{con:strat-preord} define an adjunction:
\begin{equation}
\Adjoint{\Strata}{\pTop}{\iota}{\phi}
\end{equation}
Moreover, $\iota$ is a full embedding and the composition $\phi\iota$ equals the identity on $\Strata$.
\end{lemma}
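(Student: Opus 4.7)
The plan is to establish the natural bijection on hom-sets defining the adjunction and then verify the two additional assertions essentially by unwinding Construction \ref{con:strat-preord}.

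First, I would fix $s\colon X\to P$ in $\Strata$ and $(Y,\le_Y)$ in $\pTop$ and exhibit a bijection between $\pTop\bigl((X,\le_s), (Y,\le_Y)\bigr)$ and $\Strata\bigl((s\colon X\to P), (q\colon \overline{Y}\to P_Y)\bigr)$. The map from left to right sends a continuous monotone $f\colon X\to Y$ to the pair $(f,\bar f)$, where $\bar f\colon P\to P_Y$ is defined by $\bar f(sx)=q(fx)$. Since $s$ is surjective, such an $\bar f$ is forced, and its well-definedness hinges on the observation that $sx=sx'$ implies $x\le_s x'$ and $x'\le_s x$, hence $fx\le_Y fx'$ and $fx'\le_Y fx$, so $q(fx)=q(fx')$ in the poset $P_Y$. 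Monotonicity of $\bar f$ follows from the same argument applied to $sx\le sx'$.

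Next, I would check that $f$ is continuous as a map into $\overline{Y}$. Because $\overline{Y}$ carries the initial topology along $1\colon UY\to Y$ and $q\colon UY\to P_Y$, this reduces to continuity of $f\colon X\to Y$ (given) and of $q\circ f=\bar f\circ s\colon X\to P_Y$, which is automatic since $\bar f$ is a monotone map between \Alexandroff{} spaces and $s$ is continuous. The inverse bijection simply forgets $\alpha$ and post-composes with the continuous identity $\overline{Y}\to Y$; monotonicity of the resulting $f$ with respect to $\le_s$ and $\le_Y$ is recovered by reversing the argument above, using that $[fx]\le [fx']$ in $P_Y$ is equivalent to $fx\le_Y fx'$ by \ref{subsec:poset-preord-left-adj}. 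Naturality in both variables is a straightforward diagram chase.

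The remaining assertions follow by direct computation. Faithfulness of $\iota$ is immediate because in a morphism $(f,\alpha)$ of $\Strata$ the surjectivity of $s$ forces $\alpha(sx)=s'(fx)$, so $\alpha$ is determined by $f$; fullness uses the same construction of $\bar f$ as above applied to a monotone continuous map $(X,\le_s)\to (X',\le_{s'})$. For $\phi\iota=\id_{\Strata}$ I would observe that the chaotic equivalence on $(X,\le_s)$ identifies exactly the fibres of $s$, so $P_{(X,\le_s)}$ is isomorphic as a poset to $P$; and that the initial topology on $UX$ generated by the topology of $X$ and the \Alexandroff{} topology of $P$ coincides with the topology of $X$, since $s$ is continuous so every upward-closed preimage is already open. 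The only real subtlety is keeping the two different topologies on the underlying set of $Y$ cleanly separated when verifying continuity into $\overline{Y}$; everything else is a routine unwinding of definitions.
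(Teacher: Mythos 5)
Your proposal is correct and takes essentially the same approach as the paper's own proof: establish the bijection on hom-sets, using surjectivity of $s$ to define $\bar f$ and the chaotic equivalence on $(Y,\le_Y)$ to check well-definedness, and then verify the two supplementary claims by unwinding Construction~\ref{con:strat-preord}. The only cosmetic difference is in how you verify that $f\colon X\to\overline{Y}$ is continuous: you invoke the initial-topology description of $\overline{Y}$ directly and check continuity of the two projections $f$ and $q\circ f=\bar f\circ s$, whereas the paper argues that preimages of upward-closed subsets of $Y$ are already open in $X$; both routes rely on the same fact (continuity of $s$ plus monotonicity) and are interchangeable. Your phrasing that $P_{(X,\le_s)}$ is ``isomorphic as a poset to $P$'' is, if anything, a more careful reading of the claim $\phi\iota=\id$ than the paper's bare ``coincides with~$P$.''
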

\begin{proof}
We need to show that for every stratified space $s\colon X\to P$ and every preordered topological space $(Y,\le)$, there exists a natural isomorphism:
\[\pTop(\iota(X,s,P), (Y,\le))\cong \Strata((X,s,P),\phi(Y, \le)).\]
Following the notation of \ref{con:strat-preord} let $f\colon (X,\le_{s})\to (Y,\le)$ be a morphism of preordered spaces.
As $f$ is weakly monotonic, for every upward closed subset $A$ in $Y$, the preimage $f^{{-1}}(A)$ is upward closed in $X$. In particular, $f^{{-1}}(A)$ is open in $X$, which implies that $f$ defines a map $f\colon X\to \overline{Y}$ fitting in the diagram:
\begin{diagram}
X
	\ar[r, "f"]
	\ar[d, "s"']&
\overline{Y}	\ar[d, "q"]\\
P\ar[r, dashed]&
P_{Y}.
\end{diagram}
We define a map $\alpha\colon P\to P_{Y}$ as follows: let $p\in P$, as $s$ is surjective there exists an element $x\in X$ such that $sx=p$ and we define $\alpha(p)=qfx$.
If $x'$ is another representative for $p$, we have that $x\le_{s} x'\le_{s} x$ in $X$. In particular, since $f$ is a map of preordered spaces and $P_{Y}$ is the quotient of $Y$ by the chaotic relation, we have that $qfx=qfx'$ in $P_{Y}$, hence $\alpha$ is well defined. Uniqueness of $\alpha$ follows from surjectivity of $s$.
To conclude the proof we need to show that  $(X,s,P)=\phi\iota(X,s,P)$ for every stratified space.
To do so, note that the topology of $\overline{X}$ on the underlying set of $X$ coincides with the topology of $X$, since every upward closed subset with respect to the preorder $\le_{s}$ is open in $X$.
Moreover, the quotient poset $P_{X}$ coincides with $P$ since the map $s\colon X\to P$ is surjective.
\end{proof}

\subsection{}
\label{subsec:limits-colimits-strat}
Lemma \ref{lem:strat_closed_colimits} in particular implies that the category of stratified spaces is isomorphic to a full subcategory of the category of preordered spaces and that the inclusion functor $\iota\colon \Strata\to \pTop$ creates colimits.
However, in practical applications it is convenient to have an explicit description of colimits and finite products, computed in the category $\Strata$ of stratified spaces.
If $F\colon \cat{C}\to \Strata$ is a small diagram of stratified spaces, with $Fc=X_{c}\to P_{c}$, the colimit of $F$ can be computed as the unique map 
\[s\colon \colim_{c\in \cat{C}}X_{c}\to \colim_{c\in \cat{C}}P_{c}\]
from the colimit of the total spaces to the colimit of the base spaces.
Limits in $\Strata$ are in general slightly more complicated, and we refer to \cite[Proposition 6.1.4.1]{nan18} for a complete discussion. Nonetheless, given stratified spaces $s\colon X\to P$ and $s'\colon X'\to P'$, the product of $s$ and $s'$ in $\Strata$ is given by:
\[s\times s'\colon X\times X'\to P\times P'.\]
Put more succinctly, finite products and colimits in $\Strata$ are created by the embedding $\Strata\to \arr{\Topa}$ of $\Strata$ in the arrow category of $\Topa$.

\subsection{} 
The \emph{stratified interval} is the topological interval $\II$ equipped with the map $\Omega_{0}\colon \II\to [1]$ to the \Sierpinski{} space that classifies the complement of $0$ in $\II$.
In other words, $\Omega_{0}(0)=0$ and $\Omega_{0}(t)=1$ for $t$ different from $0$.
A morphism 
\begin{equation*}
\csquare{\II}{X}{P}{[1]}{f}{s}{\alpha}{\Omega_{0}}
\end{equation*}
from the standard stratified interval to an arbitrary stratified space is said to be an \emph{elementary exit path} in $X$.
This is given by the choice of a point $x_{0}$ in a stratum $X_{p}$ of $X$ and a continuous path $f\colon \II\to X$ from $x_{0}$ to a point $x_{1}$ in a stratum $X_{q}$ such that $q\ge p$ and the image of the complement of $0$ is contained in $X_{q}$.

\subsection{}
\label{subsec:left-cone-strat}
Recall that both $\II$ and $[1]$ have the structure of a segment object (see Definition \ref{subsec:segment-object} and Examples \ref{ex:cone-top} and \ref{ex:cone-cat}).
The stratified interval inherits the structure of a segment object, since the following diagrams are commutative
\begin{equation*}
\begin{tikzcd}[ampersand replacement = \&]
{\term}\ar[r, "{i_{0}}", shift left]
	\ar[r, "{i_{1}}"', shift right]
	\ar[d]\&
{\II}	\ar[r]
	\ar[d]\&
{\term}\ar[d]\\
{[0]}	\ar[r, "{\partial_{1}}", shift left]
	\ar[r, "{\partial_{0}}"', shift right]\&
{[1]}	\ar[r]\&
{[0]}
\end{tikzcd}\quad
\begin{tikzcd}[ampersand replacement =\&]
{\II\times\II}\ar[r, "{\min{}}"]
		\ar[d, "{\Omega_{0}\times\Omega_{0}}" description] \&
{\II}		\ar[d, "{\Omega_{0}}"]\\
{[1]\times[1]}\ar[r, "{\min{}}"'] \&
{[1]}
\end{tikzcd}
\end{equation*}
In particular, for every stratified space $(X,s,P)$ we can define its \emph{left cone} (see \ref{subsec:left-cone}).
By \ref{subsec:limits-colimits-strat} the left cone $\lcone{(X,s,P)}$ has total space the $\II$-based left cone $\lcone{X}$ on $X$ and base poset the $[1]$-based left cone $\lcone{P}$ on $P$.
Notice that, the base segment objects of $\lcone{X}$ and $\lcone{P}$ are different.

\subsection{}
\label{subsec:stratified-realisation}
For every finite ordinal $[n]$ we define the \emph{stratified standard} $n$-\emph{simplex} inductively as follows.
We set $\sabs{\Delta^{0}}$ to be the terminal object $\abs{\Delta^{0}}\to[0]$ in $\Strata$ and we define
\[\sabs{\Delta^{n}}=\lcone{\sabs{\Delta^{n-1}}}\]
Notice that the total space of $\sabs{\Delta^{n}}$ is the geometric standard $n$-simplex $\abs{\Delta^{n}}$ and the base poset is $[n]$.
Explicitly, the stratification map can be given in barycentric coordinates as:
\begin{align*}
s\colon \abs{\Delta^n}	&\to [n]\\
(t_0,\ldots,t_n) & \mapsto \max_{i:t_i\ne 0}i.
\end{align*}
In particular, for $n=1$ we recover the stratified interval, and we will use the compact notation $\sabs{\Delta^{1}}$ henceforth.

\begin{comment}
\subsection{}
Let $X$ be a stratified space. The \emph{open cone} on $X$ is the stratified space given by the pushout
\begin{equation}
\push{\term\times X}{[0,1)\times X}{\ocone{X}}{\term}{}{}{}{}
\end{equation}
\todoin{fix this}
\end{comment}

\begin{remark}
Classical definitions of stratified spaces, including Thom-Mather stratified spaces, topological stratified spaces (see \cite{tho69}, \cite{mat73}, \cite{mat12}), Whitney stratified spaces (see \cite{whi92}) and homotopically stratified sets (see \cite{qui88}) are all examples of stratified spaces in the sense of Definition \ref{def:strat-space}.
More precisely, there is a chain of implications starting from Whitney stratified spaces and ending with homotopically stratified sets (see \cite[Remark 5.1.0.13]{nan18}) and every homotopically stratified set is a stratified space in the sense of Definition \ref{def:strat-space}.
Among all these notions, we are mostly interested in homotopically stratified sets as they possess pleasant homotopical properties (in particular they are fibrant stratified spaces in the sense of Definition \ref{def:model-structure-strat}, see Theorem \ref{thm:hoss-fibrant}) while they don't require any smooth structure (in contrast with Whitney and Thom-Mather stratified spaces).
A taxonomy of the different notions of stratified spaces can be found in \cite{HW01}.
\end{remark}

\subsection{} 
Let $\cat{I}$ be the full subcategory of $\pTop$ spanned by the stratified standard $n$-simplices. 
We denote by $\Strat$ the final closure of $\cat{I}$ in $\pTop$ and call it the category of numerically generated stratified spaces.
Notice that we have coreflective embeddings.
\[\Strat\to\Strata\to\pTop\]
In particular, colimits in $\Strat$ can be computed as in $\Strata$ or $\pTop$, and we will use both descriptions interchangeably according to which of them is most natural.
We now investigate the categorical properties of the category of numerically generated stratified spaces.

\begin{theorem}
\label{thm:strat-loc-pres}
The category $\Strat$ is a locally presentable, coreflective subcategory of $\pTop$.
\end{theorem}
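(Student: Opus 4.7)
The plan is to observe that this is almost a direct application of the general machinery of topological constructs developed in Section \ref{ch:1sec:4}, and to verify that the hypotheses of Theorem \ref{thm:C_I-locally-presentable} and Proposition \ref{prop:C_I-coreflective} are satisfied in our setting. The key point already recorded in \ref{subsec:ptop-pullback-top-pre} is that $\pTop$ is a well fibered topological construct over $\Set$, since it fits into the strict pullback square of the topological forgetful functors $\Topa\to\Set$ and $\Pre\to\Set$, both of which are well fibered; in particular $\pTop$ admits initial and final lifts for arbitrary classes of structured maps and its terminal object is discrete.

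The subcategory $\cat{I}\subset\pTop$ is spanned by the stratified standard simplices $\sabs{\Delta^{n}}$, one for each $n\in\NN$, so $\cat{I}$ is (essentially) small. First, I would apply Theorem \ref{thm:C_I-locally-presentable} directly to the pair $(\pTop,\cat{I})$ to obtain that the final closure $\Strat=\pTop_{\cat{I}}$ is locally presentable. Next, to establish coreflectivity in $\pTop$, I would invoke Proposition \ref{prop:C_I-coreflective}, which shows that for a well fibered topological construct $\cat{C}$ and any full subcategory $k\colon\cat{I}\to\cat{C}$, the final closure $\cat{C}_{\cat{I}}$ is coreflective in $\cat{C}$, with coreflector given at an object $X$ by the canonical map $\Lan_{k}k(X)\to X$, which is a bijection on underlying sets of $\pTop$-fibres above the underlying set of $X$.

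The main obstacle is really a bookkeeping one: the background theorems in Section \ref{ch:1sec:4} are stated for topological constructs over $\Set$, while $\pTop$ was introduced as a category that is simultaneously topological over $\Topa$, over $\Pre$, and over $\Set$. I would therefore begin the proof by explicitly fixing the topological functor $\pTop\to\Set$ arising from the pullback description \eqref{eq:ptop-pullback}, together with the observation (also made in \ref{subsec:ptop-pullback-top-pre}) that the fibres of $\pTop\to\Set$ are essentially small and that the terminal object of $\pTop$ is a singleton with its unique preorder, hence discrete in the sense of Section \ref{ch:1sec:4}. Once this is in place, both conclusions follow formally, and no further analysis of the stratified simplices themselves is needed beyond noting that they form a set.
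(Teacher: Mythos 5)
Your proof is correct and follows exactly the same route as the paper: apply Theorem \ref{thm:C_I-locally-presentable} and Proposition \ref{prop:C_I-coreflective} to the pair $(\pTop,\cat{I})$, using \ref{subsec:ptop-pullback-top-pre} to guarantee that $\pTop$ is a well-fibered topological construct over $\Set$. Your additional bookkeeping remark about fixing the topological functor $\pTop\to\Set$ (rather than one over $\Topa$ or $\Pre$) is a reasonable clarification, but it is not a departure from the paper's argument, which cites the same three ingredients.
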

\begin{proof}
The claim follows from Proposition \ref{prop:C_I-coreflective}, \ref{subsec:ptop-pullback-top-pre} and Theorem \ref{thm:C_I-locally-presentable}.
\end{proof}

\begin{lemma}
\label{lem:product-stratified-simplices}
For finite ordinals $[n]$ and $[m]$, the natural map:
\[\sabs{\Delta^n\times \Delta^m}\to \sabs{\Delta^n}\times\sabs{\Delta^m}\]
is an isomorphism.
\end{lemma}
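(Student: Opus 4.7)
My plan is to verify the natural map is an isomorphism of stratified spaces by checking its two components, using that in $\Strata$ finite products and colimits are both computed componentwise on total spaces and base posets (see \ref{subsec:limits-colimits-strat}). On total spaces the map becomes the classical comparison $\abs{\Delta^n \times \Delta^m} \to \abs{\Delta^n} \times \abs{\Delta^m}$, which is a homeomorphism in $\Top$ as recalled in the example at the end of Section \ref{ch:1sec:4}.

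For the base posets I would exploit that $\sabs{\blank}\colon \sSet \to \Strata$, being a left adjoint, preserves colimits. Combined with the coend formula $\sabs{A} = \colim_{\Delta^k \to A} \sabs{\Delta^k}$ of Theorem \ref{thm:kan-ext} and the fact that the base poset of $\sabs{\Delta^k}$ is $[k]$, this identifies the base poset of $\sabs{\Delta^n \times \Delta^m}$ with the colimit $\colim_{\Delta^k \to \Delta^n \times \Delta^m}[k]$. By Example \ref{ex:nerve-fundamental-category} the latter computes the fundamental category $\tau_1(\Delta^n \times \Delta^m)$, which by Corollary \ref{cor:tau-preserves-products} together with Example \ref{ex:fundamental-category-standard-simplex} is canonically isomorphic to $[n] \times [m]$. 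The natural map on base posets is precisely this isomorphism.

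To close the argument I would verify that the two stratification maps agree under the above identifications. A point interior to the non-degenerate $k$-simplex corresponding to $\sigma = (\alpha, \beta)\colon \Delta^k \to \Delta^n \times \Delta^m$, with strictly positive barycentric coordinates $(r_0, \ldots, r_k)$, is sent on the product side to the pair whose coordinates are $t_i = \sum_{\alpha(l)=i} r_l$ and $u_j = \sum_{\beta(l)=j} r_l$; monotonicity of $\alpha$ and $\beta$ then yields $\max\{i : t_i \neq 0\} = \alpha(k)$ and $\max\{j : u_j \neq 0\} = \beta(k)$, matching the image $(\alpha(k), \beta(k))$ of $k$ under the map $[k] \to [n] \times [m]$ induced by $\sigma$. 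The main obstacle is ensuring that the colimit computing the base poset of $\sabs{\Delta^n \times \Delta^m}$ inside $\Strata$ agrees with the categorical colimit defining $\tau_1$; this reduces to the observation that $\tau_1(\Delta^n \times \Delta^m)$ is already a poset, so no further reflection is needed.
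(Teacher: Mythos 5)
Your proof takes essentially the same approach as the paper: both arguments check the comparison map componentwise using the description of finite products and colimits in $\Strata$ from \ref{subsec:limits-colimits-strat}, invoke the Gabriel--Zisman homeomorphism $\abs{\Delta^n\times\Delta^m}\cong\abs{\Delta^n}\times\abs{\Delta^m}$ for the total spaces, identify the base poset of $\sabs{\Delta^n\times\Delta^m}$ with $\colim_{\Delta^k\to\Delta^n\times\Delta^m}[k]$, compute this colimit in $\Cat$ as $\tau_1(\Delta^n\times\Delta^m)\cong[n]\times[m]$ via Corollary \ref{cor:tau-preserves-products}, and then observe that since $[n]\times[m]$ is a poset and $\Pos\hookrightarrow\Cat$ is fully faithful the colimit restricts to one in $\Pos$. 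Your explicit barycentric verification that the stratification maps agree under these identifications is a worthwhile extra check that the paper leaves to naturality of the comparison map, but it does not alter the structure of the argument.
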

\begin{proof}
By \cite[Chapter III, 3.4]{GZ67} we have a natural isomorphism: 
\[\abs{\Delta^{n}\times\Delta^{m}}\cong\abs{\Delta^{n}}\times\abs{\Delta^{m}}\]
for every finite ordinals $[n]$ and $[m]$.
Corollary \ref{cor:tau-preserves-products} implies that we have the following isomorphisms
\begin{align*}
[n]\times [m]
	&= \tau_{1}\left(\Delta^{n}\times\Delta^{m}\right)\\
	&= \colim_{\Delta^{k}\to\Delta^{n}\times\Delta^{m}}\tau_{1}\Delta^{k}\\
	&= \colim_{\Delta^{k}\to\Delta^{n}\times\Delta^{m}}[k]
\end{align*}
in the category $\Cat$ of small categories. Since $[n]\times[m]$ is a poset and $\Pos\to \Cat$ is a fully faithful functor, the isomorphisms hold in the category of posets as well.
The claim then follows from \ref{subsec:limits-colimits-strat}.
\end{proof}

\begin{lemma}
\label{lem:stratified-standard-simplex-exponentiable}
The stratified standard $n$-simplex $\sabs{\Delta^{n}}$ is an exponentiable preordered space.
\end{lemma}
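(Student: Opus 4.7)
The plan is to invoke Proposition \ref{prop:ptop-corecompact-exponentiable}, which reduces the claim to showing that the underlying topological space of $\sabs{\Delta^{n}}$ is core-compact. By construction (see \ref{subsec:stratified-realisation}), the underlying topological space of $\sabs{\Delta^{n}}$ is the geometric standard $n$-simplex $\abs{\Delta^{n}}$.

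First I would observe that $\abs{\Delta^{n}}$ is a compact Hausdorff topological space, being a closed bounded subset of $\RR^{n+1}$ with the subspace topology. In particular, $\abs{\Delta^{n}}$ is locally compact Hausdorff, so for every point $x$ and every open neighbourhood $V$ of $x$ there is an open neighbourhood $U$ of $x$ whose closure $\overline{U}$ is compact and contained in $V$. This implies $U \Subset V$ in the sense of \ref{subsec:corecompact-exponentiable}: any open cover of $V$ restricts to an open cover of the compact set $\overline{U}$, hence admits a finite subcover of $U$. Therefore $\abs{\Delta^{n}}$ is core-compact.

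Finally, I would apply Proposition \ref{prop:ptop-corecompact-exponentiable} to conclude that $\sabs{\Delta^{n}}$ is an exponentiable preordered space, since the condition of being exponentiable in $\pTop$ depends only on the underlying topological space being core-compact. There are no real obstacles here: the only subtlety worth flagging is that exponentiability in $\pTop$ is genuinely governed by the topology alone (and not by the preorder), which is exactly the content of Proposition \ref{prop:ptop-corecompact-exponentiable}, so no separate argument about the preorder on $\sabs{\Delta^{n}}$ is required.
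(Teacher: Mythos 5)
Your proof is correct and takes essentially the same approach as the paper: the paper's proof of this lemma just says "This follows immediately from Proposition \ref{prop:ptop-corecompact-exponentiable}," leaving implicit the (standard) observation that $\abs{\Delta^n}$, being compact Hausdorff, is core-compact. You have simply spelled out that implicit step.
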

\begin{proof}
This follows immediately from Proposition \ref{prop:ptop-corecompact-exponentiable}.
\end{proof}

\begin{theorem}
\label{thm:strat-cartesian-closed}
The category $\Strat$ of numerically generated stratified spaces is a Cartesian closed category.
\end{theorem}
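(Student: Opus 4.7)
The plan is to apply Theorem \ref{thm:final-closure-cartesian-closed} with ambient category $\cat{C}=\pTop$ and generating class $\cat{I}$ the full subcategory spanned by the stratified standard simplices $\sabs{\Delta^n}$. So I would verify the three hypotheses: that $\pTop$ is a well fibered topological construct, that $\cat{I}$ is productive, and that $\cat{I}$ contains a non-empty object.

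The first condition follows from \ref{subsec:ptop-pullback-top-pre}, which records that $\pTop$ is a fibre-small topological construct; the fibre of the forgetful functor $\pTop\to\Set$ at the terminal singleton is the terminal category since a one-point set carries a unique preorder, so $\pTop$ has discrete terminal object. The third condition is clear: $\sabs{\Delta^0}$ is non-empty and lies in $\cat{I}$. For productivity, exponentiability of each $\sabs{\Delta^n}$ in $\pTop$ is exactly the content of Lemma \ref{lem:stratified-standard-simplex-exponentiable}; it remains only to show that the product $\sabs{\Delta^n}\times\sabs{\Delta^m}$, computed in $\pTop$, is $\cat{I}$-generated.

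For this last step, Lemma \ref{lem:product-stratified-simplices} identifies $\sabs{\Delta^n}\times\sabs{\Delta^m}$ with $\sabs{\Delta^n\times\Delta^m}$. By the density theorem (Corollary \ref{cor:density-theorem}), $\Delta^n\times\Delta^m$ is the colimit in $\sSet$ of its diagram of standard simplices, and applying the stratified realisation (defined on standard simplices in \ref{subsec:stratified-realisation}, and extended to the product by the skeletal decomposition of Remark \ref{rmk:realisation-cw} or equivalently by left Kan extension along the Yoneda embedding) carries this to a colimit cocone in $\Strat$ of objects of $\cat{I}$. Since $\Strat$ is coreflective in $\pTop$ by Theorem \ref{thm:strat-loc-pres}, this colimit is also a colimit in $\pTop$, and Proposition \ref{prop:C_I-coreflective} then identifies $\sabs{\Delta^n}\times\sabs{\Delta^m}$ as $\cat{I}$-generated.

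The main point requiring care is the last one: one must genuinely check that the colimit expressing $\sabs{\Delta^n\times\Delta^m}$ as a diagram of stratified standard simplices is computed in $\pTop$ and not just in $\Strat$, which is why the coreflectivity of $\Strat \hookrightarrow \pTop$ is essential. Once this is in hand, Theorem \ref{thm:final-closure-cartesian-closed} yields the desired Cartesian closed structure on $\Strat$.
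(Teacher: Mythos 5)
Your proof is correct and takes the same route as the paper, which simply cites Lemma \ref{lem:stratified-standard-simplex-exponentiable} and Theorem \ref{thm:final-closure-cartesian-closed}; you have merely spelled out the productivity check (via Lemma \ref{lem:product-stratified-simplices}, the density theorem, and coreflectivity of $\Strat$ in $\pTop$) that the paper leaves implicit. This same pattern of argument is also sketched in general in \ref{subsec:cisinski-topological}, which the paper is implicitly relying on.
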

\begin{proof}
The claim follows immediately from Lemma \ref{lem:stratified-standard-simplex-exponentiable} and Theorem \ref{thm:final-closure-cartesian-closed}.
\end{proof}

\subsection{}
The assignment that sends every finite ordinal $[n]$ to the stratified $n$-simplex $\sabs{\Delta^{n}}$ defines a functor $\DDelta\to \Strata$, which induces an adjunction:
\begin{equation}
\label{eq:sset-strata-adjunction}
\Adjoint{\sSet}{\Strata}{\sabs{\blank}}{\sSing}
\end{equation}
by Theorem \ref{thm:kan-presheaves}. 
The left adjoint is called the \emph{stratified realisation functor} and the right adjoint is called the \emph{stratified singular complex} functor.
Moreover \eqref{eq:sset-strata-adjunction} restricts to an adjunction
\begin{equation}
\label{eq:sset-strat-adjunction}
\Adjoint{\sSet}{\Strat}{\sabs{\blank}}{\sSing}
\end{equation}
between the category of simplicial sets and the category of numerically generated stratified spaces.

\begin{theorem}
\label{thm:strat-enriched-tensored-cotensored}
The category $\Strat$ is naturally an $\sSet$-enriched tensored and cotensored category via \ref{eq:sset-strat-adjunction}.
Moreover, the adjunction \ref{eq:sset-strat-adjunction} is an $\sSet$-enriched adjunction.
\end{theorem}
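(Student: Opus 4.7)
The plan is to obtain the statement as a direct application of Corollary \ref{cor:adjunction-v-adjunction}. Both $\sSet$ and $\Strat$ are closed symmetric monoidal with respect to the Cartesian product (the latter by Theorem \ref{thm:strat-cartesian-closed}), so it suffices to verify that the left adjoint $\sabs{\blank}\colon \sSet \to \Strat$ is strong monoidal, i.e., that it preserves the terminal object and all binary products.

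Preservation of the terminal object is immediate from the construction: $\sabs{\Delta^0}$ is by definition the terminal object of $\Strat$, and as a left adjoint $\sabs{\blank}$ sends the terminal presheaf $\Delta^0$ to it. The heart of the argument is then preservation of binary products, and the plan is to bootstrap from the case of representables (Lemma \ref{lem:product-stratified-simplices}) to arbitrary simplicial sets using the density of representables (Corollary \ref{cor:density-theorem}) together with Cartesian closedness on both sides.

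Concretely, given simplicial sets $K$ and $L$, I would write them as colimits of representables and use that the Cartesian product in $\sSet$ commutes with colimits in each variable (since $\sSet$ is Cartesian closed) to obtain
\[K\times L \cong \colim_{\Delta^n \to K,\,\Delta^m \to L}\Delta^n\times\Delta^m.\]
Applying $\sabs{\blank}$, which preserves colimits as a left adjoint, and invoking Lemma \ref{lem:product-stratified-simplices}, yields
\[\sabs{K\times L} \cong \colim_{\Delta^n \to K,\,\Delta^m \to L}\sabs{\Delta^n}\times\sabs{\Delta^m}.\]
Finally, because $\Strat$ is also Cartesian closed (Theorem \ref{thm:strat-cartesian-closed}), its product likewise commutes with colimits in each variable, so the right-hand side rearranges to
\[\left(\colim_{\Delta^n \to K}\sabs{\Delta^n}\right)\times\left(\colim_{\Delta^m \to L}\sabs{\Delta^m}\right)\cong \sabs{K}\times\sabs{L},\]
the last isomorphism again by density and the fact that $\sabs{\blank}$ preserves colimits. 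The resulting natural isomorphism $\sabs{K\times L}\cong \sabs{K}\times\sabs{L}$ is canonical and compatible with the associativity and unit constraints, since it is induced from the identity on products of representables.

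With strong monoidality of $\sabs{\blank}$ established, Corollary \ref{cor:adjunction-v-adjunction} applies verbatim: it endows $\Strat$ with the structure of an $\sSet$-enriched, tensored and cotensored category, and upgrades the given adjunction to an $\sSet$-enriched adjunction. The main (mild) obstacle is the bookkeeping of the two colimit manipulations --- one on each side of the adjunction --- but these are both justified in one line by Cartesian closedness, so no serious technical issue arises.
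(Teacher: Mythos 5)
Your proof is correct and follows the same route as the paper: the paper simply cites Proposition \ref{prop:cisinski-topological-enriched} (the general statement that $u_!$ is strong monoidal when it preserves products of representables and the target is Cartesian closed), whereas you unpack the content of that proposition concretely for $\sabs{\blank}$ before invoking Corollary \ref{cor:adjunction-v-adjunction}. The key ingredients are identical in both cases: Lemma \ref{lem:product-stratified-simplices} for the representable case, density of representables together with Cartesian closedness of $\sSet$ and $\Strat$ (Theorem \ref{thm:strat-cartesian-closed}) to bootstrap, and Corollary \ref{cor:adjunction-v-adjunction} to obtain the enrichment.
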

\begin{proof}
The claim follows from Proposition \ref{prop:cisinski-topological-enriched}.
\end{proof}

\begin{example}
The stratified realisation $\sabs{S^1}$ of the simplicial circle (see Example \ref{ex:simplicial-circle}) is the standard circle $\SS^1$ equipped with the chaotic stratification.
In particular, every elementary exit path $a\colon \sabs{\Delta^1}\to X$ in a stratified space $X$ such that $a(0)=a(1)$ is contained in a unique stratum.
\end{example}

\begin{example}
Recall that the $n$-spine is the sub-simplicial set $\Sp^{n}$ of $\Delta^{n}$ given by the union of the consecutive edges (see Example \ref{ex:n-spine}).
The \emph{stratified} $n$-\emph{spine} is the stratified space $\sabs{\Sp^{n}}$ given by the realisation of $\Sp^{n}$. 
As the functor $\sabs{\blank}$ preserves colimits, the total space of the stratified $n$-spine is the interval $[0,n]$, while the base poset is given by the finite ordinal $[n]$.
If $X$ is a stratified space, a morphism $f\colon \sabs{\Sp^{n}}\to X$ from the stratified $n$-spine to $X$ will be called an \emph{exit path} from $f(0)$ to $f(n)$.
\end{example}

\begin{lemma}
\label{lem:numerically-generated-exit-path}
Let $(X,\le)$ be a preordered space and assume that $X$ is a numerically generated topological space.
Then, $X$ is a numerically generated stratified space if and only if for any two points $x$ and $x'$ in $X$, we have that $x\le x'$ precisely when there exists an exit path from $x$ to $x'$ in $X$.
\end{lemma}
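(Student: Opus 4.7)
The plan is to reduce the statement to comparing two preorders on the underlying set of $X$: the given one, and the preorder obtained from stratified-simplex maps into $X$.  By Proposition \ref{prop:C_I-coreflective} and Lemma \ref{lem:coreflection-I-generated}, $X$ is a numerically generated stratified space precisely when the canonical morphism $\Lan_{k}k(X)\to X$ in $\pTop$ is an isomorphism.  By \ref{subsec:ptop-pullback-top-pre} the preordered-space structure on $\Lan_{k}k(X)$ is obtained by taking the final topology and the final preorder separately on the underlying set of $X$, and since $X$ is already numerically generated as a topological space the question reduces entirely to showing that the preorder on $X$ coincides with the final preorder on $UX$ with respect to the family of morphisms $\sabs{\Delta^n}\to X$ in $\pTop$.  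By the explicit formula in \ref{subsec:final-initial-preorder} together with Lemma \ref{lem:reflexive-transitive}, this final preorder is the reflexive-transitive closure of the relation generated by all pairs $(f(p^0),f(p^1))$ with $p^0\le p^1$ in some $\sabs{\Delta^n}$ and $f\colon\sabs{\Delta^n}\to X$ a morphism.

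The next step is to identify this final preorder with the \emph{exit-path} relation on $X$.  For one direction, since $\sabs{\Sp^n}$ is a colimit of $n$ copies of $\sabs{\Delta^1}$ glued along consecutive vertices (Example \ref{ex:n-spine}), any exit path $\sabs{\Sp^n}\to X$ restricts to $n$ elementary exit paths $f_i\colon\sabs{\Delta^1}\to X$, and each $f_i$ witnesses a single step in a chain because $0\le 1$ in $\sabs{\Delta^1}$.  For the converse, given $p^0\le p^1$ in $\sabs{\Delta^n}$ I would consider the affine segment $\gamma\colon\sabs{\Delta^1}\to\sabs{\Delta^n}$ defined by $\gamma(u)=(1-u)p^0+up^1$, verify that it is a morphism of preordered spaces using the barycentric description of the stratification from \ref{subsec:stratified-realisation}, and then compose with $f\colon \sabs{\Delta^n}\to X$ to obtain an elementary exit path from $f(p^0)$ to $f(p^1)$; concatenating one such elementary path for each step of a chain then produces an exit path $\sabs{\Sp^n}\to X$.

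The hard part is the verification that the affine segment $\gamma$ is monotonic: concretely, one must check that $s(\gamma(u))=s(p^1)$ for every $u>0$ whenever $s(p^0)\le s(p^1)$.  This is the only step that uses anything beyond formal categorical manipulations, and it will rely on the fact that $p^0$ has zero barycentric weight in every coordinate strictly above $s(p^0)$, so that $\gamma(u)$ has vanishing coordinates in indices strictly above $s(p^1)$ while its coordinate at index $s(p^1)$ becomes strictly positive as soon as $u>0$.  Combining the displays, the preorder on $\Lan_{k}k(X)$ coincides with the exit-path relation on $X$, which agrees with the given preorder precisely when the characterization in the statement holds; this proves both directions of the lemma simultaneously.
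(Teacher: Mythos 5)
You take essentially the same route as the paper: unwinding the explicit description of final preorders from \ref{subsec:final-initial-preorder} together with Lemma \ref{lem:reflexive-transitive} to write the preorder on $\Lan_{k}k(X)$ as a reflexive-transitive closure, matching each single step $p^{0}\le p^{1}$ in some $\sabs{\Delta^{n}}$ with the affine elementary exit path $u\mapsto(1-u)p^{0}+up^{1}$ via the barycentric formula, and gluing a chain of such segments into a morphism $\sabs{\Sp^{m}}\to X$. Your reorganisation — compute the $\Lan_{k}k$-preorder once and for all, identify it with the exit-path relation, and read off both directions of the biconditional — is a bit more compact than the paper's, which does the forward direction by chain-unwinding and the converse by checking the universal property directly on a competing cocone; but the ingredients are identical, and your explicit barycentric verification that the segment is weakly monotonic is a point the paper simply asserts.

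One step deserves more justification than either you or the paper give it. You claim that, because $X$ is numerically generated as a topological space, the question ``reduces entirely'' to comparing preorders. But numerically generated means the topology on $X$ is final with respect to \emph{all} continuous maps $\abs{\Delta^{n}}\to X$, while $\Lan_{k}k(X)$ carries the final topology with respect to the strictly smaller family of preordered-space morphisms $\sabs{\Delta^{n}}\to(X,\le)$, which is a priori finer. The paper's converse direction has the same elision in disguise: it tests the universal property against a competing map $f$ that is \emph{assumed} continuous, whereas the final-lift property over $\Set$ quantifies over set maps. To close this honestly one would either show that, under the exit-path hypothesis, the stratified simplices into $X$ are as rich topologically as all continuous simplices into $X$, or add a hypothesis guaranteeing this — for instance that $(X,\le)$ lies in the image of $\Strata\hookrightarrow\pTop$, i.e.\ that upward-closed sets are open. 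Without some such hypothesis the reduction step is not automatic: a non-discrete numerically generated space with the discrete preorder satisfies the exit-path characterization vacuously yet is not $\cat{I}$-generated, since the only preordered-space maps $\sabs{\Delta^{n}}\to(X,\le)$ are constants.
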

\begin{proof}
Assume $(X,\le)$ is numerically generated. By the explicit description of final structures in $\Pre$ given in \ref{subsec:final-initial-preorder}, there exist integers $(i_{1},\ldots, i_{n})$ and a set
\[\left\{p_{j}^{\epsilon} : \epsilon \in \{0,1\}, p_{j}^{\epsilon}\in \sabs{\Delta^{i_{j}}}\right\}\]
together with continuous weakly monotonic maps $f_{j}\colon \sabs{\Delta^{i_{j}}}\to X$ such that 
\begin{align}
p_{j}^{0}			&\le_{i_{j}} p_{j}^{1}				\label{eq:preord-final-structure-1}\\
f_{j}\left(p_{j}^{1}\right)	&=f_{j+1}\left(p_{j_{1}}^{0}\right)		\label{eq:preord-final-structure-2}\\
f_{1}\left(p_{1}^{0}\right)	=x &,\; f_{n}\left(p_{n}^{1}\right)=x'.			\label{eq:preord-final-structure-3}
\end{align}
where $\le_{i_{j}}$ denotes the preorder relation of $\sabs{\Delta^{i_{j}}}$. In particular, taking the line segment from $p_{j}^{0}$ to $p_{j}^{1}$ in $\sabs{\Delta^{i_{j}}}$ defines an elementary exit path:
\[\alpha_{j}\colon\sabs{\Delta^{1}}\to \sabs{\Delta^{i_{j}}}\to X\]
in $X$, for every $j$.
Then, \eqref{eq:preord-final-structure-2} and  \eqref{eq:preord-final-structure-3} imply that the $\alpha_{j}'s$ glue to an exit path $\sabs{\Sp^{n}}\to X$ from $x$ to $x'$.

Conversely assume that the preorder on $X$ is given by exit paths and let us consider a cocone $(g_{\alpha}\colon \sabs{\Delta^{n^{\alpha}}}\to (X',\le'))$ indexing over the cocone of all stratified $n$-simplices mapping to $X$. 
Assume that we have a continuous map $f\colon X\to X'$ such that, for every $\alpha\colon \sabs{\Delta^{n^{\alpha}}}\to X$, the following diagram of topological spaces is commutative:
\begin{equation}
\label{eq:lifting-numerically-generated-strat}
\begin{tikzcd}
\abs{\Delta^{n^{\alpha}}}
	\ar[r, "\alpha"]
	\ar[dr, "g_{\alpha}"']&
X	\ar[d, "f"]\\
&
X'
\end{tikzcd}
\end{equation}
Then, we need to show that $f$ defines a map of preordered spaces.
Let $x\le x'$ in $X$, then by hypothesis there exists an exit path $s\colon\sabs{\Sp^{n}}\to X$ from $x$ to $x'$.
In particular, this defines elementary exit paths $\alpha_{k}\colon \sabs{\Delta^{1}}\to X$ for $k=1,\ldots, n$ such that 
\[\alpha_{1}(0)=x,\; \alpha_{n}(1)=x'\quad \text{ and }\quad \alpha_{k}(1)=\alpha_{k-1}(0).\]
Therefore, specialising \eqref{eq:lifting-numerically-generated-strat} to the $\alpha_{k}$ we have commutative diagrams
\begin{equation}
\label{eq:lifting-numerically-generated-strat-2}
\begin{tikzcd}
\abs{\Delta^{1}}
	\ar[r, "\alpha_{k}"]
	\ar[dr, "g_{\alpha_{k}}"']&
X	\ar[d, "f"]\\
&
X'
\end{tikzcd}
\end{equation}
that imply there is a chain of inequalities 
\[fx\le' g_{\alpha_{1}}(1)= g_{\alpha_{2}}(0)\le'\ldots \le' g_{\alpha_{n-1}}(1)=g_{\alpha_{n}}(0)\le' fx'.\]
Hence, the claim follows from transitivity of $\le'$.
\end{proof}

\begin{corollary}
\label{cor:strata-path-connected}
Let $X$ be a numerically generated stratified space. Then, for every point $x\in X$, the stratum $X_{x}$ of $X$ at $x$ is a path connected topological space.
\end{corollary}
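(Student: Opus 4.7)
The plan is to reduce the claim to an almost immediate consequence of Lemma~\ref{lem:numerically-generated-exit-path} combined with the antisymmetry of the poset of chaotic orbits of $X$. Given a second point $y$ in the stratum $X_{x}$, I aim to construct a continuous path from $x$ to $y$ lying entirely inside $X_{x}$.

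First I will record that $y$ and $x$ are chaotically equivalent in the underlying preorder of $X$: since they share a stratum, $x\le y$ and $y\le x$. Applying Lemma~\ref{lem:numerically-generated-exit-path} to the inequality $x\le y$ produces an exit path $\alpha\colon \sabs{\Sp^{n}}\to X$ from $x$ to $y$.

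The key step is then to observe that, since $\alpha$ is a morphism of preordered spaces, it preserves chaotic equivalence and therefore descends to a weakly monotonic map $\bar\alpha\colon [n]\to P_{X}$ between the quotient posets, where $[n]$ is the base poset of the stratified $n$-spine and $P_{X}$ is the poset obtained from the preorder on $X$ via the reflection $\phi$ of~\ref{subsec:poset-preord-left-adj}. Writing $p\in P_{X}$ for the chaotic orbit containing both $x$ and $y$, the equalities $\bar\alpha(0)=p=\bar\alpha(n)$, combined with the chain $p=\bar\alpha(0)\le \bar\alpha(1)\le \dots\le\bar\alpha(n)=p$ and the antisymmetry of $P_{X}$, force $\bar\alpha$ to be constantly equal to $p$. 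Consequently the image of $\alpha$ is contained in $X_{x}$.

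Finally, since the total space of $\sabs{\Sp^{n}}$ is homeomorphic to the interval $[0,n]$, corestricting the underlying continuous map of $\alpha$ to $X_{x}$ equipped with its subspace topology yields the required path from $x$ to $y$ inside the stratum. No step should present real difficulty; the only minor verification is that a weakly monotonic map between preordered spaces carries chaotic orbits to chaotic orbits, which is immediate from the definition of chaotic equivalence.
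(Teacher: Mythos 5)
Your proof is correct and takes essentially the same route as the paper: both invoke Lemma~\ref{lem:numerically-generated-exit-path} to produce an exit path $\alpha\colon\sabs{\Sp^{n}}\to X$ from $x$ to $y$, observe that weak monotonicity together with $y\le x$ forces the image of $\alpha$ into the stratum $X_{x}$, and conclude by taking the underlying continuous map on $[0,n]$. You simply spell out the containment step via the induced map of base posets and antisymmetry, which the paper leaves implicit.
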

\begin{proof}
Let $x'$ be a point of $X_{x}$
By Lemma \ref{lem:numerically-generated-exit-path}, since $x\le x'$, there exists an exit path $\alpha\colon \sabs{\Sp^{n}}\to X$ from $x$ to $x'$ in $X$.
As $x'\le x$, the image of $\alpha$ is contained in $X_{x}$.
Hence the underlying map of topological spaces $\alpha\colon [0,n]\to UX$ defines a path in $X_{x}$ from $x$ to $x'$.
\end{proof}

\subsection{}
\label{subsec:coreflection-top-strat}
The underlying topological space of any numerically generated stratified space is a numerically generated space, since the forgetful functor $U\colon \pTop\to \Topa$ maps stratified standard simplices to geometric standard simplices and it preserves colimits. In particular, the restriction of $U$ defines a functor
\[U\colon \Strat \to \Top\]
which comes equipped with a right adjoint:
\[\pi\colon \Top\to \Strat\]
given by the composition of the indiscrete object functor $\Top\to \pTop$ with the coreflection $\pTop\to \Strat$. 

\begin{remark}
It is worth pointing out the difference between numerically generated stratified spaces, and stratified spaces with a numerically generated total space and arbitrary stratification map (as used for example in \cite{hai18}).
\footnote{The punchline is that ``numerically generated'' and ``stratified'' do not commute with each other}
Corollary \ref{cor:strata-path-connected} gives a clear measure of this difference, as one can easily find examples of the second kind which don't have path-connected strata.
For a concrete example, take $P$ to be a (non discrete) poset and $\disc{P}=\coprod_{p\in P}\{p\}$ to be the discrete topological space on $P$. Then, the identity on $P$ defines a stratification map $\disc{P}\to P$ with disconnected strata.
\end{remark}

\subsection{} 
Let $X$ be a numerically generated space and let $\pi_{0}X$ be the set of path-connected components of $X$. Notice that, if we endow $\pi_{0}X$ with the discrete poset structure, the Alexandroff topology on $\pi_{0}X$ coincides with the discrete topology. In particular, the quotient map $\pi\colon X\to \pi_{0}X$ defines a stratified space $(X,\pi,\pi_{0}X)$.
In terms of preordered spaces $(X,\pi,\pi_{0}X)$ is the space $X$ equipped with the preorder $\sim$ defined by: $x_0\sim x_1$ if and only if there exists a path $\alpha\colon \II\to X$ from $x_0$ to $x_1$. 
Notice that, by adjunction, the identity on $X$ defines a unique map $\eta_X\colon (X,\sim)\to \ind{X}$ to the codiscrete preordered space on $X$.

\begin{proposition}
\label{prop:delta-prespace-associated-to-delta-space}
Let $X$ be a numerically generated topological space, then there is a natural isomorphism:
\[\pi(X)\cong(X,\sim).\]
In particular, if $X$ is a path connected numerically generated space, the codiscrete preordered space $\ind{X}$ associated to $X$ is a numerically generated stratified space.
\end{proposition}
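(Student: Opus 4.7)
The plan is to identify $(X,\sim)$ with the image of $\ind{X}$ under the coreflection $\pTop\to\Strat$, by applying Lemma \ref{lem:coreflection-I-generated} with $\cat{I}$ taken to be the full subcategory of $\pTop$ spanned by the stratified standard simplices. Both $(X,\sim)$ and $\ind{X}$ have the same underlying set and the same underlying topology (namely that of $X$), and the identity on $X$ is a morphism $(X,\sim)\to\ind{X}$ in the fibre of $U\colon\pTop\to\Set$ over $X$, because the relation $\sim$ is coarser than the chaotic preorder. So the set-up of Lemma \ref{lem:coreflection-I-generated} is in place, and the task reduces to verifying two things.

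First I would verify that $(X,\sim)$ is a numerically generated stratified space using Lemma \ref{lem:numerically-generated-exit-path}. Since $X$ is numerically generated as a topological space by hypothesis, it suffices to show that $x\sim x'$ precisely when there is an exit path from $x$ to $x'$ in $(X,\sim)$. The forward direction is immediate: a continuous path $\II\to X$ witnessing $x\sim x'$ is automatically an elementary exit path, since $\sim$ is reflexive and symmetric on path components. Conversely, any exit path $\sabs{\Sp^{n}}\to(X,\sim)$ is in particular a continuous map $[0,n]\to X$, whose endpoints are then connected by a path in $X$ and hence $\sim$-equivalent.

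Next I would check the hom-set condition of Lemma \ref{lem:coreflection-I-generated}. For each stratified standard simplex $\sabs{\Delta^{n}}$, a morphism $\sabs{\Delta^{n}}\to\ind{X}$ in $\pTop$ is just a continuous map $\abs{\Delta^{n}}\to X$, since the preorder on $\ind{X}$ is chaotic. A morphism $\sabs{\Delta^{n}}\to(X,\sim)$ is such a continuous map that is additionally weakly monotonic for $\sim$; but $\abs{\Delta^{n}}$ is path-connected, so the image of any continuous $f\colon\abs{\Delta^{n}}\to X$ lies in a single path-component of $X$, and the monotonicity condition is automatic. This yields the required natural bijection, so Lemma \ref{lem:coreflection-I-generated} gives $(X,\sim)\cong\Lan_{k}k(\ind{X})=\pi(X)$; naturality in $X$ is then inherited from the universal property of the coreflection.

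For the \emph{in particular} claim, if $X$ is path-connected then any two points of $X$ are $\sim$-equivalent, so the preorder $\sim$ coincides with the chaotic preorder and $(X,\sim)=\ind{X}$. Combining this with the isomorphism just established shows $\ind{X}\cong\pi(X)$, and $\pi(X)$ is a numerically generated stratified space by construction. I do not expect a substantive obstacle in this argument; the only subtlety worth flagging is that the comparison must take place in the fibre of $U$ over $X$, so one must separately confirm that $(X,\sim)$ is $\cat{I}$-generated \emph{and} that it corepresents the same $\cat{I}$-indexed functor as $\ind{X}$, which are the two steps outlined above.
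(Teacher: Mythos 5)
Your proof is correct and follows essentially the same route as the paper: both reduce to Lemma \ref{lem:coreflection-I-generated} with $\cat{I}$ the stratified standard simplices, verify $\cat{I}$-generation of $(X,\sim)$ via Lemma \ref{lem:numerically-generated-exit-path} using that strata of $(X,\sim)$ are path components, and obtain the hom-set bijection from path-connectedness of $\abs{\Delta^n}$ together with the codiscrete/forgetful adjunction.
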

\begin{proof}
Since $\pi(X)$ is given by the coreflection $\pTop\to\Strat$ applied to the codiscrete object $\ind{X}$ on $X$, it is enough to show that $(X,\sim)$ satisfies the same universal property.
By Lemma \ref{lem:coreflection-I-generated} it is enough to show that for every $[n]\in \DDelta$ there exists a natural isomorphism:
\begin{equation}
\label{eq:path-generated-space}
\Strat(\sabs{\Delta^n},(X,\sim))\cong \pTop(\sabs{\Delta^n}, \ind{X})
\end{equation}
and that $\pi(X)$ is a numerically generated stratified space.
The isomorphism \eqref{eq:path-generated-space} follows from the fact that $\abs{\Delta^n}$ is path connected, hence a morphism $\sabs{\Delta^{n}}\to (X,\sim)$ is the same as an unstratified morphism $\abs{\Delta^{n}}\to X$. In other words, we have natural isomorphisms:
\begin{equation*}
\Strat\left(\sabs{\Delta^n},(X,\sim)\right)\cong\Top\left(\abs{\Delta^{n}},X\right)\cong \pTop\left(\sabs{\Delta^n}, \ind{X}\right),
\end{equation*}
where the second isomorphism follows from the adjunction between the codiscrete object functor and the forgetful functor.
To prove that $(X,\sim)$ is a numerically generated stratified space we apply Lemma \ref{lem:numerically-generated-exit-path}.
We claim that $x\sim x'$ in $X$ if and only if there exists an exit path $f\colon \sabs{\Sp^{n}}\to (X,\sim)$ from $x$ to $x'$.
Let $x, x'$ be points in $X$ such that $x\sim x'$ then, by definition, there exists a path $\alpha\colon\II\to X$ from $x$ to $x'$.
The path $\alpha$ defines a unique elementary exit path $\alpha\colon \sabs{\Delta^{1}}\to (X,\sim)$ from $x$ to $x'$, with same underlying path. The first implication then follows observing that $\Delta^{1}=\Sp^{1}$.
Conversely, the strata in $(X,\sim)$ are the connected components of $X$, and so every exit path $f\colon \sabs{\Sp^{n}}\to (X,\sim)$ is forced to stay in a unique stratum.
In particular $f$ defines a unique chaotic path $\alpha\colon \II\to (X,\sim)$ whose underlying path is given by the composition:
\begin{diagram}
{[0,1]}\ar[r, "n\cdot"]& {[0,n]} \ar[r, "f"] & X.
\end{diagram}
where $n\cdot\colon [0,1]\to[0,n]$ is given by multiplication by $n$.
\end{proof}
\section{The homotopy theory of stratified spaces}
\label{ch:4sec:3}
We conclude the chapter with a recollection on the modern homotopy theory of stratified spaces, following \cite{nan18}.
We apply the results of Section \ref{ch:4sec:2} to the category $\Strat$ of numerically generated stratified spaces.
In particular, Theorem \ref{thm:strat-fibrant-objects} asserts that the category of fibrant stratified spaces (Definition \ref{def:model-structure-strat}) has the structure of a category with fibrant objects (see Definition \ref{def:category-fibrant-objects}).
We recall that every stratum-preserving homotopy equivalence is a weak equivalence of stratified spaces (Proposition \ref{prop:stratum-preserving-equivalence-we}) and the converse is true when we restrict to the category of cofibrant-fibrant stratified spaces. (see \cite{nan18})
Moreover, we recall the definition of homotopically stratified spaces and how they are related to cofibrant and fibrant objects (Theorem \ref{thm:cofibrant-fibrant-hoss} and \ref{thm:hoss-fibrant}).
We conclude the section with the definition of the exit path category of a stratified space (Definition \ref{def:exit-path-category}) and its relationship with stratified covers (Theorem \ref{thm:woolf}).
The main results of this section can be found in \cite{nan18} and \cite{woo08}.

\begin{definition}
\label{def:model-structure-strat}
A morphism $f\colon X\to Y$ of stratified spaces is said to be a \emph{fibration} (\resp a weak equivalence) if the induced map:
\[\sSing(f)\colon \sSing(X)\to \sSing(Y)\]
is a fibration (\resp a weak equivalence) in the Joyal model structure.
We call $f$ a \emph{trivial fibration} if it is both a fibration and a weak equivalence.
\end{definition}

\begin{definition}
A morphism $i\colon A\to B$ between stratified spaces is said to be a \emph{cofibration} (\resp a trivial cofibration) if it has the left lifting property with respect to all trivial fibration (\resp all fibrations).
We call $i$ an \emph{acyclic cofibration} if it is both a cofibration and a weak equivalence.
\end{definition}

\begin{definition}
Let $X$ be a stratified space.
We say that $X$ is \emph{fibrant} if $\sSing(X)$ is an $\infty$-category.
Dually, we call $X$ \emph{cofibrant} if the unique morphism $\initial\to A$ from the initial object is a cofibration.
\end{definition}

\begin{example}
\label{poset-fibrant}
Let $P$ be a poset equipped with the structure of a stratified space via the identity map $1_{P}\colon P\to P$.
Then $P$ is a fibrant stratified space.
Indeed, the following natural isomorphism holds in $\sSet$:
\begin{equation}
\sSing(P)\cong \Nerv P
\end{equation}
since a morphism $\sabs{\Delta^{n}}\to P$ is uniquely determined by a weakly monotonic map $[n]\to {P}$.
In particular the stratified space $1_{[n]}\colon [n]\to [n]$ associated to every finite ordinal $[n]$ is a fibrant stratified space.
\end{example}

\begin{example}
\label{ex:realisation-sing-poset-co-fibrant}
Let $P$ be a poset and let us consider the stratified geometric realisation $\sabs{\sSing P}$ of the nerve of $P$.
Then, $\sabs{\sSing P}$ is a fibrant stratified space by \cite[Remark 9.4.0.12]{nan18} and it is also cofibrant, since $\sabs{\blank}$ preserves cofibrant objects.
In particular, for every $n$ in $\DDelta$, the stratified standard $n$-simplex $\sabs{\Delta^{n}}$ is a fibrant and cofibrant stratified space.
\end{example}

\begin{remark}
By Lemma \ref{lem:product-stratified-simplices} and Lemma \ref{lem:stratified-standard-simplex-exponentiable} the functor $\sabs{\blank}\colon \DDelta\to \pTop$ satisfies the hypothesis described in \ref{subsec:cisinski-topological}.
In particular, we can harvest the results of Section \ref{ch:2sec:4} with no effort, and we highlight them in what follows.
\end{remark}

\begin{lemma}
\label{lem:strat-homotopical-cat}
The category $\Strata$ equipped with the class of weak equivalences has the structure of a homotopical category.
\end{lemma}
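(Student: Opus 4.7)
The plan is to transfer the homotopical structure from the Joyal model structure on $\sSet$ along the singular complex functor $\sSing\colon \Strata\to \sSet$. By Definition \ref{def:model-structure-strat}, a morphism $f$ in $\Strata$ is a weak equivalence precisely when $\sSing(f)$ is a weak equivalence in the Joyal model structure, so $\sSing$ reflects (and by the same token preserves) weak equivalences by fiat. Since $\sSing$ is a functor it sends identities to identities, and identities are weak equivalences in any model category; hence identities are weak equivalences in $\Strata$.

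For the 2-out-of-6 property, I would simply observe that by Remark \ref{rmk:model-cat-homotopical-cat} the Joyal model category on $\sSet$ is itself a homotopical category, so its weak equivalences satisfy 2-out-of-6. Given a diagram
\begin{equation*}
\begin{tikzcd}
W\ar[r,"f"] & X\ar[r,"g"] & Y\ar[r,"h"] & Z
\end{tikzcd}
\end{equation*}
in $\Strata$ with $gf$ and $hg$ weak equivalences, applying $\sSing$ produces the corresponding diagram in $\sSet$ in which $\sSing(gf)=\sSing(g)\sSing(f)$ and $\sSing(hg)=\sSing(h)\sSing(g)$ are weak equivalences; 2-out-of-6 in the Joyal structure then shows that $\sSing(f),\sSing(g),\sSing(h),\sSing(hgf)$ are all weak equivalences, and reflecting back yields the same conclusion in $\Strata$. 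This argument is essentially a specialisation of Lemma \ref{lem:top-cat-homotopical-cat}, applied to the topological construct $\Strat$ together with the Joyal model structure on $\sSet$ via the adjunction \eqref{eq:sset-strat-adjunction}; the only subtlety is that the lemma as stated concerns $\Strat$ rather than $\Strata$, but this makes no difference as the definition of weak equivalence in Definition \ref{def:model-structure-strat} is formulated directly at the level of $\Strata$ via $\sSing$, and no step of the argument requires any property of the ambient category beyond functoriality of $\sSing$. There is no real obstacle, since the statement is a formal consequence of the existence of the Joyal model structure.
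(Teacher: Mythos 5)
Your proof is correct and takes essentially the same route as the paper: both argue that $\sSing$ reflects weak equivalences by definition and transfer the 2-out-of-6 property (via Remark \ref{rmk:model-cat-homotopical-cat}) from the Joyal model structure on $\sSet$.
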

\begin{proof}
Since $\sSing$ reflects weak equivalences and $\sSet$ is a model category, the claim follows from Remark \ref{rmk:model-cat-homotopical-cat}.
\end{proof}

\begin{lemma}
\label{lem:strat-prod-we}
Let $X$ be a stratified space and let $w\colon Y\to Z$ be a weak equivalence between stratified spaces, then the induced map:
\[X\times w\colon X\times Y\to X\times Z.\]
is a weak equivalence.
\end{lemma}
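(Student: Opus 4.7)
The plan is to reduce the statement to the analogous fact in the Joyal model structure on simplicial sets, mimicking the proof of Lemma \ref{lem:top-cat-prod-we} in the general topological-construct framework of Section \ref{ch:2sec:4}. Indeed, this lemma is precisely the specialisation of that general statement to the category $\Strat$ with the exact cylinder $J\times(\blank)$ on $\sSet$ used in the Joyal model structure, so no new idea is required.

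First I would observe that by definition a morphism $f$ of stratified spaces is a weak equivalence if and only if $\sSing(f)$ is a weak equivalence in the Joyal model structure, so $\sSing$ reflects weak equivalences. Moreover, since the adjunction $\sabs{\blank}\dashv \sSing$ is an $\sSet$-enriched adjunction by Theorem \ref{thm:strat-enriched-tensored-cotensored}, and in particular a right adjoint between categories with finite products, $\sSing$ preserves products. Therefore it suffices to show that for any simplicial set $A$ and any weak equivalence $w'\colon B\to C$ in the Joyal model structure, the induced map $A\times w'\colon A\times B\to A\times C$ is again a weak equivalence; applied with $A=\sSing X$ and $w'=\sSing w$, this will give the result.

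For this simplicial statement, the key ingredients are that the Joyal model structure is Cartesian closed (so the Cartesian product is a left Quillen bifunctor) and that every simplicial set is cofibrant (since cofibrations are the monomorphisms). Consequently, for every simplicial set $A$, the functor $A\times(\blank)\colon \sSet\to\sSet$ is a left Quillen endofunctor, hence sends weak equivalences between cofibrant objects to weak equivalences. As every object is cofibrant, $A\times(\blank)$ in fact preserves all weak equivalences by Corollary \ref{cor:ken-brown-lemma} (Ken Brown's Lemma).

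There is no substantive obstacle here; the only thing to check is that the chain of formal properties lines up correctly, in particular that $\sSing$ is a product-preserving right adjoint (which is Proposition \ref{prop:cisinski-topological-enriched} specialised to the present situation via Lemma \ref{lem:product-stratified-simplices}) and that the Joyal model structure is Cartesian monoidal, which is standard and recalled in the discussion preceding Theorem \ref{thm:transfer-cartesian-closed}. Assembling these facts gives the desired conclusion in two lines.
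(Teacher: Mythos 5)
Your proposal is correct and takes essentially the same route as the paper, whose proof simply cites the general Lemma \ref{lem:top-cat-prod-we} (of which this lemma is the specialisation to $\Strat$ and the Joyal model structure). One small technical slip: the version of Ken Brown's Lemma stated in Corollary \ref{cor:ken-brown-lemma} is formulated for categories of fibrant objects and acyclic fibrations, so what you actually need (and what the paper's proof of the general lemma invokes) is the \emph{dual} of that corollary, applied to the left Quillen endofunctor $A\times(\blank)$ on the category of (automatically cofibrant) simplicial sets.
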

\begin{proof}
See Lemma \ref{lem:top-cat-prod-we}.
\end{proof}

\begin{proposition}
\label{prop:cotensor-Quillen-functor-strat}
Let $i\colon K\to L$ be a monomorphism between simplicial sets and let $p\colon X\to Y$ be a fibration in $\Strat$. Then, the induced map
\[ X^L\to X^K\times_{Y^K}Y^L\]
is a fibration, which is acyclic if $i$ or $p$ is so.
\end{proposition}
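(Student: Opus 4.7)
The plan is to deduce this statement as a direct specialisation of the abstract pullback-power result already established for Cisinski-like topological constructs, namely Proposition \ref{prop:cotensor-Quillen-functor}. The setup of Section \ref{ch:2sec:4} applies verbatim here: by Lemma \ref{lem:product-stratified-simplices} and Lemma \ref{lem:stratified-standard-simplex-exponentiable} the functor $\sabs{\blank}\colon \DDelta \to \pTop$ satisfies the hypotheses of \ref{subsec:cisinski-topological}, and by Theorem \ref{thm:strat-enriched-tensored-cotensored} the adjunction $(\sabs{\blank},\sSing)$ between $\sSet$ and $\Strat$ is $\sSet$-enriched, with $\sSet$ equipped with the Cartesian closed Joyal model structure (Definition \ref{def:Joyal-model-structure}).

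First I would note that, by Definition \ref{def:model-structure-strat}, the functor $\sSing$ reflects fibrations and trivial fibrations. Therefore it suffices to show that
\[\sSing\!\left(X^{L} \to X^{K}\times_{Y^{K}} Y^{L}\right)\]
is a (trivial) fibration in the Joyal model structure whenever appropriate. Next I would use that $\sSing$, being a right $\sSet$-enriched adjoint, preserves cotensors: this identifies the above morphism with the pullback-power
\[\hat{\ihom}\bigl(i,\sSing(p)\bigr)\colon \ihom(L,\sSing X) \to \ihom(K,\sSing X)\times_{\ihom(K,\sSing Y)} \ihom(L,\sSing Y)\]
in the category of simplicial sets, where $\sSing(p)$ is a fibration (trivial if $p$ is) by Definition \ref{def:model-structure-strat}.

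The remaining step is to observe that the Joyal model structure is a Cartesian monoidal model structure on $\sSet$, whose two-variable adjunction $(\times, \ihom, \ihom)$ is a Quillen two-variable adjunction; applying Lemma \ref{lem:two-variable-lifting}\ref{item:two-variable-lifting-2} to the cofibration $i$ and the fibration $\sSing(p)$ then yields the desired conclusion, with acyclicity when either input is acyclic. This is exactly the specialisation of Proposition \ref{prop:cotensor-Quillen-functor} and its consequence Corollary \ref{cor:internal-hom-quillen-top-cat} to the pair $(\hat{A},\icat{C})=(\sSet,\Strat)$, so no new technical input is needed.

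The proof is therefore essentially a bookkeeping exercise, and I do not anticipate any genuine obstacle: the only non-trivial ingredients — that $\sabs{\blank}$ preserves products of representables, that $\Strat$ is $\sSet$-enriched tensored and cotensored, and that the Joyal model structure is Cartesian monoidal — have all been established or recalled earlier in the manuscript.
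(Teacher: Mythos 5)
Your proposal is correct and takes essentially the same route as the paper: the paper's proof is simply the pointer ``See Proposition \ref{prop:cotensor-Quillen-functor}'', and you have unfolded exactly the argument of that proposition in the special case $(\hat{A},\icat{C}) = (\sSet,\Strat)$ --- namely that $\sSing$ reflects (trivial) fibrations, that as an enriched right adjoint $\sSing$ takes the cotensor pullback-power in $\Strat$ to the corresponding pullback-power $\hat{\ihom}(i,\sSing p)$ in $\sSet$, and that the Joyal model structure is Cartesian monoidal.
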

\begin{proof}
See Proposition \ref{prop:cotensor-Quillen-functor}.
\end{proof}

\begin{corollary}
\label{cor:internal-hom-quillen-strat}
Let $f\colon X\to Y$ be a fibration in $\Strat$ and let $i\colon A\to B$ be a cofibration. Then, the induced map
\[\ihom(B,X)\to \ihom(A,X)\times_{\ihom(A,Y)}\ihom(B,Y)\]
is a fibration, which is acyclic if either $p$ is acyclic or $i$ is a trivial cofibration.
\end{corollary}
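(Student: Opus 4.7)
The plan is to reduce the statement to the cotensor pullback-power result Proposition \ref{prop:cotensor-Quillen-functor-strat} by exploiting the two distinct two-variable adjunctions at play on $\Strat$: the internal Cartesian one $(\times,\ihom,\ihom)$ coming from Theorem \ref{thm:strat-cartesian-closed}, and the external $\sSet$-enriched one supplied by Theorem \ref{thm:strat-enriched-tensored-cotensored}. In effect this is the specialisation of the abstract Corollary \ref{cor:internal-hom-quillen-top-cat} to $\icat{C}=\Strat$, whose hypotheses were verified in Section \ref{ch:4sec:2}.

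First I would invoke Definition \ref{def:model-structure-strat} to replace the claim ``$\hat{\ihom}(i,f)$ is a (trivial) fibration in $\Strat$'' by the equivalent claim ``$\sSing\hat{\ihom}(i,f)$ is a Joyal (trivial) fibration in $\sSet$''. Since $\sSing$ is a right adjoint it preserves pullbacks, so this morphism coincides with $\hat{\map}(i,f)$, where $\map(X,Y)=\sSing\ihom(X,Y)$ is the mapping space of the $\sSet$-enrichment. The triple $(K\otimes X=\sabs{K}\times X,\; X^{K}=\ihom(\sabs{K},X),\;\map)$ is a two-variable adjunction between $\sSet$ and $\Strat$, so by Lemma \ref{lem:two-variable-lifting} there is, for any $j\colon K\to L$ in $\sSet$, an equivalence
\[
j\olift\hat{\map}(i,f)\ \Longleftrightarrow\ i\olift\hat{f^{j}},
\]
where $\hat{f^{j}}\colon X^{L}\to X^{K}\times_{Y^{K}}Y^{L}$ is exactly the cotensor pullback-power treated by Proposition \ref{prop:cotensor-Quillen-functor-strat}.

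To prove that $\hat{\ihom}(i,f)$ is a fibration I would test $\hat{\map}(i,f)$ against an arbitrary Joyal trivial cofibration $j$; being a monomorphism and a Joyal weak equivalence, Proposition \ref{prop:cotensor-Quillen-functor-strat} makes $\hat{f^{j}}$ an acyclic fibration in $\Strat$, against which the cofibration $i$ lifts by the very definition of cofibration. For the two acyclicity statements I would instead test against an arbitrary monomorphism $j$ in $\sSet$ (the Joyal cofibrations) and argue case by case: if $f$ is acyclic, Proposition \ref{prop:cotensor-Quillen-functor-strat} again yields $\hat{f^{j}}$ acyclic, so the cofibration $i$ lifts against it; if instead $i$ is a trivial cofibration, the same proposition only guarantees $\hat{f^{j}}$ to be a fibration, but this is enough because trivial cofibrations lift against fibrations by definition.

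There is essentially no genuine obstacle beyond bookkeeping. The only real care required is to keep the two two-variable adjunctions straight and to remember the identifications $\sSing\ihom=\map$ and $K\otimes X=\sabs{K}\times X$ that allow the passage between them. Once these are in place, the statement is a formal consequence of Proposition \ref{prop:cotensor-Quillen-functor-strat}, Lemma \ref{lem:two-variable-lifting}, and the Cartesian closure together with the $\sSet$-enrichment of $\Strat$, with no new homotopical input.
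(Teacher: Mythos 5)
Your argument is correct and is exactly the paper's intended proof: the paper's own justification is the one-line citation of Lemma \ref{lem:two-variable-lifting} and Proposition \ref{prop:cotensor-Quillen-functor-strat}, and your transposition of the lifting problem through the $(\otimes,\,(\blank)^{(\blank)},\,\map)$ two-variable adjunction, followed by the case split on which of $f$ or $i$ is acyclic, is precisely the unpacking of that citation. No gaps.
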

\begin{proof}
The claim follows from Lemma \ref{lem:two-variable-lifting} and Proposition \ref{prop:cotensor-Quillen-functor-strat}.
\end{proof}

\subsection{}
Recall that $J$ is an interval object for the Joyal model structure on $\sSet$ (see Example \ref{ex:J-interval-sset}).
In other words, $J$ comes equipped with a factorisation of the codiagonal:
\[\terminal\coprod\terminal\to J\to\terminal\]
such that $(\partial_0,\partial_1)\colon *\coprod *\to J$ is a monomorphism and $\sigma\colon J\to *$ is a Joyal weak equivalence. 
Then, for every numerically generated stratified space $X$, applying the functor $X^{(\blank)}$ we get a factorisation
\[X\to X^J\to X\times X\]

\begin{lemma}
\label{lem:path-object-for-strat}
For every fibrant stratified space $X$ in $\Strat$, the above factorisation induces the structure of a path object $X^{J}$ for $X$.
\end{lemma}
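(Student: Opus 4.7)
The plan is to verify the two conditions in the definition of a path space from \ref{subsec:cocylinder}, namely that the map $s\colon X\to X^{J}$ is a weak equivalence and the map $(d^{0},d^{1})\colon X^{J}\to X\times X$ is a fibration.

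For the fibration, I would apply Proposition \ref{prop:cotensor-Quillen-functor-strat} to the monomorphism $(\partial_{0},\partial_{1})\colon \terminal\coprod\terminal\to J$ in $\sSet$ and to the fibration $X\to \terminal$ (which is a fibration precisely because $X$ is fibrant). The induced pullback-power map has codomain $X^{\terminal\coprod\terminal}\times_{\terminal^{\terminal\coprod\terminal}}\terminal^{J}$, which canonically simplifies to $X\times X$ since cotensoring the terminal object yields the terminal object. Thus Proposition \ref{prop:cotensor-Quillen-functor-strat} gives exactly that $(d^{0},d^{1})\colon X^{J}\to X\times X$ is a fibration in $\Strat$.

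For the weak equivalence, I would argue via Ken Brown's lemma (Corollary \ref{cor:ken-brown-lemma}) applied to the functor $X^{(\blank)}\colon \sSet^{\op}\to \Strat$. Every simplicial set is cofibrant in the Joyal model structure, so in $\sSet^{\op}$ (with classes obtained by swapping cofibrations and fibrations, as in Example \ref{ex:opposite-category-model}) every object is fibrant, and hence by Remark \ref{rmk:model-cat-fibrant-cat} the category $\sSet^{\op}$ has the structure of a category of fibrant objects. By Proposition \ref{prop:cotensor-Quillen-functor-strat} (specialised to $p\colon X\to \terminal$, which is an acyclic fibration when we consider acyclic fibrations, and to an arbitrary monomorphism), the functor $X^{(\blank)}$ sends acyclic cofibrations of $\sSet$ to acyclic fibrations of $\Strat$; equivalently, it sends the acyclic fibrations of $\sSet^{\op}$ to weak equivalences. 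Ken Brown's lemma then upgrades this to the statement that $X^{(\blank)}$ preserves all weak equivalences. Since $\sigma\colon J\to \terminal$ is a weak equivalence in $\sSet$, the induced map $s\colon X\cong X^{\terminal}\to X^{J}$ is a weak equivalence in $\Strat$.

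The two steps together give the required factorisation $X\to X^{J}\to X\times X$ of the diagonal into a weak equivalence followed by a fibration, which is exactly the data of a path space on $X$. No step here is genuinely difficult: the only subtlety is bookkeeping the pullback-power in the fibrancy argument and making sure that the codomain collapses correctly to $X\times X$ because $X\to \terminal$ is a fibration into the terminal stratified space.
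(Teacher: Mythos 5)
Your argument is correct and follows the same route as the paper's. The paper's proof of Lemma \ref{lem:path-object-for-strat} simply cites the general Lemma \ref{lem:path-object-for-top-cat}, whose proof is precisely the two steps you carry out: Proposition \ref{prop:cotensor-Quillen-functor} (here its instance \ref{prop:cotensor-Quillen-functor-strat}) applied to $\terminal\coprod\terminal\to J$ and $X\to\terminal$ for the fibration, and Ken Brown's lemma together with Proposition \ref{prop:cotensor-Quillen-functor} for the weak equivalence; so you have effectively unfolded the cited proof.

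One small point of phrasing: in the weak-equivalence step you write that $p\colon X\to\terminal$ ``is an acyclic fibration when we consider acyclic fibrations,'' but there is no need (and no reason to expect) $p$ to be acyclic. The map $p$ is merely a fibration because $X$ is fibrant; the acyclicity of the pullback-power map $X^{L}\to X^{K}$ comes from taking $i\colon K\to L$ to be a trivial cofibration, by the ``acyclic if $i$ or $p$ is so'' clause of Proposition \ref{prop:cotensor-Quillen-functor-strat}. The rest of your Ken Brown argument (every object of $\sSet$ is cofibrant, so $\sSet^{\op}$ is a category of fibrant objects by Remark \ref{rmk:model-cat-fibrant-cat}) is exactly right.
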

\begin{proof}
This follows immediately from Lemma \ref{lem:path-object-for-top-cat}.
\end{proof}

\begin{theorem}
\label{thm:strat-fibrant-objects}
The full subcategory $\Strat_{f}$ of $\Strat$ spanned by the fibrant stratified spaces is a category of fibrant objects.
\end{theorem}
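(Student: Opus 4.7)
The plan is to apply Proposition \ref{prop:top-cat-fibrant-objects} from Section \ref{ch:2sec:4}, which establishes the analogous statement in the abstract framework of a topological construct receiving a Cisinski model structure via an adjunction. The whole proof then reduces to checking that the setup of \ref{subsec:cisinski-topological} is realised by the adjunction $\sabs{\blank}\dashv \sSing$ between $\sSet$ (with the Joyal model structure) and $\Strat$.

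First I would verify that the functor $\sabs{\blank}\colon \DDelta\to \pTop$ fits the hypotheses of \ref{subsec:cisinski-topological}: by Lemma \ref{lem:product-stratified-simplices} it preserves products of representable presheaves, and by Lemma \ref{lem:stratified-standard-simplex-exponentiable} each $\sabs{\Delta^{n}}$ is exponentiable in $\pTop$; by definition $\Strat$ is the final closure of $\{\sabs{\Delta^{n}}\}_{n}$ in $\pTop$. The weak equivalences and fibrations on $\Strat$ given in Definition \ref{def:model-structure-strat} are those of Definition \ref{def:model-structure-top-cat} specialised to this situation, and Strat is cartesian closed by Theorem \ref{thm:strat-cartesian-closed}.

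Next I would verify the five axioms of Definition \ref{def:category-fibrant-objects} in turn. Axiom \ref{item:category-fibrant-objects-1} holds because $\Strat$ has finite products by Theorem \ref{thm:strat-cartesian-closed}, and $\Strat_{f}$ is closed under them: the terminal object $\sabs{\Delta^{0}}$ is fibrant by Example \ref{ex:realisation-sing-poset-co-fibrant}, and a binary product of fibrant objects is fibrant by Corollary \ref{cor:internal-hom-quillen-strat} applied to the cofibration $\emptyset\to X$ against the fibration $Y\to\sabs{\Delta^{0}}$. Axioms \ref{item:category-fibrant-objects-2} and \ref{item:category-fibrant-objects-3} follow immediately because $\sSing$ preserves pullbacks and fibrations and acyclic fibrations in the Joyal model structure are stable under pullback. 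Axiom \ref{item:category-fibrant-objects-5} is tautological from the definition of a fibrant stratified space.

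The only non-formal point is axiom \ref{item:category-fibrant-objects-4}, the existence of a path space object; this is the content of Lemma \ref{lem:path-object-for-strat}, which provides, for every fibrant $X$, the factorisation $X\to X^{J}\to X\times X$ of the diagonal into a weak equivalence followed by a fibration, induced by the Joyal interval $J$ (see Example \ref{ex:J-interval-sset}) via the cotensor structure. Since no further work is required beyond assembling these references, there is no significant obstacle: the whole content of the theorem is packaged in the general topological-construct result \ref{prop:top-cat-fibrant-objects}, and the present theorem is its direct specialisation.
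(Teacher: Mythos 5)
Your proposal takes the same route as the paper: invoke Proposition \ref{prop:top-cat-fibrant-objects} together with Lemma \ref{lem:path-object-for-strat}, checking that the adjunction $\sabs{\blank}\dashv\sSing$ with the Joyal model structure satisfies the hypotheses of \ref{subsec:cisinski-topological}. That is exactly the paper's proof, merely unpacked.

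There is, however, a flaw in the way you verify axiom \ref{item:category-fibrant-objects-1}. Applying Corollary \ref{cor:internal-hom-quillen-strat} to $i\colon\emptyset\to X$ and $p\colon Y\to\sabs{\Delta^{0}}$ yields that $\ihom(X,Y)\to\ihom(\emptyset,Y)\times_{\ihom(\emptyset,\sabs{\Delta^{0}})}\ihom(X,\sabs{\Delta^{0}})$ is a fibration, i.e.\ that the \emph{internal hom} $\ihom(X,Y)$ is fibrant; this says nothing about the \emph{product} $X\times Y$. Moreover it quietly assumes $\emptyset\to X$ is a cofibration in $\Strat$, which has not been established (cofibrations there are defined by a lifting property, not declared to be all monomorphisms). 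The intended argument is more direct: $\sSing$ is a right adjoint, hence preserves products, so $\sSing(X\times Y)\cong\sSing X\times\sSing Y$, which is a product of $\infty$-categories and therefore an $\infty$-category. This is what makes axiom \ref{item:category-fibrant-objects-1} hold, and it is what the paper has in mind when it calls axioms \ref{item:category-fibrant-objects-1}, \ref{item:category-fibrant-objects-2}, \ref{item:category-fibrant-objects-3}, \ref{item:category-fibrant-objects-5} ``immediate from the definitions'' in the proof of Proposition \ref{prop:top-cat-fibrant-objects}. The rest of your verification is sound.
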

\begin{proof}
This follows from Proposition \ref{prop:top-cat-fibrant-objects} and Lemma \ref{lem:path-object-for-strat}.
\end{proof}

\subsection{}
Notice that, by Proposition \ref{prop:delta-prespace-associated-to-delta-space}, the codiscrete interval $\II$ is a numerically generated stratified space.
Moreover, $\II$ fits into a diagram:
\begin{equation}
\label{eq:chaotic-interval-strat}
\begin{tikzcd}
\terminal\coprod\terminal \ar[r, "{\left(\partial^0,\partial^1\right)}"] &
\II \ar[r, "\sigma"] &
\terminal
\end{tikzcd}
\end{equation}
whose composition is the codiagonal $\left(1_\terminal,1_\terminal\right)\colon\terminal\coprod\terminal\to \terminal$.
As $\sSing(\II)$ is isomorphic to the singular simplicial set associated to the underlying space of $\II$, the morphism $\sSing(\II)\to \Delta^0$ is a trivial Kan fibration, and in particular, $\sigma$ is a trivial fibration in $\Strata$.

\subsection{}
Recall that the \emph{topological $n$-disk} $\DD^{n}$ is the subspace of $\RR^{n+1}$ given by:
\[\DD^{n}=\{x\in \RR^{n+1}: \norm{x}_{2}\le 1\}\]
where $\norm{\blank}_{2}$ is the standard Euclidean norm.

\begin{lemma}
\label{lem:strat-J-trivial-disk}
The stratified space $\sabs{J}$ is a chaotically stratified 2-disk $\ind{\DD^{2}}$.
\end{lemma}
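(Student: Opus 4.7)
The plan is to compute $\sabs{J}$ explicitly, exploiting the fact that the stratified realisation preserves colimits and that $J$ is built from $\Delta^{1}$ by the four iterated pushouts of Construction \ref{con:left-right-inverse}. First I would identify the underlying topological space. The pushout that adds $\beta\colon 1\to 0$ glues two copies of $\abs{\Delta^{1}}$ along their endpoints with a swap, yielding a circle. Gluing in the $2$-simplex $\sigma_{1}$ witnessing $\beta\alpha=1_{0}$ is a pushout whose attaching map $\abs{\partial\Delta^{2}}\to\abs{\Delta^{1}[\beta]}$ collapses the edge $e_{02}$ to the vertex $0$ (since $d_{1}\sigma_{1}=1_{0}$ is degenerate) and wraps $e_{01}$ and $e_{12}$ around $\alpha$ and $\beta$ respectively; since $\abs{\Delta^{2}}/e_{02}$ is a $2$-disk whose boundary is identified homeomorphically with the whole circle, the pushout is again a $2$-disk $D_{1}$. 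The remaining two pushouts (adding $\gamma$ and attaching $\sigma_{2}$) analogously attach a second disk $D_{2}$ to $D_{1}$ along the chord $\alpha$, and two disks glued along a common arc are again homeomorphic to $\DD^{2}$; so the underlying space of $\sabs{J}$ is a $2$-disk.

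Next I would show the preorder on $\sabs{J}$ is chaotic. Since colimits in $\Strat$ are computed as in $\pTop$, the preorder on $\sabs{J}$ is the final structure of \ref{subsec:final-initial-preorder} with respect to the maps $\sabs{\Delta^{n}}\to\sabs{J}$ coming from the simplices of $J$, and because all non-degenerate simplices of $J$ sit in dimensions $\le 2$ it suffices to push forward from the two $2$-simplices $\sigma_{1}$ and $\sigma_{2}$. On $\sabs{\Delta^{2}}$ the stratification $s(t_{0},t_{1},t_{2})=\max\{i:t_{i}\ne 0\}$ places $v_{0}$ in stratum $0$, $v_{1}$ in stratum $1$, and the entire open edge $e_{02}$ in the top stratum $2$. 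For $\sigma_{1}$, where $v_{0},v_{2}\mapsto 0$, $v_{1}\mapsto 1$, and $d_{1}\sigma_{1}=1_{0}$, the whole edge $e_{02}$ is pushed to the point $0$: given any $p$ in the image of $\sigma_{1}$ with preimage $x\in\sabs{\Delta^{2}}$, any $y$ in the open $e_{02}$ satisfies $x\le y$ in $\sabs{\Delta^{2}}$ and $\sigma_{1}(y)=0$, forcing $p\le 0$ in $\sabs{J}$, while $v_{0}\le x$ gives $0\le p$. Hence every point in the image of $\sigma_{1}$ is chaotically equivalent to $0$, and the analogous argument for $\sigma_{2}$ shows every point in its image is chaotically equivalent to $1$; combined with $0\sim 1$ (which already follows from $\alpha$ and $\beta$), all of $\sabs{J}$ lies in a single chaotic orbit.

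To conclude I would invoke Proposition \ref{prop:delta-prespace-associated-to-delta-space}: since $\abs{J}\cong\DD^{2}$ is path-connected, $\ind{\DD^{2}}$ is a numerically generated stratified space, and the identity on underlying spaces defines a morphism $\sabs{J}\to\ind{\DD^{2}}$ which is an isomorphism precisely because the two preorders coincide by the previous paragraph. The main obstacle is the order-theoretic bookkeeping in the second step: naively, the pushforward from a $2$-simplex only places interior points above the vertices, so establishing $p\le 0$ relies crucially on exploiting a degenerate face of each $\sigma_{k}$ to exhibit a preimage of the vertex lying in the top stratum of $\sabs{\Delta^{2}}$, without which the preorder would fail to be chaotic.
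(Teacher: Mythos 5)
Your proof is correct and follows essentially the same route as the paper: identify the underlying space as a $2$-disk by tracking the four pushouts, then show the preorder is chaotic by observing that the degenerate edge $e_{02}$ of each $\sigma_{k}$ lies in the top stratum of $\sabs{\Delta^{2}}$ yet collapses to a vertex, which forces that vertex to be both a minimum and a maximum in the final preorder. The paper states this more compactly as ``$0$ is a minimum and a maximum for the preorder on $\sabs{J}$'', which is exactly the content of your pushforward computation; the appeal to Proposition \ref{prop:delta-prespace-associated-to-delta-space} at the end is not needed once you know the preorder is chaotic, but it does no harm.
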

\begin{proof}
It is clear from the construction of $J$ that the underlying topological space of $\sabs{J}$ is a 2-disk, given by attaching two half disks along the diameter.
To see that the stratification of $\sabs{J}$ is chaotic, it is enough to notice that $0$ is a minimum and a maximum for the preorder on $\sabs{J}$.
\end{proof}

\begin{corollary}
The morphism $\left(\partial^0,\partial^1\right)\colon \terminal\coprod\terminal\to \II$ is a retract of $\terminal\coprod\terminal \to \sabs{J}$. In particular, $\left(\partial^0,\partial^1\right)$ is a cofibration.
\end{corollary}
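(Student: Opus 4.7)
The plan is to construct the retract diagram explicitly and then invoke the fact that the class of cofibrations is closed under retracts (Lemma~\ref{lemma:left-saturated-class}), after first showing that $\term\coprod\term\to \sabs{J}$ is itself a cofibration. For the latter, I would use the adjunction $(\sabs{\blank},\sSing)$: given any trivial fibration $p\colon X\to Y$ in $\Strat$, by definition $\sSing(p)$ is a trivial fibration in the Joyal model structure on $\sSet$, and the map $\term\coprod\term\to J$ in $\sSet$ is a monomorphism (hence a cofibration for the Joyal model structure), so every lifting problem of $\term\coprod\term\to \sabs{J}$ against $p$ transposes to a solvable lifting problem in simplicial sets.

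Next I would construct the retract. By Lemma~\ref{lem:strat-J-trivial-disk} the stratified space $\sabs{J}$ has underlying topological space a $2$-disk with chaotic preorder, obtained as the pushout of two copies of $\abs{\Delta^{2}}$ along the edge $\alpha\colon 0\to 1$ of the original $\Delta^{1}\subset J$. I would define $i\colon \II\to \sabs{J}$ to be the composite of the canonical identification $\II\cong\abs{\Delta^{1}}$ with the inclusion of the edge $\alpha$ into $\sabs{J}$; this sends the two endpoints $\partial^{0},\partial^{1}$ of $\II$ to the vertices $0,1$ of $J$. For the retraction $r\colon \sabs{J}\to \II$, I would use the standard deformation retraction of each $\abs{\Delta^{2}}$ onto one of its edges (namely the shared edge $\alpha$): on each triangle fix $\alpha$ pointwise and collapse the opposite vertex to a chosen point of $\alpha$ via linear interpolation; these retractions agree on $\alpha$ and so glue to a continuous map on the pushout. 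Since both $\sabs{J}$ and $\II$ carry the chaotic preorder, any continuous map between them is automatically a morphism in $\Strat$, so $i$ and $r$ lift to $\Strat$ and satisfy $r\circ i=1_{\II}$.

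Assembling the diagram
\begin{equation*}
\begin{tikzcd}
\term\coprod\term\ar[r, equal]\ar[d, "{(\partial^{0},\partial^{1})}"'] &
\term\coprod\term\ar[r, equal]\ar[d] &
\term\coprod\term\ar[d, "{(\partial^{0},\partial^{1})}"]\\
\II\ar[r, "i"'] &
\sabs{J}\ar[r, "r"'] &
\II
\end{tikzcd}
\end{equation*}
exhibits $(\partial^{0},\partial^{1})\colon \term\coprod\term\to \II$ as a retract of $\term\coprod\term\to \sabs{J}$, so that $(\partial^{0},\partial^{1})$ is a cofibration by Lemma~\ref{lemma:left-saturated-class}. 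The only nontrivial step is the first one, establishing that $\term\coprod\term\to\sabs{J}$ is a cofibration; the rest is a straightforward topological construction made easy by the fact that both spaces involved carry the chaotic preorder, so no order-theoretic subtlety enters the verification.
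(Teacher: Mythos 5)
Your argument is correct and takes essentially the same route as the paper's. Both use Lemma~\ref{lem:strat-J-trivial-disk} to identify $\sabs{J}$ with a chaotically stratified $2$-disk, take the image of $\alpha$ (the diameter) as the section $i\colon\II\to\sabs{J}$, observe that chaotic preorders make any continuous map a morphism of preordered spaces, and finish by combining preservation of cofibrations under $\sabs{\blank}$ with closure of cofibrations under retracts. The only substantive difference is the explicit retraction: the paper uses vertical projection $\ind{\DD^2}\to\ind{\II}$, whereas you collapse each half-disk onto $\alpha$ by linear interpolation from the opposite vertex; both are valid.

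One small imprecision worth flagging: $\abs{J}$ is not literally a pushout of two copies of $\abs{\Delta^2}$ along an edge. The $2$-cells of $J$ are attached by $2$-simplices whose $d_1$-face is degenerate ($1_0$ on one side, $1_1$ on the other), so in the realisation that edge is crushed to the vertex $0$ (resp.\ $1$). Consequently the ``chosen point of $\alpha$'' in your linear retraction is not free: it must be that same vertex, otherwise the linear map on $\abs{\Delta^2}$ is not constant on the degenerate edge and does not descend to $\sabs{J}$. With the forced choice the two retractions still agree on $\alpha$ and glue, so the argument goes through as written.
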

\begin{proof}
The diameter $i\colon\II\to \DD^{2}$ from $0$ to $1$ defines a morphism of chaotic stratified spaces, with a retraction $r\colon\ind{\DD^{2}}\to \ind{\II}$ given by vertical projection.
The second part of the claim follows from the fact that $\sabs{\blank}$ preserves cofibrations and that cofibrations are stable under retracts.
\end{proof}

\subsection{}
For every stratified space $X$, applying the functor $X\times(\blank)$ to \eqref{eq:chaotic-interval-strat} yields a decomposition of the codiagonal on $X$
\begin{equation}
\label{eq:cylinder-strat}
\begin{tikzcd}
X\coprod X \ar[r, "{\left(\partial_{X}^0,\partial_{X}^1\right)}"] &
X\times \II \ar[r, "\sigma_{X}"] &
X
\end{tikzcd}
\end{equation}
where $\sigma_{X}$ is a weak equivalence by Lemma \ref{lem:cylinder-top-cat}.
Moreover, if $X$ is a cofibrant stratified space, the morphism $\left(\partial_{X}^{0}, \partial_{X}^{1}\right)$ is a cofibration, again by Lemma \ref{lem:cylinder-top-cat}.

\subsection{}
\label{subsec:stratum-preserving-homotopy}
Given $f\colon X\to Y$ and $g\colon X\to Y$ morphisms between stratified spaces, a \emph{stratum preserving homotopy} from $f$ to $g$ is a morphism $h\colon X\times\II\to Y$ such that $h$ restricted to $X\times\{0\}$ equals $f$ and $X$ restricted to $X\times \{1\}$ equals $g$.
In the presence of a stratum preserving homotopy from $f$ to $g$ we say that $f$ is \emph{stratum preserving homotopic} to $g$ and write $f\sim g$.
Notice that the relation $\sim$ defines an equivalence relation on $\Strata(X,Y)$ since $\II$ fits in a cocartesian diagram:
\begin{equation*}
\push{\term}{\II}{\II}{\II}{\partial^{1}}{}{}{\partial^{0}}
\end{equation*}
of stratified spaces.

\begin{definition}
Let $f\colon X\to Y$ be a morphism between stratified spaces. We say that $f$ is a \emph{stratum preserving homotopy equivalence} if there exists a morphism $g\colon Y\to X$ such that $gf$ is stratum preserving homotopic to the identity on $X$ and $fg$ is stratum preserving homotopic to the identity on $Y$.
\end{definition}

\begin{proposition}
\label{prop:stratum-preserving-equivalence-we}
Every stratum preserving homotopy equivalence is a weak equivalence.
\end{proposition}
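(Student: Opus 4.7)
The proof is essentially an application of the general machinery developed in Section \ref{ch:2sec:4}, instantiated for the Joyal model structure on $\sSet$ and the category $\Strat$ of numerically generated stratified spaces.

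The plan is to invoke Proposition \ref{prop:E-homotopy-we} with the ``interval object'' $E = \II$, the chaotically stratified unit interval. The hypotheses needed are exactly those already verified in \eqref{eq:chaotic-interval-strat} and the surrounding discussion: the chaotic interval $\II$ sits in a factorisation
\begin{equation*}
\begin{tikzcd}
\terminal\coprod\terminal \ar[r, "{\left(\partial^0,\partial^1\right)}"] &
\II \ar[r, "\sigma"] &
\terminal
\end{tikzcd}
\end{equation*}
of the codiagonal on the terminal object of $\Strat$, where $\left(\partial^{0},\partial^{1}\right)$ is a cofibration (as it is a retract of the cofibration $\terminal\coprod\terminal\to\sabs{J}$, by Lemma \ref{lem:strat-J-trivial-disk}) and $\sigma$ is a weak equivalence (since $\sSing(\II)$ is the singular complex of a contractible space, hence the map $\sSing(\II)\to\Delta^{0}$ is a trivial Kan fibration, a fortiori a Joyal weak equivalence).

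The only step left is to match up definitions. Given a morphism $f\colon X\to Y$ of stratified spaces, the notion of stratum preserving homotopy from \ref{subsec:stratum-preserving-homotopy} is, by construction, a morphism $h\colon X\times\II\to Y$ compatible with the maps $\partial_{X}^{0}, \partial_{X}^{1}\colon X\to X\times\II$ coming from applying $X\times(\blank)$ to \eqref{eq:chaotic-interval-strat}. This is exactly an $\II$-homotopy in the sense of \ref{subsec:E-homotopy} (where $E = \II$). Consequently, a stratum preserving homotopy equivalence is precisely an $\II$-equivalence in the sense of Definition \ref{def:E-homotopy}.

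By Proposition \ref{prop:E-homotopy-we}, every such $\II$-equivalence is a weak equivalence, which concludes the proof. No real obstacle is expected: the entire argument is a bookkeeping exercise matching the terminology of stratum preserving homotopies with the abstract $E$-homotopy framework, with the substantive content (the 2-out-of-6 property together with the fact that $\sigma$ and the $\partial^{\epsilon}_{X}$ are weak equivalences, via Remark \ref{rem:partial-we-top-cat}) already encapsulated in Proposition \ref{prop:E-homotopy-we}.
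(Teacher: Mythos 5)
Your proof is correct and matches the paper's own argument: both invoke Proposition \ref{prop:E-homotopy-we} with $E = \II$, after verifying via \eqref{eq:chaotic-interval-strat}, Lemma \ref{lem:strat-J-trivial-disk} and the subsequent corollary that $(\partial^0,\partial^1)$ is a cofibration and $\sigma$ a weak equivalence, and identifying stratum preserving homotopies with $\II$-homotopies. The paper's proof is simply the one-line citation of Proposition \ref{prop:E-homotopy-we}, so your version is the same route with the bookkeeping made explicit.
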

\begin{proof}
The claim follows from Proposition \ref{prop:E-homotopy-we}.
\end{proof}

\begin{definition}
We say that a stratified space $X$ satisfies the \emph{frontier condition} if for every $p$ and $q$ in the base poset of $X$, whenever $X_{p}$ intersects the closure of $X_{q}$ the stratum $X_{p}$ is contained in the closure of $X_{q}$
\end{definition}

\begin{proposition}
Every cofibrant and fibrant stratified space satisfies the frontier condition.
\end{proposition}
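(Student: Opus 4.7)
The strategy is to combine path-connectedness of strata with a homotopy-transport argument enabled by fibrancy, using cofibrancy to produce a local link-like structure. Fix $p, q \in P$ and assume $x_{0} \in X_{p} \cap \overline{X_{q}}$; we must show that every $x' \in X_{p}$ lies in $\overline{X_{q}}$. Being cofibrant and fibrant, $X$ is in particular numerically generated, so by Corollary~\ref{cor:strata-path-connected} its stratum $X_{p}$ is path-connected and we may choose a continuous path $\gamma\colon \II \to X_{p}$ from $x_{0}$ to $x'$; by Proposition~\ref{prop:delta-prespace-associated-to-delta-space}, $\gamma$ defines a morphism $\bar{\gamma}\colon \ind{\II} \to X$ of stratified spaces.

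The key geometric step is to produce an initial elementary exit path $\alpha_{0}\colon \sabs{\Delta^{1}} \to X$ with $\alpha_{0}(0) = x_{0}$ and $\alpha_{0}(1) \in X_{q}$. Here cofibrancy is used decisively: any cofibrant object of $\Strat$ is a retract of a cell complex built by iterated pushouts of the generating cofibrations $\sabs{\partial \Delta^{n}} \hookrightarrow \sabs{\Delta^{n}}$, and one verifies by induction on the cell decomposition the stronger link property that every point of $\overline{X_{q}}$ is the source of an elementary exit path landing in $X_{q}$. The base case uses the explicit stratification of $\sabs{\Delta^{n}}$ by the maximum-coordinate function, and stability of the property under attaching stratified cells, transfinite composition, and retracts is checked by local arguments at each attaching map. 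A concrete reformulation: in a cell complex built this way, the stratification data forces any approach to a higher stratum to proceed through the interior of some cell $\sabs{\Delta^{n}}$, and the cone structure of such a cell then supplies the desired exit path.

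Having produced $\alpha_{0}$, one transports it along $\gamma$ using the fibrant structure of $X$. Proposition~\ref{prop:cotensor-Quillen-functor-strat} applied to the monomorphism $\Delta^{0} \hookrightarrow \Delta^{1}$ given by the initial vertex, together with the fibration $X \to \term$, shows that evaluation at $0$ is a fibration $\ev_{0}\colon X^{\sabs{\Delta^{1}}} \to X$ in $\Strat$. Solving the lifting problem
\begin{equation*}
\begin{tikzcd}
\term \ar[r, "\alpha_{0}"] \ar[d] & X^{\sabs{\Delta^{1}}} \ar[d, "\ev_{0}"] \\
\ind{\II} \ar[r, "\bar{\gamma}"'] \ar[ur, dashed] & X
\end{tikzcd}
\end{equation*}
yields a continuous family $\alpha_{t}$ of elementary exit paths with $\alpha_{t}(0) = \gamma(t)$, and by construction $\alpha_{t}(1) \in X_{q}$ for all $t$, so $\alpha_{1}$ exhibits $x'$ as a limit point of $X_{q}$.

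The hard part is the production of $\alpha_{0}$: this is the only place where cofibrancy really intervenes, and the argument requires careful cellular bookkeeping to show that limit points of higher strata in a cell complex automatically admit a link-like exit path into that stratum. A secondary technical point is ensuring that $\term \hookrightarrow \ind{\II}$ is a trivial cofibration in $\Strat$, which can be arranged by factoring it through a cofibrant surrogate such as the cofibrant interval $\sabs{J} \cong \ind{\DD^{2}}$ of Lemma~\ref{lem:strat-J-trivial-disk} and exploiting that $\sabs{J} \to \term$ is a weak equivalence together with the $J$-homotopy invariance of the problem at hand.
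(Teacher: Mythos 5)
The paper offers no argument of its own here: the proof is a single citation to \cite[Corollary 9.3.0.5]{nan18}, which in turn goes via the equivalence of cofibrant-and-fibrant objects with homotopically stratified spaces (Theorem \ref{thm:cofibrant-fibrant-hoss}). Your attempt is therefore a genuinely different, self-contained route, and the overall architecture --- path-connectedness of strata plus an initial exit path plus a fibrancy-driven transport --- is sound. But as written there are two real gaps.

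The first is the production of $\alpha_0$. You reduce to a link property for cofibrant objects and say it follows by ``induction on the cell decomposition,'' but this is precisely the content and you never carry it out. The verification is doable but delicate: one must use that the open $n$-cell of $\sabs{\Delta^{n}}$ lies entirely in stratum $n$, that for $n\ge 2$ the base poset of $\sabs{\partial\Delta^{n}}$ equals $[n]$ so attaching an $n$-cell does not change the base poset, while for $n=0,1$ the base poset of $\sabs{\partial\Delta^{n}}$ is discrete and the attaching map may identify strata, and that the only points of the new cell which accumulate at a point of the base lie in its image under the attaching map and are reached by line segments that enter the interior immediately. You also never record why stability under retracts works, namely that the composite $P\to Q\to P$ of base poset maps for a retract is the identity so the retraction sends stratum $\alpha(q)$ back into stratum $q$. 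None of this is difficult, but you cannot leave ``one verifies'' where the proposition actually lives.

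The second gap is the transport step. You want to lift the path $\bar{\gamma}\colon \ind{\II}\to X$ against the fibration $\ev_{0}\colon X^{\Delta^{1}}\to X$, and for that you need $\term\to\ind{\II}$ to be a \emph{trivial} cofibration in $\Strat$, i.e.\ to have the left lifting property against fibrations; being a cofibration and a weak equivalence does not give you this, since the paper explicitly refrains from asserting a model structure on $\Strat$ and flags in a footnote that acyclic cofibrations and trivial cofibrations need not coincide. Your $\sabs{J}$ suggestion is the right idea but is phrased too loosely to constitute an argument: the precise fix is that $\term\to\sabs{J}$ is the realisation of the Joyal trivial cofibration $\Delta^{0}\to J$ and hence does have the left lifting property against every fibration in $\Strat$ (by adjunction, since $\sSing$ reflects fibrations), and $\ind{\II}$ is a retract of $\sabs{J}$ by the corollary following Lemma \ref{lem:strat-J-trivial-disk}; so one solves the lifting problem against $\bar{\gamma}\circ r\colon \sabs{J}\to X$ and restricts along the section $\ind{\II}\to\sabs{J}$. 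Finally you should also spell out why $\alpha_{t}(1)$ remains in $X_{q}$: the lift $\ind{\II}\to X^{\Delta^{1}}$ is a stratified map from a chaotic source, so all $\alpha_{t}$ are chaotically equivalent in $X^{\Delta^{1}}$, and the pointwise description of the exponential preorder then forces $\alpha_{t}(1)\sim\alpha_{0}(1)$ in $X$ for every $t$. With those two pieces filled in, the argument goes through and would be a nice alternative to the citation.
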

\begin{proof}
See \cite[Corollary 9.3.0.5]{nan18}
\end{proof}

\begin{definition}
Let $X$ be a stratified space and let $p$ and $q$ be elements in the base poset of $X$.
The \emph{homotopy link} from $X_{p}$ to $X_{q}$ is the pullback
\begin{equation}
\pull{\holink(X_{p},X_{q})}{X^{\Delta^{1}}}{X\times X}{X_{p}\times X_{q}}{}{}{}{}
\end{equation}
taken in the category $\Top$.
\end{definition}

\subsection{}
Recall that a subspace $i\colon A\subset X$ of a topological space $X$ is a \emph{strong deformation retract} if there exists a retraction $r\colon X\to A$ together with a homotopy $h\colon \II\times X\to X$ from $ir$ to the identity on $X$ such that the restriction of $h$ to $\II\times A$ coincides with the projection onto $A$.

\begin{definition}
Let $i\colon A\to X$ be a union of strata in a stratified space $X$. We say that $i$ is an \emph{almost stratum preserving strong deformation retract} if the underlying topological map of $i$ is a strong deformation retract with deformation $h$ such that, for every point $x\in X$, the restriction $h_{x}$ of $h$ to $\II\times \{x\}$ defines an elementary exit path in $X$.
\end{definition}

\begin{definition}
Let $A\subset B\subset X$ be embeddings of union of strata in $X$. We say that $A$ is \emph{tame} in $B$ if there exists a neighbourhood $N$ of $A$ in $B$ such that the inclusion $A\to B$ is an almost stratum preserving strong deformation retract. 
\end{definition}

\begin{definition}
\label{def:homotopically-sset}
Let $X$ be a stratified space. We say that $X$ is a \emph{homotopically stratified space} if, for every $p$ and $q$ in the base poset of $X$, with $p\le q$, the following properties hold
\begin{enumerate}
\item
The stratum $X_{p}$ is tame in $X_{p}\cup X_{q}$.
\item 
The morphism 
\[\partial_{0}^{*}\colon\holink(X_{p},X_{q})\to X_{p}\]
induced by the evaluation at $0$ is a Serre fibration.
\end{enumerate}
\end{definition}

\begin{theorem}
\label{thm:cofibrant-fibrant-hoss}
Every cofibrant and fibrant stratified space is a homotopically stratified space.
\end{theorem}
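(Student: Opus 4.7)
The plan is to verify both conditions of Definition \ref{def:homotopically-sset} separately for a cofibrant and fibrant stratified space $X$, fixing $p\le q$ in its base poset $P$. As a preliminary reduction, the preceding proposition gives that $X$ satisfies the frontier condition, so $X_p\cup X_q$ is genuinely a union-of-strata substratification of $X$ on which the two tests must be performed.

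To establish that $\partial_{0}^{*}\colon \holink(X_{p},X_{q})\to X_{p}$ is a Serre fibration, I would first exploit Corollary \ref{cor:internal-hom-quillen-strat}: the initial vertex inclusion $\{0\}\hookrightarrow \Delta^{1}$ is a cofibration of simplicial sets, and $X\to \term$ is a fibration in $\Strat$ because $X$ is fibrant. Hence the evaluation $X^{\Delta^{1}}\to X^{\{0\}}\times_{\term^{\{0\}}}\term^{\Delta^{1}}\cong X$ is a fibration of stratified spaces. Pulling this back along $X_{p}\hookrightarrow X$ and further along $X_{q}\hookrightarrow X$ via the evaluation at $1$ (using that the class of fibrations is stable under base change by Theorem \ref{thm:strat-fibrant-objects}) produces the map $\holink(X_{p},X_{q})\to X_{p}$ as a fibration in $\Strat$. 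The main obstacle here, and the step I expect to be most delicate, is converting this stratified fibration into a topological Serre fibration of the underlying spaces: one must show that lifting against the maps $\DD^{n}\times \{0\}\hookrightarrow \DD^{n}\times\II$ can be extracted from the simplicial lifting property of $\sSing(\partial_{0}^{*})$, using that the source and target of $\partial_{0}^{*}$ lie in single strata (so the preorder is trivial there) and that the chaotic cylinder $\ind{\II}$ is a numerically generated stratified space by Proposition \ref{prop:delta-prespace-associated-to-delta-space}.

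To establish tameness of $X_{p}$ in $X_{p}\cup X_{q}$, the strategy is to produce the required neighbourhood as the image under $\partial_{0}^{*}$ of a small collar in $\holink(X_{p},X_{q})$, together with an almost stratum preserving strong deformation retraction obtained from the universal exit path $\holink(X_{p},X_{q})\times \sabs{\Delta^{1}}\to X$. Concretely, I would use the exponential adjunction provided by Theorem \ref{thm:strat-cartesian-closed} to convert sections of the Serre fibration established in the previous paragraph into stratum-preserving homotopies on a neighbourhood, and then appeal to cofibrancy of $X$ to produce enough sections (by a cell-by-cell lifting argument against the acyclic fibration $X^{J}\to X$ furnished by Lemma \ref{lem:path-object-for-strat}) so that the resulting neighbourhood actually covers an open set around $X_{p}$.

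The hardest step will be the passage from the enriched, simplicial fibration statements to the purely topological Serre fibration and deformation retraction statements; in particular, the construction of the collar neighbourhood in the tameness condition requires a careful choice of cross-section of $\partial_{0}^{*}$ compatible with the frontier condition, and this is where I would expect to have to invoke the CW structure from cofibrancy most seriously. Throughout, the proof should remain at the level of mapping into and out of stratified simplices via Proposition \ref{prop:C_I-coreflective} and the coreflectivity of $\Strat$ in $\pTop$, so that the stratified and topological pictures can be compared stratum-by-stratum.
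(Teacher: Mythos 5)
The paper itself does not prove this theorem --- it cites \cite[Theorem~9.3.0.2]{nan18}, where the argument occupies a substantial portion of a thesis chapter --- so your proposal is best judged on its own terms. As a sketch it identifies the right split (fibrancy for the holink condition, cofibrancy for tameness), but there are concrete gaps at each of the two steps you flag as hard.

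For the Serre fibration, you obtain from Proposition~\ref{prop:cotensor-Quillen-functor-strat} that $X^{\Delta^{1}}\to X$ is a fibration \emph{in $\Strat$}, i.e.\ that $\sSing$ of this map is a Joyal fibration. But the target statement is that the underlying continuous map of $\holink(X_{p},X_{q})\to X_{p}$ is a Serre fibration, and the bridge between the two is never built. Three points are skipped. (a) The homotopy link is defined by a pullback taken in $\Top$, not in $\Strat$; since limits in $\Strat$ are not created by $U\colon\Strat\to\Top$ (cf.\ \ref{subsec:limits-colimits-strat}), you cannot simply declare the topological pullback to be the stratified one and apply base-change stability of fibrations from Theorem~\ref{thm:strat-fibrant-objects}. (b) Even granting that, a Joyal fibration has the right lifting property against \emph{inner} horn inclusions and against $\{0\}\to J$; the Serre condition is lifting against all $\DD^{n}\times\{0\}\to\DD^{n}\times\II$, which by the usual adjunction corresponds to lifting against \emph{all} horn inclusions for $\Sing$. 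You would need to argue that $\sSing$ of both source and target are Kan complexes (plausible, since both lie in single strata, so $\sSing$ reduces to ordinary $\Sing$), and that a Joyal fibration between Kan complexes is a Kan fibration, and only then transfer by adjunction. (c) You would also need the preorder on the stratified cotensor $X^{\Delta^{1}}$ to restrict trivially over $X_{p}\times X_{q}$; this is true but needs justification, since $X^{\Delta^{1}}$ is genuinely stratified over $P\times P$.

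For tameness the argument is not yet an argument. The ``image under $\partial_{0}^{*}$ of a small collar'' lands in $X_{p}$, which is a closed union-of-strata subset, not a neighbourhood of $X_{p}$ inside $X_{p}\cup X_{q}$. What you presumably want is the image of a collar under the evaluation $\holink(X_{p},X_{q})\times\sabs{\Delta^{1}}\to X$ at small positive times; but then openness of the resulting set, well-definedness of the retraction, and the condition that the retraction restricts to an elementary exit path on each vertical fibre all require work. Crucially, cofibrancy of $X$ is only gestured at; in Nand-Lal's treatment it is used to give $X$ a relative cell structure that makes the collar construction possible, and without spelling that out the ``cell-by-cell lifting against $X^{J}\to X$'' step is not a proof. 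As written, the proposal reproduces the \emph{statement} of the two conditions with plausible ingredients but does not close either one.
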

\begin{proof}
See \cite[Theorem 9.3.0.2]{nan18}
\end{proof}

\begin{theorem}
\label{thm:hoss-fibrant}
Let $X$ be a homotopically stratified space with finite stratification and assume that the topology on $X$ is induced by a metric. Then,  $X$ is a fibrant stratified space.
\end{theorem}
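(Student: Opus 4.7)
The plan is to verify fibrancy of $X$ by solving inner horn extension problems adjointly. By the adjunction $(\sabs{\blank}, \sSing)$, the condition that $\sSing(X)$ is an $\infty$-category is equivalent to the requirement that every inner horn inclusion $\sabs{\Lambda^n_k} \to \sabs{\Delta^n}$ (with $0 < k < n$) lifts on the left against $X \to \term$; explicitly, every map $f\colon \sabs{\Lambda^n_k} \to X$ must extend to a morphism $\sabs{\Delta^n} \to X$. I would prove this by induction on the number of strata of $X$ met by the image of $f$, which is finite by hypothesis.

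For the base case, when the image of $f$ lies in a single stratum $X_p$, the problem reduces to the classical fact that $\Sing(X_p)$ is a Kan complex and hence an $\infty$-category: the inner horn can be filled inside the topological space $X_p$, and the resulting filler $\sabs{\Delta^n} \to X_p \hookrightarrow X$ is automatically a morphism of stratified spaces, since its image lies in a single stratum.

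For the inductive step, let $p$ be a minimal stratum met by $f$ and decompose $X$ as the closed subspace $X_p$ together with the open complement $X_{>p}$, whose induced stratification has strictly fewer strata. The two defining properties of a homotopically stratified set (Definition \ref{def:homotopically-sset}) come into play: the tameness of $X_p$ in $X_p \cup X_q$ for each $q > p$, combined with the metric hypothesis to ensure paracompactness and the coherent choice of tame neighbourhoods across the finitely many pairs $(p,q)$, lets me deform the portion of $f$ lying near $X_p$ via an almost stratum-preserving strong deformation retract into $X_p$ itself. The Serre fibration property of $\partial_0^*\colon \holink(X_p, X_q) \to X_p$ then allows me to lift combinatorial filling data living on $X_p$ to genuine exit paths from $X_p$ into $X_q$, in much the same way that ordinary Kan fillings proceed against Serre fibrations.

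The main obstacle will be the combinatorial bookkeeping in the inductive step: given the inner horn $f$, I must decompose $\sabs{\Lambda^n_k}$ (after the tameness-based deformation) into sub-regions that either lie entirely in $X_{>p}$, where the inductive hypothesis supplies a filler, or consist of exit paths out of $X_p$, where the holink fibration supplies a lift, and then glue the resulting partial fillers into a single continuous map $\sabs{\Delta^n} \to X$ that respects the stratification map of $\sabs{\Delta^n}$ (in particular, the interior of the $k$-th face must be sent into the correct higher strata). Finiteness of the stratification guarantees termination of the induction, and the metric structure is essential for making the neighbourhood choices, deformations, and gluings globally compatible; I expect the bulk of the technical work to lie in verifying this compatibility rigorously.
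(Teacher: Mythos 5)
The paper's ``proof'' of this theorem is a citation to \cite[Proposition 8.1.2.6]{nan18}: no argument is given in the text, so there is no in-paper proof against which to compare. Your strategy is nonetheless the expected one and matches the approach taken in that literature (Quinn, Miller, Nand-Lal): reduce by the adjunction $(\sabs{\blank},\sSing)$ to filling stratified inner horns $\sabs{\Lambda^n_k}\to\sabs{\Delta^n}$ against $X$, dispose of the single-stratum base case using the fact that $\Sing$ of any space is a Kan complex, and drive the inductive step with tameness and the Serre-fibration property of $\partial_0^*\colon\holink(X_p,X_q)\to X_p$.

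Two things in the sketch need repair. First, the finiteness hypothesis is not doing the work you assign it: any map $\sabs{\Lambda^n_k}\to X$ can only meet at most $n+1$ strata, whether or not the stratification of $X$ is finite, so the ``induction on the number of strata met by $f$'' terminates for free. What finiteness and metrisability actually buy you is a globally coherent system of tame neighbourhoods of $X_p$ that works for \emph{all} $q>p$ simultaneously, together with a well-behaved nearly stratum-preserving retraction of that neighbourhood onto $X_p$; this is nontrivial to arrange and is precisely where those two hypotheses enter in the cited arguments. Second, the step you flag as ``the bulk of the technical work''---decomposing the horn filler into a piece retracted onto $X_p$, a piece in $X_{>p}$ filled by induction, and gluing these along an interface using lifts against the holink fibration while keeping the result a \emph{stratified} map over the correct $[n]\to P$---is not a technical footnote but the entire content of the theorem. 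As written you have named the ingredients and identified the hard step, but stopped exactly where the proof must begin; in its present form this is a credible plan rather than a proof.
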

\begin{proof}
See \cite[Proposition 8.1.2.6]{nan18}
\end{proof}

\begin{theorem}
\label{thm:we-stratum-preserving-equivalence}
A morphism $f\colon X\to Y$ between cofibrant and fibrant stratified spaces is a weak equivalence if and only if it is a stratum preserving homotopy equivalence.
\end{theorem}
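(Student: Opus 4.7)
The forward direction, that a stratum preserving homotopy equivalence is a weak equivalence, is already Proposition \ref{prop:stratum-preserving-equivalence-we}, so the substance lies in the converse. The plan is to reduce weak equivalence to $\sabs{J}$-homotopy equivalence (the notion of homotopy inherited from the path objects of the category of fibrant objects) and then transport that to $\II$-homotopy equivalence, which is precisely the stratum preserving version.

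First I would show that any weak equivalence $f\colon X\to Y$ between cofibrant and fibrant objects of $\Strat$ admits a two-sided homotopy inverse with respect to the path object $X^J=X^{\sabs{J}}$ of Lemma \ref{lem:path-object-for-strat}. This is the standard argument in the style of Theorem \ref{thm:cofibrant-fibrant}: using Theorem \ref{thm:homotopy-category-fibrant-objects} applied to $\Strat_{f}$ we know that $\Ho(\Strat_{f})(X,Y)$ is computed by the equivalence relation on $\Strat(X,Y)$ generated by $\sabs{J}$-homotopies after precomposition with a weak equivalence, and using the cofibrancy of $X$ (equivalently, the left lifting property of $\initial\to X$ against trivial fibrations) together with the factorisation Lemma \ref{lem:factorisation-lemma} one can absorb the auxiliary weak equivalence and promote the inverse in $\Ho(\Strat_{f})$ to an honest map $g\colon Y\to X$ together with $\sabs{J}$-homotopies $fg\sim 1_Y$ and $gf\sim 1_X$.

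Next I would convert these $\sabs{J}$-homotopies into stratum preserving homotopies. By Lemma \ref{lem:strat-J-trivial-disk} and the corollary following it, the map $(\partial^0,\partial^1)\colon \term\coprod \term\to \II$ is a retract of $(\partial^0,\partial^1)\colon \term\coprod\term\to \sabs{J}$; in particular there is a section $\iota\colon \II\to \sabs{J}$ of the vertical projection with $\iota(0)=\partial^0$ and $\iota(1)=\partial^1$. Given any $\sabs{J}$-homotopy $H\colon X\times\sabs{J}\to Y$ between maps $f_0,f_1\colon X\to Y$, the composite
\[
H\circ(1_X\times\iota)\colon X\times\II\longto Y
\]
is a stratum preserving homotopy from $f_0$ to $f_1$ in the sense of \ref{subsec:stratum-preserving-homotopy}. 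Applying this conversion to the two $\sabs{J}$-homotopies produced in the first step yields that $g$ is a stratum preserving homotopy inverse to $f$.

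The main obstacle is the first step: making precise the passage from a weak equivalence, defined via the Joyal model structure on $\sSing$, to an honest $\sabs{J}$-homotopy equivalence. Because we have only established that $\Strat_{f}$ is a category of fibrant objects (Theorem \ref{thm:strat-fibrant-objects}) rather than a full model category on $\Strat$, removing the auxiliary weak equivalence that appears in the Brown-style description of $[X,Y]$ genuinely requires using the cofibrancy of $X$; this is the place where the hypothesis that both $X$ and $Y$ are cofibrant and fibrant is essential, and it is this reduction (carried out in detail in \cite{nan18}) that carries the weight of the theorem. Once this is in hand, the second step is a purely formal use of Lemma \ref{lem:strat-J-trivial-disk}.
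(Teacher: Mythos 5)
The paper supplies no internal proof of this theorem: it simply cites \cite[Theorem 9.3.0.7]{nan18}, so there is no argument in the text to compare your proposal against line by line. That said, your proposed decomposition is sound and adds useful structure.

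Your two-step reduction is correct as stated. The forward implication is indeed Proposition \ref{prop:stratum-preserving-equivalence-we}. For the converse, step two---converting a $\sabs{J}$-homotopy to an $\II$-homotopy by precomposing with $1_X\times\iota$, where $\iota\colon\II\to\sabs{J}$ is the section of the vertical projection furnished by Lemma \ref{lem:strat-J-trivial-disk} and its corollary---is correct and complete from the material in the chapter: $\iota$ sends $0,1$ to $\partial^0,\partial^1$ of $\sabs{J}$, so endpoints are preserved, and Cartesian closedness (Theorem \ref{thm:strat-cartesian-closed}) allows you to pass freely between right homotopies valued in $Y^J$ and left $\sabs{J}$-homotopies from $X\times\sabs{J}$. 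This is a genuinely new piece of the argument that the paper does not spell out.

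The place where the weight sits, and where your proposal remains a sketch, is step one. You correctly diagnose that passing from ``$[f]$ is invertible in $\Ho(\Strat_f)$'' to an honest pair of $\sabs{J}$-homotopies $gf\sim 1_X$, $fg\sim 1_Y$ requires killing the auxiliary weak equivalence in Brown's description of $[X,Y]$, and that this is exactly where cofibrancy of both $X$ and $Y$ enters. Be aware, though, that this is less routine than ``the standard argument'' suggests: $\Strat$ has only the weak factorisation system $(\mathrm{Cof},\mathrm{TrivFib})$ unconditionally (via Corollary \ref{cor:wfs-locpres}), not a full model structure, so the Whitehead argument cannot simply be imported from Theorem \ref{thm:cofibrant-fibrant}; one has to work with the single WFS together with the factorisation lemma \ref{lem:factorisation-lemma} in $\Strat_f$, and this is precisely the content of \cite[Theorem 9.3.0.7]{nan18}. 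Since you---like the paper---defer that step to Nand-Lal, this is not a fatal gap, but you should be explicit that it is \emph{that} part of \cite{nan18}'s argument, not merely a formal model-categorical manipulation, that is being invoked.
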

\begin{proof}
See \cite[Theorem 9.3.0.7]{nan18}
\end{proof}

\begin{definition}
\label{def:exit-path-category}
Let $X$ be a fibrant stratified space. The \emph{exit-path category} of $X$ is the homotopy category of the $\infty$-category associated to $X$:
\[\Exit X=\ho\Sing X\]
\end{definition}

\begin{lemma}
\label{lem:exit-path-I-homotopy}
Let $X$ be a fibrant stratified space, let $x$ and $x'$ be points of $X$ and let $f$ and $g$ be exit paths in $X$ from $x$ to $x'$.
Then $f$ and $g$ define the same morphism in $\Exit X$ if and only if there exists an $\II$-homotopy $h\colon \sabs{\Delta^{1}}\times \II\to X$ from $f$ to $g$, whose restriction to $\partial\Delta^{1}\times \II$ is constant.
\end{lemma}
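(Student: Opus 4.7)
The plan is to factor the claim through the identification $\Exit X(x,x')=\ho\sSing X(x,x')$ via the simplicial mapping space, and then bridge simplicial $J$-homotopies to chaotic $\II$-homotopies using Lemma \ref{lem:strat-J-trivial-disk}.

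First I would reduce the claim to a statement about $\pi_0$ of a mapping Kan complex. Since $X$ is fibrant, $\sSing X$ is an $\infty$-category, so $\Exit X=\ho\sSing X$ by Definition \ref{def:exit-path-category}. Combining Theorem \ref{thm:boardmann-vogt} with the natural bijection $\pi_0(Y(x,x'))\cong\ho Y(x,x')$ recalled just above the lemma, two exit paths $f,g$ from $x$ to $x'$ represent the same morphism in $\Exit X$ iff they lie in the same connected component of $\sSing(X)(x,x')$. Because $\sabs{\blank}\dashv\sSing$ is $\sSet$-enriched (Theorem \ref{thm:strat-enriched-tensored-cotensored}) and $\sSing$ preserves limits, the pullback defining $\sSing(X)(x,x')$ is isomorphic to $\sSing(X^{\Delta^1}_{x,x'})$, where $X^{\Delta^1}_{x,x'}$ is the stratified path space of $X$ based at $(x,x')$, obtained as a pullback of the cotensor $X^{\Delta^1}=\ihom(\sabs{\Delta^1},X)$.

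Second, I would compute this $\pi_0$ using $\sabs{J}$-homotopies. The simplicial set $\sSing(X)(x,x')$ is a Kan complex, and on a Kan complex $\pi_0$ is the quotient of the vertices by the already-symmetric relation ``connected by a $1$-simplex''. The inclusion $\Delta^1\hookrightarrow J$ is a monomorphism between weakly contractible simplicial sets, hence a trivial cofibration in the Kan--Quillen model structure, so it lifts against the structural map of any Kan complex. Thus every $1$-simplex in $\sSing(X)(x,x')$ from $f$ to $g$ extends to a morphism $J\to\sSing(X)(x,x')$ whose values at the two vertices of $J$ are $f$ and $g$. Translating back through the enriched adjunction, this says that $[f]=[g]$ in $\Exit X$ iff there exists $H\colon\sabs{\Delta^1}\times\sabs{J}\to X$, constant equal to $x$ on $\{0\}\times\sabs{J}$ and to $x'$ on $\{1\}\times\sabs{J}$, with $H|_{\sabs{\Delta^1}\times\{0_J\}}=f$ and $H|_{\sabs{\Delta^1}\times\{1_J\}}=g$.

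Finally, I would bridge between $\sabs{J}$- and $\II$-homotopies using Lemma \ref{lem:strat-J-trivial-disk} together with the discussion following it: $\sabs{J}$ is a chaotic $2$-disk containing $\II$ as its diameter $i\colon\II\to\sabs{J}$, and the vertical projection $r\colon\sabs{J}\to\II$ is a retraction in $\Strat$ sending the two vertices $0_J,1_J$ to $0$ and $1$ respectively. Precomposing an $\II$-homotopy $h\colon\sabs{\Delta^1}\times\II\to X$ with $1\times r$ produces a $\sabs{J}$-homotopy with the same vertex data, and precomposing a $\sabs{J}$-homotopy with $1\times i$ yields an $\II$-homotopy; both operations preserve constancy on $\partial\Delta^1\times(-)$ and the vertex values, so the two homotopy relations agree and the lemma follows. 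The main obstacle is purely bookkeeping, namely keeping track of the endpoint constraints through the chain of adjunctions and through the extension of the $1$-simplex to $J$; this is made automatic by the fact that the inclusion $\Delta^1\hookrightarrow J$ (arising from formally adjoining inverses to the generating edge) is the identity on vertices.
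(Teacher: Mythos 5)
Your proof is correct, but it takes a genuinely different route from the paper's. The paper argues directly at the level of stratified domains: by the Boardmann--Vogt theorem, $[f]=[g]$ in $\Exit X=\ho\sSing X$ if and only if there is a $2$-simplex $k\colon\sabs{\Delta^{2}}\to X$ restricting to $(f,1_{x'},g)$ on $\partial\sabs{\Delta^{2}}$, and it then identifies, under the respective boundary constraints, the domain of such a $k$ with the domain of $h$ --- both being ``$2$-disks stratified by a point and its complement'' --- so that the two kinds of data correspond. You instead route through the simplicial machinery: reduce to $\pi_{0}$ of the mapping Kan complex $\sSing(X)(x,x')$, note that on a Kan complex $\pi_{0}$ is detected by single $1$-simplices, extend such a $1$-simplex along the Kan--Quillen trivial cofibration $\Delta^{1}\hookrightarrow J$ to a $J$-homotopy, and finally transit between $\sabs{J}$-homotopies and $\II$-homotopies using the retraction pair $(i,r)$ provided by Lemma~\ref{lem:strat-J-trivial-disk} and the corollary following it. The paper's approach is shorter and stays entirely inside $\Strat$, at the price of leaving the geometric identification of the two stratified $2$-disks terse. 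Your approach is longer but more explicit: it correctly isolates why a detour through $J$ is forced (there is no morphism $\II\to\sabs{\Delta^{1}}$ in the homotopy direction, since the chaotic preorder is strictly finer than the stratified one, so one cannot simply precompose the $\sabs{\Delta^{1}}$-homotopy coming from a $1$-simplex of the mapping space), and it exploits that $\sabs{J}\cong\ind{\DD^{2}}$ is chaotic, so that $\II$ embeds as a diameter with a section $r$. Both proofs are valid; yours makes the role of $\sabs{J}$ completely transparent where the paper compresses it into a single geometric remark.
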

\begin{proof}
By definition of the homotopy category, $f$ and $g$ define the same morphism if and only if there exists a map $k\colon\sabs{\Delta^{2}}\to X$ which restricts to $(f,g,1_{x'})$ on $\partial\sabs{\Delta^{2}}$.
On the level of underlying topological spaces, it is clear how a morphism $k$ determines a morphism $h$ and viceversa.
To conclude the proof, it is enough to notice that the images of $h$ and $k$ as above are 2-disks stratified by a point and its complement.
\end{proof}

\begin{example}
\label{ex:exit-path-standard-simplex}
The stratified standard $n$-simplex $\sabs{\Delta^{n}}$ is a fibrant stratified space by Example \ref{ex:realisation-sing-poset-co-fibrant}, hence it possesses an exit-path category.
Moreover, we have the following equivalence of categories:
\[\Exit \sabs{\Delta^{n}}\cong [n]\]
Indeed, by Lemma \ref{lem:exit-path-I-homotopy} it is enough to observe that every exit path $\sabs{\Delta^{1}}\to \sabs{\Delta^{n}}$ from a stratum $k$ to a stratum $j$ is stratum preserving homotopic to the linear exit-path from the vertex $k$ to the vertex $j$.
\end{example}

\begin{definition}
\label{def:stratified-cover}
Let $X$ be a homotopically stratified space, and assume that the strata of $X$ are locally $0$-connected and locally $1$-connected.
A \emph{stratified cover} of $X$ is a morphism $p\colon E\to X$ of stratified spaces, such that $p$ is a local homeomorphism and the restriction of $p$ to every stratum of $X$ is a topological cover.
We denote by $\sCover{X}$ the category of stratified covers over $X$.
\end{definition}

\begin{comment}
\subsection{}
Let $X$ be a stratified space.
Following \cite{woo08}), a \emph{family of po-paths} in $X$ is a continuous map $f\colon A\times [0,1]\to UX$ of topological spaces such that $A$ is a CW-complex and $f(a,\blank)\colon [0,1]\to UX$ defines a morphism $(\II,\le)\to X$ from the standard ordered interval to $X$. 
A morphism of stratified space $p\colon E\to X$ is  \emph{right orthogonal with respect to families of po-paths} if for every commutative square in $\Top$:
\begin{equation}
\lift{A}{UE}{UX}{A\times \II}{a}{Up}{f}{i_{0}}{k}
\end{equation}
where $f$ defines a family of po-paths in $X$, there exists a unique lift in the diagram such that $k$ defines a family of po-paths in $X$.
Let us denote by $\UL{X}$ the full subcategory of the slice category spanned by the morphisms which are right orthogonal with respect to families of po-paths, then we have the following.
\end{comment}

\begin{theorem}[Woolf]
\label{thm:woolf}
Let $X$ be a fibrant homotopically stratified space with locally $0$-connected and locally $1$-connected strata.
Then, there exists an equivalence of categories:
\[\sCover{X}\cong \Set^{\Exit X}.\]
\end{theorem}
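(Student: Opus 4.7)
The plan is to reduce the statement to the simplicial analogue, namely Theorem \ref{thm:left-covers-fiber-functors}, by establishing a chain of equivalences
\[
\sCover{X} \;\simeq\; \LCover{\sSing X} \;\simeq\; \Set^{\tau_1 \sSing X} \;\simeq\; \Set^{\Exit X},
\]
and the main obstruction is the first of these.

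First, I would set up the comparison adjunction between simplicial left covers of $\sSing X$ and stratified covers of $X$, modelled on Proposition \ref{prop:sing-realisation-cover} and the factorisation \eqref{eq:factorisation-left-cover}. Given $p\colon E \to \sSing X$ a left cover, one can form $\sabs{E} \to \sabs{\sSing X} \to X$ using the counit of \eqref{eq:sset-strat-adjunction}, and show the resulting map is a stratified cover in the sense of Definition \ref{def:stratified-cover}: stratum-wise this reduces to the classical theorem on geometric realisations of simplicial covers (\ref{thm:cover-fiber-equivalence}). Conversely, $\sSing$ sends stratified covers to left covers of $\sSing X$, since a stratified cover has the unique right lifting property against each $0\colon \sabs{\Delta^0}\to \sabs{\Delta^n}$ (the initial vertex inclusion lifts uniquely along any local homeomorphism of stratified spaces whose restrictions to strata are covers, using that exit paths into the initial stratum lift by the local trivialisation over each stratum together with the homotopically stratified condition to glue across strata). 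This yields an adjunction $\sabs{\blank} \dashv \sSing \colon \LCover{\sSing X} \rightleftarrows \sCover{X}$.

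Second, and this is the hard step, I would prove that this adjunction is an equivalence under the hypotheses that $X$ is fibrant, homotopically stratified, with locally $0$-connected and locally $1$-connected strata. The unit $p \to \sSing\sabs{p}$ on left covers is checked by comparing fibres at each $0$-simplex, where a left cover is determined by its fibre over every vertex and the action of the fundamental category (Lemma \ref{lem:universal-left-cover-representable}); the key input is Theorem \ref{thm:cover-fiber-equivalence} applied strata by strata. For the counit on a stratified cover $q\colon Y\to X$, one uses that, stratum-by-stratum, local $0$- and $1$-connectedness ensure that the classical singular-realisation adjunction for covers is an equivalence (Corollary \ref{cor:cover-sset-top-equivalence}), and the homotopically stratified hypothesis guarantees that these stratum-wise equivalences glue coherently along exit paths via the homotopy link fibrations of Definition \ref{def:homotopically-sset}. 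This is the stratified analogue of Corollary \ref{cor:cover-sset-top-equivalence} and is where all the hypotheses of the theorem are essentially used.

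Third, applying Theorem \ref{thm:left-covers-fiber-functors} to the simplicial set $\sSing X$ yields an equivalence $\LCover{\sSing X} \simeq \Set^{\tau_1 \sSing X}$. Finally, because $X$ is fibrant, $\sSing X$ is an $\infty$-category, so by the Boardmann–Vogt theorem (Theorem \ref{thm:boardmann-vogt}) there is a canonical isomorphism $\tau_1 \sSing X \cong \ho \sSing X$, which is precisely $\Exit X$ by Definition \ref{def:exit-path-category}. Composing the three equivalences produces the desired equivalence $\sCover{X} \simeq \Set^{\Exit X}$. The main technical content thus sits in the second step; everything else is either formal or already in the text.
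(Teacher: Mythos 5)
The paper does not prove this theorem; it attributes it to Woolf and cites \cite[Theorem 4.3]{woo08}, where the exit-path category is built directly from exit paths and the classification of covers is established by a direct monodromy argument (via families of po-paths, not via the singular simplicial set). Your strategy — reducing to the simplicial classification $\LCover{\sSing X}\simeq\Set^{\tau_{1}\sSing X}$ of Theorem \ref{thm:left-covers-fiber-functors} and then identifying $\tau_{1}\sSing X$ with $\Exit X$ by Boardmann--Vogt — is therefore a genuinely different route. The last two links in your chain are immediate from the paper's machinery; it is only the first, $\sCover{X}\simeq\LCover{\sSing X}$, that carries any content, and that step has a real gap.

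Concretely: the functor you propose in the forward direction, sending a left cover $p\colon E\to\sSing X$ to the composite $\sabs{E}\to\sabs{\sSing X}\xrightarrow{\ \epsilon_{X}\ }X$, does not land in $\sCover{X}$. The counit $\epsilon_{X}\colon\sabs{\sSing X}\to X$ is a weak equivalence but almost never a local homeomorphism — already for $X$ a trivially stratified line, $\sabs{\Sing X}$ is an infinite-dimensional CW complex mapping onto a $1$-manifold — so the composite cannot be a local homeomorphism, hence fails Definition \ref{def:stratified-cover}. In the simplicial and locally stratified settings the paper fixes this with a reflection $L$ coming from the orthogonal factorisation system (Corollary \ref{cor:orthogonal-factorisation-theorem}), but $\sCover{X}$ is \emph{not} defined there by a lifting property, so no such reflection is available without further argument. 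The reverse direction — that $\sSing$ of a stratified cover has the unique RLP against the initial vertex inclusions — is plausible, but it is precisely the kind of delicate stratum-by-stratum path-lifting argument the paper carries out at length even for the special case $X=\sabs{\Delta^{n}}$ (Lemma \ref{lem:strat-cover-left-cover}); asserting it for a general fibrant homotopically stratified $X$ ``using the tameness and homotopy link conditions'' is asserting the heart of Woolf's theorem, not proving it. The same applies to the unit and counit of your would-be adjunction: ``the stratum-wise equivalences glue coherently along exit paths'' is exactly the statement being proved, and the gluing is where the tameness of strata and the homotopy-link Serre fibration hypothesis actually earn their keep. So while the reduction to the simplicial picture is an attractive reorganisation, the step you flagged as the hard one is not in fact carried out, and the one concrete mechanism you offer for it is wrong.
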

\begin{proof}
See \cite[Theorem 4.3]{woo08}
\end{proof}

\begin{comment}
\begin{corollary}
Let $X$ be a numerically generated stratified space.
Assume that $X$ is a fibrant homotopically stratified space with locally $0$-connected and locally $1$-connected strata.
Then, 
\end{corollary}
\end{comment}

\begin{example}
The stratified standard simplex $\sabs{\Delta^{n}}$ satisfies the hypothesis of Theorem \ref{thm:woolf} by Example \ref{ex:realisation-sing-poset-co-fibrant} and Theorem \ref{thm:cofibrant-fibrant-hoss}.
Therefore, there is an equivalence of categories:
\[\sCover{\sabs{\Delta^{n}}}\cong \Set^{[n]}\]
by Example \ref{ex:exit-path-standard-simplex}.
\end{example}

\chapter{Locally stratified spaces}
\label{ch:5}

\section{Streams\label{ch:5sec:1}}
A prestream is a pair $(X,\preceq)$, where $X$ is a topological space and $\preceq$ is a compatible assignment of preorders on the open subsets of $X$, called precirculation.
Following \cite{kri09} a stream is a prestream $(X, \preceq)$ such that, the value of $\preceq$ on an open subset $U$ of $X$ is uniquely determined by its value on every element of every open cover of $U$ (see Definition \ref{def:circulation}).
The definition of streams we give here (Definition \ref{def:stream}) differs slightly from the original definition, and it coincides with the notion of a Haucourt stream (see \cite{hau12}).
A Haucourt stream is a prestream whose local preorder can be detected by directed paths and, up to isomorphism, is given by the prestream associated to a d-space (see \ref{subsec:dspace} and \ref{subsec:idempotent-comonad-streams}).
In \ref{subsec:preorder-prestream} we set up an adjunction between the category of preordered topological spaces and the category of prestreams.
The only original contribution of this section is Remark \ref{rmk:sierpinski-stream} and, as a consequence, Theorem \ref{thm:stream-exponentiable} which is based on \cite[Theorem 6.2]{gou14} and gives a characterisation of exponentiable objects in the category of Haucourt streams.

\begin{definition}
Let $X$ be a topological space. 
A \emph{precirculation} $\preceq$ on $X$ is a function that assigns to every open subset $U$ in $X$ a preorder relation $\preceq_U$ on $U$ such that, given open subsets $U\subset U'$ in $X$ and two points $x_0$ and $x_1$ in $U$, if $x_0\preceq_U x_1$ then $x_0\preceq_{U'} x_1$ in $U'$.
A \emph{prestream} is a pair $(X,\preceq)$ where $X$ is a topological space and $\preceq$ is a precirculation on $X$.
\end{definition}

\subsection{}
A morphism of prestreams $f\colon (X,\preceq)\to (X',\preceq')$ is a continuous map $f\colon X\to X'$ such that, for every open subset $U'$ in $X'$ and every two points $x_0$ and $x_1$ in $f^{-1}(U')$ one has $f(x_0)\preceq_{U'} f(x_1)$ whenever $x_0\preceq_{f^{-1}(U')} x_1$.
We denote by $\pStream$ the category of prestreams.
When no confusion will arise, a prestream $(X,\preceq)$ will simply be denoted by $X$, making the precirculation implicit.

\subsection{}
\label{subsec:stream-initial-final}
Given a continuous map $f\colon X\to X'$ of topological spaces and a precirculation $\preceq'$ on $X'$, we define the \emph{pullback} precirculation $(\preceq')^{f^{-1}}$ on $X$, with graph:
\[\graph\left((\preceq')^{f^{-1}}_U\right)=\bigcap_{f(U)\subset V \in \Open(X')}(f\times f)^{-1}(\graph(\preceq'_V))\cap(U\times U).\]
On the other hand, if $\preceq$ is a precirculation on $X$ and $f\colon X\to X'$ is a continuous map, we define the \emph{pushforward} $\preceq^{f_*}$ of $\preceq$ along $f$ as the precirculation on $X'$ assigning to each open subset $U$ of $X'$ the reflexive-transitive closure of the relation on $U$ with graph:
\[(f\times f)\left(\graph\left(\preceq_{f^{-1}(U)}\right)\right).\]

\subsection{} 
\label{subsec:prestream-topological-over-top}
The forgetful functor $U\colon \pStream\to \Topa$ is a topological functor (see \cite[Proposition 4.3]{gou14}). 
Indeed, given a family of prestreams $(X_i,\preceq^i)$ and a family of continuous maps $(f_i\colon X_i\to T)$ to a topological space $T$, a final lift of this data is given by the precirculation $\bigvee_{i\in I}\preceq^{f_{i,*}}$ on $T$, where $\bigvee_{i\in I}$ is defined sectionwise.
Dually, if $(f_{i}\colon T\to X_{i})$ is a family of continuous maps with source $T$, an initial structure of this data is given by the precirculation $\bigwedge_{i\in I}\preceq^{f^{-1}_{i}}$.
In particular, we have the following:

\begin{proposition}
\label{prop:pstream-wf-top-construct}
The category $\pStream$ of prestreams is a well fibered topological construct.
\end{proposition}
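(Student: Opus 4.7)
The plan is to verify directly the three conditions required for a topological construct to be well fibered: topologicality of the forgetful functor to $\Set$, fibre-smallness, and having a discrete terminal object.

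First I would observe that composition of topological functors is topological. The forgetful functor $\Topa \to \Set$ is topological (this is the prototypical example from \ref{def:topological-functor}), and by \ref{subsec:prestream-topological-over-top} the forgetful functor $U\colon \pStream \to \Topa$ is topological. Concretely, given a set $S$ and a family of maps $(f_i\colon S \to V X_i)_{i\in I}$ where $V\colon \pStream \to \Set$ is the forgetful functor and $X_i = (Y_i, \preceq^i)$ are prestreams, I would construct the initial lift in two stages: first endow $S$ with the initial topology with respect to the maps $f_i\colon S \to Y_i$ (using topologicality of $\Topa \to \Set$), and then endow this topological space with the initial precirculation $\bigwedge_{i\in I}(\preceq^i)^{f_i^{-1}}$ as defined in \ref{subsec:stream-initial-final}. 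The universal property of this lift follows by verifying the defining condition of initial lifts at the level of underlying topological spaces (handled by the first stage) and at the level of precirculations (handled by the second stage, using the description of initial structures recalled in \ref{subsec:prestream-topological-over-top}).

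Next I would verify that $V$ is fibre-small. The fibre of $V$ over a set $S$ consists of pairs $(\tau, \preceq)$ where $\tau$ is a topology on $S$ and $\preceq$ is a precirculation on $S$. Since a topology on $S$ is a subset of $\mathcal{P}(S)$, and a precirculation is a function assigning to each open $U \in \tau$ a subset of $U \times U$, both data range over sets, so the fibre is essentially small.

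Finally, I would verify that $V$ has discrete terminal object. The terminal object in $\Set$ is a singleton $\{*\}$; on it there is a unique topology and the unique open subset $\{*\}$ admits a unique preorder (since reflexivity forces $*\preceq *$ and that is the only relation possible). Thus the fibre of $V$ over $\{*\}$ is the terminal category, which by \ref{subsec:preorder-fibre} means the discrete and codiscrete prestreams on $\{*\}$ coincide. The main obstacle is essentially bookkeeping: ensuring that the two-step construction of initial lifts composes correctly, but this is immediate from the fact that initial structures in $\pStream$ over $\Topa$ are defined on top of a fixed topology, matching the initial topology produced in the first step. Combining the three points, $\pStream$ is a well fibered topological construct.
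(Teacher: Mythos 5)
Your proof is correct and follows the same route as the paper: use that topological functors compose (so $\pStream\to\Topa\to\Set$ is topological), then check fibre-smallness and the discrete terminal object directly from the definitions. You simply spell out the details that the paper leaves implicit.
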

\begin{proof}
Since $\pStream\to \Topa$ is topological and $\Topa$ is a topological construct, $\pStream$ is a topological construct.
To conclude, it follows from the definitions that $\pStream$ has discrete terminal object and small fibres.
\end{proof}

\subsection{}
By Proposition \ref{prop:top-functor-adjoints}, the forgetful functor $U\colon \pStream\to \Topa$ has a left and a right adjoint.
If $X$ is a topological space, the \emph{discrete prestream} on $X$ is the prestream $(X,=)$ given by $x=_{U} x'$ if and only if $x=x'$, for every $U$ open subset of $X$.
Dually, the \emph{chaotic prestream} on $X$ is the prestream $(X,\equiv)$ defined by $x\equiv_{U}x'$ for every $(x,x')\in U\times U$ and every $U$ open in $X$.

\subsection{}
\label{subsec:preorder-prestream}
Every preordered space $(X,\le)$ naturally defines a precirculation $\restr{\le}{(\blank)}$ on $X$ by restriction. For every $U$ open in $X$, the graph of $\restr{\le}{U}$ is defined as:
\[\graph(\restr{\le}{U})=\graph(\le)\cap\left(U\times U\right).\]
This induces a full embedding $\iota\colon \pTop\to \pStream$ of the category of preordered spaces into the category of prestreams, which comes equipped with a left adjoint:
\[\Gamma\colon \pStream\to \pTop\]
defined by taking a prestream $(X,\preceq)$ to the \emph{global preorder} $(X,\preceq_X)$ on $X$.

\begin{example}
Let $(\II,\le)$ be the interval with the natural preorder structure inherited by the real numbers.
By the above paragraph we can see it as a prestream where $t\le_U t'$ if and only if $t\le t'$.
On the other hand, we denote by $\dI$ the \emph{directed interval}, which is the prestream $(\II, \direct{\le})$ where $t {\direct{\le}}_U t'$ if and only if $t\le t'$ and $[t,t']$ is contained in $U$.
Clearly, the identity map on $\II$ defines a morphism of prestreams $\dI\to(\II,\le)$. 
\end{example}

\begin{definition}
Let $X$ be a prestream and let $x_0$ and $x_1$ be points in $X$. A \emph{directed path} in $X$ from $x_0$ to $x_1$ is a morphism of prestreams $\alpha\colon \dI\to X$ such that $\alpha(i)=x_{i}$ for $i\in \{0,1\}$.
\end{definition}

\subsection{}
\label{subsec:dspace}
Recall that a \emph{d-space} is a pair $(X,dX)$, where $X$ is a topological space and $dX\subset \Topa(\II,X)$ is a set of paths in $X$, called \emph{d-paths}, which contains all constant paths and is closed under concatenation and monotonic reparametrisation (see \cite{gra03}). A morphism of d-spaces or \emph{d-map} $f\colon (X,dX)\to (X',dX')$ is a continuous map $f\colon X\to X'$ which preserves directed paths. 
The category $\dTop$ of directed spaces is a well fibered topological construct equipped with a topological functor to $\Topa$ (see for example \cite[Theorem 4.2]{faj08}).

\subsection{}
\label{subsec:idempotent-comonad-streams}
Let $(X,dX)$ be a d-space, we define a prestream $(X,\preceq^{dX})$ by the rule $x\preceq^{dX}_{U}x'$ if and only if there exists a d-path in $U$ from $x$ to $x'$.
Dually, if $(X,\preceq)$ is a prestream, we define a d-space $(X, d^{\preceq}X)$ with set of $d$-paths the directed paths in $(X,\preceq)$.
These constructions are functorial and define an adjunction (see \cite[Section 5]{hau12}):
\begin{equation}
\Adjoint{\dTop}{\pStream}{P}{D}
\end{equation}
In particular, by \ref{subsec:adjunction-monad}, the composition 
\[S=PD\colon \pStream\to \pStream\] 
comes equipped with the structure of a comonad $\mathbb{S}=(S,\nu,\epsilon)$.

\subsection{}
Given a prestream $(X,\preceq)$ the free coalgebra of $(X,\preceq)$ over $\mathbb{S}$ can be described explicitly as the prestream $SX=(X,\direct{\preceq})$ where $x_0 {\direct{\preceq}}_U x_1$ if and only if there exists a directed path in $(X,\preceq)$ from $x_0$ to $x_1$, whose image is contained in $U$, for every open subset $U$ in $X$ and every pair of points $x_0$ and $x_1$ in $U$. 
Notice that the underlying continuous map to the structure morphism $SX\to X$ of the free coalgebra on $X$  is the identity on $X$.

\begin{proposition}
The comonad $\mathbb{S}$ is an idempotent comonad. In particular, the category of coalgebras over $\mathbb{S}$ is a coreflective full subcategory of the category of prestreams.
\end{proposition}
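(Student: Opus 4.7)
The plan is to show directly that the comultiplication $\nu\colon S \to SS$ is a natural isomorphism, which by Proposition \ref{prop:idempotent-monad} (dualised for comonads) immediately yields idempotency of $\mathbb{S}$ and, a fortiori, the coreflectivity of the category of $\mathbb{S}$-coalgebras in $\pStream$. Since the counit $\epsilon_X\colon SX \to X$ has underlying continuous map equal to $\mathrm{id}_X$, so does $\nu_X$ (from the counitality axiom $\epsilon_{SX}\circ \nu_X = \mathrm{id}_{SX}$), so the task reduces to proving that the precirculations on $SX$ and $SSX$ coincide.

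First I would unfold the definitions: writing $SX = (X, \direct{\preceq})$ as in \ref{subsec:idempotent-comonad-streams}, we have $SSX = (X, \direct{(\direct{\preceq})})$ where $x \mathrel{\direct{(\direct{\preceq})}_U} x'$ iff there exists a directed path $\alpha\colon \dI \to SX$ from $x$ to $x'$ whose image lies in the open set $U$. Hence, modulo checking this, showing $SSX = SX$ is equivalent to the assertion that the directed paths in $SX$ coincide, as continuous maps, with the directed paths in $X$.

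The forward inclusion is the routine direction: if $\alpha\colon \dI \to X$ is a directed path and $s \mathrel{\direct{\le}_V} t$ with $V = \alpha^{-1}(U)$, the restriction $\alpha|_{[s,t]}$, reparametrised to $\dI$, is a directed path in $(X,\preceq)$ from $\alpha(s)$ to $\alpha(t)$ with image in $U$, witnessing $\alpha(s) \mathrel{\direct{\preceq}_U}\alpha(t)$. The reverse inclusion is the substantive step: given a directed path $\alpha\colon \dI \to SX$, we must show it lifts to a directed path into $X$. The point is that the hypothesis $\alpha(s) \mathrel{\direct{\preceq}_U}\alpha(t)$ provides some directed path $\beta\colon \dI \to X$ from $\alpha(s)$ to $\alpha(t)$ with image in $U$; applying the defining property of a morphism of prestreams to $\beta$ with the open set $U$ itself (whose preimage under $\beta$ is all of $\II$), we obtain $\alpha(s) \preceq_U \alpha(t)$, which is exactly what is needed for $\alpha$ to be a directed path into $(X,\preceq)$.

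Combining the two inclusions gives $\direct{(\direct{\preceq})} = \direct{\preceq}$, so $\nu_X$ is an isomorphism for every $X$, and $\mathbb{S}$ is idempotent. The second assertion about coreflectivity is then immediate from Proposition \ref{prop:idempotent-monad}. The main obstacle I anticipate is purely bookkeeping: one must be scrupulous about the roles of the open sets $U$ and their preimages when invoking the morphism-of-prestreams condition, because the precirculation on $\dI$ is itself defined via containment of closed intervals in preimages, and conflating these conditions would produce a vacuous argument.
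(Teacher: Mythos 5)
Your proof is correct, and it is worth noting that the paper itself provides no proof at all — it merely cites \cite[Proposition~5.25]{hau12} — so you are supplying an argument that the paper does not contain. Your strategy is the natural direct one: observe from counitality that $\nu_X$ has underlying map the identity, so idempotency reduces (by amnesticity of the topological functor $\pStream\to\Topa$) to the set-level claim that the precirculations $\direct{\preceq}$ and $\direct{(\direct{\preceq})}$ on $X$ agree, which in turn reduces to the claim that a continuous map $\II\to X$ is a prestream morphism $\dI\to(X,\preceq)$ if and only if it is a prestream morphism $\dI\to SX$. Both inclusions check out: the forward direction uses that restricting and linearly reparametrising a directed path to a subinterval $[s,t]$ yields a directed path, and the reverse direction uses that a directed path $\beta$ witnessing $x\,\direct{\preceq}_U\,x'$ already certifies $x\preceq_U x'$ by applying the morphism condition for $\beta$ at the open set $U$ itself. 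Your final cautionary remark about the bookkeeping is apt, but you have handled it correctly; the one place where one might still want to pause is verifying explicitly that the reparametrised restriction $\alpha|_{[s,t]}$ is a morphism $\dI\to(X,\preceq)$, which does hold because the reparametrisation sends $[a,b]\subset\II$ to a closed subinterval of $[s,t]$. A small conceptual streamlining worth noting: the reverse inclusion is precisely the statement that the counit $\epsilon_X\colon SX\to X$ is a prestream morphism (so postcomposition with it transports directed paths into $SX$ to directed paths into $X$ without changing the underlying continuous map), while the forward inclusion is the nontrivial content; together they show the right adjoint $D\colon\pStream\to\dTop$ satisfies $DPD\cong D$, which is the abstract reason the comonad $PD$ is idempotent.
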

\begin{proof}
See \cite[Proposition 5.25]{hau12}.
\end{proof}

\begin{definition}
\label{def:stream}
A  \emph{(Haucourt) stream} is a coalgebra over the comonad $\mathbb{S}$.
More explicitly, a prestream $(X,\preceq)$ is said to be a stream if, for every open subset $U$ of $X$ and every pair of points $x_0$ and $x_1$ in $U$ the inequality $x_0\preceq_{U} x_1$ holds if and only if there exists a directed path $\alpha\colon \dI\to (X,\preceq)$ from $x_0$ to $x_1$ whose image lies in $U$.
We denote by $\Stream$ the full subcategory of the category of prestreams spanned by streams.
\end{definition}

\begin{example}
The \emph{directed circle} $\dS^{1}$ is the quotient in $\Stream$ of the directed interval $\dI$ by its boundary $\partial\dI$. 
The underlying topological space of $\dS^{1}$ is the standard unit-circle $\SS^{1}$ with a distinguished basepoint.
The precirculation is given by counterclockwise directed paths (See \cite[Example 3.19]{kri09}, see also \cite[Examples 4.9-4.11]{gou14} for different models of the directed circle).
\end{example}

\begin{definition}
\label{def:circulation}
Let $X$ be a topological space. A precirculation $\preceq$ on $X$ is said to be a \emph{circulation} if, for every open subset $U$ in $X$ and every open cover $\{U_i\}_{i\in I}$ of $U$ the following identity holds:
\begin{equation}
\label{eq:circulation}
\graph\left(\preceq_U\right)=\graph\left(\bigvee_{i\in I}\preceq_{U_i}\right).
\end{equation}
\end{definition}

\begin{proposition}
The category $\Stream$ of streams is a well fibered topological construct.
\end{proposition}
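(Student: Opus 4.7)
The plan is to deduce the statement by transferring the well fibered topological construct structure from $\pStream$ to its coreflective subcategory $\Stream$ by means of Proposition \ref{prop:coreflective-subcat-top-cat}. The key observation is that $\Stream$ is the category of coalgebras over the idempotent comonad $\mathbb{S}=(S,\nu,\epsilon)$ on $\pStream$, so by Proposition \ref{prop:idempotent-monad} the inclusion $\iota\colon\Stream\to\pStream$ is coreflective with coreflector $S$, and the counit of this adjunction is the counit $\epsilon\colon SX\to X$ of the comonad. From the explicit description of $SX=(X,\direct{\preceq})$ given just before Definition \ref{def:stream}, this counit forgets to the identity continuous map (hence, a fortiori, to the identity on underlying sets).

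With this in hand, I would apply Proposition \ref{prop:coreflective-subcat-top-cat} taking $\cat{S}=\pStream$, $\cat{A}=\Set$, $\cat{C}=\Stream$, and $U\colon\pStream\to\Set$ the topological functor provided by Proposition \ref{prop:pstream-wf-top-construct}. The proposition immediately yields that the forgetful functor $\Stream\to\Set$ is topological, so $\Stream$ is a topological construct. It then remains to check the two extra conditions of being well fibered, namely fibre-smallness and discrete terminal object. Fibre-smallness is automatic because the fibre of $\Stream\to\Set$ over a set $S$ is a full subcategory of the fibre of $\pStream\to\Set$ over $S$, which is essentially small since $\pStream$ is well fibered.

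For the discrete terminal object, I would argue that the terminal prestream (the one-point topological space equipped with its unique precirculation) is trivially a stream, since there are no non-constant directed paths; hence it lies in $\Stream$ and, being terminal in $\pStream$, it is terminal in the full subcategory $\Stream$. As this is manifestly the only stream structure on the one-point set, the discrete and codiscrete streams on $\term$ coincide, verifying the last condition. I do not expect any serious obstacle: everything reduces to Proposition \ref{prop:coreflective-subcat-top-cat} and the observation about the underlying map of the counit $\epsilon$, which is explicit in the construction of $SX$.
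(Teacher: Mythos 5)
Your proof follows the same route as the paper's, which simply cites Proposition \ref{prop:coreflective-subcat-top-cat}. In fact your argument is more complete: that proposition on its own only yields that $\Stream\to\Set$ is a topological functor, so the additional conditions of being \emph{well fibered} (fibre-smallness and discrete terminal object) still need checking, and you correctly supply those verifications — the fibre over a set is a full subcategory of the corresponding fibre of $\pStream$ (hence essentially small), and the unique prestream on the singleton is a stream, forcing discrete and codiscrete to coincide. You also correctly note that the counit $\epsilon\colon SX\to X$ forgets to the identity continuous map (and hence the identity function), which is the hypothesis needed to invoke Proposition \ref{prop:coreflective-subcat-top-cat}. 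No gap.
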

\begin{proof}
This follows from Proposition \ref{prop:coreflective-subcat-top-cat}.
\end{proof}

\begin{lemma}
The precirculation of every Haucourt stream $(X,\preceq)$ is a circulation.
\end{lemma}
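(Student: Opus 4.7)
The plan is to establish both inclusions in the equality $\graph(\preceq_U) = \graph\bigl(\bigvee_{i\in I}\preceq_{U_i}\bigr)$ for an arbitrary open cover $\{U_i\}_{i\in I}$ of an open subset $U \subset X$. The inclusion $\supseteq$ is purely formal: if $x_0 \preceq_{U_i} x_1$, then since $U_i \subset U$ the precirculation axiom yields $x_0 \preceq_U x_1$; closure under transitivity follows from transitivity of $\preceq_U$ itself, so any element of the reflexive-transitive closure on the right is already related on the left.

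The substantive direction is $\subseteq$. Suppose $x_0 \preceq_U x_1$. Since $(X,\preceq)$ is a Haucourt stream, there exists a directed path $\alpha \colon \dI \to (X,\preceq)$ with $\alpha(0)=x_0$, $\alpha(1)=x_1$, and $\alpha([0,1]) \subset U$. The preimages $\{\alpha^{-1}(U_i)\}_{i\in I}$ form an open cover of the compact interval $[0,1]$, so by the Lebesgue number lemma we may choose a subdivision $0=t_0<t_1<\cdots<t_n=1$ and indices $i_1,\ldots,i_n \in I$ such that $\alpha([t_{k-1},t_k]) \subset U_{i_k}$ for every $k$. For each $k$, let $\alpha_k \colon \dI \to (X,\preceq)$ be the affine reparametrisation of $\alpha\rvert_{[t_{k-1},t_k]}$; this is again a morphism of prestreams (affine orientation-preserving reparametrisation sends directed paths to directed paths), lands in $U_{i_k}$, and goes from $\alpha(t_{k-1})$ to $\alpha(t_k)$. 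Invoking the Haucourt stream property on the open subset $U_{i_k}$, this directed path witnesses $\alpha(t_{k-1}) \preceq_{U_{i_k}} \alpha(t_k)$.

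Chaining these relations produces $x_0 \;\bigl(\bigvee_{i\in I}\preceq_{U_i}\bigr)\; x_1$, completing the reverse inclusion. The only non-formal step is the passage from the global directed path in $U$ to a chain of local directed paths each confined to a single $U_i$, and this is handled cleanly by compactness of $[0,1]$; no obstruction of substance arises, since the axioms of $\dI$ guarantee that monotone affine reparametrisations remain directed, and the stream axiom then translates each such local path back into an inequality of the appropriate local precirculation.
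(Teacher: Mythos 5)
Your proof is correct. The paper itself does not give a proof of this lemma; it defers to a citation (Haucourt, \emph{Streams, d-spaces and their fundamental categories}, Lemma 4.15), so there is no internal argument to compare against. What you have written is a clean, self-contained argument and is almost certainly the same compactness-plus-reparametrisation argument as the cited source.

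The only step worth flagging is the claim that the affine reparametrisation $\rho_k \colon s \mapsto t_{k-1} + s(t_k - t_{k-1})$ is a morphism $\dI \to \dI$, which you assert in passing. It does hold: if $s \le s'$ and $[s,s'] \subset \rho_k^{-1}(V)$, then since $\rho_k$ is continuous and weakly increasing its image of the interval $[s,s']$ is the interval $[\rho_k(s),\rho_k(s')]$, which therefore lies in $V$, giving $\rho_k(s) \preceq_V \rho_k(s')$ in $\dI$. With that filled in, each $\alpha_k = \alpha \circ \rho_k$ is a composite of prestream morphisms, hence a directed path, and the rest of your argument goes through. The $\supseteq$ inclusion, as you say, is formal from the precirculation axiom and transitivity of $\preceq_U$, and the $\subseteq$ inclusion follows from the existence of a directed path witnessing $x_0 \preceq_U x_1$ (this is exactly what the Haucourt/coalgebra condition supplies), the Lebesgue number lemma applied to the compact domain $[0,1]$, and the Haucourt condition again applied locally to each $U_{i_k}$ to convert the restricted path back into a relation $\preceq_{U_{i_k}}$. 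Chaining these relations places $(x_0,x_1)$ in the reflexive-transitive closure defining $\bigvee_{i\in I}\preceq_{U_i}$. No gap.
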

\begin{proof}
See \cite[Lemma 4.15]{gou14}
\end{proof}

\begin{remark}
A pair $(X, \preceq)$ where $X$ is a topological space and $\preceq$ is a circulation is called a (Krishnan) stream and it was first introduced by Sanjeevi Krishnan in \cite{kri09}.
We have a chain of coreflective full embeddings:
\[\Stream\subset \Stream_{\mathrm{Kri}}\subset \pStream\]
where we denote by $\Stream_{\mathrm{Kri}}$ the category of Krishnan streams.
Although \cite{kri09} is the original source on streams, in this text we decided to take a slighlty different approach since we will take $\Stream$ as our ``ambient category'' for the category of locally stratified spaces, in Section \ref{ch:5sec:2}.
\footnote{The reason of this choice lies in the fact that, by definition, Haucourt streams have the ``homotopical flavour'' of d-spaces, as well as the ``order-theoretic flavour'' of prestreams.}
\end{remark}

We conclude the section with a brief recollection on exponentiable streams, following \cite{gou14}.

\subsection{}
\label{subsec:stream-exponential}
Following \cite{gou14}, a stream $X$ is said to be \emph{core-compact} if the topological space underlying $X$ is core-compact.
For a core-compact stream $X$ and a stream $Y$, we can consider the set $\Stream(X,Y)$ of stream morphisms from $X$ to $Y$, endowed with the subspace topology induced by the core-open topology on $\Topa(X,Y)$ (see \ref{subsec:top-exponential}).
Then, we define a precirculation $\sqsubseteq$ on $\Stream(X,Y)$ as follows. If $W$ is an open subset of ${\Stream}(X,Y)$ we say that $f\sqsubseteq_{W} g$ if and only if for all open subsets $U$ of $X$ and $V$ of $Y$ such that $W\times U$ is contained in  $\ev_{X,Y}^{-1}(V)$, given any two points $x,x'\in U$ such that $x\preceq^X_U x'$, one has $f(x)\preceq^Y_V g(x')$.

\begin{remark}
\label{rmk:sierpinski-stream}
Following \cite{gou14} we define the (chaotic) \emph{\Sierpinski{} prestream} as the chaotic prestream $\ind{[1]}$ associated to the \Sierpinski{} space (see the paragraph before \cite[Lemma 5.1]{gou14}).
In contrast with the remark made by the author of \cite{gou14} in the paragraph before \cite[Theorem 6.2]{gou14}, the \Sierpinski{} prestream is actually a Haucourt stream.
Indeed, the quotient map $p\colon [0,1]\to[1]$ defined by 
\begin{align*}
p\colon [0,1]&\to [1]\\
t &\mapsto \begin{cases}
0 & t=0\\
1 & \text{ otherwise.}
\end{cases}
\end{align*}
and the composition $pr$ of $p$ with the reverse path $r\colon [0,1]\to[0,1]$ mapping $t$ to $1-t$, define morphisms of prestreams $p\colon \dI \to \ind{[1]}$.
\end{remark}

The next result is a strengthening of \cite[Theorem 6.2]{gou14} that follows from the previous remark.

\begin{theorem}
\label{thm:stream-exponentiable}
A stream is core-compact if and only if it is exponentiable in $\Stream$.
Moreover, for a core-compact stream $X$ and a stream $Y$, the exponential $Y^X$ from $X$ to $Y$ is given by the stream associated to the prestream defined in \ref{subsec:stream-exponential}. 
\end{theorem}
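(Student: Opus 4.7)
The plan is to establish the equivalence in two directions and then identify the concrete form of the exponential. For the "only if" direction, I would mirror the strategy used in Proposition \ref{prop:ptop-corecompact-exponentiable} for preordered spaces. Suppose $X$ is exponentiable in $\Stream$ and let $U \colon \Stream \to \Topa$ denote the forgetful functor; since $\Stream$ is a well fibered topological construct (coreflective in $\pStream$, which is topological over $\Topa$), the functor $U$ has fully faithful left and right adjoints, sending a topological space $Y$ to the discrete stream $\disc{Y}$ and the chaotic stream $\ind{Y}$ respectively. For any $Y \in \Topa$, I would then chain the natural isomorphisms
\begin{equation*}
\Topa(Z \times X, Y) \cong \Stream(\disc{Z} \times X, \ind{Y}) \cong \Stream(\disc{Z}, \ind{Y}^X) \cong \Topa(Z, U(\ind{Y}^X))
\end{equation*}
using that $U$ preserves products (being topological), that $\ind{Y}$ is indiscrete, and the adjunction $\disc{(\blank)} \dashv U$. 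This exhibits $U(\ind{Y}^X)$ as an exponential in $\Topa$, whence $X$ is core-compact by Lemma \ref{lem:corecompact-exponentiable}.

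For the "if" direction, I would follow the route of \cite[Theorem 6.2]{gou14}, adapted to the setting of Haucourt streams. Given a core-compact stream $X$ and a stream $Y$, I would first verify that the pair $(\Stream(X,Y), \sqsubseteq)$ described in \ref{subsec:stream-exponential} is a well-defined prestream, checking reflexivity, transitivity, and compatibility with inclusions of open subsets of $\Stream(X,Y)$. The next step is to establish a natural bijection
\begin{equation*}
\Stream(Z \times X, Y) \cong \pStream(Z, (\Stream(X,Y), \sqsubseteq))
\end{equation*}
by adjoint-transposing a stream morphism $h \colon Z \times X \to Y$ to $\hat{h} \colon Z \to \Stream(X,Y)$ on underlying spaces (using core-compactness of $X$ for continuity of $\hat{h}$) and then verifying that $\hat{h}$ preserves the precirculations. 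Finally, precomposing with the stream coreflection $S \colon \pStream \to \Stream$ would yield $Y^X = S(\Stream(X,Y), \sqsubseteq)$ and turn the right-hand side into $\Stream(Z, Y^X)$.

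The main obstacle is the verification that $\sqsubseteq$ on $\Stream(X,Y)$ is fine enough to detect all stream morphisms $Z \times X \to Y$. In \cite{gou14} this is done by testing the precirculation against the Sierpinski prestream $\ind{[1]}$, which gives the correct characterisation in $\pStream$ and in $\Stream_{\mathrm{Kri}}$; a priori this route only produces a Krishnan stream rather than a Haucourt stream. This is precisely where Remark \ref{rmk:sierpinski-stream} is indispensable: the explicit directed paths $p, pr \colon \dI \to \ind{[1]}$ show that $\ind{[1]}$ is already a Haucourt stream, so the test object used to reverse-engineer the circulation on $\Stream(X,Y)$ lives inside $\Stream$, and the resulting universal property holds against all Haucourt streams $Z$. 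Combined with the fact that coreflections preserve any exponential that happens to lie in the coreflective subcategory, this identifies the exponential in $\Stream$ with the stream associated to the prestream of \ref{subsec:stream-exponential}, completing the proof.
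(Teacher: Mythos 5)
Your proof is correct, but for the implication ``exponentiable implies core-compact'' you take a genuinely different route from the paper, and you have the role of Remark \ref{rmk:sierpinski-stream} misplaced. The paper's proof is a two-line citation: Remark \ref{rmk:sierpinski-stream} is precisely what licenses the application of \cite[Lemma 5.1]{gou14} -- the Sierpinski test object -- to the category $\Stream$ of Haucourt streams, delivering ``exponentiable implies core-compact''; the converse is \cite[Theorem 6.2]{gou14}. Your alternative for the first implication, chaining through the discrete and codiscrete adjoints of the topological functor $U\colon\Stream\to\Topa$ in parallel with Proposition \ref{prop:ptop-corecompact-exponentiable}, is valid and buys you a self-contained argument that never invokes the Sierpinski object at all, provided you read $\ind{Y}$ as the codiscrete object \emph{in $\Stream$} supplied by the topological-functor formalism (the Haucourt coreflection of the chaotic prestream, not the chaotic prestream itself, which need not be a Haucourt stream); what the paper's citation buys instead is a transparent link to \cite{gou14} showing the theorem as a strengthening enabled by one corrected observation.

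Where you go astray is the claim, in your closing paragraph, that Remark \ref{rmk:sierpinski-stream} is ``indispensable'' for verifying that $\sqsubseteq$ detects stream morphisms $Z\times X\to Y$ in the ``core-compact implies exponentiable'' direction. The remark plays no role there: its sole function is to make \cite[Lemma 5.1]{gou14} applicable to $\Stream$, and Lemma 5.1 is exactly the implication your discrete/codiscrete argument has replaced. In your version the remark becomes simply unused, which you should say rather than repositioning it. Relatedly, ``coreflections preserve any exponential that happens to lie in the coreflective subcategory'' is not the right principle for transporting the exponential of \ref{subsec:stream-exponential} to $\Stream$. The fact you actually need is that the coreflector $S\colon\pStream\to\Stream$ is a right adjoint and so preserves finite products, whence $Z\times_{\Stream}X \cong S(Z\times_{\pStream}X)$, and the universal property of $S(\Stream(X,Y),\sqsubseteq)$ must be checked against this product -- a point the paper's terse citation also leaves implicit, but which is where the real content of carrying \cite[Theorem 6.2]{gou14} over to $\Stream$ lies.
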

\begin{proof}
By Remark \ref{rmk:sierpinski-stream} we can apply \cite[Lemma 5.1]{gou14} for one implication.
The other implication is \cite[Theorem 6.2]{gou14}.
\end{proof}

\begin{corollary}
\label{cor:stream-core-compact-cartesian-closed}
Let $\cat{I}$ be a class of core-compact streams such that any binary product of objects in $\cat{I}$ is $\cat{I}$-generated.
Then, the category $\Stream_{\cat{I}}$ is Cartesian closed.
\end{corollary}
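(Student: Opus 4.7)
The plan is to deduce this corollary as a direct application of Theorem \ref{thm:final-closure-cartesian-closed} to the ambient category $\Stream$. By Proposition \ref{prop:pstream-wf-top-construct} (applied to the coreflective subcategory $\Stream\subset\pStream$, together with Proposition \ref{prop:coreflective-subcat-top-cat}) we already know that $\Stream$ is a well-fibered topological construct. The category $\Stream_{\cat{I}}$ in the statement is exactly the final closure of $\cat{I}$ inside $\Stream$ in the sense of Definition \ref{def:I-generated}, so Theorem \ref{thm:final-closure-cartesian-closed} applies once we verify that $\cat{I}$ is a productive class containing a nonempty object.

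First I would check productivity. Exponentiability of each object of $\cat{I}$ inside $\Stream$ is precisely Theorem \ref{thm:stream-exponentiable}: a stream is exponentiable if and only if it is core-compact, and every element of $\cat{I}$ is assumed to be core-compact. The second half of productivity, that binary products of objects in $\cat{I}$ are $\cat{I}$-generated, is exactly the hypothesis of the corollary. So $\cat{I}$ is productive in the sense of the definition preceding Theorem \ref{thm:final-closure-cartesian-closed}.

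Next I would address the technical requirement that $\cat{I}$ contains a nonempty object. If the statement of the corollary tacitly excludes the degenerate case (say $\cat{I}=\emptyset$ or $\cat{I}$ consisting solely of an empty stream), there is nothing to check; otherwise one can always enlarge $\cat{I}$ by the terminal stream $\term$, which is core-compact and whose binary products with any $\cat{I}$-generated object remain $\cat{I}$-generated. Either reading keeps us inside the hypothesis of Theorem \ref{thm:final-closure-cartesian-closed}. Applying that theorem then yields that $\Stream_{\cat{I}}$ is Cartesian closed, concluding the proof.

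I do not expect any real obstacle here: the content of the corollary is that the general machinery of Section \ref{ch:1sec:4} combines cleanly with the characterisation of exponentiable streams from Theorem \ref{thm:stream-exponentiable}. The only mildly delicate point is the nonempty-object hypothesis, which is a bookkeeping matter rather than a mathematical one.
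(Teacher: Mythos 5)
Your proof is correct and follows exactly the same route as the paper's: the paper's proof simply cites Theorem \ref{thm:stream-exponentiable} (core-compact $\Leftrightarrow$ exponentiable in $\Stream$) and Theorem \ref{thm:final-closure-cartesian-closed}, and you have merely spelled out the verification of the hypotheses — that $\Stream$ is a well-fibered topological construct and that $\cat{I}$ is productive. Your remark on the nonempty-object hypothesis is a reasonable observation that the paper glosses over, but it does not change the substance of the argument.
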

\begin{proof}
The proof follows immediately from Theorem \ref{thm:stream-exponentiable} and Theorem \ref{thm:final-closure-cartesian-closed}.
\end{proof}

\section{Locally stratified spaces: definitions and examples\label{ch:5sec:2}}
We define the category of locally stratified spaces as a full subcategory of the category of streams.
We show that the stream associated to the stratified standard $n$-simplex is locally stratified.
Since locally stratified spaces are closed under colimits, this assignment induces an adjunction between the category of locally stratified spaces and the category of simplicial sets (\ref{subsec:locstrat-sset-adjunction}).
Moreover, we construct a locally presentable and Cartesian closed category of locally stratified spaces (Theorems \ref{thm:locstrat-locally-presentable} and \ref{thm:locstrat-cartesian-closed}), that we call the category of numerically generated locally stratified spaces.
As a consequence, the category of numerically generated locally stratified spaces is $\sSet$-enriched, tensored and cotensored (Theorem \ref{thm:locstrat-enriched-tensored-cotensored}).
We conclude the section by giving a characterisation of numerically generated locally stratified spaces in terms of local exit paths (Lemma \ref{lem:numerically-generated-local-exit-path}), and we give a concrete description of the adjunctions with the categories of numerically generated stratified and topological spaces (Proposition \ref{prop:delta-stream-associated-to-delta-prespace} and Corollary \ref{cor:forgetful-codiscrete-adjunction-locstrat}).

\begin{definition}
A \emph{locally stratified space} is a stream $(X,\preceq)$ such that for every open subset $U$ in $X$ and every subset $A$ of $U$, which is upward closed with respect to the preorder $\preceq_U$, the subset $A$ is open in $X$.
We denote by $\LocStrata$ the full subcategory of the category of streams spanned by the locally stratified spaces.
\end{definition}

\begin{lemma}
\label{lem:stream-upward-closed}
Let $f\colon X\to Y$ be a morphism of streams, let $U$ be an open subset of $Y$ and assume that $A$ is upward closed in $U$. Then, $f^{-1}(A)$ is upward closed in $f^{-1}(U)$.
\end{lemma}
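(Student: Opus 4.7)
The plan is to unpack the definitions and chase the elements through. First I would fix notation: let $\preceq^X$ denote the precirculation on $X$ and $\preceq^Y$ the precirculation on $Y$. Since $f$ is continuous, $f^{-1}(U)$ is open in $X$ whenever $U$ is open in $Y$, so $\preceq^X_{f^{-1}(U)}$ is defined.

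Next I would take $x \in f^{-1}(A)$ and $y \in f^{-1}(U)$ with $x \preceq^X_{f^{-1}(U)} y$, and aim to show $y \in f^{-1}(A)$. The key step is to invoke the defining property of a morphism of prestreams (which applies verbatim to morphisms of streams, since streams form a full subcategory of prestreams): applied to the open subset $U \subset Y$ and the pair of points $x, y \in f^{-1}(U)$, it gives $f(x) \preceq^Y_U f(y)$. Because $x \in f^{-1}(A)$, we have $f(x) \in A$, and since $A$ is upward closed in $U$ with respect to $\preceq^Y_U$, we conclude $f(y) \in A$, i.e. $y \in f^{-1}(A)$.

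There is no main obstacle here; this is a pure definition-unwinding argument and the statement is essentially the restriction of the morphism axiom to the subset $A$. The whole proof is a single chain of implications and requires no additional machinery.
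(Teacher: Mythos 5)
Your proof is correct and follows exactly the same element-chasing argument as the paper: take $x \in f^{-1}(A)$ with $x \preceq^X_{f^{-1}(U)} y$, apply the morphism property to get $f(x) \preceq^Y_U f(y)$, and use upward closure of $A$ to conclude. No differences worth noting.
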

\begin{proof}
Let $x_0$ be a point in $f^{-1}(A)$ and assume that $x_0\preceq_{f^{-1}(U)} x_1$ for a point $x_1\in f^{-1}(U)$. 
Then, we have that $f(x_0)\preceq_{U} f(x_1)$ which implies that $f(x_1)$ belongs to $A$, since $A$ is upward closed.
Therefore, $x_1$ belongs to $f^{-1}(A)$ and the proof is complete.
\end{proof}

\subsection{}
The \emph{locally stratified standard $n$-simplex} is the stream $\dabs{\Delta^n}$ associated to the stratified standard $n$-simplex via the composition 
\begin{equation*}
\begin{tikzcd}
\pTop \ar[r, "\iota"]&
\pStream \ar[r, "S"]&
\Stream 
\end{tikzcd}
\end{equation*}
where $\iota$ is the embedding defined in \ref{subsec:preorder-prestream} and $S$ is the coreflection functor. 
Concretely, $\dabs{\Delta^{n}}$ is the stream $(\abs{\Delta^{n}}, \direct{\le})$ with underlying space the geometric standard $n$-simplex and circulation given by: $x\direct{\le}_{U}x'$ if and only if there exists a directed path $\dI\to \sabs{\Delta^{n}}$ from $x$ to $x'$ whose image is contained in $U$.

\begin{lemma}
\label{lem:convex-preorder-path-preorder}
For every convex open subset $C$ of $\dabs{\Delta^n}$, the preorder $\direct{\le}_{C}$ coincides with the restriction $\restr{\le}{C}$ of the preorder $\le$ on $C$.
\end{lemma}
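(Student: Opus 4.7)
The plan is to prove the two inclusions separately, using the explicit description of the precirculation on $\dabs{\Delta^{n}}$ given just before the statement, together with the concrete formula for the stratification preorder $\le$ inherited from $s\colon\abs{\Delta^{n}}\to [n]$, $(t_{0},\ldots,t_{n})\mapsto\max\{i:t_{i}\ne 0\}$. Throughout, I would exploit the fact that a directed path $\alpha\colon \dI\to\iota(\sabs{\Delta^{n}})$ in the sense of \ref{subsec:preorder-prestream} is nothing more than a continuous map $\alpha\colon \II\to\abs{\Delta^{n}}$ which is weakly monotonic with respect to the (global) stratification preorder; indeed the precirculation on $\iota(\sabs{\Delta^{n}})$ is obtained from the preorder on $\sabs{\Delta^{n}}$ by restriction to open subsets, so the defining condition on a prestream morphism $\dI\to\iota(\sabs{\Delta^{n}})$ reduces, by taking $V=\abs{\Delta^{n}}$, to the requirement that $\alpha(t_{0})\le\alpha(t_{1})$ whenever $t_{0}\le t_{1}$.

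For the inclusion $\direct{\le}_{C}\subseteq \restr{\le}{C}$, suppose $x\direct{\le}_{C}x'$. By the explicit description of the circulation of $\dabs{\Delta^{n}}$, there is a directed path $\alpha\colon \dI\to\iota(\sabs{\Delta^{n}})$ from $x$ to $x'$ whose image lies in $C$. By the observation above, $\alpha$ is weakly monotonic with respect to $\le$, so applying this at the endpoints $0\le 1$ of $\II$ gives $x=\alpha(0)\le\alpha(1)=x'$; since $x,x'\in C$, this means $x\restr{\le}{C}x'$.

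For the converse inclusion, suppose $x\restr{\le}{C}x'$, so $x,x'\in C$ and $s(x)\le s(x')$ in $[n]$. Since $C$ is convex, the straight line $\alpha\colon\II\to\abs{\Delta^{n}}$ defined by $\alpha(t)=(1-t)x+tx'$ takes values in $C$. The core calculation is to verify that $\alpha$ is weakly monotonic for $\le$: setting $i=s(x)$ and $k=s(x')$ with $i\le k$, for $t\in(0,1]$ every coordinate of $\alpha(t)$ of index $>k$ vanishes (since both $x_{j}$ and $x'_{j}$ do) while the $k$-th coordinate equals $(1-t)x_{k}+tx'_{k}>0$ because $x'_{k}>0$; hence $s(\alpha(t))=k$ for $t>0$ and $s(\alpha(0))=i\le k$, so $s\circ\alpha$ is weakly monotonic. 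Consequently $\alpha$ is a directed path of $\iota(\sabs{\Delta^{n}})$ from $x$ to $x'$ with image in $C$, and by the definition of the circulation of $\dabs{\Delta^{n}}$ this witnesses $x\direct{\le}_{C}x'$.

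The main (and essentially only) obstacle is the coordinate-wise verification that the stratification map is monotonic along the straight-line interpolation; the rest is bookkeeping on definitions. Convexity of $C$ is used exactly once, to ensure that the witnessing straight-line directed path lies in $C$.
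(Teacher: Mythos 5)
Your proof is correct and takes essentially the same approach as the paper: the key point in both is that the straight-line segment joining two globally comparable points is a directed path, and convexity keeps it inside $C$. The paper notes only that the forward inclusion $\direct{\le}_{C}\subseteq\restr{\le}{C}$ is immediate and then asserts the barycentric-coordinate fact without computation; your version spells out both directions and supplies the coordinate-wise check that $s\circ\alpha$ is weakly monotonic, which is exactly the detail the paper leaves implicit.
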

\begin{proof}
It is enough to show that, given any two points $x$ and $y$ in a subset $C$ convex in $\dabs{\Delta^n}$ with $x\le y$, there exists a directed path $\alpha$ lying entirely in $C$ and connecting $x$ and $y$. 
By describing $\dabs{\Delta^n}$ in barycentric coordinates, we see that if $x$ and $y$ are points in $\dabs{\Delta^n}$, the line segment connecting $x$ and $y$ is a directed path in $\dabs{\Delta^n}$.
In particular, since $C$ is convex, if $x$ and $y$ belong to $C$, such a line segment is contained in $C$, as required.
\end{proof}

\begin{proposition}
\label{prop:delta-locally-stratified}
For every finite ordinal $[n]$, the stream $\dabs{\Delta^n}$ is a locally stratified space.
\end{proposition}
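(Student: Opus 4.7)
The plan is to unpack the definition of locally stratified space and reduce the problem to a local statement about upward closed subsets of open neighbourhoods of $\dabs{\Delta^{n}}$. Fix an open subset $U$ of $\dabs{\Delta^{n}}$ and a subset $A\subset U$ which is upward closed with respect to $\direct{\le}_{U}$. I need to show that $A$ is open in $\dabs{\Delta^{n}}$, so I will pick an arbitrary $a\in A$ and construct an open neighbourhood of $a$ contained in $A$.

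The key observation is that the Euclidean topology on $\abs{\Delta^{n}}$ admits a basis of convex open subsets. Therefore I can pick a convex open set $C\subset\abs{\Delta^{n}}$ with $a\in C\subset U$. On such a convex $C$ the circulation simplifies drastically: by Lemma \ref{lem:convex-preorder-path-preorder}, $\direct{\le}_{C}$ coincides with the restriction $\restr{\le}{C}$ of the global preorder inherited from the stratification $s\colon\abs{\Delta^{n}}\to [n]$.

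Next I set
\[
V = C\cap s^{-1}\bigl([n]_{\ge s(a)}\bigr),
\]
which is open in $\dabs{\Delta^{n}}$ because $[n]_{\ge s(a)}$ is an upward closed subset of $[n]$, hence open in the Alexandroff topology, and $s$ is continuous. Clearly $a\in V$. For any $x\in V$ one has $a\le x$ in $\sabs{\Delta^{n}}$, so by Lemma \ref{lem:convex-preorder-path-preorder} applied to $C$, one has $a\direct{\le}_{C} x$. Since $C\subset U$, the monotonicity axiom of a precirculation yields $a\direct{\le}_{U} x$, and because $A$ is upward closed in $U$ and contains $a$, we conclude $x\in A$. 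Therefore $V\subset A$, and $A$ is open.

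The potential obstacle is the equivalence between the path-defined circulation $\direct{\le}_{U}$ and the much more tractable global preorder $\le$, but this is exactly the content of Lemma \ref{lem:convex-preorder-path-preorder}, which is already available. So the main step is simply the careful choice of the convex neighbourhood $C$ together with the open refinement $V = C \cap s^{-1}([n]_{\ge s(a)})$; everything else is a routine verification.
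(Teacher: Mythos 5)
Your proof is correct, and it is a cleaner, more local form of the argument than the one the paper gives. Both approaches hinge on the same key ingredient — convex open balls together with Lemma \ref{lem:convex-preorder-path-preorder} identifying the path-circulation on a convex open with the restricted global preorder — but the routes from there diverge. The paper reduces to showing that each set $U_{\succeq x}=\{y\in U : x\preceq_U y\}$ is open, writes $U$ as a union of open balls $B_i$, and then gives a global description $U_{\succeq x}=\bigcup_i (B_i)_{\ge k_i}$, which requires a case analysis on the integers $k_i$ (whether or not a directed path from $x$ reaches $B_i$, and if so which is the shallowest stratum it hits). You instead work directly from the definition of locally stratified space: given $a\in A$ you build a \emph{single} open neighbourhood $V=C\cap s^{-1}([n]_{\ge s(a)})$ of $a$ inside $A$, using one convex ball and one application of the Lemma plus the monotonicity axiom of precirculations. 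This avoids describing $U_{\succeq x}$ globally and so bypasses the combinatorial bookkeeping entirely; since $A=\bigcup_{a\in A} V_a$ exhibits $A$ as a union of opens, you are done. The trade-off is that the paper's proof produces an explicit formula for $U_{\succeq x}$ that could be reused elsewhere, while yours gives only openness — but for the stated Proposition, openness is all that is needed, and your argument is tighter.
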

\begin{proof}
In order to show that $\dabs{\Delta^{n}}$ is a locally stratified space it is enough to show that for every open subset $U$ in $\dabs{\Delta^{n}}$ and every point $x\in U$, the subset 
\[U_{\succeq x}=\{y\in U: x\preceq_{U}y\}\]
is open in $U$. 
Let $U$ be an open subset in $\dabs{\Delta^n}$ and write $U$ as a union $U=\bigcup_{i\in I} B_i$ of open balls.
For every open ball $B$ in $\dabs{\Delta^n}$ and every point $x\in B$, as $B$ is convex in $\dabs{\Delta^n}$, the subset $B_{\succeq x}$ is open in $B$  by Lemma \ref{lem:convex-preorder-path-preorder}.
In particular, $B_{\succeq x}$ is equal to the set $B_{\ge k}$, where $x$ belongs to the $k$-th stratum of $B$.
Moreover, the path preorder $\preceq_{U}$ on $U$ coincides with the join $\bigvee_{i\in I}\preceq_{B_i}$ of the path preorders on each ball.
We claim that, for every point $x\in U$, the following equality holds:
\begin{equation}
\label{eq:loc-strat-1}
U_{\succeq x}=\bigcup_{i\in I} (B_i)_{\ge k_i}
\end{equation}
for some integers $k_i\in [n+1]$, where we set $(B_i)_{\ge n+1}=\emptyset$ for every $i\in I$.
For every $i\in I$ if there is no directed path in $U$ joining $x$ with a point of $B_i$, we take $k_i$ to be $n+1$.
On the other hand, if there exists a path as above, we take $k_i$ to be the minimum such that $x \preceq_U y_{k_i}$, for some $y_{k_i}$ in the stratum $B_{k_i}$.
To show that the right hand side of $\eqref{eq:loc-strat-1}$ is contained in $U_{\succeq x}$ notice that, given a point $y$ in $B_{\ge k_i}$, we have $y_{k_i}\preceq_{B_i} y$ and $x\preceq_{U}y_{k_i}$, which implies $x\preceq_U y$.
To show the opposite inclusion, given $y\in U_{\succeq x}$ there exists some $B_i$ such that $y\in B_i$ and, by our choice of $k_i$, we must have that $y$ belongs to a shallower stratum than $(B_i)_{k_i}$.
\end{proof}

\begin{proposition}
\label{prop:colimit-loc-strat}
The inclusion functor $\iota\colon\LocStrata\to\Stream$ creates colimits.
In particular, $\LocStrata$ is a cocomplete category.
\end{proposition}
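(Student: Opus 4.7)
The plan is to exhibit the colimit concretely. Given a diagram $F\colon \cat{J}\to \LocStrata$, form its colimit $(X,\preceq)$ in $\Stream$ with structural maps $f_{i}\colon F(i)\to X$. I want to show that $(X,\preceq)$ already lies in $\LocStrata$, so that the colimit in $\Stream$ serves as the colimit in $\LocStrata$ as well and the inclusion $\iota$ preserves (and evidently reflects) colimits.

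First I would record what the underlying topological space of $X$ is. The forgetful functor $\Stream\to\Topa$ is topological by \ref{subsec:prestream-topological-over-top} together with \ref{subsec:idempotent-comonad-streams}, hence by Proposition \ref{prop:top-functor-adjoints} it has a right adjoint and so preserves colimits. Consequently the underlying topological space of $X$ is the colimit of the underlying spaces of the $F(i)$ in $\Topa$, equipped with the final topology with respect to the maps $f_{i}$. In particular, a subset $V\subset X$ is open in $X$ if and only if $f_{i}^{-1}(V)$ is open in $F(i)$ for every $i\in \cat{J}$.

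Next I would verify the defining property of a locally stratified space for $(X,\preceq)$. Let $U$ be open in $X$ and let $A\subset U$ be upward closed with respect to $\preceq_{U}$. For every $i\in \cat{J}$ the map $f_{i}\colon F(i)\to X$ is a morphism of streams, so by Lemma \ref{lem:stream-upward-closed} the preimage $f_{i}^{-1}(A)$ is upward closed in $f_{i}^{-1}(U)$ with respect to the precirculation of $F(i)$. Since $F(i)$ is locally stratified and $f_{i}^{-1}(U)$ is open in $F(i)$ (by continuity of $f_{i}$), this forces $f_{i}^{-1}(A)$ to be open in $F(i)$. Because this holds for every $i$, the final-topology characterisation recalled above gives that $A$ is open in $X$, which is exactly the condition for $X$ to be locally stratified.

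The only slightly delicate point is the first step: one needs to know that colimits in $\Stream$ really do carry the final topology with respect to the structural maps, rather than some coarser topology produced by the coreflection from $\pStream$ to $\Stream$. This is handled uniformly by invoking that $\Stream\to\Topa$ is topological (hence cocontinuous), so no separate argument about how the coreflection interacts with the precirculation is required. Cocompleteness of $\LocStrata$ then follows because $\Stream$ is itself cocomplete as a topological construct over $\Topa$ (\ref{subsec:prestream-topological-over-top}), and every such colimit has been shown to land in $\LocStrata$.
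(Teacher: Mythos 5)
Your proof is correct and follows essentially the same route as the paper's: reduce to checking that a colimit in $\Stream$ of locally stratified spaces is itself locally stratified, use Lemma~\ref{lem:stream-upward-closed} to get that $f_i^{-1}(A)$ is upward closed in $f_i^{-1}(U)$, invoke local stratification of each $F(i)$ to conclude each preimage is open, and finish by the final-topology characterisation. The only difference is that you spell out explicitly why the underlying topology of the colimit stream is final with respect to the cocone (via the topological functor $\Stream\to\Topa$ and Proposition~\ref{prop:top-functor-adjoints}), a point the paper takes as read when it says ``by definition of the final structure.''
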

\begin{proof}
Since $\LocStrata\to \Stream$ is a fully faithful functor it suffices to show that a colimit of a diagram of locally stratified spaces in $\Stream$ is locally stratified.
Let $X$ be a stream and assume that $X$ has the final structure with respect to a family of maps $(f_{i}\colon X_{i}\to X)_{i\in I}$ such that $X_{i}$ is locally stratified for every $i\in I$.
Let $U$ be an open subset of $X$ and let $A$ be an upward closed subset in $U$ with respect to $\preceq_U$.
By definition of the final structure, it is enough to show that $f_{i}^{-1}(A)$ is open in $X_{i}$ for every $i\in I$.
Since $X_{i}$ is a locally stratified space it is enough to show that $f^{-1}(A)$ is upward closed in $f^{-1}(U)$ with respect to the preorder $\preceq_{f^{-1}(U)}$, which is true by Lemma \ref{lem:stream-upward-closed}.
\end{proof}

\subsection{}
Let $\cat{I}$ be the full subcategory of $\Stream$ spanned by locally stratified standard simplices.
An object of the final closure $\Stream_{\cat{I}}$ of $\cat{I}$ in $\Stream$ is a locally stratified space by Propositions \ref{prop:delta-locally-stratified} and \ref{prop:colimit-loc-strat}.

\begin{definition}
A locally stratified space $X$ is said to be \emph{numerically generated} if it belongs to the final closure of $\cat{I}$ in $\Stream$.
We denote by $\LocStrat$ the full subcategory of numerically generated locally stratified spaces.
\end{definition}

\begin{theorem}
\label{thm:locstrat-locally-presentable}
The category $\LocStrat$ is a locally presentable, coreflective full subcategory of the category $\Stream$ of streams.
\end{theorem}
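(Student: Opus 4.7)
The plan is to mimic the strategy used in Theorem~\ref{thm:strat-loc-pres} for the stratified case, replacing $\pTop$ with $\Stream$. The two assertions in the statement (coreflectivity and local presentability) both follow from the general machinery of Section~\ref{ch:1sec:4} once we verify the correct hypotheses on the ambient category $\Stream$.

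First I would establish that $\Stream$ is itself a well fibered topological construct. By Proposition~\ref{prop:pstream-wf-top-construct} the category $\pStream$ of prestreams is a well fibered topological construct via the forgetful functor to $\Topa$. By the discussion in \ref{subsec:idempotent-comonad-streams} together with the following paragraph, $\Stream$ arises as the category of coalgebras for the idempotent comonad $\mathbb{S}$ on $\pStream$, and hence is a coreflective full subcategory of $\pStream$ whose coreflection $S\colon\pStream\to\Stream$ has counit $\epsilon_{X}\colon SX\to X$ whose underlying continuous map is the identity on the underlying topological space of $X$. Since the forgetful functor from $\pStream$ to $\Set$ factors through $\Topa$, the counit of the coreflection $\Stream\subset\pStream$ forgets to the identity map of underlying sets. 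Therefore Proposition~\ref{prop:coreflective-subcat-top-cat} applies and we conclude that $\Stream\to\Set$ is a topological functor. Small fibres and the discrete terminal object are inherited from $\pStream$, so $\Stream$ is well fibered.

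Next I would apply the general theory of final closures to the small full subcategory $\cat{I}\subset\Stream$ spanned by the locally stratified standard simplices $\dabs{\Delta^{n}}$, $n\in\NN$. By definition $\LocStrat=\Stream_{\cat{I}}$, and Proposition~\ref{prop:C_I-coreflective} immediately gives that $\LocStrat$ is a coreflective full subcategory of $\Stream$. Local presentability then follows from Theorem~\ref{thm:C_I-locally-presentable} applied to the well fibered topological construct $\Stream$ and the small subcategory $\cat{I}$.

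There is no real obstacle here beyond identifying the correct ambient category $\Stream$ as a well fibered topological construct; once that observation is in place, both conclusions are formal consequences of results already recorded in Sections~\ref{ch:1sec:4} and \ref{ch:5sec:1}. The only mildly delicate point is verifying that the counit of the coreflection $S\colon\pStream\to\Stream$ forgets to an identity, so that Proposition~\ref{prop:coreflective-subcat-top-cat} is applicable; this is exactly what is recorded in the explicit description of the free $\mathbb{S}$-coalgebra on a prestream given after \ref{subsec:idempotent-comonad-streams}.
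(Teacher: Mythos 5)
Your proof is correct and takes the same route as the paper: the paper first records as a separate proposition (in Section~\ref{ch:5sec:1}) that $\Stream$ is a well fibered topological construct, via Proposition~\ref{prop:coreflective-subcat-top-cat} applied to the coreflection $S\colon\pStream\to\Stream$ whose counit forgets to the identity, and then proves the theorem by citing Theorem~\ref{thm:C_I-locally-presentable} and Proposition~\ref{prop:C_I-coreflective}. You have simply inlined the first step into your argument; the logic is identical.
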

\begin{proof}
This follows from Theorem \ref{thm:C_I-locally-presentable} and Proposition \ref{prop:C_I-coreflective}.
\end{proof}

\subsection{} 
\label{subsec:locstrat-sset-adjunction}
By Proposition \ref{prop:colimit-loc-strat}, the functor:
\begin{align*}
\DDelta	& \to \LocStrata\\
[n] 		&\mapsto \dabs{\Delta^{n}}
\end{align*}
induces an adjunction:
\begin{equation}
\label{eq:sset-locstrata-adjunction}
\Adjoint{\sSet}{\LocStrata}{\dabs{\blank}}{\dSing}
\end{equation}
whose left adjoint we call \emph{locally stratified geometric realisation functor} and the right adjoint we call the \emph{locally stratified singular simplicial set functor}.
Moreover, \eqref{eq:sset-locstrata-adjunction} restricts to an adjunction
\begin{equation}
\label{eq:sset-locstrat-adjunction}
\Adjoint{\sSet}{\LocStrat}{\dabs{\blank}}{\dSing}
\end{equation}
with the category of numerically generated locally stratified spaces.

\begin{proposition}
\label{prop:global-section-realisation}
Let $\Gamma\colon \Stream\to \pTop$ be the global preorder functor and let $K$ be a simplicial set. 
Then, there exists a natural isomorphism:
\[\Gamma\left(\dabs{K}\right)\simeq \sabs{K}.\]
\end{proposition}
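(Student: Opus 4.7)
The plan is to reduce the statement to representable simplicial sets by a cocontinuity argument. First, the functor $\dabs{\blank}\colon\sSet\to\LocStrat$ is a left adjoint by \ref{subsec:locstrat-sset-adjunction}, and the inclusions $\LocStrat\hookrightarrow\Stream\hookrightarrow\pStream$ are left adjoints by coreflectivity (Theorem \ref{thm:locstrat-locally-presentable} combined with Proposition \ref{prop:colimit-loc-strat}, and \ref{subsec:idempotent-comonad-streams}). Composing with the left adjoint $\Gamma\colon\pStream\to\pTop$ from \ref{subsec:preorder-prestream}, the composite $\Gamma\dabs{\blank}\colon\sSet\to\pTop$ preserves all colimits. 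Combined with the density theorem (Corollary \ref{cor:density-theorem}), this yields a natural isomorphism
\[\Gamma(\dabs{K})\;\cong\;\colim_{\Delta^n\to K}\Gamma(\dabs{\Delta^n})\qquad\text{in }\pTop.\]

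Second, we evaluate $\Gamma$ on representables. Applying Lemma \ref{lem:convex-preorder-path-preorder} to the convex open subset $C=\abs{\Delta^n}$ of itself shows that the global path-preorder on $\dabs{\Delta^n}$ coincides with the stratification preorder, so $\Gamma(\dabs{\Delta^n})\cong\sabs{\Delta^n}$ as preordered spaces. Therefore $\Gamma(\dabs{K})$ is naturally isomorphic to $\colim^{\pTop}_{\Delta^n\to K}\sabs{\Delta^n}$, and the proof reduces to identifying this $\pTop$-colimit with $\iota(\sabs{K})$, where $\iota\colon\Strata\to\pTop$ denotes the full embedding of Lemma \ref{lem:strat_closed_colimits}.

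Third, let $\phi$ be the left adjoint of $\iota$. Since $\phi$ preserves colimits and $\phi\iota=\id$, applying $\phi$ gives $\phi(\Gamma(\dabs{K}))\cong\sabs{K}$ in $\Strata$. It remains to show that the unit $\Gamma(\dabs{K})\to\iota\phi(\Gamma(\dabs{K}))$ is an isomorphism in $\pTop$. Unwinding Construction \ref{con:strat-preord}, this reduces to two checks: (i) the preorder on the underlying set of $\Gamma(\dabs{K})$ coincides with the pullback of the poset structure on its chaotic quotient, which holds automatically since for any preordered set $(X,\le)$ the induced preorder from $X\to X/\sim$ recovers $\le$; and (ii) the topology on $\abs{K}$ is already fine enough for the stratification map of $\sabs{K}$ to be continuous with respect to the \Alexandroff{} topology, which follows from the universal property of the colimit topology together with the continuity of each $\abs{\Delta^n}\to[n]$.

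The main obstacle lies in this third step: $\iota\colon\Strata\to\pTop$ is a right adjoint and does not preserve colimits in general, so the identification $\phi\Gamma\dabs{K}\cong\sabs{K}$ cannot be transported from $\Strata$ to $\pTop$ purely formally. The key technical input is the comparison between the colimits of the base posets $[n]$ computed in $\Pre$ and in $\Pos$, together with the observation that the poset reflection $P\to P/\sim$ does not alter the preorder pulled back along any map into $P$; this is what ensures that no refinement of the topology or coarsening of the preorder occurs when applying $\phi$ to $\Gamma(\dabs{K})$.
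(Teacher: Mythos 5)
Your first two steps are correct and match the paper's proof exactly: $\Gamma\dabs{\blank}$ is a composite of left adjoints, hence cocontinuous, and Lemma \ref{lem:convex-preorder-path-preorder} (applied with $C=\abs{\Delta^n}$) gives $\Gamma(\dabs{\Delta^n})\cong\sabs{\Delta^n}$. The paper's proof stops there, since this is all that is needed.

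Your third step, and the ``main obstacle'' you identify, rest on a misreading of Lemma \ref{lem:strat_closed_colimits}. In the adjunction $\iota\colon\Strata\rightleftarrows\pTop\colon\phi$, the inclusion $\iota$ is the \emph{left} adjoint and $\phi$ is the \emph{right} adjoint (this is explicit in the displayed hom-set isomorphism $\pTop(\iota(X,s,P),(Y,\le))\cong\Strata((X,s,P),\phi(Y,\le))$, and is stated outright in \ref{subsec:limits-colimits-strat}: ``the inclusion functor $\iota\colon\Strata\to\pTop$ creates colimits''). So $\iota$ certainly preserves colimits, and there is no obstacle to transporting the identification: $\iota\sabs{\blank}$ is cocontinuous because it is the composite of the cocontinuous functor $\sabs{\blank}$ with the colimit-creating embedding $\iota$. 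Once you know that both $\Gamma\dabs{\blank}$ and $\iota\sabs{\blank}$ are cocontinuous and agree on representables, the density theorem gives the natural isomorphism on all of $\sSet$ immediately. Your checks (i) and (ii) in step three are not incorrect observations, but they are a by-hand verification of a fact that is free once you read the adjunction in the right direction, and the surrounding framing (``the unit $\Gamma(\dabs{K})\to\iota\phi(\Gamma(\dabs{K}))$'', $\iota$ ``is a right adjoint'') is reversed throughout.
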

\begin{proof}
The claim is true for the realisation of standard simplices, by Lemma \ref{lem:convex-preorder-path-preorder}. 
The general statement follows from the fact that $\Gamma$ commutes with colimits.
\end{proof}

\begin{example}
The \emph{locally stratified $n$-spine} is the realisation $\dabs{\Sp^n}$ of the $n$-spine (see Example \ref{ex:n-spine}). 
Since realisation preserves colimits, $\dabs{\Sp^n}$ is naturally isomorphic to the colimit given by glueing $n$ ordered copies of $\dabs{\Delta^1}$, where the endpoint of a copy of $\dabs{\Delta^1}$ is glued to the starting point of the next copy.
Equivalently, by the description of colimits in prestreams (see \ref{subsec:prestream-topological-over-top}), $\dabs{\Sp^{n}}$ is the stream associated to the stratified space $\sabs{\Sp^{n}}$. 
In particular, multiplication by $n$:
\begin{align*}
n \colon [0,1] &\to [0,n]\\
t\mapsto n\cdot t
\end{align*}
determines a morphism  
\[n \colon \dI\to \dabs{\Sp^{n}}\]
of streams.
\end{example}

\begin{definition}
Let $X$ be a prestream and let $x$ and $x'$ be points in $X$. 
An \emph{elementary local exit path} in $X$ from $x$ to $x'$ is a morphism of prestreams $\alpha\colon\dabs{\Delta^1}\to X$ such that $\alpha(0)=x$ and $\alpha(1)=x'$. 
A \emph{local exit path} in $X$ from $x$ to $x'$ is a morphism $\alpha\colon \dabs{\Sp^n}\to X$ such that $\alpha(0)=x$ and $\alpha(n)=x'$. 
\end{definition}

\begin{example}
\label{ex:locally-strat-circle}
The locally stratified realisation $\dabs{S^{1}}$ of the simplicial circle (see Example \ref{ex:simplicial-circle}) is called the \emph{locally stratified circle} pointed at $0$.
The local preorder around every point $x\in \dabs{S^{1}}$ distinct from the origin is the chaotic preorder.
On the other hand, directed paths crossing $0$ can only do so counterclockwise.
In other words, for any open arc $U_{\epsilon}=(-\epsilon,\epsilon)$ around $0$, the local preorder $\preceq_{U_{\epsilon}}$ on $U_{\epsilon}$ corresponds to the stratification $U_\epsilon\to [1]$ mapping $(-\epsilon, 0]$ to $0$ and $(0,\epsilon)$ to 1.
\begin{figure}[h]
\centering
\begin{tikzpicture}

\draw [thick] (0,0) circle [radius=1];

\draw [thick, fill] (-1,0) circle [radius=0.05];
\node at (-0.7,0) {$0$};

\draw [thick, ->] (1,-0.1) -- (1,0.1);
\node at (0.7,0) {$f$};

\draw [gray,  dashed] (-2,0) circle [radius=0.5];
\draw[gray, dashed] (-1.5,0) -- (-1,0);

\draw[red, thick] (-2,0.5) -- (-2,0);
\draw[green, thick] (-2,0) -- (-2,-0.5);
\draw [red, thick, fill] (-2,0) circle [radius=0.05];

\draw [gray, dashed] (2,0) circle [radius=0.5];
\draw[gray, dashed] (1,0) -- (1.5,0);

\draw[red, thick] (2,-0.5) -- (2,0.5);

\end{tikzpicture}
\caption{The locally stratified circle}\label{figure}
\end{figure}
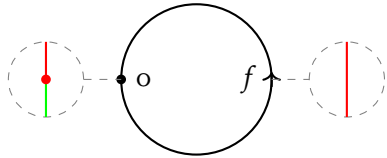
\end{example}

\begin{lemma}
\label{lem:product-realisation-stream}
The canonical morphism of streams 
\[\dabs{\Delta^n\times\Delta^m}\to \dabs{\Delta^n}\times\dabs{\Delta^m}\]
is an isomorphism.
\end{lemma}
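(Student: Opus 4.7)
The plan is to factor the canonical comparison map through the preordered realisation and to exploit that the functors relating $\pTop$, $\pStream$ and $\Stream$ are right adjoints. First, since both $\iota\colon \pTop\to \pStream$ (right adjoint to the global preorder $\Gamma$) and $S\colon \pStream\to \Stream$ (right adjoint to the inclusion) preserve products, combining with Lemma \ref{lem:product-stratified-simplices} one obtains
\[\dabs{\Delta^{n}}\times \dabs{\Delta^{m}} = S\iota\sabs{\Delta^{n}}\times S\iota\sabs{\Delta^{m}} \cong S\iota\left(\sabs{\Delta^{n}}\times\sabs{\Delta^{m}}\right)\cong S\iota\sabs{\Delta^{n}\times\Delta^{m}},\]
so the proof reduces to exhibiting a natural isomorphism $\dabs{\Delta^{n}\times\Delta^{m}}\cong S\iota\sabs{\Delta^{n}\times\Delta^{m}}$.

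For the underlying topological spaces, I would observe that the forgetful functor $\Stream\to \Top$ preserves colimits (the inclusion $\Stream\hookrightarrow \pStream$ is a left adjoint, and $\pStream\to \Top$ is topological), while $S\iota$ manifestly preserves the underlying space. Thus both sides have underlying space $\abs{\Delta^{n}\times\Delta^{m}}$, and the canonical comparison map induces on underlying spaces the classical homeomorphism $\abs{\Delta^{n}\times\Delta^{m}}\cong \abs{\Delta^{n}}\times\abs{\Delta^{m}}$ of \cite[Chapter III, 3.4]{GZ67}.

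It remains to match the two circulations, and I would do so by showing that each coincides with the circulation generated by monotonic continuous paths into $\sabs{\Delta^{n}\times\Delta^{m}}$. The circulation on $S\iota\sabs{\Delta^{n}\times\Delta^{m}}$ is generated by such paths by construction. The circulation on $\dabs{\Delta^{n}\times\Delta^{m}}$, as a colimit in $\Stream$ of the $\dabs{\Delta^{k}}$ ranging over $\overcat{\DDelta}{\Delta^{n}\times\Delta^{m}}$, is the final circulation with respect to the inclusions of these simplices; directed paths in each $\dabs{\Delta^{k}}$ push forward to monotonic paths in $\sabs{\Delta^{n}\times\Delta^{m}}$, which gives one direction of the comparison.

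The hard part will be the converse inclusion: showing that every monotonic continuous path $\alpha\colon \dI\to \sabs{\Delta^{n}\times\Delta^{m}}$ can be written as a finite concatenation of monotonic paths, each supported in a single top-dimensional simplex of the canonical triangulation of $\Delta^{n}\times\Delta^{m}$. I would prove this by a finiteness argument: $\sabs{\Delta^{n}\times\Delta^{m}}$ has only finitely many strata, and the monotonicity of $\alpha$ with respect to the stratification forces the stratum of $\alpha(t)$ to increase weakly with $t$, so once $\alpha$ leaves a stratum it never returns. Therefore $[0,1]$ partitions into finitely many subintervals on each of which $\alpha$ remains in a single stratum, each stratum lies in the interior of some simplex of the triangulation, and continuity at the partition endpoints yields the required finite decomposition. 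Concatenating these pieces exhibits $\alpha$ as a directed path in the final circulation on $\dabs{\Delta^{n}\times\Delta^{m}}$, completing the proof.
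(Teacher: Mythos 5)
Your opening reduction — that $\iota$ and $S$, being right adjoints, preserve products, so that Lemma~\ref{lem:product-stratified-simplices} identifies $\dabs{\Delta^{n}}\times\dabs{\Delta^{m}}$ with $S\iota\sabs{\Delta^{n}\times\Delta^{m}}$ — is in fact the paper's entire proof, which stops there and silently identifies the latter with $\dabs{\Delta^{n}\times\Delta^{m}}$. You are right to flag that this last step is not free: $\dabs{\Delta^{n}\times\Delta^{m}}$ is defined as a colimit (the value of a left Kan extension), whereas $S\iota$ is a composite of right adjoints and has no a priori reason to commute with colimits. So the additional work you attempt is genuinely needed, and the remainder of your argument is where the content lies.

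That remainder, however, has a concrete gap at the sentence ``each stratum lies in the interior of some simplex of the triangulation.'' This is false for products. Already for $\Delta^{1}\times\Delta^{1}$, in coordinates $(s,t)\in[0,1]^{2}$, the open stratum is $\{s>0,\,t>0\}$: it meets the interior of both $2$-simplices of the canonical triangulation and also contains the open diagonal $\{(s,s):s>0\}$, which lies on the shared $1$-face and hence in the interior of neither. A monotone path confined to a single stratum may therefore cross between simplices infinitely many times, so your stratum-by-stratum decomposition does not produce the finite simplex-supported decomposition you want. The way out is to notice you are asking for more than you need: the final circulation at $U$ is a reflexive-transitive closure, so it suffices to produce a \emph{finite chain} $x=x_{0},\dots,x_{k}=y$ in $U$ with consecutive points joined by directed paths each supported in a single simplex; you do not need to decompose $\alpha$ itself. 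Using the gluing identity~\eqref{eq:circulation}, one may cover $U$ by convex open balls and argue there: on a convex open $C$, as in Lemma~\ref{lem:convex-preorder-path-preorder} (whose proof transfers to the product via Lemma~\ref{lem:product-stratified-simplices}), both circulations coincide with the restriction of the global order, because a line segment joining two comparable points crosses only finitely many simplices of the triangulation and each piece is a directed segment inside that simplex.
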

\begin{proof}
Both functors $\iota\colon\pTop\to \pStream$ and $S\colon \pStream\to \Stream$ preserve products, since they are right adjoints. Therefore, we conclude by Lemma \ref{lem:product-stratified-simplices}.
\end{proof}

\begin{lemma}
\label{lem:locstratified-standard-simplex-exponentiable}
The locally stratified strandard $n$-simplex $\dabs{\Delta^{n}}$ is an exponentiable stream.
\end{lemma}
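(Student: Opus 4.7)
The plan is to invoke Theorem \ref{thm:stream-exponentiable}, which characterises exponentiable streams as precisely the core-compact ones, and reduce the statement to a well-known topological fact about the geometric standard simplex.

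First I would identify the underlying topological space of $\dabs{\Delta^{n}}$. By construction (see the definition preceding Proposition \ref{prop:delta-locally-stratified}), the stream $\dabs{\Delta^{n}}$ is obtained from the stratified standard $n$-simplex $\sabs{\Delta^{n}}$ via the composite $S\circ \iota\colon \pTop\to \pStream\to \Stream$, both of whose constituents act as the identity on underlying topological spaces (the embedding $\iota$ simply endows $\abs{\Delta^{n}}$ with the precirculation induced by the preorder on $\sabs{\Delta^{n}}$, while the coreflection $S$ keeps the topology unchanged and only refines the precirculation to the one generated by directed paths). Hence the topological space underlying $\dabs{\Delta^{n}}$ is the geometric standard $n$-simplex $\abs{\Delta^{n}}$.

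Next I would observe that $\abs{\Delta^{n}}$ is a compact Hausdorff subspace of $\RR^{n+1}$, hence locally compact Hausdorff, hence core-compact in the sense of \ref{subsec:corecompact-exponentiable} (this was already used implicitly in Example \ref{ex:cone-top} and Example \ref{ex:numerically-generated}). By the definition of a core-compact stream recalled in \ref{subsec:stream-exponential}, this means that $\dabs{\Delta^{n}}$ is a core-compact stream.

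Finally, applying Theorem \ref{thm:stream-exponentiable} concludes that $\dabs{\Delta^{n}}$ is exponentiable in $\Stream$. There is no real obstacle here: the content of the lemma is entirely carried by Theorem \ref{thm:stream-exponentiable}, and the only verification needed is the purely topological fact that $\abs{\Delta^{n}}$ is core-compact, which is immediate from compactness.
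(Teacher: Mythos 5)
Your proof is correct and follows exactly the same route as the paper, which simply cites Theorem \ref{thm:stream-exponentiable}; you have just spelled out the implicit verification that the underlying space $\abs{\Delta^{n}}$ is core-compact (being compact Hausdorff), which is indeed the entire content of the reduction.
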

\begin{proof}
This follows immediately from Theorem \ref{thm:stream-exponentiable}.
\end{proof}

\begin{theorem}
\label{thm:locstrat-cartesian-closed}
The category $\LocStrat$ of numerically generated locally stratified spaces is a Cartesian closed category.
\end{theorem}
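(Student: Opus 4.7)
The plan is to apply Corollary \ref{cor:stream-core-compact-cartesian-closed} directly to the class $\cat{I}$ of locally stratified standard simplices, since by construction $\LocStrat$ coincides with the final closure $\Stream_{\cat{I}}$. To do this, I would need to check the two hypotheses of that corollary: first, that every object of $\cat{I}$ is core-compact as a stream, and second, that binary products of objects of $\cat{I}$, taken in $\Stream$, are $\cat{I}$-generated.

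The core-compactness hypothesis is immediate. By Theorem \ref{thm:stream-exponentiable}, a stream is core-compact if and only if it is exponentiable in $\Stream$, and Lemma \ref{lem:locstratified-standard-simplex-exponentiable} already establishes exponentiability of each $\dabs{\Delta^{n}}$.

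For the second hypothesis, I would argue as follows. Lemma \ref{lem:product-realisation-stream} gives a natural isomorphism
\[
\dabs{\Delta^{n}}\times\dabs{\Delta^{m}}\cong \dabs{\Delta^{n}\times\Delta^{m}}
\]
in $\Stream$. By Corollary \ref{cor:density-theorem} the simplicial set $\Delta^{n}\times\Delta^{m}$ is canonically a colimit of representables, and since $\dabs{\blank}\colon \sSet\to\Stream$ is a left adjoint (see \ref{subsec:locstrat-sset-adjunction}) it preserves this colimit. Hence $\dabs{\Delta^{n}}\times\dabs{\Delta^{m}}$ is a colimit in $\Stream$ of objects of $\cat{I}$, and by Proposition \ref{prop:C_I-coreflective}(3) any such colimit is $\cat{I}$-generated.

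With both hypotheses verified, Corollary \ref{cor:stream-core-compact-cartesian-closed} yields at once that $\LocStrat=\Stream_{\cat{I}}$ is Cartesian closed. I expect no real obstacle here: the argument is a straightforward assembly of results already in place, the only mildly subtle point being the observation that the product of two locally stratified simplices, computed in the ambient category $\Stream$, is again expressible as a colimit of simplices via the realisation functor, so that $\cat{I}$-generation follows without having to analyse the stream structure by hand.
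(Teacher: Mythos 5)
Your proposal is correct and follows essentially the same route as the paper: the paper's proof also appeals to Corollary \ref{cor:stream-core-compact-cartesian-closed} with $\cat{I}$ the class of locally stratified standard simplices, citing Lemma \ref{lem:product-realisation-stream} as the key input (and relying implicitly on Lemma \ref{lem:locstratified-standard-simplex-exponentiable}, proved just before, for the core-compactness hypothesis). Your write-up merely makes explicit the chain — density of representables, colimit preservation by the left adjoint $\dabs{\blank}$, Proposition \ref{prop:C_I-coreflective}(3) — that the paper leaves tacit.
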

\begin{proof}
The class of realisations of standard simplices satisfies the hypotheses of Corollary \ref{cor:stream-core-compact-cartesian-closed} by Lemma \ref{lem:product-realisation-stream}.
\end{proof}

\begin{theorem}
\label{thm:locstrat-enriched-tensored-cotensored}
The category $\LocStrat$ is naturally an $\sSet$-enriched tensored and cotensored category via \ref{eq:sset-locstrat-adjunction}.
Moreover, the adjunction \ref{eq:sset-locstrat-adjunction} is an $\sSet$-enriched adjunction.
\end{theorem}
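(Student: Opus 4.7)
The plan is to apply Proposition \ref{prop:cisinski-topological-enriched} to the situation of \ref{subsec:cisinski-topological}, taking $A=\DDelta$, the ambient topological construct $\cat{C}=\Stream$, and the functor $u\colon \DDelta\to\Stream$ sending $[n]$ to the locally stratified standard $n$-simplex $\dabs{\Delta^{n}}$.

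First I would check that the hypotheses of \ref{subsec:cisinski-topological} are satisfied in this setup. The category $\Stream$ is a well fibered topological construct by the results of Section \ref{ch:5sec:1}. With the chosen $u$, the induced left Kan extension $u_{!}\colon \sSet\to \Stream$ along the Yoneda embedding is precisely the locally stratified realisation functor $\dabs{\blank}$ with right adjoint $\dSing$, so that the adjunction \eqref{adjoint:presheaf-topcat} of \ref{subsec:cisinski-topological} coincides with \eqref{eq:sset-locstrata-adjunction}. The condition that $u_{!}$ preserves products of representable presheaves is exactly the content of Lemma \ref{lem:product-realisation-stream}, and the condition that every $u[n]=\dabs{\Delta^{n}}$ is exponentiable in $\Stream$ is Lemma \ref{lem:locstratified-standard-simplex-exponentiable}. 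By construction, the final closure of $uA$ in $\Stream$ is the category $\LocStrat$ of numerically generated locally stratified spaces, so the restricted adjunction \eqref{adjoint:presheaf-topcat-gen} is exactly \eqref{eq:sset-locstrat-adjunction}.

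Having verified the hypotheses, Proposition \ref{prop:cisinski-topological-enriched} immediately yields both assertions: the functor $\dabs{\blank}\colon \sSet\to\LocStrat$ is strong monoidal, and hence $\LocStrat$ acquires the structure of a category enriched, tensored and cotensored over $\sSet$, with tensor $K\otimes X=\dabs{K}\times X$, cotensor $X^{K}=\ihom(\dabs{K},X)$, and mapping simplicial set $\map(X,Y)=\dSing\,\ihom(X,Y)$; moreover the adjunction \eqref{eq:sset-locstrat-adjunction} is an $\sSet$-enriched adjunction by the same proposition.

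The proof is essentially bookkeeping: no serious obstacle arises, since the substantive content has been packaged into the general result of Proposition \ref{prop:cisinski-topological-enriched} and the two lemmas cited above. The only point requiring any care is ensuring that the final closure really is $\LocStrat$ rather than some larger subcategory of $\Stream$, which is immediate from the definition of numerically generated locally stratified space in Section \ref{ch:5sec:2}.
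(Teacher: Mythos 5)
Your proposal is correct and follows exactly the route the paper takes: the paper's proof is the one-line citation of Proposition \ref{prop:cisinski-topological-enriched}, with the preceding Lemmas \ref{lem:product-realisation-stream} and \ref{lem:locstratified-standard-simplex-exponentiable} playing the role you assign them of verifying the hypotheses of \ref{subsec:cisinski-topological}. Your version simply spells out the bookkeeping that the paper leaves implicit.
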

\begin{proof}
The claim follows from Proposition \ref{prop:cisinski-topological-enriched}.
\end{proof}

\subsection{} 
Concretely, for every pair of numerically generated locally stratified spaces $X$ and $Y$ the mapping space of $X$ and $Y$ is the simplicial set:
\[\map(X,Y)=\dSing\ihom(X,Y),\]
where $\ihom(X,Y)$ denotes the internal-hom in $\LocStrat$.
For every simplicial set $K$, the tensor of $X$ with $K$ is the locally stratified space:
\[X\otimes K=X\times\dabs{K}\]
and dually, the cotensor of $Y$ with $K$ is given by:
\[Y^K=\ihom(\dabs{K},Y).\]

\begin{lemma}
\label{lem:numerically-generated-local-exit-path}
Let $(X,\preceq)$ be a preordered space and assume that $X$ is a numerically generated topological space.
Then, $(X,\preceq)$ is a numerically generated locally stratified space if and only if for any two points $x$ and $x'$ in an open subset $U$ of $X$, we have that $x\preceq_{U} x'$ if and only if there exists a local exit path from $x$ to $x'$ in $X$ whose image is contained in $U$.
\end{lemma}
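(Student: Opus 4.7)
The plan is to mirror the proof of Lemma \ref{lem:numerically-generated-exit-path}, replacing the explicit description of final structures in $\Pre$ by the analogous description in $\Stream$. The latter has two ingredients: the final precirculation in $\pStream$ (a sectionwise reflexive-transitive closure of pushforwards, as recorded in \ref{subsec:prestream-topological-over-top}) followed by the stream coreflection $S \colon \pStream \to \Stream$, which preserves colimits because it is a left adjoint.

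For the forward direction, assume $(X,\preceq)$ is numerically generated. Then $X$ is the colimit in $\Stream$ of the diagram $\overcat{\cat{I}}{X} \to \Stream$ of locally stratified standard simplices mapping to $X$, and because $S$ preserves colimits this colimit equals $S(X,\preceq^{L\textrm{pre}})$, where $\preceq^{L\textrm{pre}}$ is the final precirculation in $\pStream$. I would like to show that if $x \preceq_U x'$, then there is a finite sequence of structure maps $f_k\colon \dabs{\Delta^{i_k}} \to X$ and points $p_k^0, p_k^1 \in \dabs{\Delta^{i_k}}$ with $p_k^0 \preceq p_k^1$, $f_k(p_k^\epsilon) \in U$, $f_k(p_k^1) = f_{k+1}(p_{k+1}^0)$, $f_1(p_1^0) = x$ and $f_n(p_n^1)=x'$. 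This is essentially the stream-level analogue of the chain in the proof of Lemma \ref{lem:numerically-generated-exit-path}, and it can be extracted by unpacking the coreflection $S$ on the directed path produced by the Haucourt condition and then applying the sectionwise pushforward description inside $U$. Once such a chain is in hand, Lemma \ref{lem:convex-preorder-path-preorder} supplies a linear elementary local exit path $\dabs{\Delta^1} \to \dabs{\Delta^{i_k}}$ from $p_k^0$ to $p_k^1$ whose image lies in $f_k^{-1}(U)$, and composing with $f_k$ and concatenating over $k$ yields the required local exit path $\dabs{\Sp^n} \to X$ from $x$ to $x'$ with image in $U$.

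For the converse direction, assume the precirculation is detected by local exit paths. By Lemma \ref{lem:coreflection-I-generated} together with Proposition \ref{prop:C_I-coreflective}, to exhibit $X$ as $\cat{I}$-generated it suffices to show that the canonical map $\eta_X$ from the coreflection is an isomorphism of streams; since $X$ is already a numerically generated topological space, $\eta_X$ is a homeomorphism, so one must only match the two precirculations. The inclusion $\preceq^L \subseteq \preceq$ holds automatically because every structure map $\dabs{\Delta^{n_\alpha}} \to X$ is a stream morphism. For the reverse inclusion, given $x \preceq_U x'$, the hypothesis provides a local exit path $\alpha\colon \dabs{\Sp^n}\to X$ with image in $U$, and decomposing $\alpha$ into its elementary pieces $\alpha_k\colon \dabs{\Delta^1}\to X$ together with the observation that $0 \preceq 1$ holds in $\dabs{\Delta^1}$ and that $\alpha_k^{-1}(U)=\dabs{\Delta^1}$ yields $\alpha_k(0) \preceq^L_U \alpha_k(1)$ via pushforward; transitivity of $\preceq^L_U$ then gives $x \preceq^L_U x'$.

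The main obstacle lies in the forward direction, where one must convert a single directed path $\dI \to X$ (produced by the Haucourt condition) inside $U$ into a finite concatenation of directed-segment pushforwards from the locally stratified standard simplices, again entirely inside $U$. This is a discretisation step whose rigour rests on the compactness of the interval, on the openness of $U$ in the underlying numerically generated topological space, and on the explicit description of the final precirculation in $\pStream$ as an iterated union of pushforwards; after this decomposition, Lemma \ref{lem:convex-preorder-path-preorder} takes care of replacing each directed segment by a linear elementary local exit path.
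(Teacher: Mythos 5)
Your overall plan --- mirror the proof of Lemma~\ref{lem:numerically-generated-exit-path}, replacing the final preorder with the final precirculation from~\ref{subsec:prestream-topological-over-top} --- is the one the paper takes, and your converse direction is essentially sound. But the forward direction contains two mistakes. The minor one: the coreflector $S\colon \pStream \to \Stream$ is the \emph{right} adjoint to the inclusion $\Stream \hookrightarrow \pStream$, not a left adjoint, so it does not preserve colimits. What is true is that the final $\Stream$ lift of a cocone of streams is $S$ applied to the final $\pStream$ lift --- this is how Proposition~\ref{prop:coreflective-subcat-top-cat} produces the topological functor $\Stream\to\Topa$. This observation also dissolves your ``main obstacle'': since $S$ only coarsens the precirculation, $x \preceq_U x'$ in the final $\Stream$ structure already yields the corresponding relation in the final $\pStream$ structure, so the chain of pushforwards drops straight out of~\ref{subsec:prestream-topological-over-top}, with no detour through a single Haucourt directed path and no subsequent compactness/discretisation step.

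The serious error is the appeal to Lemma~\ref{lem:convex-preorder-path-preorder} to produce a \emph{linear} elementary local exit path from $p_k^0$ to $p_k^1$ with image in $f_k^{-1}(U)$. That lemma applies only to \emph{convex} open subsets of $\dabs{\Delta^{i_k}}$, and $f_k^{-1}(U)$ is in general not convex, so the line segment between $p_k^0$ and $p_k^1$ may well leave $f_k^{-1}(U)$. This is exactly where the locally stratified case departs from the stratified Lemma~\ref{lem:numerically-generated-exit-path}: there the chain conditions involve the \emph{global} preorder on $\sabs{\Delta^{i_j}}$, so a single line segment suffices, whereas here the relation $p_k^0 \preceq_{f_k^{-1}(U)} p_k^1$ in $\dabs{\Delta^{i_k}}$ only says that there is \emph{some} directed path from $p_k^0$ to $p_k^1$ whose image lies in $f_k^{-1}(U)$. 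It is this directed path --- not the line segment --- that one cuts into finitely many elementary local exit paths, for instance by observing that its composition with the stratification map to $[i_k]$ is a monotone step function with finitely many jumps; the paper's proof accordingly claims only a (not necessarily linear) local exit path $\dabs{\Sp^{k_j}}\to\dabs{\Delta^{i_j}}$ at this step.
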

\begin{proof}
Assume $(X,\preceq)$ is numerically generated. 
By \ref{subsec:stream-initial-final}, there exist integers $(i_{1},\ldots, i_{n})$ and a set
\[\left\{p_{j}^{\epsilon} : \epsilon \in \{0,1\}, p_{j}^{\epsilon}\in \dabs{\Delta^{i_{j}}}\right\}\]
together with morphisms of streams $f_{j}\colon \dabs{\Delta^{i_{j}}}\to (X,\preceq)$ such that 
\begin{align}
p_{j}^{0}			&\le_{f_{j}^{{-1}}U} p_{j}^{1}				\label{eq:circulation-final-structure-1}\\
f_{j}\left(p_{j}^{1}\right)	&=f_{j+1}\left(p_{j+1}^{0}\right)		\label{eq:circulation-final-structure-2}\\
f_{1}\left(p_{1}^{0}\right)	=x &,\; f_{n}\left(p_{n}^{1}\right)=x'.			\label{eq:circulation-final-structure-3}
\end{align}
where $\le_{f_{j}^{-1}U}$ denotes the precirculation of $\dabs{\Delta^{i_{j}}}$ evaluated at $f^{-1}_{j}U$. 
In particular, there exists a local exit path from $p_{j}^{0}$ to $p_{j}^{1}$ in $\dabs{\Delta^{i_{j}}}$ whose image is contained in $U$, which defines a local exit path:
\[\alpha_{j}\colon\dabs{\Sp^{k_{j}}}\to \dabs{\Delta^{i_{j}}}\to X\]
in $X$, for every $j$.
Then, \eqref{eq:circulation-final-structure-2} and  \eqref{eq:circulation-final-structure-3} imply that the $\alpha_{j}'s$ glue to an exit path $\sabs{\Sp^{m}}\to X$ from $x$ to $x'$.

Conversely assume that the circulation $\preceq$ on $X$ is given by local exit paths and let us consider a cocone $(g_{\alpha}\colon \dabs{\Delta^{n^{\alpha}}}\to (X',\preceq'))$ indexed over the cocone of all locally stratified $n$-simplices mapping to $X$. 
Let $f\colon X\to X'$ be a continuous map such that for every morphism $\alpha\colon \dabs{\Delta^{n^{\alpha}}}\to (X,\preceq)$, the following diagram of topological spaces is commutative:
\begin{equation}
\label{eq:lifting-numerically-generated-locstrat}
\begin{tikzcd}
\abs{\Delta^{n^{\alpha}}}
	\ar[r, "\alpha"]
	\ar[dr, "g_{\alpha}"']&
X	\ar[d, "f"]\\
&
X'
\end{tikzcd}.
\end{equation}
We need to show that $f$ defines a map of streams.
Let $U'$ be an open subset of $X'$ and assume that $x\preceq_{f^{{-1}}U'} x'$ in $X$, then by assumption there exists a local exit path $s\colon\dabs{\Sp^{n}}\to (X,\preceq)$ from $x$ to $x'$ whose image is contained in $f^{-1}(U')$.
In particular, this defines elementary local exit paths $g_{\alpha_{k}}\colon \dabs{\Delta^{1}}\to (X',\preceq')$ for $k=1,\ldots, n$ with images contained in $U'$ and such that 
\[\alpha_{1}(0)=x,\; \alpha_{n}(1)=x'\quad \text{ and }\quad \alpha_{k}(1)=\alpha_{k-1}(0).\]
Hence, there is a chain of inequalities:
\[fx\preceq'_{U'} g_{\alpha_{1}}(1)= g_{\alpha_{2}}(0)\preceq_{U'}'\ldots \preceq_{U'}' g_{\alpha_{n-1}}(1)=g_{\alpha_{n}}(0)\le' fx'.\]
Which implies that $fx\preceq_{U'}fx'$ and we are done.
\end{proof}

\begin{proposition}
\label{prop:open-numerically-generated}
Let $(X,\preceq)$ be a numerically generated locally stratified space and let $i\colon U\subset X$ be an open subset of $X$.
Then, $U$ endowed with the subspace structure is a numerically generated locally stratified space.
\end{proposition}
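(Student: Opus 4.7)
The plan is to verify that $U$ equipped with the subspace stream structure satisfies the characterisation provided by Lemma \ref{lem:numerically-generated-local-exit-path}. First, I would equip $U$ with the precirculation $\preceq^U$ defined by $\preceq^U_V = \preceq_V$ for every $V$ open in $U$ (equivalently, open in $X$, since $U$ is open in $X$). This is manifestly a circulation inherited by restriction, and $U$ is locally stratified: if $V$ is open in $U$ and $A \subset V$ is upward closed with respect to $\preceq^U_V = \preceq_V$, then $A$ is open in $X$ by the locally stratified condition on $X$, and thus open in $U$.

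Next, I would verify that the underlying topological space of $U$ is numerically generated, a standard fact about $\Delta$-generated spaces. Given a subset $W \subset U$ whose preimage is open under every continuous map $\abs{\Delta^n} \to U$, I would show $W$ is open in $X$: for any $g\colon \abs{\Delta^n} \to X$ and any point $p \in g^{-1}(U)$, since $g^{-1}(U)$ is open in $\abs{\Delta^n}$ I can choose a small affine sub-simplex $\sigma_p \subset g^{-1}(U)$ containing $p$ in its interior and homeomorphic to $\abs{\Delta^n}$; the restriction $g|_{\sigma_p}$ factors through $U$, so $(g|_{\sigma_p})^{-1}(W)$ is open in $\sigma_p$. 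Taking the union over $p$ shows $g^{-1}(W)$ is open in $\abs{\Delta^n}$, and since $X$ is $\Delta$-generated, $W$ is open in $X$ and hence in $U$.

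To conclude, I would apply Lemma \ref{lem:numerically-generated-local-exit-path} to the stream $(U, \preceq^U)$. For $x, x'$ in some $V$ open in $U$, the relation $x \preceq^U_V x'$ is by construction the same as $x \preceq_V x'$, which by the numerical generation of $X$ is equivalent to the existence of a local exit path $\dabs{\Sp^n} \to X$ from $x$ to $x'$ whose image is contained in $V$. Any such path automatically factors through $U$ and defines a local exit path in $U$ contained in $V$; conversely, composition with the inclusion $U \hookrightarrow X$ turns local exit paths in $U$ into local exit paths in $X$.

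The main technical obstacle is the second step: while the fact that open subspaces of numerically generated topological spaces are numerically generated is standard, it is the only non-formal ingredient and requires the affine-simplex restriction argument sketched above; the remaining verifications are immediate translations between the stream structure on $U$ and the ambient structure on $X$ through the open inclusion.
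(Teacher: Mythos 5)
Your proposal is correct and follows essentially the same strategy as the paper: pass to the subspace (restriction) precirculation, observe that the underlying space is $\Delta$-generated, and invoke Lemma~\ref{lem:numerically-generated-local-exit-path}. The only substantive difference is that you give an inline affine-subdivision argument for the fact that open subspaces of $\Delta$-generated spaces are $\Delta$-generated, where the paper simply cites \cite[Proposition 1.18]{dug03}; your argument is a reasonable sketch of that standard proof. Your separate verification that $U$ is locally stratified is harmless but redundant — it is already recorded as Lemma~\ref{lem:stream-upward-closed}, and in any case Lemma~\ref{lem:numerically-generated-local-exit-path} delivers the full conclusion that $U$ is a numerically generated locally stratified space in one step, so the extra check is absorbed into the final invocation.
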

\begin{proof}
By \cite[Proposition 1.18]{dug03} $U$ is a numerically generated topological space. Therefore, by Lemma \ref{lem:numerically-generated-local-exit-path} it is enough to show that the local order of $U$ can be detected by local exit paths.
Since $i\colon U\to X$ is an open inclusion, the precirculation $\preceq^{i^{-1}}$ coincides with the restriction of $\preceq$ to $U$.
As the underlying topological space of $X$ is numerically generated, the claim follows from Lemma \ref{lem:numerically-generated-local-exit-path}.
\end{proof}

\subsection{}
\label{subsec:global-strat-space}
Since the global preorder on $\dabs{\Delta^n}$ coincides with $\sabs{\Delta^n}$, the global preorder functor, restricted to the category of numerically generated locally stratified spaces, defines a functor:
\[\Gamma\colon\LocStrat\to \Strat\]
which has a right adjoint $R\colon \Strat\to \LocStrat$, given by the composition of the functor $\pTop\to \Stream$ with the coreflection $\Stream\to \LocStrat$.
We now give a more explicit description of $R$.

\subsection{} Given a numerically generated stratified space $(X,\le)$, consider the prestream $(X, \le^\Delta)$ with $x\le^\Delta_U x'$ if and only if there exist an exit path $\alpha\colon \sabs{\Sp^n}\to (X,\le)$ whose image lies in $U$.
Notice that the identity on $X$ defines a morphism $\epsilon_X\colon(X,\le^\Delta)\to (X,\direct{\le})$ of prestreams, to the stream associated to $(X,\le)$.
In particular, we have that the identity on $X$ induces morphisms of prestreams:
\[(X,\le^\Delta)\to (X,\direct{\le})\to (X,\restr{\le}{\blank})\]

\begin{proposition}
\label{prop:delta-stream-associated-to-delta-prespace}
Let $(X,\le)$ be a numerically generated stratified space, and let 
\[R\colon \Strat\to \LocStrat\]
be the right adjoint to the global preorder functor. 
Then 
\[R(X,\le)=(X,\le^\Delta).\]
\end{proposition}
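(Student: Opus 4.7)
The plan is to prove the equality by showing that $(X,\le^{\Delta})$ is a numerically generated locally stratified space representing the functor $\Strat(\Gamma(-),(X,\le))$, whence it must coincide with $R(X,\le)$ by Yoneda.

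First I would verify that $\le^{\Delta}$ defines a precirculation on $X$: reflexivity is clear from constant exit paths; transitivity of $\le^{\Delta}_{U}$ follows from the fact that concatenating two exit paths $\sabs{\Sp^{n}}\to X$ and $\sabs{\Sp^{m}}\to X$ sharing an endpoint produces an exit path $\sabs{\Sp^{n+m}}\to X$, since the stratification is monotone and the strata agree at the joining vertex; and monotonicity in $U$ is immediate. Combining this with Lemma \ref{lem:numerically-generated-exit-path}, which asserts that $\le$ is itself detected by exit paths, I would deduce $\Gamma(X,\le^{\Delta}) = (X,\le)$.

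The technical heart of the argument would be to establish, for every $Y\in\LocStrat$, a natural bijection
\[
\pStream\bigl(Y,(X,\le^{\Delta})\bigr)\;\cong\;\Strat\bigl(\Gamma Y,(X,\le)\bigr).
\]
The forward direction is obtained by applying $\Gamma$ together with the preceding equality. For the reverse direction, given $f\colon\Gamma Y\to(X,\le)$ and $y\preceq_{f^{-1}(U)}y'$ in $Y$, I would use the fact that $Y$ is a numerically generated locally stratified space and invoke Lemma \ref{lem:numerically-generated-local-exit-path} to produce a local exit path $\gamma\colon\dabs{\Sp^{n}}\to Y$ from $y$ to $y'$ with image in $f^{-1}(U)$; passing to global preorders via Proposition \ref{prop:global-section-realisation} and composing with $f$ would then yield an exit path $f\circ\Gamma\gamma\colon\sabs{\Sp^{n}}\to(X,\le)$ from $f(y)$ to $f(y')$ with image in $U$, which by the very definition of $\le^{\Delta}$ witnesses $f(y)\le^{\Delta}_{U}f(y')$.

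Specialising this bijection to $Y=\dabs{\Sp^{n}}$ identifies prestream morphisms $\dabs{\Sp^{n}}\to(X,\le^{\Delta})$ with exit paths $\sabs{\Sp^{n}}\to(X,\le)$, so that \emph{local} exit paths in $(X,\le^{\Delta})$ coincide with exit paths in $(X,\le)$. A further application of Lemma \ref{lem:numerically-generated-local-exit-path} then shows that $(X,\le^{\Delta})$ is a numerically generated locally stratified space, and the bijection upgrades to an isomorphism of representables on $\LocStrat$. Combined with the description of $R$ as the coreflection into $\LocStrat$ of the stream associated to $(X,\le)$ given in \ref{subsec:global-strat-space}, Yoneda forces $R(X,\le)\cong(X,\le^{\Delta})$. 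The main obstacle I anticipate is the reverse direction of the bijection: it uses the numerically generated hypothesis on $Y$ essentially, since without it the local preorder $\preceq_{f^{-1}(U)}$ need not be detectable by local exit paths and the translation back to exit paths in $X$ would break down.
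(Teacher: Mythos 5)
Your proposal is correct and establishes the statement, but by a route that is packaged differently from the paper's. The paper invokes Lemma \ref{lem:coreflection-I-generated}, which reduces the identification of the coreflection $R(X,\le)$ to two checks: that $(X,\le^{\Delta})$ is $\cat{I}$-generated, and that the canonical map induces bijections $\Stream\left(\dabs{\Delta^n}, (X,\le^\Delta)\right) \simeq \Stream\left(\dabs{\Delta^n}, (X,\direct{\le})\right)$ for the representables only. You instead verify the universal property of the coreflection $R$ directly, by exhibiting a natural bijection $\pStream(Y,(X,\le^{\Delta}))\cong\Strat(\Gamma Y,(X,\le))$ for \emph{all} $Y\in\LocStrat$ and then appealing to Yoneda. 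The substantive step is the same in both arguments: given a local exit path in the source (for you, an arbitrary $Y$; for the paper, $\dabs{\Delta^n}$), compose with the given map and use the definition of $\le^{\Delta}$. Both invoke Lemma \ref{lem:numerically-generated-local-exit-path} to produce the requisite local exit path on the source side and to conclude that $(X,\le^{\Delta})$ is numerically generated. Your approach is marginally more self-contained, in that it dispenses with Lemma \ref{lem:coreflection-I-generated}, at the cost of quantifying over all $Y$; but that generalisation is essentially free once the argument is set up, because the only property of $Y$ used is exactly what Lemma \ref{lem:numerically-generated-local-exit-path} provides. The additional care you take in verifying that $\le^{\Delta}$ is a precirculation (concatenation of exit paths) and in checking $\Gamma(X,\le^{\Delta})=(X,\le)$ via Lemma \ref{lem:numerically-generated-exit-path} is left implicit in the paper but is a genuine detail worth spelling out.
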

\begin{proof}
We show the claim by applying Lemma \ref{lem:coreflection-I-generated}.
For every $[n]\in \DDelta$, we have natural isomorphisms:
\begin{align}
\label{eq:delta-ptop-delta-str}
\Stream\left(\dabs{\Delta^n}, (X,\le^\Delta)\right)
& \simeq \Stream\left(\dabs{\Delta^n}, (X,\direct{\le})\right)\\
& \simeq \Strat\left(\sabs{\Delta^n},(X,\le)\right)
\end{align}
where the second isomorphism holds by adjunction. 
To prove the first isomorphism,  a morphism of streams $\alpha\colon\dabs{\Delta^n}\to (X,\direct{\le})$ defines a morphism of prestreams $\alpha\colon\dabs{\Delta^n}\to (X,\restr{\le}{(\blank)})$ to the prestream associated to the preordered space $(X,\le)$.
Now, for every open subset $U$ in $X$, given any two points $x$ and $x'$ in $\alpha^{-1}(U)$ such that $x\preceq_{\alpha^{-1}(U)}x'$ we can find a local exit path $\gamma \colon \dabs{\Sp^n}\to \dabs{\Delta^n}$ from $x$ to $x'$ whose image is contained in $\alpha^{-1}(U)$. 
Hence, $\alpha\gamma\colon \dabs{\Sp^n}\to \left(X,\restr{\le}{(\blank)}\right)$ defines an exit path in $(X,\le)$ from $\alpha(x)$ to $\alpha(x')$ whose image is contained in $U$.
Therefore $\alpha(x)\le^\Delta_U\alpha(x')$, which proves the claim.
To conclude, the stream $(X,\le^{\Delta})$ is a numerically generated locally stratified space by Lemma \ref{lem:numerically-generated-local-exit-path}.
\end{proof}

\begin{corollary}
\label{cor:forgetful-codiscrete-adjunction-locstrat}
The forgetful functor $U\colon \Stream\to \Top$ together with the codiscrete object functor $\ind{(\blank)}\colon\Top\to \Stream$ restrict to an adjunction
\begin{equation}
\Adjoint{\LocStrat}{\Top}{U}{\ind{(\blank)}}
\end{equation}
\end{corollary}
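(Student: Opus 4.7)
The plan is to obtain the desired adjunction as the composite of two previously established adjunctions: the adjunction $\Gamma \dashv R$ between $\LocStrat$ and $\Strat$ from \ref{subsec:global-strat-space} and the adjunction $U \dashv \pi$ between $\Strat$ and $\Top$ from \ref{subsec:coreflection-top-strat}. This gives an adjunction $U\Gamma \dashv R\pi$ between $\LocStrat$ and $\Top$, and since the global preorder functor $\Gamma$ leaves the underlying topological space unchanged, $U\Gamma$ is precisely the restriction of the forgetful functor to $\LocStrat$. The remaining task is to identify $R\pi(X)$ with the codiscrete stream $\ind{X}$ for every numerically generated topological space $X$; this will simultaneously show that $\ind{X}$ belongs to $\LocStrat$ and that $R\pi$ coincides with the restriction of $\ind{(-)}$.

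For $X$ numerically generated, Proposition \ref{prop:delta-prespace-associated-to-delta-space} identifies $\pi(X)$ with the numerically generated stratified space $(X, \sim)$, where $\sim$ is the path-connectivity relation, and Proposition \ref{prop:delta-stream-associated-to-delta-prespace} then presents $R\pi(X)$ as the numerically generated locally stratified space $(X, \sim^{\Delta})$, whose circulation is detected by exit paths in $(X, \sim)$. The crux will be to compute this circulation explicitly: since the poset $\pi_{0}X$ of path components carries the discrete preorder, any stratified map $\sabs{\Sp^{n}} \to (X, \sim)$ reduces to a continuous map $[0,n] \to X$ (the induced map $[n] \to \pi_{0}X$ is forced to be constant by monotonicity into a discrete poset, which is automatic since continuous images of connected spaces are connected). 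It follows that $x \sim^{\Delta}_{U} x'$ holds precisely when $x$ and $x'$ are joined by a continuous path with image in $U$.

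It remains to match this relation with the circulation of $\ind{X}$. Directed paths $\dI \to \ind{X}$ correspond to arbitrary continuous maps $\II \to X$, since the chaotic precirculation imposes no constraint on morphisms into the codiscrete prestream, and the stream circulation of $\ind{X}$ is recovered from such directed paths via the idempotent comonad of \ref{subsec:idempotent-comonad-streams}. Hence $\ind{X}$ has the same underlying space and circulation as $R\pi(X)$, yielding the identification $\ind{X} = R\pi(X)$ and establishing the corollary. The main delicate point will be the careful bookkeeping between prestreams, streams, and stratified spaces needed to match exit paths in $(X, \sim)$ with directed paths into $\ind{X}$; once the identification is made, the adjunction follows formally from composing the two known adjunctions.
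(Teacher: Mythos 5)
Your proposal follows essentially the same route as the paper's own proof: compose the adjunctions $\Gamma \dashv R$ and $U \dashv \pi$, apply Propositions \ref{prop:delta-prespace-associated-to-delta-space} and \ref{prop:delta-stream-associated-to-delta-prespace} to present $R\pi(X)$ as $(X,\sim^{\Delta})$, and identify the circulation of $(X,\sim^{\Delta})$ with that of the codiscrete stream. The paper merely states the last identification in one line, while you spell out why exit paths in $(X,\sim)$ reduce to arbitrary continuous paths (the discrete base poset forces the stratification map to be locally constant) and why directed paths into the chaotic prestream are unconstrained — useful expansions, but the argument and its decomposition are the same.
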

\begin{proof}
Since the forgetful functor $U\colon \Str\to \Topa$ factors through $\pTop$, the composition $R\pi\colon \Top\to \LocStrat$ is right adjoint to the forgetful $\LocStrat\to \Top$, where $\pi$ is the coreflection functor defined in \ref{subsec:coreflection-top-strat}.
Therefore, by Proposition \ref{prop:delta-prespace-associated-to-delta-space} and Proposition \ref{prop:delta-stream-associated-to-delta-prespace}, it is enough to show that the stream $(X,\sim^\Delta)$ is isomorphic to the codiscrete stream $\ind{X}$. 
To conclude, notice that $x\sim^{\Delta}_{U}x'$ if and only if there exists a path $\alpha\colon \II\to X$ from $x$ to $x'$ whose image lies in $U$.
\end{proof}

\begin{remark}
The reader is invited to compare Corollary \ref{cor:forgetful-codiscrete-adjunction-locstrat} with the analogous result for stratified spaces (Proposition \ref{prop:delta-prespace-associated-to-delta-space}).
The difference between the two results lies in the fact that local preorder in streams can be detected by directed paths.
Notice indeed that, if we take $\LocStrat$ as a coreflective subcategory of the category of prestreams, the right adjoint $\Strat\to \LocStrat$ does not coincide with the codiscrete object functor.
\end{remark}

\subsection{} Given a numerically generated topological space $X$, the codiscrete stream $\ind{X}$ associated to $X$ will be simply denoted by $X$, when no confusion arises. For example, we denote by $\II$ the codiscrete stream associated to the interval in $\Top$ and call it the \emph{chaotic interval}.
\section{The homotopy theory of locally stratified spaces}\label{ch:5sec:3}
Using the adjunction between simplicial sets and locally stratified spaces, we define a candidate model structure on the category of locally stratified spaces.
By the results in Section \ref{ch:2sec:4} we are able to show that the category of locally stratified spaces that are fibrant and numerically generated, has the structure of a category of fibrant objects (Theorem \ref{thm:loc-strat-fibrant-objects}),
where path objects are defined using the locally stratified realisation of the interval $J$.
We define a chaotic homotopy equivalence to be an $\II$-equivalence, where $\II$ is the chaotic interval and we show that chaotic homotopy equivalences are weak equivalences (Proposition \ref{prop:chaotic-equivalence-weak-equivalence}).

\begin{definition}
A morphism $f\colon X\to Y$ between locally stratified spaces is said to be a \emph{fibration} (resp. a \emph{weak equivalence}) if the induced map
\[\dSing(f)\colon \dSing(X)\to \dSing(Y)\]
is a fibration (resp. a weak equivalence) in the Joyal model structure.
We say that $f$ is a trivial fibration if it is both a fibration and a weak equivalence.
\end{definition}

\begin{definition}
Let $i\colon A\to B$ be a morphism of locally stratified spaces.
We say that $i$ is be a \emph{cofibration} (\resp a \emph{trivial cofibration}) if it has the left lifting property with respect to all trivial fibration (\resp all fibrations).
We call $i$ an \emph{acyclic cofibration} if it is both a cofibration and a weak equivalence.
\end{definition}

\begin{definition}
Let $X$ be a locally stratified space.
We say that $X$ is \emph{fibrant} if $\dSing(X)$ is an $\infty$-category.
We call $X$ \emph{cofibrant} if the unique morphism $\initial\to X$ from the initial object is a cofibration.
\end{definition}

\begin{lemma}
Let $X$ be a locally stratified space and let $\Gamma X$ be the stratified space associated to $X$.
Then, there exists a natural map of simplicial sets:
\[\iota_X\colon \dSing(X)\to \sSing(\Gamma X)\]
which is a monomorphism, for every locally stratified space $X$.
\end{lemma}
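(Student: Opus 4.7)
The plan is to define $\iota_X$ degreewise via the global preorder functor $\Gamma$. An $n$-simplex of $\dSing(X)$ is a stream morphism $\alpha\colon\dabs{\Delta^n}\to X$. Applying $\Gamma$ yields a morphism of preordered spaces $\Gamma\alpha\colon\Gamma\dabs{\Delta^n}\to\Gamma X$, which via the natural isomorphism $\Gamma\dabs{\Delta^n}\cong\sabs{\Delta^n}$ of Proposition~\ref{prop:global-section-realisation} and the adjunction $\iota\dashv\phi$ of Lemma~\ref{lem:strat_closed_colimits} corresponds to an $n$-simplex of $\sSing(\Gamma X)$. I set $\iota_{X,n}(\alpha)$ to be this simplex; naturality in $X$ is immediate from the functoriality of $\Gamma$.

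To check that the collection $(\iota_{X,n})_{n\ge 0}$ assembles into a morphism of simplicial sets, compatibility with face and degeneracy operators reduces to the naturality of $\Gamma\dabs{\blank}\cong\sabs{\blank}$ in the simplicial index. This holds because both $\Gamma\dabs{\blank}$ and $\sabs{\blank}$ arise as cocontinuous extensions of functors out of $\DDelta$ that agree on objects and on the generating face and degeneracy maps $\partial^n_i$, $\sigma^n_i$, since each simply records the barycentric stratification of the standard simplex.

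For the monomorphism claim it suffices to show that every $\iota_{X,n}$ is an injection of sets. The forgetful functors $U_{\Stream}\colon\Stream\to\Topa$ and $U_{\pTop}\colon\pTop\to\Topa$ are both faithful, being topological (see \ref{subsec:prestream-topological-over-top}, \ref{subsec:idempotent-comonad-streams} and \ref{subsec:ptop-pullback-top-pre}). The crucial observation is that $\Gamma\colon\Stream\to\pTop$ leaves the underlying topology unchanged, so $U_{\pTop}\circ\Gamma=U_{\Stream}$. Consequently, if $\alpha,\beta\colon\dabs{\Delta^n}\to X$ are stream morphisms with $\Gamma\alpha=\Gamma\beta$, their underlying continuous maps from $\abs{\Delta^n}$ to the underlying space of $X$ coincide, and faithfulness of $U_{\Stream}$ forces $\alpha=\beta$. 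The only genuine work is bookkeeping around the several canonical identifications between $\Gamma\dabs{\Delta^n}$, $\sabs{\Delta^n}$, and its image under the reflector $\phi$; all of these enter only through Proposition~\ref{prop:global-section-realisation} and are natural in the simplex, so they cause no difficulty.
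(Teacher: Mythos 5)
Your proof is correct and takes essentially the same approach as the paper: define $\iota_X$ degreewise by applying $\Gamma$, identify $\Gamma\dabs{\Delta^n}\cong\sabs{\Delta^n}$ via Proposition~\ref{prop:global-section-realisation}, and deduce injectivity from faithfulness of $\Gamma$. You spell out the faithfulness of $\Gamma$ (via the equality $U_{\pTop}\circ\Gamma=U_{\Stream}$ of faithful forgetful functors) which the paper leaves implicit, and your invocation of $\iota\dashv\phi$ is harmless but unnecessary since $\Gamma X$ already lies in $\Strata$ (take $U=X$ in the definition of a locally stratified space).
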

\begin{proof}
Level-wise, the map $\iota_X$ is defined as:
\[\LocStrata\left(\dabs{\Delta^n}, X\right)\to \Strata\left(\Gamma\dabs{\Delta^n}, \Gamma X\right)\cong\Strata\left(\sabs{\Delta^n}, \Gamma X\right).\]
To conclude, notice that the functor $\Gamma$ is faithful, and so $\iota_X$ is a monomorphism.
\end{proof}

\begin{proposition}
\label{prop:locstrat-standard-simplex-strat}
For every $[n]\in \DDelta$, the induced map:
\[\iota_{\dabs{\Delta^n}}\colon \dSing\dabs{\Delta^n}\to \sSing\sabs{\Delta^n}\]
is an isomorphism of simplicial sets. 
In particular, the locally stratified realisation of every standard simplex is a fibrant locally stratified space.
\end{proposition}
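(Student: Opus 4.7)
The map $\iota_{\dabs{\Delta^n}}$ is already known to be levelwise injective by the preceding lemma, so the task reduces to showing surjectivity: every stratified morphism $f\colon\sabs{\Delta^k}\to\sabs{\Delta^n}$ must lift to a stream morphism $\dabs{\Delta^k}\to\dabs{\Delta^n}$. The plan is to unwind the coreflection defining $\dabs{\Delta^n}$, and then to invoke Lemma \ref{lem:convex-preorder-path-preorder} to identify the circulation on $\dabs{\Delta^k}$ with the global preorder on $\abs{\Delta^k}$.

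By construction $\dabs{\Delta^n}=S\iota\sabs{\Delta^n}$, and since $S$ is right adjoint to the inclusion $\Stream\hookrightarrow\pStream$, there is a natural bijection
\begin{equation*}
\LocStrata(\dabs{\Delta^k}, \dabs{\Delta^n}) \cong \pStream(\dabs{\Delta^k}, \iota\sabs{\Delta^n}).
\end{equation*}
A prestream morphism on the right is a continuous $f\colon\abs{\Delta^k}\to\abs{\Delta^n}$ such that for every open $U\subseteq\abs{\Delta^n}$ and every pair $x\preceq^{\dabs{\Delta^k}}_{f^{-1}(U)}x'$ one has $f(x)\le f(x')$ in the global stratification preorder on $\abs{\Delta^n}$. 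Applying Lemma \ref{lem:convex-preorder-path-preorder} with $U=\abs{\Delta^k}$ (a convex open subset of itself) identifies the circulation $\preceq^{\dabs{\Delta^k}}$ with the global preorder, so the condition on $f$ collapses to: $f$ is continuous and weakly monotonic for the global preorders on $\abs{\Delta^k}$ and $\abs{\Delta^n}$.

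It remains to check that this is the same datum as a stratified morphism $\sabs{\Delta^k}\to\sabs{\Delta^n}$. One direction is immediate from the compatibility $s_n\circ f=\alpha\circ s_k$. Conversely, given $f$ continuous and globally monotonic, any two points of a fixed stratum of $\abs{\Delta^k}$ are chaotically equivalent, so their images are chaotically equivalent in $\abs{\Delta^n}$; antisymmetry of $[n]$ then forces them into a single stratum. This lets one define $\alpha\colon[k]\to[n]$ stratum-by-stratum, and the monotonicity of $f$ transfers to monotonicity of $\alpha$. Hence the two sets of morphisms coincide, which proves the isomorphism.

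The ``in particular'' clause now follows immediately: by Example \ref{ex:realisation-sing-poset-co-fibrant}, $\sabs{\Delta^n}=\sabs{\sSing[n]}$ is a fibrant stratified space, so $\sSing\sabs{\Delta^n}$ is an $\infty$-category, and the isomorphism $\iota_{\dabs{\Delta^n}}$ transports this property to $\dSing\dabs{\Delta^n}$. The main obstacle is the coreflection identification together with the convexity reduction; after these two moves the verification is a short bookkeeping exercise with no analytic content.
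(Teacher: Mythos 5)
Your proof is correct, and it lands in the same place as the paper's but travels through a lower level of abstraction. The paper's argument is a one-liner: the level-$m$ component of $\iota_{\dabs{\Delta^n}}$ is the map on hom-sets induced by $\Gamma$, and it ``is true by adjunction'' — implicitly, the adjunction $\Gamma\dashv R$ between $\LocStrat$ and $\Strat$, combined with the identifications $R\sabs{\Delta^n}=\dabs{\Delta^n}$ (by construction of $\dabs{\blank}$) and $\Gamma\dabs{\Delta^m}\cong\sabs{\Delta^m}$ (Proposition \ref{prop:global-section-realisation}, proved from Lemma \ref{lem:convex-preorder-path-preorder}). You instead drop down to the $\Stream/\pStream$ coreflection $S$, pass by adjunction to $\pStream(\dabs{\Delta^k},\iota\sabs{\Delta^n})$, and then unpack the prestream-morphism condition by hand, invoking Lemma \ref{lem:convex-preorder-path-preorder} to show the ``for every open $U$'' quantifier collapses to the global case. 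That argument is sound: on the source side the circulation only strengthens as $U$ grows, and on the target side the precirculation is the plain restriction of the global order, so the global condition dominates all the local ones. What this buys is a transparent view of exactly which content the paper's ``by adjunction'' is hiding; the cost is some bookkeeping. One economy you could have taken: the final paragraph, checking that continuous globally monotonic maps between geometric simplices are exactly stratified morphisms, is already recorded in the paper as the full faithfulness of $\iota\colon\Strata\to\pTop$ (Lemma \ref{lem:strat_closed_colimits}), so you could cite that rather than re-deriving it via the stratum-by-stratum argument.
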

\begin{proof}
It is enough to prove that the map induced by $\Gamma$:
\[\LocStrata(\dabs{\Delta^m},\dabs{\Delta^n})\to \Strata(\sabs{\Delta^m},\sabs{\Delta^n})\]
is an isomorphism, for every $[m]\in \DDelta$. This is true by adjunction.
\end{proof}

\begin{comment}
Let $f\colon \sabs{\Delta^m}\to \sabs{\Delta^n}$ be a morphism of stratified spaces and let $C\subset\abs{\Delta^n}$ be a convex open subset.
For any two points $x$ and $y$ in $f^{-1}(C)$, if $x\preceq_{f^{-1}(C)} y$ then $x\preceq_{\abs{\Delta^m}} y$ which implies that $f(x)\preceq_{\abs{\Delta^n}} f(y)$ since $f$ is a morphism of stratified spaces.
We conclude that $f(x)\preceq_C f(y)$ by Lemma \ref{lem:convex-preorder-path-preorder}.
More in general, if $U$ is an open subset of $\abs{\Delta^n}$, we can write $U=\bigcup_{i\in I}C_i$ for some family $(C_i)_{i\in I}$ of open convex subsets in $\abs{\Delta^n}$.
Then, $x\preceq_{f^{-1}(U)} y$ if and only if there exists $i_1,\ldots, i_n$ in $I$ and a finite chain:
\[x=x_0\preceq_{f^{-1}(C_1)} x_1\preceq_{f^{-1}(C_2)} \ldots, \preceq_{f^{-1}(C_n)} x_n=y\]
but then, by the previous paragraph, we have that
\[f(x)=f(x_0)\preceq_{C_1} f(x_1)\preceq_{C_2} \ldots, \preceq_{C_n} f(x_n)=f(y)\]
which implies that $f(x)\preceq_{U}f(y)$, proving the claim.
\end{comment}

\begin{remark}
Thanks to Lemma \ref{lem:product-realisation-stream} and Lemma \ref{lem:locstratified-standard-simplex-exponentiable} the functor $\dabs{\blank}\colon \DDelta\to \pTop$ satisfies the hypothesis described in \ref{subsec:cisinski-topological}.
In particular, as in Section \ref{ch:4sec:3} we can apply the results of Section \ref{ch:2sec:4} as we do in the following.
\end{remark}

\begin{lemma}
\label{lem:locstrat-homotopical-cat}
The category $\LocStrata$ equipped with the class of weak equivalences has the structure of a homotopical category.
\end{lemma}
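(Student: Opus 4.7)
The plan is to mirror the proof of Lemma \ref{lem:strat-homotopical-cat}, since the setup is entirely parallel. By construction, the class of weak equivalences in $\LocStrata$ is defined as the preimage under the functor $\dSing\colon \LocStrata\to \sSet$ of the class of Joyal weak equivalences. In particular, $\dSing$ both preserves and reflects weak equivalences, which immediately gives that the class of weak equivalences in $\LocStrata$ contains all identities (since identities go to identities).

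For the 2-out-of-3 property, suppose $f$ and $g$ are composable morphisms in $\LocStrata$. Applying $\dSing$ produces composable morphisms $\dSing(f)$ and $\dSing(g)$ in $\sSet$, and 2-out-of-3 in $\LocStrata$ follows from 2-out-of-3 for Joyal weak equivalences, which holds because $\sSet$ with the Joyal model structure is a model category (see Definition \ref{def:Joyal-model-structure}). The same argument, applied to any four composable morphisms, yields the 2-out-of-6 property: if $hg$ and $gf$ are Joyal weak equivalences, then $f$, $g$, $h$ and $hgf$ are also Joyal weak equivalences by Remark \ref{rmk:model-cat-homotopical-cat}, and pulling back along $\dSing$ gives the corresponding statement in $\LocStrata$.

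There is no real obstacle here: the only ingredients used are the definition of weak equivalence in $\LocStrata$ (so that $\dSing$ reflects weak equivalences by construction) and the fact that the Joyal model structure gives $\sSet$ the structure of a homotopical category. The proof is literally one sentence, identical in form to Lemma \ref{lem:top-cat-homotopical-cat} and Lemma \ref{lem:strat-homotopical-cat}.
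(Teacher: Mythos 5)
Your proof is correct and follows the same route as the paper's: the paper's proof simply refers back to Lemma \ref{lem:top-cat-homotopical-cat}, which argues exactly as you do — $\dSing$ (there $u_*$) reflects weak equivalences by definition, the target is a model category, and by Remark \ref{rmk:model-cat-homotopical-cat} model categories satisfy 2-out-of-6, so the preimage class inherits that property. Your version just unpacks the terse reference, with no substantive difference.
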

\begin{proof}
See Lemma \ref{lem:top-cat-homotopical-cat}.
\end{proof}

\begin{lemma}
\label{lem:locstrat-prod-we}
Let $X$ be a locally stratified space and let $w\colon Y\to Z$ be a weak equivalence between locally stratified spaces, then the induced map:
\[X\times w\colon X\times Y\to X\times Z.\]
is a weak equivalence.
\end{lemma}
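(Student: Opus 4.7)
The plan is to reduce this to the general fact already proved in Lemma \ref{lem:top-cat-prod-we}, via the Joyal adjunction $(\dabs{\blank}, \dSing)$ set up in \ref{subsec:locstrat-sset-adjunction} together with the framework developed in Section \ref{ch:2sec:4}.

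First I would observe that the pair $(\dabs{\blank},\dSing)$ fits exactly into the setup of \ref{subsec:cisinski-topological}: the functor $\dabs{\blank}\colon\DDelta\to\LocStrat$ sends each $[n]$ to a core-compact stream (Lemma \ref{lem:locstratified-standard-simplex-exponentiable}), and it preserves products of representable presheaves by Lemma \ref{lem:product-realisation-stream}. Hence by Proposition \ref{prop:cisinski-topological-enriched} the right adjoint $\dSing$ preserves products. By definition, $\dSing$ also both preserves and reflects weak equivalences.

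Next I would reduce the claim to a statement purely about the Joyal model structure. Applying $\dSing$ to $X\times w$, we get the morphism
\[
\dSing(X)\times \dSing(w)\colon \dSing(X)\times\dSing(Y)\to \dSing(X)\times\dSing(Z),
\]
so it suffices to show that the product of a fixed simplicial set with a Joyal weak equivalence is again a Joyal weak equivalence. Since the Joyal model structure is cartesian closed and every simplicial set is cofibrant, the endofunctor $\dSing(X)\times(\blank)$ is a left Quillen functor, hence sends trivial cofibrations to weak equivalences; by Ken Brown's Lemma (Corollary \ref{cor:ken-brown-lemma}, in its dual form for cofibrant objects) it preserves all weak equivalences between cofibrant objects, i.e.\ all weak equivalences.

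There is no real obstacle here; this is essentially a bookkeeping result. The only thing to be careful about is checking that the hypotheses of \ref{subsec:cisinski-topological} (and hence of Lemma \ref{lem:top-cat-prod-we}) are in fact verified for $\LocStrat$ equipped with the Joyal model structure on $\sSet$, but these verifications are already available in the excerpt as Lemmas \ref{lem:product-realisation-stream} and \ref{lem:locstratified-standard-simplex-exponentiable} together with Theorem \ref{thm:locstrat-enriched-tensored-cotensored}. So the final proof should be a one-line citation of Lemma \ref{lem:top-cat-prod-we}.
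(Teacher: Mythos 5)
Your proposal is correct and matches the paper's approach exactly: the paper's proof is simply a citation of Lemma \ref{lem:top-cat-prod-we}, whose hypotheses are already noted to hold for $\LocStrat$ (via Lemmas \ref{lem:product-realisation-stream} and \ref{lem:locstratified-standard-simplex-exponentiable}), and that lemma's proof uses the same reduction to $\sSet$ via the product-preserving right adjoint together with the dual of Ken Brown's Lemma. Your spelled-out verification is exactly the bookkeeping the paper leaves implicit.
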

\begin{proof}
See Lemma \ref{lem:top-cat-prod-we}.
\end{proof}

\begin{proposition}
\label{prop:cotensor-Quillen-functor-streams}
Let $i\colon K\to L$ be a monomorphism between simplicial sets and let $p\colon X\to Y$ be a fibration in $\LocStrat$. Then, the induced map
\[ X^L\to X^K\times_{Y^K}Y^L\]
is a fibration, which is acyclic if $i$ or $p$ is so.
\end{proposition}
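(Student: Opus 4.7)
The plan is to reduce the statement to the corresponding fact in the Joyal model structure on simplicial sets, exactly parallel to the proofs of Proposition \ref{prop:cotensor-Quillen-functor} and Proposition \ref{prop:cotensor-Quillen-functor-strat}. Since fibrations and acyclic fibrations in $\LocStrat$ are by definition reflected from $\sSet$ by $\dSing$, it suffices to show that the image of the pullback-power map
\[\hat{p^{i}}\colon X^{L}\to X^{K}\times_{Y^{K}}Y^{L}\]
under $\dSing$ is a fibration in the Joyal model structure, which is acyclic if either $i$ or $p$ is so.

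First I would use that, by Theorem \ref{thm:locstrat-enriched-tensored-cotensored}, the adjunction $(\dabs{\blank},\dSing)$ is $\sSet$-enriched. By Proposition \ref{prop:enriched-adjunction} the right adjoint $\dSing$ preserves cotensors by simplicial sets, and since $\dSing$ is a right adjoint it also preserves the pullback defining $\hat{p^{i}}$. Because $\sSet$ is self-enriched (Example \ref{ex:sym-mon-self-enriched}), the cotensor of a simplicial set by a simplicial set is the internal-hom. Assembling these identifications gives a canonical isomorphism
\[\dSing\bigl(\hat{p^{i}}\bigr)\cong \hat{\ihom}\bigl(i,\dSing p\bigr)\colon \ihom(L,\dSing X)\to \ihom(K,\dSing X)\times_{\ihom(K,\dSing Y)}\ihom(L,\dSing Y).\]

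Finally, the Joyal model structure is Cartesian closed, i.e.\ the internal-hom defines a right Quillen bifunctor with respect to monomorphisms and fibrations (see Remark \ref{rem:monoidal-model-structure} and \cite{cis16}). Since $i$ is a monomorphism (a cofibration in the Joyal structure) and $\dSing p$ is a fibration, the pullback-power $\hat{\ihom}(i,\dSing p)$ is a fibration in $\sSet$, acyclic whenever $i$ is additionally a weak equivalence or $\dSing p$ is. Translating back through $\dSing$ gives the desired conclusion. The argument is essentially formal, and the only point that requires care is the verification that $\dSing$ preserves cotensors and pullbacks; but both are consequences of the enriched adjunction already established, so no real obstacle appears.
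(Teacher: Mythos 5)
Your proof is correct and follows essentially the same route as the paper: the paper's proof of this proposition is simply a citation of the general Proposition \ref{prop:cotensor-Quillen-functor}, whose proof is exactly the formal argument you give (reflect along the right adjoint, use that an enriched right adjoint preserves cotensors and pullbacks, identify the result with the pullback-power of internal homs in $\sSet$, and appeal to the Cartesian monoidal property of the Joyal model structure). You have merely unpacked the specialisation to $(\dabs{\blank},\dSing)$ explicitly.
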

\begin{proof}
See Proposition \ref{prop:cotensor-Quillen-functor}.
\end{proof}

\begin{corollary}
Let $p\colon X\to Y$ be a fibration in $\LocStrat$ and let $i\colon A\to B$ be a cofibration.
Then, the induced map
\[\ihom(B,X)\to \ihom(A,X)\times_{\ihom(A,Y)}\ihom(B,Y)\]
is a fibration, which is acyclic if either $p$ is acyclic or $i$ is a trivial cofibration.
\end{corollary}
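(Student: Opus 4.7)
The plan is to deduce this from Proposition \ref{prop:cotensor-Quillen-functor-streams} by exploiting the two-variable adjunction encoded in Theorem \ref{thm:locstrat-enriched-tensored-cotensored}, following the same pattern as the proof of Corollary \ref{cor:internal-hom-quillen-strat} in the stratified setting.

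First, I would observe that since $\dSing$ is a right adjoint it preserves pullbacks, and since $\map(X,Y)=\dSing\ihom(X,Y)$ by the construction of the simplicial enrichment, the map $\dSing\bigl(\hat{\ihom}(i,p)\bigr)$ is canonically identified with the pullback-power $\hat{\map}(i,p)$ in $\sSet$. By the very definition of fibrations and acyclic fibrations in $\LocStrat$, it therefore suffices to show that $\hat{\map}(i,p)$ is a Joyal fibration in $\sSet$, which is acyclic as soon as either $p$ is an acyclic fibration or $i$ is a trivial cofibration.

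Next, I would invoke Lemma \ref{lem:two-variable-lifting}, applied to the two-variable adjunction given by the tensor $\sSet\times\LocStrat\to\LocStrat$, the cotensor $(K,Y)\mapsto Y^{K}$, and the mapping space $\map\colon\LocStrat^{\op}\times\LocStrat\to\sSet$ provided by Theorem \ref{thm:locstrat-enriched-tensored-cotensored}. This translates the right lifting property of $\hat{\map}(i,p)$ against a (trivial) cofibration $j$ of $\sSet$ into the equivalent lifting problem of $i$ against the pullback-power $\hat{p^{j}}$ in $\LocStrat$. Proposition \ref{prop:cotensor-Quillen-functor-streams} guarantees that $\hat{p^{j}}$ is a fibration in $\LocStrat$ whenever $j$ is a monomorphism and $p$ is a fibration, and that it is acyclic whenever in addition either $j$ or $p$ is acyclic. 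Distributing across the three cases (plain fibration; $p$ an acyclic fibration with $j$ a monomorphism; $i$ a trivial cofibration with $j$ a trivial monomorphism) produces exactly the lifting problems $i\olift\hat{p^{j}}$ that are needed, all of which are solvable by the very definitions of cofibration and trivial cofibration in $\LocStrat$.

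The argument is formal and I do not expect any substantive obstacle: everything reduces to the enriched two-variable machinery once Proposition \ref{prop:cotensor-Quillen-functor-streams} is in hand. The only mildly delicate step is verifying that $\dSing$ converts $\hat{\ihom}(i,p)$ into $\hat{\map}(i,p)$, but this is immediate from the fact that $\dSing$ preserves limits together with the defining identity $\map=\dSing\circ\ihom$.
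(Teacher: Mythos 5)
Your argument is correct and matches the paper's: the paper's proof of this corollary just cites Lemma \ref{lem:two-variable-lifting} together with Proposition \ref{prop:cotensor-Quillen-functor-streams}, which is exactly the two-variable-adjunction translation you carry out (the identification $\dSing\hat{\ihom}(i,p)\cong\hat{\map}(i,p)$ being implicit there). One small slip: in your third case, $j$ should range over \emph{all} monomorphisms of simplicial sets, not just trivial ones, so that $\hat{p^{j}}$ is only a fibration and it is the trivial-cofibration hypothesis on $i$ that supplies the lift; as written, lifting only against trivial monomorphisms would establish that $\hat{\map}(i,p)$ is a fibration but not that it is a trivial one.
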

\begin{proof}
The claim follows from Lemma \ref{lem:two-variable-lifting} and Proposition \ref{prop:cotensor-Quillen-functor-streams}.
\end{proof}

\subsection{}
Recall that $J$ is an interval object for the Joyal model structure on $\sSet$ (see Example \ref{ex:J-interval-sset}).
In other words, $J$ comes equpped with a factorisation of the codiagonal:
\[\terminal\coprod\terminal\to J\to\terminal\]
such that $(\partial_0,\partial_1)\colon *\coprod *\to J$ is a monomorphism and $\sigma\colon J\to *$ is a Joyal weak equivalence. 
Then, for every numerically generated locally stratified space $X$, applying the functor $X^{(\blank)}$ we get a factorisation
\[X\to X^J\to X\times X\]

\begin{lemma}
\label{lem:path-object-for-loc-strat}
For every fibrant locally stratified space $X$ in $\LocStrat$, the above factorisation induces the structure of a path object $X^J$ for $X$.
\end{lemma}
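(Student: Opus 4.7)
The plan is to mirror the strategy of Lemma \ref{lem:path-object-for-strat} (which in turn invokes Lemma \ref{lem:path-object-for-top-cat}) in the setting of numerically generated locally stratified spaces, using that the two inputs needed, namely Proposition \ref{prop:cotensor-Quillen-functor-streams} and Corollary \ref{cor:ken-brown-lemma}, are available verbatim for $\LocStrat$.

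First I would exhibit the factorisation of the diagonal. Since $J$ is an interval object for the Joyal model structure, the map $(\partial_0,\partial_1)\colon \terminal\coprod\terminal\to J$ is a monomorphism (hence a cofibration in $\sSet$) and $\sigma\colon J\to \terminal$ is a Joyal weak equivalence. Because $X$ is fibrant by hypothesis, $X\to \terminal$ is a fibration in $\LocStrat$. Applying Proposition \ref{prop:cotensor-Quillen-functor-streams} to the monomorphism $\terminal\coprod\terminal\to J$ and the fibration $X\to\terminal$ (and using that the cotensor $X^{\terminal\coprod\terminal}$ is naturally isomorphic to $X\times X$), the induced map $X^J\to X\times X$ is a fibration in $\LocStrat$. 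This handles the fibration half of the definition of a path object.

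Next I would show that the map $X\to X^J$ induced by $\sigma\colon J\to\terminal$ is a weak equivalence. For this I would appeal to Ken Brown's Lemma (Corollary \ref{cor:ken-brown-lemma}) applied to the functor $X^{(-)}\colon \sSet^{\op}\to \LocStrat$, after noting that the Joyal model structure on $\sSet$ has every object cofibrant (so the opposite category is a category of fibrant objects in the required sense), and that $\LocStrat$ equipped with the weak equivalences of Definition is a homotopical category by Lemma \ref{lem:locstrat-homotopical-cat}. Proposition \ref{prop:cotensor-Quillen-functor-streams} specialised to $p\colon X\to\terminal$ tells us that $X^{(-)}$ carries trivial cofibrations between cofibrant simplicial sets (i.e.\ monomorphisms that are Joyal weak equivalences) to acyclic fibrations, which are in particular weak equivalences in $\LocStrat$. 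Ken Brown's Lemma then upgrades this to the statement that $X^{(-)}$ sends all Joyal weak equivalences to weak equivalences. Since $\sigma\colon J\to \terminal$ is a Joyal weak equivalence, the map $X\to X^J$ is a weak equivalence.

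The only minor subtlety will be checking that the composition $X\to X^J\to X\times X$ is indeed the diagonal on $X$; this is automatic, since it is obtained by applying the representable $X^{(-)}$ to the codiagonal $\terminal\coprod\terminal\to J\to \terminal$. I do not anticipate a real obstacle: the statement is a formal consequence of the enriched Quillen-type behaviour encoded by Proposition \ref{prop:cotensor-Quillen-functor-streams} and the fibrancy hypothesis on $X$, exactly as in the stratified case.
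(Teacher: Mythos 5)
Your proof is correct and follows exactly the same route as the paper: the fibration half comes from the pullback-power property of the cotensor (Proposition \ref{prop:cotensor-Quillen-functor-streams}), and the weak-equivalence half comes from Ken Brown's Lemma applied to $X^{(-)}\colon\sSet^{\op}\to\LocStrat$ using that all objects of $\sSet$ are cofibrant. The only difference is presentational: the paper cites the general Lemma \ref{lem:path-object-for-top-cat} directly (which already encapsulates this argument for an arbitrary topological construct satisfying the hypotheses of \ref{subsec:cisinski-topological}), whereas you unpack that argument explicitly in the $\LocStrat$ case.
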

\begin{proof}
This follows immediately from Lemma \ref{lem:path-object-for-top-cat}.
\end{proof}

\begin{theorem}
\label{thm:loc-strat-fibrant-objects}
The full subcategory $\LocStrat_{f}$ of $\LocStrat$ spanned by the fibrant locally stratified spaces is a category of fibrant objects.
\end{theorem}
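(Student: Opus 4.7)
The plan is to verify that $\LocStrat_{f}$ satisfies each of the five axioms of Definition \ref{def:category-fibrant-objects}, in perfect analogy with the argument for Theorem \ref{thm:strat-fibrant-objects}. The key observation is that, thanks to Lemma \ref{lem:product-realisation-stream} and Lemma \ref{lem:locstratified-standard-simplex-exponentiable}, the functor $\dabs{\blank}\colon \DDelta \to \pTop$ satisfies the hypotheses of \ref{subsec:cisinski-topological}, so that all the general results of Section \ref{ch:2sec:4} specialise to the adjunction between $\sSet$ (with the Joyal model structure) and $\LocStrat$. In particular, the theorem follows at once by invoking Proposition \ref{prop:top-cat-fibrant-objects} and Lemma \ref{lem:path-object-for-loc-strat}.

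To check the axioms one at a time: axiom \ref{item:category-fibrant-objects-1} is immediate, as $\LocStrat$ has finite products by Theorem \ref{thm:locstrat-cartesian-closed}, and the full subcategory spanned by fibrant objects is clearly closed under finite products (for a product $X\times Y$ of fibrant objects, $\dSing(X\times Y)\cong \dSing X\times \dSing Y$ is an $\infty$-category since the class of $\infty$-categories is closed under products in $\sSet$). Axioms \ref{item:category-fibrant-objects-2} and \ref{item:category-fibrant-objects-3} follow because fibrations and acyclic fibrations in $\LocStrat$ are defined by the condition that $\dSing(f)$ is a fibration (\resp an acyclic fibration) in the Joyal model structure; since $\dSing$ is a right adjoint, it preserves pullbacks, and both classes are stable under pullback in $\sSet$. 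Axiom \ref{item:category-fibrant-objects-5} is just a restatement of fibrancy, using that $\Delta^{0}$ is the terminal object of $\sSet$ and $\dSing$ preserves terminal objects. Finally, axiom \ref{item:category-fibrant-objects-4} is precisely the content of Lemma \ref{lem:path-object-for-loc-strat}, which provides the path object $X^{J}$ for every fibrant locally stratified space $X$.

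There is no real obstacle here: all the difficulty has already been absorbed into the general framework of Section \ref{ch:2sec:4} and into the verifications that the locally stratified standard simplices are exponentiable and that their products are computed as expected. The argument is thus essentially bookkeeping, and once one remarks that the specialisation of Proposition \ref{prop:top-cat-fibrant-objects} to the present setting produces exactly the claim, nothing further needs to be done.
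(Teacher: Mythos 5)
Your proposal is correct and follows exactly the same route as the paper: the paper's proof is a one-line invocation of Proposition \ref{prop:top-cat-fibrant-objects} and Lemma \ref{lem:path-object-for-loc-strat}, and your axiom-by-axiom elaboration is simply what Proposition \ref{prop:top-cat-fibrant-objects} encapsulates. Nothing to change.
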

\begin{proof}
This follows from Proposition \ref{prop:top-cat-fibrant-objects} and Lemma \ref{lem:path-object-for-loc-strat}.
\end{proof}

\subsection{}
The chaotic interval $\II$ is a numerically generated locally stratified space that fits into a diagram:
\begin{equation}
\label{eq:chaotic-interval-locstrat}
\begin{tikzcd}
\terminal\coprod\terminal \ar[r, "{(\partial^0,\partial^1)}"] &
\II \ar[r, "\sigma"] &
\terminal
\end{tikzcd}
\end{equation}
whose composition is the codiagonal $(1_\terminal,1_\terminal)\colon\terminal\coprod\terminal\to \terminal$.
Notice that, since $\dSing(\II)$ is isomorphic to the singular simplicial set associated to the underlying space of $\II$, the morphism $\dSing(\sigma)\colon \dSing(\II)\to \Delta^0$ is a trivial Kan fibration, and in particular, $\sigma$ is a trivial fibration of locally stratified spaces.

\begin{example}
The locally stratified realisation of $J$ is a quotient of $\dabs{\Delta^2}\coprod \dabs{\Delta^2}$ where we glue together the edges from $0$ to $1$ of both simplices and we contract the edges from $0$ to $2$. 
Recall that, the underlying stratified space of $\dabs{J}$ is the chaotic 2-disk $\ind{\DD^{2}}$ (see Lemma \ref{lem:strat-J-trivial-disk}).
However, differently from the stratified case, the local stratification of $\dabs{J}$ is nontrivial. 
For example, locally around the vertex $1$, every local exit path leaving $1$ counterclockwise cannot come back to $1$.
\end{example}

\begin{lemma}
The morphism $(\partial^0,\partial^1)\colon \terminal\coprod\terminal\to \II$ is a retract of $\terminal\coprod\terminal \to \dabs{J}$. In particular, $(\partial^0,\partial^1)$ is a cofibration.
\end{lemma}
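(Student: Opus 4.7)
The proof will parallel the just-established stratified case. I build morphisms $i\colon \II\to \dabs{J}$ and $r\colon \dabs{J}\to\II$ of locally stratified spaces such that $r\circ i=1_{\II}$ and such that both $i$ and $r$ preserve the endpoints $0,1$; this exhibits $(\partial^0,\partial^1)$ as a retract of $\term\coprod\term\to\dabs{J}$ in the arrow category of $\LocStrat$. Now $\partial\Delta^1\hookrightarrow J$ is a monomorphism in $\sSet$, and as such is a cofibration in the Joyal model structure, so $\term\coprod\term=\dabs{\partial\Delta^1}\to\dabs{J}$ is a cofibration in $\LocStrat$ because $\dabs{\blank}$ sends generating cofibrations to cofibrations in the transferred structure. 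The conclusion that $(\partial^0,\partial^1)$ is a cofibration then follows from Lemma~\ref{lemma:left-saturated-class}, which gives closure of the left class of a weak factorisation system under retracts.

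Constructing $r$ is the easy half. By Corollary~\ref{cor:forgetful-codiscrete-adjunction-locstrat}, stream morphisms into $\II=\ind{[0,1]}$ are in bijection with continuous maps of underlying topological spaces into $[0,1]$. Since $U\dabs{J}$ is a topological 2-disk by Lemma~\ref{lem:strat-J-trivial-disk}, a continuous projection sending the points underlying $0$ and $1$ respectively to $0$ and $1$ in $[0,1]$ gives a valid choice of $r$. For $i$, I parametrise the underlying edge $\alpha\colon 0\to 1$ of $\dabs{J}$ as its diameter; the composition $r\circ i$ is then the identity on the underlying interval and hence equals $1_{\II}$ in $\LocStrat$, so commutativity of the retract diagram follows directly from the choices of $i$ and $r$.

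The technical heart of the argument, and the main obstacle, is to verify that $i$ is a morphism of streams. Concretely, I must check that for every open $V\subset\dabs{J}$ and every pair of points $t,t'\in i^{-1}(V)$ lying in a common path-component of $i^{-1}(V)$, one has both $i(t)\preceq_V i(t')$ and $i(t')\preceq_V i(t)$, i.e.\ directed paths in $V$ joining them in each direction. The crucial observation is that the two non-degenerate 2-simplices of $J$ realise the edge $\alpha$ with interiors in different strata of $\dabs{\Delta^2}$, namely stratum $1$ in the simplex witnessing $\beta\alpha=1_0$ and stratum $2$ in the simplex witnessing $\alpha\gamma=1_1$; consequently, forward-directed paths along $\alpha$ are furnished by the first simplex (where the segment sits in a single stratum below the top), while backward-directed paths along $\alpha$ are furnished by the second simplex (where the reversed segment sits in the top stratum of $\dabs{\Delta^2}$ and so remains monotonic). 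Near the endpoints $0$ and $1$ the backward direction requires more care: one routes through the interior of the appropriate 2-simplex up to the identified vertex using the collapse of the degenerate edges $1_0$ and $1_1$ in the realisation, and verifies via Lemma~\ref{lem:numerically-generated-local-exit-path} that a local exit path witnessing the required relation exists inside every sufficiently small connected open neighbourhood.
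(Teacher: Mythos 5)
Your proposal follows essentially the same strategy as the paper: parametrise the diameter $\alpha$ to obtain $i\colon\II\to\dabs{J}$, obtain $r\colon\dabs{J}\to\II$ from a topological retraction of the disk onto its diameter using the adjunction $U\dashv\ind{(\blank)}$, and conclude by closure of cofibrations under retracts. The content---that $i$ is a stream morphism---rests in both accounts on the observation that the vertex $2$ of each non-degenerate $2$-simplex of $J$ is identified, via the collapsed degenerate edge $d_{1}$, with one of the two $0$-simplices of $J$, which furnishes a monotone route from any nearby point up to that endpoint. The paper phrases this by showing the circulation of $\dabs{J}$ is outright chaotic in a neighbourhood of $0$, so that the image of $\alpha$ inherits the codiscrete structure; you verify the stream-morphism condition directly. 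One inaccuracy worth flagging: in the simplex witnessing $\alpha\gamma=1_{1}$, the edge $\alpha$ is realised as the face $\{1,2\}$ of $\Delta^{2}$, and its vertex $1$ (mapping to $0\in J$) sits in stratum $1$ rather than the top stratum; so the reversed segment along $\alpha$ is in fact not monotone near $0$, and your ``route through the identified vertex'' step is what carries the weight there---and it is needed only at $0$, since near $1$ the relevant portion of $\alpha$ already lies in a single stratum via either $2$-simplex.
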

\begin{proof}
We show that the circulation of $\dabs{J}$ is trivial locally around $0$, so that the image of the canonical map $\dabs{\Delta^1}\to \dabs{J}$ is indeed trivially locally stratified, hence it extends to a morphism $i\colon \II\to \dabs{J}$.
To do this, notice that for every open subset $U$ of $\dabs{J}$ containing $0$ and for every point $x\in U$, we can take a directed path from $x$ to $2$ in a copy of $\dabs{\Delta^2}$ containing the preimage of $x$, via the quotient map $\dabs{\Delta^2}\coprod\dabs{\Delta^2}\to \dabs{J}$, which then defines a directed path from $x$ to $0$ in $U$. Therefore, $i$ fits in a commutative diagram:
\begin{diagram}
\terminal\coprod\terminal\ar[r]\ar[d] &
\terminal\coprod\terminal\ar[r]\ar[d] &
\terminal \coprod\terminal\ar[d] \\
\II\ar[r, "i"] &
\dabs{J} \ar[r, "r"] &
\II
\end{diagram}
where $r\colon \dabs{J}\to \II$ is the unique map in $\Strat$ corresponding to the retraction $r\colon \abs{J}\to \II$ and the horizontal compositions are identities.
\end{proof}

\begin{comment}
\begin{corollary}
For every $X$  a fibrant $\Delta$-generated stream the interval structure on $\II$ induces a diagram:
\begin{diagram}
X \ar[r, "s"] &
\ihom(\II, X) \ar[r, "{(d_0, d_1)}"] &
X\times X
\end{diagram}
such that $s$ is a trivial cofibration and $(d_0,d_1)$ is a fibration.
\end{corollary}
\end{comment}

\subsection{}
For every locally stratified space $X$, applying the functor $X\times(\blank)$ to \eqref{eq:chaotic-interval-locstrat} yields a decomposition of the codiagonal on $X$
\begin{equation}
\label{eq:cylinder-locstrat}
\begin{tikzcd}
X\coprod X \ar[r, "{\left(\partial_{X}^0,\partial_{X}^1\right)}"] &
X\times \II \ar[r, "\sigma_{X}"] &
X
\end{tikzcd}
\end{equation}
where $\sigma_{X}$ is a weak equivalence by Lemma \ref{lem:cylinder-top-cat}.
Moreover, if $X$ is a cofibrant stratified space, the morphism $\left(\partial_{X}^{0}, \partial_{X}^{1}\right)$ is a cofibration, again by Lemma \ref{lem:cylinder-top-cat}.

\begin{lemma}
\label{lem:push-chaotic-interval-locstrat}
The chaotic interval $\II$ fits in a pushout diagram:
\begin{equation}
\label{eq:push-chaotic-interval-locstrat}
\push{\term}{\II}{\II}{\II}{\partial^{1}}{}{}{\partial^{0}}
\end{equation}
of locally stratified spaces.
\end{lemma}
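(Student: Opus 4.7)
The plan is to verify the universal property of the pushout directly. First, observe that the underlying topological space of the candidate pushout must be $\II$: by Corollary \ref{cor:forgetful-codiscrete-adjunction-locstrat}, the forgetful functor $U\colon \LocStrat\to \Top$ is a left adjoint, hence preserves colimits, and the pushout of the span $\II \leftarrow \term \rightarrow \II$ in $\Top$ is the standard interval via concatenation. The chaotic interval $\II = \ind{\II}$ itself lies in $\LocStrat$ by the same corollary, so it is a legitimate candidate for the pushout; what remains is to verify the universal property.

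Given a locally stratified space $X$ together with morphisms $f, g\colon \II \to X$ satisfying the compatibility $f(1) = g(0)$, I would define $h\colon \II \to X$ to be the unique continuous concatenation map. Uniqueness on underlying spaces is immediate from the pushout property in $\Top$, so the entire task reduces to showing that $h$ preserves the circulation. Recall from Proposition \ref{prop:delta-stream-associated-to-delta-prespace} together with Corollary \ref{cor:forgetful-codiscrete-adjunction-locstrat} that the circulation on $\II = \ind{\II}$ is given by path-components of opens: $x \preceq_U y$ if and only if $x$ and $y$ lie in the same path-component of $U$. I would fix an open $V\subseteq X$ and points $x, y\in h^{-1}(V)$ with $x\preceq_{h^{-1}(V)} y$, witnessed by a continuous path $\gamma$ from $x$ to $y$ whose image is contained in $h^{-1}(V)$, and show $h(x) \preceq_V h(y)$.

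The argument proceeds by case analysis on whether the image of $\gamma$ contains the midpoint $1/2$. If it does not, then $x$ and $y$ lie on the same side of $1/2$ and the morphism property of either $f$ or $g$ gives the conclusion directly. If it does, then $1/2 \in h^{-1}(V)$, so $h(1/2) = f(1) = g(0) \in V$, and the intermediate value theorem applied to $\gamma$ extracts subpaths joining $x$ to $1/2$ in $h^{-1}(V)\cap [0,1/2]$ and $1/2$ to $y$ in $h^{-1}(V)\cap [1/2,1]$; applying the morphism property of $f$ and $g$ to each of these subpaths gives $h(x) \preceq_V h(1/2) \preceq_V h(y)$, and transitivity of $\preceq_V$ finishes.

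The main subtlety, which would otherwise be the obstacle, is the crucial observation that $\II = \ind{\II}$ does not have every pair of points in an open comparable, but only those lying in the same path-component. This rules out the potentially problematic scenario in which $x$ and $y$ sit on opposite sides of an open whose preimage in $\II$ disconnects at $1/2$: in such a case $x$ and $y$ are not comparable in the chaotic circulation, so there is simply nothing to verify. Once this point is kept in mind, the verification above is routine.
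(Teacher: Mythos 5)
Your proof is correct, but it takes a genuinely different route from the paper's. The paper proceeds by directly computing the circulation on the pushout: since $\ind{\II}$ is codiscrete, the identity already gives a stream morphism from the pushout to $\ind{\II}$, so it suffices to show the reverse containment of circulations, which the paper does by invoking the explicit description of final structures in prestreams (splitting $[t,t']$ at the glued point and pushing forward). You instead verify the universal property of the pushout head-on: you construct the concatenation $h$ and show it is a morphism by analysing where a witnessing path lies relative to the midpoint. Both arguments are sound; yours avoids appealing to the explicit final-structure formula at the price of a slightly longer case analysis, while the paper's is more direct once that formula is granted.

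Two small points worth tightening. First, in Case 2 you implicitly assume the path straddles the midpoint, i.e.\ $x \le 1/2 \le y$; if both $x$ and $y$ lie on the same side but $\gamma$ nevertheless crosses $1/2$, you should observe that since $h^{-1}(V)$ is a disjoint union of open intervals containing both $x$ and $y$, the straight segment $[\min(x,y),\max(x,y)]$ already lies in $h^{-1}(V)$ and in a single half, so this subcase reduces to Case 1. Second, strictly speaking the universal-property argument also needs the cocone legs $\II \to \ind{\II}$ to be morphisms of locally stratified spaces; this is immediate because any continuous map into the codiscrete object is a morphism, but it is worth a one-line remark. Your closing observation — that the codiscrete circulation only relates points in the same path component of an open, so the apparently dangerous case simply cannot arise — is exactly the right thing to notice, and is indeed the same phenomenon exploited in the paper's proof when it deduces $[t,t'] \subset U$.
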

\begin{proof}
Notice that the underlying topological space of the pushout \eqref{eq:push-chaotic-interval-locstrat} is the space $[0,2]\cong \II$ and  the chaotic structure on $\II$ is finer than any other stream structure.
Therefore, it is enough to prove that for every open subset $U$ in $[0,2]$, if $t\sim_{U}t'$ then $t\preceq_{U}t'$, where $\sim$ denotes the chaotic circulation on $\ind{\II}$ and we use $\preceq$ for the stream structure of the pushout.
Without loss of generality, we can assume that $t$ is less than $1$ and $t'$ is greater than $1$, for if $t$ and $t'$ both lie on the same half interval, the claim follows from the fact that $\partial^{\epsilon}\colon \II\to \II \coprod_{\term}\II$ is a stream morphism.
Therefore, under the above assumptions, we have that $[t,t']$ is contained in $U$. 
In particular, taking the intersections $[t,t']\cap [0,1]$ and $[t,t']\cap[1,2]$ and by \ref{subsec:final-initial-preorder} the claim follows.
\end{proof}

\subsection{}
\label{subsec:local-stratum-preserving-homotopy}
Let $f\colon X\to Y$ and $g\colon X\to Y$ be morphisms of stratified spaces.
A \emph{chaotic homotopy} from $f$ to $g$ is an $\II$-homotopy in the sense of definition \ref{def:E-homotopy}.
In the presence of a chaotic homotopy from $f$ to $g$ we say that $f$ is \emph{chaotically homotopic} to $g$ and write $f\sim g$.
By Lemma \ref{lem:push-chaotic-interval-locstrat}, $\sim$ defines an equivalence relation on the set of morphisms from $X$ to $Y$, which is compatible with composition.
In particular, when $X$ is the terminal locally stratified space, two points $x$ and $x'$ of $X$ are chaotically homotopic if and only if there exists a chaotic path $\alpha\colon \II\to X$ from $x$ to $x'$.

\begin{definition}
For every locally stratified space $X$ we define the set of \emph{chaotic path components} of $X$, denoted by $\cpi_{0}X$ as the quotient of the set of points of $X$ by the chaotic homotopy relation.
\end{definition}

\begin{example}
If $X$ is the locally stratified space associated to a stratified space, two points in $X$ are chaotically homotopic if and only if they belong to the same stratum.
Hence, the chaotic path components of $X$ correspond to the strata of its underlying space, and we will blur the difference between them.
In particular, the set of chaotic path components $\cpi_{0}\dabs{\Delta^{n}}$ of the locally stratified standard $n$-simplex is the set with $n+1$ elements.
Notice that $\cpi_{0}\dabs{\Delta^{n}}$ is different from $\pi_{0}\Sing \dabs{\Delta^{n}}=\term$. 
\end{example}

\begin{definition}
Let $f\colon X\to Y$ be a morphism between locally stratified spaces. 
We say that $f$ is a \emph{chaotic homotopy equivalence} if there exists a morphism $g\colon Y\to X$ such that $gf$ is chaotically homotopic to the identity on $X$ and $fg$ is chaotically homotopic to the identity on $Y$.
\end{definition}

\begin{proposition}
\label{prop:chaotic-equivalence-weak-equivalence}
Every chaotic homotopy equivalence is a weak equivalence.
\end{proposition}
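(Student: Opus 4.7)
The plan is to reduce this to the abstract statement Proposition~\ref{prop:E-homotopy-we}, which was established in the general framework of Section~\ref{ch:2sec:4} for any object $E$ satisfying the hypotheses spelled out around \eqref{eq:chaotic-interval-top-cat}. So the work is really just to check that the chaotic interval $\II$ plays the role of such an $E$ in $\LocStrat$, and that the notion of chaotic homotopy equivalence defined in \ref{subsec:local-stratum-preserving-homotopy} is literally the notion of $\II$-equivalence from Definition~\ref{def:E-homotopy}.

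First, I would record that $\LocStrat$ equipped with its weak equivalences is a homotopical category by Lemma~\ref{lem:locstrat-homotopical-cat}, which in particular delivers the $2$-out-of-$6$ property used in the proof of Proposition~\ref{prop:E-homotopy-we}. Next, the chaotic interval fits into the factorisation \eqref{eq:chaotic-interval-locstrat} of the codiagonal on the terminal object, with $(\partial^0,\partial^1)$ a cofibration (retract of $\term\coprod\term\to \dabs{J}$, established just above) and $\sigma\colon \II\to\term$ a weak equivalence (indeed a trivial fibration, since $\dSing(\II)\to \Delta^0$ is a trivial Kan fibration). These are exactly the conditions on $E$ required to apply the abstract result.

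With this in place, the proof amounts to unwinding definitions: if $f\colon X\to Y$ is a chaotic homotopy equivalence with inverse $g$ and homotopies $h\colon X\times\II\to X$ from $gf$ to $1_X$ and $k\colon Y\times\II\to Y$ from $fg$ to $1_Y$, then $f$ is an $\II$-equivalence in the sense of Definition~\ref{def:E-homotopy}. Proposition~\ref{prop:E-homotopy-we} then yields directly that $f$ is a weak equivalence, via the argument that $\partial_X^\epsilon$ and $\partial_Y^\epsilon$ are weak equivalences (Remark~\ref{rem:partial-we-top-cat} applied in $\LocStrat$), which forces $gf$ and $fg$ to be weak equivalences by $2$-out-of-$3$, after which $2$-out-of-$6$ applied to the composable string $X\xrightarrow{f}Y\xrightarrow{g}X\xrightarrow{f}Y$ concludes.

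There is no real obstacle; the only subtle point is making sure that every ingredient used in the general proof of Proposition~\ref{prop:E-homotopy-we} is available in the category of numerically generated locally stratified spaces. The homotopical-category structure handles $2$-out-of-$3$ and $2$-out-of-$6$, Lemma~\ref{lem:locstrat-prod-we} ensures that products with weak equivalences remain weak equivalences (used to deduce that $\sigma_X\colon X\times\II\to X$ is a weak equivalence for every $X$, and hence that each $\partial_X^\epsilon$ is), and the factorisation \eqref{eq:chaotic-interval-locstrat} provides the interval data. No new calculation is required.
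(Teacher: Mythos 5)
Your proposal is correct and takes exactly the route the paper takes: the paper's proof is simply the one-line reduction to Proposition~\ref{prop:E-homotopy-we}, and your unwinding of why the chaotic interval $\II$ satisfies the required hypotheses (the factorisation \eqref{eq:chaotic-interval-locstrat}, Lemma~\ref{lem:locstrat-homotopical-cat}, Lemma~\ref{lem:locstrat-prod-we}) is precisely the implicit content of that reduction. No discrepancy.
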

\begin{proof}
The claim follows from Proposition \ref{prop:E-homotopy-we}.
\end{proof}
\section{Left covers and fundamental categories}
\label{ch:5sec:4}
In this section, we define the fundamental category $\dPi_{1}X$ of a locally stratified space $X$.
We show that there exists a canonical essentially surjective functor $\theta_{A}\colon \tau_{1}A\to \dPi_{1}\dabs{A}$ from the fundamental category of a simplicial set to the fundamental category of its realisation (Corollary \ref{cor:tau-dpi-essentially-surjective}).
We illustrate a few examples of simplicial sets $A$ with $\theta_{A}$ an equivalence of categories (Example \ref{ex:fundamental-cat-standard-locstrat} and Proposition \ref{prop:fund-cat-locstrat-circle}) and some counterexamples (Remark \ref{rmk:fundamental-category-R-unequivalent} and Corollary \ref{cor:fund-cat-retract-unequivalent}).
When $Q$ is an $\infty$-category, the functor $\theta_{Q}$ maps split monomorphisms to isomorphisms (Corollary \ref{cor:infty-category-split-iso}) and we conjecture that $\dPi_{1}\dabs{Q}$ is the localisation of $\tau_{1}Q$ at the class of split monomorphisms.
We define the category of left covers of a locally stratified space (Definition \ref{def:left-cover-locstrat}) and its universal left covers.
We conclude the section showing through a series of Examples, that the realisation $\dabs{E}$ of a left cover $E$ over a simplicial set $X$ is not necessarily a left cover.

\begin{definition}
Let $X$ be a locally stratified space. We define the \emph{fundamental category} of $X$ and denote it as $\dPi_{1} X$ as the fundamental category
\[\dPi_{1} X=\tau_{1}\dSing X\]
of the locally stratified singular simplicial set associated to $X$.
\end{definition}

\begin{example}
\label{ex:fundamental-cat-standard-locstrat}
Let $\dabs{\Delta^{n}}$ be the locally stratified standard $n$-simplex. Its fundamental category $\dPi_{1}\dabs{\Delta^{n}}$ is naturally isomorphic to the exit path category $\Exit\sabs{\Delta^{n}}$ of its underlying stratified space.
Indeed, we have that $\dSing\dabs{\Delta^{n}}$ is isomorphic to $\sSing\sabs{\Delta^{n}}$ by Proposition \ref{prop:locstrat-standard-simplex-strat} and the result follows from Theorem \ref{thm:boardmann-vogt}.
In particular, the unit map $\Delta^{n}\to \dSing\dabs{\Delta^{n}}$ induces a natural isomorphism:
\[[n]=\tau_{1}\Delta^{n}\to \tau_{1}\dSing\dabs{\Delta^{n}}=\dPi_{1} \dabs{\Delta^{n}}\cong \Exit\sabs{\Delta^{n}}=[n].\]
\end{example}

\begin{proposition}
\label{prop:fund-cat-locstrat-circle}
The fundamental category $\dPi_{1}\dabs{S^{1}}$ of the locally stratified circle $\dabs{S^{1}}$ (see Example \ref{ex:locally-strat-circle}) is isomorphic to the monoid $\NN$ of the natural numbers.
Moreover, the unit map 
\[\eta_{S^{1}}\colon S^{1}\to \dSing\dabs{S^{1}}\]
induces an isomorphism of fundamental categories.
\end{proposition}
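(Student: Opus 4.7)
The plan is to prove that the functor $\tau_{1}\eta_{S^{1}}\colon \tau_{1}S^{1}\to \dPi_{1}\dabs{S^{1}}$ is an equivalence of categories, which combined with $\tau_{1}S^{1}\cong \NN$ from Example \ref{ex:simplicial-circle} yields both assertions of the proposition. By construction, $\tau_{1}\eta_{S^{1}}$ sends the unique object of $\NN$ to the basepoint $0\in \dabs{S^{1}}$ and the generator to the class $[\ell]$ of the once-counterclockwise loop $\ell\colon \dabs{\Delta^{1}}\to \dabs{S^{1}}$ obtained as the realisation of the unique non-degenerate $1$-simplex of $S^{1}$. It therefore suffices to prove essential surjectivity and fully faithfulness on the unique object.

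For essential surjectivity, parametrise $\dabs{S^{1}}=\RR/\ZZ$ and fix $x\in (0,1)$. The key observation from Example \ref{ex:locally-strat-circle} is that on any small neighbourhood $U_{\epsilon}=(-\epsilon,\epsilon)$ of $0$ the circulation restricts to the Sierpi\'nski preorder with lower stratum $(-\epsilon,0]$, so directed paths may leave $0$ in either direction but may enter $0$ only from the lower stratum. I would then define the counterclockwise path $\alpha(t)=x+t(1-x)\bmod 1$ from $x$ to $0$ through the arc $(x,1)$ and the clockwise path $\beta(t)=1-t(1-x)\bmod 1$ from $0$ to $x$ through the same arc, and exhibit the $2$-simplex $\sigma\colon \dabs{\Delta^{2}}\to \dabs{S^{1}}$ given in barycentric coordinates by $\sigma(t_{0},t_{1},t_{2})=1-t_{1}(1-x)\bmod 1$. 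Its face maps satisfy $d_{2}\sigma=\beta$, $d_{0}\sigma=\alpha$, and $d_{1}\sigma=1_{0}$, so the defining relation of $\tau_{1}$ yields $[\alpha]\circ[\beta]=[1_{0}]$; a symmetric construction delivers $[\beta]\circ[\alpha]=[1_{x}]$, exhibiting $[\beta]$ as an isomorphism $0\to x$ in $\dPi_{1}\dabs{S^{1}}$. The stream-morphism property of $\sigma$ is immediate because its image lies in the closed arc $[x,1]\subset\dabs{S^{1}}$ and the preimage of any sufficiently small neighbourhood of $0$ maps entirely into the lower stratum, where the preorder is chaotic.

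For fully faithfulness on $0$, the topological winding number defines a well-defined monoid homomorphism $w\colon \mathrm{End}_{\dPi_{1}\dabs{S^{1}}}(0)\to \ZZ$: any $2$-simplex $\sigma\colon \dabs{\Delta^{2}}\to \dabs{S^{1}}$ supplies a topological nullhomotopy that forces additivity of winding numbers, and the Sierpi\'nski preorder at $0$ forbids clockwise crossings of the basepoint, so the image of $w$ lies in $\NN$ and $w[\ell]=1$. To conclude that $w$ is an isomorphism I would show that any directed loop $\gamma$ based at $0$ with $w[\gamma]=n$ satisfies $[\gamma]=[\ell^{n}]$ in $\dPi_{1}\dabs{S^{1}}$ by constructing a $2$-simplex filler realising a directed homotopy from $\gamma$ to $\ell^{n}$.

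The hard part is this last step: the continuous homotopy between $\gamma$ and $\ell^{n}$ exists by classical covering-space theory, but it must be refined to a morphism of locally stratified spaces, and the Sierpi\'nski structure at $0$ imposes stringent restrictions on how the homotopy may approach the basepoint. My intended route is to reduce by a preliminary directed homotopy to the case where $\gamma$ is piecewise affine and its encounters with $U_{\epsilon}$ are monotone arcs in the lower stratum, and then assemble the filler by stitching together simplices of the type used in the essential-surjectivity step, whose images lie in a contractible closed arc containing $0$, with filler simplices mapping into the chaotic region $\dabs{S^{1}}\setminus\{0\}$, where the stream-morphism condition imposes no constraint.
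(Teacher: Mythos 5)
Your strategy takes a genuinely different route from the paper's. The paper first establishes that a single elementary local exit path $\dabs{\Delta^{1}}\to\dabs{S^{1}}$ can wrap around the circle at most once, by observing that if such a path crossed the basepoint at an interior parameter, the preimage of a small arc around $0$ would contain a chaotically preordered interval mapping onto a non-chaotic preorder; from this constraint on individual $1$-simplices (extended to higher simplices by restricting to the edge from the first to the last vertex) the paper then argues that $\dPi_{1}\dabs{S^{1}}$ is the free category on the class of the quotient map $q\colon\dabs{\Delta^{1}}\to\dabs{S^{1}}$. You instead build a winding-number homomorphism $w$ on the endomorphism monoid of $0$ and aim to show it inverts $\theta_{S^{1}}$.

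Two remarks. First, essential surjectivity is Corollary~\ref{cor:tau-dpi-essentially-surjective} and holds for every simplicial set, so your explicit argument is redundant; it is nevertheless correct, as the $2$-simplex $\sigma(t_{0},t_{1},t_{2})=1-t_{1}(1-x)\bmod 1$ has image in the closed arc $[x,1]$, which meets every small arc around $0$ only in the lower stratum, so the stream-morphism condition is automatic. Second, and more seriously, the full-faithfulness argument is not complete. You correctly show that $w$ is a well-defined monoid homomorphism landing in $\NN$ with $w\circ\theta_{S^{1}}=\id_{\NN}$, which gives injectivity of $\theta_{S^{1}}$ on morphisms. But surjectivity of $\theta_{S^{1}}$ onto $\dPi_{1}\dabs{S^{1}}(0,0)$ is precisely the step you label ``the hard part'', and for that you offer only a plan. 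A morphism in $\dPi_{1}\dabs{S^{1}}(0,0)$ is a class of a finite composite of elementary local exit paths, and identifying it with $[\ell^{n}]$ requires producing explicit $2$-simplex fillers in $\dSing\dabs{S^{1}}$; the ordinary topological nullhomotopy does not supply these, exactly for the reason you note. Until that step is carried through the proof has a genuine gap. The paper's winding bound on individual $1$-simplices, combined with the chaotic homotopies near $0$ that you already construct, is probably the right additional input to close it.
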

\begin{proof}
Notice that an elementary local exit path $\gamma \colon \dabs{\Delta^{1}}\to \dabs{S^{1}}$ in $\dabs{S^{1}}$ can wrap around the circle at most once. 
For if $\gamma\colon\dabs{\Delta^{1}}\to \dabs{S^{1}}$ is an elementary local exit path crossing the basepoint counterclockwise, taking a small enough open arc containing $0$ in $\dabs{S^{1}}$, we see that the preimage of $U$ has at least one connected component which is chaotically preordered yielding a contradiction, since $U$ is not chaotically preordered.
As a consequence, every morphism $\alpha\colon\dabs{\Delta^{n}}\to \dabs{S^{1}}$ from an $n$-dimensional locally stratified standard simplex to $\dabs{S^{1}}$ can also wrap around the circle at most once, as the restriction of $\alpha$ to the edge $\dabs{\Delta^{{0,n}}}$ from the first to the last vertex of $\Delta^{n}$, defines an elementary local exit path in $\dabs{S^{1}}$.
Moreover, every non trivial elementary local exit path in $\dabs{S^{1}}$ starting and ending at $0$ is easily seen to be chaotically homotopic to the canonical projection $q\colon \dabs{\Delta^{1}}\to \dabs{S^{1}}$.
Therefore, the fundamental category $\dPi_{1}\dabs{S^{1}}$ of the locally stratified circle is naturally equivalent to the free category generated by the quotient map $q\colon \dabs{\Delta^{1}}\to \dabs{S^{1}}$ and the morphism
\[\NN=\tau_{1}S^{1}\to \dPi_{1}\dabs{S^{1}}\]
sends the generator $1$ of $\NN$ to the generator $q$ of $\dPi_{1}\dabs{S^{1}}$.
\end{proof}

\begin{lemma}
\label{lem:chaotic-path-iso-dpi}
Let $X$ be a locally stratified space, $x$ and $x'$ be points in $X$ and assume that there exists a chaotic path $\alpha$ in $X$ from $x$ to $x'$.
Then, $\alpha$ defines an isomorphism in $\dPi_{1}X$ from $x$ to $x'$.
\end{lemma}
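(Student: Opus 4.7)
The plan is to exhibit explicit morphisms and 2-simplices in $\dSing X$ that witness the required invertibility, exploiting the fact that $\II$ is chaotic (so every continuous map into $\II$ is automatically a morphism of locally stratified spaces).

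First I would produce the candidate inverse. Composing $\alpha$ with the identity $\dabs{\Delta^{1}}\to \II$ (a morphism because the chaotic circulation on $\II$ is coarser than any other) gives a directed path $f\colon \dabs{\Delta^{1}}\to X$ from $x$ to $x'$; this is a $1$-simplex of $\dSing X$ whose class in $\dPi_{1}X$ is the one named by $\alpha$. Composing $\alpha$ with the reverse map $t\mapsto 1-t\colon \dabs{\Delta^{1}}\to \II$ (again a morphism because $\II$ is chaotic) gives $g\colon \dabs{\Delta^{1}}\to X$ from $x'$ to $x$. The claim is then that $[g]\circ[f]=1_{x}$ and $[f]\circ[g]=1_{x'}$ in $\dPi_{1}X$.

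By the explicit description of $\tau_{1}$ recalled in \ref{subsec:tau-explicit}, it suffices to produce two $2$-simplices of $\dSing X$, that is two morphisms $\dabs{\Delta^{2}}\to X$, whose boundaries display the required composition relations. For the first relation I would use the affine map
\[
\phi\colon \dabs{\Delta^{2}}\to \II,\qquad (t_{0},t_{1},t_{2})\longmapsto t_{1},
\]
which is a morphism of locally stratified spaces since $\II$ is chaotic. A direct check of the three face maps shows that $\sigma=\alpha\circ\phi$ restricts to $f$ on the $01$-edge, to $g$ on the $12$-edge and to the constant path $1_{x}$ on the $02$-edge, and hence witnesses $[g]\circ[f]=1_{x}$. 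For the second relation the same construction with $\psi(t_{0},t_{1},t_{2})=1-t_{1}$ produces a $2$-simplex $\tau=\alpha\circ\psi$ whose $01$-edge is $g$, whose $12$-edge is $f$ and whose $02$-edge is the constant path $1_{x'}$, yielding $[f]\circ[g]=1_{x'}$.

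There is no real obstacle here: once one notices that $\II$ being chaotic makes every continuous map into $\II$ a morphism of streams, the whole argument reduces to writing down two affine maps $\dabs{\Delta^{2}}\to \II$ and reading off their face restrictions. The only point to keep track of is the convention on which face corresponds to which edge in the formula of \ref{subsec:tau-explicit}, so that the $02$-edge (the one appearing on the left of $d_{2}^{1}(x)\sim d_{2}^{0}(x)\circ d_{2}^{2}(x)$) is indeed the one made constant by $\phi$ and $\psi$.
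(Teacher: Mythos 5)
Your proof is correct and follows essentially the same approach as the paper: produce the forward path and its reverse as $1$-simplices of $\dSing X$, then exhibit two $2$-simplices witnessing that they are mutually inverse up to homotopy. The paper is terser, simply asserting that the required $2$-simplices exist because the image of $\alpha$ is chaotically preordered, whereas you make them explicit by factoring through the chaotic interval $\II$ via the affine maps $\phi$ and $\psi$; both observations ultimately rest on the same fact that a chaotic target admits all continuous maps as stream morphisms.
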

\begin{proof}
The path $\alpha$ and its reverse path $\alpha^{{-1}}$ define elementary local exit paths in $X$. 
Moreover, points in the image of $\alpha$ are chaotically equivalent and we can define a  morphism $\dabs{\Delta^{2}}\to X$ whose boundary is given by $(\alpha,1_{x},\alpha^{{-1}})$ and dually a morphism $\dabs{\Delta^{2}}\to X$ with boundary $(\alpha^{{-1}}, 1_{x'},\alpha)$.
\end{proof}

\subsection{}
\label{subsec:last-vertex-geometric}
Let $A$ be a simplicial set and let $\abs{A}$ be its topological realisation.
Since $\abs{A}$ is a CW-complex (see Remark \ref{rmk:realisation-cw}), for every point $x\in \abs{A}$ there exists a unique non-degenerate $n$-simplex $\sigma_{x}\colon \Delta^{n}\to A$ such that $x$ is the image via $\abs{\sigma_{x}}$ of a unique point in the interior of $\abs{\Delta^{n}}$.
In particular, we can define the \emph{geometric last vertex map}:
\begin{align*}
\lv\colon \abs{A}&\to A_{0}\\
x &\mapsto \sigma_{x}(n)
\end{align*}
mapping every point $x$ in $\abs{A}$ to the image of the last vertex of $\Delta^{n}$ via $\sigma_{x}$.
Moreover, we use the notation $\lv(x)$  also for the image of $\lv(x)$ in $\abs{A}$.

\begin{lemma}
\label{lem:last-vertex-chaotic-path}
Let $A$ be a simplicial set and let $x$ be a point of $\dabs{A}$. Then, there exists a chaotic path $\II\to \dabs{A}$ from $x$ to $\lv(x)$
\end{lemma}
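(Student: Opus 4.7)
The plan is to realise $\alpha$ as the locally stratified geometric realisation of the straight line joining $\tilde{x}$ to the last vertex $v_n$ of the unique non-degenerate simplex $\sigma_x\colon \Delta^n \to A$ containing $x$ in its interior (as singled out in \ref{subsec:last-vertex-geometric}). Writing $\tilde{x}=(t_0,\dots,t_n)$ with every $t_i>0$, set $\gamma(\lambda)=(1-\lambda)\tilde{x}+\lambda v_n$ for $\lambda\in[0,1]$, and let $\alpha\colon \II \to \dabs{A}$ be the composite $\dabs{\sigma_x}\circ\gamma$. By construction $\alpha(0)=x$ and $\alpha(1)=\lv(x)$, so the content of the lemma is that $\alpha$ is a morphism of streams out of the chaotic interval.

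First I would record the geometric observation that $\gamma$ never leaves the top stratum of $\sabs{\Delta^n}$: its last barycentric coordinate is $(1-\lambda)t_n+\lambda$, which is strictly positive for every $\lambda\in[0,1]$, so the stratification value of $\gamma(\lambda)$ equals $n$ throughout. A continuous path in $\sabs{\Delta^n}$ is a directed path of the associated prestream $\iota\sabs{\Delta^n}$ precisely when its composite with the stratification map is weakly monotonic; hence both $\gamma$ and its time-reversal, and more generally every restriction $\gamma|_{[a,b]}$ together with its reverse, are directed paths in $\dabs{\Delta^n}$.

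The second ingredient is an explicit description of the circulation of $\II=\ind{[0,1]}$. Unpacking the description of the right adjoint in Corollary \ref{cor:forgetful-codiscrete-adjunction-locstrat} together with Proposition \ref{prop:delta-stream-associated-to-delta-prespace}, one finds that $\lambda \preceq_W \lambda'$ in $\II$ if and only if $\lambda$ and $\lambda'$ lie in the same connected component of the open subset $W\subseteq [0,1]$. Consequently, verifying that $\alpha$ is a stream morphism reduces to showing that for every open $V \subseteq \dabs{A}$ and every pair of points $\lambda,\lambda'$ lying in the same connected component $[a,b]$ of $\alpha^{-1}(V)$, one has $\alpha(\lambda) \preceq_V \alpha(\lambda')$ in $\dabs{A}$.

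Given such data, with $\lambda\le\lambda'$ say, the segment $\gamma([\lambda,\lambda'])\subseteq[a,b]$ is contained in $\dabs{\sigma_x}^{-1}(V)$, so both $\gamma|_{[\lambda,\lambda']}$ and its reverse — which are directed paths in $\dabs{\Delta^n}$ by the observation above — postcompose with the stream morphism $\dabs{\sigma_x}$ to yield directed paths in $\dabs{A}$ between $\alpha(\lambda)$ and $\alpha(\lambda')$ in both directions, each with image contained in $V$. By the definition of the circulation on $\dabs{A}$ this delivers $\alpha(\lambda) \preceq_V \alpha(\lambda')$ as required. The main delicate point is recognising that the morphism condition out of the chaotic interval only has to be tested within each connected component of the preimage of an open; once this is in hand, the straight-line path trivially witnesses the chaotic relation.
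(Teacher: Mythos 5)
Your proof is correct and follows essentially the same approach as the paper: take the straight line in $\dabs{\Delta^{n}}$ from the canonical preimage $\tilde{x}$ to the last vertex, observe that the last barycentric coordinate stays positive so the line never leaves the top stratum, and conclude the composite with $\dabs{\sigma_x}$ is a chaotic path. The only cosmetic difference is in how that conclusion is reached: the paper notes that the open subset $\{t_n>0\}\subset\dabs{\Delta^n}$ carries the chaotic subspace structure (so any continuous map $\II\to\{t_n>0\}$ is automatically a stream morphism, by Corollary~\ref{cor:forgetful-codiscrete-adjunction-locstrat}), whereas you unwind the morphism condition $\II\to\dabs{A}$ directly, identifying the circulation of $\II$ with ``same connected component of $W$'' and exhibiting the two directed sub-segments as witnesses; both routes rest on the same two ingredients and are equally valid.
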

\begin{proof}
By \ref{subsec:last-vertex-geometric} there exists a unique simplex $\sigma_{x}\colon \Delta^{n}\to A$ of $A$ such that $x$ is the image of a unique point $p$ in the interior of $\dabs{\Delta^{n}}$.
In particular, since the complement of the $n$-th face of $\dabs{\Delta^{n}}$ is trivially locally stratified, taking the straight line from $p$ to $n$ defines a chaotic path $\gamma\colon\II\to \dabs{\Delta^{n}}$ in $\dabs{\Delta^{n}}$, which gives a chaotic path $\dabs{\sigma_{x}}\gamma\colon \II\to \dabs{A}$ in $\dabs{A}$ from $x$ to $\lv(x)$.
\end{proof}

\begin{corollary}
\label{cor:tau-dpi-essentially-surjective}
Let $A$ be a simplicial set. Then, the canonical functor
\[\theta_{A}\colon \tau_{1}A\to \dPi_{1}\dabs{A}\]
from the fundamental category of $A$ to the fundamental category of its realisation is an essentially surjective functor.
\end{corollary}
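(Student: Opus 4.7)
The plan is to combine the two lemmas immediately preceding the corollary. Recall that the objects of $\tau_1 A$ are the $0$-simplices of $A$, the objects of $\dPi_1 \dabs{A}$ are the points of $\dabs{A}$, and the functor $\theta_A$ is induced by the unit $\eta_A \colon A \to \dSing \dabs{A}$ of the adjunction \eqref{eq:sset-locstrat-adjunction}; in particular, on objects $\theta_A$ sends a vertex $v \in A_0$ to its image in $\dabs{A}$ under the canonical map $A_0 \to \dabs{A}$.

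Given an arbitrary point $x \in \dabs{A}$, the idea is to exhibit an explicit $0$-simplex $v \in A_0$ together with an isomorphism $x \cong \theta_A(v)$ in $\dPi_1 \dabs{A}$. The natural candidate is $v = \lv(x)$, the last vertex of the unique non-degenerate simplex whose open cell contains $x$, as introduced in \ref{subsec:last-vertex-geometric}. First I would apply Lemma \ref{lem:last-vertex-chaotic-path} to produce a chaotic path $\gamma \colon \II \to \dabs{A}$ from $x$ to $\lv(x)$. Then I would invoke Lemma \ref{lem:chaotic-path-iso-dpi} to conclude that $\gamma$ represents an isomorphism in $\dPi_1 \dabs{A}$ from $x$ to $\lv(x) = \theta_A(\lv(x))$. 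Since $x$ was arbitrary, $\theta_A$ is essentially surjective.

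There is essentially no obstacle here: both lemmas have already been established, and essential surjectivity follows immediately by concatenating them. The only substantive content sits in the preceding lemmas, namely the geometric fact that every point of a CW realisation lies in the open interior of a unique non-degenerate cell and hence can be connected by a straight-line chaotic path to the last vertex of that cell, and the fact that chaotic paths become invertible in the fundamental category because both the path and its reverse define elementary local exit paths that can be filled by $2$-simplices into $\dabs{A}$.
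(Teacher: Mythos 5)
Your proposal is correct and matches the paper's proof exactly: the paper's argument is precisely to combine Lemma \ref{lem:last-vertex-chaotic-path} (chaotic path from $x$ to $\lv(x)$) with Lemma \ref{lem:chaotic-path-iso-dpi} (chaotic paths give isomorphisms in $\dPi_1$), and you have simply spelled out the details the paper leaves implicit.
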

\begin{proof}
The claim follows from Lemma \ref{lem:last-vertex-chaotic-path} and Lemma \ref{lem:chaotic-path-iso-dpi}.
\end{proof}

\begin{example}
\label{ex:walking-retraction-simplicial-set}
Let $\Delta^{1}$ be the standard 1-simplex.
Recall that the simplicial set $R$ is defined by freely adding a right inverse $\beta$ to the unique non-degenerate 1-simplex $\alpha$ from $0$ to $1$ in $\Delta^{1}$ (see Example \ref{ex:walking-retraction-sset}).
The locally stratified realisation of $R$ is the locally stratified space whose underlying topological space is the 2-disk, with local stratification given as in Figure \ref{figure2}.

\begin{figure}[ht]
\centering
\begin{tikzpicture}

\draw [thick, fill=gray!20] (0,0) circle [radius=1];

\draw [thick, fill] (-1,0) circle [radius=0.05];
\node at (-0.7,0) {$0$};

\draw [thick, fill] (1,0) circle [radius=0.05];
\node at (0.7,0) {$1$};

\draw [thick, ->] (0.1,1) -- (-0.1,1);
\node at (0,0.6) {$\beta$};

\draw [thick, ->] (-0.1,-1) -- (0.1,-1);
\node at (0,-0.6) {$\alpha$};

\draw [gray,  dashed] (-2,0) circle [radius=0.5];
\draw[gray, dashed] (-1.5,0) -- (-1,0);

\fill[red!30] (-2,-0.5) arc [radius=0.5, start angle = -90, end angle=90];
\draw[red, thick] (-2,0.5) -- (-2,-0.5);
\draw [red, thick, fill] (-2,0) circle [radius=0.05];

\draw [gray, dashed] (2,0) circle [radius=0.5];
\draw[gray, dashed] (1,0) -- (1.5,0);

\fill[green!30] (2,0.5) arc [radius=0.5, start angle = 90, end angle=270];
\draw[red, thick] (2,-0.5) -- (2,0);
\draw[green, thick] (2,0) -- (2,0.5);
\draw [red, thick, fill] (2,0) circle [radius=0.05];

\draw [gray,  dashed] (0,2) circle [radius=0.5];
\draw[gray, dashed] (0,1) -- (0,1.5);

\fill[red!30] (-0.5,2) arc [radius=0.5, start angle = 180, end angle=360];
\draw[red, thick] (-0.5,2) -- (0.5,2);

\draw [gray,  dashed] (0,-2) circle [radius=0.5];
\draw[gray, dashed] (0,-1) -- (0,-1.5);

\fill[green!30] (-0.5,-2) arc [radius=0.5, start angle = 180, end angle=0];
\draw[red, thick] (-0.5,-2) -- (0.5,-2);
\end{tikzpicture}
\caption{The locally stratified realisation of $R$}\label{figure2}
\end{figure}
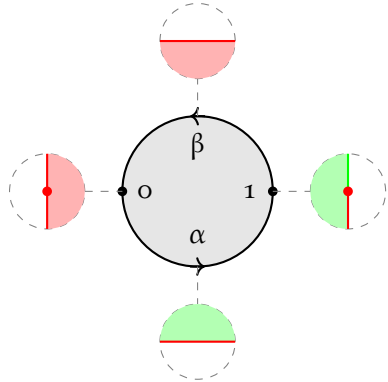
In particular, notice that the underlying path of the elementary local exit path $\dabs{\alpha}$ defines a chaotic path $\gamma\colon \II\to \dabs{R}$.
Indeed, it is enough to show that for every connected open neighborhood $U$ of $0$ in $\dabs{R}$ and every point $x$ in $\im \dabs{\alpha} \cap U$, we have that $x\preceq_{U}0$, where $\preceq$ denotes the circulation of $\dabs{R}$.
This follows from the fact that $x \le_{q^{{-1}}U} 2$, where $q\colon \dabs{\Delta^{2}}\to \dabs{R}$ denotes the quotient map given by collapsing the edge from $0$ to $2$.
\end{example}

\begin{corollary}
\label{cor:fundamental-category-retract}
The fundamental category $\dPi_{1}\dabs{R}$ of the realisation of $R$ is equivalent to the terminal category.
\end{corollary}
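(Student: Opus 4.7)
The plan is to show that the canonical functor $\theta_R\colon \tau_1 R = \Ret \to \dPi_1\dabs{R}$, which sends the generators $i$ and $r$ of $\Ret$ to $[\dabs{\alpha}]$ and $[\dabs{\beta}]$, factors through an equivalence of categories $\term \xrightarrow{\simeq} \dPi_1\dabs{R}$.

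For the first half of the proof I would invoke Example \ref{ex:walking-retraction-simplicial-set}: the underlying path of $\dabs{\alpha}$ defines a chaotic path $\gamma\colon \II \to \dabs{R}$ from $0$ to $1$. Since $\gamma$ and $\dabs{\alpha}$ share the same underlying continuous map, they represent the same elementary local exit path and hence the same morphism in $\dSing\dabs{R}$. Lemma \ref{lem:chaotic-path-iso-dpi} then guarantees that $\theta_R(i) = [\dabs{\alpha}] = [\gamma]$ is invertible in $\dPi_1\dabs{R}$. The relation $\theta_R(r)\theta_R(i) = \theta_R(ri) = 1_0$ forces $\theta_R(r) = \theta_R(i)^{-1}$, and in particular $\theta_R(ir) = 1_1$. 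Thus $\theta_R$ factors through the localisation $\Ret[i^{-1}]$: inverting the section $i$ collapses the idempotent $ir$ to $1_1$ and identifies $0$ with $1$, so $\Ret[i^{-1}]$ is equivalent to the terminal category. Call the induced functor $\bar\theta_R\colon \term \to \dPi_1\dabs{R}$. By Corollary \ref{cor:tau-dpi-essentially-surjective}, $\theta_R$ is essentially surjective, and hence so is $\bar\theta_R$.

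To conclude it suffices to verify that $\dPi_1\dabs{R}(0,0) = \{1_0\}$, and this is the main obstacle. My plan is to apply Proposition \ref{prop:chaotic-equivalence-weak-equivalence}: if the canonical projection $\dabs{R} \to \term$ admits a chaotic homotopy inverse, it is a weak equivalence of locally stratified spaces, so $\dSing\dabs{R}$ becomes Joyal-equivalent to $\Delta^0$ and applying $\tau_1$ gives $\dPi_1\dabs{R} \cong \term$. Viewing $\dabs{R}$ as the quotient of $\dabs{\Delta^2}$ by the collapse of the edge $[0\to 2]$ (see Construction \ref{con:left-right-inverse}), the task reduces to lifting a suitable barycentric contraction of $\dabs{\Delta^2}$ towards this collapsed edge to a morphism of streams $H\colon \dabs{R}\times \II \to \dabs{R}$ from $1_{\dabs{R}}$ to the constant map at $0$. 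The technical heart of the argument is to check that $H$ respects the local circulation of $\dabs{R}$ described in Example \ref{ex:walking-retraction-simplicial-set}; the key point is that the contraction is directed precisely towards the edge which is collapsed by the quotient map, so that the motion stays within regions whose local preorder can be realised by local exit paths in the quotient, and the chaotic $t$-coordinate of $\II$ does not impose any obstruction.
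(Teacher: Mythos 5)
The first half of your argument, establishing that all objects of $\dPi_1\dabs{R}$ are isomorphic and that $\theta_R$ factors through $\Ret[i^{-1}]\simeq\term$, tracks the paper closely: both rely on the chaotic path $\gamma$ of Example \ref{ex:walking-retraction-simplicial-set}, Lemma \ref{lem:chaotic-path-iso-dpi}, and Corollary \ref{cor:tau-dpi-essentially-surjective}. At the point where you flag the ``main obstacle,'' however, the paper does \emph{not} prove chaotic contractibility. It argues at the level of individual elementary local exit paths: every elementary local exit path from $1$ to $0$ is chaotically homotopic to $\dabs{\beta}$, and $\dabs{\beta}$ becomes invertible in $\dPi_1\dabs{R}$ because it is a right inverse to $\dabs{\alpha}$, which is already invertible by the chaotic-path argument. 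That is a one-variable check on representatives of morphisms, not a two-variable homotopy $H\colon\dabs{R}\times\II\to\dabs{R}$.

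Your second half is therefore a genuinely different route, and it contains the gap. You propose to deduce $\dPi_1\dabs{R}(0,0)=\{1_0\}$ by showing $\dabs{R}\to\term$ is a chaotic homotopy equivalence and invoking Proposition \ref{prop:chaotic-equivalence-weak-equivalence}. Note first that this is strictly stronger than the corollary: it would say $\dSing\dabs{R}$ is weakly contractible in the Joyal model structure, not merely that its $\tau_1$ is terminal. More seriously, the existence of $H$ as a morphism of streams is exactly the content you would need to verify, and you do not verify it; you explicitly defer ``the technical heart.'' The local circulation of $\dabs{R}$ is non-chaotic both near the basepoint $0$ (the image of the collapsed edge $[0,2]$, where $0$ sits below part of its neighbourhood) and near the vertex $1$ (where the image of the open edge $[0,1]$ is a lower stratum than the interior and the image of $[1,2]$). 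A contraction of the whole disk onto $0$ necessarily pushes points along and across these local preorders, and checking that $H$ respects $\preceq_{H^{-1}(V)}$ for every open $V$ and every pair of related points is at least as delicate as, and in fact harder than, the pointwise chaotic-homotopy checks the paper performs on elementary local exit paths. ``The contraction is directed towards the collapsed edge'' is a heuristic, not a proof; you should either construct $H$ explicitly and verify the stream-morphism condition in detail, or adopt the paper's approach of showing directly that the morphisms of $\dPi_1\dabs{R}$ are exhausted, up to chaotic homotopy, by $\dabs{\alpha}$, $\dabs{\beta}$, and identities.
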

\begin{proof}
Following Example \ref{ex:walking-retraction-simplicial-set} and by Lemma \ref{lem:chaotic-path-iso-dpi} the chaotic path $\gamma\colon \II\to \dabs{R}$ defines an isomorphism from $0$ to $1$ in $\dPi_{1}\dabs{R}$.
In particular, since $0$ and $1$ are the only vertices of $R$, by Corollary \ref{cor:tau-dpi-essentially-surjective} we see that all the objects of $\dPi_{1}\dabs{R}$ are isomorphic to each other.
To show that every morphism in $\dPi_{1}\dabs{R}$ is invertible it is enough to show that the edge $\dabs{\beta}\colon 1\to 0$ is invertible in $\dPi_{1}\dabs{R}$, since any other elementary local exit path starting from the lower edge of $\dabs{R}$ is easily seen to be chaotically homotopic to $\dabs{\beta}$.
To conclude since $\dabs{\beta}$ is a right inverse to $\dabs{\alpha}$ in $\dPi_{1}\dabs{R}$ and since the class of $\dabs{\alpha}$ is an isomorphism, the class of $\dabs{\beta}$ is an isomorphism as well.
\end{proof}

\begin{remark}
\label{rmk:fundamental-category-R-unequivalent}
Corollary \ref{cor:fundamental-category-retract} implies that the canonical morphism:
\[\theta_{R}\colon \tau_{1}R\to \dPi_{1}\dabs{R}\]
is not an equivalence of categories. 
Indeed, the fundamental category $\tau_{1}R$ of $R$ is the walking retraction category $\Ret$ (see Example \ref{ex:walking-retraction-cat}).
\end{remark}

\begin{proposition}
\label{prop:sset-split-iso}
Let $A$ be a simplicial set and consider the functor: 
\[\theta_{A}\colon\tau_{1}A\to \dPi_{1}\dabs{A}\]
given by applying $\tau_{1}$ to the map:
\[\eta_{A}\colon A\to \dSing\dabs{A}.\]
Assume that we have $1$-simplices $r\colon a\to a'$ and $i\colon a'\to a$ in $A$ such that $ri$ is homotopic to $1_{a}$ via a 2-simplex $\Delta^{2}\to A$ with boundary $(r,1_{a},i)$.
Then $\theta_{A}r$ defines an isomorphism in $\dPi_{1}\dabs{A}$ with inverse $\theta_{A}i$.
\end{proposition}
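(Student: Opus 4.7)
The plan is to reduce the statement to the walking retraction simplicial set $R$ of Example \ref{ex:walking-retraction-simplicial-set}, where the analogous result is Corollary \ref{cor:fundamental-category-retract}.

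The first step is to package the hypotheses into a morphism of simplicial sets $\phi \colon R \to A$ sending $\alpha \mapsto r$ and $\beta \mapsto i$. Recall from Construction \ref{con:left-right-inverse} that $R$ is obtained from $\Delta^{1}$ as an iterated pushout: one first adjoins a 1-simplex $\beta \colon 1 \to 0$ to form $\Delta^{1}[\beta]$, and then glues a 2-simplex $c$ whose boundary is $(\alpha,\beta,1_{0})$. The 1-simplices $r$, $i$ and the given 2-simplex $\sigma \colon \Delta^{2} \to A$ supply exactly the data needed to invoke the universal properties of these two pushouts in succession, yielding the desired $\phi$.

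Applying $\tau_{1} \circ \dSing \circ \dabs{\blank}$, I obtain a square
\begin{equation*}
\begin{tikzcd}
\tau_{1}R \ar[r, "\tau_{1}\phi"] \ar[d, "\theta_{R}"'] & \tau_{1}A \ar[d, "\theta_{A}"] \\
\dPi_{1}\dabs{R} \ar[r, "\dPi_{1}\dabs{\phi}"'] & \dPi_{1}\dabs{A}
\end{tikzcd}
\end{equation*}
which is commutative by naturality of $\theta$. Corollary \ref{cor:fundamental-category-retract} tells us that $\dPi_{1}\dabs{R}$ is equivalent to the terminal category, so every morphism there is invertible; in particular $\theta_{R}(\alpha)$ is an isomorphism. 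The glued 2-simplex in $R$ already exhibits $\theta_{R}(\beta)$ as a one-sided inverse of $\theta_{R}(\alpha)$ in $\dPi_{1}\dabs{R}$, hence by uniqueness of inverses $\theta_{R}(\beta) = \theta_{R}(\alpha)^{-1}$. Transporting this identity along $\dPi_{1}\dabs{\phi}$ via the commutative square, I conclude that $\theta_{A}(r) = \dPi_{1}\dabs{\phi}(\theta_{R}(\alpha))$ is an isomorphism in $\dPi_{1}\dabs{A}$ with inverse $\dPi_{1}\dabs{\phi}(\theta_{R}(\beta)) = \theta_{A}(i)$.

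The substantive geometric input, namely the existence of a chaotic path in $\dabs{R}$ joining its two vertices which furnishes the inverse invisible in $\tau_{1}R$, has already been absorbed into Corollary \ref{cor:fundamental-category-retract}, so nothing in this proof is truly difficult. The only points requiring care are to verify that the boundary conventions for the 2-simplices in $R$ and in $A$ match up so that $\phi$ is a well-defined simplicial map, and that the identifications $\dPi_{1}\dabs{\phi}(\theta_{R}(\alpha)) = \theta_{A}(r)$ and $\dPi_{1}\dabs{\phi}(\theta_{R}(\beta)) = \theta_{A}(i)$ hold; both follow directly from the defining properties of $\phi$.
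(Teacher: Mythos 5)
Your proof is correct, and it reduces to the walking retraction $R$ in the same spirit as the paper, but the final deduction takes a slightly different route. The paper's own proof also constructs a simplicial map $R\to A$ from the given data, but then argues directly: the realisation of the appropriate $1$-simplex in $\dabs{R}$ is a chaotic path by Example \ref{ex:walking-retraction-simplicial-set}, hence its pushforward along $\dabs{R}\to\dabs{A}$ is a chaotic path in $\dabs{A}$, and Lemma \ref{lem:chaotic-path-iso-dpi} converts this into an isomorphism in $\dPi_{1}\dabs{A}$. You instead cite Corollary \ref{cor:fundamental-category-retract} (that $\dPi_{1}\dabs{R}$ is equivalent to the terminal category) and transport the resulting isomorphism along $\dPi_{1}\dabs{\phi}$ using the naturality square for $\theta$. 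This is one layer more indirect, since Corollary \ref{cor:fundamental-category-retract} itself is proved from exactly the chaotic-path ingredients the paper invokes; but it isolates the geometry into the earlier corollary and makes the role of naturality explicit, so it is arguably cleaner to read. One small point in your favour: your orientation of $\phi$ (sending $\alpha\mapsto r$ and $\beta\mapsto i$) is the one compatible with the convention of Construction \ref{con:left-right-inverse}, where the glued $2$-simplex of $R$ witnesses $\beta\alpha = 1_{0}$, whereas the paper's stated assignment $\alpha\mapsto i$, $\beta\mapsto r$ matches only the reversed convention used in Example \ref{ex:walking-retraction-simplicial-set}; the paper is internally inconsistent on this point, and either choice works once the convention is fixed.
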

\begin{proof}
By hypothesis, we can find a morphism $f\colon R\to A$ mapping the unique $1$-simplex from $0$ to $1$ to $i$ and its right inverse to $r$.
Then, by Example \ref{ex:walking-retraction-simplicial-set} the realisation of $i$ defines a chaotic path $\gamma \colon \II\to \dabs{A}$ from $a$ to $a'$.
In particular, $\gamma$ defines an isomorphism in $\dPi_{1}\dabs{A}$ with inverse $\gamma^{-1}$ which implies the claim.
\end{proof}

\begin{corollary}
\label{cor:infty-category-split-iso}
Let $Q$ be an $\infty$-category and let us consider the functor:
\[\theta_{Q}\colon\tau_{1}Q\to \dPi_{1}\dabs{Q}.\]
Then, $\theta_{Q}$ maps split epimorphisms and split monomorphisms to isomorphisms in $\dPi_{1}\dabs{Q}$.
\end{corollary}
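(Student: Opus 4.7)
The plan is to reduce the claim to Proposition \ref{prop:sset-split-iso} by lifting the splitting data from $\tau_{1} Q$ to the level of simplices of $Q$. I first observe that both halves of the corollary are subsumed by a single statement: whenever $[i] \colon a \to a'$ and $[r] \colon a' \to a$ are morphisms of $\tau_{1} Q$ satisfying $[r] \circ [i] = [1_{a}]$, then both $\theta_{Q}([i])$ and $\theta_{Q}([r])$ are isomorphisms in $\dPi_{1} \dabs{Q}$. Indeed, a split monomorphism is by definition such an $[i]$ with left inverse $[r]$, and a split epimorphism is such an $[r]$ with right inverse $[i]$, so these two cases are exactly this unified statement.

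Since $Q$ is an $\infty$-category, Theorem \ref{thm:boardmann-vogt} identifies $\tau_{1} Q$ with the homotopy category $\ho Q$, so I can choose honest $1$-simplices $i \colon a \to a'$ and $r \colon a' \to a$ of $Q$ representing $[i]$ and $[r]$. Unravelling the definition of composition in $\ho Q$, the equation $[r] \circ [i] = [1_{a}]$ asserts the existence of a $2$-simplex $\sigma \colon \Delta^{2} \to Q$ with $d_{2} \sigma = i$ and $d_{0} \sigma = r$, and such that $d_{1} \sigma \sim 1_{a}$ in the sense of Definition \ref{def:homotopy-category-quasicategory}.

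The second step is to rectify $\sigma$ so that its middle face becomes $1_{a}$ on the nose. The homotopy $d_{1}\sigma \sim 1_{a}$ is witnessed by a $2$-simplex $\tau$ of $Q$ with $d_{2}\tau = d_{1}\sigma$, $d_{0}\tau = 1_{a}$, and $d_{1}\tau = 1_{a}$. I assemble these pieces into an inner horn $\Lambda^{3}_{2} \to Q$ with vertices $a, a', a, a$, whose $d_{3}$-face is $\sigma$, whose $d_{1}$-face is $\tau$, and whose $d_{0}$-face is the degenerate $2$-simplex $s_{1} r$ (a direct check shows that these three $2$-simplices agree on their common edges). Filling this inner horn in $Q$ produces a $3$-simplex whose $d_{2}$-face is a $2$-simplex $\sigma'$ with $d_{2}\sigma' = i$, $d_{0}\sigma' = r$, and $d_{1}\sigma' = 1_{a}$ on the nose.

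The existence of $\sigma'$ is precisely the hypothesis of Proposition \ref{prop:sset-split-iso} applied to $A = Q$, so the proposition immediately yields that $\theta_{Q}([i])$ and $\theta_{Q}([r])$ are mutually inverse in $\dPi_{1} \dabs{Q}$, which concludes the argument. The main obstacle is the combinatorial bookkeeping in the third step: selecting the correct inner horn and confirming that the prescribed faces $\sigma$, $\tau$, and $s_{1} r$ are compatible on their shared edges, so that the inner-horn-filling property of $Q$ really does deliver a $3$-simplex whose $d_{2}$-face has the boundary required to invoke Proposition \ref{prop:sset-split-iso}.
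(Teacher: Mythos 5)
Your proof is correct and follows the same route as the paper's: identify $\tau_{1}Q$ with $\ho Q$ via Boardman--Vogt, choose honest $1$-simplices $i$ and $r$ representing the splitting, and reduce to Proposition~\ref{prop:sset-split-iso}. The paper states this reduction in a single line, whereas you carefully supply the rectification it leaves implicit: filling the inner horn $\Lambda^{3}_{2}\to Q$ built from $\sigma$, the homotopy $\tau$, and the degeneracy $s_{1}r$ upgrades the $2$-simplex whose inner face is merely \emph{homotopic} to $1_{a}$ into one whose inner face is $1_{a}$ on the nose, which is the hypothesis Proposition~\ref{prop:sset-split-iso} actually requires (and the step that genuinely uses that $Q$ is an $\infty$-category). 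Your face bookkeeping is correct, so the argument goes through.
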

\begin{proof}
This follows from the fact that $\tau_{1}Q\simeq \ho Q$ for an $\infty$-category $Q$, together with Proposition \ref{prop:sset-split-iso}.
\end{proof}

\begin{conjecture}
\label{conj:fundamental-cat-localisation-split}
Let $Q$ be an $\infty$-category. Then the functor:
\[\theta_{Q}\colon \tau_{1}Q\to \dPi_{1}\dabs{Q}\]
is the localisation of $\tau_{1}Q$ at the class of split monomorphisms.
Equivalently, $\theta_{Q}$ is the localisation of $\tau_{1}Q$ at the class of split epimorphisms.
\end{conjecture}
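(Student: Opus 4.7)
The natural plan is to introduce the class $W$ of split monomorphisms in $\tau_1 Q$, form the localisation $L\colon \tau_1 Q \to \tau_1 Q[W^{-1}]$, and use Corollary \ref{cor:infty-category-split-iso} together with the universal property of the localisation to obtain a unique functor $\bar{\theta}_Q\colon \tau_1 Q[W^{-1}] \to \dPi_1 \dabs{Q}$ with $\bar{\theta}_Q \circ L = \theta_Q$. The goal then becomes to show that $\bar{\theta}_Q$ is an equivalence of categories. Essential surjectivity is immediate, since $L$ is bijective on objects and $\theta_Q$ is essentially surjective by Corollary \ref{cor:tau-dpi-essentially-surjective}, so $\bar{\theta}_Q$ is essentially surjective. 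The heart of the proof is the fully faithfulness of $\bar{\theta}_Q$.

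For fullness, I would take a morphism in $\dPi_1\dabs{Q}(q,q')$, represent it by a local exit path $\gamma\colon \dabs{\Sp^n}\to \dabs{Q}$ from $q$ to $q'$, and show it is chaotic-homotopic to the image of a zigzag in $\tau_1 Q$ whose wrong-way arrows lie in $W$. The strategy is a cellular straightening: using the CW structure of $\abs{Q}$ (Remark \ref{rmk:realisation-cw}) and Lemma \ref{lem:last-vertex-chaotic-path}, replace each elementary segment $\dabs{\Delta^1}\to \dabs{Q}$ by a path between $0$-simplices, then subdivide and homotope through cells of successively lower dimension until each segment either traverses a $1$-simplex of $Q$ in the forward direction, or, as in the walking-retraction picture of Example \ref{ex:walking-retraction-simplicial-set}, traverses the ``wrong-way'' edge of a $2$-simplex of $Q$ certifying that some $\alpha\in Q_1$ is a split monomorphism. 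The Joyal-fibrancy of $Q$ enters to guarantee that the intermediate higher simplices one needs to fill really exist.

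For faithfulness, I would have to show that if two such zigzags $\gamma_0$ and $\gamma_1$ in $\tau_1 Q$ become chaotic-homotopic under $\theta_Q$, then they already coincide after inverting $W$. Unpacking Definition~\ref{def:E-homotopy} and \ref{subsec:local-stratum-preserving-homotopy}, a chaotic homotopy between them corresponds to a map $H\colon \dabs{\Sp^n}\times \II \to \dabs{Q}$. The plan is to cellularly approximate $H$ to a map supported on the $2$-skeleton and analyse it $2$-cell by $2$-cell: each rectangle of chaotic homotopy decomposes, after subdivision, into finitely many elementary coherences of one of three kinds -- composition of consecutive forward edges, insertion/deletion of a degenerate edge, or a retraction identity $\beta\alpha = \mathrm{id}$ for a split mono $\alpha\in W$ witnessed by a $2$-simplex of $Q$. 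Each of these is a defining relation in $\tau_1 Q[W^{-1}]$, which gives the required equality.

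The principal obstacle is precisely this last step, and more specifically the cellular/simplicial approximation for chaotic homotopies: standard CW approximation preserves continuity but not the local order, and one must check that the approximation can be carried out through morphisms of locally stratified spaces. A viable technical route is to use Conjecture \ref{conj:fundamental-cat-localisation-split}'s own hypothesis that $Q$ is an $\infty$-category to replace all higher data by horn fillers in $Q$, effectively turning the combinatorial analysis of $H$ into a bounded number of inner-horn and outer-horn fillings, the latter being exactly where $W$ gets inverted. The realisation of the walking retraction computed in Corollary \ref{cor:fundamental-category-retract} is the smallest non-trivial test case where the construction of $\bar{\theta}_Q^{-1}$ must send the unique non-identity morphism to $\alpha^{-1}$, and it should serve both as a sanity check and as a local model for the general straightening procedure.
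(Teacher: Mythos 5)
The statement you are trying to prove is labelled Conjecture~\ref{conj:fundamental-cat-localisation-split} in the paper and is not proven there: the only supporting material is Corollary~\ref{cor:infty-category-split-iso} (the ``easy'' half, that $\theta_Q$ inverts split monomorphisms) together with the two explicit computations for $S^1$ and $R$. There is therefore no proof of the paper's to compare against, and your proposal has to stand on its own.

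As a proof it does not yet stand, and you acknowledge as much. The decisive gaps are really one gap in two dimensions. For fullness you need a cellular/simplicial approximation theorem that works inside $\LocStrat$, a procedure that homotopes an arbitrary local exit path $\dabs{\Sp^n}\to\dabs{Q}$ to one carried by the $1$-skeleton \emph{through chaotic homotopies}, without losing control of the local order at any stage. The paper provides no such theorem; Lemma~\ref{lem:last-vertex-chaotic-path} contracts only a single interior point to its geometric last vertex, and iterating it does not visibly produce a skeletal approximation of the whole path while keeping the endpoints fixed and the intermediate stages stream morphisms. The faithfulness step hits the same obstruction one dimension higher, for chaotic homotopies $\dabs{\Sp^n}\times\II\to\dabs{Q}$. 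There is also a subtler gap you do not flag: even granting a hypothetical straightening, you would need every ``wrong-way'' segment that appears to traverse a $1$-simplex of $Q$ that is genuinely a split monomorphism in $\tau_1 Q$, rather than merely one whose realisation happens to be a chaotic path in $\dabs{Q}$. That is the converse of Proposition~\ref{prop:sset-split-iso} and is established nowhere in the paper; a priori a chaotic path in $\dabs{Q}$ could arise from a higher-dimensional configuration not certified by a single retraction $2$-simplex, in which case the class of morphisms inverted by $\theta_Q$ would be strictly larger than $W$ and the conjecture would fail. Until these points are resolved, the proposal remains an attack plan, consistent with the paper's own decision to leave the statement as a conjecture.
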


We highlight the following interesting consequence of Conjecture \ref{conj:fundamental-cat-localisation-split}.

\begin{conjecture}
Let $M$ be an abelian monoid, considered as a category with one object. Then, the natural map
\[\Nerv M\to \dSing\dabs{\Nerv M}\]
induces a natural isomorphism:
\[M\to \dPi\dabs{\Nerv M}.\]
\end{conjecture}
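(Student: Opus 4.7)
The plan is to deduce this from Conjecture \ref{conj:fundamental-cat-localisation-split} by identifying split monomorphisms in a commutative monoid with its units. First, I would observe that $Q := \Nerv M$ is an $\infty$-category, being the nerve of a small $1$-category, and that $\tau_{1}Q \cong M$ by Corollary \ref{cor:tau-nerve-iso}. Hence the canonical functor of Corollary \ref{cor:tau-dpi-essentially-surjective} takes the form $\theta_{\Nerv M}\colon M\to\dPi_{1}\dabs{\Nerv M}$, and by Conjecture \ref{conj:fundamental-cat-localisation-split} it exhibits the target as the localisation of $M$ at the class $S\subset M$ of split monomorphisms in $M$, viewed as a one-object category.

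Next, I would identify $S$ explicitly. An element $m\in M$ is a split monomorphism precisely when there exists $r\in M$ with $rm = e$, where $e\in M$ is the identity. Since $M$ is commutative, $rm = mr$, so $mr = e$ as well, and $m$ admits a two-sided inverse in $M$. Conversely, every invertible element of $M$ is tautologically a split monomorphism. Hence $S$ coincides with the group of units $M^{\times}\subset M$.

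Finally, I would invoke the universal property of the localisation: since every element of $S = M^{\times}$ is already invertible in $M$, the identity functor $M\to M$ already sends $S$ to isomorphisms and is initial with this property, so the canonical map $M\to M[S^{-1}]$ is an isomorphism. Combining this with the first paragraph, $\theta_{\Nerv M}$ is an isomorphism, natural in $M$ by the functoriality of $\Nerv$, $\dabs{\blank}$ and $\dSing$. The only non-routine input is Conjecture \ref{conj:fundamental-cat-localisation-split} itself; given that, the remaining argument amounts to the two elementary observations above, so no genuine obstacle is anticipated beyond the conjectural step being invoked.
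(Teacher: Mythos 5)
Your derivation is correct and is precisely the argument the paper leaves implicit: the statement appears only as an unproved ``consequence'' of Conjecture \ref{conj:fundamental-cat-localisation-split}, and your observation that in an abelian monoid the split monomorphisms (equivalently split epimorphisms) coincide with the group of units, so that localising at them changes nothing, supplies exactly the missing step. The identifications $\tau_{1}\Nerv M\cong M$ via Corollary \ref{cor:tau-nerve-iso} and the naturality via the unit of the adjunction are handled correctly as well.
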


\begin{example}
\label{ex:walking-retraction-category}
Let $\Ret$ be the walking retraction category, defined in Example \ref{ex:walking-retraction-cat}.
The nerve $\Nerv \Ret$ of the walking retraction category is an infinite dimensional simplicial set whose $2$-skeleton is the  simplicial set with two $0$-simplices $0$ and $1$, three non degenerate $1$-simplices $i\colon 0\to 1$, $r\colon 1\to 0$ and $ir\colon 1\to 1$ and two non degenerate $2$-simplices $\alpha$ and $\beta$ with boundaries $(r,1_{a}, i)$ and $(i,ir,r)$ respectively.
The locally stratified realisation of the $2$-skeleton $\sk_{2}\Nerv \Ret$ can depicted as follows:

\begin{figure}[h]
\centering
\begin{tikzpicture}

\fill [gray!10] (1,0) ellipse (2.25 and 1.5);
\draw [thick, fill=gray!20] (0,0) circle [radius=1];
\draw [thick, fill=white] (2,0) circle [radius=1];

\draw [thick, fill] (-1,0) circle [radius=0.05];
\node at (-0.7,0) {$a$};

\draw [thick, fill] (1,0) circle [radius=0.05];
\node at (0.7,0) {$b$};

\draw [thick, ->] (0.1,1) -- (-0.1,1);
\node at (0,0.6) {$f$};

\draw [thick, ->] (-0.1,-1) -- (0.1,-1);
\node at (0,-0.6) {$g$};

\draw [thick, ->] (3,0.1) -- (3,-0.1);
\node at (2.7,0) {$gf$};

\draw [gray,  dashed, fill=red!30] (-2,0) circle [radius=0.5];
\draw[gray, dashed] (-1.5,0) -- (-1,0);

\draw[red, thick] (-2,0.5) -- (-2,-0.5);
\draw [red, thick, fill] (-2,0) circle [radius=0.05];

\draw [gray, dashed] (2,2) circle [radius=0.5];
\draw[gray, dashed] (1,0) -- (2,0) -- (2,1.5);

\fill[green!30] (2,2) -- ([shift=(45:0.5)] 2,2) arc [radius=0.5, start angle = 45, end angle=135] -- (2,2);
\draw [green, thick] (2,2) -- ([shift=(45:0.5)] 2,2);

\fill[yellow!30] (2,2) -- ([shift=(135:0.5)] 2,2) arc [radius=0.5, start angle = 135, end angle=225] -- (2,2);
\draw [yellow, thick] (2,2) -- ([shift=(135:0.5)] 2,2);

\fill[red!30] (2,2) -- ([shift=(225:0.5)] 2,2) arc [radius=0.5, start angle = 225, end angle=315] -- (2,2);\draw [red, thick] (2,2) -- ([shift=(315:0.5)] 2,2);
\draw [red, thick] (2,2) -- ([shift=(225:0.5)] 2,2);
\draw [red, thick, fill] (2,2) circle [radius=0.05];

\draw [gray,  dashed] (0,2) circle [radius=0.5];
\draw[gray, dashed] (0,1) -- (0,1.5);

\fill[red!30] (-0.5,2) arc [radius=0.5, start angle = 180, end angle=360];
\draw[red, thick] (-0.5,2) -- (0.5,2);

\fill[green!30] (-0.5,2) arc [radius=0.5, start angle = 180, end angle=0];

\draw [gray,  dashed] (0,-2) circle [radius=0.5];
\draw[gray, dashed] (0,-1) -- (0,-1.5);

\fill[red!30] (-0.5,-2) arc [radius=0.5, start angle = 180, end angle=360];
\draw[red, thick] (-0.5,-2) -- (0.5,-2);

\fill[green!30] (-0.5,-2) arc [radius=0.5, start angle = 180, end angle=0];

\draw [gray,  dashed] (2,-2) circle [radius=0.5];
\draw[gray, dashed] (2,-1) -- (2,-1.5);

\fill[red!30] (1.5,-2) arc [radius=0.5, start angle = 180, end angle=360];
\draw[red, thick] (1.5,-2) -- (2.5,-2);

\end{tikzpicture}
\caption{The realisation of the $2$-skeleton of ${\Nerv \Ret}$}\label{figure3}
\end{figure}
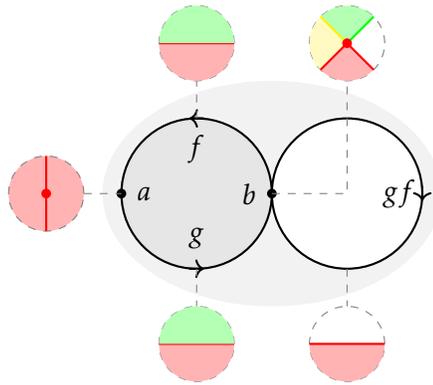

Moreover, the unit of the adjunction $(\tau_{1},\Nerv)$ at $R$ defines a canonical map $R\to \Nerv\Ret$ which factors through the $2$-skeleton of $\Nerv\Ret$, since $R$ is a $2$-dimensional simplicial set.
Therefore, the edge $i$ from $0$ to $1$ defines a chaotic path in $\dabs{\Nerv\Ret}$ and in particular in $\dabs{\sk_{2}\Nerv \Ret}$ by Example \ref{ex:walking-retraction-simplicial-set}.
\end{example}

\begin{corollary}
\label{cor:fund-cat-retract-unequivalent}
The fundamental category $\dPi_{1}\dabs{\Nerv \Ret}$ has a unique isomorphism class of objects. 
In particular, the canonical morphism:
\[\theta_{\Ret}\colon \Ret\cong \tau_{1}\Nerv\Ret\to \dPi_{1}\dabs{\Nerv\Ret}\]
is not an equivalence of categories.
\end{corollary}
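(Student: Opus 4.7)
The strategy is to extract from Example~\ref{ex:walking-retraction-category} enough geometric data to transport the argument used for $\dabs{R}$ (Corollary~\ref{cor:fundamental-category-retract}) to $\dabs{\Nerv\Ret}$, and then exhibit a concrete obstruction to essential surjectivity on isomorphism classes.

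First, I would establish that the two vertices $0$ and $1$ become isomorphic in $\dPi_{1}\dabs{\Nerv\Ret}$. The unit $R\to \Nerv\Ret$ of the adjunction $(\tau_{1},\Nerv)$ factors through the inclusion $\sk_{2}\Nerv\Ret\subset \Nerv\Ret$ because $R$ is $2$-dimensional, and by the discussion in Example~\ref{ex:walking-retraction-category} the underlying path of the realisation of the edge $i\colon 0\to 1$ is a chaotic path $\gamma\colon\II\to\dabs{\sk_{2}\Nerv\Ret}$ from $0$ to $1$ (the local stratification around $0$ is trivialised by the $2$-simplex $\alpha$, exactly as in the case of $R$). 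Composing with the canonical map $\dabs{\sk_{2}\Nerv\Ret}\to\dabs{\Nerv\Ret}$ yields a chaotic path from $0$ to $1$ in $\dabs{\Nerv\Ret}$, and Lemma~\ref{lem:chaotic-path-iso-dpi} turns it into an isomorphism $0\cong 1$ in $\dPi_{1}\dabs{\Nerv\Ret}$.

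Next, I would invoke essential surjectivity. Applying Corollary~\ref{cor:tau-dpi-essentially-surjective} to $A=\Nerv\Ret$ shows that every object of $\dPi_{1}\dabs{\Nerv\Ret}$ is isomorphic to $\theta_{\Nerv\Ret}(v)$ for some $0$-simplex $v$ of $\Nerv\Ret$; but the $0$-simplices of $\Nerv\Ret$ are just the two objects $0$ and $1$ of $\Ret$. Together with the previous step this proves that $\dPi_{1}\dabs{\Nerv\Ret}$ has a single isomorphism class of objects.

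Finally, for the second assertion, I need to observe that $\tau_{1}\Nerv\Ret\cong\Ret$ (Corollary~\ref{cor:tau-nerve-iso}) has \emph{two} isomorphism classes of objects. The only morphism from $0$ to $1$ in $\Ret$ is $i$ and the only morphism $1\to 0$ is $r$; since $ri=1_{0}$ but $ir\ne 1_{1}$ (this is the defining feature of the walking retraction category, cf.~Example~\ref{ex:walking-retraction-cat}), the pair $(i,r)$ is not an isomorphism, so $0\not\cong 1$ in $\Ret$. A functor that collapses two non-isomorphic objects to a single isomorphism class cannot be an equivalence, which completes the argument. The only substantive step is the first one; the rest is bookkeeping. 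I expect that the main subtlety is making sure the chaotic path argument in $\dabs{R}$ really transports along $\dabs{R}\to\dabs{\sk_{2}\Nerv\Ret}$, which amounts to checking that the preimage of any neighbourhood of $0$ in $\dabs{\Nerv\Ret}$ under the $2$-simplex $\dabs{\alpha}$ is still trivially locally stratified on the image of $i$; this is already implicit in the figure of Example~\ref{ex:walking-retraction-category}.
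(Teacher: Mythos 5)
Your proof is correct and follows the same route as the paper. The paper's own argument is a one-line appeal to the fact (established in Example~\ref{ex:walking-retraction-category}) that the edge $i$ realises to a chaotic path identifying the images of $0$ and $1$, combined with essential surjectivity from Corollary~\ref{cor:tau-dpi-essentially-surjective}; you have simply unpacked the same chain of references, including the factorisation of the unit $R\to\Nerv\Ret$ through the $2$-skeleton and the observation that $\Ret$ itself has two isomorphism classes.
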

\begin{proof}
The claim follows from the fact that the images of the vertices $0$ and $1$ in $\dabs{\Nerv\Ret}$ are isomorphic via a chaotic path, together with Corollary \ref{cor:tau-dpi-essentially-surjective}.
\end{proof}

\subsection{}
Recall that (see Definition \ref{def:left-cover}), if $X$ is a simplicial set, a left cover over $X$ is a morphism $p\colon E\to X$ which has the unique right lifting property with respect to the set:
\[\tensor[^{l}]{\Delta}{_{0}}=\{0\colon\Delta^{0}\to\Delta^{n}: [n]\in \DDelta\}\]
of initial vertices of all standard simplices.

\begin{definition}
\label{def:left-cover-locstrat}
Let $X$ be an object in $\LocStrat$. 
A morphism $p\colon E\to X$ is said to be a \emph{left cover} over $X$ if $p$ has the unique right lifting property with respect to the set:
\[\dabs{\tensor[^{l}]{\Delta}{_{0}}}=\left\{0\colon\term\to\dabs{\Delta^{n}}: [n]\in \DDelta\right\}.\]
In other words, $p$ is a left cover in $\LocStrat$ if and only if $\dSing p\colon \dSing E\to \dSing X$ is a left cover in $\sSet$.
We denote by  $\LCover{X}$ the full subcategory of the slice $\overcat{\LocStrat}{X}$ spanned by the left covers over $X$.
\end{definition}

\begin{remark}
\label{rmk:left-cover-locstrat}
By Remark \ref{rmk:left-cover}, a morphism $p\colon E\to X$ is a left cover if and only if the initial map $[0]\to[n]$ induces a natural isomorphism:
\[\dSing(E)_{n}\cong \dSing(E)_{0}\times_{\dSing(X)_{0}}\dSing(X)_{n}.\]
In particular a left cover $p\colon E\to \term$ over the terminal locally stratified space is a discrete locally stratified space.
Hence there is a natural isomorphism $\LCover{\term}\cong\Set$.
\end{remark}

\subsection{}
Since right orthogonal maps are stable under pullback, a morphism $f\colon X'\to X$ of locally stratified spaces induces a \emph{base change} functor:
\[f^{*}\colon \LCover{X}\to \LCover{X'}\]
in particular, by Remark \ref{rmk:left-cover-locstrat} a point $x\colon\term\to X$ of $X$ induces a \emph{fibre functor}:
\begin{align*}
\fib_{x}\colon \LCover{X} & \to \Set\\
p\colon E\to X &\mapsto \fib_{x}E 
\end{align*}

\subsection{}
Since $\LocStrat$ is a locally presentable category, by Corollary \ref{cor:orthogonal-factorisation-theorem}, every morphism $f\colon Y\to X$ can be factored as a composition:
\begin{equation}
\label{eq:factorisation-left-cover-locstrat}
\begin{tikzcd}[column sep=small, row sep=small]
Y \arrow[rr, "f"] \arrow[rdd, "j"']
   & & X \\
   & & \\
& E \ar[ruu, "p"'] &
\end{tikzcd}
\end{equation}
where $p\colon E\to X$ is a left cover and $j\colon Y\to X$ has the unique left lifting property with respect to any left cover.
In particular, this defines a left adjoint to the inclusion functor $\LCover{X}\to \overcat{\LocStrat}{X}$
\begin{equation}
\label{eq:adjoint-left-cover-locstrat}
\Adjoint{\LCover{X}}{\overcat{\LocStrat}{X}}{L}{}
\end{equation}
which displays $\LCover{X}$ as a reflective subcategory of $\overcat{\LocStrat}{X}$.

\subsection{}
Let $x$ be a point of a locally stratified space $X$, applying the factorisation \eqref{eq:factorisation-left-cover-locstrat} to the morphism $x\colon \term\to X$ yields a diagram:
\begin{diagram}
\term
	\ar[r, "\tilde{x}"']
	\ar[rr, bend left, "x"]&
\tilde{X}_{x}
	\ar[r,"\tilde{p}"']&
X
\end{diagram}
The morphism $\tilde{p}\colon \tilde{X}_{x}\to X$ is called the \emph{universal left cover} of $X$ at $x$.
Moreover, we have the following result:

\begin{lemma}
\label{lem:universal-left-cover-represents-fibers-locstrat}
Let $X$ be a locally stratified space and let $\tilde{X}_{x}$ be the universal left cover of $X$ at $x$.
Then, for every left cover $E$ of $X$, evaluation at $\tilde{x}$ induces an isomorphism:
\begin{equation}
\label{eq:universal-left-cover-represents-fibers}
\LCover{X}(\tilde{X}_{x}, E)=\fib_{x}E,
\end{equation}
natural in $E$.
\end{lemma}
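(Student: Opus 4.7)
The plan is to mimic exactly the argument used in the simplicial setting (Lemma \ref{lem:universal-left-cover-represents-fibers}), exploiting the fact that $\tilde{X}_{x}$ was constructed precisely via the orthogonal factorisation system on $\LocStrat$ associated to the set $\dabs{\tensor[^{l}]{\Delta}{_{0}}}$. Concretely, the factorisation
\[
\term \xrightarrow{\tilde{x}} \tilde{X}_{x} \xrightarrow{\tilde{p}} X
\]
produces a left cover $\tilde{p}$ together with a map $\tilde{x}$ lying in the left class of the orthogonal factorisation system, i.e.\ having the unique left lifting property against every left cover of any locally stratified space.

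First I would define the candidate map
\[
\ev_{\tilde{x}}\colon \LCover{X}(\tilde{X}_{x},E)\to \fib_{x}E,\qquad f\mapsto f\circ\tilde{x},
\]
which makes sense because, for $f\colon \tilde{X}_{x}\to E$ a morphism over $X$, the composite $f\tilde{x}\colon \term\to E$ lies over $p(f\tilde{x})=\tilde{p}\tilde{x}=x$, hence defines an element of $\fib_{x}E$. This assignment is clearly natural in $E$, since postcomposition with a morphism $E\to E'$ of left covers commutes with composition with $\tilde{x}$.

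Next I would construct the inverse. Given $e\in\fib_{x}E$, regarded as a map $e\colon \term\to E$ with $pe=x=\tilde{p}\tilde{x}$, I obtain the solid square
\begin{diagram}
\term \ar[r,"e"] \ar[d,"\tilde{x}"'] & E \ar[d,"p"]\\
\tilde{X}_{x} \ar[r,"\tilde{p}"'] \ar[ur,dashed,"f_{e}" description] & X
\end{diagram}
and, since $\tilde{x}\in{}\ullift{(\urlift{\dabs{\tensor[^{l}]{\Delta}{_{0}}}})}$ while $p$ is a left cover, there is a \emph{unique} dashed filler $f_{e}\colon \tilde{X}_{x}\to E$ over $X$ with $f_{e}\tilde{x}=e$. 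The assignment $e\mapsto f_{e}$ gives a map $\fib_{x}E\to \LCover{X}(\tilde{X}_{x},E)$.

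Finally I would verify that these maps are mutually inverse: the composite $f\mapsto f\tilde{x}\mapsto f_{f\tilde{x}}$ returns $f$ because $f$ itself is a (necessarily unique) filler of the corresponding square, while $e\mapsto f_{e}\mapsto f_{e}\tilde{x}=e$ by construction. There is no real obstacle here; the only point worth spelling out is that the uniqueness of the filler — and not merely its existence — is what makes $\ev_{\tilde{x}}$ a bijection rather than a surjection, and this is exactly the content of the orthogonal (as opposed to merely weak) lifting property supplied by Corollary \ref{cor:orthogonal-factorisation-theorem} applied in the locally presentable category $\LocStrat$ (Theorem \ref{thm:locstrat-locally-presentable}).
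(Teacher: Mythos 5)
Your argument is correct and is precisely what the paper's terse ``follows immediately from the definitions'' is gesturing at: the universal left cover is constructed via the orthogonal factorisation system of Corollary \ref{cor:orthogonal-factorisation-theorem}, so $\tilde{x}$ has the unique left lifting property against every left cover, and the bijection is the standard consequence. Same approach, just spelled out.
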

\begin{proof}
This follows immediately from the definitions.
\end{proof}

\subsection{}
Given a locally stratified space $X$, and a simplicial set $A$ we have adjunctions:
\begin{equation}
\label{eq:sset-locstrat-adjunction-slice}
\Adjoint{\overcat{\sSet}{A}}{\overcat{\LocStrat}{\dabs{A}}}{\dabs{\blank}}{\dSing_{A}} \quad
\Adjoint{\overcat{\sSet}{\dSing X}}{\overcat{\LocStrat}{X}}{\dabs{\blank}_{X}}{\dSing}
\end{equation}
where the functors $\dabs{\blank}$ and $\dSing$ are the induced functors on the slice categories
while the functor $\dSing_{A}$ maps a morphism $f\colon B\to \dabs{A}$ to the base change
\begin{equation}
\pull{\dSing_{A}Y}{\dSing Y}{\dSing\dabs{A}}{X}{}{\dSing f}{\eta_{A}}{}
\end{equation}
and $\dabs{\blank}_{X}$ takes a map $g\colon Y\to \dSing X$ to the composite
\begin{equation}
\begin{tikzcd}
\dabs{Y}
\ar[r, "\dabs{g}"]&
\dabs{\dSing X}
\ar[r, "\epsilon_{X}"]
& X
\end{tikzcd}
\end{equation}
Hence, the adjunction \ref{eq:adjoint-left-cover-locstrat} implies that we have adjunctions:
\begin{equation}
\label{eq:sset-locstrat-adjunction-covers}
\Adjoint{\LCover{A}}{\LCover{\dabs{A}}}{L\dabs{\blank}}{\dSing_{A}} \quad
\Adjoint{\LCover{\dSing X}}{\LCover{X}}{L\dabs{\blank}_{X}}{\dSing}
\end{equation}
In particular, when $X=\dabs{A}$, the second adjunction of \ref{eq:sset-locstrat-adjunction-covers} factors through the first as:
\begin{equation}
\begin{tikzcd}[ampersand replacement=\&]
{\LCover{\dSing\dabs{A}}} \arrow[r, "{}" {name=F}, shift left]
\& {\LCover{A}} \arrow[l, "{}" {name=U}, shift left]
\arrow[phantom, from=F, to=U, symbol=\dashv]
 \arrow[r, "{L\dabs{\blank}}" {name=F}, shift left]
\& {\LCover{\dabs{A}}} \arrow[l, "{\dSing_{A}}" {name=U}, shift left]
\arrow[phantom, from=F, to=U, symbol=\dashv]
\end{tikzcd}
\end{equation}

The following Examples show that, differently from the case of topological covers (see Proposition \ref{prop:sing-realisation-cover}), the functor $\dabs{\blank}\colon \LCover{A}\to \slice{\LocStrat}{\dabs{A}}$ alone does not preserve left covers.

\begin{example}
\label{ex:universal-cover-1-simplex}
The universal left cover of $\Delta^{1}$ at the vertex $1$ is the simplicial set $\Delta^{0}$ together with the face map $\partial^{1}_{0}\colon \Delta^{0}\to\Delta^{1}$ (see Example \ref{ex:left-cover-standard-simplex}).
Taking realisations, we see that the morphism $1\colon \term\to\dabs{\Delta^{1}}$ is not a left cover, since, for example, the path:
\begin{align*}
\alpha\colon[0,1]&\to [0,1]\\
t&\mapsto 1-\frac{1}{2}t
\end{align*}
has no lift.
On the other hand, taking the trivially locally stratified half-open interval $i\colon(0,1]\to \dabs{\Delta^{1}}$ yields a left cover of $\dabs{\Delta^{1}}$.
Moreover, $i$ is the universal left cover of $\dabs{\Delta^{1}}$ at $1$.
Indeed, as we will show in Corollary \ref{cor:chaotic-left-property-cover}, for every left cover $p\colon E\to\dabs{\Delta^{1}}$ and for every $t>0$ we can find a lift in the diagram:
\begin{equation}
\lift{\term}{E}{\dabs{\Delta^{1}}}{[t,1]}{e}{p}{}{1}{\alpha_{t}}
\end{equation}
Moreover, the maps $\alpha_{t}$ are compatible with the inclusions $[t,1]\subset[t',1]$ for $0<t\le t'$, by uniqueness of lifts.
Therefore, they induce a unique morphism $\alpha\colon (0,1]\to E$ such that $\alpha(1)=e$.
Applying Lemma \ref{lem:universal-left-cover-represents-fibers-locstrat} we are finished.
\end{example}

\begin{example}
\label{ex:universal-cover-n-simplex-locstrat}
More generally, let $x$ be a point of $\dabs{\Delta^{n}}$ and assume that $x$ belongs to the global $k$-stratum.
Then, analogously to Example \ref{ex:universal-cover-1-simplex}, the universal left cover of $\dabs{\Delta^{n}}$ at $x$ is the complement of the union of strata:
\[\bigcup_{j<k} \sabs{\Delta^{n}}_{j}\]
equipped with the subspace locally stratified structure.
\end{example}

\begin{example}
\label{ex:left-cover-1-simplex}
Let us consider the simplicial set $\Lambda^{2}_{2}$.
The natural projection map $p\colon\Lambda^{2}_{2}\to \Delta^{1}$ induced by the degeneracy $\sigma^{0}\colon \Delta^{2}\to \Delta^{1}$ is the left cover associated to the functor $F\colon [1]\to \Set$ that sends $0$ to the set with two elements, and $1$ to the singleton (see Example \ref{ex:fundamental-category-standard-simplex} and Theorem \ref{thm:left-covers-fiber-functors}).
However, the locally stratified realisation of $p$ is not a left cover in $\LocStrat$.
Indeed, since all the points in the complement of $0$ in $\dabs{\Delta^{1}}$ are chaotically equivalent, it is easy to find an elementary local exit path that admits multiple lifts in $\dabs{\Delta^{2}_{2}}$.
For example, one can choose the elementary local exit path whose underlying path is given by:
\begin{align*}
\alpha\colon [0,1]&\to [0,1]\\
t & \mapsto \begin{cases}
	t+\frac{1}{2} & t\le \frac{1}{2}\\
	\frac{3}{2}-t & t\ge \frac{1}{2}.
	\end{cases}
\end{align*}
On the other hand, let us consider the pushout:
\begin{equation}
\push{(0,1]}{\dabs{\Lambda^{2}_{2}}}{E}{\dabs{\Lambda^{2}_{2}}}{\dabs{\partial_{0}}}{}{}{\dabs{\partial_{1}}}
\end{equation}
where $\dabs{\partial_{\epsilon}}$ denotes the restriction of the realisation of the face map $\partial_{\epsilon}\colon \Delta^{1}\to \Lambda^{2}$ to the half-open interval $(0,1]$.
Then, $E$ is a locally stratified space whose underlying topological space is the segment with two distinct origins.
In particular, the induced map $E\to \dabs{\Delta^{1}}$ is a left cover and one can see that $E=L\dabs{\Lambda^{2}_{2}}\to \dabs{\Delta^{1}}$.
\end{example}

\begin{remark}
Let $\dabs{\Delta^{n}}$ be the locally stratified standard $n$-simplex and let $p\colon E\to \dabs{\Delta^{n}}$ be a left cover. 
Then, the underlying stratified map $\Gamma p\colon \Gamma E\to \sabs{\Delta^{n}}$ (see \ref{subsec:global-strat-space}) has the unique right lifting property with respect to every map $0\colon \term\to \sabs{\Delta^{m}}$.
Indeed, let us consider a lifting problem:
\begin{equation}
\lift{\term}{\Gamma E}{\sabs{\Delta^{n}}}{\sabs{\Delta^{m}}}{e}{\Gamma p}{\alpha}{0}{k}.
\end{equation}
Then, since the points of $\Gamma E$ coincide with the points of $E$, by Proposition \ref{prop:locstrat-standard-simplex-strat} this defines a lifting problem for $p$:
\begin{equation}
\lift{\term}{E}{\dabs{\Delta^{n}}}{\dabs{\Delta^{m}}}{e}{p}{\alpha}{0}{h}
\end{equation}
and it is enough to take $k =\Gamma h$.
\end{remark}

\begin{remark}
Let $p\colon E\to A$ be a left cover of simplicial sets and let $\dabs{p}\colon \dabs{E}\to \dabs{A}$ be its locally stratified realisation.
Examples \ref{ex:universal-cover-1-simplex} and \ref{ex:left-cover-1-simplex} show two different phenomena that occur when applying the functor $L$ to $\dabs{p}$.
The first example shows that given a point $e$ in the realisation of $E$, the functor $L$ \emph{``spreads out''} $e$ to the set of points chaotically equivalent to $e$.
This ensures that lifts in $L\dabs{E}$ exist against all first vertex inclusion.
On the other hand, the second example shows that, given two realisations of non-degenerate simplices in $E$ projecting down to the same simplex in $\dabs{A}$, the functor $L$ glues them together away from the bottom face.
This ensures that lifts are unique.
\end{remark}

\begin{example}
\label{ex:left-cover-circle-locstrat}
Let $\dabs{S^{1}}$ be the locally stratified circle. Recall that the universal cover of $S^{1}$ at $0$ is the simplicial set $e\colon \Sp^{\infty}\to S^{1}$ (see Example \ref{ex:infinity-spine}).
Taking realisations we see that $\dabs{\Sp^{\infty}}\to \dabs{S^{1}}$ does not define a left cover of $\dabs{S^{1}}$. 
Indeed, similarly to Example \ref{prop:universal-cover-functor}, any non-constant chaotic clockwise path starting at $0$ in $\dabs{S^{1}}$ has no lift starting at $0$ in $\dabs{\Sp^{\infty}}$.
Let $L$ be the following pushout:
\begin{equation}
\push{\term}{\dabs{\Sp^{\infty}}}{L}{(0,1]}{0}{}{}{1}
\end{equation}
and let us consider the morphism $q\colon (0,1]\to \dabs{S^{1}}$ given by the restriction of the quotient map $\dabs{\Delta^{1}}\to \dabs{S^{1}}$ to $(0,1]$.
Then, $e$ and $q$ induce a unique map $p\colon L\to \dabs{S^{1}}$ and we claim that $p$ is the universal left cover of $\dabs{S^{1}}$ at $0$.
Indeed, $L$ is a left cover of $\dabs{S^{1}}$ since, every counterclocwise elementary local exit path starting at $0$ in $\dabs{S^{1}}$ has a lift in $\dabs{\Sp^{\infty}}$ starting at every point in the fibre of $0$.
On the other hand, every chaotic path starting at $0$ has a lift either in $\dabs{\Sp^{\infty}}$ if we choose a point $k>0$ in the fibre of $0$, or in $(0,1]$ if we choose $0$ as a starting point, by construction.
Moreover, the morphism $0\colon \term\to L$ is a colimit of maps in $\dabs{\tensor[^{l}]{\Delta}{_{0}}}$, by Example \ref{ex:universal-cover-1-simplex} and Example \ref{ex:infinity-spine}.
\end{example}
\section{\'Etale morphisms and left covers}
\label{ch:5sec:5}
A morphism of locally stratified spaces $p\colon E\to X$ is \'etale precisely when it is a local isomorphism.
Due to the local nature of streams, \'etale morphisms are uniquely determined by \'etale maps of the underlying spaces.
In Corollary \ref{cor:characterisation-left-cover} we characterise left covers over the realisation of a simplicial set in terms of \'etale maps.
For a simplicial set $A$, under suitable assumptions, we give an explicit construction that associates to every functor $F\colon \tau_{1}A\to \Set$ a left cover $C(F)$ over the realisation of $\dabs{A}$.
This allows us to show that, under the same assumptions, the category of left covers over $A$ is equivalent to the category of left covers over its realisation (Corollary \ref{cor:simplicial-left-cover-locstrat}).
We conclude the section, showing that every left cover over the realisation of $R$, is trivial (Corollary \ref{cor:cover-R-trivial}).
In particular, as suggested by Remark \ref{rmk:fundamental-category-R-unequivalent}, the category of left covers over $R$ is not equivalent to the category of left covers over its realisation.

\begin{definition}
Let $p\colon E\to X$ be a morphism of locally stratified spaces.
We say that $p$ is an \'etale morphism if every point $e\in E$ has an open neighborhood $U$ such that, if we endow $U$ with the initial structure, the restriction:
\[p_{U}\colon U\to X\]
is an isomorphism onto its image.
\end{definition}

\begin{lemma}
\label{lem:etale-lifts}
Let $X$ be a  locally stratified space and let:
\[p\colon E\to UX\]
be an \'etale continuous map.
Then, there exists a unique  locally stratified structure on $E$ making $p\colon E\to X$ an \'etale morphism of locally stratified spaces.
\end{lemma}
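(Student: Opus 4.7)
My approach is to transport the local structure of $X$ to $E$ via the local homeomorphism property of $p$. Cover $E$ by open subsets $\{W_i\}_{i \in I}$ on each of which $p_i := p|_{W_i} : W_i \to p(W_i)$ restricts to a homeomorphism onto an open subset of $UX$. Each $p(W_i) \subset X$ is open and inherits a locally stratified structure by restricting the circulation of $X$ (openness of upward-closed subsets is a local condition, preserved by passing to an open subspace). The task is then to glue the transported local structures on the $W_i$ into a single circulation on $E$ and verify that this yields a locally stratified space for which $p$ is étale.

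\textbf{Construction via d-paths.} Rather than gluing by hand, I would use the adjunction $P \dashv D$ between $\dTop$ and $\pStream$ recalled in Section \ref{ch:5sec:1}. Declare a continuous map $\alpha : [0,1] \to E$ to be a d-path of $E$ iff the composite $p\alpha : \dI \to X$ is a morphism of streams. Constant paths, concatenations, and monotonic reparametrisations of d-paths remain d-paths because $X$ is a stream, so $(E, dE)$ is a d-space; I take the locally stratified structure on $E$ to be the associated Haucourt stream, with circulation $e_0 \preceq^E_V e_1$ iff there is a d-path from $e_0$ to $e_1$ whose image lies in $V$. By construction $p$ is a morphism of streams. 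On each étale chart $W_i$, a path in $W_i$ is a d-path of $E$ iff its image under $p_i$ is a d-path in $p(W_i)$; hence $p_i$ becomes an isomorphism in $\Stream$ onto $p(W_i)$ equipped with its restricted structure from $X$, which shows that $p$ is étale in the sense of the definition above.

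\textbf{Locally stratified axiom and uniqueness.} To verify the locally stratified axiom, let $V \subset E$ be open and $A \subset V$ upward closed for $\preceq^E_V$. For each $a \in A$, pick an étale chart $W \ni a$: then $A \cap W$ is upward closed in $V \cap W$ (immediate from the precirculation axioms), and via the isomorphism $p_W$ it corresponds to an upward-closed subset of the open $p(V \cap W) \subset X$, which is open because $X$ is locally stratified. Hence $A$ is open in $E$. For uniqueness, any locally stratified structure on $E$ making $p$ étale must agree with the transported structure on each chart $W_i$ (by definition of isomorphism onto its image in $\LocStrat$); since a Haucourt stream is determined by its d-paths, and d-paths are characterised locally on any open cover, the two structures coincide globally. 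The main obstacle I expect is precisely the passage from local openness on an étale cover to global openness of upward-closed sets — the non-trivial ingredient in verifying the locally stratified axiom — but this follows cleanly from the sheaf-like behaviour inherent in the Haucourt stream construction, which is why factoring the argument through d-paths is the right move.
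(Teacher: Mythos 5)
Your proof takes a genuinely different route from the paper's. The paper glues initial structures on an étale atlas via the final lift of the topological functor $\Stream\to\Topa$ and invokes Proposition \ref{prop:open-numerically-generated} to conclude that the result is a numerically generated locally stratified space; you instead pull back the d-space structure along $p$, declaring $\alpha$ a d-path of $E$ exactly when $p\alpha$ is a d-path of $X$, and take the associated Haucourt stream. Both constructions yield the same structure (this is forced by the uniqueness you argue at the end), and your steps hold up: closure of the pulled-back d-paths under the d-space axioms follows from $X$ being a stream, the étale-chart isomorphisms follow from the bijective correspondence between d-paths of $E$ lying in $W_i$ and d-paths of $X$ lying in $p(W_i)$, and the locally stratified axiom and uniqueness are verified by local-to-global patching of upward-closed sets and d-paths respectively. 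What your route buys is a concrete, hands-on description of the circulation on $E$; what the paper's route buys is that $E$ is automatically numerically generated — a conclusion you do not record, but which the lemma's downstream uses (Proposition \ref{prop:etale-chaotic-chaotic}, Corollary \ref{cor:characterisation-left-cover}, Construction \ref{con:tentative-left-cover}) rely on, since they operate in $\LocStrat$ rather than in $\LocStrata$. You can recover this either by invoking Lemma \ref{lem:numerically-generated-local-exit-path} once the Haucourt-stream circulation on $E$ is in hand, or more simply by observing, via the uniqueness you already proved, that your structure coincides with the final-lift structure of the paper's proof.
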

\begin{proof}
Since $p$ is an \'etale map, every point $e\in E$ has an open neighbourhood $U_{e}$ which is mapped homeomorphically onto $pU_{e}$.
Since $\Str\to \Topa$ is a topological functor, there exists a unique stream structure on $U_{e}$ making the restriction $p_{e}\colon U_{e}\to pU_{e}$ an isomorphism of streams.
Moreover, by Lemma \ref{prop:open-numerically-generated}, $U_{e}$ is a numerically generated locally stratified space.
We endow $E$ with the final structure with respect to the family of maps $\{U_{e}\to E: e\in E\}$.
Then, $E$ is a numerically generated locally stratified space, since $U_{e}$ is.
The map $p$ defines a morphism of streams since, given an open subset $V$ of $X$, we can cover $f^{-1}V$ with the set $\{f^{{-1}}V\cap U_{e}: e\in V\}$ and the claim follows from \eqref{eq:circulation} and the fact that $p_{e}$ is an isomorphism of streams for every $e\in E$.
To conclude, $p$ is an \'etale morphism of locally stratified spaces, by construction.
\end{proof}

\begin{proposition}
\label{prop:etale-chaotic-chaotic}
Let $p\colon E\to X$ be an \'etale morphism of locally stratified spaces and assume that $X$ is chaotically locally stratified.
Then, $E$ is chaotically locally stratified.
\end{proposition}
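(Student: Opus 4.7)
The plan is to unpack what ``chaotically locally stratified'' means concretely. By Corollary \ref{cor:forgetful-codiscrete-adjunction-locstrat}, a locally stratified space is chaotic when it is isomorphic (via the unit of $U\dashv\ind{(\blank)}$) to the codiscrete stream on its underlying topological space; equivalently, for every open $V$ and every $v_{0},v_{1}\in V$, one has $v_{0}\preceq_{V}v_{1}$ if and only if there exists a continuous path from $v_{0}$ to $v_{1}$ whose image lies in $V$. Using the adjunction this is equivalent to saying that stream morphisms $\dI\to X$ coincide with continuous maps $\II\to UX$. So the goal is to prove that in $E$ every continuous path $\gamma\colon\II\to UE$ lifts to a stream morphism $\gamma\colon\dI\to E$, after which the circulation on $E$ is forced to be path-connectedness in opens and $E\cong\ind{UE}$ in $\LocStrat$.

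First, I would observe that for every $e\in E$ the étale neighbourhood $U_{e}$ produced by Lemma \ref{lem:etale-lifts} is isomorphic, as a stream, to the open subspace $pU_{e}\subset X$. Since open subspaces of a chaotic stream are again chaotic (the circulation at any open $W\subset pU_{e}\subset X$ is determined by the circulation of $X$ at $W$, which is chaotic by hypothesis), each $U_{e}$ is a chaotic locally stratified space. In particular, for every open $O\subset U_{e}$, continuous paths in $O$ are precisely directed paths into $U_{e}$ whose image lies in $O$.

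The main step is then the following. Given $\gamma\colon\II\to UE$, together with an open $W\subset E$ and $s\le t$ in $\gamma^{-1}(W)$ with $[s,t]\subset\gamma^{-1}(W)$, I would cover the compact set $[s,t]$ by finitely many preimages of sets of the form $W\cap U_{e_{i}}$; by choosing a Lebesgue subdivision, I obtain $s=s_{0}<s_{1}<\dots<s_{n}=t$ with $\gamma([s_{i-1},s_{i}])\subset W\cap U_{e_{i}}$ for some $e_{i}\in E$. Since $W\cap U_{e_{i}}$ is an open subspace of the chaotic stream $U_{e_{i}}$, hence itself chaotic, the continuous path $\gamma|_{[s_{i-1},s_{i}]}$ defines a directed path in $W\cap U_{e_{i}}$; consequently $\gamma(s_{i-1})\preceq^{E}_{W\cap U_{e_{i}}}\gamma(s_{i})$, and by the precirculation axiom this gives $\gamma(s_{i-1})\preceq^{E}_{W}\gamma(s_{i})$. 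Transitivity of $\preceq^{E}_{W}$ then yields $\gamma(s)\preceq^{E}_{W}\gamma(t)$, which is exactly the condition that $\gamma$ defines a morphism $\dI\to E$ of streams.

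Putting the two halves together, directed paths into $E$ coincide with continuous paths into $UE$. Since the circulation on the stream $E$ is determined by its directed paths (Definition \ref{def:stream}), it follows that $e_{0}\preceq^{E}_{V}e_{1}$ exactly when $e_{0}$ and $e_{1}$ lie in the same path-component of $V$, which is the defining property of the chaotic stream $\ind{UE}$; hence the unit $E\to\ind{UE}$ is an isomorphism in $\LocStrat$ and $E$ is chaotically locally stratified. The main technical obstacle is the gluing argument in the middle paragraph: it requires that the chaotic structure detected in each étale chart $U_{e_{i}}$ is compatible with the global circulation of $E$, and this is precisely where the locality axiom defining streams, together with the fact that $W\cap U_{e_{i}}$ is open both in $U_{e_{i}}$ and in $E$, is essential.
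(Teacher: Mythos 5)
Your proof is correct, but it takes a genuinely different route from the paper's. The paper's argument is a short uniqueness trick: equip $UE$ with the chaotic locally stratified structure and observe that this makes $p$ étale --- this reduces to the fact that a homeomorphism between two chaotic streams is automatically a stream isomorphism (both $U_{e}$ and $pU_{e}$ become codiscrete, and $\ind{(\blank)}$ takes inverse continuous maps to inverse morphisms). Then the uniqueness clause of Lemma \ref{lem:etale-lifts} says there is \emph{only one} locally stratified structure on $UE$ making $p$ étale, so the given structure on $E$ must already be the chaotic one. You instead reprove the circulation of $E$ from scratch: you identify the chaotic circulation with path-connectedness, show the étale charts $U_{e}$ are chaotic, and then patch a continuous path $\gamma$ into a directed path via a Lebesgue subdivision into pieces contained in charts, together with the precirculation and transitivity axioms; the converse inclusion is supplied by the Haucourt stream condition. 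Both proofs rely on the same two observations (open subspaces of a chaotic stream are chaotic, and the étale charts of $E$ are isomorphic to opens of $X$), but the paper packages the rest into the uniqueness of Lemma \ref{lem:etale-lifts}, while your argument unfolds that uniqueness explicitly along paths. Your version is longer and essentially re-derives part of the content of Lemma \ref{lem:etale-lifts}, but it has the advantage of exhibiting concretely what the circulation of $E$ is and why it agrees with path-components of opens, rather than citing an abstract uniqueness statement.
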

\begin{proof}
If $Up$ is \'etale and we equip $E$ with the chaotic structure, $p$ is an  \'etale morphism of locally stratified spaces, since the chaotic object functor reflects isomorphisms.
Lemma \ref{lem:etale-lifts} implies that this is the only possible structure on $E$ making $p$ \'etale.
\end{proof}

\begin{proposition}
\label{prop:etale-strat-strat}
Let $p\colon E\to X$ be an \'etale morphism of locally stratified spaces and assume that $X=RS$ is the locally stratified space associated to a stratified space $S$.
Then $p\colon E\to X$ is the locally stratified morphism associated to a stratified map $p\colon \Gamma E\to S$.
\end{proposition}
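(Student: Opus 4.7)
The plan is to exploit the adjunction $\Gamma \dashv R$ from \ref{subsec:global-strat-space} to define $q\colon \Gamma E\to S$ as the morphism in $\Strat$ adjoint to $p$, and then to show that the unit $\eta_E\colon E\to R\Gamma E$ is an isomorphism, from which $p = Rq$ follows under this identification. For any numerically generated stratified space $S$, the counit $\epsilon_S\colon \Gamma RS\to S$ is an isomorphism: by Proposition \ref{prop:delta-stream-associated-to-delta-prespace} the preorder on $\Gamma RS$ is generated by exit paths in $S$, which by Lemma \ref{lem:numerically-generated-exit-path} is exactly the original preorder on $S$. Hence I would take $q = \epsilon_S\circ \Gamma p\colon \Gamma E\to S$, and once $\eta_E$ is known to be an isomorphism, the triangle identity $p = R\epsilon_S \circ R\Gamma p \circ \eta_E$ exhibits $p$ as $Rq$.

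Since $\Gamma$ and $R$ both act as the identity on underlying topological spaces, $\eta_E$ is the identity continuous map, so it is an isomorphism in $\LocStrat$ precisely when the two circulations on the common underlying space coincide. As both $E$ and $R\Gamma E$ are Haucourt streams, their circulations are determined by their directed paths, and so it suffices to show that the directed paths $\dabs{\Delta^1}\to E$ and $\dabs{\Delta^1}\to R\Gamma E$ coincide as subsets of the set of continuous maps $[0,1]\to E$. By the adjunction, directed paths into $R\Gamma E$ correspond to weakly monotonic continuous maps $\sabs{\Delta^1}\to \Gamma E$, and the forward inclusion, that every directed path in $E$ yields one in $R\Gamma E$, is immediate by applying $\Gamma$.

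For the reverse inclusion, given a weakly monotonic continuous map $\alpha\colon \sabs{\Delta^1}\to \Gamma E$, I would show that its underlying map defines a stream morphism $\dabs{\Delta^1}\to E$. Using the \'etale hypothesis and compactness of $[0,1]$, choose a partition $0=t_0<\ldots<t_k=1$ such that each image $\alpha([t_{j-1},t_j])$ lies in an open $U_j\subset E$ on which $p|_{U_j}\colon U_j\to pU_j$ is an isomorphism in $\LocStrat$. On each subinterval, the composite $\Gamma p\circ \alpha|_{[t_{j-1},t_j]}$ is a weakly monotonic continuous map into the open stratified subspace $pU_j\subset S$; a direct check comparing circulations generated by exit paths shows that the open subspace of $RS$ supported on $pU_j$ agrees with $R$ applied to the open subspace $(pU_j,\le|_{pU_j})$ of $S$. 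Hence the adjunction $\Gamma\dashv R$ promotes this into a stream morphism into $pU_j\subset RS$, and composing with $(p|_{U_j})^{-1}$ produces a stream morphism from the corresponding initial substructure of $\dabs{\Delta^1}$ into $U_j\hookrightarrow E$ with underlying map $\alpha|_{[t_{j-1},t_j]}$. To conclude I would check the stream condition for $\alpha$ at an arbitrary open $V\subset E$ and $s,t\in\alpha^{-1}V$ with $[s,t]\subset \alpha^{-1}V$ by chaining the local inequalities $\alpha(t_{j-1})\preceq^E_V\alpha(t_j)$ through the above partition and using transitivity of $\preceq^E_V$.

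The main obstacle is exactly this reverse inclusion of directed paths, where the \'etale hypothesis is essential: it allows the global comparison between the circulation $\preceq^E$ generated by local exit paths in $E$ and the preorder on $\Gamma E$ generated by stratified exit paths to be reduced to a local situation, on each piece of which opens of $RS$ coincide with $R$ of opens of $S$ and the adjunction furnishes the required stream morphism; gluing then goes through thanks to the local determination of the circulation in any Haucourt stream.
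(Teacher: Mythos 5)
Your proposal is correct and ultimately rests on the same two ingredients as the paper's proof: Proposition \ref{prop:delta-stream-associated-to-delta-prespace}, which identifies the circulation of $R$ of a stratified space as the one generated by stratified exit paths, and the \'etale hypothesis, which lets one transport that circulation locally through the homeomorphisms $p_{U}\colon U\to pU$. What differs is the packaging. You route the argument through the adjunction $\Gamma\dashv R$: prove $\epsilon_{S}$ is an isomorphism, then show the unit $\eta_{E}$ is one, then invoke the triangle identity. The paper skips the categorical scaffolding entirely and states the goal as a single equality of circulations (``$x\preceq_{U}y$ iff there is an exit path in $\Gamma E$ from $x$ to $y$ with image in $U$''), reduces at once to \'etale opens $U$ using that both sides are circulations, and transports the exit-path preorder through $p_{U}^{\pm 1}$. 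Your version needs two auxiliary moves that the paper avoids: the counit isomorphism (which does hold, by Lemma \ref{lem:numerically-generated-exit-path}), and the reduction from circulations to elementary directed paths $\dabs{\Delta^{1}}\to(-)$ together with the compactness-of-$[0,1]$ partition argument. The paper buys a shorter proof by working with exit paths $\sabs{\Sp^{n}}\to\Gamma E$ directly, where the reduction to an \'etale open is one invocation of the circulation axiom rather than a Lebesgue-number argument.

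Two points in your write-up deserve tightening, though neither is a fatal gap. First, the claim that ``the open subspace of $RS$ supported on $pU_{j}$ agrees with $R$ applied to the open subspace $(pU_{j},\le\vert_{pU_{j}})$ of $S$'' is true but not immediate: one has to check that $(pU_{j},\le\vert_{pU_{j}})$ is still numerically generated, or better, bypass this by noting directly (as the paper does) that the circulation of $RS$ at any open $V\subset pU_{j}$ is the exit-path preorder for paths into $S$ with image in $V$, which is what you actually need. Second, the final chaining step identifies $s\preceq^{\dabs{\Delta^{1}}}_{\alpha^{-1}V}t$ with ``$[s,t]\subset\alpha^{-1}V$'', which is only correct when $\alpha^{-1}V$ is an interval; for a general open $V$ you would still need to appeal to the circulation axiom on $\dabs{\Delta^{1}}$ to reduce to connected opens, and then refine the partition so that each piece lies in $V\cap U_{j}$ before concluding $\alpha(t_{j-1})\preceq^{E}_{V}\alpha(t_{j})$. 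Both repairs are straightforward and run along the same lines as the paper's argument.
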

\begin{proof}
By Proposition \ref{prop:delta-stream-associated-to-delta-prespace} it is enough to show that, for every open subset $U$ in $E$, we have that $x\preceq_{U}y$ if and only if there exists an exit path $\alpha\colon \sabs{\Sp^{n}}\to \Gamma E$ from $x$ to $y$ in the stratified space associated to $E$, whose image lies in $U$. Since \'etale neighbourhoods cover $E$, we can assume that $p$ induces an isomorphism $p_{U}\colon U\to pU$. Then, since $X$ is the locally stratified space associated to $S$, by Proposition \ref{prop:delta-stream-associated-to-delta-prespace} we have that $px\preceq_{pU} py$ if and only if there exists an exit path $\alpha\colon \sabs{\Sp^{n}}\to S$ from $px$ to $py$ whose image is contained in $pU$. Composition with the inverse of $p_{U}$ defines then an exit path in $U$ from $x$ to $y$ and we are done.
\end{proof}

\begin{lemma}
\label{lem:etale-pullback}
Let $p\colon E\to X$ be an \'etale morphism of locally stratified spaces and let $f\colon Y\to X$ be any morphism.
Then, the base change $p_{Y}\colon E_{Y}\to Y$ is an \'etale morphism.
\end{lemma}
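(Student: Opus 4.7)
The plan is to work locally around an arbitrary point of $E_{Y}$ and exploit the fact that pullbacks preserve isomorphisms. Let $q\colon E_{Y}\to E$ denote the other projection in the pullback square defining $E_{Y}=E\times_{X}Y$, and pick a point $(e,y)\in E_{Y}$ with $p(e)=f(y)$. Since $p$ is \'etale, there exists an open neighbourhood $U$ of $e$ in $E$ such that $pU$ is open in $X$ and the restriction $p_{U}\colon U\to pU$ is an isomorphism of locally stratified spaces (when $U$ and $pU$ are equipped with the initial structures induced by $p$). Set $V=q^{-1}(U)$; this is an open neighbourhood of $(e,y)$ in $E_{Y}$, and $p_{Y}$ maps $V$ into the open subspace $f^{-1}(pU)\subset Y$.

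Next I would observe that $V$ together with the two maps $q|_{V}\colon V\to U$ and $p_{Y}|_{V}\colon V\to f^{-1}(pU)$ is itself a pullback of $p_{U}\colon U\to pU$ along $f|_{f^{-1}(pU)}\colon f^{-1}(pU)\to pU$. This follows because pullbacks in $\LocStrat$ commute with restriction to open subspaces: both $V$ and $f^{-1}(pU)$ are open in $E_{Y}$ and $Y$ respectively, and by Proposition \ref{prop:open-numerically-generated} their subspace structures agree with the structures they inherit as restrictions from the pullback. Consequently, in the commutative square
\[
\begin{tikzcd}
V \arrow[r, "q|_{V}"] \arrow[d, "p_{Y}|_{V}"'] & U \arrow[d, "p_{U}"] \\
f^{-1}(pU) \arrow[r, "f|_{f^{-1}(pU)}"'] & pU
\end{tikzcd}
\]
the right-hand vertical arrow is an isomorphism by choice of $U$, so the left-hand vertical arrow $p_{Y}|_{V}$ is an isomorphism as well, since isomorphisms are stable under base change in any category.

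Since $(e,y)$ was arbitrary and $V$ is an open neighbourhood of $(e,y)$ on which $p_{Y}$ restricts to an isomorphism onto its image $f^{-1}(pU)$, this shows that $p_{Y}$ is \'etale. The only delicate point, which should be addressed explicitly, is the verification that the initial locally stratified structure on $V$ as a subspace of $E_{Y}$ coincides with the locally stratified structure that makes the above square a pullback; this is where Proposition \ref{prop:open-numerically-generated} does the work, together with the fact that the forgetful functor $\LocStrat\to \pStream$ creates pullbacks of open inclusions.
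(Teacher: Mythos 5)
Your proof is correct, but it takes a genuinely different route from the paper's. The paper first observes that the underlying continuous map $Up_{Y}$ is \'etale (since \'etale continuous maps are stable under pullback), then invokes Lemma \ref{lem:etale-lifts} to obtain the \emph{unique} locally stratified structure on the topological pullback making $p_{Y}$ \'etale, and finally shows that this structure satisfies the universal property of the pullback in $\LocStrata$ (by checking, for an arbitrary cone $Z\to E$, $Z\to Y$, that the induced continuous map $Z\to E_{Y}$ preserves the local preorder on each \'etale neighbourhood $U$). You instead construct \'etale neighbourhoods of $E_{Y}$ directly: you identify $V=q^{-1}(U)$ as a pullback of the restricted cospan $U\to pU\leftarrow f^{-1}(pU)$ and then use stability of isomorphisms under base change. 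Both approaches succeed; yours is more hands-on and geometrically transparent, while the paper's is shorter because Lemma \ref{lem:etale-lifts} already packages the local patching and lets one compare two structures on the same underlying space via the universal property, avoiding any commutation statement.

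The one place where your argument needs more care is precisely the point you flag. The justification you give --- Proposition \ref{prop:open-numerically-generated} plus the claim that the forgetful functor to $\pStream$ ``creates pullbacks of open inclusions'' --- does not by itself yield the commutation you use, and the phrase ``their subspace structures agree with the structures they inherit as restrictions from the pullback'' is close to restating the claim rather than proving it. The clean way to close the gap is via the pasting law for pullbacks: $V = U\times_{E}E_{Y}$ by the observation that pullbacks along open embeddings are computed as open subspaces in $\LocStrat$ (this is where Proposition \ref{prop:open-numerically-generated} genuinely enters, since it guarantees the $\pStream$-initial structure on $q^{-1}(U)$ is already numerically generated and hence is fixed by the coreflector), then two applications of pasting give $U\times_{E}E_{Y} = U\times_{X}Y = U\times_{pU}f^{-1}(pU)$. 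With that spelled out, your argument is complete.
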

\begin{proof}
Since \'etale maps of topological spaces are stable under pullback, it suffices to prove that the unique structure on $E_{Y}$ making $p_{Y}$ an \'etale map coincides with the pullback structure.
Let $(Z,\preceq)$ be a locally stratified space and assume we have morphisms $g\colon Z\to E$ and $h\colon Z\to Y$ of locally stratified spaces such that $pg=fh$.
Let $k\colon Z\to E_{Y}$ be the induce continuous map.
Let $U$ be an open subset of $E_{Y}$ such that $p_{U}\colon U\to p_{Y}U$ is an isomorphism of locally stratified space. 
Then $z\preceq_{k^{-1}U} z'$ in $Z$ implies that $hz\preceq_{p_{Y}U}hz'$ in $Y$, since $h$ is a morphism of locally stratified spaces.
Therefore, as $p_{U}$ is an isomorphism, $kz\preceq_{U}kz'$ in $E_{Y}$ and we are done.
\end{proof}

\begin{proposition}
\label{prop:chaotic-left-cover-top-cover}
Let $X$ be a locally $0$-connected and locally $1$-connected chaotic locally stratified space.
A morphism $p\colon E\to X$ of locally stratified spaces is a left cover precisely when the following properties hold:
\begin{enumerate}
\item
The underlying continuous map $Up\colon UE\to UX$ is a topological cover.
\item
The locally stratified structure of $E$ is chaotic.
\end{enumerate}
\end{proposition}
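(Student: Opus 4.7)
The plan is to treat the two implications separately, using chaoticity of $X$ to translate between the locally stratified and topological settings.

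For the reverse implication, suppose $E$ is chaotic and $Up\colon UE\to UX$ is a topological cover. Since both $X$ and $E$ are chaotic, every morphism $\dabs{\Delta^n}\to X$ (resp.\ $\to E$) in $\LocStrat$ is uniquely determined by its underlying continuous map $\abs{\Delta^n}\to UX$ (resp.\ $\to UE$). Hence every lifting problem for $p$ against $0\colon\term\to\dabs{\Delta^n}$ reduces to a topological lifting problem for $Up$ against $0\colon\term\to\abs{\Delta^n}$, which admits a unique solution because $\abs{\Delta^n}$ is simply connected and locally path-connected and $Up$ is a topological cover. The resulting continuous lift transports back to the required unique morphism in $\LocStrat$, so $p$ is a left cover.

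For the forward implication, suppose $p$ is a left cover. The strategy is to show successively that $Up$ has unique path lifting, that $Up$ is a local homeomorphism (hence a topological cover), and that $E$ is chaotic. For unique path lifting, a continuous path $\gamma\colon\abs{\Delta^1}\to UX$ defines a morphism $\dabs{\Delta^1}\to X$ by chaoticity of $X$, and lifting against $0\colon\term\to\dabs{\Delta^1}$ through a chosen $e\in\fib_{\gamma(0)}$ yields a distinguished continuous lift; applying the same argument to the reverse morphism $\bar\gamma\colon\dabs{\Delta^1}\to X$ (which exists because $X$ is chaotic) and invoking the $\dabs{\Delta^2}$-lifting property on a morphism $\dabs{\Delta^2}\to X$ whose boundary encodes a null-homotopy of $\gamma\cdot\bar\gamma$ forces any other continuous lift to agree with the distinguished one. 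For the local homeomorphism property, given $e\in E$ with $x=pe$, pick a path-connected open neighbourhood $V$ of $x$ in $UX$ in which every loop is null-homotopic in $UX$, using the local $0$- and $1$-connectedness of $X$. Define a section $s_e\colon V\to UE$ by sending $y$ to the endpoint of the unique morphism lift through $e$ of any path $\gamma$ from $x$ to $y$ in $V$. Well-definedness is verified by applying the $\dabs{\Delta^2}$-lifting property to a morphism $\dabs{\Delta^2}\to X$ witnessing the homotopy between two choices of path, and continuity follows by a standard argument on a basis of path-connected opens. This exhibits $s_e$ as a local inverse to $Up$, showing $Up$ is étale, and combined with unique path lifting and local $0$-connectedness of $UX$ makes $Up$ a topological cover.

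For chaoticity of $E$, given any open $U\subset E$ and any continuous path $\alpha\colon\II\to U$ from $e$ to $e'$, the projection $p\alpha$ defines a morphism $\dabs{\Delta^1}\to X$ by chaoticity of $X$, whose unique morphism lift through $e$ has $\alpha$ as its underlying continuous map by the unique path lifting step above. Hence $\alpha$ is a directed path in $E$ inside $U$, giving $e\preceq_U e'$; a symmetric argument applied to the reverse path yields $e'\preceq_U e$, so the circulation on $E$ coincides with the chaotic one. The main obstacle is the local homeomorphism step, where the monodromy argument for well-definedness of $s_e$ requires carefully combining the $\dabs{\Delta^1}$- and $\dabs{\Delta^2}$-lifting properties with the local $0$- and $1$-connectedness of $X$ to mirror the classical covering space construction.
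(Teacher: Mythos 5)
Your reverse direction is essentially the same as the paper's: chaoticity of both $X$ and $E$ identifies each $\LocStrat$-lifting problem against $0\colon\term\to\dabs{\Delta^n}$ with the corresponding topological lifting problem against $0\colon\term\to\abs{\Delta^n}$, which a topological cover over a nice base solves uniquely. Your forward direction, by contrast, takes a genuinely different route. The paper appeals to the Gabriel--Zisman equivalence of Corollary \ref{cor:cover-sset-top-equivalence} applied to the simplicial left cover $\dSing p$ over the Kan complex $\Sing UX$ to conclude that $Up$ is a topological cover, and then deduces chaoticity of $E$ from \'etaleness via Proposition \ref{prop:etale-chaotic-chaotic}. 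You instead rebuild the classical covering-space machinery from scratch (unique path lifting, local sections via monodromy, chaoticity by path lifting); the overall order -- first $Up$ a cover, then $E$ chaotic -- agrees with the paper's.

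There is, however, a genuine gap in your forward direction, and it is precisely at the point where the two settings diverge. The left cover property gives uniqueness of lifts only among \emph{morphisms of locally stratified spaces} $\dabs{\Delta^n}\to E$, which, for as long as the structure on $E$ is unknown, form a proper subset of the continuous maps $\abs{\Delta^n}\to UE$. When you assert in the unique-path-lifting step that the $\dabs{\Delta^2}$-lifting property ``forces any other continuous lift to agree with the distinguished one'', the $\dabs{\Delta^2}$-lifting property constrains only $\LocStrat$-morphism lifts and says nothing about a continuous lift that fails to be such a morphism. As a consequence the chaoticity argument, which depends on the continuous lift coinciding with the morphism lift, is circular: you need $E$ to be chaotic to identify continuous lifts with morphism lifts, and you need that identification to invoke uniqueness. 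The monodromy argument for well-definedness of $s_e$ is fine (the $\dabs{\Delta^2}\to X$ witnessing the homotopy lifts uniquely as a $\LocStrat$-morphism), but continuity of $s_e$ and openness of its image -- the actual content of \'etaleness -- do not follow from a ``standard argument''; in the realisation case the paper proves them in Lemma \ref{lem:left-cover-etale} by a careful skeletal induction using the numerically generated topology on $E$, and some such argument, or else the appeal to the Gabriel--Zisman equivalence, is needed here.
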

\begin{proof}
Since $X$ is chaotic, every commutative square:
\begin{equation}
\label{prop:equation-1}
\csquare{\term}{E}{X}{\dabs{\Delta^{n}}}{e}{p}{\sigma}{0}
\end{equation}
is equivalent to its underlying commutative square:
\begin{equation}
\label{prop:equation-2}
\csquare{\term}{UE}{UX}{\abs{\Delta^{n}}}{e}{Up}{\sigma}{0}
\end{equation}
Assuming that $p\colon E\to X$ is a left cover, Corollary \ref{cor:cover-sset-top-equivalence} implies that $Up\colon UE\to UX$ is a topological cover.
In particular $p$ is \'etale, hence $E$ is chaotic by Proposition \ref{prop:etale-chaotic-chaotic}.
Conversely, if $Up$ is a topological cover and $E$ is chaotic, the adjunction $(U,\ind{(\blank)})$ implies that the lifting problem determined by \ref{prop:equation-1} is equivalent to the lifting problem determined by \ref{prop:equation-2}. Hence, Corollary \ref{cor:cover-sset-top-equivalence} implies that $p$ is a left cover.
\end{proof}

\begin{corollary}
\label{cor:chaotic-left-property-cover}
The initial vertex $0\colon \term\to \II$ of the chaotic interval has the unique left lifting property with respect to all left covers.
\end{corollary}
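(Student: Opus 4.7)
The plan is to reduce to a topological statement about covers of the unit interval by pulling back. Given a left cover $p\colon E\to X$ and a lifting problem
\[
\begin{tikzcd}
\term \ar[r, "e"] \ar[d, "0"'] & E \ar[d, "p"] \\
\II \ar[r, "\sigma"'] & X,
\end{tikzcd}
\]
form the pullback $p_{\II}\colon E_{\II}\to \II$ of $p$ along $\sigma$. Since the class of left covers is defined by a right orthogonality condition, it is stable under base change, so $p_{\II}$ is again a left cover; and the map $e$ factors as a point $\tilde e\colon \term \to E_{\II}$ over $0\in \II$. A solution to the original lifting problem corresponds bijectively, via the universal property of the pullback, to a lift of $0\colon\term\to\II$ against $p_{\II}$ through $\tilde e$. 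Hence it suffices to prove the statement in the case $X=\II$.

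Now $\II$ is the chaotic locally stratified space on the standard unit interval $[0,1]$, which is locally $0$-connected and locally $1$-connected. Therefore Proposition \ref{prop:chaotic-left-cover-top-cover} applies, and tells us that $p_{\II}\colon E_{\II}\to \II$ is the locally stratified space associated to a topological covering map $Up_{\II}\colon UE_{\II}\to [0,1]$, with $E_{\II}$ carrying the chaotic locally stratified structure.

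Since $[0,1]$ is simply connected and locally path-connected, the covering $Up_{\II}$ is trivial, so there is a canonical isomorphism $UE_{\II}\cong \fib_{0}E_{\II}\times [0,1]$ over $[0,1]$. In particular, for the distinguished point $\tilde e\in \fib_{0}E_{\II}$ there is a unique continuous section $\tilde h\colon [0,1]\to UE_{\II}$ of $Up_{\II}$ with $\tilde h(0)=\tilde e$, and any other continuous lift agrees with it by uniqueness of path lifting for topological covers. Because both $\II$ and $E_{\II}$ are chaotically locally stratified, the adjunction between $U$ and $\ind{(\blank)}$ promotes $\tilde h$ to a unique morphism of locally stratified spaces $\tilde h\colon \II \to E_{\II}$, finishing the proof.

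The main obstacle is really a bookkeeping one: making sure that the pullback $E_{\II}\to \II$ inherits the hypotheses of Proposition \ref{prop:chaotic-left-cover-top-cover}, and that no subtle structure gets lost when passing between the chaotic locally stratified category and the underlying topological category. Once that is in place the argument is a direct application of classical covering space theory on the interval.
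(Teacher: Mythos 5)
Your proof is correct and follows essentially the same route as the paper: reduce by base change to left covers over $\II$ itself, invoke Proposition \ref{prop:chaotic-left-cover-top-cover} to recognise such a cover as the chaotic locally stratified space attached to a topological cover of $[0,1]$, and then appeal to unique path lifting together with the $(U,\ind{(\blank)})$ adjunction. The paper simply leaves all of these intermediate steps implicit, so your writeup is the same argument spelled out in full.
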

\begin{proof}
It is enough to show that every left cover over $\II$ has the unique right lifting property with respect to $0$.
We conclude by Proposition \ref{prop:chaotic-left-cover-top-cover}.
\end{proof}

Recall that, if $X$ is a homotopically stratified set with locally 1-connected and locally 0-connected strata, a stratified cover of $X$ is a stratified space $p\colon E\to X$ over $X$ where $p$ is a local homeomorphism and restricts to a covering space over each stratum (see Definition \ref{def:stratified-cover}).

\begin{lemma}
\label{lem:strat-cover-left-cover}
Let $p\colon E\to \sabs{\Delta^{n}}$ be a stratified cover. Then the locally stratified morphism associated to $p$ is a left cover.
\end{lemma}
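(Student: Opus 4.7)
The strategy is to verify that $Rp\colon RE\to\dabs{\Delta^n}$ satisfies the unique right lifting property against each inclusion of the initial vertex $0\colon \term\to\dabs{\Delta^m}$. First I would reduce the problem to a lifting problem in $\Strat$ via the adjunction $\Gamma\dashv R$. Using Proposition \ref{prop:global-section-realisation} one has $\Gamma\dabs{\Delta^k}\cong \sabs{\Delta^k}$, and Proposition \ref{prop:delta-stream-associated-to-delta-prespace} combined with Lemma \ref{lem:numerically-generated-exit-path} shows that $\Gamma R$ is the identity on numerically generated stratified spaces, hence on $E$ and $\sabs{\Delta^n}$. Thus a commutative square with $\sigma\colon\dabs{\Delta^m}\to\dabs{\Delta^n}$ and $e\in (Rp)^{-1}(\sigma(0))$ in $\LocStrat$ is in natural bijection with the analogous square in $\Strat$ featuring $\sigma\colon\sabs{\Delta^m}\to\sabs{\Delta^n}$ and $e\in p^{-1}(\sigma(0))$, and diagonal fillers correspond under this bijection.

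Next I would rephrase the stratified lifting problem as the existence and uniqueness of a section of the pullback $q\colon \sigma^{*}E\to\sabs{\Delta^m}$ sending the vertex $0\in\sabs{\Delta^m}$ to the point $(e,0)$ of the fibre $q^{-1}(0)\cong p^{-1}(\sigma(0))$. Since local homeomorphisms and topological coverings are stable under base change and strata pull back to strata, $q$ is again a stratified cover in the sense of Definition \ref{def:stratified-cover}, so this reformulation is legitimate.

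Finally I would invoke Woolf's Theorem \ref{thm:woolf} applied to $\sabs{\Delta^m}$, which is a fibrant homotopically stratified space with contractible (hence locally $0$- and $1$-connected) open strata by Example \ref{ex:realisation-sing-poset-co-fibrant} and Theorem \ref{thm:cofibrant-fibrant-hoss}. Combined with Example \ref{ex:exit-path-standard-simplex}, this provides an equivalence $\sCover{\sabs{\Delta^m}}\simeq \Set^{[m]}$. Let $G\colon[m]\to\Set$ be the functor corresponding to $q$, noting that the identity cover corresponds to the terminal functor. Sections of $q$ correspond to natural transformations $\term\to G$, i.e.\ to elements of $\lim_{[m]}G$; since $[m]$ has initial object $0$, this limit equals $G(0)$, which is naturally identified with $q^{-1}(0)$ via evaluation of a section at the vertex $0\in\sabs{\Delta^m}$. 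Hence there exists a unique section of $q$ whose value at $0$ is $(e,0)$, yielding the required unique lift $\tilde\sigma$.

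The main obstacle is the final step: one must check that the exodromy equivalence of Theorem \ref{thm:woolf} is natural enough that sections of $q$ correspond to natural transformations out of the terminal functor, and that the bijection between these and $G(0)$ is implemented by evaluation at the $0$-vertex. Both facts follow from the standard construction of the equivalence (the fibre over a stratum $k$ computes $G(k)$, and the terminal functor represents the identity cover), but since neither is spelled out in the excerpt they warrant explicit justification; everything else in the argument is a formal consequence of adjunction, pullback-stability and the initial-object property of $0\in[m]$.
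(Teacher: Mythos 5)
Your proof is correct in outline but takes a genuinely different route from the paper. The paper's argument is a self-contained, elementary induction on $m$: it filters $\sabs{\Delta^m}$ by the last barycentric coordinate $t_m$, lets $L\subset[0,1]$ be the set of parameters $t$ for which the restriction of the lifting problem to $A_t=\{t_m\le t\}$ has a unique lift, and shows $L$ is non-empty (by the inductive base), open (via compactness and the \'etale neighbourhoods of $p$) and closed (via the covering-space property over each stratum and local $0$-/$1$-connectedness), hence $L=[0,1]$. Your proof instead outsources the hard analytic work to Woolf's exodromy theorem (Theorem \ref{thm:woolf}), reducing the lifting problem to counting sections of the pullback $q\colon\sigma^{*}E\to\sabs{\Delta^m}$ via the equivalence $\sCover{\sabs{\Delta^m}}\simeq\Set^{[m]}$ and the fact that $0$ is initial in $[m]$. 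Your preliminary reduction to a lifting problem in $\Strat$ via $\Gamma\dashv R$ (together with $\Gamma R\cong\mathrm{id}$ on numerically generated stratified spaces and Proposition \ref{prop:locstrat-standard-simplex-strat}) is sound and in fact also implicit in the paper, which opens its proof by immediately stating the lifting problem in $\Strat$.

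The trade-off is this. Your argument is shorter and more conceptual, but it needs more than the bare statement of Theorem \ref{thm:woolf}: you must know that the equivalence is implemented by the fibre functor $q\mapsto\bigl(k\mapsto q^{-1}(b_k)\bigr)$ for basepoints $b_k$ in each stratum, that the identity cover corresponds to the terminal functor, and that the resulting bijection $\mathrm{Sec}(q)\cong G(0)$ is exactly evaluation at the vertex $0$. You correctly flag these as the weak point; they hold by the standard construction of the monodromy equivalence, but they are not consequences of the equivalence-of-categories statement alone, so a fully rigorous write-up would need to unpack \cite{woo08} to that extent. The paper's proof avoids any dependence on the internal structure of Woolf's equivalence and is therefore more self-contained; it also makes transparent exactly which hypotheses on $p$ (\'etale, covering over each stratum, locally $0$- and $1$-connected strata) are consumed at which step. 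Finally, note that your pullback-stability claim in the middle step (that $\sigma^{*}E\to\sabs{\Delta^m}$ is again a stratified cover) is correct but deserves a line of justification: $\sigma$ sends the stratum over $p\in[m]$ into the stratum over $\alpha(p)\in[n]$ for the underlying monotone map $\alpha$, so the restriction of $q$ over each stratum of $\sabs{\Delta^m}$ is a base change of a topological cover, and local homeomorphisms are stable under pullback.
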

\begin{proof}
Let $p\colon E\to \sabs{\Delta^{n}}$ be a stratified cover. Then, we need to show that for every lifting problem:
\begin{equation*}
\lift{\sabs{\Delta^{0}}}{E}{\sabs{\Delta^{n}}}{\sabs{\Delta^{m}}}{e}{p}{\alpha}{0}{\tilde{\alpha}}
\end{equation*}
There exists a unique lift $\tilde{\alpha}\colon \sabs{\Delta^{m}}\to E$ filling the diagram.
We do so by induction on $m$. For $m=0$ the claim is vacuously true and we can assume that $m>0$ and that the claim is true for $k<m$. 
For every $s,t\in [0,1]$ with $s\le t$ we let $A^{s}_{t}$ be the stratified subspace of $\sabs{\Delta^{m}}$ given by the points $(t_{0},\ldots, t_{m})$ such that $s\le t_{m}\le t$.
Moreover, we set $A_{t}= A^{0}_{t}$ and $I_{t}=A^{t}_{t}$.
We define  $L\subset[0,1]$ to be the set of parameters for which the restriction $\alpha_{t}\colon A_{t}\to \sabs{\Delta^{n}}$ of $\alpha$ to $A_{t}$ has a unique lift $\tilde{\alpha_{t}}\colon A_{t}\to E$.
Notice that $A_{0}^{0}=A_{0}=I_{0}$ is isomorphic to $\sabs{\Delta^{m-1}}$ so $0$ belongs to $L$ by the base of the induction.
Assume that $t$ is in $L$, and let $\tilde{\alpha_{t}}\colon A_{t}\to E$ be the unique lift of $\alpha_{t}$. 
Then, for every point $e$ in the image of $I_{t}$ by $\tilde{\alpha_{t}}$ there exists an \'etale neighbourhood $U_{x}$ around $x$ such that $U_{x}\to pU_{x}$ is an isomorphism.
In particular, since $\sabs{\Delta^{n}}$ can be covered by convex balls and the image of $I_{t}$ via $\alpha_{t}$ is a compact subset of $\sabs{\Delta^{m}}$, we can reduce $U_{x}$ to a finite cover $U_{1},\ldots, U_{k}$ for some $k$, covering the image of $I_{t}$ via $\tilde{\alpha_{t}}$.
Each element $U_{i}$ of the cover allows us to extend uniquely the lift $\tilde{\alpha_{t}}$ further to a unique lift of $A_{t}\cup \tilde{\alpha_{t}}^{-1}U_{i}$. Moreover, since $I_{t}$ is connected, these unique lifts patch together to a unique lift of $A_{t}\cup \left(\bigcup_{i=1}^{k}\tilde{\alpha_{t}}^{-1}U_{i}\right)$.
To conclude, finiteness of the cover implies that we can find a small enough $\epsilon$ such that $[t,t+\epsilon)$ is contained in $L$, hence $L$ is open.
Now assume that $s$ belongs to $L$ for every $s<t$. Then, for every $0<s<t$ there exists a unique lift $\tilde{\alpha_{s}}\colon A_{s}\to E$ to $\alpha_{s}$. 
Let us fix $0<s<t$ and consider the lifting problem
\begin{equation*}
\lift{I_{s}}{E}{\sabs{\Delta^{n}}}{A^{s}_{t}}{e_{s}}{p}{\alpha^{s}_{t}}{0}{\tilde{\alpha^{s}_{t}}}
\end{equation*}
where $e_{s}\colon I_{s}\to E$ is the restriction of $\tilde{\alpha_{s}}$ to $I_{s}$.
Notice that both $I_{s}$ and $A^{s}_{t}$ are trivially stratified spaces and that the inclusion $I_{s}\to A^{s}_{t}$ is naturally isomorphic to the inclusion $\abs{\Delta^{m-1}}\times\{0\}\to \abs{\Delta^{m-1}}\times \II$.
Therefore, since the strata of $\sabs{\Delta^{n}}$ are locally 0-connected and locally 1-connected  and $p$ restricts to a covering space over each stratum, there exists a unique lift filling the diagram.
Pasting together the unique lifts $\tilde{\alpha_{s}}$ and $\tilde{\alpha^{s}_{t}}$ yields a unique lift $\tilde{\alpha_{t}}\colon A_{t}\to E$.
In particular, $L$ is closed. Hence $L=[0,1]$ and the claim is proven.
\end{proof}

\begin{lemma}
\label{lem:pullback-cover-left-cover}
Let $p\colon E\to X$ be an \'etale morphism of locally stratified spaces.
Assume that, for every morphism $f\colon Y\to X$, where $Y$ is a locally $0$-connected and locally $1$-connected chaotic locally stratified space, the base change:
\[p_{Y}\colon E_{Y}\to Y\]
is a topological cover. 
Then, $p$ is a left cover.
\end{lemma}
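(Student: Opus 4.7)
The plan is to reduce the lifting problem against $0\colon \term\to \dabs{\Delta^n}$ to a statement about the associated \'etale stratified morphism, and conclude via Lemma~\ref{lem:strat-cover-left-cover}. Given a lifting problem
\begin{equation*}
\csquare{\term}{E}{X}{\dabs{\Delta^{n}}}{e}{p}{\sigma}{0}
\end{equation*}
I first form the base change $p_{\dabs{\Delta^n}}\colon E_{\dabs{\Delta^n}}\to \dabs{\Delta^n}$, which is \'etale by Lemma~\ref{lem:etale-pullback} and still satisfies the hypothesis of the lemma: every morphism $f\colon Y\to \dabs{\Delta^n}$ from a chaotic $Y$ composes with $\sigma$ to a morphism $Y\to X$ whose pullback of $p$ coincides with the pullback of $p_{\dabs{\Delta^n}}$ along $f$. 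Thus it suffices to show that $p_{\dabs{\Delta^n}}$ admits a unique section taking $0$ to the canonical lift $\tilde e$ of $e$ through the fibre product.

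By Proposition~\ref{prop:etale-strat-strat}, $p_{\dabs{\Delta^n}}$ corresponds to an \'etale stratified morphism $q\colon \Gamma E_{\dabs{\Delta^n}}\to \sabs{\Delta^n}$. I aim to show that $q$ is a stratified cover in the sense of Definition~\ref{def:stratified-cover}; by Lemma~\ref{lem:strat-cover-left-cover} this will imply that $p_{\dabs{\Delta^n}}$ is a left cover and produce the required unique lift. \'Etaleness of $q$ being established, the substance reduces to showing that the restriction of $q$ to each stratum $S$ of $\sabs{\Delta^n}$ is a topological covering map. For each point $s \in S$, I plan to produce an evenly covered neighbourhood of $s$ in $S$ by applying the hypothesis to a suitable chaotic, locally $0$- and $1$-connected $Y$ and morphism $f\colon Y\to \dabs{\Delta^n}$ whose image covers a neighbourhood of $s$: the topological cover $E_Y \to Y$ delivered by the hypothesis will then descend to a local trivialisation of $q^{-1}(V)\to V$ for $V$ a suitable open neighbourhood of $s$ in $S$, using the \'etaleness of $q$ and the fact that $f$ is a surjection onto $V$.

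The principal obstacle lies in this last step: identifying, locally near each point of each stratum, a chaotic locally $0$- and $1$-connected space $Y$ admitting a morphism into $\dabs{\Delta^n}$ whose image covers the point. Morphisms from chaotic spaces to $\dabs{\Delta^n}$ are restrictive, as they must factor through a single chaotic orbit (that is, a single stratum) of the ambient global preorder, and must satisfy a strong local path-connectedness condition on preimages of opens. The construction will rely on the convexity of the strata of $\sabs{\Delta^n}$ as subsets of Euclidean space, together with the key observation that inside a single stratum, directed paths in $\dabs{\Delta^n}$ coincide with ordinary continuous paths (see Lemma~\ref{lem:numerically-generated-local-exit-path}); assembling these geometric inputs into a chaotic $Y$ and morphism $f$ that detects local triviality of $q$ over a neighbourhood of $s$ in $S$ is the crux of the argument.
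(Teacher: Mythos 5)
Your plan follows the same route as the paper: reduce to $X=\dabs{\Delta^n}$ by base change, translate to a stratified morphism via Proposition~\ref{prop:etale-strat-strat}, verify it is a stratified cover, and conclude with Lemma~\ref{lem:strat-cover-left-cover}. Where you diverge is in overestimating the difficulty of the step you label the ``crux''. You do not need to fabricate, locally near each $s\in S$, a chaotic locally $0$- and $1$-connected space mapping into $\dabs{\Delta^n}$: the entire stratum $S$ already serves as the required $Y$. Each stratum of $\sabs{\Delta^n}$ is a convex open subset of Euclidean space, hence (equipped with the codiscrete structure on its underlying topological space) a chaotic, numerically generated locally stratified space by Corollary~\ref{cor:forgetful-codiscrete-adjunction-locstrat}, which is path connected, locally $0$-connected and locally $1$-connected. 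The inclusion $S\hookrightarrow\dabs{\Delta^n}$ is a morphism of locally stratified spaces: for any convex open $V$ of $\dabs{\Delta^n}$, Lemma~\ref{lem:convex-preorder-path-preorder} shows that two points of $V\cap S$ are already $\preceq_V$-related, and the general case follows by covering an arbitrary open $V$ by convex balls and using the circulation property. Applying the hypothesis to this inclusion directly exhibits the restriction of $\Gamma p$ to each stratum as a topological cover; since $f$ is an embedding, the pullback $E_S$ literally \emph{is} $(\Gamma p)^{-1}(S)$, so no ``descent'' to a local trivialisation is needed. Once you see this, your plan reduces to the paper's proof and closes cleanly.
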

\begin{proof}
By change of base, it is enough to show the claim for $X=\dabs{\Delta^{n}}$. 
By Proposition \ref{prop:etale-strat-strat} and since strata of $\sabs{\Delta^{n}}$ are locally 0-connected and locally 1-connected, it is enough to show that every stratified cover $p\colon E\to \sabs{\Delta^{n}}$ defines a left cover of the associated locally stratified spaces. This is precisely Lemma \ref{lem:strat-cover-left-cover}.
\end{proof}

\subsection{}
\label{subsec:GZ-nghb}
Let $A$ be a simplicial set.
Following \cite[Chapter III, 1.7 and 1.8]{GZ67} every point $x$ in $\abs{A}$ has a contractible open neighbourhood $V_{x}=V_{x}^{1,1}$.
Let $x'$ be a point in $V_{x}$ and let $\sigma\colon \Delta^{n}\to A$ and $\sigma'\colon \Delta^{m}\to A$ be the unique non degenerate simplices corresponding to $x$ and $x'$ respectively.
Then $m\ge n$ and $x$ belongs to a face of $\sigma'$.
We call $V_{x}$ the \emph{Gabriel-Zisman neighbourhood} of $x$ in $\abs{A}$.

\begin{lemma}
\label{lem:GZ-lifting}
Let $p\colon E\to \dabs{A}$ be a left cover and let $x\in \dabs{A}$.
Let $i\colon V_{x}\subset \dabs{A}$ be the Gabriel-Zisman neighbourhood of $x$, equipped with the subspace structure. 
Then, a lifting problem of the form:
\begin{equation*}
\lift{\term}{E}{\dabs{A}}{V_{x}}{e}{p}{i}{x}{h}
\end{equation*}
has a unique solution.
\end{lemma}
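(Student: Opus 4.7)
The plan is to construct the lift $h\colon V_{x}\to E$ by covering $V_{x}$ with pieces coming from non-degenerate simplices of $A$, lifting each piece uniquely via the universal left cover of the corresponding locally stratified standard simplex, and then gluing. Fix $e\in E$ with $pe=x$. For each non-degenerate simplex $\sigma'\colon \Delta^{m}\to A$ whose realisation meets $V_{x}$, let $\tilde{x}_{\sigma'}\in \dabs{\Delta^{m}}$ be the point mapping to $x$ under $\dabs{\sigma'}$ and let $W_{\sigma'}=\dabs{\sigma'}^{-1}(V_{x})$. By the Gabriel–Zisman construction \ref{subsec:GZ-nghb}, every $\tilde{x}'\in W_{\sigma'}$ has its non-degenerate factor in $\dabs{\Delta^{m}}$ of dimension at least that of $\tilde{x}_{\sigma'}$, so $W_{\sigma'}$ lies entirely in the upward cone of $\tilde{x}_{\sigma'}$, which is the universal left cover of $\dabs{\Delta^{m}}$ at $\tilde{x}_{\sigma'}$ described in Example \ref{ex:universal-cover-n-simplex-locstrat}.

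By Lemma \ref{lem:universal-left-cover-represents-fibers-locstrat} applied to the pulled-back left cover $p_{*}\colon E\times_{\dabs{A}}\dabs{\Delta^{m}}\to \dabs{\Delta^{m}}$ (using Lemma \ref{lem:etale-pullback} to know it is a left cover, or more directly by the adjunction of base change), there exists a unique lift $h_{\sigma'}\colon W_{\sigma'}\to E$ of $\dabs{\sigma'}\!\restriction\! W_{\sigma'}$ with $h_{\sigma'}(\tilde{x}_{\sigma'})=e$. Define $h\colon V_{x}\to E$ pointwise by setting $h(x')=h_{\sigma'}(\tilde{x}')$, where $\sigma'$ is the unique non-degenerate simplex of $x'$ and $\tilde{x}'$ its interior preimage in $\dabs{\Delta^{m}}$; this is well-defined by uniqueness of non-degenerate factorisations (Eilenberg–Zilber).

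To see that $h$ is a morphism of locally stratified spaces, use that $V_{x}$ is an open subspace of $\dabs{A}=\colim_{\Delta^{m}\to A}\dabs{\Delta^{m}}$, so by Proposition \ref{prop:open-numerically-generated} it inherits both its topology and its stream structure as a colimit of the restrictions $W_{\sigma'}\hookrightarrow \dabs{\Delta^{m}}$ along all simplices $\sigma'$ of $A$ meeting $V_{x}$. Thus it suffices to check that the $h_{\sigma'}$ are compatible along degeneracies and face relations, which follows from the uniqueness part of Lemma \ref{lem:universal-left-cover-represents-fibers-locstrat}: whenever a simplex $\sigma'$ factors through another $\tau$ via a map of $\DDelta$, the composite $h_{\tau}\circ \dabs{\alpha}\!\restriction\! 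W_{\sigma'}$ is another lift of $\dabs{\sigma'}\!\restriction\! W_{\sigma'}$ sending $\tilde{x}_{\sigma'}$ to $e$, hence equals $h_{\sigma'}$. Uniqueness of $h$ itself follows by running the same argument against any putative second lift $h'$, whose restriction to each $W_{\sigma'}$ must coincide with $h_{\sigma'}$.

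The main obstacle will be carefully identifying the shape of $W_{\sigma'}$ inside $\dabs{\Delta^{m}}$ using the explicit Gabriel–Zisman construction in order to guarantee the containment in the upward cone at $\tilde{x}_{\sigma'}$; everything else reduces, via colimit arguments and the machinery already developed, to the universal case of the locally stratified standard simplex treated in Example \ref{ex:universal-cover-n-simplex-locstrat}.
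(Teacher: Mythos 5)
Your overall strategy (reduce to the standard simplex, use Example~\ref{ex:universal-cover-n-simplex-locstrat} to identify the universal left cover of $\dabs{\Delta^m}$, lift there, and glue) is close in spirit to the paper's argument, which lifts a triangulation of $V_x$ by locally stratified simplices rooted at $x$. But your version has a genuine gap in the step where you fix ``\emph{the} point $\tilde{x}_{\sigma'}$ mapping to $x$ under $\dabs{\sigma'}$''. That point need not be unique: the non-degenerate simplex $\sigma$ of $x$ can appear as a face of $\sigma'$ in several ways, and then $x$ has several preimages in $\dabs{\Delta^m}$, possibly lying in different global strata. The minimal example is $A=S^1=\Delta^1/\partial\Delta^1$ with $x$ the unique vertex and $\sigma'$ the unique non-degenerate $1$-simplex: $x$ has two preimages $0$ and $1$ in $\dabs{\Delta^1}$, and $W_{\sigma'}=\dabs{\sigma'}^{-1}(V_x)$ is the disconnected set $[0,\epsilon)\sqcup(1-\epsilon,1]$. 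Whichever preimage you take as $\tilde{x}_{\sigma'}$, the single lift $h_{\sigma'}$ gives the wrong value on one of the two components. Concretely, for the universal left cover $L$ of $\dabs{S^1}$ at $0$ from Example~\ref{ex:left-cover-circle-locstrat}, the clockwise side of $V_x$ must lift into the tail $(0,1]\subset L$ while the counterclockwise side must lift into $\dabs{\Sp^\infty}$; your formula $h(x')=h_{\sigma'}(\tilde{x}')$, with a single choice of basepoint, sends both sides into the same branch. The invocation of Eilenberg--Zilber addresses the uniqueness of $(\sigma',\tilde{x}')$, but it is $\tilde{x}_{\sigma'}$ whose non-uniqueness breaks well-definedness.

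Two smaller issues. First, the inference ``non-degenerate factor of $\tilde{x}'$ has dimension $\ge n$, hence $\tilde{x}'$ lies in the upward cone at $\tilde{x}_{\sigma'}$'' conflates the dimension of the non-degenerate factor (number of nonzero barycentric coordinates minus one) with the stratum (the index of the \emph{last} nonzero coordinate); the containment $W_{\sigma'}\subset\tilde{X}_{\tilde{x}_{\sigma'}}$ does seem to hold when $\tilde{x}_{\sigma'}$ is uniquely determined, but not for the reason you give, and it fails for a bad choice of preimage in the $S^1$ example ($0\in W_{\sigma'}$ is not in the upward cone at $1$). Second, Lemma~\ref{lem:etale-pullback} concerns étale morphisms, and at this point of the paper it has not yet been established that left covers over $\dabs{A}$ are étale (Lemma~\ref{lem:left-cover-etale} comes afterwards and uses the present lemma), so it cannot be cited here; your parenthetical fallback to stability of the orthogonal class under base change is the correct justification.

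The paper's approach avoids all of this by working in the target rather than in the source: one covers $V_x$ by images of locally stratified simplices $\dabs{\Delta^k}\to V_x$ rooted at $x$, lifts each uniquely from $e$ using the left-cover property, and invokes uniqueness of lifts to see that the pieces are consistent. Because each such simplex sits inside $V_x$, the ``which branch'' information is retained; pulling back along $\dabs{\sigma'}$, as you do, forgets it. To salvage your argument you would have to index not by non-degenerate simplices $\sigma'$ but by pairs $(\sigma',c)$ of a non-degenerate simplex and a connected component $c$ of $\dabs{\sigma'}^{-1}(V_x)$, verify that each $c$ contains exactly one preimage of $x$ and lies in its upward cone, and then perform the gluing at the level of these pairs; at that point you have essentially recovered the paper's triangulation.
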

\begin{proof}
For every point $x'\in V_{x}$ we can find a triangulation of $V_{x}$ by locally stratified simplices, covering $x'$. Since $E$ is a cover, we can lift uniquely each of those simplices, therefore defining a set theoretic map $h\colon V_{x}\to E$ fitting in the diagram above.
By uniqueness of lifts, the map does not depend on the choice of the triangulation and it is easily seen to be continuous and a morphism of locally stratified spaces.
\end{proof}

\begin{lemma}
\label{lem:left-cover-etale}
Let $A$ be a simplicial set and let $p\colon E\to \dabs{A}$ be a left cover. Then $p$ is an \'etale morphism.
\end{lemma}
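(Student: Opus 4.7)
The plan is to show that, for every $e \in E$, the Gabriel--Zisman neighborhood of $p(e)$ lifts through $p$ to an \'etale chart around $e$. Fix $e \in E$ and set $x = p(e) \in \dabs{A}$, and let $V_{x} \subset \dabs{A}$ be the Gabriel--Zisman neighborhood of $x$ recalled in \ref{subsec:GZ-nghb}, with inclusion $i\colon V_{x} \hookrightarrow \dabs{A}$. Apply Lemma \ref{lem:GZ-lifting} to the lifting problem that sends the point of $\term$ to $e$ over $i$: this produces a unique morphism $h\colon V_{x} \to E$ in $\LocStrat$ with $p h = i$ and $h(x) = e$. Set $U := h(V_{x})$, which I claim is the required \'etale neighborhood.

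Since $p h = i$ is injective, so is $h$, and the set-theoretic inverse of $h\colon V_{x} \to U$ is simply $p|_{U}$. Both maps are continuous, so $h$ is a homeomorphism onto $U$ with the subspace topology; because $h$ is a morphism in $\LocStrat$, equipping $U$ with the initial structure promotes this to an isomorphism $p|_{U}\colon U \to V_{x}$ in $\LocStrat$. Thus the lemma reduces to showing that $U$ is open in $E$.

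For openness, I would use uniqueness of lifts to patch local sections. Given $e' \in U$, write $e' = h(x')$ with $x' \in V_{x}$, and apply Lemma \ref{lem:GZ-lifting} at $e'$ and its own Gabriel--Zisman neighborhood $V_{x'}$ to obtain a section $h'\colon V_{x'} \to E$ with $h'(x') = e'$. Since GZ-neighborhoods form a basis of the topology on $\dabs{A}$, for every $y \in V_{x} \cap V_{x'}$ one can find $V_{y} \subset V_{x} \cap V_{x'}$; applying the uniqueness assertion of Lemma \ref{lem:GZ-lifting} to the common value $h(y) = h'(y)$ forces $h|_{V_{y}} = h'|_{V_{y}}$, so $h$ and $h'$ agree on $V_{x} \cap V_{x'}$. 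In particular $h'(V_{x'}) \subset U$, and every point of $U$ lies in the image of some Gabriel--Zisman lift contained in $U$.

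The main obstacle is to promote this set-theoretic patching to genuine openness of $U$ in $E$. My plan is to verify that the family $\{h_{f}(V_{p(f)}) : f \in E\}$ of images of GZ-lifts is a topological basis for $E$, so that each such image, and hence $U$, is open. To establish this, I would exploit the construction of $p$ guaranteed by Corollary \ref{cor:orthogonal-factorisation-theorem} as a retract of a transfinite composite of pushouts of the initial-vertex inclusions $\term \to \dabs{\Delta^{n}}$: inspecting how the topology on $E$ arises at each stage and noting that lifted GZ-sections are compatible with these pushouts should identify the topology on $E$ with the one generated by the family above. Once openness is achieved, the identification of $U$ with $V_{x}$ in $\LocStrat$ from the second paragraph completes the proof that $p$ is \'etale at $e$.
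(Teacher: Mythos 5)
Your reduction is exactly the paper's: apply Lemma \ref{lem:GZ-lifting} to obtain the unique section $h\colon V_{x}\to E$ through $e$, observe that $h$ is a bijection onto $U=h(V_{x})$ inverse to $p|_{U}$, and conclude that everything hinges on showing $U$ is open in $E$. Up to that point you are on the paper's track, and you are also right to flag openness as the crux.

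The gap is that your proposed route to openness does not work. You want to read off the topology of $E$ from Corollary \ref{cor:orthogonal-factorisation-theorem} by exhibiting $p$ as a retract of a transfinite composite of pushouts of the inclusions $\term\to\dabs{\Delta^{n}}$. But those closure operations characterise the \emph{left} class $\ullift{(\urlift{\cat{I}})}$ of the orthogonal factorisation system; a left cover is a member of the \emph{right} class $\urlift{\cat{I}}$, defined purely by the unique right lifting property. The corollary tells you that every map factors as (left class)$\circ$(left cover), but an arbitrary left cover $p\colon E\to\dabs{A}$ need not arise from any such transfinite construction, and the factorisation of $p$ itself is just $p=p\circ\mathrm{id}$, which gives you no grip on the topology of $E$. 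Similarly, your patching argument shows that $U$ is a union of images of GZ-lifts, but that is circular unless you already know such images are open.

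What the paper actually uses is that $E$ is \emph{numerically generated}: a subset of $E$ is open iff its preimage under every $\sigma\colon\dabs{\Delta^{n}}\to E$ is open, and each $p\sigma$ lands in $\abs{\sk_{k}A}$ for some finite $k$. Setting $E_{k}=p^{-1}\abs{\sk_{k}A}$, it suffices to show $U\cap E_{k}$ is open in $E_{k}$ for all $k$. This is proved by induction on $k$: for $k$ below the dimension $m$ of the carrier of $x$ it is vacuous since $V_{x}$ misses $\abs{\sk_{k}A}$; at $k=m$ the restriction of $p$ over $V_{x}\cap\abs{\sk_{m}A}$ is a chaotically stratified cover, so the section is open there; and the inductive step for $k>m$ uses discreteness of the fibres together with the fact that $V_{x}\cap\abs{\sk_{k+1}A}$ adds to $V_{x}\cap\abs{\sk_{k}A}$ only convex pieces of interiors of $(k{+}1)$-cells. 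If you replace your last paragraph with this skeleton-wise induction, the proof goes through.
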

\begin{proof}
Since $E$ is numerically generated, a subset $U$ of $E$ is open in $E$ if and only if the preimage of $U$ via every map
\[\sigma\colon \dabs{\Delta^{n}}\to E\]
is open in $\dabs{\Delta^{n}}$.
Since $\dabs{\Delta^{n}}$ is an $n$-dimensional CW-complex, there exists some $k\in \NN$ such that the composition $p\sigma\colon \dabs{\Delta^{n}}\to \dabs{A}$ factors through $\abs{\sk_{k}A}$.
Let $E_{k}=p^{-1}\abs{\sk_{k}A}$, then by the above argument, $U$ is open in $E$ if and only if the intersection $U\cap E_{k}$ is open in $E_{k}$ for every $[k]\in \DDelta$.

Let $e$ be a point of $E$ and let $x$ be the image of $e$ via $p$.
Let us consider the Gabriel-Zisman neighbourhood $V_{x}$ of $x$ in $\dabs{A}$.
Then, by Lemma \ref{lem:GZ-lifting} we can find a unique lift $h\colon V_{x}\to E$ and to prove that $E$ is \'etale over $\dabs{A}$, it is enough to prove that $U_{e}=hV_{x}$ is open in $E$.
By the first paragraph, it suffices to show that $U_{e}^{k}= U_{e}\cap E_{k}$ is open in $E_{k}$, for every $k$.
We prove the claim by induction.
Let $\sigma_{x}\colon \Delta^{m}\to A$ be the unique non degenerate simplex associated to $x$, then the claim is vacuously true for $k<m$, since $V_{x}$ does not intersect $\abs{\sk_{k}A}$ by construction (see \cite[1.8]{GZ67}).
For $k=m$ we have that the intersection $V_{x}^{m} =V_{x}\cap \abs{\sk_{m}A}$ is a convex chaotic open neighbourhood of $x$ in $\abs{\sk_{m}A}$ and the restriction of $E$ to $V_{x}^{m}$ is a chaotically stratified cover, hence $U^{m}_{e}$ is open in $E_{m}$.
Assuming that $U_{e}^{k}$ is open in $E_{k}$, it follows that $U_{e}^{k+1}$ is open in $E_{k+1}$ since the fibres of $p$ are discrete and $V_{x}^{k+1}$ is the union of $V_{x}^{k}$ with a convex subset of the interior of a $(k+1)$-cell.
\end{proof}

\begin{corollary}
\label{cor:characterisation-left-cover}
Let $A$ be a simplicial set.
A morphism $p\colon E\to \dabs{A}$ of locally stratified spaces is a left cover if and only if the following properties hold:
\begin{enumerate}
\item
The morphism $p$ is \'etale.
\item 
For every morphism $f\colon Y\to \dabs{A}$, where $Y$ is a locally $0$-connected and locally $1$-connected chaotic locally stratified space, the base change:
\[p_{Y}\colon E_{Y}\to Y\]
is a topological cover.
\end{enumerate}
\end{corollary}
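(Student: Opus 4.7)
The plan is to prove the two implications separately, leveraging the lemmas established immediately before the statement.

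For the forward direction, assume that $p\colon E\to \dabs{A}$ is a left cover. The fact that $p$ is \'etale is exactly the content of Lemma \ref{lem:left-cover-etale}, so nothing further is needed for condition (1). For condition (2), I would first observe that left covers are stable under arbitrary base change: since they are characterised by a unique right lifting property against the set $\dabs{\tensor[^{l}]{\Delta}{_{0}}}$, and right orthogonal classes are stable under pullback, for any morphism $f\colon Y\to \dabs{A}$ the base change $p_{Y}\colon E_{Y}\to Y$ is again a left cover of locally stratified spaces. When $Y$ is a locally $0$-connected and locally $1$-connected chaotic locally stratified space, Proposition \ref{prop:chaotic-left-cover-top-cover} applies and tells us that $p_{Y}$ has underlying map a topological cover (and that $E_{Y}$ is forced to be chaotic), which is exactly condition (2).

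For the backward direction, assume that $p$ satisfies both (1) and (2). This is precisely the hypothesis of Lemma \ref{lem:pullback-cover-left-cover}, whose conclusion is that $p$ is a left cover. So the reverse implication follows immediately.

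Combining these two directions yields the claim. The bulk of the work has already been done: Lemma \ref{lem:left-cover-etale} is the nontrivial input that left covers over realisations of simplicial sets are automatically \'etale (via the Gabriel--Zisman neighbourhoods and induction on the skeletal filtration), and Lemma \ref{lem:pullback-cover-left-cover} is the nontrivial converse that reduces checking the left covering property to a covering condition on chaotic test spaces. The corollary only needs to package these together with the observation that left covers are stable under pullback and with Proposition \ref{prop:chaotic-left-cover-top-cover} to bridge between left covers over chaotic bases and ordinary topological covers. There is no real obstacle here; the statement is essentially an assembly of the preceding results, and the proof is short.
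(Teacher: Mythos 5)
Your proof is correct and follows the same decomposition as the paper's: forward direction via Lemma \ref{lem:left-cover-etale} for \'etaleness and pullback-stability plus Proposition \ref{prop:chaotic-left-cover-top-cover} for the covering condition, backward direction directly from Lemma \ref{lem:pullback-cover-left-cover}. You spell out the pullback-stability of left covers slightly more explicitly than the paper does, but the argument is identical.
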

\begin{proof}
The conditions are sufficient by Lemma \ref{lem:pullback-cover-left-cover}.
Conversely, if $p$ is a left cover, then $p$ is an \'etale map by Lemma \ref{lem:left-cover-etale} and it is a cover over $Y$ by Proposition \ref{prop:chaotic-left-cover-top-cover}.
\end{proof}

\begin{lemma}
\label{lem:GZ-nghb}
Let $A$ be a simplicial set and let $x$ be a point in  $\abs{A}$.
Then every point $x'$ in the Gabriel-Zisman neighbourhood $V_{x}$ of $x$ induces a morphism:
\[\alpha_{x,x'}\colon \lv x\to \lv x'\]
from the geometric last vertex of $x$ to the geometric last vertex of $x'$.
\end{lemma}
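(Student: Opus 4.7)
The plan is to read the desired morphism $\alpha_{x,x'}$ directly off the combinatorial data recorded by \ref{subsec:GZ-nghb}.

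By \ref{subsec:last-vertex-geometric}, write $\sigma_{x}\colon \Delta^{n}\to A$ and $\sigma_{x'}\colon \Delta^{m}\to A$ for the unique non-degenerate simplices whose interiors contain $x$ and $x'$, so that $\lv(x) = \sigma_{x}(n)$ and $\lv(x') = \sigma_{x'}(m)$. Since $x'\in V_{x}$, \ref{subsec:GZ-nghb} gives $m\ge n$ and tells us that $x$ lies in a face of $\sigma_{x'}$; concretely this means there is an injective monotone map $\delta\colon [n]\to [m]$ with $\sigma_{x'}\circ\delta = \sigma_{x}$ (the uniqueness of the non-degenerate simplex carrying $x$ forces this factorisation).

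The first step is therefore to extract $\delta$ and observe that $\sigma_{x}(n) = \sigma_{x'}(\delta(n))$. Since $\delta$ is order-preserving and injective, $\delta(n)\le m$, so there is a unique edge $e\colon \Delta^{1}\to \Delta^{m}$ sending $0\mapsto \delta(n)$ and $1\mapsto m$. The second step is simply to set
\[
\alpha_{x,x'} \;:=\; \sigma_{x'}\circ e \;\colon\; \Delta^{1}\longrightarrow A,
\]
which is a $1$-simplex of $A$ with source $\sigma_{x'}(\delta(n)) = \sigma_{x}(n) = \lv(x)$ and target $\sigma_{x'}(m) = \lv(x')$, as required (see \ref{subsec:morphisms-sset}).

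There is essentially no obstacle here; the only point worth checking is that the factorisation $\sigma_{x'}\circ\delta = \sigma_{x}$ exists and that $\delta$ is a genuine face map. This is where \ref{subsec:GZ-nghb} and the Eilenberg--Zilber lemma come in: the face of $\sigma_{x'}$ containing $x$ is itself a simplex of $A$, its non-degenerate part must coincide with $\sigma_{x}$ by uniqueness, and a face of a non-degenerate simplex is again non-degenerate, so $\delta$ is injective. Once this is in place, the construction of $\alpha_{x,x'}$ is canonical and manifestly functorial in $x'$ when $x'$ varies over $V_{x}$, a remark that will likely be used in the sequel.
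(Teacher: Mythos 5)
Your final construction agrees with the paper's: both take $\alpha_{x,x'}$ to be $\sigma_{x'}$ post-composed with the $1$-simplex of $\Delta^{m}$ from the image of the last vertex of the $n$-dimensional face carrying $x$ to the last vertex $m$. The difference is in how the face $\delta\colon[n]\to[m]$ is singled out: the paper uses the contractibility of $V_{x}$ to produce a path $\alpha$ from $x$ to $x'$, unique up to homotopy, and takes the unique $n$-face through which a lift of $\alpha$ along $\abs{\sigma_{x'}}$ enters; you instead invoke Eilenberg--Zilber directly. This is a reasonable shortcut, but the parenthetical you offer — \emph{``a face of a non-degenerate simplex is again non-degenerate''} — is false as a general statement, and the paper's own running example is exactly a counterexample: the walking retraction simplicial set $R$ (and $\Nerv\Ret$) is built from a non-degenerate $2$-simplex whose $d_{1}$-face is degenerate. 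The correct reason $\sigma_{x'}\circ\delta$ is non-degenerate here is a dimension count: $x$ lies in the interior of the realisation of the face $\delta$, so if $\sigma_{x'}\circ\delta$ factored through a proper surjection $[n]\twoheadrightarrow[l]$ then $x$ would lie in the open cell of an $l$-dimensional non-degenerate simplex with $l<n$, contradicting the uniqueness of $\sigma_{x}$. With that repaired, your step $2$ goes through and the rest of the argument is fine.

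One further point worth a sentence: you assert the existence of some injective $\delta$ with $\sigma_{x'}\delta=\sigma_{x}$, but when $\sigma_{x'}$ has coinciding faces (again, $\Nerv\Ret$ is a case in point) there can be several such $\delta$'s, giving \emph{a priori} several candidate edges. The paper's detour through the path $\alpha$ is there precisely to pin down a canonical choice, which is what the word ``induces'' in the statement is promising. Your remark about functoriality in $x'$ actually presupposes this well-definedness, so it should be addressed rather than taken for granted.
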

\begin{proof}
Since $V_{x}$ is contractible, up to homotopy, there exists a unique path $\alpha\colon \II\to \abs{A}$ from $x$ to $x'$.
Let $\sigma\colon \Delta^{n}\to A$ and $\sigma'\colon \Delta^{m}\to A$ be the unique non degenerate simplices associated to $x$ and $x'$, respectively.
Then $m\ge n$ and there exists a unique $n$-dimensional face of $\Delta^{m}$ such that $\alpha$ is the image of a continuous path in $\abs{\Delta^{m}}$ under the map $\abs{\sigma'}\colon \abs{\Delta^{m}}\to \abs{A}$.
To conclude, taking the last vertex of $\Delta^{n}$ in $\Delta^{m}$ defines a unique $1$-simplex  $\beta\colon \Delta^{1}\to\Delta^{m}$ from the vertex $n$ to the vertex $m$, and we take $\alpha_{x,y}\colon \lv x \to \lv y$ to be the morphism in $\tau_{1}A$ determined by the composition:
\begin{equation}
\begin{tikzcd}
\Delta^{1}\ar[r, "\beta"]&\Delta^{m}\ar[r, "\sigma'"]& A
\end{tikzcd}
\end{equation}
\end{proof}

\begin{construction}
\label{con:tentative-left-cover}
Let $A$ be a simplicial set and let $F\colon \tau^{1}A\to \Set$ be a functor.
We define a locally stratified space $p\colon C(F)\to \dabs{A}$ over $\dabs{A}$ as follows.
The underlying set of $C(F)$ is the coproduct:
\[\bigcoprod_{x\in \dabs{A}}F(\lv x).\]
where $\lv\colon \abs{A}\to A_{0}$ is the geometric last vertex map defined in \ref{subsec:last-vertex-geometric}.
Let $V_{x}$ be a Gabriel-Zisman neighbourhood of $\abs{A}$, for every point $(x,e)\in C(F)$ we take $V_{x,e}$ to be the set of points 
\[V_{x,e}=\left\{\left(x',F\left(\alpha_{x,x'}\right)e\right) : x'\in V_{x}\right\}\]
We take the topology on $C(F)$ generated by the set $\{V_{x,e}:x\in\abs{A}, e\in F(\lv x)\}$.
By construction, $p\colon C(F)\to \abs{A}$ is an \'etale map of topological spaces.
Therefore, by Lemma \ref{lem:etale-lifts} there exists a unique locally stratified structure on $C(F)$ making the map $p\colon C(F)\to\dabs{A}$ an \'etale morphism of locally stratified spaces.
\end{construction}

\subsection{}
Let $A$ be a simplicial set and let $x$ and $y$ be two points in $\abs{A}$. 
We define a relation $R$ on the underlying set of $\abs{A}$ by $x \mathrel{R}y$ if $y$ belongs to the Gabriel-Zisman neighbourhood $V_{x}$ of $x$ and the induced map $\alpha_{x,y}\colon \lv x \to \lv y$ is an isomorphism.
Let $\sim_{R}$ be the equivalence relation generated by $R$. If $x\sim_{R}y$ we say that $x$ is \emph{GZ-equivalent} to $y$.

\begin{definition}
Let $A$ be a simplicial set.
We say that $A$ \emph{satisfies the GZ-property} if two points $x$ and $y$ in $\dabs{A}$ are chaotically equivalent precisely when they are GZ-equivalent. 
\end{definition}

\begin{proposition}
Let $A$ be a simplicial set that satisfies the GZ-property.
Then, for every functor $F\colon \tau_{1}A\to \Set$, the morphism $p\colon C(F)\to \dabs{A}$ is a left cover.
\end{proposition}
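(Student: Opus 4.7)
The plan is to verify the two characterising conditions of Corollary \ref{cor:characterisation-left-cover}. Condition (1), that $p\colon C(F)\to \dabs{A}$ is \'etale, holds by construction: the very definition of the locally stratified structure on $C(F)$ in Construction \ref{con:tentative-left-cover} invokes Lemma \ref{lem:etale-lifts} applied to the topologically \'etale map of underlying spaces. So the work lies in condition (2): given a morphism $f\colon Y\to \dabs{A}$ with $Y$ a locally $0$-connected and locally $1$-connected chaotic locally stratified space, the base change $p_{Y}\colon C(F)_{Y}\to Y$ is a topological cover.

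By Lemma \ref{lem:etale-pullback}, $p_{Y}$ remains \'etale, and by Proposition \ref{prop:etale-chaotic-chaotic} the total space $C(F)_{Y}$ is chaotically locally stratified. Thus it suffices to produce, around each point $y\in Y$, an open neighbourhood $U$ of $y$ such that $p_{Y}^{-1}(U)$ splits as a disjoint union of open sets, each mapped homeomorphically onto $U$ by $p_{Y}$. By local $0$-connectedness we may choose $U$ to be path-connected. Since $Y$ is chaotically locally stratified, any path in $U$ is a chaotic path, so for any $y'\in U$ there exists a chaotic path in $\dabs{A}$ connecting $f(y)$ and $f(y')$; in other words $f(y)\sim f(y')$ chaotically in $\dabs{A}$.

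Here is where the GZ-property enters: by assumption chaotic equivalence in $\dabs{A}$ coincides with GZ-equivalence, so $f(y')$ is connected to $f(y)$ by a finite chain of points $x_{0}=f(y),x_{1},\ldots,x_{k}=f(y')$ with each consecutive pair related by $R$, giving a canonical bijection $F(\lv f(y))\cong F(\lv f(y'))$ induced by $F$ applied to the composition of the isomorphisms $\alpha_{x_{i},x_{i+1}}$ (and their inverses). Using these identifications, for every $e\in F(\lv f(y))$ one obtains a set-theoretic section $s_{e}\colon U\to C(F)_{Y}$, and the local description of the topology on $C(F)$ in terms of the open sets $V_{x,e}$ in Construction \ref{con:tentative-left-cover} shows that each $s_{e}(U)$ is open in $p_{Y}^{-1}(U)$ and that $p_{Y}$ restricts to a homeomorphism on it. Choosing $U$ also simply connected (by local $1$-connectedness) guarantees that the canonical bijections coming from different chains of $R$-moves within $U$ agree, so the sections $s_{e}$ are well-defined; this yields the required local trivialisation $p_{Y}^{-1}(U)\cong U\times F(\lv f(y))$.

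The main obstacle is verifying this well-definedness of the local sections $s_{e}$, namely that the composite isomorphism assigned to a chain of $R$-moves depends only on its endpoints within $U$, and does not depend on the choice of intermediate chain. This is precisely a cocycle-type condition, and resolving it is where the hypothesis of local $1$-connectedness of $Y$, together with the GZ-property translating local chaotic equivalences in $\dabs{A}$ into chains of isomorphisms in $\tau_{1}A$, does the essential work. Once this consistency is in place, the remaining verifications that $s_{e}$ is continuous, a morphism of locally stratified spaces, and that the $s_{e}(U)$ together exhaust $p_{Y}^{-1}(U)$, follow routinely from the definition of the topology on $C(F)$ and the chaotic structure on $Y$.
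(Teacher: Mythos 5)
The overall reduction to Corollary \ref{cor:characterisation-left-cover} and the identification of \'etaleness as immediate from Construction \ref{con:tentative-left-cover} via Lemma \ref{lem:etale-lifts} match the paper. But from there you diverge, and the divergence introduces a gap. You take an arbitrary path-connected neighbourhood $U\ni y$, use chaotic paths to establish that $f(y)\sim f(y')$ for $y'\in U$, then translate this via the GZ-property into a finite \emph{chain} of $R$-moves and compose the resulting isomorphisms $\alpha_{x_{i},x_{i+1}}$ to get a bijection $F(\lv f(y))\cong F(\lv f(y'))$. This bijection a priori depends on the chain, and you explicitly acknowledge the well-definedness problem but only assert that local $1$-connectedness of $Y$ resolves it. That assertion is not justified and does not obviously go through: the consistency question is about composites of $R$-moves in $\dabs{A}$, i.e.\ about morphisms in $\tau_{1}A$, and it is not clear how a topological hypothesis on $Y$ alone forces two different chains in $\dabs{A}$ to compose to the same isomorphism. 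Moreover, even granting well-definedness, you still have to show that your chain-defined bijection agrees with the single morphisms $F(\alpha_{x,x'})$ used in Construction \ref{con:tentative-left-cover} to define the topology of $C(F)$ via the sets $V_{x,e}$; otherwise the claim that $s_{e}(U)$ is open does not follow.

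The paper avoids all of this by taking $U_{y}=f^{-1}(V_{x})$, where $V_{x}$ is the Gabriel--Zisman neighbourhood of $x=f(y)$, rather than an arbitrary path-connected open. Inside $V_{x}$ Lemma \ref{lem:GZ-nghb} already furnishes a \emph{canonical} morphism $\alpha_{x,x'}\colon \lv x\to \lv x'$ for each $x'\in V_{x}$, with no chains to choose. Since $Y$ is chaotic, $y$ and $y'$ are chaotically equivalent, hence so are $x$ and $fy'$, and the GZ-property then forces this canonical $\alpha_{x,fy'}$ to be an isomorphism. This produces the identification $p_{Y}^{-1}(U_{y})\cong U_{y}\times F(\lv x)$ directly, and each slice $U_{y}\times\{e\}$ is open because it is literally the preimage of $V_{x,e}$ under $C(F)_{Y}\to C(F)$. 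No cocycle condition arises, no shrinking of $U$ to a path-connected or simply connected set is needed, and no separate compatibility check with the $V_{x,e}$ is required. You should rework condition (2) along these lines: fix the Gabriel--Zisman neighbourhood from the start and use the canonical morphism, rather than paths and chains of $R$-moves.
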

\begin{proof}
By Corollary \ref{cor:characterisation-left-cover} it is enough to show that, for every morphism:
\[f\colon Y\to \dabs{A}\]
of locally stratified spaces, where $Y$ is a chaotic locally stratified space, the pullback $p_{Y}\colon C(F)_{Y}=C(F)\times_{\dabs{A}}Y\to Y$ of $p$ along $f$ is a topological cover.
The underlying set of $C(F)_{Y}$ is the coproduct:
\[\bigcoprod_{y\in Y}F(\lv(fy))\]
Let $y$ be a point of $Y$ and let us consider the Gabriel-Zisman neighbourhood $V_{x}$ of the image $x=fy$ of $y$ in $\abs{A}$.
To show that $C(F)_{Y}$ is a topological cover of $Y$, it is enough to show that the preimage of $U_{y}=f^{{-1}}V_{x}$ via $p_{Y}$ is a disjoint union of open subsets of $C(F)$ mapped homeomorphically onto $U_{y}$.
We claim that $p_{Y}^{-1}(U_{y})$ is equal to the product:
\[U_{y}\times F(\lv x).\]
Take a pair $(y', e)$ in $U_{y}\times F(\lv x)$, then since $fy'\in V_{x}$ there exists a morphism $\alpha_{x}\colon x\to f_{y'}$ in $\tau_{1}X$ but then, since $y$ is chaotically equivalent to $y'$, we have that $x$ is chaotically equivalent to $fy'$ which implies that $\alpha_{x}$ is an isomorphism.
In particular, we have that $(y',e)$ determines a unique point $(y', \alpha_{x,*}e)$ in $p_{Y}^{{-1}}U_{y}$.
To conclude the proof, it is enough to observe that $U_{y}\times\{e\}$ is open in $C(F)_{Y}$, which is true since it is the preimage of $U_{x,e}$ via $C(F)_{Y}\to C(F)$.
\end{proof}

\begin{remark}
Let $R$ be the simplicial set defined in Example \ref{ex:walking-retraction-simplicial-set}. 
We observe that $R$ does not satisfy the GZ-property.
Indeed, all points in $\dabs{R}$ are chaotically equivalent, while there are two GZ-equivalence classes, namely $\lv^{-1}0$ and $\lv^{-1}1$.
\end{remark}

\begin{example}
Let $R$ be as above and let
\[F\colon \Ret\to \Set\]
be the functor that maps $0$ to a singleton, that we label $\{a\}$ and $1$ to a set with two elements, labelled $\{b,b'\}$, with $Fi\colon \{a\}\to \{b,b'\}$ picking out the element $b$.
Then, the construction \ref{con:tentative-left-cover} yields a locally stratified space $C(F)$ over $\dabs{R}$ which is the disjoint union of a copy of $\dabs{R}$ with vertices $a$ and $b$, together with a chaotic half-open arc $U$ ending at $b'$ and projecting down to the complement of $0$ in  $\dabs{i}$.
In particular, $C(F)$ is not a left cover, since the local exit path $\dabs{r}$ from $1$ to $0$ has no lift starting at $b'$.
\end{example}

\begin{proposition}
\label{prop:tentative-left-cover-ff}
Let $A$ be a simplicial set that satisfies the GZ-property and let $F\colon \tau_{1}A\to \Set$ be a functor. Then, there exists a natural isomorphism:
\begin{equation}
\label{eq:C(E)}
F\cong \fib_{C(F)}
\end{equation}
where $\fib_{C(F)}$ denotes the fiber functor of $C(F)$.
\end{proposition}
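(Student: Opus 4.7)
The plan is to exhibit, for each $0$-simplex $a$ of $A$, a natural bijection $\eta_{a}\colon F(a)\to \fib_{a}C(F)$, and then verify that the family $(\eta_{a})_{a\in A_{0}}$ is natural with respect to morphisms in $\tau_{1}A$. The first half is essentially by construction: if $a\in A_{0}$ is viewed as a point of $\dabs{A}$, then the unique non-degenerate simplex whose interior contains $a$ is $a\colon \Delta^{0}\to A$ itself, so $\lv(a)=a$. By Construction \ref{con:tentative-left-cover}, the underlying set of $C(F)$ is $\coprod_{x\in\dabs{A}}F(\lv x)$, so the fibre $p^{-1}(a)$ equals $F(a)$, giving the tautological bijection $\eta_{a}(e)=(a,e)$.

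For naturality, by \ref{subsec:tau-explicit} it suffices to check compatibility with each $1$-simplex $\alpha\colon a\to a'$ of $A$, since the relations in $\tau_{1}A$ are generated by $2$-simplices and any lift-based action automatically respects composition (by uniqueness of lifts). The induced map $\alpha_{*}\colon \fib_{a}C(F)\to\fib_{a'}C(F)$ is obtained by uniquely lifting the elementary local exit path $\dabs{\alpha}\colon \dabs{\Delta^{1}}\to\dabs{A}$ starting at $(a,e)$ and evaluating at $1$; here uniqueness of the lift is guaranteed by Lemma \ref{lem:GZ-lifting} combined with the fact that $p$ is \'etale (as shown in Construction \ref{con:tentative-left-cover}). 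The claim to establish is that this lift terminates at $(a',F(\alpha)(e))$.

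To see this, consider the Gabriel--Zisman neighbourhood $V_{a}$ of $a$ in $\dabs{A}$ and the basic open $V_{a,e}\subset C(F)$. For any interior point $x$ of $\dabs{\alpha}$ lying in $V_{a}$, the unique non-degenerate simplex containing $x$ is $\alpha$ itself, so $\lv x = a'$, and the morphism $\alpha_{a,x}\colon \lv a\to \lv x$ produced by Lemma \ref{lem:GZ-nghb} coincides with the class of $\alpha$ in $\tau_{1}A$. Consequently $V_{a,e}$ contains the points $(x,F(\alpha)(e))$ for every such $x$, which means the continuous lift of the restriction $\dabs{\alpha}|_{[0,\epsilon)}$ starting at $(a,e)$ is given explicitly by $t\mapsto (\dabs{\alpha}(t),F(\alpha)(e))$ for small $t$. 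Performing the symmetric analysis at the endpoint $a'$ using the Gabriel--Zisman neighbourhood $V_{a'}$ (where $\lv(\dabs{\alpha}(t))=a'$ for $t$ close to $1$, and the morphism $\alpha_{a',\dabs{\alpha}(t)}$ is the identity on $a'$ in $\tau_{1}A$) shows that $(a',F(\alpha)(e))$ has an open neighbourhood of the form $V_{a',F(\alpha)(e)}$ containing $(\dabs{\alpha}(t),F(\alpha)(e))$ for $t$ close to $1$. By uniqueness of the lift, the global lift must be the path $t\mapsto (\dabs{\alpha}(t),F(\alpha)(e))$ on the entire open $(0,1]$, and it extends continuously to $(a',F(\alpha)(e))$ at $t=1$, as desired.

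The main obstacle is the intermediate range of parameters: one must argue that the lift does not ``jump'' to a different element of the fibre somewhere in the interior of $\dabs{\alpha}$. This is handled by observing that the subset $\{(\dabs{\alpha}(t),F(\alpha)(e)):t\in(0,1)\}$ of $C(F)$ is connected and open in $p^{-1}(\mathrm{im}(\dabs\alpha))$ (being cut out as the continuous image of an interval inside the \'etale cover), so uniqueness of lifts pins the global lift down completely. Once naturality is established for $1$-simplices, functoriality of $\eta$ in $\tau_{1}A$ follows from the universal property of the free category description of $\tau_{1}A$ together with the fact that both $F(\blank)$ and $\fib_{\bullet}C(F)$ send commutative triangles in $A$ to commutative triangles in $\Set$ (for the latter, again by uniqueness of lifts of the realisation of a $2$-simplex).
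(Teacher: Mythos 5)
Your proof is correct and follows the same route as the paper's: it reduces naturality to $1$-simplices and uses the Gabriel--Zisman neighbourhood at the initial vertex to identify the lift near $t=0$ as $t\mapsto(\dabs{\alpha}(t),F(\alpha)e)$. You spell out more explicitly than the paper why this local identification propagates to the endpoint $t=1$, but the core idea is identical.
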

\begin{proof}
By construction the value of $F$ at every $0$-simplex of $A$ is equal to the fibre of $C(F)$ at $a$.
Therefore, it is enough to show that for every morphism $\alpha\colon a\to a'$ in $A$ and an element $e\in F(a)$, the endpoint of the unique lift $\tilde{\alpha}\colon \dabs{\Delta^{1}}\to C(F)$ of $\dabs{\alpha}$ is $(a', F\alpha(e))$.
To do so, notice that the Gabriel-Zisman neighbourhood $V_{a}$ of $a$ in $\abs{A}$ intersects the image of $\alpha$ and for every point $y$ in the intersection, the path $\alpha_{a,y}\colon a\to \lv(y)$ is $\alpha$, by construction.
\end{proof}

\begin{theorem}
\label{thm:functor-left-cover-locstrat}
Let $A$ be a simplicial set and assume that $A$ satisfies the GZ-property.
Then, the functor $C$ fits in an equivalence of categories:
\begin{equation}
\Adjoint{\Set^{\tau^{1}A}}{\LCover{\dabs{A}}}{C}{\fib}
\end{equation}
\end{theorem}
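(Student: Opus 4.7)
The plan is to construct a natural isomorphism $\mu_{E}\colon C(\fib E)\to E$ for every left cover $p\colon E\to \dabs{A}$, to combine it with the natural isomorphism $\eta_{F}\colon F\to \fib C(F)$ provided by Proposition~\ref{prop:tentative-left-cover-ff}, and to verify the triangle identities so that $(C,\fib)$ is an adjoint equivalence.

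The construction of $\mu_{E}$ is pointwise. An underlying element of $C(\fib E)$ is a pair $(x,e)$ with $e\in\fib_{\lv x}E$. By Lemma~\ref{lem:last-vertex-chaotic-path} one picks a chaotic path $\gamma\colon\II\to\dabs{A}$ from $x$ to $\lv x$; by Corollary~\ref{cor:characterisation-left-cover} the base change $\gamma^{*}E\to\II$ is a topological cover, so $e$ lifts uniquely to a point $\tilde e\in\fib_{x}E$, and one sets $\mu_{E}(x,e)=\tilde e$. Continuity and preservation of circulation of $\mu_{E}$ follow because both $C(\fib E)$ and $E$ are étale over $\dabs{A}$ by Construction~\ref{con:tentative-left-cover} and Lemma~\ref{lem:left-cover-etale}, and on each basic open $V_{x,e}\subset C(\fib E)$ the map $\mu_{E}$ agrees with the unique local section of $p$ produced by Lemma~\ref{lem:GZ-lifting}. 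Bijectivity at the level of underlying sets is immediate: injectivity is uniqueness of path lifts, and surjectivity follows by lifting the reversed chaotic path starting at any $\tilde x\in\fib_{x}E$ to recover an element of $\fib_{\lv x}E$.

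The main obstacle is well-definedness of $\mu_{E}$: two chaotic paths $\gamma$ and $\gamma'$ from $x$ to $\lv x$ must yield the same lift of $e$. This is precisely where the GZ-property is used. One connects $\gamma$ to $\gamma'$ by a chain of Gabriel--Zisman neighborhoods; on each such neighborhood Lemma~\ref{lem:GZ-lifting} produces a unique local section of $p$, and Lemma~\ref{lem:GZ-nghb} identifies the composite transport along the chain with a morphism in $\tau_{1}A$. The GZ-property forces any two such chains linking $\gamma$ to $\gamma'$ to compose to the same morphism in $\tau_{1}A$, and functoriality of $\fib E\colon \tau_{1}A\to \Set$ then guarantees that the two lifts agree.

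To conclude, $\mu_{E}$ is an isomorphism of locally stratified spaces over $\dabs{A}$, naturally in $E$. The triangle identities reduce to the tautology that the unique lift of $\eta_{F}(e)$ along the constant chaotic path at $\lv x$ is $e$ itself, together with the analogous statement for $\mu_{C(F)}\circ C(\eta_{F})$, which holds by restricting to the basic opens $V_{x,e}$ where both maps are the canonical local section. Hence $(C,\fib,\eta,\mu)$ is an adjoint equivalence of categories, as claimed.
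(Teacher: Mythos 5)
Your overall plan --- exhibit the counit $\mu_E\colon C(\fib E)\to E$ as a natural isomorphism and combine it with the unit isomorphism from Proposition~\ref{prop:tentative-left-cover-ff} --- is the same as the paper's, which compresses the counit step into the bare assertion that the underlying sets of $E$ and $C(\fib E)$ coincide and then cites \'etaleness and Lemma~\ref{lem:etale-lifts}. Making the bijection $\mu_E$ explicit, as you do, is a worthwhile expansion.

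The gap is in your well-definedness paragraph. First, $\mu_E$ needs no well-definedness argument at the set level: Lemma~\ref{lem:last-vertex-chaotic-path} already fixes a canonical chaotic path (the straight line from the interior point representing $x$ to the last vertex of the unique non-degenerate simplex carrying $x$), so there is no choice of $\gamma$ to resolve. Second, and more seriously, the step you actually need --- that on each $V_{x,e}$ the map $\mu_E$ coincides with the local section of Lemma~\ref{lem:GZ-lifting} --- does not follow from the GZ-property as stated. The GZ-property only identifies two equivalence relations on the points of $\dabs{A}$; it gives no control over whether distinct chains of Gabriel--Zisman neighbourhoods compose to the \emph{same} morphism in $\tau_1 A$, which is the assertion your argument rests on. The compatibility you want is in fact local: for $x'\in V_x$ the point $x$ lies on a face of the non-degenerate simplex $\sigma_{x'}\colon\Delta^m\to A$, so the canonical chaotic paths from $x$ and from $x'$ to their last vertices and the edge realising $\alpha_{x,x'}$ of Lemma~\ref{lem:GZ-nghb} all lie in the image of $\dabs{\sigma_{x'}}$. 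Over $\dabs{\Delta^m}$ a left cover has fibres controlled by $\tau_1\Delta^m=[m]$, in which there is exactly one morphism from $n$ to $m$, and uniqueness of lifts over the simplex forces the two transports to agree. This intra-simplex argument is the content hidden behind the paper's terse appeal to \'etaleness and Lemma~\ref{lem:etale-lifts}; the GZ-property is needed elsewhere --- to ensure that $C(F)$ is a left cover at all, and in Proposition~\ref{prop:tentative-left-cover-ff} --- not at this step.
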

\begin{proof}
Proposition \ref{prop:tentative-left-cover-ff} proves a half of the claim. To show the other half, let $p\colon E\to \dabs{A}$ be a left cover and let us consider the left cover $C(\fib E)$. 
By construction, the underlying sets of $E$ and $C(\fib E)$ coincide. The claim follows from Lemma \ref{lem:etale-lifts}, since both $E\to \dabs{A}$ and $C(\fib E)\to \dabs{A}$ are \'etale morphisms by Corollary \ref{cor:characterisation-left-cover}.
\end{proof}

\begin{corollary}
\label{cor:simplicial-left-cover-locstrat}
Let $A$ be a simplicial set and assume that $A$ satisfies the GZ-property. 
Then, the adjunction:
\begin{equation}
\Adjoint{\LCover{A}}{\LCover{\dabs{A}}}{L\dabs{\blank}}{\Sing_{A}}
\end{equation}
is an equivalence of categories.
Moreover, for every left cover $E$ of $\dabs{A}$, there exists a natural isomorphism:
\[L\dabs{E}\cong C(\fib E).\]
\end{corollary}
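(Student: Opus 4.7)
The plan is to identify both $\Sing_{A}$ and $L\dabs{\blank}$ with equivalences by passing through the two equivalences with $\Set^{\tau_{1}A}$ provided by Theorems \ref{thm:left-covers-fiber-functors} and \ref{thm:functor-left-cover-locstrat}. The key auxiliary statement I will establish is a natural isomorphism
\[\fib \circ \Sing_{A}\cong \fib\colon \LCover{\dabs{A}}\to \Set^{\tau_{1}A}.\]
For a left cover $p\colon F\to \dabs{A}$ and a vertex $a\in A_{0}$, unwinding the pullback definition of $\Sing_{A}F$ yields $\fib_{a}\Sing_{A}F\cong p^{-1}(\eta_{A}(a))=\fib_{a}F$. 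Naturality in $a\in \tau_{1}A$ is a consequence of the unique right lifting property against left covers applied to $1$-simplices: given a morphism $\alpha\colon a\to a'$ in $A$ and an element $e\in \fib_{a}F$, the unique $1$-simplex of $\Sing_{A}F$ lifting $\alpha$ with first vertex $e$ corresponds exactly to the unique lift of $\dabs{\alpha}\colon \dabs{\Delta^{1}}\to \dabs{A}$ in $F$ starting at $e$, and the two produce the same endpoint in $\fib_{a'}F$.

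Once this natural isomorphism is in hand, Theorem \ref{thm:left-covers-fiber-functors} will give $\Sing_{A}F\cong \rec(\fib F)$ naturally in $F$, so that $\Sing_{A}\cong \rec\circ \fib$ as functors $\LCover{\dabs{A}}\to \LCover{A}$. By Theorem \ref{thm:functor-left-cover-locstrat} the functor $\fib\colon \LCover{\dabs{A}}\to \Set^{\tau_{1}A}$ is an equivalence, and by Theorem \ref{thm:left-covers-fiber-functors} so is $\rec$; hence $\Sing_{A}$ is itself an equivalence. Its left adjoint $L\dabs{\blank}$ is therefore automatically an equivalence as well, establishing the first half of the statement.

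For the \emph{moreover} clause, both $L\dabs{\blank}$ and $C\circ \fib$ are left adjoints of $\Sing_{A}\cong \rec\circ \fib$: the former by construction, and the latter via the chain of natural bijections
\[\hom_{\LCover{A}}(H,\rec \fib G)\cong \hom_{\Set^{\tau_{1}A}}(\fib H,\fib G)\cong \hom_{\LCover{\dabs{A}}}(C \fib H,G)\]
coming from $\rec\dashv \fib$ and $C\dashv \fib$. By uniqueness of left adjoints up to natural isomorphism, I conclude that $L\dabs{\blank}\cong C\circ \fib$ as functors, hence $L\dabs{E}\cong C(\fib E)$ naturally in $E$.

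The main obstacle is the verification of the natural isomorphism $\fib\circ \Sing_{A}\cong \fib$. Although essentially a bookkeeping exercise given the explicit pullback description of $\Sing_{A}$, it demands a careful comparison of the two fibre functors and of how morphisms of $\tau_{1}A$ act on them; the crux is that in both settings the action on fibres is dictated by the same universal lifting property against left covers, which is precisely what makes the identification canonical and turns the rest of the argument into formal manipulation of equivalences.
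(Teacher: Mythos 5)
Your proof is correct and follows the same route the paper intends—composing the two equivalences from Theorem \ref{thm:left-covers-fiber-functors} and Theorem \ref{thm:functor-left-cover-locstrat}—but you fill in a genuinely nontrivial step that the paper's one-line proof leaves implicit. Citing the two theorems only yields an abstract equivalence $\LCover{A}\simeq\LCover{\dabs{A}}$; it does not by itself identify that equivalence with the specific adjunction $(L\dabs{\blank},\Sing_A)$, which is what the corollary actually asserts. Your key lemma, the natural isomorphism $\fib\circ\Sing_A\cong\fib$ over $\tau_1 A$, is precisely the compatibility check that closes this gap, and your verification—matching the two universal lifting properties along $1$-simplices—is sound, since the unique lift of $\alpha$ in $\Sing_A F$ with given first vertex is by construction the same data as the unique lift of $\dabs{\alpha}$ in $F$. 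The deduction $\Sing_A\cong\rec\circ\fib$, the conclusion that $\Sing_A$ is an equivalence, and the uniqueness-of-left-adjoints argument for $L\dabs{\blank}\cong C\circ\fib$ are all correct. One small imprecision: the bijection $\hom_{\LCover{A}}(H,\rec\fib G)\cong\hom_{\Set^{\tau_1 A}}(\fib H,\fib G)$ uses $\fib\dashv\rec$ rather than $\rec\dashv\fib$, but this is legitimate because $(\rec,\fib)$ is an adjoint equivalence and so each is adjoint to the other on both sides; you should make that explicit rather than attributing it to $\rec\dashv\fib$.
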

\begin{proof}
The claim follows from Theorem \ref{thm:functor-left-cover-locstrat} and Theorem \ref{thm:left-covers-fiber-functors}.
\end{proof}

\begin{corollary}
\label{cor:cover-R-trivial}
For every left cover $E$ over $\dabs{R}$ there exists a set $F$ such that $E$ is naturally isomorphic to $\dabs{R}\times F$ over $\dabs{R}$.
In particular, there is an equivalence of categories:
\[\LCover{\dabs{R}}\cong \Set.\]
\end{corollary}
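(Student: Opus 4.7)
The plan is to show that the vertex inclusion $1\colon\term\to\dabs{R}$ enjoys the unique left lifting property against all left covers, so that $\dabs{R}$ serves as its own universal left cover at $1$. By Lemma \ref{lem:universal-left-cover-represents-fibers-locstrat} this will yield a natural isomorphism $\LCover{\dabs{R}}(\dabs{R},E)\cong\fib_1E$, exhibiting $F\mapsto\dabs{R}\times F$ as left adjoint to $\fib_1$ with identity unit; the corollary then reduces to showing the counit of this adjunction is an isomorphism.

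To construct the lift I would use the quotient presentation $q\colon\dabs{\Delta^2}\to\dabs{R}$ of Example \ref{ex:walking-retraction-simplicial-set} collapsing the degenerate edge $d_1$, combined with the chaotic path $\gamma_\alpha\colon\II\to\dabs{R}$ produced in that same example. Given a lifting problem with left cover $p\colon E\to X$, map $\sigma\colon\dabs{R}\to X$, and point $e\in\fib_{\sigma(1)}E$, observe first that by Proposition \ref{prop:chaotic-left-cover-top-cover} the pullback $\gamma_\alpha^{*}E\to\II$ is a trivial topological cover, so the fibre-transport $\alpha_*\colon\fib_{\sigma(0)}E\to\fib_{\sigma(1)}E$ obtained by lifting $\dabs{\alpha}$ is a bijection; set $e_0:=\alpha_*^{-1}(e)$. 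By the ULLP of the initial vertex $v_0\colon\term\to\dabs{\Delta^2}$ (which holds by Example \ref{ex:universal-cover-n-simplex-locstrat} since $v_0$ lies in the minimum stratum) the map $\sigma q$ lifts uniquely to $\tilde\rho\colon\dabs{\Delta^2}\to E$ with $\tilde\rho(v_0)=e_0$. Since $qd_1$ is constant at $0\in\dabs{R}$, the restriction $\tilde\rho|_{d_1}$ must also be constant at $e_0$, so $\tilde\rho$ descends along $q$ to a morphism $h\colon\dabs{R}\to E$, and the face $d_2$ forces $h(1)=\tilde\rho(v_1)=\alpha_*(e_0)=e$. Uniqueness follows by running the argument in reverse: any competitor $h'$ satisfies $\alpha_*(h'(0))=h'(1)=e$, forcing $h'(0)=e_0$ and thereby $h'q=\tilde\rho$.

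Once $\dabs{R}$ is identified as the universal left cover at $1$, it remains to verify that the resulting counit $\Phi\colon\dabs{R}\times\fib_1E\to E$, $(x,e)\mapsto s_e(x)$, is an isomorphism for every left cover $E$. By construction it is a morphism of étale maps over $\dabs{R}$ restricting to the identity on the fibre over $1$, and this will be the crux of the argument: I would promote this to pointwise bijectivity by transporting along chaotic paths joining $1$ to arbitrary points of $\dabs{R}$, using that all points of $\dabs{R}$ are chaotically equivalent to $1$ (as exploited in the proof of Corollary \ref{cor:fundamental-category-retract}) and that lifts of chaotic paths are unique by Proposition \ref{prop:chaotic-left-cover-top-cover}. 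Since an étale bijection is automatically an isomorphism in $\LocStrat$, this yields the required trivialisation and hence the displayed equivalence $\LCover{\dabs{R}}\cong\Set$.
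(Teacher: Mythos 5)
Your proposal is correct, and it takes a genuinely different route from the paper's own argument. The paper pulls $E$ back along the quotient $q = \dabs{\sigma}\colon \dabs{\Delta^{2}}\to\dabs{R}$, invokes Theorem \ref{thm:functor-left-cover-locstrat} (via the GZ-property of $\dabs{\Delta^{2}}$) to classify $E_\sigma$ by a functor $F_\sigma\colon[2]\to\Set$, observes that the image of the edge $0\to 2$ is forced to be the identity (as $q$ collapses it) and the image of $0\to 1$ is an isomorphism (as $\dabs{\alpha}$ underlies a chaotic path), and concludes triviality. You instead prove that $1\colon\term\to\dabs{R}$ has the unique left lifting property against all left covers, making $\dabs{R}$ its own universal left cover at $1$, and then verify the resulting counit $\dabs{R}\times\fib_{1}E\to E$ is an isomorphism by chaotic-path transport. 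Both proofs rest on the same two geometric facts — the constancy of $q\circ\dabs{\partial_{1}}$ and the chaotic path joining $0$ to $1$ — but you trade the classification machinery for a direct lifting-and-descent construction. This buys self-containment (you never touch Construction \ref{con:tentative-left-cover} or the GZ-property) at the cost of a slightly longer argument, especially the descent of $\tilde\rho$ along the pushout presentation of $\dabs{R}$ and the pointwise verification of the counit. One small simplification worth noting: had you run the argument at the vertex $0$ instead of $1$, the lift $\tilde\rho$ would already start at the given basepoint $e_{0}$ and the initial transport step $e_{0}=\alpha_{*}^{-1}(e)$ could be dropped, since $q(v_{0})=0$. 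The transport would then only reappear in the final counit verification, where it is genuinely needed.
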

\begin{proof}
Let $p\colon E\to \dabs{R}$ be a left cover and let us consider the pullback $p_{\sigma}\colon E_{\sigma}\to \dabs{\Delta^{2}}$ of $p$ along the realisation of the canonical map $\sigma\Delta^{2}\to R$.
Since $\dabs{\Delta^{n}}$ satisfies the GZ-property, and by Theorem \ref{thm:functor-left-cover-locstrat}, there exists a functor $F_{\sigma}\colon [2]\to\Set$ unique up to isomorphism such that $E_{\sigma}=C(F_{\sigma})$.
Moreover, since $E_{\sigma}$ is the pullback of a cover over $R$, the morphsm from $0$ to $2$ in $[2]$ induces the identity $F(0)=(2)$ on the fibres.
On the other hand, since the path from $0$ to $1$ is a chaotic path in $R$, the morphism from $0$ to $1$ in $[2]$ induces an isomorphism by Corollary \ref{cor:chaotic-left-property-cover}.
Therefore, the cover $E_{\sigma}$ is trivial over $\dabs{\Delta^{2}}$.
Therefore, $E\to \dabs{R}$ is a trivial cover with fibre $F(0)$.
\end{proof}

\begin{remark}
As the fundamental category of $R$ is the walking retraction category $\Ret$, Theorem \ref{thm:left-covers-fiber-functors} together with Corollary \ref{cor:cover-R-trivial} imply that the category $\LCover{R}$ of left covers over $R$ is not equivalent to the category $\LCover{\dabs{R}}$ of left covers over its realisation.
\end{remark}

\backmatter				%

\sloppy
\printbibliography[
heading=bibintoc,
title={Bibliography}
] 
\printindex
\end{document}